\newtheorem{theorem}{Theorem}[section]
\newtheorem{lemma}[theorem]{Lemma}
\newtheorem{proposition}[theorem]{Proposition}
\newtheorem{corollary}[theorem]{Corollary}
\newenvironment{proof}[1][Proof]{\begin{trivlist}
\item[\hskip \labelsep {\bfseries #1}]}{\end{trivlist}}
\newenvironment{definition}[1][Definition]{\begin{trivlist}
\item[\hskip \labelsep {\bfseries #1}]}{\end{trivlist}}
\newenvironment{example}[1][Example]{\begin{trivlist}
\item[\hskip \labelsep {\bfseries #1}]}{\end{trivlist}}
\newenvironment{remark}[1][Remark]{\begin{trivlist}
\item[\hskip \labelsep {\bfseries #1}]}{\end{trivlist}}
\newcommand{\qed}{\nobreak \ifvmode \relax \else
      \ifdim\lastskip<1.5em \hskip-\lastskip
      \hskip1.5em plus0em minus0.5em \fi \nobreak
      \vrule height0.75em width0.5em depth0.25em\fi}
\newcommand{\R}[1]{\mathbb{R}^{#1}}
\newcommand{\lbar}{\underline{L}}
\newcommand{\pd}{\partial}
\newcommand{\bp}{\indent$\bullet$\hspace{.1in}}
\newcommand{\sla}[1]{\displaystyle{\not}{#1}}
\newcommand{\ul}[1]{\underline{#1}}
\newcommand{\delm}{\delta_-}
\newcommand{\delp}{\delta_+}
\newcommand{\delh}{\delta_H}
\newcommand{\tg}{\hat{\Gamma}}
\newcommand{\tsla}[1]{\tilde{\sla{#1}}}
\newcommand{\fa}{{\mathfrak{a}}}
\newcommand{\fb}{{\mathfrak{b}}}
\newcommand{\fc}{{\mathfrak{c}}}
\newcommand{\fd}{{\mathfrak{d}}}
\begin{document}

\title{Global Stability of the Nontrivial Solutions to the Wave Map Problem from Kerr $|a|\ll M$ to the Hyperbolic Plane under Axisymmetric Perturbations Preserving Angular Momentum}
\author{John Stogin}
\maketitle

\begin{abstract}
This document proves global boundedness and decay for axisymmetric perturbations of a known solution to the wave map problem from a slowly rotating $|a|\ll M$ Kerr spacetime to the hyperbolic plane. This problem is motivated by the general axisymmetric stability of Kerr conjecture and was first posed by Ionescu and Klainerman in \cite{IoKl}. Two particular developments in this paper, the treatment of terms near the axis of symmetry and the use of a decay hierarchy for energy estimates on uniformly spacelike hypersurfaces, can be used for a variety of similar problems.
\end{abstract}

\tableofcontents

\section{Introduction}

A major open problem in mathematical general relativity is to prove that the Kerr black hole solutions are stable as solutions to the Einstein Vacuum Equations. Due to the complicated nature of these equations, this problem is likely to remain open for a while. However, steps are being taken by focusing on simplified model problems. One such model problem, which was originally posed by Ionescu and Klainerman in \cite{IoKl} is the subject of this paper.

\subsection{The model problem of \cite{IoKl}}

To derive the model problem, one starts by restricting the space of solutions to those which have axisymmetry, since the Kerr solutions themselves have axisymmetry. This simplification already has a number of consequences. First, it excludes the challenging effects of the ergoregion, a neighborhood of the black hole where the effects of frame dragging are so strong that null trajectories are forced to spin in the same direction as the black hole. Second, it simplifies the set of trapped null geodesics by confining them to a single radius, $r_{trap}$. And third, it significantly simplifies the equations as we shall now discuss.

An axisymmetric spacetime $(\mathcal{M},\mathbf{g})$ can be described by the restriction $g$ of the metric $\mathbf{g}$ to the quotient spacetime under the axisymmetry, together with a new complex scalar quantity $\sigma$. The Einstein Vacuum Equations take the form
\begin{align}
\Box_g\sigma &= N[g,\sigma]\label{eve_pi_eqn} \\
Ric(g)_{ij} &= N[g,\sigma]_{ij}.\label{eve_ric_eqn}
\end{align}
For a derivation of this reduction, see \cite{weinstein}.

The model problem that is the subject of this paper comes from yet another simplification: The equation for the reduced metric (\ref{eve_ric_eqn}) is ignored, and $g$ is replaced with the reduced Kerr metric $g_{Kerr}$. So only the evolution of the scalar $\sigma$ according to equation (\ref{eve_pi_eqn}) with $g=g_{Kerr}$ is studied. (From now on, $g$ will indeed refer to the reduced metric $g_{Kerr}$.) This has the simplifying advantage that the wave dynamics are fixed.

If
$$\sigma=X+iY,$$
then equation (\ref{eve_pi_eqn}) is given by the following system of equations for $X$ and $Y$.
\begin{align}
\Box_gX &= \frac{\pd^\alpha X\pd_\alpha X}{X}-\frac{\pd^\alpha Y\pd_\alpha Y}{X}\label{wm_X_eqn} \\
\Box_gY &= 2\frac{\pd^\alpha X\pd_\alpha Y}{X}.\label{wm_Y_eqn}
\end{align}
Coincidentally, if $X$ and $Y$ are taken to be the standard coordinates for the hyperbolic plane with ranges $X\in (0,\infty)$ and $Y\in (-\infty,\infty)$, then these equations are precisely the equations that govern wave maps from the Kerr spacetime to the hyperbolic plane. For this reason, the system (\ref{wm_X_eqn}-\ref{wm_Y_eqn}) is often referred to as the \textit{wave map system}.

A particular (nontrivial) solution to this system is given by the scalar $\sigma_0$ corresponding to the Kerr metric itself.
$$\sigma_0=A+iB,$$
\begin{align*}
A &= \frac{(r^2+a^2)^2-a^2\sin^2\theta(r^2-2Mr+a^2)}{r^2+a^2\cos^2\theta}\sin^2\theta \\
B &= -2aM(3\cos\theta-(\cos\theta)^3)-\frac{2a^3M(\sin\theta)^4\cos\theta}{r^2+a^2\cos^2\theta}.
\end{align*}
The purpose of this paper is to investigate the stability of the scalar $\sigma_0=A+iB$ as an axisymmetric solution to the wave map system.

A general fact for any truly vacuum axisymmetric spacetime (ie. a spacetime that solves both equations (\ref{eve_pi_eqn}-\ref{eve_ric_eqn})) is that the imaginary part $\Im(\sigma)=Y$, called the \textit{Ernst potential}, must be constant on each connected segment of the axis. (Segments are disconnected if they lie on opposite ends of a black hole.) Furthermore, the difference between the constant values of $Y$ on opposite sides of a black hole is directly related to the angular momentum of the black hole. (One can easily see this is the case for the Kerr spacetime by evaluating $B$ at $\theta=0$ and $\theta=\pi$.) The space of perturbations of $\sigma_0$ that will be studied in this paper are such that $Y-B$ vanishes on the entire axis. We therefore interpret these perturbations as preserving angular momentum.

At this point, we can meaningfully state the informal version of the main theorem of this paper.
\begin{theorem}(Main Theorem, informal version) For a slowly rotating ($|a|\ll M$) Kerr black hole, there is a general function space (preserving angular momentum) in which the complex scalar $\sigma_0=A+iB$ is indeed stable as a solution to the wave map equations (\ref{wm_X_eqn}-\ref{wm_Y_eqn}).
\end{theorem}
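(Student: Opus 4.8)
The plan is to treat the main theorem as a small-data global existence-and-decay result for the perturbation of $\sigma_0$: a bootstrap argument feeds the quadratic nonlinearity of (\ref{wm_X_eqn})--(\ref{wm_Y_eqn}) into a robust linear analysis of $\Box_g$ on the slowly rotating ($|a|\ll M$) Kerr exterior, supplemented by a hierarchy of weighted energy estimates producing decay; part of the work is precisely to identify the function space (the norm carrying this hierarchy together with its pointwise decay rates) in which the argument closes.

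First I would choose unknowns adapted both to the hyperbolic target metric $X^{-2}(dX^2+dY^2)$ and to the degeneracy of the background on the axis $\theta\in\{0,\pi\}$. Using the Cartesian variables $(X,Y)$ directly is poor, since the nonlinearity carries factors $X^{-1}$ and $X=A$ vanishes like $\sin^2\theta$ on the axis; instead I would use, schematically, a logarithmic variable $u\sim\log(X/A)$ and a variable $v$ measuring $Y-B$ rescaled so that the angular-momentum-preserving hypothesis becomes the clean statement that $u$, $v$, and suitable derivatives of them vanish on the axis. In such variables the system becomes $\Box_g u=F_u$, $\Box_g v=F_v$, where $F_u,F_v$ comprise (i) linear terms whose coefficients are built from $A,B$ and their first derivatives — bounded away from the axis but carrying weights singular like $\sin^{-2}\theta$ as $\theta\to 0,\pi$ — and (ii) genuinely quadratic terms, products of first derivatives of $u,v$ with the classical null-form structure inherited from the wave map, again carrying such weights and the benign factor $X^{-1}$. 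The negative curvature of $\mathbb{H}^2$ should surface as a favorable sign in the zeroth-order part of the linearized operator; absorbing the $\sin^{-2}\theta$ coefficients into the angular energy via Hardy-type inequalities, available exactly because $u,v$ vanish on the axis, is the first use of the announced near-axis analysis.

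Next I would establish the linear estimates for $\Box_g$ (plus the zeroth-order terms) on Kerr with $|a|\ll M$. The ingredients, by now understood in outline for this parameter range by perturbation off Schwarzschild, are: a coercive Killing ($T$-) energy on a foliation of the exterior by uniformly spacelike hypersurfaces — here axisymmetry is essential, removing superradiance so that this energy is positive definite down to the horizon after a red-shift commutation; a red-shift estimate near $r=r_+$; an integrated local energy decay (Morawetz) estimate whose multiplier degenerates at the trapped radius $r_{trap}$, built by perturbing the Schwarzschild multiplier; and a hierarchy of $r$-weighted energy estimates on that spacelike foliation, playing the role the Dafermos--Rodnianski $r^p$-hierarchy plays on a null foliation. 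The usual pigeonhole argument through this hierarchy yields polynomial decay of the energy flux; commuting with $T$, the axial Killing field, and the intrinsic Laplacian on the slices, and then using Sobolev embedding, upgrades this to pointwise decay, with the near-axis coefficients handled throughout by the Hardy inequalities above.

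Finally I would close the nonlinear problem by continuity: assume $(u,v)$ small in the hierarchy norm with its decay rates, regard $(F_u,F_v)$ as a source in the linear estimates, and check that the quadratic terms — products of two factors each carrying the bootstrapped decay, with $X$ bounded below away from the axis so that division by $X$ is harmless there and the $\sin^{-2}\theta$ weights controlled as above near the axis — reproduce the bootstrap norm with a gain; commutation supplies the higher-order norms for the Sobolev step. The step I expect to be the main obstacle is constructing a single Morawetz/integrated-decay estimate that is simultaneously well-behaved at $r_{trap}$ and on the axis: trapping forces a degenerate multiplier and a derivative loss recovered only by commutation, the axis forces $\sin^{-2}\theta$-weighted terms that interact with that commutation, and the $r$-weighted hierarchy on uniformly spacelike rather than null slices gives only borderline decay for the quadratic terms — reconciling all three (the null-form structure being the fallback if the decay proves too weak) is the technical heart of the argument.
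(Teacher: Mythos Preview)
Your high-level strategy---bootstrap, linear estimates for $\Box_g$ on slowly rotating Kerr (energy, red-shift, Morawetz with trapping degeneracy, $r^p$-hierarchy), commutation, Sobolev, and closure via the null structure---matches the paper's architecture. But several of your concrete technical choices diverge from what the paper actually does, and in ways that matter.

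\textbf{Choice of unknowns and the axis.} You propose $u\sim\log(X/A)$ and a rescaled $v$ for $Y-B$, then plan to absorb $\sin^{-2}\theta$ coefficients by Hardy inequalities using vanishing on the axis. The paper instead sets $X=A(1+\phi)$ and, crucially, $Y=B+A^2\psi$: the $A^2$ (not $A$) rescaling turns the $\psi$-equation into a genuine wave equation $\Box_{\tilde g}\psi=\ldots$ on a \emph{higher-dimensional} modified spacetime $(\tilde{\mathcal M},\tilde g)$ with effective volume form $r^6\sin^5\theta$. In this picture $\psi$ is \emph{regular} (not vanishing) on the axis, and the apparently singular $\cot^2\theta$ potential disappears into the geometry of $\tilde{\mathcal M}$ rather than being controlled by a Hardy inequality. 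For the nonlinear terms after commutation the paper develops a new algebraic formalism (the operators $\mathfrak a=\partial_\theta^2$, $\mathfrak b=\cot\theta\,\partial_\theta$, and the families $\mathfrak c^l,\mathfrak d^l$) and isolates a structural condition---that every commuted nonlinear term is a product $\mathfrak d^{i_1}f_1\cdots\mathfrak d^{i_k}f_k$ with $\sum i_j$ even---which is what actually replaces your Hardy step. A generic Hardy argument would likely not survive the repeated commutation needed to close at high order.

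\textbf{The Morawetz estimate.} You suggest perturbing the Schwarzschild scalar multiplier. The paper instead first passes to the vector-bundle linearization $\xi_a$ (so that the equation reads $\Box_g\xi_a-V_a{}^b\xi_b=0$ with a matrix potential) and builds the Morawetz current in that formalism; the bundle structure dictates the correct grouping of terms---in particular an unusual $m_\theta$-component in the current template is needed to make the $\cot^2\theta$ contributions cancel (Lemmas~\ref{0_K_theta_lem}--\ref{0_K_r_lem}). Only afterward is the estimate translated back to $(\phi,\psi)$.

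\textbf{Decay and closure.} You correctly flag that the $r^p$-hierarchy gives only borderline decay. The paper's resolution is not to squeeze more pointwise decay out of the linear theory but to introduce a \emph{weak} decay notion (time-integrated control of $E_{\delta_--1}$) and verify that this weaker, integrable-in-$t$ rate already suffices to close the nonlinear bootstrap. This is the precise mechanism you are groping for in your last sentence.

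So: the skeleton of your plan is right, but the paper's proof hinges on three specific devices---the $A^2$-rescaled $(\phi,\psi)$ variables with the auxiliary $7{+}1$-dimensional spacetime, the $\mathfrak a/\mathfrak b$ axis calculus in place of Hardy, and the weak-decay closure---none of which your proposal anticipates.
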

The precise statement of this theorem is given by Theorem \ref{main_thm}.

When introducing this model problem, Ionescu and Klainerman outlined three main difficulties that must be overcome to find a solution.
\begin{enumerate}
\item \textit{Strong linear stability} The main result of \cite{IoKl} was a set of decay estimates for the linearized wave map system. These estimates are reproven in this paper in \S\ref{xi_a_sec}-\S\ref{ee_sec} and are summarized most concisely in Theorem \ref{p_L_thm}. It was perhaps not known at the time  \cite{IoKl} was published that these decay estimates are sufficient to handle the nonlinear problem. See \S\ref{intro_null_condition_sec}.
\item \textit{Nonlinear stability} The nonlinear terms must have some kind of special structure, including the well-known null condition, which is compatible with the linear decay estimates. In addition to this condition, there is a new structural condition on the axis. See \S\ref{intro_regularity_sec} for a brief description and \S\ref{nonlinear_sec} for a detailed examination.
\item \textit{Degeneracy on the axis} There are difficulties associated with the axisymmetric reduction of the equations, which are manifest in terms that appear to be singular on the axis. To overcome these difficulties, a new formalism is presented in \S\ref{regularity_sec}, which is essential in this paper and likely will be useful in future works on problems with axisymmetry. Again, see \S\ref{intro_regularity_sec} for a brief description.
\end{enumerate}

\subsection{The $\xi_a$ system}

There are two useful ways to linearize the wave map system about the nontrivial solution $(A,B)$. One way is to introduce a vector bundle formalism (motivated by the geometric nature of the wave map system) and derive an equation for a section $\xi_a$ of this bundle, $\mathcal{B}$. This approach is useful because it suggests an appropriate grouping of terms when deriving a Morawetz estimate. We now derive this system.

\subsubsection{General theory of wave maps}
Let $\Phi$ be a map.
$$\Phi:(M,g)\rightarrow (N,h).$$
Let $\Phi_*=d\Phi$ be its pushforward.
$$\Phi_*:T_pM\rightarrow T_{\Phi(p)}N$$
The wave map equation says that
\begin{equation}\label{wave_map_equation}
div\Phi_*=0.
\end{equation}

Before proceeding, let us take a moment to outline a few conventions for index notation. We will use greek indices to represent tensor quantities on $M$ and lower-case latin indices to represent tensor quantities on $N$. We emphasize that these shall be used for \textit{coordinate invariant} quantities only. When using a particular set of coordinates, we will use primed indices instead. For example, we can represent the pushforward $d\Phi$ by
$$d\Phi=d\Phi^a_\mu,$$
but if we specify the map $\Phi$ in coordinates $\Phi^{a'}(x^{\mu'})$, then we have the coordinate dependent equation
$$d\Phi^{a'}_{\mu'}=\pd_{\mu'}\Phi^{a'}.$$
As an exception to the primed index rule, indices $i,j,k,l$ will be used to correspond to a particular orthonormal frame for the bundle $\mathcal{B}$, which will be introduced later.

We use $\nabla$ to denote the Levi-Civita connections on both $M$ and $N$. The particular connection being used will be clear from context or use of indices. The pushforward $\Phi_*$ and the Levi-Civita connection on $N$ induce a connection $D$ taking a vector $\vec{V}\in T_pM$ to a differential operator $D_{\vec{V}}$ acting on tensors on $N$ by
$$D_{\vec{V}}:=\nabla_{\Phi_* \vec{V}}.$$

When the context is clear, we will use indices to implicitly push forward contravariant tensors on $M$ or pull back covariant tensors on $N$. Thus, for example, 
$$g^{ab}=d\Phi_\alpha^a d\Phi_{\beta}^bg^{\alpha\beta}$$
and
$$h_{\alpha\beta}=d\Phi_\alpha^a d\Phi_\beta^b h_{ab}.$$
We will never use the inverse of $\Phi$ (which may not exist) to push forward covariant tensors or pull back contravariant tensors. This allows for raising and lowering indices without ambiguity. For example, 
$$h^{\alpha\beta}=g^{\alpha\gamma}g^{\beta\delta}h_{\gamma\delta}=g^{\alpha\gamma}g^{\beta\delta}d\Phi_\gamma^c d\Phi_\delta^d h_{cd}.$$
The one drawback of this convention is that indices no longer clarify whether tensors naturally belong to $M$ or $N$. (For example, is $R_{\alpha\beta\gamma\delta}$ the Riemann curvature tensor for $M$ or the pullback of the Riemann curvature tensor for $N$?) This ambiguity must be resolved explicitly when the tensor is introduced, but is not a serious issue in practice.

Using the above index notation, one can directly calculate the following coordinate-dependent equation.
$$D_{\mu'} d\Phi^{a'}_{\nu'}=\pd_{\mu'}\pd_{\nu'}\Phi^{a'}-\Gamma_{\mu'\nu'}^{\lambda'}\pd_{\lambda'}\Phi^{a'}+\pd_{\mu'}\Phi^{b'}\pd_{\nu'}\Phi^{c'}\Gamma_{b'c'}^{a'}.$$
In particluar, the wave map equation (\ref{wave_map_equation}) in coordinates takes the form
\begin{equation}\label{general_wave_eqn_in_coordinates_eqn}
\Box_g\Phi^{a'}+g^{\mu'\nu'}\pd_{\mu'}\Phi^{b'}\pd_{\nu'}\Phi^{c'}\Gamma_{b'c'}^{a'}=0.
\end{equation}

\subsubsection{Linearized wave maps and the section $\xi_a$}

To examine the linear stability of a solution $\Phi$ to a wave map, one obtains an equation for a vectorfield 
$$\vec{\psi}:M\rightarrow TN,\hspace{.5in}\vec{\psi}(p)\in T_{\Phi(p)}N.$$
The significance of this vectorfield is that if $\Phi(s)$ is a parametrized family of solutions to the wave map equation with $\Phi(0)=\Phi$, then $\vec{\psi}=\left.\frac{d}{ds}\right|_{s=0}\Phi(s)$.

The equation for $\vec{\psi}$ is
\begin{equation}\label{vec_psi_linear_stability_eqn}
\Box_g\psi^a+R^{\lambda a}{}_{\lambda b}\psi^b=0,
\end{equation}
where $R$ is the curvature tensor of the target manifold
$$R^{\lambda a}{}_{\lambda b}=g^{\gamma\delta}d\Phi_\gamma^c d\Phi_\delta^d R_c{}^a{}_{db}.$$

By comparing the model system (\ref{wm_X_eqn}-\ref{wm_Y_eqn}) to the general wave map equation (\ref{general_wave_eqn_in_coordinates_eqn}) one can read off the Christoffel symbols $\Gamma^{a'}_{b'c'}$ and determine that the target manifold $(N,h)$ for the model problem is the hyperbolic plane--with negative constant curvature. Letting $\epsilon_{ab}$ be the volume form for the hyperbolic plane, we have
$$R_{abcd}=-\epsilon_{ab}\epsilon_{cd}.$$
Contracting equation (\ref{vec_psi_linear_stability_eqn}) with $\epsilon_{ab}$, and using the facts that $D_\mu \epsilon_{ab}=0$ and $\epsilon_{ac}\epsilon^c{}_b=h_{ab}$, we obtain a new equation.
$$\Box_g(\epsilon_{ab}\psi^b)-g_a{}^b\epsilon_{bc}\psi^c=0.$$
Therefore, we introduce the new dynamic quantity
$$\xi_a:=\epsilon_{ab}\psi^b$$
and the potential
$$V^{ab}=g^{ab}=g^{\alpha\beta}d\Phi_\alpha^a d\Phi_\beta^b,$$
and the equation (\ref{vec_psi_linear_stability_eqn}) becomes
\begin{equation}\label{xi_a_linearized_eqn}
\Box_g\xi_a-V_a{}^b\xi_b=0.
\end{equation}
We refer to the vector bundle for this equation as $\mathcal{B}$.

\subsubsection{The equations for $\xi_a$ in component form}

An orthonormal frame for $(N,h)$ is given by
$$e_1=X\pd_X,\hspace{.5in} e_2=X\pd_Y.$$
We can represent the section $\xi_a$ in terms of the dual frame $\{e^1,e^2\}$.
$$\xi_a=\xi_i(e^i)_a=\xi_1(e^1)_a+\xi_2(e^2)_a.$$
The functions $\xi_1$ and $\xi_2$ are scalar quantities, which will be the object of study in \S\ref{xi_a_sec}. In particular, these quantities linearize the wave map system (\ref{wm_X_eqn}-\ref{wm_Y_eqn}) in the following way.
\begin{align*}
X &= A-A\xi_2 \\
Y &= B+A\xi_1.
\end{align*}

For future reference, the pushforward $d\Phi$ is given by
\begin{align*}
d\Phi_\alpha^a = \frac{\pd_\alpha A}{A} (e_1)^a+\frac{\pd_\alpha B}{A} (e_2)^a,
\end{align*}
and the Christoffel symbols for the dual frame can be read from the following relations
\begin{align*}
D_\alpha e^1 &= -\frac{\pd_\alpha B}{A} e^2 \\
D_\alpha e^2 &= \frac{\pd_\alpha B}{A} e^1.
\end{align*}
Equation (\ref{xi_a_linearized_eqn}) in component form is given by the following system of equations for the scalar components $\xi_1$ and $\xi_2$.
\begin{align*}
\Box_g\xi_1 &= - 2\frac{\pd^\alpha B}{A}\pd_\alpha\xi_2+\frac{\pd^\alpha A\pd_\alpha A+\pd^\alpha B\pd_\alpha B}{A^2}\xi_1 ,\\
\Box_g\xi_2 &= 2\frac{\pd^\alpha B}{A}\pd_\alpha\xi_1 +2\frac{\pd^\alpha B\pd_\alpha B}{A^2}\xi_2 + 2\frac{\pd^\alpha A\pd_\alpha B}{A^2}\xi_1.
\end{align*}
In the Schwarzschild case, where $A=r^2\sin^2\theta$ and $B=0$, these equations reduce to
\begin{align*}
\Box_g\xi_1 &= \frac{4}{r^2}\left(1-\frac{2M}r\right)\xi_1+\frac{4\cot^2\theta}{r^2}\xi_1  \\
\Box_g\xi_2 &= 0. 
\end{align*}

\subsection{The $(\phi,\psi)$ system}\label{intro_phi_psi_system_sec}

Although the $\xi_a$ linearization naturally arises from geometric principles, it is somewhat unusual in that it has a nontrivial potential that is singular on the axis. It turns out that by replacing $\xi_1=A\psi$, the equation
$$\Box_g\xi_1 = \frac{4}{r^2}\left(1-\frac{2M}r\right)\xi_1+\frac{4\cot^2\theta}{r^2}\xi_1$$
 for $\xi_1$ in the Schwarzschild case can be transformed into a simple wave equation for $\psi$ on a modified Schwarzschild spacetime.
$$\Box_{\tilde{g}}\psi = 0.$$
Letting $\phi=-\xi_2$ so that this new linearization reads
\begin{align*}
X &= A+A\phi \\
Y &= B+A^2\psi,
\end{align*}
then in the Schwarzschild case, the linear system reduces to the following simple system.
\begin{align*}
\Box_g\phi &= 0 \\
\Box_{\tilde{g}}\psi &= 0.
\end{align*}

Perhaps more important than simplifying the equations, this new linearization captures the essential behavior of the linear system on the axis. As we shall see in the main theorem, the quantity $\psi$ will be regular on the axis, which implies that the quantity $\xi_1=A\psi$ will vanish to second order on the axis.

\subsubsection{The modified Schwarzschild spacetime $(\tilde{\mathcal{M}},\tilde{g})_{a=0}$}

It was just stated that the equation for $\psi$ naturally belongs to a modified Schwarzschild spacetime. We take a moment to explan this fact further.

Let $(\mathcal{M},g)$ denote the Schwarzschild spacetime. In the usual Boyer-Lindquist coordinate system, the metric is
$$g_{\alpha\beta}dx^\alpha dx^\beta=-\left(1-\frac{2M}r\right)dt^2+\left(1-\frac{2M}r\right)^{-1}dr^2+r^2d\omega_{S^2}^2,$$
and its volume form is
$$\sqrt{-\det g}=r^2\sin\theta.$$

By making the substitution $\xi_1=A\psi$, the equation for $\psi$ becomes
$$\Box_g\psi +2\frac{\pd^\alpha A}{A}\pd_\alpha \psi = 0.$$
For the class of axisymmetric functions considered in this paper, the linear operator in this equation is a wave operator for a different ($7+1$ dimensional) spacetime $(\tilde{\mathcal{M}},\tilde{g})$, whose metric is given by\footnote{The difference between the two metrics $g$ and $\tilde{g}$ is very subtle. For $g$, the last term is $r^2d\omega^2_{S^2}$ and for $\tilde{g}$ the last term is $r^2d\omega^2_{S^6}$.}
$$\tilde{g}_{\alpha\beta}dx^\alpha dx^\beta=-\left(1-\frac{2M}r\right)dt^2+\left(1-\frac{2M}r\right)^{-1}dr^2+r^2d\omega_{S^6}^2,$$
and whose volume form is effectively
$$\sqrt{-\det\tilde{g}}=r^6\sin^5\theta.$$

Let us take a moment to derive this fact. The sphere $S^6$ can be given coordinates $\theta_1,...,\theta_5,\phi$, where $\theta_i\in [0,\pi]$ and $\phi\in [0,2\pi]$. An axisymmetric function $f:\mathcal{M}\rightarrow\R{}$ can be written as $f(t,r,\theta)$. We consider only functions $f:\tilde{\mathcal{M}}\rightarrow\R{}$ of the form $f(t,r,\theta_1)$. We therefore identify $\theta_1$ on $\tilde{\mathcal{M}}$ with $\theta$ on $\mathcal{M}$. In the calculations to follow, $\theta_1$ and $\theta$ will be used interchangeably. The metric for $S^6$ in these coordinates is
$$d\omega^2_{S^6}=d\theta_1^2+\sin^2\theta_1(d\theta_2^2+\sin^2\theta_2(...(d\theta_5^2+\sin^2\theta_5d\phi^2)...)).$$
It follows that
$$\sqrt{-\det\tilde{g}}=r^6\sin^5\theta_1\sin^4\theta_2\sin^3\theta_3\sin^2\theta_4\sin\theta_5.$$

Assume that $\psi:\tilde{\mathcal{M}}\rightarrow\R{}$ satisfies
$$\pd_{\theta_2}\psi=...=\pd_{\theta_5}\psi=\pd_\phi\psi=0.$$
Then since
$$g^{tt}=\tilde{g}^{tt}\text{, }g^{rr}=\tilde{g}^{rr}\text{, and }g^{\theta\theta}=\tilde{g}^{\theta_1\theta_1},$$
it follows that
$$g^{\alpha\beta}\pd_\beta\psi=\tilde{g}^{\alpha\beta}\pd_\beta\psi.$$
Therefore,
\begin{align*}
\Box_{\tilde{g}}\psi &= \frac{1}{\sqrt{-\det\tilde{g}}}\pd_\alpha\left(\sqrt{-\det\tilde{g}}\tilde{g}^{\alpha\beta}\pd_\beta\psi\right) \\
&= \frac{1}{\sqrt{-\det\tilde{g}}}\pd_\alpha\left(\sqrt{-\det\tilde{g}}g^{\alpha\beta}\pd_\beta\psi\right) \\
&= \frac{1}{r^6\sin^5\theta_1}\pd_\beta\left(r^6\sin^5\theta_1g^{\alpha\beta}\pd_\beta\psi\right) \\
&= \frac{1}{r^2\sin\theta}\pd_\alpha\left(r^2\sin\theta g^{\alpha\beta}\pd_\beta\psi\right)+\frac{1}{r^4\sin^4\theta}\pd^\alpha(r^4\sin^4\theta)\pd_\alpha\psi \\
&= \frac{1}{\sqrt{-det g}}\pd_\alpha\left(\sqrt{-\det g}g^{\alpha\beta}\pd_\beta\psi\right)+\frac{2}{r^2\sin^2\theta}\pd^\alpha(r^2\sin^2\theta)\pd_\alpha\psi \\
&= \Box_g\psi +2\frac{\pd^\alpha A}{A}\pd_\alpha\psi.
\end{align*}

\subsubsection{The modified Kerr spacetime  $(\tilde{\mathcal{M}},\tilde{g})$}

The same reasoning also applies to the Kerr case, but the operator
$$\Box_g+\frac{\pd^\alpha A}{A}\pd_\alpha$$
has fewer commutators, since the function
$$A=\frac{(r^2+a^2)^2-a^2\sin^2\theta(r^2-2Mr+a^2)}{r^2+a^2\cos^2\theta}\sin^2\theta$$
depends nontrivially on $r$ and $\theta$. For this reason, we write
\begin{align*}
A &= A_1A_2 \\
A_1 &= (r^2+a^2)\sin^2\theta \\
A_2 &= \left(1+\frac{a^2\sin^2\theta}{q^2}\right)\left(1-a^2\sin^2\theta v\right) \\
v &= \frac{r^2-2Mr+a^2}{(r^2+a^2)^2},
\end{align*}
and we generalize the spacetime $\tilde{g}$ by replacing $A$ with $A_1$. (Note that $A_1$ reduces to $A$ in the Schwarzschild case--generally speaking, $A_1$ behaves very similarly to $A$ in the Schwarzschild case, while terms depending on $A_2$ are treated as error terms in very much the same way that terms depending on $B$ are treated.)

That is, we extend the calculation for Schwarzschild by defining
$$\Box_{\tilde{g}} := \Box_g\psi +\frac{\pd^\alpha A_1}{A_1}\pd_\alpha$$
and
$$\int_{\tilde{\Sigma}_t}f := \int_{\Sigma_t}f A_1^2 =\int_{r_H}^\infty\int_0^\pi f A_1^2 q^2\sin\theta d\theta dr.$$

\subsubsection{The equations for $(\phi,\psi)$}

The linearized wave map system is given by the equations
\begin{align*}
\Box_g\phi &= \mathcal{L}_\phi \\
\Box_{\tilde{g}}\psi &= \mathcal{L}_\psi,
\end{align*}
where
$$\mathcal{L}_\phi=-2\frac{\pd^\alpha B}{A}A\pd_\alpha \psi + 2\frac{\pd^\alpha B\pd_\alpha B}{A^2}\phi-4\frac{\pd^\alpha A\pd_\alpha B}{A^2} A\psi,$$
$$\mathcal{L}_\psi=-2\frac{\pd^\alpha A_2}{A_2}\pd_\alpha\psi+2\frac{\pd^\alpha B\pd_\alpha B}{A^2}\psi + 2A^{-1}\frac{\pd^\alpha B}{A}\pd_\alpha\phi,$$
and again, the modified wave operator $\Box_{\tilde{g}}$ appearing in the equation for $\psi$ is defined by
$$\Box_{\tilde{g}}:=\Box_g+2\frac{\pd^\alpha A_1}{A_1}\pd_\alpha.$$

The fully nonlinear system is given by
\begin{align}
\Box_g\phi &= \mathcal{L}_\phi+\mathcal{N}_\phi, \label{phi_nonlinear_system_eqn}\\
\Box_{\tilde{g}}\psi &= \mathcal{L}_\psi+\mathcal{N}_\psi, \label{psi_nonlinear_system_eqn}
\end{align}
where the nonlinear terms are
\begin{align*}
(1+\phi)\mathcal{N}_\phi &= \pd^\alpha \phi \pd_\alpha \phi - A\pd^\alpha \psi A\pd_\alpha \psi +2\frac{\pd^\alpha B}{A} \phi A\pd_\alpha \psi -4\frac{\pd^\alpha A}{A}A\psi A\pd_\alpha \psi \\
&\hspace{.5in}-\frac{\pd^\alpha B\pd_\alpha B}{A^2}\phi^2+4\frac{\pd^\alpha A\pd_\alpha B}{A^2}\phi A\psi -4\frac{\pd^\alpha A\pd_\alpha A}{A^2}(A\psi)^2,
\end{align*}
$$(1+\phi)\mathcal{N}_\psi = 2\pd^\alpha \phi \pd_\alpha\psi  +4\frac{\pd^\alpha A}{A}\psi\pd_\alpha \phi -2\frac{\pd^\alpha B}{A}A^{-1}\phi\pd_\alpha\phi.$$
This system will be further studied starting in \S\ref{phi_psi_sec} and throughout the remainder of the paper.

\subsubsection{Conventions for spacetime norms}

Since there are effectively two spacetimes, we briefly lay out a few conventions for the remainder of the paper. \\
\bp All functions will be assumed to depend only on $t$, $r$, and $\theta$. Therefore, any quantity can be treated as a function defined on either spacetime. \\
\bp When necessary, the tilde mark ( $\tilde{ }$ ) will be used to denote quantities corresponding to $(\tilde{\mathcal{M}},\tilde{g})$. This includes the effective volume form 
$$\tilde{\mu}=A_1^2q^2\sin\theta=q^2(r^2+a^2)^2\sin^5\theta,$$
and the constant-time hypersurface 
$$\tilde{\Sigma}_t=\{t\}\times [r_H,\infty)\times S^6.$$
\bp Integrated expressions will depend on a volume form that is implicitly defined by the manifold of integration. That is,
$$\int_{\Sigma_t}f:=\int_{r_H}^\infty\int_0^\pi\int_0^{2\pi}f(t,r,\theta)q^2\sin\theta d\phi d\theta dr.$$
and
\begin{multline*}
\int_{\tilde{\Sigma}_t}f:=\int_{r_H}^\infty\int_0^\pi\int_0^\pi\int_0^\pi\int_0^\pi\int_0^\pi\int_0^{2\pi}f(t,r,\theta)q^2(r^2+a^2)^2\sin^5\theta\sin^4\theta_2\sin^3\theta_3\sin^2\theta_4\sin\theta_5 \\
d\phi d\theta_5 d\theta_4 d\theta_3 d\theta_2 d\theta dr.
\end{multline*}
Since all relevant integral estimates in this paper are valid up to a constant, one may equivalently define
$$\int_{\Sigma_t}f:=\int_{r_H}^\infty\int_0^\pi f(t,r,\theta)q^2\sin\theta d\theta dr$$
and
$$\int_{\tilde{\Sigma}_t}f:=\int_{r_H}^\infty\int_0^\pi f(t,r,\theta)q^2(r^2+a^2)^2\sin^5\theta d\theta dr.$$
The point is that the only difference occurs in the factors that show up in the volume form. \\
\bp We also observe that $L^\infty$ estimates are weaker in the higher-dimensional spacetime. That is,
$$||f||_{L^\infty(S^2)}\lesssim \sum_{i\le 2}||\sla\nabla^if||_{L^2(S^2)},$$
while
$$||f||_{L^\infty(S^6)}\lesssim \sum_{i\le 4}||\tilde{\sla\nabla}^if||_{L^2(S^6)}.$$

\subsection{Regularity on the axis and a new structural condition}\label{intro_regularity_sec}

As previously discussed, the axis of symmetry presents additional challenges, which are manifest in the presence of apparently singular terms, ie. terms which have factors of $\sec\theta$ or $\csc\theta$. It is not a priori clear whether these challenges are related to the true dynamics of the problem or whether they are due to the fact that the coordinate system is degenerate on the axis. As we shall see, in the $(\phi,\psi)$ picture these terms are merely due to the degeneracy of the coordinate system. To handle these terms, we use a new formalism which is developed in detail in \S\ref{regularity_sec}, but a brief summary is given here.

Consider the two functions $\cos\theta$ and $\sin\theta$. While these are both smooth functions of $\theta$, if they are treated as functions on the spacetime, one of them is actually much less regular than the other. The problem is that the function $\sin\theta$ behaves like $|\theta|$ in a neighborhood of the half axis $\theta=0$, which means that $\sin\theta$ is not twice differentiable. In contrast, the function $\cos\theta$ is everywhere smooth. Informally, we will say that $\cos\theta$ belongs to a space of functions that are regular on the axis, but $\sin\theta$ does not.

The fact that $\sin\theta$ is not regular on the axis is not clear when applying two coordinate derivatives, because $\pd_\theta^2\sin\theta=-\sin\theta$ appears to be bounded. In order to measure the singular nature of $\sin\theta$ on the axis, it is necessary to use a second operator $\cot\theta\pd_\theta$, which by no coincidence appears in the spherical laplacian. Since $\cot\theta\pd_\theta\sin\theta=\cos^2\theta\csc\theta$, it is now clear that something goes wrong on the axis. Since the two operators $\pd_\theta^2$ and $\cot\theta\pd_\theta$ will often be used, they are given names.
\begin{align*}
\fa &:= \pd_\theta^2 \\
\fb &:= \cot\theta\pd_\theta.
\end{align*}

The operator $\fb$ itself seems to be in some sense singular on the axis, because it has a factor of $\csc\theta$. However, for any twice-differentiable axisymmetric function $f$, the first derivative $\pd_\theta f$ should vanish at least to first order on the axis. So for twice-differentiable axisymmetric functions, there is a cancellation effect with the factor $\csc\theta$. Essentially, \textit{the operators $\fa$ and $\fb$ preserve the space of regular functions on the axis.}

In the nonlinear problem, we will commute the fully nonlinear equations (\ref{phi_nonlinear_system_eqn}-\ref{psi_nonlinear_system_eqn}) with the following angular operators.
\begin{align*}
Q &= \fa+\fb+a^2\sin^2\theta\pd_t^2 \\
\tilde{Q} &= \fa+5\fb+a^2\sin^2\theta\pd_t^2.
\end{align*}
Let us ignore for a moment the $a^2\sin^2\theta\pd_t^2$ part of these operators. We will need to estimate the terms belonging to
$$(\fa+\fb)^l\mathcal{N}_\phi\hspace{.5in}\text{and}\hspace{.5in}(\fa+5\fb)^l\mathcal{N}_\psi.$$
We introduce the operator family $\fc^l$ to represent any term in the expansion of $(\fa+\fb)^l$. So for example,
$$\fc^2=\{\fa^2,\fa\fb,\fb\fa,\fb^2\}.$$
We will estimate $\fc^l(\mathcal{N}_\phi)$ and $\fc^l(\mathcal{N}_\psi)$, but must do so carefully to ensure that we stay in the space of regular functions on the axis.

There are a few ways things could go wrong if we are not careful. First, if at any point we expand an operator such as $\fa\fb$ or $\fb^2$, then we get terms that are truly singular on the axis. As an example, consider the following calculation.
$$\fb^2f =\cot\theta\pd_\theta(\cot\theta\pd_\theta f) =\cot^2\theta\pd_\theta^2f-\cot\theta\csc^2\theta\pd_\theta f.$$
If $f$ is regular on the axis, then $\pd_\theta f$ will vanish at least to first order on the axis, but that is not enough to ensure that each of the two terms on the right side remain regular--in general they do not. For this reason, it is necessary to treat the operator $\fb$ as an atomic operator.

The second thing that can go wrong happens when applying $\fa$ or $\fb$ to products of functions. As an example, consider the following calculation.
$$\fa(fg)=\pd_\theta^2(fg)=\fa f g+2\pd_\theta f \pd_\theta g+f\fa g.$$
If $f$ and $g$ are regular on the axis, then so are $\fa f$ and $\fa g$, but the factors $\pd_\theta f$ and $\pd_\theta g$ are not--for the same reason that $\cos\theta$ is regular on the axis, but $-\sin\theta=\pd_\theta\cos\theta$ is not. Here, it is important to observe that \textit{the product $\pd_\theta f\pd_\theta g$ is indeed regular on the axis}, because it is a product of two functions behaving like $|\theta|$, which will behave like $\theta^2$. Without being careful, it is possible to expand $\fb^2\fa(fg)$ to get a term of the form $\fb^2(\pd_\theta f)\pd_\theta g$. This term would not be regular on the axis. The correct way to handle such an expansion is illustrated by the following intermediate calculation.
$$\fb(\pd_\theta f\pd_\theta g)=\cot\theta \pd_\theta^2f\pd_\theta g+\cot\theta\pd_\theta f\pd_\theta^2 g = \fa f \fb g+\fb f\fa g.$$
Then one can apply any of the additional $\fc$ operators.

To ensure that products are regular on the axis (especially after applying $\fa$ or $\fb$), the most general form of products that can be permitted in the nonlinear term must be something like 
$$\pd_\theta \fc^{l_1}f_1\pd_\theta \fc^{l_2}f_2...\pd_\theta \fc^{l_{2k}}f_{2k}\fc^{l_{2k+1}}f_{2k+1}...\fc^{l_{2k+k'}}f_{2k+k'}.$$
That is, \textit{it is essential that an even number of single $\pd_\theta$ derivatives show up and that each $\pd_\theta$ be applied only after the $\fc$ operators are applied.} To write this more compactly, we define yet another family of operators.
$$
\fd^l := \left\{\begin{array}{cc}
\fc^{l/2} & l\in 2\mathbb{Z} \\
\pd_\theta \fc^{(l-1)/2} & l\not\in 2\mathbb{Z}.
\end{array}\right.
$$
If $f_1,...,f_k$ are each regular functions, then regular product terms will be of the form
$$\fd^{i_1}f_1...\fd^{i_k}f_k$$
where $i_1+...+i_k=2n$. We call these products \textit{terms of degree $n$}. An important fact is that if $\fa$ or $\fb$ is applied to a term of degree $n$, the result can be expressed as a sum of terms of degree $n+1$. \textbf{A new important structural condition for the nonlinear terms arising in the wave map problem is that they be of this form.}

\subsection{The null condition and decay of $p$-weighted energy norms}\label{intro_null_condition_sec}

As previously mentioned, the challenge of establishing sufficient decay for the nonlinear problem is not overcome by proving more decay than was proved in \cite{IoKl}, but instead to observe that the decay estimates in \cite{IoKl} are actually sufficient. What was perhaps overlooked is a weaker form of decay implied by these estimates that is still strong enough to estimate the nonlinear error terms. We outline the argument here, noting that \textbf{it is rather general and can be applied to a wide range of semilinear wave problems.}

\subsubsection{The $p$-weighted energy estimates}

The starting point for the decay argument is a family of spacetime estimates roughly of the form
$$E_p(t_2)+\int_{t_1}^{t_2}B_p(t)dt \lesssim E_p(t_1)+\int_{t_1}^{t_2}N_p(t)dt,$$
for values of $p$ ranging almost from $0$ to $2$. (See Theorem \ref{p_thm}.) These estimates and their higher order analogues are developed in \S\ref{ee_sec}.

For a typical wave problem, the weighted energy $E_p(t)$ is given by
$$E_p(t)=\int_{\Sigma_t}r^p\left[(L\phi)^2+|\sla\nabla\phi|^2+r^{-2}\phi^2+r^{-2}(\lbar\phi)^2\right].$$
The $r^{-2}$ weight for the $(\lbar\phi)^2$ term is necessary to ensure decay, because the term $(\lbar\phi)^2$ otherwise tends to dominate in late times and for large values of $r$. 

For the particular problem studied in this paper, the weighted energy is actually given by
\begin{multline*}
E_p(t)= \int_{\Sigma_t}r^p\left[(L\phi)^2+|\sla\nabla\phi|^2+r^{-2}\phi^2+r^{-2}(\lbar\phi)^2\right] \\
+\int_{\tilde{\Sigma}_t}r^p\left[(L\psi)^2+|\tsla\nabla\psi|^2+r^{-2}\psi^2+r^{-2}(\lbar\psi)^2\right].
\end{multline*}
Note that the second integral is over $\tilde{\Sigma}_t$, implying that there is an additional weight of $r^4\sin^4\theta$. This will generally be the case for integrals of quantities depending only on $\psi$, and has to do with the fact that the equation for $\psi$ is a wave equation naturally belonging to the modified spacetime $(\tilde{\mathcal{M}},\tilde{g})$.

The bulk quantity $B_p(t)$ is similar in weight to the weighted energy $E_{p-1}(t)$, except that it also has a degeneracy on the photon sphere.
\begin{multline*}
B_p(t)= \int_{\Sigma_t}r^{p-1}\left[\chi_{trap}(L\phi)^2+\chi_{trap}|\sla\nabla\phi|^2+r^{-2}\phi^2+r^{-2}(\pd_r\phi)^2\right] \\
+\int_{\tilde{\Sigma}_t}r^{p-1}\left[\chi_{trap}(L\psi)^2+\chi_{trap}|\tsla\nabla\psi|^2+r^{-2}\psi^2+r^{-2}(\pd_r\psi)^2\right].
\end{multline*}
The function $\chi_{trap}$ vanishes to second order at the trapping radius $r_{trap}$, which defines the photon sphere. $r_{trap}$ is the radius that maximizes the geodesic potential 
$$v=\frac{r^2+2Mr+a^2}{(r^2+a^2)^2}$$
and corresponds to the radius at which null geodesics with zero angular momentum orbit the black hole at a constant radius. It coincides with $3M$ in the Schwarzschild case.

The nonlinear quantity $N_p(t)$ is an error term that we will generally ignore in the summary that follows.

\subsubsection{Time decay}

Time decay is derived from the fact that the $B_p(t)$ norm has the same weight as the $E_{p-1}(t)$ norm. So, \textbf{if we ignore the issue of trapping}, and if we also ignore the nonlinear part, then
$$\int_{t_1}^{t_2}E_{p-1}(t)dt\lesssim E_p(t_1).$$
This roughly suggests that
$$E_{p-1}(t)\sim T^{-1}E_p(t),$$
where $T=1+t$ so that $T^{-1}(t=0)$ remains bounded. Since $p$ almost ranges from $0$ to $2$, we roughly expect that $E_2(t)$ is bounded, $E_1(t)$ behaves like $T^{-1}$, and $E_0(t)$ behaves like $T^{-2}$. In reality, we have the estimates in the range $p\in [\delm,2-\delp]$ for arbitrarily small positive constants $\delm$ and $\delp$, so the best outright decay we can conclude is 
$$E_{\delm}(t)\lesssim T^{\delm-2+\delp}.$$

\subsubsection{Weak time decay}

Although the best energy decay we can conclude is
$$E_{\delm}(t)\lesssim T^{\delm-2+\delp},$$
it turns out that there is a weaker notion of decay that allows us to estimate $E_{\delm-1}(t)$ using the $p=\delm$ estimate. That is,
$$\int_{t_1}^{t_2} E_{\delm-1}(t)dt \lesssim E_{\delm}(t_1)\lesssim T^{\delm-2+\delp}.$$
This would suggest that $E_{\delm-1}(t)$ behaves like $T^{\delm-3+\delp}$, but unfortunately, it is only possible to prove the integrated version of this estimate. So $E_{\delm-1}(t)$ decays like $T^{\delm-3+\delm}$ in the following sense.
\begin{definition} (weak decay) A function $f$ decays like $T^{-p}$ weakly if
$$\int_t^\infty f(\tau)d\tau \lesssim T^{-p+1}.$$
\end{definition}

\subsubsection{The pointwise estimates compatible with $p$-weighted energy estimates}

There are pointwise estimates that pair with the energy estimates. They are proved in \S\ref{pointwise_sec}, but a brief summary is given here.

 Let $\phi^s$ and $\psi^s$ denote quantities obtained by applying up to $s$ commutators to either $\phi$ or $\psi$, and let $E_p^s(t)$ be the generalized $s$-order energy corresponding to $\phi^s$ and $\psi^s$. One particular estimate roughly states
$$|rL\phi^s|+|r\sla\nabla\phi^s|+|\phi^s|+|\lbar\phi^s|+|r^3L\phi^s|+|r^3\sla\nabla\phi^s|+|r^2\phi^s|+|r^2\lbar\phi^s| \lesssim (E_0^{s+5}(t))^{1/2}.$$
Note that the $L$ and $\sla\nabla$ derivatives come with an additional $r$ weight compared to the $\lbar$ derivative. This is particularly relevant for the discussion on the null condition, which will follow momentarily. Note also that the $\psi$ quantities have two additional factors of $r$ compared to the $\phi$ quantities. This is due to the additional $r$ weights in the integral over $\tilde{\Sigma}_t$ when compared to $\Sigma_t$.

A more general estimate is roughly the following.
\begin{multline*}|r^{p+1}L\phi^s|+|r^{p+1}\sla\nabla\phi^s|+|r^p\phi^s|+|r^p\lbar\phi^s|+|r^{p+3}L\phi^s|+|r^{p+3}\sla\nabla\phi^s|+|r^{p+2}\phi^s|+|r^{p+2}\lbar\phi^s| \\
\lesssim (E_{2p}(t))^{1/2}\lesssim T^{(p-2+\delp+\delm)/2}.
\end{multline*}
The fact that multiplication by $r^p$ corresponds to multiplication by $T^p$ is consistent with the principle that the estimated quantities are mostly supported in the wave zone where $r\approx t$.

\subsubsection{The null condition}

The null condition is a well-known condition for the nonlinear part of a wave equation. Roughly speaking, it requires that the nonlinear part exclude terms of the form
$$\lbar\phi\lbar \phi.$$
It is built into the structure of the nonlinear term specified in \S\ref{nl_structure_sec}.

In the proof of the nonlinear problem, one of the factors must be estimated according to the $L^\infty$ estimates and translated into decay in time. The resulting time decay must be sufficiently strong so that it is integrable in time.

The null condition guarantees that the appropriate $r^p$ weight accompanies each factor. But there is still a problem. From the prevous pointwise estimate, we see that the decay is $T^{(p-2+\delp+\delm)/2}$ and we have the requirement that $p\in [\delm,2-\delp]$. So the best decay we can obtain is $T^{(-2+\delp)/2}=T^{-1+\delp/2}$, which is not integrable in time. However, if we use the weak notion of decay, this allows us to use $p=\delm-1$ and obtain decay like $T^{(-3+\delp)/2}=T^{-3/2+\delp/2}$, which is indeed integrable in time (if $\delp$ is sufficiently small). Fortunately, this weak decay is sufficient for the proof of the main theorem.

\subsection{Outline}

A brief outline of this paper is as follows.

In \S\ref{xi_a_sec}, estimates are established for the $\xi_a$ system. In particular, an energy estimate (Proposition \ref{xi_energy_estimate_prop}) the more general $h\pd_t$ estimate (Proposition \ref{xi_h_dt_prop}) and a Morawetz estimate (Proposition \ref{xi_morawetz_estimate_prop}) are proved for use in the following section.

 In \S\ref{phi_psi_sec}, the estimates for the $\xi_a$ system are translated into estimates for the $(\phi,\psi)$ system and additional $p$-type estimates near $i^0$ are established. More specifically, Proposition \ref{xi_energy_estimate_prop} (the energy estimate) is translated into Proposition \ref{translated_energy_estimate_prop}, Proposition \ref{xi_h_dt_prop} (the $h\pd_t$ estimate) is translated into Proposition \ref{translated_h_dt_prop}, and Proposition \ref{xi_morawetz_estimate_prop} (the Morawetz estimate) is translated into Proposition \ref{translated_morawetz_prop}. Then the incomplete $p$-estimates near $i^0$ (Proposition \ref{incomplete_p_estimates_prop}) are proved. Finally, Propositions \ref{translated_h_dt_prop}, \ref{translated_morawetz_prop}, and \ref{incomplete_p_estimates_prop} are combined to prove the $p$-weighted energy estimates (Proposition \ref{p_estimates_prop}).

In the beginning of \S\ref{ee_sec}, the energy estimate (Proposition \ref{translated_energy_estimate_prop}) and the $p$-weighted energy estimates (Proposition \ref{p_estimates_prop}) are combined into one single theorem (Theorem \ref{p_L_thm}). This theorem is then generalied to higher derivatives of $\phi$ and $\psi$ after applying different types of commutators. The most general and final theorem of \S\ref{ee_sec} is Theorem \ref{p_s_k_thm}.

In \S\ref{pointwise_sec}, the pointwise estimates that are compatible with the weighted energy estimates are proved. They are summarized in Proposition \ref{infinity_prop}.

In \S\ref{nonlinear_sec}, the nonlinear terms $\mathcal{N}_\phi$ and $\mathcal{N}_\psi$ are examined in detail and their structure is defined for the eventual proof of the main theorem.

Finally, in \S\ref{main_thm_sec}, the main theorem is stated and proved.


\section{Estimates for the $\xi_a$ System}\label{xi_a_sec}

As discussed in the introduction, the geometric nature of the wave map problem suggests a vector bundle formalism with which one can describe a linearization of the wave map system (\ref{wm_X_eqn}-\ref{wm_Y_eqn}). In this section, the vector bundle formalism is used to derive the most delicate spacetime estimates.

The derivation of the Morawetz estimate (Proposition \ref{xi_morawetz_estimate_prop}) is rather involved. The first step is to prove a \textit{partial Morawetz estimate}, which is just an estimate for the divergence of a current $J$ that will eventually need to be slightly modified to prove the actual Morawetz estimate. Even the partial Morawetz estimate is quite difficult to prove. It is first proved in the Schwarzschild case (see \S\ref{partial_morawetz_szd_sec}) and then for the slowly rotating Kerr case (see \S\ref{partial_morawetz_kerr_sec}).

Once the partial Morawetz estimate is proved, focus shifts to proving spacetime estimates. First, a family of estimates, called the $h\pd_t$ estimates (so named, because they use a current depending on a vectorfield of the form $h(r)\pd_t$) are proved. A special case of these estimates (where $h=1$) is the classic energy estimate (Proposition \ref{xi_energy_estimate_prop}). The more general estimate is Proposition \ref{xi_h_dt_prop}. These are both proved in \S\ref{xi_h_dt_sec}.

Finally, in \S\ref{xi_morawetz_sec}, the Morawetz estimate is proved, drawing on the partial Morawetz estimate derived earlier.

\subsection{Results from the scalar wave equation}

We begin by reviewing a few facts that are known from analysis of the simpler scalar wave equation problem. These are mainly for reference.

\begin{lemma}\label{Kmunu_lem}
Let $X$ be a vectorfield and $w$ a scalar function. Define
$$K^{\mu\nu}=2\nabla^{(\mu}X^{\nu)}+(w-divX)g^{\mu\nu}.$$
For all $\epsilon_{temper}>0$ sufficiently small, there exists a function $u(r)$ so that the following hold true.

If $X=uv\pd_r$ and $w=v\pd_r u$, where
$$v=\frac{\Delta}{(r^2+a^2)^2}$$
is the geodesic potential, then for any axisymmetric function $\psi$,
$$K^{\mu\nu}\pd_\mu\psi\pd_\nu\psi = 2\left(u'-\frac{2ru}{r^2+a^2}\right)\frac{\Delta^2}{q^2(r^2+a^2)^2}(\pd_r\psi)^2-\frac{u\pd_r v}{q^2}Q^{\mu\nu}\pd_\mu\psi\pd_\nu\psi.$$
Furthermore, we define
\begin{align*}
K^{tt}&=0 \\
K^{rr}&=2\left(u'-\frac{2ru}{r^2+a^2}\right)\frac{\Delta^2}{q^2(r^2+a^2)^2} \\
K_{Q}^{\mu\nu} &= -\frac{u\pd_r v}{q^2}Q^{\mu\nu}.
\end{align*}
Then
\begin{align*}
K^{rr} &\sim \frac{M^2}{r^3}\left(1-\frac{r_H}r\right)^2 \\
K_{Q}^{\theta\theta} &\sim \frac1{r^3}\left(1-\frac{r_{trap}}r\right) \\
K_{Q}^{tt} &\sim \frac{a^2\sin^2\theta}{r^3}\left(1-\frac{r_{trap}}r\right).
\end{align*}
Also,
$$\frac{M}{r^4}1_{r\ge r_*}-q^{-2}V_{\epsilon_{temper}} \le -\frac12\Box_gw,$$
where $V_{\epsilon_{temper}}$ is a positive function supported near the event horizon and satisfying 
$$||V_{\epsilon_{temper}}||_{L^1(r)} < \epsilon_{temper}.$$

Let $r_{trap}$ be the radius where $\pd_rv$ changes sign from positive to negative, and let $r_*$ be the radius where $\pd_r(2rv)$ changes sign from positive to negative. (In Schwarzschild, $r_{trap}=3M$ and $r_*=4M$.) Then $u$ changes sign from negative to positive at $r_{trap}$ and $\pd_ru=2r$ for $r>r_*$.
\end{lemma}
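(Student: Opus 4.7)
The plan is to carry out a standard Morawetz-style computation adapted to slowly rotating Kerr. With the ansatz $X=uv\pd_r$ and $w=v\pd_r u$, one expands $K^{\mu\nu}\pd_\mu\psi\pd_\nu\psi$ term by term. For axisymmetric $\psi$ the $\pd_\phi$ contributions vanish. The defining relation $w=v\pd_r u$ is chosen precisely so that the Lagrangian piece $(w-\text{div}\,X)g^{\mu\nu}\pd_\mu\psi\pd_\nu\psi$ combines with the symmetrised gradient $2\nabla^{(\mu}X^{\nu)}\pd_\mu\psi\pd_\nu\psi$ to produce the clean splitting stated: a pure $(\pd_r\psi)^2$ contribution with coefficient $2(u'-2ru/(r^2+a^2))\Delta^2/(q^2(r^2+a^2)^2)$, plus a Morawetz bulk contribution proportional to $-u\pd_r v\cdot q^{-2}Q^{\mu\nu}\pd_\mu\psi\pd_\nu\psi$, where $Q^{\mu\nu}$ is the dual of the geodesic Lagrangian. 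This is a direct if tedious calculation and does not use the Kerr specifics beyond the form of the inverse metric.

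The second step is to construct $u(r)$ so that the three sign conditions hold simultaneously. The prescription $\pd_r u=2r$ for $r>r_*$ combined with the requirement $u(r_{trap})=0$ pins down $u$ on the far region as $u(r)=r^2+C$, and there the coefficient $u'-2ru/(r^2+a^2)$ reduces to $2ra^2/(r^2+a^2)^2$, which is nonnegative and conspires with the $\Delta^2$ factor to give the claimed $K^{rr}\sim M^2 r^{-3}(1-r_H/r)^2$ behavior at large $r$. On the intermediate region $(r_H,r_*)$ one chooses $u$ negative for $r<r_{trap}$, vanishing to first order at $r_{trap}$, and positive between $r_{trap}$ and $r_*$, with $u'$ large enough near the horizon that $K^{rr}\sim M^2 r^{-3}(1-r_H/r)^2$ holds there as well. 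The asymptotic comparisons $K_Q^{\theta\theta}\sim r^{-3}(1-r_{trap}/r)$ and $K_Q^{tt}\sim a^2\sin^2\theta\,r^{-3}(1-r_{trap}/r)$ then follow because $\pd_r v$ vanishes to first order at $r_{trap}$ with the opposite sign to $u$, and because $Q^{\theta\theta}$ and $Q^{tt}$ carry the stated $\theta$- and $r$-dependence.

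The main obstacle is the pointwise bound $Mr^{-4}1_{r\ge r_*}-q^{-2}V_{\epsilon_{temper}}\le -\tfrac12\Box_g w$. Using $\pd_r u=2r$ on $r\ge r_*$ gives $w=2rv$ in that region, and a direct calculation of $\Box_g(rv)=q^{-2}\pd_r(\Delta\pd_r(rv))$ shows that $-\tfrac12\Box_g(2rv)\ge Mr^{-4}$ there, because the derivatives of $v$ have definite sign beyond $r_*$ by the very definition of $r_*$. The difficulty lies near the horizon, where $\Box_g w$ can take the wrong sign. The flexibility producing the parameter $\epsilon_{temper}$ is obtained by confining the transition of $\pd_r u$ from its interior profile to $2r$ to an increasingly thin neighborhood of $r_H$: the resulting bad part of $-\tfrac12\Box_g w$ is supported in an $L^1(r)$-arbitrarily-small set and can be absorbed into $q^{-2}V_{\epsilon_{temper}}$. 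Verifying that this transition can be carried out while preserving the $K^{rr}$ and $K_Q$ sign conditions, and quantifying how small $\epsilon_{temper}$ can be made, is the delicate core of the argument and is what drives the entire construction.
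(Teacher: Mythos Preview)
The paper does not actually prove this lemma; it is stated in the subsection ``Results from the scalar wave equation'' as background material taken from the literature on the scalar wave equation on Kerr, so there is no paper proof to compare against. Your outline follows the standard Morawetz construction, which is indeed what underlies the lemma, so the broad strategy is correct.

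That said, two concrete points in your plan are off. First, in the far region where $\pd_r u=2r$ you have $u(r)=r^2-c_1^2$ for some constant $c_1^2>0$ determined by matching at $r_*$ (not by the condition $u(r_{trap})=0$, which lies in a different piece of the construction), and then
\[
u'-\frac{2ru}{r^2+a^2}=\frac{2r(a^2+c_1^2)}{r^2+a^2},
\]
which is $O(r^{-1})$, not your $2ra^2/(r^2+a^2)^2$. The asymptotic $K^{rr}\sim M^2 r^{-3}$ still comes out right once you multiply by $\Delta^2/(q^2(r^2+a^2)^2)\sim r^{-2}$ and use $c_1^2\sim M^2$, but your intermediate formula is incorrect and would give the wrong scaling if taken at face value.

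Second, your description of the $\epsilon_{temper}$ mechanism is geographically confused. The transition of $\pd_r u$ to $2r$ occurs at $r_*$ (roughly $4M$), far from the horizon, and has nothing to do with $\epsilon_{temper}$. The tempering is a separate modification of $u$ (equivalently of $w=vu'$) in a neighborhood of $r_H$: one adjusts the profile of $u'$ near the horizon so that the negative part of $-\tfrac12\Box_g w$ there can be made arbitrarily small in $L^1(r)$, at the cost of slightly worsening (but not destroying) the positivity of $K^{rr}$. Your sentence conflates these two unrelated transitions, and as written the mechanism you describe would not produce the claimed control.
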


The spacetime estimates will derive from the following simple application of the divergence theorem.
\begin{proposition}\label{general_divergence_estimate_prop}
Let $J$ be a current vanishing at an appropriate rate as $r\rightarrow\infty$. Then
$$\int_{H_{t_1}^{t_2}}J^r+\int_{\Sigma_{t_2}}-J^t+\int_{t_1}^{t_2}\int_{\Sigma_t}div J=\int_{\Sigma_{t_1}}-J^t.$$
\end{proposition}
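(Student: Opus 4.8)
The statement is a direct application of the divergence theorem (Stokes' theorem) to the vector field $J$ on the spacetime region bounded by $\Sigma_{t_1}$, $\Sigma_{t_2}$, the horizon portion $H_{t_1}^{t_2}$, and (formally) the sphere at spatial infinity. The plan is to integrate $\mathrm{div}\, J = \frac{1}{\sqrt{-\det g}}\pd_\mu(\sqrt{-\det g}\, J^\mu)$ over the region $\mathcal{R} = \{(t,r,\theta): t_1 \le t \le t_2,\ r \ge r_H\}$ (with the suppressed angular variables), compute the boundary terms contributed by each piece of $\pd\mathcal{R}$, and collect them with the correct signs dictated by the outward-pointing conormal.

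First I would set up coordinates and the volume form, writing $\int_{\mathcal R}\mathrm{div}\,J = \int_{t_1}^{t_2}\!\!\int_{r_H}^\infty\!\! \pd_\mu(\sqrt{-\det g}\,J^\mu)\,\frac{dr\,dt}{\ldots}$ and splitting the divergence into its $\pd_t$, $\pd_r$, and angular parts. The $\pd_t$ part integrates in $t$ to give $\int_{\Sigma_{t_2}} J^t - \int_{\Sigma_{t_1}} J^t$; bringing the $\Sigma_{t_2}$ term to the left accounts for the $-J^t$ and $+(-J^t)$ on opposite sides of the equation (the sign flip being exactly the statement that $\Sigma_{t_2}$ is a ``future'' boundary with inward-pointing $\pd_t$). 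The $\pd_r$ part integrates in $r$ from $r_H$ to $\infty$: the upper endpoint vanishes by the decay hypothesis on $J$ as $r\to\infty$, and the lower endpoint at $r = r_H$ produces the horizon flux $\int_{H_{t_1}^{t_2}} J^r$ (with the sign corresponding to the horizon being a past/ingoing boundary of $\mathcal R$). The angular part integrates to zero since the angular variables have no boundary (the sphere is closed), after checking that $\sqrt{-\det g}\,J^\theta$ vanishes appropriately at $\theta = 0,\pi$ — which holds for axisymmetric currents built from regular fields.

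The only genuinely delicate point — and the one I would be most careful about — is the behavior at the endpoints $r = r_H$ and $r \to \infty$, together with getting all the signs right. At $r\to\infty$ one needs ``$J$ vanishing at an appropriate rate'' precisely so that $\sqrt{-\det g}\, J^r \to 0$; this is the clause built into the hypothesis and it is why the statement is phrased the way it is rather than carrying an extra boundary integral at $i^0$. At the horizon one must confirm that $r_H$ is a regular finite endpoint in these coordinates for the quantities involved (or, more honestly, pass to coordinates regular across $\mathcal H^+$ and interpret $\int_{H_{t_1}^{t_2}} J^r$ as the flux through the horizon), and that the orientation gives a plus sign as written. Once the endpoint contributions and orientations are pinned down, the identity is just the fundamental theorem of calculus applied in $t$ and in $r$, and no further computation is required.
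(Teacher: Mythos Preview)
Your proposal is correct and is exactly the standard argument; the paper itself does not give a proof of this proposition, stating it merely as ``a simple application of the divergence theorem'' and treating it as a known fact from the scalar wave equation analysis. Your outline --- integrating $\mathrm{div}\,J$ over the slab, picking up the $\Sigma_{t_i}$ terms from $\pd_t$, the horizon flux from $\pd_r$ at $r=r_H$, using the decay hypothesis to kill the contribution at $r\to\infty$, and noting the angular parts integrate to zero --- is precisely what the proposition encodes.
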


\subsection{A current template for the bundle $\mathcal{B}$}

We generalize the well-known estimates for the scalar wave equation by using a vector-bundle formalism. To begin, we prove a few lemmas that make this possible.

The first lemma defines a Morawetz current template for the bundle $\mathcal{B}$.
\begin{lemma}\label{vectorized_current_template_lem}
For a vectorfield $X$, a function $w$, and a $\mathcal{B}'\otimes\mathcal{B}'$-valued one-form $m_\mu^{ab}$, define
$$J[X,w,m]_\mu:=T_{\mu\nu}X^\nu+w\xi\cdot D_\mu\xi-\frac12|\xi|^2\pd_\mu w+m_\mu^{ab}\xi_a\xi_b,$$
where
$$T_{\mu\nu}=2D_\mu\xi^a D_\nu\xi_a-g_{\mu\nu}D^\lambda\xi^a D_\lambda\xi_a-g_{\mu\nu}V^{ab}\xi_a\xi_b.$$
Then
\begin{multline*}
div J = K^{\mu\nu}D_\mu\xi\cdot D_\nu\xi-\frac12\Box_g w|\xi|^2+((w-divX)V^{ab}-D_XV^{ab})\xi_a\xi_b+D^\mu m_\mu^{ab}\xi_a\xi_b+2m_\mu^{ab}\xi_aD^\mu\xi_b \\
+2R_{\mu\nu a b}X^\mu \xi^a D^\nu \xi^b+(\Box_g\xi_a-V_a{}^b\xi_b)(2D_X\xi^a+w\xi^a).
\end{multline*}
where
$$K^{\mu\nu}=2\nabla^{(\mu}X^{\nu)}+(w-divX)g^{\mu\nu}$$
is the same tensor as defined in Lemma \ref{Kmunu_lem}.
\end{lemma}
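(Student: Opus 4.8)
The plan is to prove this by a direct computation of the divergence of the current $J[X,w,m]_\mu$, term by term, using the wave map structure and the fact that $\xi_a$ satisfies the bundle equation (\ref{xi_a_linearized_eqn}) only up to the inhomogeneity $(\Box_g\xi_a - V_a{}^b\xi_b)$, which is why that factor appears explicitly on the right-hand side. This is essentially the standard $T_{\mu\nu}X^\nu + w$-current computation familiar from the scalar wave equation (Lemma \ref{Kmunu_lem}), but lifted to the vector bundle $\mathcal{B}$ with its induced connection $D$, plus a new lower-order current piece $m_\mu^{ab}\xi_a\xi_b$. The key structural inputs are $D_\mu g_{\mu\nu}=0$, $D_\mu \epsilon_{ab}=0$ (so $D$ is metric-compatible on $\mathcal{B}$), and the definition $\Box_g = g^{\mu\nu}D_\mu D_\nu$ acting on sections of $\mathcal{B}$.

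First I would compute $\operatorname{div}(T_{\mu\nu}X^\nu)$. Writing $D^\mu(T_{\mu\nu}X^\nu) = (D^\mu T_{\mu\nu})X^\nu + T_{\mu\nu}D^\mu X^\nu$, I would handle the two pieces separately. For $T_{\mu\nu}D^\mu X^\nu$: by symmetry of $T_{\mu\nu}$ this equals $T_{\mu\nu}\nabla^{(\mu}X^{\nu)}$, and substituting the definition of $T_{\mu\nu}$ and comparing with $K^{\mu\nu} = 2\nabla^{(\mu}X^{\nu)} + (w-\operatorname{div}X)g^{\mu\nu}$ produces the term $K^{\mu\nu}D_\mu\xi\cdot D_\nu\xi$ together with correction terms involving $(w-\operatorname{div}X)$ acting on the trace of $T$, which contribute the $(w-\operatorname{div}X)V^{ab}\xi_a\xi_b$ piece and a $(w-\operatorname{div}X)D^\lambda\xi\cdot D_\lambda\xi$ piece that is reabsorbed into $K^{\mu\nu}D_\mu\xi\cdot D_\nu\xi$. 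For $(D^\mu T_{\mu\nu})X^\nu$: differentiating $T_{\mu\nu}$, the $2D_\mu\xi^a D_\nu\xi_a$ term gives $2(\Box_g\xi^a)D_\nu\xi_a + 2D_\mu\xi^a D^\mu D_\nu\xi_a$; the second term, after commuting $D^\mu$ and $D_\nu$ (which introduces the target-curvature term $2R_{\mu\nu ab}X^\mu\xi^a D^\nu\xi^b$ — here the bundle curvature of $\mathcal{B}$ equals the pullback of the hyperbolic curvature $R_{abcd} = -\epsilon_{ab}\epsilon_{cd}$), is cancelled against the divergence of the $-g_{\mu\nu}D^\lambda\xi^a D_\lambda\xi_a$ term. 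What survives from this part, after using $\Box_g\xi_a = V_a{}^b\xi_b + (\Box_g\xi_a - V_a{}^b\xi_b)$ to split off the error, is $-D_X(V^{ab})\xi_a\xi_b - 2V^{ab}\xi_a D_X\xi_b + 2(\Box_g\xi_a - V_a{}^b\xi_b)D_X\xi^a + 2R_{\mu\nu ab}X^\mu\xi^a D^\nu\xi^b$, where the $2V^{ab}\xi_a D_X\xi_b$ term combines with a contribution from the $w\xi\cdot D_\mu\xi$ current piece below.

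Next I would compute $\operatorname{div}(w\xi\cdot D_\mu\xi) = (\Box_g w)\tfrac12|\xi|^2$... more precisely $D^\mu(w\xi\cdot D_\mu\xi) = (\partial^\mu w)(\xi\cdot D_\mu\xi) + w(D^\mu\xi\cdot D_\mu\xi) + w(\xi\cdot\Box_g\xi)$, and $\operatorname{div}(-\tfrac12|\xi|^2\partial_\mu w) = -\tfrac12\Box_g w|\xi|^2 - (\partial^\mu w)(\xi\cdot D_\mu\xi)$; the two $(\partial^\mu w)(\xi\cdot D_\mu\xi)$ terms cancel, leaving $-\tfrac12\Box_g w|\xi|^2 + w D^\mu\xi\cdot D_\mu\xi + w\xi\cdot\Box_g\xi$. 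The $w D^\mu\xi\cdot D_\mu\xi$ cancels the analogous piece from $-(w - \operatorname{div}X)g^{\mu\nu}$ in $K^{\mu\nu}$... I would need to track signs carefully here, and the $w\xi\cdot\Box_g\xi = w V^{ab}\xi_a\xi_b + w\xi^a(\Box_g\xi_a - V_a{}^b\xi_b)$ contributes the $wV^{ab}\xi_a\xi_b$ (completing the $(w-\operatorname{div}X)V^{ab}$ coefficient) and the $w\xi^a$ part of the error term. Finally $\operatorname{div}(m_\mu^{ab}\xi_a\xi_b) = (D^\mu m_\mu^{ab})\xi_a\xi_b + 2m_\mu^{ab}\xi_a D^\mu\xi_b$ directly, using $\xi_a\xi_b$ symmetric against $m^{ab}$. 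Collecting all surviving terms reproduces exactly the claimed formula for $\operatorname{div}J$.

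The main obstacle is the bookkeeping in the curvature commutator step: when I commute $D^\mu$ past $D_\nu$ acting on $\xi_a$, the commutator $[D_\mu, D_\nu]\xi_a$ involves both the spacetime Riemann curvature of $M$ (which drops out upon the relevant contractions since $T_{\mu\nu}$ is built from a metric-compatible connection and we only ever see it through $K^{\mu\nu}$) and the pulled-back target curvature $R_{\mu\nu ab} = d\Phi_\mu^c d\Phi_\nu^d R_{cadb}$ acting on the $\mathcal{B}$-index; I must be careful that the index placement and contraction with $X^\mu$ and $D^\nu\xi^b$ matches the stated $2R_{\mu\nu ab}X^\mu\xi^aD^\nu\xi^b$ and that no extraneous trace terms of the curvature (e.g. a Ricci-type contraction) are generated — in this problem they either vanish by the Einstein-type structure or get absorbed into the $V^{ab}$ terms via $R^{\lambda a}{}_{\lambda b}$ as in (\ref{vec_psi_linear_stability_eqn}). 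A secondary nuisance is keeping the $(w - \operatorname{div}X)$ and bare-$w$ contributions from the three separate current pieces straight so that they assemble precisely into the coefficient $(w-\operatorname{div}X)V^{ab} - D_X V^{ab}$; I would do this by first writing everything with $\operatorname{div}X$ and $w$ kept separate and only combining at the very end.
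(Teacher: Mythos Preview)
Your approach is correct and is essentially identical to the paper's: compute $\nabla^\mu(T_{\mu\nu}X^\nu)$, $\nabla^\mu(w\xi\cdot D_\mu\xi - \tfrac12|\xi|^2\pd_\mu w)$, and $\nabla^\mu(m_\mu^{ab}\xi_a\xi_b)$ separately, extract the curvature term from the commutator $[D_\mu,D_\nu]\xi_a$, and assemble. Two small bookkeeping corrections: the term $-2V^{ab}\xi_a D_X\xi_b$ you list as surviving from $(D^\mu T_{\mu\nu})X^\nu$ actually cancels \emph{internally} against the $+2V_a{}^b\xi_b D_X\xi^a$ produced by your own splitting $\Box_g\xi_a = V_a{}^b\xi_b + (\Box_g\xi_a - V_a{}^b\xi_b)$, not against anything from the $w$-piece; and there is no spacetime Riemann contribution to worry about in the commutator, since $\xi_a$ carries only a bundle index and the Levi-Civita connection is torsion-free, so $[D_\mu,D_\nu]\xi_a = R_{\mu\nu a}{}^b\xi_b$ with bundle curvature only.
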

\begin{remark}
The formula for $divJ$ in the lemma seems rather complicated, but it can be broken down into the following parts. (i) The part $K^{\mu\nu}D_\mu\xi\cdot D_\nu\xi-\frac12\Box_gw|\xi|^2$ is directly analogous to the scalar wave equation with no potential. (ii) The part $((w-divX)V^{ab}-D_XV^{ab})\xi_a\xi_b$ is new, because it depends on the potential $V^{ab}$ that is introduced by the equation. (If the scalar equation also had a potential, this part would be analogous to an additional part for that scalar equation.) (iii) The part $D^\mu m_\mu^{ab}\xi_a\xi_b+2m_\mu^{ab}\xi_aD^\mu \xi_b$ is also new, but only because it will be helpful, since it could be excluded by choosing $m_\mu^{ab}=0$. (iv) The part $2R_{\mu\nu a b}X^\mu \xi^aD^\nu \xi^b$ is also new, but this time it is purely due to the use of a bundle instead of a scalar. It will be considered an error term because the curvature tensor vanishes in the Schwarzschild case (see Lemma \ref{bundle_R_calculation_lem} below). (v) Finally, the part $(\Box_g\xi_a-V_a{}^b\xi_b)(2D_X\xi^a+w\xi^a)$ is a nonlinear error term, because it vanishes when the linear equation is satisfied.
\end{remark}
\begin{proof}
To start, we compute the divergence of the energy-momentum tensor.
\begin{align*}
\nabla^\nu T_{\mu\nu} &= 2D_\mu\xi^a D^\nu D_\nu\xi_a + 2D^\nu D_\mu\xi^a D_\nu\xi_a - 2D_\mu D^\lambda\xi^a D_\lambda\xi_a -D_\mu V^{ab}\xi_a\xi_b -2V^{ab}\xi_aD_\mu\xi_b \\
&= 2D_\mu\xi^a\Box_g\xi_a+2(D_\nu D_\mu\xi_a-D_\mu D_\nu\xi_a)D^\nu\xi^a - D_\mu V^{ab}\xi_a\xi_b -2V_a{}^b\xi_bD_\mu \xi^a \\
&= 2(\Box_g\xi_a-V_a{}^b\xi_b)D_\mu\xi^a+2R_{\nu\mu ab}\xi^b D^\nu\xi^a -D_\mu V^{ab}\xi_a\xi_b \\
&= 2(\Box_g\xi_a-V_a{}^b\xi_b)D_\mu\xi^a+2R_{\mu\nu ab}\xi^a D^\nu\xi^b -D_\mu V^{ab}\xi_a\xi_b.
\end{align*}
It follows that
\begin{align*}
\nabla^\nu(T_{\mu\nu}X^\mu) &= \nabla^\nu T_{\mu\nu}X^\mu + T_{\mu\nu}\nabla^{(\mu}X^{\nu)} \\
&= 2(\Box_g\xi_a-V_a{}^b\xi_b)D_X\xi^a+2R_{\mu\nu ab}X^\mu \xi^aD^\nu\xi^b-D_XV^{ab}\xi_a\xi_b \\
&\hspace{.2in}+(2\nabla^{(\mu}X^{\nu)}-divX g^{\mu\nu})D_\mu\xi^aD_\nu\xi_a-(divX) V^{ab}\xi_a\xi_b.
\end{align*}
Also,
\begin{align*}
\nabla^\mu &(w\xi^a D_\mu\xi_a-\frac12\xi^a\xi_a\pd_\mu w) \\
&= w\xi^a D^\mu D_\mu\xi_a+wD^\mu\xi^a D_\mu\xi_a +\pd^\mu w\xi^a D_\mu\xi_a -\pd_\mu w\xi^aD^\mu\xi_a-\frac12\xi^a\xi_a\nabla^\mu \pd_\mu w \\
&= w\xi^a \Box_g\xi_a+wD^\mu\xi^a D_\mu\xi_a -\frac12\Box_g w \xi^a\xi_a \\
&= (\Box_g\xi_a-V_a{}^b\xi_b)w\xi^a + wV^{ab}\xi_a\xi_b+wD^\mu\xi^a D_\mu\xi_a -\frac12\Box_g w \xi^a\xi_a.
\end{align*}
And
$$\nabla^\mu(m_\mu^{ab}\xi_a\xi_b) = D^\mu m_\mu^{ab}\xi_a\xi_b + 2m_\mu^{ab}\xi_aD^\mu\xi_b.$$
Summing all these terms proves the statement of the lemma. \qed
\end{proof}

The next lemma condenses the part of $divJ$ having to do with the potential.
\begin{lemma}\label{muvVab_lem}
If $X=uv\pd_r$ and $w=v\pd_ru$, then
$$(w-divX)V^{ab}-D_XV^{ab}=-\frac{u}{\mu}D_r(\mu v V^{ab}).$$
\end{lemma}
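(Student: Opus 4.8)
The plan is to obtain this identity purely from the product rule for $\nabla$ and the Leibniz rule for the connection $D$, exploiting that $X=uv\,\pd_r$ is radial, with its only nonzero component $X^r=uv$ a function of $r$ alone, and that $u$, $v$, and $\mu v$ are scalars on which $D$ acts simply as $\pd_r$.

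First I would compute $divX$. Since $X^\alpha=0$ for $\alpha\ne r$, the divergence formula gives $divX=\frac1\mu\pd_\alpha(\mu X^\alpha)=\frac1\mu\pd_r(\mu uv)$, with $\mu$ the volume density entering $div$. Expanding by the product rule and then invoking the hypothesis $w=v\,\pd_r u$,
\[
divX=\frac{u}{\mu}\pd_r(\mu v)+v\,\pd_r u=\frac{u}{\mu}\pd_r(\mu v)+w.
\]
Therefore $w-divX=-\frac{u}{\mu}\pd_r(\mu v)$, and multiplying by the section $V^{ab}$ gives $(w-divX)V^{ab}=-\frac{u}{\mu}\pd_r(\mu v)\,V^{ab}$.

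Next I would expand the right-hand side of the claimed identity. Since $\mu v$ is a scalar function of $r$, the Leibniz rule for the bundle connection gives $D_r(\mu v\,V^{ab})=\pd_r(\mu v)\,V^{ab}+\mu v\,D_rV^{ab}$, so
\[
-\frac{u}{\mu}D_r(\mu v\,V^{ab})=-\frac{u}{\mu}\pd_r(\mu v)\,V^{ab}-uv\,D_rV^{ab}.
\]
On the left-hand side, $uv$ being a scalar, $D_XV^{ab}=uv\,D_rV^{ab}$, so the left-hand side reads $(w-divX)V^{ab}-uv\,D_rV^{ab}$; substituting the expression for $w-divX$ from the previous step, the two sides coincide term by term.

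I do not anticipate any real obstacle: the lemma is a bookkeeping identity whose entire content is that $u$, $v$, and $\mu v$ are scalars --- so that both $div$ and $D$ act on them as the ordinary $r$-derivative --- together with the cancellation of $w$ against the $v\,\pd_r u$ piece of $divX$. The only point I would take care to state is the choice of $\mu$: the combination $-\frac{u}{\mu}D_r(\mu v\,\cdot)$ is exactly the radial derivative conjugated by the volume density, i.e.\ the operator that arises after integrating the $K^{\mu\nu}$-type terms by parts against $\mu$, so this lemma is the algebraic shadow of that integration by parts.
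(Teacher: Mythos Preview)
Your proof is correct and follows essentially the same approach as the paper: both compute $divX=\frac{1}{\mu}\pd_r(\mu uv)$, use the product rule to extract $w=v\pd_r u$ so that $w-divX=-\frac{u}{\mu}\pd_r(\mu v)$, and then apply the Leibniz rule for $D_r$ to recombine everything into $-\frac{u}{\mu}D_r(\mu v V^{ab})$. The only cosmetic difference is that the paper writes the left-hand side out and simplifies directly, whereas you expand both sides separately and match terms.
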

\begin{proof}
\begin{align*}
(w-divX)V^{ab}-X^\mu D_\mu V^{ab} &= (v\pd_ru - \frac1\mu\pd_r(\mu u v))V^{ab}-uvD_rV^{ab} \\
&= -\frac{u}{\mu}\pd_r(\mu v)V^{ab}-uv D_rV^{ab} \\
&= -\frac{u}{\mu}D_r(\mu v V^{ab}).
\end{align*}
\qed
\end{proof}

The final lemma gives a formula for the curvature tensor $R_{\mu\nu ab}$ for the bundle $\mathcal{B}$. Note in particular that in the Schwarzschild case, since $B=0$, the curvature tensor vanishes.
\begin{lemma}\label{bundle_R_calculation_lem}
The curvature tensor is given by
$$R_{\mu\nu a b} = -2\frac{\pd_{[\mu} A\pd_{\nu]} B}{A^2}\epsilon_{ab}.$$
\end{lemma}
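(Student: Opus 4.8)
The plan is to use the fact that $\mathcal{B}$ is (isomorphic to) the pullback bundle $\Phi^* TN$ equipped with the pullback of the Levi-Civita connection of $N$, so that its curvature is simply the pullback of the target curvature $R^N$. Concretely, I would first record the identity
$$R_{\mu\nu ab} = d\Phi^c_\mu\, d\Phi^d_\nu\, R_{abcd},$$
which is the component form of the general statement that the curvature two-form of a pullback connection $D_\mu = \nabla_{\Phi_*\pd_\mu}$ is obtained by evaluating $R^N$ on the pushed-forward frame vectors. This can be taken from standard Riemannian geometry or, if one prefers to stay self-contained, derived in a line from the connection coefficients $D_\alpha e^1 = -\frac{\pd_\alpha B}{A}\, e^2$ and $D_\alpha e^2 = \frac{\pd_\alpha B}{A}\, e^1$ recorded above, by computing $[D_\mu, D_\nu]$ on the dual frame and noting that the mixed second derivatives $\pd_\mu\pd_\nu B$ cancel.

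Next I would substitute the curvature of the hyperbolic plane, $R_{abcd} = -\epsilon_{ab}\epsilon_{cd}$, and pull the factor $\epsilon_{ab}$ outside the base contraction:
$$R_{\mu\nu ab} = -\epsilon_{ab}\,\bigl(d\Phi^c_\mu\, \epsilon_{cd}\, d\Phi^d_\nu\bigr).$$
It then remains to evaluate the scalar $d\Phi^c_\mu\, \epsilon_{cd}\, d\Phi^d_\nu$. Using the expansion of the pushforward in the orthonormal frame, $d\Phi^a_\alpha = \frac{\pd_\alpha A}{A}(e_1)^a + \frac{\pd_\alpha B}{A}(e_2)^a$, together with $\epsilon(e_1,e_1) = \epsilon(e_2,e_2) = 0$ and $\epsilon(e_1,e_2) = -\epsilon(e_2,e_1) = 1$, only the mixed $A$--$B$ cross terms survive, giving
$$d\Phi^c_\mu\, \epsilon_{cd}\, d\Phi^d_\nu = \frac{\pd_\mu A\, \pd_\nu B - \pd_\nu A\, \pd_\mu B}{A^2} = \frac{2\,\pd_{[\mu}A\,\pd_{\nu]}B}{A^2}.$$
Combining the last two displays yields the claimed formula.

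I do not expect a genuine obstacle here: once the pullback-curvature identity is in place the computation is a two-line contraction. The only point requiring care is bookkeeping of conventions — which Riemann sign convention is in force (consistent with the relation $(D_\nu D_\mu - D_\mu D_\nu)\xi_a = R_{\nu\mu ab}\xi^b$ used earlier in the proof of Lemma \ref{vectorized_current_template_lem}), whether the antisymmetrization bracket $[\mu\nu]$ carries the factor $\tfrac12$, and the normalization and orientation of the area form $\epsilon_{ab}$ — so that the overall sign comes out as stated rather than flipped. A convenient internal consistency check is that in the Schwarzschild case $B = 0$, whence $R_{\mu\nu ab}$ vanishes identically, which is precisely the property invoked (``the curvature tensor vanishes in the Schwarzschild case'') in the remark following Lemma \ref{vectorized_current_template_lem}.
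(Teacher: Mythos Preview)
Your proposal is correct, but it takes a different route from the paper. The paper does precisely what you mention as the self-contained alternative: it computes $D_\mu D_\nu e^i - D_\nu D_\mu e^i$ for $i=1,2$ directly from the connection coefficients $D_\alpha e^1 = -\frac{\pd_\alpha B}{A}e^2$, $D_\alpha e^2 = \frac{\pd_\alpha B}{A}e^1$, observes the cancellation of the symmetric $\nabla_\mu\pd_\nu B$ terms, and then reads off $R_{\mu\nu ab}$ from the defining relation $D_\mu D_\nu(e^i)_a - D_\nu D_\mu(e^i)_a = R_{\mu\nu ab}(e^i)^b$. Your primary approach --- invoking the pullback-curvature identity $R_{\mu\nu ab} = d\Phi^c_\mu\, d\Phi^d_\nu\, R^N_{cdab}$ and then contracting against the known hyperbolic curvature $R^N_{cdab} = -\epsilon_{cd}\epsilon_{ab}$ --- is more conceptual and factors the work into a general geometric fact plus a single antisymmetric contraction; the paper's hands-on computation has the advantage of being entirely self-contained and of verifying the sign convention directly against the definition used in Lemma~\ref{vectorized_current_template_lem}. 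One small bookkeeping remark: the identity you display should strictly have $R_{cdab}$ rather than $R_{abcd}$ (the first pair are the direction slots pulled back by $d\Phi$), but the pair-exchange symmetry of the Riemann tensor makes this harmless here.
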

\begin{proof}
We calculate $R_{\mu\nu a b}$ according to the formula
$$D_\mu D_\nu (e^i)_a -D_\nu D_\mu (e^i)_a = R_{\mu\nu a b}(e^i)^b.$$
We start with
\begin{align*}
D_\mu D_\nu e^1 &= D_\mu\left(-\frac{\pd_\nu B}{A} e^2\right) \\
&= -\frac{\nabla_\mu \pd_\nu B}{A}e^2+\frac{\pd_\mu A\pd_\nu B}{A^2}e^2-\frac{\pd_\mu B\pd_\nu B}{A^2}e^1.
\end{align*}
Therefore,
$$D_\mu D_\nu e^1 -D_\nu D_\mu e^1 = 2\frac{\pd_{[\mu }A\pd_{\nu]}B}{A^2}e^2.$$
Likewise, we calculate
\begin{align*}
D_\mu D_\nu e^2 &= D_\mu\left(\frac{\pd_\nu B}{A} e^1\right) \\
&= \frac{\nabla_\mu \pd_\nu B}{A}e^1-\frac{\pd_\mu A\pd_\nu B}{A^2}e^1-\frac{\pd_\mu B\pd_\nu B}{A^2}e^2.
\end{align*}
Therefore,
$$D_\mu D_\nu e^2 -D_\nu D_\mu e^2 = -2\frac{\pd_{[\mu }A\pd_{\nu]}B}{A^2}e^1.$$
\end{proof}
The formula is verified by the following two relations.
$$(D_\mu D_\nu e^1 -D_\nu D_\mu e^1)_a = 2\frac{\pd_{[\mu}A\pd_{\nu]}B}{A^2}(e^2)_a = -2\frac{\pd_{[\mu}A\pd_{\nu]}B}{A^2}\epsilon_{ab}(e^1)^b,$$
$$(D_\mu D_\nu e^2 -D_\nu D_\mu e^2)_a = -2\frac{\pd_{[\mu}A\pd_{\nu]}B}{A^2}(e^1)_a = -2\frac{\pd_{[\mu}A\pd_{\nu]}B}{A^2}\epsilon_{ab}(e^2)^b.$$
This completes the proof. \qed

\subsection{The partial Morawetz estimate for the Schwarzschild case}\label{partial_morawetz_szd_sec}

Here, we prove the partial Morawetz estimate for the Schwarzschild case in a way that it can be adapted to Kerr for $|a|\ll M$. Since the proof of the estimate in Kerr is so complicated, it can help to understand the simpler proof for the Schwarzschidl case first.

\begin{proposition}\label{partial_morawetz_szd_prop} (partial Morawetz estimate) Suppose $a=0$, and suppose $\xi_a$ satisfies the equation
$$\Box_g\xi_a - V_a{}^b\xi_b=0,$$
where $V_a{}^b$ is the potential defined previously.

Then there exists a current $J$ such that
\begin{multline*}
\frac{M^2}{r^3}\left(1-\frac{2M}r\right)^2(\pd_r\xi_1)^2+\frac{\chi_{trap}}{r}|\sla\nabla\xi_1|^2+\frac{1}{r^3}(\xi_1)^2 + \chi_{trap}\frac{\cot^2\theta}{r^3}(\xi_1)^2 \\
+ \frac{M^2}{r^3}\left(1-\frac{2M}r\right)^2(\pd_r\xi_2)^2+\frac{\chi_{trap}}{r}|\sla\nabla\xi_2|^2+\frac{M}{r^4}1_{r\ge 4M}(\xi_2)^2 \\
-r^{-2}V_{\epsilon_{temper}}((\xi_1)^2+(\xi_2)^2) \\
\lesssim div J,
\end{multline*}
where $\chi_{trap}=\left(1-\frac{3M}r\right)^2$, and $V_{\epsilon_{temper}}$ is the potential defined in Lemma \ref{Kmunu_lem}.
\end{proposition}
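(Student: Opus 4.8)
The plan is to apply the current template of Lemma~\ref{vectorized_current_template_lem} with the geodesic multiplier $X=uv\pd_r$, $w=v\pd_r u$ supplied by Lemma~\ref{Kmunu_lem}, augmented by a suitably chosen lower-order term $m_\mu^{ab}$, and then to verify the claimed pointwise lower bound on $divJ$ region by region. First I would specialize the divergence formula of Lemma~\ref{vectorized_current_template_lem} to Schwarzschild. Here $B\equiv 0$, so the bundle connection is flat ($D_\alpha e^1=D_\alpha e^2=0$), $\xi_1$ and $\xi_2$ decouple, the curvature term $2R_{\mu\nu ab}X^\mu\xi^aD^\nu\xi^b$ vanishes by Lemma~\ref{bundle_R_calculation_lem}, and the term $(\Box_g\xi_a-V_a{}^b\xi_b)(2D_X\xi^a+w\xi^a)$ vanishes since $\xi_a$ solves the equation. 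Using Lemma~\ref{muvVab_lem} to rewrite the potential contribution, and noting that $V_1{}^1=\frac{4}{r^2}(\csc^2\theta-\frac{2M}{r})$ is the only nonzero component of the potential, one is left with
\begin{multline*}
divJ = K^{\mu\nu}\big(\pd_\mu\xi_1\pd_\nu\xi_1+\pd_\mu\xi_2\pd_\nu\xi_2\big) - \frac12\Box_g w\,(\xi_1^2+\xi_2^2) \\ -\frac{u}{\mu}\pd_r(\mu v V_1{}^1)\,\xi_1^2 + D^\mu m_\mu^{ab}\xi_a\xi_b + 2m_\mu^{ab}\xi_aD^\mu\xi_b .
\end{multline*}

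For the derivative terms and the $\xi_2$ zeroth-order term there is nothing to do: by Lemma~\ref{Kmunu_lem} (with $a=0$, so $K_Q^{tt}=0$) the quadratic form $K^{\mu\nu}$ has radial part $\sim\frac{M^2}{r^3}(1-\frac{2M}{r})^2$ and an angular part degenerating (to second order) at $r_{trap}$, whence $K^{\mu\nu}\pd_\mu\xi_i\pd_\nu\xi_i\gtrsim \frac{M^2}{r^3}(1-\frac{2M}{r})^2(\pd_r\xi_i)^2+\frac{\chi_{trap}}{r}|\sla\nabla\xi_i|^2$; and $-\frac12\Box_g w\ge \frac{M}{r^4}1_{r\ge r_*}-q^{-2}V_{\epsilon_{temper}}$ produces the $\frac{M}{r^4}1_{r\ge 4M}\xi_2^2$ term together with a defect absorbed into $-r^{-2}V_{\epsilon_{temper}}\xi_2^2$ (recall $q^2=r^2$ here). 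So the whole problem reduces to extracting $\frac{1}{r^3}\xi_1^2+\chi_{trap}\frac{\cot^2\theta}{r^3}\xi_1^2$ from the remaining three terms.

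The key point is a sign cancellation in the potential term. Since $\mu v V_1{}^1=\frac{4(1-\frac{2M}{r})(\csc^2\theta-\frac{2M}{r})}{r^2}$, a direct computation gives
$$-\frac{u}{\mu}\pd_r(\mu v V_1{}^1) = \frac{8u(r-3M)\csc^2\theta}{r^6}-\frac{8Mu(3r-8M)}{r^7}.$$
Because $u$ changes sign from negative to positive exactly at $r_{trap}=3M$, the product $u(r-3M)$ is nonnegative with a double zero at $r_{trap}$, so the first term is $\gtrsim \frac{\chi_{trap}}{r^3}\csc^2\theta\,\xi_1^2=\frac{\chi_{trap}}{r^3}\xi_1^2+\chi_{trap}\frac{\cot^2\theta}{r^3}\xi_1^2$; this already delivers the $\cot^2\theta$ term in full and the bare $\xi_1^2$ term wherever $\chi_{trap}\gtrsim 1$, i.e. away from $r_{trap}$ and $r\to\infty$. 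The second term is $O(M/r^4)$, hence subleading at large $r$ and, near $r_H$, absorbable into $-r^{-2}V_{\epsilon_{temper}}$. It remains to upgrade $\frac{\chi_{trap}}{r^3}\xi_1^2$ to the non-degenerate $\frac{1}{r^3}\xi_1^2$ on a neighborhood of $r_{trap}$ --- where the angular derivative term is trapped but the radial one, $\frac{M^2}{r^3}(1-\frac{2M}{r})^2(\pd_r\xi_1)^2$, is \emph{not}. For this I would take $m_\mu^{ab}=n(r)\,\delta_\mu^r\,(e^1)^a(e^1)^b$ with $n$ supported near $r_{trap}$, exactly as in the standard scalar Morawetz construction: then $D^\mu m_\mu^{ab}\xi_a\xi_b+2m_\mu^{ab}\xi_aD^\mu\xi_b=\frac{1}{\mu}\pd_r\!\big(\mu(1-\frac{2M}{r})n\big)\xi_1^2+2(1-\frac{2M}{r})n\,\xi_1\pd_r\xi_1$, and after splitting the cross term against the (nondegenerate there) radial term one chooses $n$ so that the net zeroth-order contribution is $\gtrsim\frac{1}{r^3}\xi_1^2$ near $r_{trap}$. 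Summing the contributions over the wave zone, the photon sphere, and a shrinking neighborhood of $r_H$ (into which all residual defects are dumped via $-r^{-2}V_{\epsilon_{temper}}$, using the $L^1$-smallness of $V_{\epsilon_{temper}}$) yields the stated bound.

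The main obstacle is the simultaneous behavior at the photon sphere and at the horizon: at $r_{trap}$ the geodesic multiplier --- and with it the entire potential contribution $-\frac{u}{\mu}\pd_r(\mu v V_1{}^1)\xi_1^2$ --- degenerates, so the non-degenerate $\frac{1}{r^3}\xi_1^2$ must be wrung out of the auxiliary $m$-current while its cross term is absorbed into the radial term, and one must verify that the $m$-current and the $\chi_{trap}$ weights do not disturb the delicate signs in the $w$-term and the geodesic term near $r_H$; this last point is precisely what the freedom in the smallness parameter $\epsilon_{temper}$ of Lemma~\ref{Kmunu_lem} is for.
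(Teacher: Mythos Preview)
Your sketch identifies the right current template and correctly computes the full potential contribution
$-\frac{u}{\mu}\pd_r(\mu v V_1{}^1)=\frac{8u(r-3M)\csc^2\theta}{r^6}-\frac{8Mu(3r-8M)}{r^7}$, and the observation that $u(r-3M)\ge 0$ is exactly the right starting point.  But the treatment of the second term is wrong, and this turns into a genuine gap.  You call it ``$O(M/r^4)$, hence subleading at large $r$ and, near $r_H$, absorbable into $-r^{-2}V_{\epsilon_{temper}}$.''  Near $r_H$ it is \emph{not} small: at $r=2M$, $\theta=\pi/2$ the first term equals $\frac{-u(2M)}{8M^5}$ and the second equals $\frac{u(2M)}{8M^5}$, so they cancel exactly and the full potential contribution vanishes.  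More generally, at $\theta=\pi/2$ the sum collapses to the radial potential $\frac{8u}{r^7}(r-2M)(r-4M)$, which vanishes at $r_H$ and is negative on $(3M,4M)$.  Since your $m$ is supported only near $r_{trap}$, at $r=2M,\ \theta=\pi/2$ your $divJ$ carries no positive $\xi_1^2$ contribution whatsoever (only derivative terms and $-\tfrac12\Box_g w$), and the pointwise bound $\frac{1}{r^3}\xi_1^2-r^{-2}V_{\epsilon_{temper}}\xi_1^2\lesssim divJ$ fails there.  The defect is not concentrated on a set of small $r$-measure, so the small-$L^1$ allowance $V_{\epsilon_{temper}}$ cannot absorb it.

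What the paper does differently is to include an \emph{angular} correction $m_\theta^{11}=(2-\epsilon)u\pd_r v\,\cot\theta$ in addition to the radial $m_r^{11}$.  This effects an integration by parts in $\theta$ inside the angular block $\mathcal{K}_{(\theta)}$: completing the square $(\pd_\theta\xi_1-2\cot\theta\,\xi_1)^2$ liberates an extra $(2-\epsilon)\bigl(-\tfrac{u\pd_r v}{r^2}\bigr)\xi_1^2$, which is then ``borrowed'' and added to the radial block $\mathcal{K}_{(r)}$.  That borrowed term is strictly positive at $r_H$ (since $-u\pd_r v>0$ there) and is precisely what supplies the nondegenerate $\frac{1}{r^3}\xi_1^2$ down to the horizon; it is also essential in the region $r\ge 4M$, where it combines with the radial $m_r^{11}$ term (built from a \emph{global} function $\chi$ equal to $0$ on $r<3M$, to $u$ on $[3M,4M]$, and constant beyond) to keep the net zeroth-order coefficient nonnegative.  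A purely radial, locally supported $m$ cannot manufacture this positivity at the equator on the horizon; you need the angular $m_\theta^{11}$ (equivalently, the borrowing mechanism).
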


\begin{proof}
We choose
$$J_\mu=J[X,w,m]_\mu,$$
where $J[X,w,m]$ is the current template defined in Lemma \ref{vectorized_current_template_lem}.

We use the same vectorfield
$$X=uv\pd_r$$
and scalar function
$$w=v\pd_ru$$
that are used for the scalar wave equation (see Lemma \ref{Kmunu_lem}).

However, we now choose an additional one-form $m_\mu^{ab}$ with components
\begin{align*}
m_\mu^{ab}&=m_\mu^{ij} (e_i)^a(e_j)^b \\
m^{11}_\mu dx^\mu&=\frac{4\chi v}{r^2}dr+(2-\epsilon)u\pd_rv\cot\theta d\theta \\
m^{12}_\mu dx^\mu&=m^{21}=m^{22}=0,
\end{align*}
where the function $\chi$ will be defined in Lemma \ref{0_K_r_lem}. It is not common to include an $m$ term  with a nonzero $d\theta$ component. The reason will become clear in the proof of Lemma \ref{0_K_theta_lem}. (See the remark following the proof of Lemma \ref{0_K_theta_lem}.)

By Lemma \ref{vectorized_current_template_lem}, we have that
\begin{multline}\label{0_divJ_initial_terms_eqn}
div J = K^{\mu\nu}D_\mu\xi\cdot D_\nu\xi-\frac12\Box_g w|\xi|^2+((w-divX)V^{ab}-D_XV^{ab})\xi_a\xi_b+D^\mu m_\mu^{ab}\xi_a\xi_b+2m_\mu^{ab}\xi_aD^\mu\xi_b \\
+2R_{\mu\nu a b}X^\mu \xi^a D^\nu \xi^b+(\Box_g\xi_a-V_a{}^b\xi_b)(2D_X\xi^a+w\xi^a).
\end{multline}
We rearrange these terms according to the following lemma.
\begin{lemma}\label{0_divJ_rearranged_lem}
$$divJ =\mathcal{K}-\frac12\Box_g w |\xi|^2+(\Box_g\xi_a-V_a{}^b\xi_b)(2D_X\xi^a+w\xi^a),$$
where
$$\mathcal{K}= \mathcal{K}_{(r)}+\mathcal{K}_{(\theta)}+\mathcal{K}_{(2)}$$
$$\mathcal{K}_{(r)}=K^{rr}(\pd_r\xi_1)^2+((w-divX)V_{(r)}^{11}-X^\mu\pd_\mu V_{(r)}^{11})(\xi_1)^2+\nabla^rm_r^{11}(\xi_1)^2+2m_r^{11}\xi_1\pd^r\xi_1$$
$$\mathcal{K}_{(\theta)}=K^{\theta\theta}(\pd_\theta\xi_1)^2+((w-divX)V_{(\theta)}^{11}-X^\mu\pd_\mu V_{(\theta)}^{11})(\xi_1)^2+\nabla^\theta m_\theta^{11}(\xi_1)^2+2m_\theta^{11}\xi_1\pd^\theta\xi_1$$
$$\mathcal{K}_{(2)}=K^{rr}(\pd_r\xi_2)^2+K^{\theta\theta}(\pd_\theta\xi_2)^2,$$
and
\begin{align*}
V_{(r)}^{11} &= g^{rr}A^{-2}(\pd_rA)^2 = \frac4{r^2}\left(1-\frac{2M}r\right) \\
V_{(\theta)}^{11} &= g^{\theta\theta}A^{-2}(\pd_\theta A)^2 = \frac{4}{r^2}\cot^2\theta.
\end{align*}
\end{lemma}
\begin{proof}
Comparing with equation (\ref{0_divJ_initial_terms_eqn}), since $R_{\mu\nu ab}=0$, it suffices to show that
$$\mathcal{K}=K^{\mu\nu}D_\mu\xi\cdot D_\nu\xi +((w-divX)V^{ab}-D_XV^{ab})\xi_a\xi_b+D^\mu m_\mu^{ab}\xi_a\xi_b+2m_\mu^{ab}\xi_aD^\mu \xi_b.$$
Since $K^{tt}=0$, this can be directly verified.
\qed
\end{proof}

In what follows, we examine the coercive properties of each of the terms $\mathcal{K}_{(\theta)}$ (Lemma \ref{0_K_theta_lem}), $\mathcal{K}_{(r)}$ (Lemma \ref{0_K_r_lem}), and $\mathcal{K}_{(2)}$ (Lemma \ref{0_K_2_lem}).

First, we show that even after subtracting a good term, the quantity $\mathcal{K}_{(\theta)}$ controls two angular terms. See the remark following the proof of this lemma.
\begin{lemma}\label{0_K_theta_lem}
For all $\epsilon>0$,
$$\frac{\chi_{trap}}{r}|\sla\nabla\xi_1|^2+\chi_{trap}\frac{\cot^2\theta}{r^3}(\xi_1)^2\lesssim_\epsilon \mathcal{K}_{(\theta)}-(2-\epsilon)\left(-\frac{u\pd_r v}{r^2}\right)(\xi_1)^2.$$
\end{lemma}
\begin{proof}
Recall that
$$\mathcal{K}_{(\theta)}=K^{\theta\theta}(\pd_\theta\xi_1)^2+((w-divX)V_{(\theta)}^{11}-X^\mu\pd_\mu V_{(\theta)}^{11})(\xi_1)^2+D^\theta m_\theta^{11}(\xi_1)^2+2m_\theta^{11}\xi_1\pd^\theta\xi_1.$$
We calculate (using Lemma \ref{Kmunu_lem} in the first line and Lemma \ref{muvVab_lem} in the second line)
\begin{align*}
K^{\theta\theta}(\pd_\theta\xi_1)^2&=-\frac{u\pd_rv}{r^2}(\pd_\theta\xi_1)^2 \\
((w-divX)V_{(\theta)}^{11}-X^r\pd_rV_{(\theta)}^{11})(\xi_1)^2&=-\frac{u\pd_rv}{r^2}4\cot^2\theta(\xi_1)^2 \\
\nabla^\theta m_\theta^{11}(\xi_1)^2&=\frac1{r^2\sin\theta}\pd_\theta(\sin\theta (2-\epsilon)u\pd_rv\cot\theta)(\xi_1)^2 \\
&=-\frac{u\pd_rv}{r^2}(2-\epsilon)(\xi_1)^2 \\
2m_\theta^{11}\xi_1\pd^\theta\xi_1&=2(2-\epsilon)u\pd_rv\cot\theta \xi_1r^{-2}\pd_\theta\xi_1.
\end{align*}
Summing these, we obtain
$$\mathcal{K}_{(\theta)}=-\frac{u\pd_rv}{r^2}\left[(\pd_\theta\xi_1)^2+4\cot^2\theta(\xi_1)^2+(2-\epsilon)(\xi_1)^2-2(2-\epsilon)\cot\theta\xi_1\pd_\theta\xi_1\right].$$
Now,
\begin{align*}
-2(2-\epsilon)\cot\theta\xi_1\pd_\theta\xi_1 &=  \left(1-\frac{\epsilon}2\right)\left(-2\cot\theta\xi_1\pd_\theta\xi_1\right) \\
&=\left(1-\frac{\epsilon}2\right)\left( (\pd_\theta\xi_1-2\cot\theta\xi_1)^2-(\pd_\theta\xi_1)^2-4\cot^2\theta(\xi_1)^2\right).
\end{align*}
Therefore,
$$\mathcal{K}_{(\theta)}=-\frac{u\pd_rv}{r^2}\left[\frac{\epsilon}{2}(\pd_\theta\xi_1)^2+\frac{\epsilon}{2}4\cot^2\theta(\xi_1)^2+\left(1-\frac{\epsilon}2\right)(\pd_\theta\xi_1-2\cot\theta\xi_1)^2+(2-\epsilon)(\xi_1)^2\right].$$
Put another way,
\begin{multline*}
\mathcal{K}_{(\theta)}-(2-\epsilon)\left(-\frac{u\pd_rv}{r^2}\right)(\xi_1)^2 \\
=-\frac{u\pd_rv}{r^2}\left[\frac{\epsilon}{2}(\pd_\theta\xi_1)^2+\frac{\epsilon}{2}4\cot^2\theta(\xi_1)^2+\left(1-\frac{\epsilon}2\right)(\pd_\theta\xi_1-2\cot\theta\xi_1)^2\right].
\end{multline*}
Since $-u\pd_rv\sim \frac{\chi_{trap}}{r}$, the bound stated in Lemma \ref{0_K_theta_lem} follows.
\qed
\end{proof}
\begin{remark}
The case $\epsilon=0$ corresponds directly to a calculation for the $(\phi,\psi)$ system. Indeed,
$$\pd_\theta\xi_1-2\cot\theta\xi_1=A\pd_\theta\psi.$$
This is precisely the need for the $m_\theta^{11}$ component. It accounts for the fact that integrated quantities for $\xi_a$ will differ from their $(\phi,\psi)$ counterparts up to the divergence of a term with a $\theta$ component. For another example, see Lemma \ref{xi_le_phi_psi_2_lem}.
\end{remark}

Next, we show that if the term subtracted from $\mathcal{K}_{(\theta)}$ is added to the term $\mathcal{K}_{(r)}$, then the result is a coercive quantity.
\begin{lemma}\label{0_K_r_lem}
If $\epsilon>0$ is sufficiently small, then there exists a function $\chi$ (determining the component $m_r^{11}$) such that
$$\frac{M^2}{r^3}\left(1-\frac{2M}r\right)^2(\pd_r\xi_1)^2+\frac{1}{r^3}(\xi_1)^2 \lesssim \mathcal{K}_{(r)}+(2-\epsilon)\left(-\frac{u\pd_r v}{r^2}\right)(\xi_1)^2.$$
\end{lemma}
\begin{proof}
Recall that
$$\mathcal{K}_{(r)}=K^{rr}(\pd_r\xi_1)^2+((w-divX)V_{(r)}^{11}-X^\mu\pd_\mu V_{(r)}^{11})(\xi_1)^2+D^rm_r^{11}(\xi_1)^2+2m_r^{11}\xi_1\pd^r\xi_1.$$
We calculate  (using Lemma \ref{Kmunu_lem} in the first line and Lemma \ref{muvVab_lem} in the second line)
\begin{align*}
K^{rr}(\pd_r\xi_1)^2 &= 2\left(\frac{u'}{r^2}-\frac{2u}{r^3}\right)\left(1-\frac{2M}r\right)^2(\pd_r\xi_1)^2 \\
((w-divX)V_{(r)}^{11}-X^\mu\pd_\mu V_{(r)}^{11})(\xi_1)^2 &= -\frac{u}{r^2}\pd_r(r^2 vV_{(r)}^{11}) (\xi_1)^2\\
&= -u\pd_r\left(\frac{4}{r^2}\left(1-\frac{2M}r\right)^2\right)(r^{-1}\xi_1)^2 \\
\nabla^rm_r^{11}(\xi_1)^2 &= \frac1{r^2}\pd_r(r^2 g^{rr}m_r^{11}) (\xi_1)^2\\
&= \pd_r\left(\frac{4\chi}{r^2}\left(1-\frac{2M}r\right)^2\right)(r^{-1}\xi_1)^2 \\
2m_r^{11}\xi_1\pd^r\xi_1 &= \frac{4\chi}{r^3}\left(1-\frac{2M}r\right)^22(r^{-1}\xi_1)\pd_r\xi_1.
\end{align*}
Summing these, we obtain
\begin{multline*}
\mathcal{K}_{(r)} =
2\left(\frac{u'}{r^2}-\frac{2u}{r^3}\right)\left(1-\frac{2M}r\right)^2(\pd_r\xi_1)^2
+(\chi-u)\pd_r\left(\frac{4}{r^2}\left(1-\frac{2M}r\right)^2\right)(r^{-1}\xi_1)^2 \\
+\chi'\frac{4}{r^2}\left(1-\frac{2M}r\right)^2(r^{-1}\xi_1)^2 + \frac{4\chi}{r^3}\left(1-\frac{2M}r\right)^22(r^{-1}\xi_1)\pd_r\xi_1.
\end{multline*}
Now,
\begin{multline*}
\frac{4\chi}{r^3}\left(1-\frac{2M}r\right)^22(r^{-1}\xi_1)\pd_r\xi_1 \\
= \frac{4\chi}{r^3}\left(1-\frac{2M}r\right)^2(\pd_r\xi_1+r^{-1}\xi_1)^2 - \frac{4\chi}{r^3}\left(1-\frac{2M}r\right)^2(\pd_r\xi_1)^2 -\frac{4\chi}{r^3}\left(1-\frac{2M}r\right)^2(r^{-1}\xi_1)^2.
\end{multline*}
Therefore,
\begin{multline*}
\mathcal{K}_{(r)} =
2\left(\frac{u'}{r^2}-\frac{2u}{r^3}-\frac{2\chi}{r^3}\right)\left(1-\frac{2M}r\right)^2(\pd_r\xi_1)^2
+(\chi-u)\pd_r\left(\frac{4}{r^2}\left(1-\frac{2M}r\right)^2\right)(r^{-1}\xi_1)^2 \\
+4\left(\frac{\chi'}{r^2}-\frac{\chi}{r^3}\right)\left(1-\frac{2M}r\right)^2(r^{-1}\xi_1)^2 + \frac{4\chi}{r^3}\left(1-\frac{2M}r\right)^2(\pd_r\xi_1+r^{-1}\xi_1)^2.
\end{multline*}
We make an initial choice for $\chi$.
$$\chi=\left\{
\begin{array}{ll}
  0 & r<3M \\
  u & r\in[3M,4M]. \\
  u(4M) & 4M\le r
\end{array}
\right.$$
Note that since $u(3M)=0$, this piecewise function is continuous. With this choice for $\chi$, we derive estimates for the terms appearing in the above expression for $\mathcal{K}_{(r)}$. These estimates are given in Lemmas \ref{0_K_r_1_lem}-\ref{0_K_r_3_lem}.
\begin{lemma}\label{0_K_r_1_lem}
For all $r>r_H$,
$$\frac{M^2}{r^3}\left(1-\frac{2M}r\right)^2(\pd_r\xi_1)^2 \lesssim 2\left(\frac{u'}{r^2}-\frac{2u}{r^3}-\frac{2\chi}{r^3}\right)\left(1-\frac{2M}r\right)^2(\pd_r\xi_1)^2.$$
\end{lemma}
\begin{proof}
First, we will show that
\begin{equation}\label{up_4u_eqn}
\left(\frac{u'}{r^2}-\frac{4u}{r^3}\right)1_{r\le 4M}>0.
\end{equation}
Note that $u'\ge 0$ everywhere and that $u<0$ for $r< 3M$. The challenge is to prove the inequality for $r\in [3M,4M]$. Since $u$ vanishes at $3M$,
$$\frac{u'}{r^2}-\frac{4u}{r^3}=\frac{u'}{r^2}-\frac{4}{r^3}\int_{3M}^ru'.$$
We now divide by $w=vu'$, which is constant in the interval $[3M,4M]$.
\begin{align*}
\frac1w\left(\frac{u'}{r^2}-\frac{u}{r^3}\right) &= \frac1{r^2v}-\frac{4}{r^3}\int_{3M}^r \frac1{v}.
\end{align*}
Next, we multiply by $r^2v$ so that
\begin{align*}
\frac{r^2v}w\left(\frac{u'}{r^2}-\frac{u}{r^3}\right) &= 1-\frac{4v}{r}\int_{3M}^r \frac1{v}.
\end{align*}
Now, observe that the quantity $v^{-1}$ is increasing starting at $r=3M$, where $v$ has a minimum. Therefore,
$$1-\frac{4v}{r}\int_{3M}^r\frac1v >1-\frac{4v}{r}\left(\frac1v(r-3M)\right)=1-4\left(1-\frac{3M}r\right)=-3\left(1-\frac{4M}r\right).$$
Given the strict inequality in the first step, this establishes (\ref{up_4u_eqn}).

It remains to examine the interval $[4M,\infty)$. In this range, $u=r^2-c_1^2$ and $\chi=c_2^2$ for constants $c_1$ and $c_2$. Thus,
$$\left(\frac{u'}{r^2}-\frac{2u}{r^3}-\frac{2\chi}{r^3}\right)1_{r\ge 4M} = \left(\frac{2r}{r^2}-\frac{2(r^2-c_1^2)}{r^3}-\frac{2c_2^2}{r^3}\right)1_{r\ge 4M} = \frac{2(c_1^2-c_2^2)}{r^3}1_{r>4M}.$$
It follows that $\frac{u'}{r^2}-\frac{2u}{r^3}-\frac{2\chi}{r^3}$ does not change sign and behaves like $O(r^{-3})$ for large $r$. \qed
\end{proof}

\begin{lemma}\label{0_K_r_2_lem}
Since $\chi-u$ has the same sign as $-\left(1-\frac{4M}r\right)$ and $\chi-u=O(-r^2)$ for large $r$,
\begin{multline*}
-\frac1{M^3}\left(1-\frac{2M}r\right)\left(1-\frac{3M}r\right)1_{r\le 3M}(\xi_1)^2+\frac1{r^3} \left(1-\frac{4M}r\right)^21_{r\ge 4M}(\xi_1)^2 \\
\lesssim (\chi-u)\pd_r\left(\frac{4}{r^2}\left(1-\frac{2M}r\right)^2\right)(r^{-1}\xi_1)^2.
\end{multline*}
\end{lemma}
\begin{proof}
We compute
$$(\chi-u)\pd_r\left(\frac{4}{r^2}\left(1-\frac{2M}r\right)^2\right)=-\frac{4(\chi-u)}{r^3}\left(1-\frac{2M}r\right)\left(1-\frac{4M}r\right).$$
By the choice of $\chi$, this quantity vanishes on the interval $[3M,4M]$. Then, $\chi-u>0$ for $r< 3M$ and $\chi-u<0$ for $r>4M$. The fact that $\chi-u$ vanishes linearly as $r\nearrow 3M$ and as $r\searrow 4M$ accounts for the factor $1-\frac{3M}r$ and the additional factor $1-\frac{4M}r$ respectively. Finally, since $\chi-u=O(-r^2)$, this accounts for the appropriate $r^{-3}$ weight for large $r$. \qed
\end{proof}

\begin{lemma}\label{0_K_r_3_lem}
For all $r>r_H$,
\begin{multline*}
\frac{1}{M^3}\left(1-\frac{3M}r\right)^21_{r\le 3M}(\xi_1)^2+\frac{1}{M^3}1_{3M\le r\le 4M}(\xi_1)^2+\frac{1}{r^3}\left(1-\frac{4M}r\right)1_{r\ge 4M}(\xi_1)^2 \\
\lesssim 4\left(\frac{\chi'}{r^2}-\frac{\chi}{r^3}\right)\left(1-\frac{2M}r\right)^2(r^{-1}\xi_1)^2-2\frac{u\pd_rv}{r^2}(\xi_1)^2.
\end{multline*}
\end{lemma}
\begin{proof}
For the region $r\le 3M$, we have that $\chi=0$, so the inequality reduces to the fact that $\left(1-\frac{3M}r\right)^2\lesssim -u\pd_rv$ for $r\le 3M$. For the region $3M\le r\le 4M$, we have that $\chi=u$. We ignore the $-u\pd_rv$ term, because it has the right sign. Then the inequality reduces to the fact that 
$$\frac{\chi'}{r^2}-\frac{\chi}{r^3}=\frac{u'}{r^3}-\frac{u}{r^4}\ge \frac{u'}{r^3}-\frac{4u}{r^4}\ge 0.$$
The last inequality follows from (\ref{up_4u_eqn}).

It is in the region $r\ge 4M$ where the term $-2\frac{u\pd_rv}{r^2}(\xi_1)^2$ is essential. In this region, $\chi'=0$, so
\begin{multline*}
4\left(\frac{\chi'}{r^2}-\frac{\chi}{r^3}\right)\left(1-\frac{2M}r\right)^2(r^{-1}\xi_1)^2-2\frac{u\pd_rv}{r^2}(\xi_1)^2 \\
= -\frac{4\chi}{r^3}\left(1-\frac{2M}r\right)^2(r^{-1}\xi_1)^2 -2\chi\pd_rv(r^{-1}\xi_1)^2-2(u-\chi)\pd_rv (r^{-2}\xi_1)^2.
\end{multline*}
Now,
$$\frac1{r^3}\left(1-\frac{4M}r\right)(\xi_1)^2 \lesssim -2(u-\chi)\pd_rv(r^{-1}\xi_1)^2.$$
It remains to show that
$$0\le -\frac{4\chi}{r^3}\left(1-\frac{2M}r\right)^2(r^{-1}\xi_1)^2-2\chi\pd_rv(r^{-1}\xi_1)^2.$$
Indeed,
\begin{align*}
-\frac{4\chi}{r^3}\left(1-\frac{2M}r\right)^2-2\chi\pd_rv &= -\frac{4\chi}{r^3}\left(1-\frac{2M}r\right)^2+\frac{4\chi}{r^3}\left(1-\frac{3M}r\right) \\
&= \frac{4\chi}{r^3}\left[-1+\frac{4M}r-\frac{4M^2}{r^2} + 1 -\frac{3M}r\right] \\
&= \frac{4\chi}{r^3}\left[\frac{M}{r}-\frac{4M^2}{r^2}\right] \\
&= \frac{4\chi}{r^3}\frac{M}{r}\left(1-\frac{4M}r\right).
\end{align*}
This completes the proof of Lemma \ref{0_K_r_3_lem}. \qed
\end{proof}

Combining Lemmas \ref{0_K_r_1_lem}-\ref{0_K_r_3_lem}, we have the following estimate.
$$\frac{M^2}{r^3}\left(1-\frac{2M}r\right)^2(\pd_r\xi_1)^2+\frac{1}{r^3}\left|1-\frac{3M}r\right|\left|1-\frac{4M}r\right|(\xi_1)^2 \lesssim \mathcal{K}_{(r)}-2\frac{u\pd_r v}{r^2}(\xi_1)^2.$$
By slightly modifying $\chi$, the weak degeneracies at $r=3M$ and $r=4M$ can be removed and the following estimate can be established.
$$\frac{M^2}{r^3}\left(1-\frac{2M}r\right)^2(\pd_r\xi_1)^2+\frac{1}{r^3}(\xi_1)^2 \lesssim \mathcal{K}_{(r)}-2\frac{u\pd_r v}{r^2}(\xi_1)^2.$$
It follows that if $\epsilon>0$ is sufficiently small, then 
$$\frac{M^2}{r^3}\left(1-\frac{2M}r\right)^2(\pd_r\xi_1)^2+\frac{1}{r^3}(\xi_1)^2 \lesssim \mathcal{K}_{(r)}-(2-\epsilon)\frac{u\pd_r v}{r^2}(\xi_1)^2.$$
This completes the proof of Lemma \ref{0_K_r_lem}. \qed
\end{proof}

\begin{lemma}\label{0_K_2_lem}
$$\frac{M^2}{r^3}\left(1-\frac{2M}r\right)^2(\pd_r\xi_2)^2+\frac{\chi_{trap}}{r}|\sla\nabla\xi_2|^2 \lesssim \mathcal{K}_{(2)}$$
\end{lemma}
\begin{proof}
This was established for the scalar wave equation (see Lemma \ref{Kmunu_lem}). \qed
\end{proof}

Finally, we are prepared to complete the proof of the partial Morawetz estimate in Schwarzschild. From Lemma \ref{0_divJ_rearranged_lem}, since $\Box_g\xi_a-V_a{}^b\xi_b=0$,
\begin{align*}
divJ &= \mathcal{K}_{(\theta)}+\mathcal{K}_{(r)}+\mathcal{K}_{(2)}-\frac12\Box_g w |\xi|^2 \\
&= \left(\mathcal{K}_{(\theta)}-(2-\epsilon)\left(-\frac{u\pd_r v}{r^2}\right)(\xi_1)^2\right) 
+\left(\mathcal{K}_{(r)}+(2-\epsilon)\left(-\frac{u\pd_r v}{r^2}\right)(\xi_1)^2\right) +\mathcal{K}_{(2)}-\frac12\Box_g w|\xi|^2.
\end{align*}
From Lemma \ref{0_K_theta_lem},
$$\frac{\chi_{trap}}{r}|\sla\nabla\xi_1|^2+\chi_{trap}\frac{\cot^2\theta}{r^3}(\xi_1)^2\lesssim_\epsilon \mathcal{K}_{(\theta)}-(2-\epsilon)\left(-\frac{u\pd_r v}{r^2}\right)(\xi_1)^2.$$
From Lemma \ref{0_K_r_lem},
$$\frac{M^2}{r^3}\left(1-\frac{2M}r\right)^2(\pd_r\xi_1)^2+\frac{1}{r^3}(\xi_1)^2 \lesssim \mathcal{K}_{(r)}+(2-\epsilon)\left(-\frac{u\pd_r v}{r^2}\right)(\xi_1)^2.$$
From Lemma \ref{0_K_2_lem},
$$\frac{M^2}{r^3}\left(1-\frac{2M}r\right)^2(\pd_r\xi_2)^2+\frac{\chi_{trap}}{r}|\sla\nabla\xi_2|^2 \lesssim \mathcal{K}_{(2)}.$$
And from Lemma \ref{Kmunu_lem},
$$\frac{M}{r^4}1_{r\ge 4M}((\xi_1)^2+(\xi_2)^2)-r^{-2}V_{\epsilon_{temper}}((\xi_1)^2+(\xi_2)^2)
\lesssim -\frac12\Box_g w|\xi|^2.$$
The bound stated in the partial Morawetz estimate is obtained by summing each of these. \qed
\end{proof}

\subsection{The partial Morawetz estimate for Kerr $|a|\ll M$}\label{partial_morawetz_kerr_sec}

Here, we prove the partial Morawetz estimate for slowly rotating Kerr spacetimes by slightly generalizing the much simpler proof for the Schwarzschild case presented previously.

First, we begin with a calculation of quantites that arise in Kerr.
\begin{lemma}\label{small_a_quantities_lem}
The following identities hold.
$$\frac{\pd_\mu A}{A} = \frac{\pd_\mu A_1}{A_1}+\frac{\pd_\mu A_2}{A_2},$$
\begin{align*}
\frac{\pd_r A_1}{A_1} &= \frac{2r}{r^2+a^2} \\
\frac{\pd_\theta A_1}{A_1} &= 2\cot\theta \\
\frac{\pd_rA_2}{A_2} &= a^2\sin^2\theta\left(\frac{-2r}{q^2(r^2+a^2)}+\frac{-\pd_rv}{1-a^2\sin^2\theta v}\right) \\
\frac{\pd_\theta A_2}{A_2} &= a^2\sin\theta\left(\frac{4Mr\cos\theta}{q^2(r^2+a^2)(1-a^2\sin^2\theta v)}\right) \\
\frac{\pd_rB}{A} &= a^3\sin^2\theta\left(\frac{4Mr\cos\theta}{q^2(r^2+a^2)^2(1-a^2\sin^2\theta v)}\right) \\
\frac{\pd_\theta B}{A} &= a\sin\theta\left(\frac{2M(2r^2(r^2+a^2)+(r^2-a^2)q^2)}{q^2(r^2+a^2)^2(1-a^2\sin^2\theta v)}\right).
\end{align*}
In particular,
\begin{align*}
\left|\frac{\pd_r A_2}{A_2}\right| &\lesssim \frac{a^2}{r^3} \\
\left|\frac{\pd_\theta A_2}{A_2}\right| &\lesssim \frac{a^2M}{r^3} \\
\left|\frac{\pd_r B}{A}\right| &\lesssim \frac{|a|^3M}{r^5} \\
\left|\frac{\pd_\theta B}{A}\right| &\lesssim \frac{|a|M}{r^2}.
\end{align*}
\end{lemma}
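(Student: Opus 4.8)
The plan is to derive every identity by direct differentiation, after one preliminary simplification of $A_2$, and then read the size bounds off the resulting closed forms. Since $q^2 = r^2 + a^2\cos^2\theta$, one has
$$1 + \frac{a^2\sin^2\theta}{q^2} = \frac{q^2 + a^2\sin^2\theta}{q^2} = \frac{r^2+a^2}{q^2},$$
so $A_2 = (r^2+a^2)q^{-2}(1 - a^2\sin^2\theta\, v)$ and therefore $A = A_1 A_2 = (r^2+a^2)^2 q^{-2}\sin^2\theta\,(1 - a^2\sin^2\theta\, v)$; this matches the given closed form for $A$ once one writes $\Delta = r^2 - 2Mr + a^2$ and $v = \Delta(r^2+a^2)^{-2}$. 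The identity $\frac{\pd_\mu A}{A} = \frac{\pd_\mu A_1}{A_1} + \frac{\pd_\mu A_2}{A_2}$ is then immediate from $A = A_1 A_2$, and the two formulas for $A_1 = (r^2+a^2)\sin^2\theta$ are one-line computations.

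Next I would compute $\frac{\pd_\mu A_2}{A_2}$ by splitting the logarithmic derivative of $A_2 = (r^2+a^2)q^{-2}(1 - a^2\sin^2\theta\, v)$ into three summands. In the radial case the pair $\frac{2r}{r^2+a^2} - \frac{2r}{q^2}$ collapses via $q^2 - (r^2+a^2) = -a^2\sin^2\theta$ into $\frac{-2r a^2\sin^2\theta}{q^2(r^2+a^2)}$, which is exactly the first term in the stated formula. In the angular case the two nonvanishing summands share the structure $\frac{1}{q^2} - \frac{v}{1 - a^2\sin^2\theta\, v}$, and the scalar identity
$$1 - (r^2+a^2)v = \frac{(r^2+a^2)^2 - \Delta(r^2+a^2)}{(r^2+a^2)^2} = \frac{2Mr}{r^2+a^2}$$
turns this into $\frac{2Mr}{(r^2+a^2)q^2(1 - a^2\sin^2\theta\, v)}$, producing the claimed expression with the $4Mr\cos\theta$ numerator.

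For $B$ I would differentiate its closed form directly. Only the $q^{-2}$ factor carries $r$-dependence, so $\pd_r B = 4a^3 Mr\sin^4\theta\cos\theta\, q^{-4}$; dividing by $A$ cancels two powers of $q$ and one of $\sin\theta$ and gives the formula for $\frac{\pd_r B}{A}$. The $\theta$-derivative is the one genuinely tedious identity and the place I expect to spend the most effort: differentiating yields $\pd_\theta B = aM\sin^3\theta\, q^{-4}\big(6q^4 - 2a^2(4\cos^2\theta - \sin^2\theta)q^2 - 4a^4\sin^2\theta\cos^2\theta\big)$, and one must verify that the bracket equals $2\big(2r^2(r^2+a^2) + (r^2-a^2)q^2\big)$. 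Expanding both sides in powers of $r$ and $a$ and substituting $\cos^2\theta = 1 - \sin^2\theta$, both reduce to $6r^4 + 2a^2 r^2 + 2a^2(r^2-a^2)\cos^2\theta$; dividing by $A$ then gives the stated formula for $\frac{\pd_\theta B}{A}$.

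Finally, the four bounds follow by inspecting these closed forms. Since $|a| \ll M$ and $r \ge r_H > M$, we have $q^2 \sim r^2$, $r^2 + a^2 \sim r^2$, and $|\pd_r v| \lesssim r^{-3}$ (from the explicit form of $v$); moreover $0 \le v \le (r^2+a^2)^{-1} \le r^{-2}$, so $a^2\sin^2\theta\, v \le a^2 M^{-2} \ll 1$ and the factor $1 - a^2\sin^2\theta\, v$ is bounded above and below by positive constants uniformly on the exterior $r \ge r_H$. Substituting into the formulas for $\frac{\pd_r A_2}{A_2}$, $\frac{\pd_\theta A_2}{A_2}$, $\frac{\pd_r B}{A}$, and $\frac{\pd_\theta B}{A}$ gives the bounds $a^2 r^{-3}$, $a^2 M r^{-3}$, $|a|^3 M r^{-5}$, and $|a| M r^{-2}$ respectively. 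Apart from the $\theta$-derivative of $B$, the only item requiring genuine care is this uniform two-sided control of $1 - a^2\sin^2\theta\, v$ together with the bound on $\pd_r v$ over the whole range $r \ge r_H$; everything else is bookkeeping.
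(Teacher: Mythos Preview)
Your proposal is correct and takes the same approach as the paper, which simply states that the identities ``follow from direct calculation'' without elaboration. Your preliminary simplification $1 + a^2\sin^2\theta/q^2 = (r^2+a^2)/q^2$ and the use of the identity $1 - (r^2+a^2)v = 2Mr/(r^2+a^2)$ are exactly the right moves to make the logarithmic derivatives of $A_2$ and the verification of $\pd_\theta B$ tractable; the polynomial check for the bracket in $\pd_\theta B$ is correct as written.
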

\begin{proof}
These follow from direct calculation. \qed
\end{proof}

We also prove the following lemma, which allows us to estimate pure partial derivatives.
\begin{lemma}\label{Dxi_lem}
$$\left(1-\left|\frac{r\pd_r B}{A}\right|\right)\left[(\pd_r\xi_1)^2+(\pd_r\xi_2)^2\right] \le |D_r\xi|^2 +\left|\frac{r\pd_r B}{A}\right|\frac{|\xi|^2}{r^2}$$
$$\left(1-\left|\frac{\pd_{\theta} B}{A}\right|\right)\left[(\pd_{\theta}\xi_1)^2+(\pd_{\theta}\xi_2)^2\right] \le |D_{\theta}\xi|^2+\left|\frac{\pd_{\theta} B}{A}\right||\xi|^2.$$
\end{lemma}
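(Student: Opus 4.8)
The plan is to expand $D_\alpha\xi$ in the orthonormal frame $\{e^1,e^2\}$, which reduces both inequalities to an elementary pointwise estimate in the two scalar components $\xi_1,\xi_2$ and their coordinate derivatives. First I would use $\xi_a=\xi_1(e^1)_a+\xi_2(e^2)_a$ together with the frame Christoffel relations $D_\alpha e^1=-\frac{\pd_\alpha B}{A}e^2$ and $D_\alpha e^2=\frac{\pd_\alpha B}{A}e^1$ recorded above to get
$$D_\alpha\xi_a=\left(\pd_\alpha\xi_1+\frac{\pd_\alpha B}{A}\xi_2\right)(e^1)_a+\left(\pd_\alpha\xi_2-\frac{\pd_\alpha B}{A}\xi_1\right)(e^2)_a.$$
Since $\{e^1,e^2\}$ is orthonormal, the fiber norm splits as a sum of squares of the frame components, giving the pointwise identity
$$|D_\alpha\xi|^2=(\pd_\alpha\xi_1)^2+(\pd_\alpha\xi_2)^2+\left(\frac{\pd_\alpha B}{A}\right)^2|\xi|^2+2\frac{\pd_\alpha B}{A}\left(\xi_2\pd_\alpha\xi_1-\xi_1\pd_\alpha\xi_2\right),$$
with $|\xi|^2=\xi_1^2+\xi_2^2$.

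Next, abbreviate $P_\alpha^2:=(\pd_\alpha\xi_1)^2+(\pd_\alpha\xi_2)^2$ and bound the cross term by Cauchy--Schwarz applied to the vectors $(\xi_2,-\xi_1)$ and $(\pd_\alpha\xi_1,\pd_\alpha\xi_2)$, so that $|\xi_2\pd_\alpha\xi_1-\xi_1\pd_\alpha\xi_2|\le|\xi|\,P_\alpha$. Discarding the nonnegative term $(\pd_\alpha B/A)^2|\xi|^2$ from the identity yields
$$P_\alpha^2\le|D_\alpha\xi|^2+2\left|\frac{\pd_\alpha B}{A}\right||\xi|\,P_\alpha.$$
For $\alpha=\theta$, apply Young's inequality in the form $2\left|\frac{\pd_\theta B}{A}\right||\xi|\,P_\theta\le\left|\frac{\pd_\theta B}{A}\right||\xi|^2+\left|\frac{\pd_\theta B}{A}\right|P_\theta^2$ and move the $P_\theta^2$ term to the left to obtain the second stated inequality. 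For $\alpha=r$, instead use the weighted split $2\left|\frac{\pd_r B}{A}\right||\xi|\,P_r\le\left|\frac{r\pd_r B}{A}\right|\frac{|\xi|^2}{r^2}+\left|\frac{r\pd_r B}{A}\right|P_r^2$ — the geometric mean of the two right-hand terms is exactly $\left|\frac{\pd_r B}{A}\right||\xi|\,P_r$ — and again absorb the $P_r^2$ term, which gives the first stated inequality.

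The argument is entirely elementary once the frame expansion is in hand, so there is no real obstacle. The only two points deserving attention are the use of orthonormality of $\{e^1,e^2\}$ so that $|D_\alpha\xi|^2$ and $|\xi|^2$ decompose as sums of squares of frame components, and the choice of weights in Young's inequality: the split $\left|\frac{r\pd_r B}{A}\right|\cdot r^{-2}$ versus $\left|\frac{r\pd_r B}{A}\right|$ (resp. $\left|\frac{\pd_\theta B}{A}\right|\cdot1$ versus $\left|\frac{\pd_\theta B}{A}\right|$) is forced by requiring the geometric mean to reproduce the cross term, and dropping the favorable-sign term $(\pd_\alpha B/A)^2|\xi|^2$ is what makes the right-hand sides as clean as stated.
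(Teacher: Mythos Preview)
Your proof is correct and essentially equivalent to the paper's. The only organizational difference is that the paper first isolates the scalar inequality $(1-f)x^2\le(x\pm fy)^2+fy^2$ (via the identity $(x\pm fy)^2+f(x^2+y^2)=x^2+f(x\pm y)^2+f^2y^2$) and then applies it componentwise to each of the two frame components, whereas you expand the full norm $|D_\alpha\xi|^2$, bound the cross term by Cauchy--Schwarz, and then split via Young's inequality; the underlying algebra is the same.
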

\begin{proof}
For arbitrary quantities $x$ and $y$ and some positive function $f$, we have
$$(x\pm fy)^2+f (x^2+y^2) = x^2 \pm 2fxy+f^2y^2+fx^2+fy^2 = x^2+f(x \pm y)^2+f^2y^2.$$
Therefore,
$$x^2\le (x \pm fy)^2+ f(x^2+y^2),$$
whence
$$(1-f)x^2 \le (x \pm fy)^2+fy^2.$$

Now, we have that
$$|D_r\xi|^2 = \left(\pd_r\xi_1 +\frac{r\pd_rB}{A}\frac{\xi_2}{r}\right)^2 + \left(\pd_r\xi_2 -\frac{r\pd_rB}{A}\frac{\xi_1}{r}\right)^2$$
and
$$|D_{\theta}\xi|^2 = \left(\pd_{\theta}\xi_1 +\frac{\pd_{\theta}B}{A}\xi_2\right)^2 + \left(\pd_{\theta}\xi_2 -\frac{\pd_{\theta}B}{A}\xi_1\right)^2.$$
At this point, both estimates in the lemma can be easily deduced. \qed
\end{proof}

Finally, we turn to the partial Morawetz estimate for slowly rotating Kerr spacetimes. 
\begin{proposition}\label{partial_morawetz_kerr_prop} (partial Morawetz estimate) Suppose $|a|/M$ is sufficiently small, and suppose $\xi_a$ satisfies the equation
$$\Box_g\xi_a - V_a{}^b\xi_b=0,$$
where $V_a{}^b$ is the potential defined previously.

Then there exists a current $J$ such that
\begin{multline*}
\frac{M^2}{r^3}\left(1-\frac{r_H}r\right)^2(\pd_r\xi_1)^2+\frac{\chi_{trap}}{r}|\sla\nabla\xi_1|^2+\frac{1}{r^3}(\xi_1)^2 + \chi_{trap}\frac{\cot^2\theta}{r^3}(\xi_1)^2 \\
+ \frac{M^2}{r^3}\left(1-\frac{r_H}r\right)^2(\pd_r\xi_2)^2+\frac{\chi_{trap}}{r}|\sla\nabla\xi_2|^2+\frac{M}{r^4}1_{r\ge r_*}(\xi_2)^2 \\
-q^{-2}V_{\epsilon_{temper}}((\xi_1)^2+(\xi_2)^2) -q^{-2}V_{\epsilon_a}(\xi_2)^2\\
\lesssim div J,
\end{multline*}
where $\chi_{trap}=\left(1-\frac{r_{trap}}r\right)^2$, $V_{\epsilon_{temper}}$ is the potential defined in Lemma \ref{Kmunu_lem}, and $V_{\epsilon_a}$ is a positive function supported on $r\in [r_H,r_*]$ and satisfying $||V_{\epsilon_a}||_{L^1(r)}\le \epsilon_a$ when $|a|/M$ is chosen sufficiently small.
\end{proposition}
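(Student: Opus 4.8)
The plan is to run the same argument as in the Schwarzschild case (Proposition \ref{partial_morawetz_szd_prop}), choosing the current $J=J[X,w,m]$ with the same vectorfield $X=uv\pd_r$ and scalar $w=v\pd_r u$, but now with the $a$-dependent versions of $v$, $\Delta$, $q$, and the potential $V^{ab}$. The new feature is that the curvature tensor $R_{\mu\nu ab}$ no longer vanishes — by Lemma \ref{bundle_R_calculation_lem} it is proportional to $\pd_{[\mu}A\pd_{\nu]}B/A^2$, hence to $B$, hence controlled by $|a|$. So the strategy is: (i) keep the main positive terms from the $a=0$ proof, which are stable under small perturbations; (ii) absorb all the new $a$-dependent contributions — the curvature term $2R_{\mu\nu ab}X^\mu\xi^a D^\nu\xi^b$, the corrections to $V^{ab}$ coming from $A_2$ and $B$, and the discrepancy between $|D_r\xi|^2,|D_\theta\xi|^2$ and the pure coordinate-derivative squares — into either the good bulk terms (with room to spare, since those came with an honest implied constant) or into the small error potentials $V_{\epsilon_{temper}}$ and $V_{\epsilon_a}$.

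The key steps, in order: First, re-run Lemmas \ref{0_divJ_rearranged_lem}, \ref{0_K_theta_lem}, \ref{0_K_r_lem}, \ref{0_K_2_lem} verbatim with the Kerr $v$ and $\Delta$; the estimates $K^{rr}\sim \frac{M^2}{r^3}(1-\frac{r_H}{r})^2$, $K_Q^{\theta\theta}\sim\frac1{r^3}(1-\frac{r_{trap}}{r})$ from Lemma \ref{Kmunu_lem} already hold in Kerr, and the sign/vanishing structure of $u$, $\chi$, $\chi-u$ near $r_{trap}$ and $r_*$ is the same, so the coercive lower bounds survive with $r_H,r_{trap},r_*$ replacing $2M,3M,4M$. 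Second, handle the splitting $\frac{\pd_\mu A}{A}=\frac{\pd_\mu A_1}{A_1}+\frac{\pd_\mu A_2}{A_2}$ from Lemma \ref{small_a_quantities_lem}: the $A_1$ part reproduces exactly the Schwarzschild computation (since $\pd_r A_1/A_1=2r/(r^2+a^2)$ and $\pd_\theta A_1/A_1=2\cot\theta$ mirror the $a=0$ case), while the $A_2$ and $B$ corrections to $V_{(r)}^{11},V_{(\theta)}^{11}$, and the cross terms $V^{12}$, are all $O(a^2/r^2)$ or smaller and supported where needed, so they are dominated by $\frac1{r^3}(\xi_1)^2$ or pushed into $V_{\epsilon_a}$ near the horizon. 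Third, use Lemma \ref{Dxi_lem} to pass from $|D_r\xi|^2$ and $|D_\theta\xi|^2$ (which is what $K^{\mu\nu}D_\mu\xi\cdot D_\nu\xi$ naturally produces) to $(\pd_r\xi_i)^2$ and $(\pd_\theta\xi_i)^2$: the loss is a factor $(1-|r\pd_r B/A|)\ge 1-C|a|^3M/r^4$ and an error $|r\pd_r B/A|\,|\xi|^2/r^2$, both harmless for $|a|$ small. Fourth, bound the curvature term: $|2R_{\mu\nu ab}X^\mu\xi^a D^\nu\xi^b|\lesssim |a|\cdot(\text{geometric weights})\cdot|\xi||D\xi|$, and split it by Cauchy–Schwarz into a piece absorbed by the good derivative bulk terms (times the large implied constant) and a piece absorbed by $\frac1{r^3}|\xi|^2$, except possibly near $r=r_H$ where the derivative bulk degenerates — there it goes into $V_{\epsilon_a}$, which is exactly why that potential is supported on $[r_H,r_*]$. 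Finally, the $-\frac12\Box_g w|\xi|^2$ term is handled by the Kerr version of the last display in Lemma \ref{Kmunu_lem}, giving $\frac{M}{r^4}1_{r\ge r_*}(\xi_2)^2 - q^{-2}V_{\epsilon_{temper}}|\xi|^2$ as before; the $\xi_2$ analysis (Lemma \ref{0_K_2_lem}) is unchanged since $\xi_2$ decouples from the potential at leading order and the scalar-wave Kerr Morawetz estimate already covers it.

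The main obstacle I expect is the region near the event horizon. Away from $r_H$ the good bulk terms are strictly positive with a fixed margin, so every $O(|a|)$ error is trivially absorbed; but as $r\to r_H$ the coefficients $K^{rr}$ and the $(\pd_r\xi)^2$ bulk both degenerate like $(1-r_H/r)^2$, and the curvature/potential errors do not degenerate as fast. The resolution — and the reason the statement is phrased with an extra error potential $V_{\epsilon_a}$ rather than just enlarging $V_{\epsilon_{temper}}$ — is that these near-horizon errors are genuinely $O(|a|)$ in $L^1(dr)$ on the compact collar $[r_H,r_*]$, so by taking $|a|/M$ small enough one makes $\|V_{\epsilon_a}\|_{L^1(r)}\le\epsilon_a$. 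The bookkeeping is to track carefully that every error term is either (a) pointwise dominated by a good term with room, or (b) supported on $[r_H,r_*]$ with small $L^1(dr)$ norm; no error is allowed to be both non-degenerate at $r_H$ and non-integrable, and Lemma \ref{small_a_quantities_lem} is precisely the input guaranteeing this. Summing all the pieces then yields the claimed inequality. \qed
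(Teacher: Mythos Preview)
Your approach matches the paper's: same current $J[X,w,m]$ with Kerr-adapted $m$, same decomposition into $\mathcal{K}_{(r)},\mathcal{K}_{(\theta)},\mathcal{K}_{(2)}$ pieces (the paper adds explicit bins $\mathcal{K}_{(t)}$ and $\mathcal{K}_{(a)}$ for the new $O(|a|)$ contributions), Lemma \ref{Dxi_lem} to strip connection coefficients, and absorption of small-$a$ errors into the good bulk.

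One point is misdiagnosed. The reason $V_{\epsilon_a}$ appears is not a horizon degeneracy of the \emph{derivative} bulk. The $O(|a|)$ errors (from the curvature term in $\mathcal{K}_{(a)}$ and from the Lemma \ref{Dxi_lem} corrections in $\mathcal{K}_{(\theta)}$, $\mathcal{K}_{(2)}$) produce a zero-order term of size $\frac{|a|M}{r^5}|\xi|^2$. The $(\xi_1)^2$ part of this is absorbed everywhere by the good $\frac{1}{r^3}(\xi_1)^2$ bulk. But for $(\xi_2)^2$ the only available bulk is $\frac{M}{r^4}1_{r\ge r_*}$ --- there is \emph{no} $(\xi_2)^2$ control on $[r_H,r_*]$ at all, not merely a degenerate one. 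That is why the leftover lands in $V_{\epsilon_a}$ supported on $[r_H,r_*]$, and why the statement reads $-q^{-2}V_{\epsilon_a}(\xi_2)^2$ rather than $-q^{-2}V_{\epsilon_a}|\xi|^2$. Your near-horizon derivative-degeneracy concern is a red herring here; the actual bookkeeping is about zero-order $(\xi_2)^2$ on the whole interval $[r_H,r_*]$.
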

The reader is encouraged to compare the following proof to the proof of Proposition \ref{partial_morawetz_szd_prop}.
\begin{proof}
We again choose the current
$$J_\mu = J[X,w,m]_\mu,$$
where $J[X,w,m]$ is defined in Lemma \ref{vectorized_current_template_lem}.

We use the same vectorfield
$$X=uv\pd_r$$
and scalar function
$$w=v\pd_r u$$
that are used in the scalar wave equation for Kerr (see Lemma \ref{Kmunu_lem}).

As in the simpler proof for Schwarzschild, we also use a one-form $m_\mu^{ab}$ with components
\begin{align*}
m_\mu^{ab}&=m_\mu^{ij} (e_i)^a(e_j)^b \\
m^{11}_\mu dx^\mu&=\frac{4r^2\chi v}{(r^2+a^2)^2}dr+(2-\epsilon)u\pd_rv\cot\theta d\theta \\
m^{12}_\mu dx^\mu&=m^{21}=m^{22}=0,
\end{align*}
where the function $\chi$ will be defined in Lemma \ref{a_K_r_lem}.

By Lemma \ref{vectorized_current_template_lem}, we have that
\begin{multline*}
div J = K^{\mu\nu}D_\mu\xi\cdot D_\nu\xi-\frac12\Box_g w|\xi|^2+((w-divX)V^{ab}-D_XV^{ab})\xi_a\xi_b+D^\mu m_\mu^{ab}\xi_a\xi_b+2m_\mu^{ab}\xi_aD^\mu\xi_b \\
+2R_{\mu\nu a b}X^\mu \xi^a D^\nu \xi^b+(\Box_g\xi_a-V_a{}^b\xi_b)(2D_X\xi^a+w\xi^a).
\end{multline*}
We rearrange these terms according to the following lemma.
\begin{lemma}\label{a_divJ_rearranged_lem}
$$divJ = \mathcal{K}-\frac12\Box_g w|\xi|^2+(\Box_g\xi_a-V_a{}^b\xi_b)(2D_X\xi^a+w\xi^a),$$
where
$$\mathcal{K} = \mathcal{K}_{(r)}+\mathcal{K}_{(\theta)}+\mathcal{K}_{(2)}+\mathcal{K}_{(t)}+\mathcal{K}_{(a)},$$
$$\mathcal{K}_{(r)} = K^{rr}\left(\pd_r\xi_1+\frac{\pd_r B}{A}\xi_2\right)^2+((w-divX)V_{(r)}^{11}+X^\mu\pd_\mu V_{(r)}^{11})(\xi_1)^2+\nabla^r (m_r^{11})(\xi_1)^2+2m_r^{11}\xi_1\pd^r\xi_1$$
$$\mathcal{K}_{(\theta)} = K_{Q}^{\theta\theta}\left(\pd_\theta\xi_1+\frac{\pd_\theta B}{A}\xi_2\right)^2+((w-divX)V_{(\theta)}^{11}+X^\mu\pd_\mu V_{(\theta)}^{11})(\xi_1)^2+\nabla^\theta (m_\theta^{11})(\xi_1)^2+2m_\theta^{11}\xi_1\pd^\theta\xi_1$$
$$\mathcal{K}_{(2)}=K^{rr}\left(\pd_r\xi_2-\frac{\pd_r B}{A}\xi_1\right)^2+K_{Q}^{\theta\theta}\left(\pd_\theta\xi_2-\frac{\pd_\theta B}{A}\xi_1\right)^2$$
$$\mathcal{K}_{(t)}=K_Q^{tt}\left((\pd_t\xi_1)^2+(\pd_t\xi_2)^2\right)$$
\begin{multline*}
\mathcal{K}_{(a)} = ((w-divX)V_{(a)}^{ij}+X^\mu \pd_\mu V_{(a)}^{ij})\xi_i\xi_j +X^\mu V^{ij}D_\mu(e_i^a e_j^b)\xi_a\xi_b +m_\mu^{ij}D^\mu(e_i^a e_j^b)\xi_i\xi_j \\
+2m_\mu^{ij} e_i^aD^\mu (e_j^b)\xi_a\xi_b +2R_{\mu\nu ab}X^\mu\xi^aD^\nu\xi^b
\end{multline*}
and
$$V_{(r)}^{11}=\frac{\Delta}{q^2}\left(\frac{2r}{r^2+a^2}\right)^2$$
$$V_{(\theta)}^{11}=\frac1{q^2}\left(2\cot\theta\right)^2$$
$$V_{(a)}^{ij}=V^{ij}-(V_{(r)}^{11}+V_{(\theta)}^{11})\delta_1^i\delta_1^j.$$
\end{lemma}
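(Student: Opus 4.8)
The plan is to recognize that the statement is a purely algebraic rearrangement of the divergence formula proved in Lemma \ref{vectorized_current_template_lem}. Since the two terms $-\frac12\Box_g w|\xi|^2$ and $(\Box_g\xi_a-V_a{}^b\xi_b)(2D_X\xi^a+w\xi^a)$ already appear there verbatim, it suffices to verify the identity
\begin{multline*}
\mathcal{K}=K^{\mu\nu}D_\mu\xi\cdot D_\nu\xi+\bigl((w-divX)V^{ab}-D_XV^{ab}\bigr)\xi_a\xi_b \\
+D^\mu m_\mu^{ab}\xi_a\xi_b+2m_\mu^{ab}\xi_aD^\mu\xi_b+2R_{\mu\nu ab}X^\mu\xi^aD^\nu\xi^b .
\end{multline*}
The new feature relative to the Schwarzschild identity of Lemma \ref{0_divJ_rearranged_lem} is that the curvature tensor no longer vanishes (Lemma \ref{bundle_R_calculation_lem}), so I would decompose each object on the right into a ``Schwarzschild-like principal part'' supported on the $\xi_1$ slot of $\mathcal{B}$ and a remainder of size $O(|a|)$ built from $\pd B/A$ and $\pd A_2/A_2$ (both $O(|a|)$ by Lemma \ref{small_a_quantities_lem}), and park every such remainder, together with the curvature term, in $\mathcal{K}_{(a)}$.

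First I would expand the kinetic term. By Lemma \ref{Kmunu_lem}, for axisymmetric $\xi$ the contraction $K^{\mu\nu}D_\mu\xi\cdot D_\nu\xi$ reduces to $K^{rr}|D_r\xi|^2+K_Q^{\theta\theta}|D_\theta\xi|^2+K_Q^{tt}|D_t\xi|^2$. Using the frame Christoffel relations $D_\alpha e^1=-\frac{\pd_\alpha B}{A}e^2$, $D_\alpha e^2=\frac{\pd_\alpha B}{A}e^1$ one gets $D_\alpha\xi=\bigl(\pd_\alpha\xi_1+\frac{\pd_\alpha B}{A}\xi_2\bigr)e^1+\bigl(\pd_\alpha\xi_2-\frac{\pd_\alpha B}{A}\xi_1\bigr)e^2$, so $|D_\alpha\xi|^2$ splits into the ``$\xi_1$ square'' $\bigl(\pd_\alpha\xi_1+\frac{\pd_\alpha B}{A}\xi_2\bigr)^2$, which goes into $\mathcal{K}_{(r)}$ or $\mathcal{K}_{(\theta)}$, and the ``$\xi_2$ square'' $\bigl(\pd_\alpha\xi_2-\frac{\pd_\alpha B}{A}\xi_1\bigr)^2$, which goes into $\mathcal{K}_{(2)}$. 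Since $A$ and $B$ are $t$-independent we have $\Phi_*\pd_t=0$, hence $D_t e^i=0$ and $|D_t\xi|^2=(\pd_t\xi_1)^2+(\pd_t\xi_2)^2$, which is precisely $\mathcal{K}_{(t)}$.

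Next I would treat the potential and the $m$-terms. Writing $A=A_1A_2$ gives $\frac{\pd_\alpha A}{A}=\frac{\pd_\alpha A_1}{A_1}+\frac{\pd_\alpha A_2}{A_2}$; expanding $V^{ij}=g^{\alpha\beta}d\Phi_\alpha^i d\Phi_\beta^j$ in the orthonormal frame and invoking Lemma \ref{small_a_quantities_lem} identifies $g^{rr}(\pd_rA_1/A_1)^2=V_{(r)}^{11}$ and $g^{\theta\theta}(\pd_\theta A_1/A_1)^2=V_{(\theta)}^{11}$, while everything else in $V^{ij}$ (the $A_2$ cross-terms and the entries $V^{12},V^{22}$, all $O(|a|)$) is by definition $V_{(a)}^{ij}$. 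Splitting $\bigl((w-divX)V^{ab}-D_XV^{ab}\bigr)\xi_a\xi_b$ accordingly, and further splitting $D_X\bigl(V^{ij}e_i^ae_j^b\bigr)$ into its coordinate part $X^\mu\pd_\mu V^{ij}$ and its frame-connection part $V^{ij}X^\mu D_\mu(e_i^ae_j^b)$, sends the $V_{(r)}^{11}$, $V_{(\theta)}^{11}$ contributions into $\mathcal{K}_{(r)}$ and $\mathcal{K}_{(\theta)}$ (in the same pattern as Lemma \ref{0_divJ_rearranged_lem}) and the rest into $\mathcal{K}_{(a)}$. Because $m^{ij}$ has only its $11$ component nonzero, both $D^\mu m_\mu^{ab}\xi_a\xi_b$ and $2m_\mu^{ab}\xi_aD^\mu\xi_b$ split into a coordinate part $\nabla^\mu m_\mu^{11}(\xi_1)^2+2m_\mu^{11}\xi_1\pd^\mu\xi_1$, whose $\mu=r$ and $\mu=\theta$ pieces are exactly the last two terms of $\mathcal{K}_{(r)}$ and $\mathcal{K}_{(\theta)}$, and a frame-connection part that joins $\mathcal{K}_{(a)}$; the curvature term is placed in $\mathcal{K}_{(a)}$ unchanged. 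Collecting like terms then gives the claimed decomposition.

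I expect the only real difficulty to be organizational: tracking which $A_2$-correction and frame-mixing terms land in $\mathcal{K}_{(a)}$, and making the signs of the $(w-divX)V_{(r)}^{11}$, $X^\mu\pd_\mu V_{(r)}^{11}$, $\nabla^r m_r^{11}$ and $2m_r^{11}\xi_1\pd^r\xi_1$ contributions (and their $\theta$ analogues) consistent with the Schwarzschild computation of Lemma \ref{0_divJ_rearranged_lem} once the extra $O(|a|)$ mixing terms are present. There is no analytic content beyond that: once the frame expansion and the $A=A_1A_2$ split are written out, the identity is checked term by term exactly as in the proof of Lemma \ref{0_divJ_rearranged_lem}.
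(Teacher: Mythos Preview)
Your proposal is correct and follows essentially the same approach as the paper: the paper's proof simply says ``the proof is the same as the proof of Lemma \ref{0_divJ_rearranged_lem}, except that there are new terms which are grouped into the quantities $\mathcal{K}_{(t)}$ and $\mathcal{K}_{(a)}$,'' and what you have written is a careful unpacking of exactly that. Your identification of the kinetic splitting via $|D_\alpha\xi|^2$, the $V^{ij}=V_{(r)}^{11}+V_{(\theta)}^{11}+V_{(a)}^{ij}$ decomposition, and the routing of all frame-connection and curvature pieces into $\mathcal{K}_{(a)}$ is precisely the content the paper leaves implicit.
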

\begin{proof}
The proof is the same as the proof of Lemma \ref{0_divJ_rearranged_lem}, except that there are new terms which are grouped into the quantities $\mathcal{K}_{(t)}$ and $\mathcal{K}_{(a)}$. \qed
\end{proof}
In what follows, we examine the coercive properties of each of the terms $\mathcal{K}_{(\theta)}$ (Lemma \ref{a_K_theta_lem}), $\mathcal{K}_{(r)}$ (Lemma \ref{a_K_r_lem}), and $\mathcal{K}_{(2)}$ (Lemma \ref{a_K_2_lem}). We also estimate the error term $\mathcal{K}_{(a)}$ (Lemma \ref{a_K_a_lem}). The term $\mathcal{K}_{(t)}$ has a good sign, but also a factor of $a^2/M^2$, so it is generally ignored.

First, we show that even after subtracting a good term, the quantity $\mathcal{K}_{(\theta)}$ almost controls two angular terms.
\begin{lemma}\label{a_K_theta_lem}
For all $\epsilon>0$, if $|a|/M$ is sufficiently small compared to $\epsilon$, then
$$\frac{\chi_{trap}}{r}|\sla\nabla\xi_1|^2+\chi_{trap}\frac{\cot^2\theta}{r^3}(\xi_1)^2\lesssim_\epsilon \mathcal{K}_{(\theta)}-(2-\epsilon)\left(-\frac{u\pd_r v}{q^2}\right)(\xi_1)^2+\chi_{trap}\frac{aM}{r^5}(\xi_2)^2.$$
\end{lemma}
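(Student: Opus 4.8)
The plan is to carry over the proof of the Schwarzschild analogue, Lemma \ref{0_K_theta_lem}, essentially verbatim, replacing $r^2$ by $q^2$ everywhere and introducing one new feature: in the term $K_Q^{\theta\theta}(\,\cdot\,)^2$ from the definition of $\mathcal{K}_{(\theta)}$ (Lemma \ref{a_divJ_rearranged_lem}) the bare derivative $\pd_\theta\xi_1$ is now the bundle derivative $D_\theta\xi_1=\pd_\theta\xi_1+\frac{\pd_\theta B}{A}\xi_2$, and the $\xi_2$-content of this square is what produces the error term $\chi_{trap}\frac{aM}{r^5}(\xi_2)^2$. I note first that the potential component appearing in $\mathcal{K}_{(\theta)}$ is $V_{(\theta)}^{11}=q^{-2}(2\cot\theta)^2$, which has the exact Schwarzschild shape because it is built from $\frac{\pd_\theta A_1}{A_1}=2\cot\theta$; all genuinely Kerr corrections — from $A_2$, from the off-diagonal potential $V_{(a)}^{ij}$, and from the bundle connection $\frac{\pd_\mu B}{A}$ acting on the frame — have been collected into $\mathcal{K}_{(a)}$, which is estimated separately in Lemma \ref{a_K_a_lem}, so none of them appears here.

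First I would evaluate the four constituents of $\mathcal{K}_{(\theta)}$, exactly mirroring the Schwarzschild computation. By Lemma \ref{Kmunu_lem}, $K_Q^{\theta\theta}(D_\theta\xi_1)^2=-\frac{u\pd_r v}{q^2}(D_\theta\xi_1)^2$. For the potential term Lemma \ref{muvVab_lem} applies: because $q^2vV_{(\theta)}^{11}=4v\cot^2\theta$ is $\theta$-independent (the factors of $q^2$ cancel before differentiating in $r$), one obtains $((w-\mathrm{div}\,X)V_{(\theta)}^{11}-X^\mu\pd_\mu V_{(\theta)}^{11})(\xi_1)^2=-\frac{u\pd_r v}{q^2}4\cot^2\theta(\xi_1)^2$. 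The same cancellation of $q^2$ (it appears both as $\sqrt{-g}/\sin\theta$ and as $1/g^{\theta\theta}$) gives $\nabla^\theta m_\theta^{11}(\xi_1)^2=-\frac{(2-\epsilon)u\pd_r v}{q^2}(\xi_1)^2$ and $2m_\theta^{11}\xi_1\pd^\theta\xi_1=\frac{2(2-\epsilon)u\pd_r v\cot\theta}{q^2}\xi_1\pd_\theta\xi_1$. Writing $\beta:=\frac{\pd_\theta B}{A}$ and summing,
\[
\mathcal{K}_{(\theta)}=-\frac{u\pd_r v}{q^2}\Big[(\pd_\theta\xi_1+\beta\xi_2)^2+4\cot^2\theta(\xi_1)^2+(2-\epsilon)(\xi_1)^2-2(2-\epsilon)\cot\theta\,\xi_1\pd_\theta\xi_1\Big].
\]

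Next I would apply the Schwarzschild completion of the square to the $\beta$-free part, a purely algebraic identity in $\theta$ untouched by the deformation,
\[
(\pd_\theta\xi_1)^2+4\cot^2\theta(\xi_1)^2-2(2-\epsilon)\cot\theta\,\xi_1\pd_\theta\xi_1=\tfrac{\epsilon}{2}(\pd_\theta\xi_1)^2+2\epsilon\cot^2\theta(\xi_1)^2+\big(1-\tfrac{\epsilon}{2}\big)(\pd_\theta\xi_1-2\cot\theta\,\xi_1)^2,
\]
and then absorb the cross term by Young's inequality, $|2\beta\xi_2\pd_\theta\xi_1|\le\tfrac{\epsilon}{4}(\pd_\theta\xi_1)^2+\tfrac{4}{\epsilon}\beta^2(\xi_2)^2$, discarding the nonnegative pieces $\beta^2(\xi_2)^2$ and $(1-\tfrac{\epsilon}{2})(\pd_\theta\xi_1-2\cot\theta\,\xi_1)^2$. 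Since $-\frac{u\pd_r v}{q^2}\ge0$ this gives
\[
\mathcal{K}_{(\theta)}-(2-\epsilon)\Big(-\tfrac{u\pd_r v}{q^2}\Big)(\xi_1)^2\ge-\frac{u\pd_r v}{q^2}\Big[\tfrac{\epsilon}{4}(\pd_\theta\xi_1)^2+2\epsilon\cot^2\theta(\xi_1)^2\Big]-\tfrac{4}{\epsilon}\Big(-\tfrac{u\pd_r v}{q^2}\Big)\beta^2(\xi_2)^2.
\]
Finally I would read off sizes from $-u\pd_r v\sim\chi_{trap}/r$ (Lemma \ref{Kmunu_lem}), $|\sla\nabla\xi_1|^2=q^{-2}(\pd_\theta\xi_1)^2$, and $|\beta|\lesssim|a|M/r^2$ (Lemma \ref{small_a_quantities_lem}): the first bracket controls $\frac{\chi_{trap}}{r}|\sla\nabla\xi_1|^2+\chi_{trap}\frac{\cot^2\theta}{r^3}(\xi_1)^2$ up to constants depending on $\epsilon$, while the error obeys $\tfrac{1}{\epsilon}\big(-\tfrac{u\pd_r v}{q^2}\big)\beta^2\lesssim\tfrac{1}{\epsilon}\cdot\tfrac{\chi_{trap}}{r q^2}\cdot\tfrac{a^2M^2}{r^4}\lesssim\tfrac{|a|}{\epsilon M}\,\chi_{trap}\tfrac{aM}{r^5}\lesssim\chi_{trap}\tfrac{aM}{r^5}$ for all $r\ge r_H$ once $|a|/M$ is small compared to $\epsilon$. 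Rearranging the two bounds yields the claimed inequality.

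I expect the work to be bookkeeping rather than conceptual. The two points needing care are: (i) confirming that the $\theta$-dependence of $q^2$ genuinely cancels in each of the three Schwarzschild-type terms, so $\mathcal{K}_{(\theta)}$ really collapses to the clean displayed form with nothing leaking out of $\mathcal{K}_{(a)}$; and (ii) tuning the single use of Young's inequality so that the resulting $(\xi_2)^2$ error lands at exactly the advertised weight $\chi_{trap}\frac{aM}{r^5}$ and no worse — the only step in which both the smallness of $a$ and the lower bound $r\ge r_H$ are used.
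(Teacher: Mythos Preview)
Your proposal is correct and follows essentially the same route as the paper. The only cosmetic difference is in handling the bundle term $(\pd_\theta\xi_1+\beta\xi_2)^2$: the paper invokes the prepackaged inequality $(1-|\beta|)(\pd_\theta\xi_1)^2\le(\pd_\theta\xi_1+\beta\xi_2)^2+|\beta|(\xi_2)^2$ from Lemma \ref{Dxi_lem}, giving an error linear in $|\beta|$, whereas you expand and use Young with parameter $\epsilon/4$, giving an error $\sim\beta^2/\epsilon$; both land at the stated weight $\chi_{trap}\frac{aM}{r^5}$ once $|a|/M\ll\epsilon$. (One harmless slip: ``$q^2vV_{(\theta)}^{11}=4v\cot^2\theta$ is $\theta$-independent'' is not literally true, but what you need---and what you actually use---is that it is independent of $r$ except through $v$, so $\pd_r$ hits only $v$.)
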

\begin{proof}
Recall that
$$\mathcal{K}_{(\theta)} = K^{\theta\theta}\left(\pd_\theta\xi_1+\frac{\pd_\theta B}{A}\xi_2\right)^2+((w-divX)V_{(\theta)}^{11}+X^\mu\pd_\mu V_{(\theta)}^{11})(\xi_1)^2+\nabla^\theta (m_\theta^{11})(\xi_1)^2+2m_\theta^{11}\xi_1\pd^\theta\xi_1.$$
By Lemma \ref{Dxi_lem}, since $K^{\theta\theta}\ge 0$,
$$\left(1-\left|\frac{\pd_\theta B}{A}\right|\right)K^{\theta\theta}(\pd_\theta\xi_1)^2 \le K^{\theta\theta}\left(\pd_\theta\xi_1+\frac{\pd_\theta B}{A}\xi_2\right)^2 +K^{\theta\theta}\left|\frac{\pd_\theta B}{A}\right|(\xi_2)^2.$$
We calculate  (using Lemma \ref{Kmunu_lem} in the first line and Lemma \ref{muvVab_lem} in the second line)
\begin{align*}
\left(1-\left|\frac{\pd_\theta B}{A}\right|\right)K^{\theta\theta}(\pd_\theta\xi_1)^2 &= \left(1-\left|\frac{\pd_\theta B}{A}\right|\right)\left(-\frac{u\pd_r v}{q^2}\right)(\pd_\theta\xi_1)^2 \\
((w-div X)V^{11}_{(\theta)}-X^\mu \pd_\mu V_{(\theta)}^{11})(\xi_1)^2 &= -\frac{u\pd_r v}{q^2}4\cot^2\theta (\xi_1)^2 \\
\nabla^\theta m_\theta^{11}(\xi_1)^2 &= \frac{1}{q^2\sin\theta}\pd_\theta\left(q^2\sin\theta \frac{u\pd_rv}{q^2}\cot\theta\right)(\xi_1)^2 \\
&= -\frac{u\pd_r v}{q^2}(2-\epsilon)(\xi_1)^2 \\
2m_\theta^{11}\xi_1\pd^\theta \xi_1 &= 2(2-\epsilon)u\pd_r v\cot\theta \xi_1 q^{-2}\pd_\theta\xi_1
\end{align*}
Proceeding exactly as in the Schwarzschild case, we conclude that 
\begin{multline*}
-\frac{u\pd_r v}{q^2}\left[\left(\frac{\epsilon}{2}-\left|\frac{\pd_\theta B}{A}\right|\right)(\pd_\theta\xi_1)^2+\frac{\epsilon}{2}4\cot^2\theta(\xi_1)^2+\left(1-\frac{\epsilon}{2}\right)(\pd_\theta\xi_1-2\cot\theta\xi_1)^2\right] \\
\le \mathcal{K}_{(\theta)}-(2-\epsilon)\left(-\frac{u\pd_r v}{q^2}\right)(\xi_1)^2+\left|\frac{u\pd_r v}{q^2}\frac{\pd_\theta B}{A}\right|(\xi_2)^2.
\end{multline*}
Lastly, since $-\frac{u\pd_r v}{q^2}\sim \frac{\chi_{trap}}{r^3}$ and $\left|\frac{\pd_\theta B}{A}\right|\lesssim \frac{aM}{r^2}$,
$$\left|\frac{u\pd_r v}{q^2}\frac{\pd_\theta B}{A}\right|(\xi_2)^2 \lesssim \chi_{trap}\frac{aM}{r^5}(\xi_2)^2.$$
Thus, the bound stated in Lemma \ref{a_K_theta_lem} follows. \qed
\end{proof}

Next, we show that if the term subtracted from $\mathcal{K}_{(\theta)}$ is added to the term $\mathcal{K}_{(r)}$, then the result is almost a coercive quantity.
\begin{lemma}\label{a_K_r_lem}
If $\epsilon>0$ and $|a|/M$ are both sufficiently small, then there exists a function $\chi$ (determining the component $m_r^{11}$) such that 
$$\frac{M^2}{r^3}\left(1-\frac{r_H}r\right)^2(\pd_r\xi_1)^2+\frac1{r^3}(\xi_1)^2\lesssim \mathcal{K}_{(r)}+(2-\epsilon)\left(-\frac{u\pd_rv}{q^2}\right)(\xi_1)^2+\frac{a^3M^3}{r^9}\left(1-\frac{r_H}r\right)^2(\xi_2)^2.$$
\end{lemma}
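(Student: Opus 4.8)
The plan is to run the Schwarzschild argument for Lemma \ref{0_K_r_lem} essentially line by line, with $2M$, $3M$, $4M$ replaced by $r_H$, $r_{trap}$, $r_*$, treating every discrepancy between the Kerr and Schwarzschild coefficients as a perturbation controlled by the smallness of $|a|/M$ via Lemma \ref{small_a_quantities_lem}, and absorbing the single genuinely new effect -- the coupling of $\xi_1$ to $\xi_2$ inside $\mathcal{K}_{(r)}$ through $\pd_r B/A$ -- into the error term $\frac{a^3M^3}{r^9}(1-\frac{r_H}{r})^2(\xi_2)^2$ that is already allowed in the statement.

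First I would recall the four pieces of $\mathcal{K}_{(r)}$ from Lemma \ref{a_divJ_rearranged_lem} and apply Lemma \ref{Dxi_lem} to the first one to write $(1-|\frac{r\pd_r B}{A}|)K^{rr}(\pd_r\xi_1)^2 \le K^{rr}(\pd_r\xi_1+\frac{\pd_r B}{A}\xi_2)^2 + K^{rr}|\frac{r\pd_r B}{A}|\frac{(\xi_2)^2}{r^2}$. By Lemma \ref{small_a_quantities_lem}, $|\frac{\pd_r B}{A}| \lesssim \frac{|a|^3 M}{r^5}$, so on $r\ge r_H$ the factor $1-|\frac{r\pd_r B}{A}|$ is bounded below by a positive constant once $|a|/M$ is small, and since $K^{rr}\sim \frac{M^2}{r^3}(1-\frac{r_H}{r})^2$ the leftover satisfies $K^{rr}|\frac{r\pd_r B}{A}|\frac{(\xi_2)^2}{r^2} \lesssim \frac{a^3M^3}{r^9}(1-\frac{r_H}{r})^2(\xi_2)^2$, which is exactly the permitted error. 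After this reduction the inequality is a statement about $\xi_1$ alone, and every remaining coefficient -- $K^{rr}$, $v$, $g^{rr}$, and $V_{(r)}^{11}=\frac{\Delta}{q^2}(\frac{2r}{r^2+a^2})^2$ -- agrees with its Schwarzschild counterpart up to $O(a^2/M^2)$ relative corrections by Lemma \ref{small_a_quantities_lem}.

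I would then evaluate each piece via Lemmas \ref{Kmunu_lem} and \ref{muvVab_lem}, complete the square on the cross term $2m_r^{11}\xi_1\pd^r\xi_1$ exactly as in Schwarzschild, and choose $\chi$ by the same piecewise recipe ($\chi=0$ for $r<r_{trap}$, $\chi=u$ on $[r_{trap},r_*]$, $\chi=u(r_*)$ for $r\ge r_*$), which is admissible because Lemma \ref{Kmunu_lem} supplies the structural facts the Schwarzschild argument uses: $u<0$ for $r<r_{trap}$, $u'\ge 0$, $\pd_r u=2r$ for $r>r_*$, and $u(r_{trap})=0$. One then proves the Kerr analogues of Lemmas \ref{0_K_r_1_lem}--\ref{0_K_r_3_lem}; their crux is the Kerr version of (\ref{up_4u_eqn}), $(\frac{u'}{r^2}-\frac{4u}{r^3})1_{r\le r_*}>0$, obtained by the same device of dividing by $w=vu'$, multiplying by $r^2 v$, and using the monotonicity of $v^{-1}$ past $r_{trap}$, the strict inequality being stable under the $O(a^2/M^2)$ perturbation of $v$.

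The main obstacle is quantitative bookkeeping rather than a new idea: I must verify that the Schwarzschild inequalities in Lemmas \ref{0_K_r_1_lem}--\ref{0_K_r_3_lem} each hold with room to spare -- especially the strict inequality in (\ref{up_4u_eqn}) near $r_{trap}$ and $r_*$ and the sign and vanishing orders of $\chi-u$ at $r_{trap}$ and $r_*$ -- so that all of them survive the $O(a^2/M^2)$ deformation of the coefficients, and that the small $a$-dependent perturbations of the potential term $((w-divX)V_{(r)}^{11}+X^\mu\pd_\mu V_{(r)}^{11})$ and of $\nabla^r m_r^{11}$ are genuinely dominated by the coercive terms $\frac{M^2}{r^3}(1-\frac{r_H}{r})^2(\pd_r\xi_1)^2+\frac{1}{r^3}(\xi_1)^2$ that remain after completing the square. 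As in Schwarzschild, I would then remove the residual weak degeneracies of this lower bound at $r=r_{trap}$ and $r=r_*$ by a final small modification of $\chi$, and -- to match the hypothesis -- take $\epsilon>0$ small first (it is the parameter in the term $(2-\epsilon)(-\frac{u\pd_r v}{q^2})(\xi_1)^2$ carried over from the $\mathcal{K}_{(\theta)}$ estimate of Lemma \ref{a_K_theta_lem}) and only then take $|a|/M$ small.
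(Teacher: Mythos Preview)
Your proposal is correct and follows essentially the same approach as the paper: apply Lemma \ref{Dxi_lem} to decouple the $\xi_2$ error (yielding exactly the $\frac{a^3M^3}{r^9}(1-\frac{r_H}{r})^2(\xi_2)^2$ term), compute the four pieces of $\mathcal{K}_{(r)}$ via Lemmas \ref{Kmunu_lem} and \ref{muvVab_lem}, choose $\chi$ by the same piecewise recipe with $(r_{trap},r_*)$ in place of $(3M,4M)$, prove the Kerr analogues of Lemmas \ref{0_K_r_1_lem}--\ref{0_K_r_3_lem} by perturbing from $a=0$, and finally modify $\chi$ to remove the residual degeneracies. The only nuance you should make explicit is that in the Kerr analogue of Lemma \ref{0_K_r_3_lem} the perturbation does \emph{not} quite preserve positivity in a small neighborhood of $r_*$, producing an $O(|a|/M)$-sized negative error there that must be absorbed by the subsequent modification of $\chi$; otherwise your outline matches the paper's proof step for step.
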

\begin{proof}
Recall that
$$\mathcal{K}_{(r)} = K^{rr}\left(\pd_r\xi_1+\frac{\pd_r B}{A}\xi_2\right)^2+((w-divX)V_{(r)}^{11}+X^\mu\pd_\mu V_{(r)}^{11})(\xi_1)^2+\nabla^r (m_r^{11})(\xi_1)^2+2m_r^{11}\xi_1\pd^r\xi_1.$$
By Lemma \ref{Dxi_lem}, since $K^{rr}\ge 0$,
$$\left(1-\left|\frac{r\pd_r B}{A}\right|\right)K^{rr}(\pd_r\xi_1)^2 \le K^{rr}\left(\pd_\theta\xi_1+\frac{\pd_rB}{A}\xi_2\right)^2+K^{rr}\left|\frac{r\pd_rB}{A}\right|\frac{(\xi_2)^2}{r^2}.$$
We calculate (using Lemma \ref{Kmunu_lem} in the first line and Lemma \ref{muvVab_lem} in the second line)
\begin{align*}
\left(1-\left|\frac{r\pd_r B}{A}\right|\right)K^{rr}(\pd_r\xi_1)^2 &= 2\left(1-\left|\frac{r\pd_r B}{A}\right|\right)\left(\frac{u'}{r^2+a^2}-\frac{2ru}{(r^2+a^2)^2}\right)\frac{\Delta^2}{q^2(r^2+a^2)}(\pd_r\xi_1)^2 \\
((w-divX)V_{(r)}^{11}+X^\mu\pd_\mu V_{(r)}^{11})(\xi_1)^2 &= -\frac{u}{q^2}\left(q^2v V_{(r)}^{11}\right) (\xi_1)^2 \\
&=-\frac{u}{q^2}\left(q^2\frac{\Delta}{(r^2+a^2)^2}\frac{\Delta}{q^2}\frac{4r^2}{(r^2+a^2)^2}\right)(\xi_1)^2 \\
&=-\frac{u}{q^2}\left(\frac{4r^2\Delta^2}{(r^2+a^2)^4}\right)(\xi_1)^2 \\
\nabla^r m_r^{11} (\xi_1)^2 &= \frac1{q^2}\pd_r\left(q^2 g^{rr}m_r^{11}\right) (\xi_1)^2 \\
&= \frac1{q^2}\pd_r\left(q^2\frac{\Delta}{q^2}\frac{4r^2\chi v}{(r^2+a^2)^2}\right)(\xi_1)^2 \\
&= \frac1{q^2}\pd_r\left(\frac{4r^2\chi\Delta^2}{(r^2+a^2)^4}\right) (\xi_1)^2 \\
2m_r^{11}\xi_1\pd^r\xi_1 &= \frac{4r^2\chi v}{(r^2+a^2)^2}\frac{\Delta}{q^2}  2\xi_1\pd_r\xi_1.
\end{align*}
Summing these, we obtain
\begin{multline*}
2\left(1-\left|\frac{r\pd_r B}{A}\right|\right)\left(\frac{u'}{r^2+a^2}-\frac{2ru}{(r^2+a^2)^2}\right) \frac{\Delta^2}{q^2(r^2+a^2)}(\pd_r\xi_1)^2  \\
+\frac{(\chi-u)}{q^2}\pd_r\left(\frac{4r^2\Delta^2}{(r^2+a^2)^4}\right)(\xi_1)^2 
+\frac{\chi'}{q^2}\frac{4r^2\Delta^2}{(r^2+a^2)^4}(\xi_1)^2 + \frac{4r^2\chi\Delta^2}{q^2(r^2+a^2)^4}(2\xi_1\pd_r\xi_1)
\\ \le \mathcal{K}_{(r)} + 2\frac{\Delta^2}{q^2(r^2+a^2)}\pd_r\left(\frac{u}{r^2+a^2}\right)\frac{r\pd_r B}{A} r^{-2}(\xi_2)^2.
\end{multline*}
Now,
\begin{multline*}
\frac{4r^2\chi\Delta^2}{q^2(r^2+a^2)^4}2\xi_1\pd_r\xi_1  \\
= \frac{4r\chi\Delta^2}{q^2(r^2+a^2)^3}\left(\pd_r\xi_1+\frac{r}{r^2+a^2}\xi_1\right)^2 -\frac{4r\chi\Delta^2}{q^2(r^2+a^2)^3}(\pd_r\xi_1)^2-\frac{4r^3\chi \Delta^2}{q^2(r^2+a^2)^5}(\xi_1)^2.
\end{multline*}
Therefore,
\begin{multline*}
2\left(\left(1-\left|\frac{r\pd_r B}{A}\right|\right)\left(\frac{u'}{r^2+a^2}-\frac{2ru}{(r^2+a^2)^2}\right)-\frac{2r\chi}{(r^2+a^2)^2} \right) \frac{\Delta^2}{q^2(r^2+a^2)}(\pd_r\xi_1)^2  \\
+\frac{(\chi-u)}{q^2}\pd_r\left(\frac{4r^2\Delta^2}{(r^2+a^2)^4}\right)(\xi_1)^2 
+\left(\frac{\chi'}{r^2+a^2}-\frac{r\chi}{(r^2+a^2)^2}\right)\frac{4r^2\Delta^2}{q^2(r^2+a^2)^3}(\xi_1)^2 \\
+ \frac{4r\chi\Delta^2}{q^2(r^2+a^2)^3}\left(\pd_r\xi_1+\frac{r}{r^2+a^2}\xi_1\right)^2
\\ \le \mathcal{K}_{(r)} + 2\frac{\Delta^2}{q^2(r^2+a^2)}\pd_r\left(\frac{u}{r^2+a^2}\right)\left|\frac{r\pd_r B}{A}\right| r^{-2}(\xi_2)^2.
\end{multline*}
The error term can be estimated as
\begin{align*}
2\frac{\Delta^2}{q^2(r^2+a^2)}\pd_r\left(\frac{u}{r^2+a^2}\right)\left|\frac{r\pd_r B}{A}\right| r^{-2}(\xi_2)^2 &\lesssim \left(1-\frac{r_H}r\right)^2\frac{M^2}{r^3}\frac{a^3M}{r^4}\frac{(\xi_2)^2}{r^2}(\xi_2)^2 \\
&\lesssim \frac{a^3M^3}{r^9}\left(1-\frac{r_H}r\right)^2(\xi_2)^2.
\end{align*}
We make an initial choice for $\chi$.
$$\chi=\left\{
\begin{array}{ll}
  0 & r<r_{trap} \\
  u & r\in[r_{trap},r_*]. \\
  u(r_*) & r_*\le r
\end{array}
\right.$$
Note that since $u(r_{trap})=0$, this piecewise function is continuous. With this choice for $\chi$, we derive estimates for the terms appearing in the expression for $\mathcal{K}_{(r)}$ above. These estimates are given in Lemmas \ref{a_K_r_1_lem}-\ref{a_K_r_3_lem}.
\begin{lemma}\label{a_K_r_1_lem}
If $|a|/M$ is sufficiently small, then for all $r>r_H$,
\begin{multline*}
\frac{M^2}{r^3}\left(1-\frac{r_H}r\right)^2(\pd_r\xi_1) \\
\lesssim 2\left(\left(1-\left|\frac{r\pd_r B}{A}\right|\right)\left(\frac{u'}{r^2+a^2}-\frac{2ru}{(r^2+a^2)^2}\right)-\frac{2r\chi}{(r^2+a^2)^2} \right) \frac{\Delta^2}{q^2(r^2+a^2)}(\pd_r\xi_1)^2
\end{multline*}
\end{lemma}
\begin{proof}
The case $a=0$ reduces to Lemma \ref{0_K_r_1_lem}. One can check that the inequality is not affected by taking $|a|/M$ to be small.\qed
\end{proof}

\begin{lemma}\label{a_K_r_2_lem}
For all  $|a|<M$,
\begin{multline*}
-\frac1{M^3}\left(1-\frac{r_H}r\right)\left(1-\frac{r_{trap}}r\right)1_{r\le r_{trap}}(\xi_1)^2 +\frac1{r^3}\left(1-\frac{r_*}r\right)^21_{r\ge r_*}(\xi_1)^2 \\
\lesssim \frac{(\chi-u)}{q^2}\pd_r\left(\frac{4r^2\Delta^2}{(r^2+a^2)^4}\right)(\xi_1)^2.
\end{multline*}
\end{lemma}
\begin{proof}
The case $a=0$ reduces to Lemma \ref{0_K_r_2_lem}. Recall that $r_*$ is by definition the radius at which the function $\frac{2r\Delta}{(r^2+a^2)^2}$ has a maximum value. Therefore the function $\frac{4r^2\Delta^2}{(r^2+a^2)^4}$ also has a maximum value at $r_*$. With this fact in mind, one can check that the argument in the proof of Lemma \ref{0_K_r_2_lem} applies for the general case $|a|<M$ after replacing $3M$ and $4M$ with $r_{trap}$ and $r_*$ respectively. \qed
\end{proof}

\begin{lemma}\label{a_K_r_3_lem}
If $|a|/M$ is sufficiently small, then for all $r>r_H$, there exists a constant $c$ such that
\begin{multline*}
\frac1{M^3}\left(1-\frac{r_{trap}}r\right)^21_{r\le r_{trap}}(\xi_1)^2+\frac1{M^3}1_{r_{trap}\le r\le r_*}(\xi_1)^2+\frac1{r^3}\left(1-\frac{r_*}r-c\frac{|a|}{M}\right)1_{r\ge r_*}(\xi_1)^2 \\
\lesssim \left(\frac{\chi'}{r^2+a^2}-\frac{r\chi}{(r^2+a^2)^2}\right)\frac{4r^2\Delta^2}{q^2(r^2+a^2)^3}(\xi_1)^2 +2\left(-\frac{u\pd_r v}{q^2} \right)(\xi_1)^2.
\end{multline*}
\end{lemma}
\begin{proof}
The case $a=0$ reduces to Lemma \ref{0_K_r_3_lem}. One can check that the inequality is not affected by taking $|a|/M$ to be small except for a small neighborhood of $r_*$, which will have an error term for $r>r_*$. This term is accounted for by the introduction of a constant $c$ so that the term
$$\frac{1}{r^3}\left(1-\frac{r_*}{r}-c\frac{|a|}{M}\right)1_{r\ge r_*}(\xi_1)^2,$$
which is slightly negative near $r_*$, is still bounded by the right hand side. \qed
\end{proof}
Combining Lemmas \ref{a_K_r_1_lem}-\ref{a_K_r_3_lem}, we have the following estimate.
\begin{multline*}
\frac{M^2}{r^3}\left(1-\frac{r_H}r\right)^2(\pd_r\xi_1)^2+\frac{1}{r^3}\left|1-\frac{r_{trap}}r\right|\left|1-\frac{r_*}r\right|(\xi_1)^2 -c\frac{|a|}{M}r^{-3}1_{r\approx r_*}(\xi_1)^2 \\
\lesssim \mathcal{K}_{(r)}-2\frac{u\pd_r v}{q^2}(\xi_1)^2+\frac{a^3M^3}{r^9}\left(1-\frac{r_H}r\right)^2(\xi_2)^2.
\end{multline*}
If $|a|/M$ is sufficiently small, then by slightly modifying $\chi$, the weak degeneracies at $r_{trap}$ and $r_*$ can be removed as well as the small error term $-c\frac{|a|}{M}r^{-3}1_{r\approx r_*}(\xi_1)^2$. Then the following estimate can be established.
$$\frac{M^2}{r^3}\left(1-\frac{r_H}r\right)^2(\pd_r\xi_1)^2+\frac{1}{r^3}(\xi_1)^2 
\lesssim \mathcal{K}_{(r)}-2\frac{u\pd_r v}{q^2}(\xi_1)^2+\frac{a^3M^3}{r^9}\left(1-\frac{r_H}r\right)^2(\xi_2)^2.$$
It follows that if $\epsilon>0$ is sufficiently small, 
$$\frac{M^2}{r^3}\left(1-\frac{r_H}r\right)^2(\pd_r\xi_1)^2+\frac{1}{r^3}(\xi_1)^2 
\lesssim \mathcal{K}_{(r)}-(2-\epsilon)\frac{u\pd_r v}{q^2}(\xi_1)^2+\frac{a^3M^3}{r^9}\left(1-\frac{r_H}r\right)^2(\xi_2)^2.$$
This completes the proof of Lemma \ref{a_K_r_lem}. \qed
\end{proof}

\begin{lemma}\label{a_K_2_lem}
If $|a|/M$ is sufficiently small, then
$$\frac{M^2}{r^3}\left(1-\frac{r_H}r\right)^2(\pd_r\xi_2)^2+\frac{\chi_{trap}}{r}|\sla\nabla\xi_2|^2 \lesssim \mathcal{K}_{(2)} +\frac{|a|M}{r^5}(\xi_1)^2.$$
\end{lemma}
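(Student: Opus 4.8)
The plan is to deduce Lemma \ref{a_K_2_lem} from its Schwarzschild counterpart (Lemma \ref{0_K_2_lem}, equivalently the scalar-wave computation of Lemma \ref{Kmunu_lem}) by absorbing the new $B$-dependent connection terms into the $\xi_1$ error. Recall from Lemma \ref{a_divJ_rearranged_lem} that
$$\mathcal{K}_{(2)}=K^{rr}\left(\pd_r\xi_2-\frac{\pd_r B}{A}\xi_1\right)^2+K_Q^{\theta\theta}\left(\pd_\theta\xi_2-\frac{\pd_\theta B}{A}\xi_1\right)^2,$$
that is, up to the weights $K^{rr}$ and $K_Q^{\theta\theta}$, a sum of squares of the second frame-components of $D_r\xi$ and $D_\theta\xi$ (the same quantities appearing in Lemma \ref{Dxi_lem}). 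By Lemma \ref{Kmunu_lem} both weights are nonnegative on $(r_H,\infty)$, with $K^{rr}\sim\frac{M^2}{r^3}\left(1-\frac{r_H}{r}\right)^2$ and $\frac{\chi_{trap}}{r^3}\lesssim K_Q^{\theta\theta}\lesssim\frac1{r^3}$.

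First I would apply, componentwise, the elementary inequality underlying Lemma \ref{Dxi_lem}, namely $(x\pm fy)^2\ge(1-f)x^2-fy^2$ for $f\ge0$, with $(x,fy)=\left(\pd_r\xi_2,\frac{\pd_r B}{A}\xi_1\right)$ and then with $(x,fy)=\left(\pd_\theta\xi_2,\frac{\pd_\theta B}{A}\xi_1\right)$; this gives
\begin{align*}
\left(\pd_r\xi_2-\frac{\pd_r B}{A}\xi_1\right)^2&\ge\left(1-\left|\frac{r\pd_r B}{A}\right|\right)(\pd_r\xi_2)^2-\left|\frac{r\pd_r B}{A}\right|\frac{(\xi_1)^2}{r^2},\\
\left(\pd_\theta\xi_2-\frac{\pd_\theta B}{A}\xi_1\right)^2&\ge\left(1-\left|\frac{\pd_\theta B}{A}\right|\right)(\pd_\theta\xi_2)^2-\left|\frac{\pd_\theta B}{A}\right|(\xi_1)^2.
\end{align*}
Multiplying these by $K^{rr}\ge0$ and $K_Q^{\theta\theta}\ge0$ respectively and adding bounds $\mathcal{K}_{(2)}$ from below by $\left(1-\left|\frac{r\pd_r B}{A}\right|\right)K^{rr}(\pd_r\xi_2)^2+\left(1-\left|\frac{\pd_\theta B}{A}\right|\right)K_Q^{\theta\theta}(\pd_\theta\xi_2)^2$ minus the error $\left(K^{rr}\left|\frac{r\pd_r B}{A}\right|r^{-2}+K_Q^{\theta\theta}\left|\frac{\pd_\theta B}{A}\right|\right)(\xi_1)^2$.

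To close, I would invoke the bounds of Lemma \ref{small_a_quantities_lem}, $\left|\frac{r\pd_r B}{A}\right|\lesssim\frac{|a|^3M}{r^4}$ and $\left|\frac{\pd_\theta B}{A}\right|\lesssim\frac{|a|M}{r^2}$; since $r\ge r_H\sim M$, both are $\ll1$ once $|a|/M$ is small, so $1-\left|\frac{r\pd_r B}{A}\right|\ge\frac12$ and $1-\left|\frac{\pd_\theta B}{A}\right|\ge\frac12$. The main terms then reproduce the left side of Lemma \ref{a_K_2_lem}, using $K^{rr}\gtrsim\frac{M^2}{r^3}\left(1-\frac{r_H}{r}\right)^2$, $K_Q^{\theta\theta}\gtrsim\frac{\chi_{trap}}{r^3}$, and the identity $|\sla\nabla\xi_2|^2=q^{-2}(\pd_\theta\xi_2)^2\sim r^{-2}(\pd_\theta\xi_2)^2$ valid for axisymmetric functions; and the two error terms are each $\lesssim\frac{|a|M}{r^5}(\xi_1)^2$, the angular one directly since $K_Q^{\theta\theta}\lesssim r^{-3}$, and the radial one because $K^{rr}\left|\frac{r\pd_r B}{A}\right|r^{-2}\lesssim\frac{|a|^3M^3}{r^9}\lesssim\frac{|a|M}{r^5}$ when $r\gtrsim M$ and $|a|<M$. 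I do not expect a serious obstacle here; the only points needing care are verifying that $K^{rr}$ and $K_Q^{\theta\theta}$ are genuinely nonnegative (so that the pointwise inequalities survive multiplication by them) and the weight bookkeeping that pushes both $\xi_1^2$ errors down to the $\frac{|a|M}{r^5}$ scale, this bookkeeping being exactly why, in contrast to the Schwarzschild Lemma \ref{0_K_2_lem}, the present statement carries an $\xi_1$ error term at all.
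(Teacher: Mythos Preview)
Your proposal is correct and follows essentially the same approach as the paper: both use the elementary inequality from Lemma \ref{Dxi_lem} to pass from the $D$-derivative squares in $\mathcal{K}_{(2)}$ to the coordinate-derivative squares, then invoke the $|a|$-smallness bounds of Lemma \ref{small_a_quantities_lem} together with $K^{rr}\sim M^2r^{-3}(1-r_H/r)^2$ and $K_Q^{\theta\theta}\lesssim r^{-3}$ to control the $\xi_1$ errors by $\frac{|a|M}{r^5}(\xi_1)^2$. Your bookkeeping on the radial error (noting it is actually $O(|a|^3M^3/r^9)$ but dominated by the stated bound) matches the paper's implicit treatment.
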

\begin{proof}
Recall that
\begin{align*}
\mathcal{K}_{(2)}&=K^{rr}\left(\pd_r\xi_2-\frac{\pd_r B}{A}\xi_1\right)^2+K_Q^{\theta\theta}\left(\pd_\theta\xi_2-\frac{\pd_\theta B}{A}\xi_1\right)^2 \\
&=K^{rr}\left(\pd_r\xi_2-\frac{r\pd_r B}{A}r^{-1}\xi_1\right)^2+K_Q^{\theta\theta}\left(\pd_\theta\xi_2-\frac{\pd_\theta B}{A}\xi_1\right)^2.
\end{align*}
By applying the procedure in the proof of Lemma \ref{Dxi_lem}, we conclude that
$$\left(1-\left|\frac{r\pd_r B}{A}\right|\right)K^{rr}(\pd_r\xi_2)^2 \le K^{rr}\left(\pd_r\xi_2-\frac{r\pd_r B}{A}r^{-1}\xi_1\right)^2 +\left|\frac{r\pd_r B}{A}\right|K^{rr}r^{-2}(\xi_1)^2.$$
Therefore, if $|a|/M$ is sufficiently small,
$$K^{rr}(\pd_r\xi_2)^2 \lesssim K^{rr}\left(\pd_r\xi_2-\frac{\pd_r B}{A}\xi_1\right)^2 +\frac{|a|^3M}{r^4}K^{rr}r^{-2}(\xi_1)^2.$$
Again by applying the procedure in the proof of Lemma \ref{Dxi_lem}, we also conclude that
$$\left(1-\left|\frac{\pd_\theta B}{A}\right|\right)K_Q^{\theta\theta}(\pd_\theta\xi_2)^2 \le K_Q^{\theta\theta}\left(\pd_\theta\xi_2-\frac{\pd_\theta B}{A}\xi_1\right)^2+\left|\frac{\pd_\theta B}{A}\right|K_Q^{\theta\theta}(\xi_1)^2.$$
Therefore, if $|a|/M$ is sufficiently small,
$$K_Q^{\theta\theta}(\pd_\theta\xi_2)^2 \lesssim K_Q^{\theta\theta}\left(\pd_\theta\xi_2-\frac{\pd_\theta B}{A}\xi_1\right)^2+\frac{|a|M}{r^2}K_Q^{\theta\theta}(\xi_1)^2.$$
From both estimates, since $K^{rr}=O(M^2/r^3)$ and $K_Q^{\theta\theta}=O(r^{-3})$, we have
\begin{multline*}
K^{rr}(\pd_r\xi_2)^2+K_Q^{\theta\theta}(\pd_\theta\xi_2)^2 \\
\lesssim K^{rr}\left(\pd_r\xi_2-\frac{r\pd_r B}{A}\xi_1\right)^2+ K_Q^{\theta\theta}\left(\pd_\theta\xi_2-\frac{\pd_\theta B}{A}\xi_1\right)^2+\frac{|a|M}{r^5}(\xi_1)^2 \\
\lesssim \mathcal{K}_{(2)}+\frac{|a|M}{r^5}(\xi_1)^2.
\end{multline*}
The lemma follows from the lower bound estimates for $K^{rr}$ and $K_Q^{\theta\theta}$. \qed
\end{proof}

\begin{lemma}\label{a_K_a_lem}
The following bound holds.
$$|\mathcal{K}_{(a)}| \lesssim \chi_{trap}\frac{|a|M}{r^3}\left(|\sla\nabla\xi_1|^2+|\sla\nabla\xi_2|^2\right)+ \frac{|a|M}{r^5}|\xi|^2.$$
\end{lemma}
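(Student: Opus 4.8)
The plan is to estimate $\mathcal{K}_{(a)}$ term by term, using the single structural feature that \emph{every} term in its definition carries at least one explicit factor of $a$. Indeed, $V_{(a)}^{ij}$ is by construction the part of the potential left over after removing the $A_1$-contribution: from $d\Phi_\alpha^a=\frac{\pd_\alpha A}{A}(e_1)^a+\frac{\pd_\alpha B}{A}(e_2)^a$ and $\frac{\pd_\alpha A}{A}=\frac{\pd_\alpha A_1}{A_1}+\frac{\pd_\alpha A_2}{A_2}$ one gets $V^{11}=g^{rr}(\frac{\pd_r A}{A})^2+g^{\theta\theta}(\frac{\pd_\theta A}{A})^2$ (the $\pd_t$ term vanishes, $A$ and $B$ being $t$-independent), so $V_{(a)}^{11}$ is a sum of cross terms each containing a factor $\frac{\pd_r A_2}{A_2}$ or $\frac{\pd_\theta A_2}{A_2}$, while $V_{(a)}^{12}=V^{12}$ and $V_{(a)}^{22}=V^{22}$ each contain a factor $\frac{\pd_r B}{A}$ or $\frac{\pd_\theta B}{A}$. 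By Lemma~\ref{small_a_quantities_lem} all of these are $O(a)$ (in fact $O(a^2)$ or $O(a^3)$ for the $A_2$-quantities and $\frac{\pd_r B}{A}$), and crucially each of them involving a $\theta$-derivative carries an explicit factor of $\sin\theta$. Likewise the factors $D_\mu(e_i^ae_j^b)$, $D^\mu(e_i^ae_j^b)$, $e_i^aD^\mu(e_j^b)$ in the second through fourth terms equal $\pm\frac{\pd_\mu B}{A}$ times a bounded frame expression (using $D_\alpha e^1=-\frac{\pd_\alpha B}{A}e^2$, $D_\alpha e^2=\frac{\pd_\alpha B}{A}e^1$), and the curvature in the last term is $R_{\mu\nu ab}=-2\frac{\pd_{[\mu}A\pd_{\nu]}B}{A^2}\epsilon_{ab}$ by Lemma~\ref{bundle_R_calculation_lem}.

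With this structure I would bound the first four terms — all quadratic in $\xi$ with no derivatives of $\xi$ — directly, using that $X=uv\pd_r$ and $w=v\pd_ru$ have bounded coefficients, that $m_r^{11}=\frac{4r^2\chi v}{(r^2+a^2)^2}$ is bounded and $m_\theta^{11}=(2-\epsilon)u\pd_rv\cot\theta\sim\chi_{trap}\frac{\cot\theta}{r}$, and that $w-divX$ and the $\nabla^\mu m_\mu^{11}$ terms contribute $O(r^{-1})$ beyond a bounded factor. The only apparent danger is the $\cot\theta$ and $\cot^2\theta$ coming from $\frac{\pd_\theta A_1}{A_1}=2\cot\theta$ inside $V^{11}$ and from $m_\theta^{11}$; but these are always hit by the compensating power of $\sin\theta$ supplied by $\frac{\pd_\theta A_2}{A_2}$, $\frac{\pd_\theta B}{A}$, or $\frac{\pd_r B}{A}$ (which is $\propto\sin^2\theta$), so every product is regular on the axis — e.g. the $V^{11}\frac{\pd_r B}{A}$ piece of the second term is saved precisely because $\frac{\cot^2\theta}{q^2}\cdot\sin^2\theta=\frac{\cos^2\theta}{q^2}$. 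Counting powers of $r$ via Lemma~\ref{small_a_quantities_lem}, each of the first four terms is $\lesssim\frac{|a|M}{r^5}|\xi|^2$, with room to spare.

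The fifth term, $2R_{\mu\nu ab}X^\mu\xi^aD^\nu\xi^b$, is the only one producing angular derivatives of $\xi$ and is where the $\chi_{trap}$ weight on the right-hand side originates. Since $X$ is purely radial and $B$ is $t$-independent, the only surviving component is $2R_{r\theta ab}X^r\xi^aD^\theta\xi^b$ with $X^r=uv$, and $R_{r\theta ab}=-\big(\frac{\pd_r A}{A}\frac{\pd_\theta B}{A}-\frac{\pd_\theta A}{A}\frac{\pd_r B}{A}\big)\epsilon_{ab}$ satisfies $|R_{r\theta ab}|\lesssim\frac{|a|M}{r^3}$ by Lemma~\ref{small_a_quantities_lem} (the $\cot\theta$ in $\frac{\pd_\theta A}{A}\cdot\frac{\pd_r B}{A}$ again being absorbed by the $\sin^2\theta$ in $\frac{\pd_r B}{A}$). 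Writing $|D^\theta\xi|_h=q^{-1}|\sla\nabla\xi|$ and applying Young's inequality with weight $\lambda^{-1}=\chi_{trap}\frac{|a|M}{r^3}$ produces a $|\sla\nabla\xi|^2$-term with exactly that coefficient (which up to an $O(a)$ frame-connection correction absorbed into the other terms equals $\chi_{trap}\frac{|a|M}{r^3}(|\sla\nabla\xi_1|^2+|\sla\nabla\xi_2|^2)$) and a $|\xi|^2$-term with coefficient $\lesssim\frac{|a|M}{\chi_{trap}r^5}(uv)^2$. The estimate then closes using the elementary fact $(uv)^2\lesssim\chi_{trap}=(1-r_{trap}/r)^2$: $uv$ has a simple zero at $r_{trap}$ (where $u$ vanishes, $v$ does not) and at $r_H$ (where $v$ vanishes) and tends to $1$ at infinity, so $uv/(1-r_{trap}/r)$ extends to a bounded function on $[r_H,\infty)$.

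The main obstacle — and the step that deserves the most care — is the axis bookkeeping: checking that after expanding the frame-connection and curvature factors no uncompensated $\cot\theta$ survives in any of the five terms, in particular in the $V^{11}\cot^2\theta$ contribution to the second term. The secondary subtle point is that the $\chi_{trap}$ attached to $|\sla\nabla\xi|^2$ can be extracted even though only one power of $X^r=uv$ is available in the fifth term; this works because Young's inequality allows all of $(X^r)^2$ to be moved onto the $|\xi|^2$ side, where it is bounded by $\chi_{trap}$.
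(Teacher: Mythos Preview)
Your proposal is correct and follows essentially the same approach as the paper's proof: both isolate the curvature term $2R_{\mu\nu ab}X^\mu\xi^aD^\nu\xi^b$ as the sole source of derivatives of $\xi$, extract the $\chi_{trap}$ weight from the factor $X^r=uv$ via the observation $(uv)^2\lesssim\chi_{trap}$ (the paper places $(uv)^2$ on the $|\sla\nabla\xi|^2$ side of Young's inequality, you place $\chi_{trap}^{-1}(uv)^2$ on the $|\xi|^2$ side, which is equivalent), and then assert that all remaining terms are quadratic in $\xi$ with coefficients $\lesssim\frac{|a|M}{r^5}$. You in fact give more detail than the paper on the axis bookkeeping for those non-derivative terms, which the paper summarizes in a single sentence; your observation that every stray $\cot\theta$ or $\cot^2\theta$ is paired with a compensating power of $\sin\theta$ from $\frac{\pd_\theta A_2}{A_2}$, $\frac{\pd_\theta B}{A}$, or $\frac{\pd_r B}{A}$ is exactly the mechanism at work.
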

\begin{proof}
Recall that
\begin{multline*}
\mathcal{K}_{(a)} = ((w-divX)V_{(a)}^{ij}+X^\mu \pd_\mu V_{(a)}^{ij})\xi_i\xi_j +X^\mu V^{ij}D_\mu(e_i^a e_j^b)\xi_a\xi_b +m_\mu^{ij}D^\mu(e_i^a e_j^b)\xi_i\xi_j \\
+2m_\mu^{ij} e_i^aD^\mu (e_j^b)\xi_a\xi_b +2R_{\mu\nu ab}X^\mu\xi^aD^\nu\xi^b,
\end{multline*}
where
$$V_{(a)}^{ij}=V^{ij}-(V_{(r)}^{11}+V_{(\theta)}^{11})\delta_1^i\delta_1^j.$$

There is one term (the last term) in the expression for $\mathcal{K}_{(a)}$ that contains a derivative of $\xi$. We calculate it explicitly using Lemma \ref{bundle_R_calculation_lem} and the fact that $X$ has only $t$ and $r$ components.
\begin{align*}
2R_{\mu\nu a b}X^\mu\xi^aD^\nu\xi^b &= -4\frac{\pd_{[\mu}A\pd_{\nu]}B}{A^2}\epsilon_{ab}X^\mu \xi^aD^\nu\xi^b \\
&= -4\frac{\pd_{[r}A\pd_{\theta]}B}{A^2}\epsilon_{ab}X^r\xi^aD^\theta\xi^b \\
&= -4\frac{\pd_{[r}A\pd_{\theta]}B}{A^2}X^rg^{\theta\theta}\epsilon^{ab}\xi_aD_\theta\xi_b.
\end{align*}
Now,
\begin{align*}
\epsilon^{ab}\xi_aD_\theta\xi_b &= \epsilon^{ab}\xi_aD_\theta(\xi_j(e^j)_b) \\
&= \epsilon^{ij}\xi_i\pd_\theta\xi_j +\epsilon^{ab}(e^i)_a(D_\theta e^j)_b\xi_i\xi_j
\end{align*}
Thus, the one term that contains a derivative of $\xi_i$ is
\begin{multline*}
-4\frac{\pd_{[r}A\pd_{\theta]}B}{A^2}X^rg^{\theta\theta}\epsilon^{ij}\xi_i\pd_\theta\xi_j \\
=2\left(\frac{\pd_\theta A\pd_r B}{A^2}-\frac{\pd_r A\pd_\theta B}{A^2}\right)\frac{uv}{q^2}(\xi_1\pd_\theta\xi_2-\xi_2\pd_\theta\xi_1) \\
\lesssim \frac{uv}{q^2}\left(\frac{|a|^3M}{r^5}+\frac1r\frac{|a|M}{r^2}\right)(|\xi_1\pd_\theta\xi_2|+|\xi_2\pd_\theta\xi_1|) \\
\lesssim \frac{u^2v^2}{q^2}\frac{|a|M}{r^3}((\pd_\theta\xi_1)^2+(\pd_\theta\xi_2)^2)+\frac1{q^2}\frac{|a|M}{r^3}((\xi_1)^2+(\xi_2)^2) \\
\lesssim \chi_{trap}\frac{|a|M}{r^3}\left(|\sla\nabla\xi_1|^2+|\sla\nabla\xi_2|^2\right) + \frac{|a|M}{r^5}|\xi|^2.
\end{multline*}
The remaining terms to estimate are
\begin{multline*}
((w-divX)V_{(a)}^{ij}+X^\mu \pd_\mu V_{(a)}^{ij})\xi_i\xi_j +X^\mu V^{ij}D_\mu(e_i^a e_j^b)\xi_a\xi_b +m_\mu^{ij}D^\mu(e_i^a e_j^b)\xi_i\xi_j \\
+2m_\mu^{ij} e_i^aD^\mu (e_j^b)\xi_a\xi_b -4\frac{\pd_{[r}A\pd_{\theta]}B}{A^2}X^rg^{\theta\theta}\epsilon^{ab}(e^i)_a(D_\theta e^j)_b\xi_i\xi_j.
\end{multline*}
Each of these terms can be estimated by $\frac{|a|M}{r^5}|\xi|^2$. \qed
\end{proof}

Finally, we are prepared to complete the proof of the partial Morawetz estimate in slowly rotating Kerr spacetimes. From Lemma \ref{a_divJ_rearranged_lem}, since $\Box_g\xi_a-V_a{}^b\xi_b=0$,
\begin{align*}
divJ &= \mathcal{K}_{(\theta)}+\mathcal{K}_{(r)}+\mathcal{K}_{(2)}+\mathcal{K}_{(t)}+\mathcal{K}_{(a)}-\frac12\Box_g w |\xi|^2 \\
&= \left(\mathcal{K}_{(\theta)}-(2-\epsilon)\left(-\frac{u\pd_r v}{r^2}\right)(\xi_1)^2\right) 
+\left(\mathcal{K}_{(r)}+(2-\epsilon)\left(-\frac{u\pd_r v}{r^2}\right)(\xi_1)^2\right) \\
&\hspace{3in}+\mathcal{K}_{(2)}+\mathcal{K}_{(t)}+\mathcal{K}_{(a)}-\frac12\Box_g w|\xi|^2.
\end{align*}
From Lemma \ref{a_K_theta_lem},
$$\frac{\chi_{trap}}{r}|\sla\nabla\xi_1|^2+\chi_{trap}\frac{\cot^2\theta}{r^3}(\xi_1)^2\lesssim_\epsilon \mathcal{K}_{(\theta)}-(2-\epsilon)\left(-\frac{u\pd_r v}{q^2}\right)(\xi_1)^2+\chi_{trap}\frac{|a|M}{r^5}(\xi_2)^2.$$
From Lemma \ref{a_K_r_lem},
$$\frac{M^2}{r^3}\left(1-\frac{r_H}r\right)^2(\pd_r\xi_1)^2+\frac1{r^3}(\xi_1)^2\lesssim \mathcal{K}_{(r)}+(2-\epsilon)\left(-\frac{u\pd_rv}{q^2}\right)(\xi_1)^2+\frac{|a|^3M^3}{r^9}\left(1-\frac{r_H}r\right)^2(\xi_2)^2.$$
From Lemma \ref{a_K_2_lem},
$$\frac{M^2}{r^3}\left(1-\frac{r_H}r\right)^2(\pd_r\xi_2)^2+\frac{\chi_{trap}}{r}|\sla\nabla\xi_2|^2 \lesssim \mathcal{K}_{(2)} +\frac{|a|M}{r^5}(\xi_1)^2.$$
From Lemma \ref{a_K_a_lem},
$$|\mathcal{K}_{(a)}| \lesssim \chi_{trap}\frac{|a|M}{r^3}\left(|\sla\nabla\xi_1|^2+|\sla\nabla\xi_2|^2\right) + \frac{|a|M}{r^5}|\xi|^2.$$
And from Lemma \ref{Kmunu_lem},
$$\frac{M}{r^4}1_{r\ge r_*}((\xi_1)^2+(\xi_2)^2)-q^{-2}V_{\epsilon_{temper}}((\xi_1)^2+(\xi_2)^2)
\lesssim -\frac12\Box_g w|\xi|^2.$$
We ignore the term
$$\mathcal{K}_{(t)}=K_Q^{tt}((\pd_t\xi_1)^2+(\pd_t\xi_2)^2)\approx \chi_{trap}\frac{a^2\sin^2\theta}{r^3}((\pd_t\xi_1)^2+(\pd_t\xi_2)^2),$$
because it has a good sign, but vanishes on the axis and is of order $a^2/M^2$.

Combining each of these estimates, we obtain the following.
\begin{multline*}
\frac{M^2}{r^3}\left(1-\frac{r_H}r\right)^2(\pd_r\xi_1)^2+\frac{\chi_{trap}}{r}|\sla\nabla\xi_1|^2+\frac{1}{r^3}(\xi_1)^2 + \chi_{trap}\frac{\cot^2\theta}{r^3}(\xi_1)^2 \\
+ \frac{M^2}{r^3}\left(1-\frac{r_H}r\right)^2(\pd_r\xi_2)^2+\frac{\chi_{trap}}{r}|\sla\nabla\xi_2|^2+\frac{M}{r^4}1_{r\ge r_*}(\xi_2)^2 \\
-q^{-2}V_{\epsilon_{temper}}((\xi_1)^2+(\xi_2)^2) \\
\lesssim div J +\chi_{trap}\frac{|a|M}{r^3}\left(|\sla\nabla\xi_1|^2+|\sla\nabla\xi_2|^2\right) + \frac{|a|M}{r^5}|\xi|^2.
\end{multline*}
By taking $|a|/M$ sufficiently small, the new terms on the right hand side can be absorbed into terms on the left, except for the term $\frac{|a|M}{r^5}(\xi_2)^2$, which only can be absorbed for $r\ge r_*$. This is the reason for the new error term $-q^{-2}V_{\epsilon_a}(\xi_2)^2$ on the left hand side of the bound stated in the partial Morawetz estimate.

This concludes the proof of the partial Morawetz estimate in slowly rotating Kerr spacetimes. \qed
\end{proof}

\subsection{Estimates using the vectorfields $h\pd_t$}\label{xi_h_dt_sec}

We now derive two spacetime estimates using the vectorfield $h\pd_t$ in the current template $J$. The first estimate will turn out to be the wave map perturbation analogue of the classic energy estimate, and it is obtained by taking $h=1$.

We start by proving an identity for components of $J$ that will appear in the spacetime estimate when applying Proposition \ref{general_divergence_estimate_prop}.
\begin{lemma}\label{h_dt_J_components_lem}
On the event horizon $H_{t_1}^{t_2}$,
$$J^r[h\pd_t] \sim h|D_t\xi|^2$$
and on a timelike hypersurface $\Sigma_t$,
$$-J^t[h\pd_t] \sim h\left[\chi_H|D_r\xi|^2+|D_t\xi|^2+q^{-2}|D_\theta\xi|^2+V^{ab}\xi_a\xi_b\right],$$
where $\chi_H=1-\frac{r_H}r$.
\end{lemma}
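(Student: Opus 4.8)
The plan is to read both identities straight off the current template of Lemma~\ref{vectorized_current_template_lem}. Taking $X=h\pd_t$, $w=0$ and $m^{ab}_\mu=0$ collapses the template to its energy--momentum piece,
$$J_\mu[h\pd_t]=T_{\mu\nu}(h\pd_t)^\nu=h\,T_{\mu t},\qquad T_{\mu\nu}=2D_\mu\xi^a D_\nu\xi_a-g_{\mu\nu}\big(D^\lambda\xi^a D_\lambda\xi_a+V^{ab}\xi_a\xi_b\big),$$
so that $J^r[h\pd_t]=h\,g^{r\nu}T_{\nu t}$ and $-J^t[h\pd_t]=-h\,g^{t\nu}T_{\nu t}$, and the computation reduces to raising one index in $T_{\nu t}$ and reading off metric coefficients. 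Two background facts do the work: (i) in Boyer--Lindquist coordinates one has $g^{rr}=\Delta/q^2$, $g^{\theta\theta}=q^{-2}$, $g^{tt}<0$ on $r>r_H$, no $(t,r)$-, $(t,\theta)$- or $(r,\theta)$-cross terms, and $\Delta/q^2\sim\chi_H$ on $[r_H,\infty)\times[0,\pi]$, while $\pd_\phi\xi=0$ removes the $\phi$-derivatives so that the $\phi$-cross term of the metric contributes only through the identity $g^{t\mu}g_{\mu t}=1$; and (ii) the background map $\Phi_0=(A,B)$ is stationary, $d\Phi^a_{0,t}=0$, so $V^{ab}\xi_a\xi_b=g^{rr}(d\Phi^a_r\xi_a)^2+g^{\theta\theta}(d\Phi^a_\theta\xi_a)^2\ge 0$ on all of $r\ge r_H$ (on the horizon only the $g^{\theta\theta}$ term survives).

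For the slice identity I would expand $T_{tt}=2|D_t\xi|^2-g_{tt}\big(g^{tt}|D_t\xi|^2+g^{rr}|D_r\xi|^2+g^{\theta\theta}|D_\theta\xi|^2+V^{ab}\xi_a\xi_b\big)$, and after the cancellation from (i) obtain the clean identity
$$-J^t[h\pd_t]=h\Big((-g^{tt})|D_t\xi|^2+g^{rr}|D_r\xi|^2+g^{\theta\theta}|D_\theta\xi|^2+V^{ab}\xi_a\xi_b\Big).$$
Every term on the right is non-negative by (i)--(ii), so the claimed $\sim$ reduces to the elementary fact that $-g^{tt}$, $\chi_H^{-1}g^{rr}$ and $q^2g^{\theta\theta}$ are each bounded above and below by positive constants on the hypersurface. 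Away from $r=r_H$ this is immediate; near the horizon it is exactly where one invokes that the $\Sigma_t$ are the uniformly spacelike hypersurfaces constructed in the paper, so that the coefficient of $|D_t\xi|^2$ stays comparable to $1$ down to $r_H$ and the only degeneracy present is the $\chi_H$ in front of $|D_r\xi|^2$.

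For the horizon flux I would pass to coordinates regular across the future horizon, in which $\{r=r_H\}$ is the null hypersurface whose generator agrees with $\pd_t$ on axisymmetric fields. There $g^{rr}=\Delta/q^2$ still vanishes at $r_H$ while the transversal component $g^{rt}$ does not, so
$$J^r[h\pd_t]=h\big(g^{rt}T_{tt}+g^{rr}T_{rt}\big),\qquad J^r[h\pd_t]\big|_{r=r_H}=h\,g^{rt}\big|_{r=r_H}\,T_{tt}\big|_{r=r_H},$$
the $g^{rr}T_{rt}$ term dropping because its coefficient vanishes and $T_{rt}$ is bounded in these coordinates. On the Schwarzschild horizon $g_{tt}=0$, so $T_{tt}\big|_{r_H}=2|D_t\xi|^2$ identically; for $|a|/M$ small the Kerr horizon is an $O(a^2)$ perturbation of this, so the same holds up to an error absorbed by the smallness of $|a|/M$ used throughout. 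Since $g^{rt}\big|_{r_H}>0$ (the red-shift sign), this gives $J^r[h\pd_t]\sim h|D_t\xi|^2$ on $H_{t_1}^{t_2}$.

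The genuinely delicate step is not any of the algebra above but the near-horizon bookkeeping: one must fix one coordinate system and one family of hypersurfaces that is simultaneously Boyer--Lindquist near spatial infinity (so the later $p$-weighted analysis is untouched) and regular/adapted to $\pd_t$ near $r_H$ (so the horizon flux is non-degenerate and the red-shift sign visible), and then verify the uniform two-sided bounds above and that the $O(a^2)$ horizon corrections are harmless. No structure of $\xi$ beyond axisymmetry is used: it is the axisymmetric reduction together with the stationarity of $\Phi_0$ that makes $-J^t$ manifestly a sum of squares, with no ergoregion obstruction, and axisymmetry again that lets one work with $\pd_t$ rather than the horizon generator.
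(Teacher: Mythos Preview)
Your approach is essentially the same as the paper's---both read the identities off the current template with $X=h\pd_t$, $w=0$, $m=0$---but the paper is more streamlined in two respects. First, the paper works throughout in a single coordinate system regular across the horizon (with $g^{tr}\neq 0$ near $r_H$ and $g^{rr}=\Delta/q^2$ vanishing there); the $g^{tr}$ cross terms in $-J^t$ cancel explicitly between the $-2D^t\xi\cdot D_t\xi$ and $D^\lambda\xi\cdot D_\lambda\xi$ pieces, and the resulting expression $(-g^{tt}+a^2\sin^2\theta/q^2)|D_t\xi|^2+g^{rr}|D_r\xi|^2+q^{-2}|D_\theta\xi|^2+V^{ab}\xi_a\xi_b$ has coefficients that are uniformly comparable down to $r_H$ with no coordinate-switching required. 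Second, and more to the point, for the horizon flux the paper raises the index \emph{before} restricting: $J^r=2g^{r\lambda}D_\lambda\xi\cdot D_t\xi-\delta^r{}_t(D^\lambda\xi\cdot D_\lambda\xi+V^{ab}\xi_a\xi_b)$, and since $\delta^r{}_t=0$ and $g^{rr}|_{r_H}=0$ one obtains $J^r|_{r_H}=2g^{rt}|D_t\xi|^2$ \emph{exactly}. Your detour through $T_{tt}$ reintroduces $g_{tt}$, which does not vanish on the Kerr horizon (only on the ergosphere; indeed $g_{tt}|_{r_H}=a^2\sin^2\theta/q^2$), and this is what forces your $O(a^2)$ perturbative argument. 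The algebraic identity $g^{r\nu}g_{\nu t}=\delta^r{}_t=0$ is exactly what makes that detour unnecessary.
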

\begin{proof}
For simplicity, we take $h=1$. From Lemma \ref{vectorized_current_template_lem},
$$J^\mu[\pd_t] = 2g^{\mu\lambda}D_\lambda\xi\cdot D_t\xi -\delta^\mu{}_t D^\lambda\xi\cdot D_\lambda\xi-\delta^\mu{}_tV^{ab}\xi_a\xi_b.$$
On the event horizon $H_{t_1}^{t_2}$, since $g^{rr}$ vanishes and $g^{rt}>0$, it follows that
$$J^r[\pd_t] = 2g^{rt}|D_t\xi|^2 \sim |D_t\xi|^2.$$
On a timelike hypersurface $\Sigma_t$,
\begin{multline*}
-J^t[\pd_t] = -2g^{tt}|D_t\xi|^2-2g^{tr}D_r\xi\cdot D_t\xi \\
+\left(g^{tt}|D_t\xi|^2+2g^{tr}D_r\xi\cdot D_t\xi+g^{rr}|D_r\xi|^2+q^{-2}|D_\theta\xi|^2+\frac{a^2\sin^2\theta}{q^2}|D_t\xi|^2\right) \\
+V^{ab}\xi_a\xi_b.
\end{multline*}
Thus,
$$-J^t[\pd_t] = \left(-g^{tt}+\frac{a^2\sin^2\theta}{q^2}\right)|D_t\xi|^2+g^{rr}|D_r\xi|^2+q^{-2}|D_\theta\xi|^2+V^{ab}\xi_a\xi_b.$$
\qed
\end{proof}

Next, we define a null pair.
\begin{definition} (null pair)
\begin{align*}
L &= \pd_t+\alpha \pd_r \\
\lbar &= \pd_t-\alpha\pd_r \\
\alpha &= \frac{\Delta}{r^2+a^2}.
\end{align*}
The following identities will often be used.
$$\frac{q^2}{r^2+a^2}g^{rr}=\alpha\text{ and }-\frac{q^2}{r^2+a^2}g^{tt}=\alpha^{-1}.$$
\end{definition}

The null pair shows up in the calculation of the divergence of $J[h\pd_t]$. See the following lemma.
\begin{lemma}\label{h_dt_divJ_lem}
If $h=h(r)$ is constant in the interval $r\in[r_H,r_H+\delh]$, and
$$\Box_g\xi_a-V_a{}^b\xi_b=0,$$
then
$$\frac{q^2}{r^2+a^2}divJ[h\pd_t]= \frac{h'}{2}\left[|D_L\xi|^2-|D_{\lbar}\xi|^2\right].$$
\end{lemma}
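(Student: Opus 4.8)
The plan is to compute $\mathrm{div}\,J[h\pd_t]$ directly from the general divergence formula in Lemma \ref{vectorized_current_template_lem}, specializing to $X = h\pd_t$, $w = 0$, and $m_\mu^{ab} = 0$, and then simplify using the hypotheses. With these choices the divergence formula collapses, since the $w$-terms and $m$-terms vanish and the linear equation $\Box_g\xi_a - V_a{}^b\xi_b = 0$ kills the nonlinear error term $(\Box_g\xi_a - V_a{}^b\xi_b)(2D_X\xi^a + w\xi^a)$. What remains is
$$
\mathrm{div}\,J[h\pd_t] = K^{\mu\nu}D_\mu\xi\cdot D_\nu\xi + ((- \mathrm{div}X)V^{ab} - D_XV^{ab})\xi_a\xi_b + 2R_{\mu\nu ab}X^\mu\xi^aD^\nu\xi^b,
$$
with $K^{\mu\nu} = 2\nabla^{(\mu}X^{\nu)} - (\mathrm{div}X)g^{\mu\nu}$ for $X = h(r)\pd_t$.

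First I would handle the potential and curvature terms. Since $X = h(r)\pd_t$ and all quantities ($A$, $B$, $V^{ab}$, and hence the frame coefficients) are axisymmetric and time-independent, $D_XV^{ab} = h\,D_tV^{ab} = 0$; moreover $\mathrm{div}X = \nabla_\mu(h\delta^\mu{}_t) = 0$ because $h$ depends only on $r$ and $g$ is stationary (the volume form $q^2\sin\theta$ has no $t$-dependence). So the whole $\xi_a\xi_b$-block vanishes. Similarly, $2R_{\mu\nu ab}X^\mu\xi^aD^\nu\xi^b = 2hR_{t\nu ab}\xi^aD^\nu\xi^b$, and by Lemma \ref{bundle_R_calculation_lem} this is proportional to $\pd_{[t}A\pd_{\nu]}B$, which vanishes since $A$ and $B$ are time-independent. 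Hence $\mathrm{div}\,J[h\pd_t] = K^{\mu\nu}D_\mu\xi\cdot D_\nu\xi$, and the problem reduces to a purely metric computation of the deformation tensor of $h\pd_t$, identical to the scalar wave case.

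Next I would compute $K^{\mu\nu} = 2\nabla^{(\mu}(h\delta^\nu{}_t)$ explicitly. Lowering indices, the only nonzero Christoffel-type contributions come from $\nabla_\mu(h g_{\nu t})$, and because $h = h(r)$ the deformation tensor is supported on the $(t,r)$-block with $K_{tr} = K_{rt} \propto h'$ and appropriate metric factors, while the contribution of $h$ itself to $\nabla_{(\mu}X_{\nu)}$ through the Killing field $\pd_t$ drops out (as $\pd_t$ is Killing, $\nabla_{(\mu}(\pd_t)_{\nu)} = 0$, so only the $\nabla h$ part survives). Raising back and contracting with $D_\mu\xi\cdot D_\nu\xi$, one gets a multiple of $h'\,g^{rr}g^{tt}\,(\text{cross terms})$ plus the diagonal pieces; collecting, and using the null-pair identities $\frac{q^2}{r^2+a^2}g^{rr} = \alpha$ and $-\frac{q^2}{r^2+a^2}g^{tt} = \alpha^{-1}$ together with $D_L = D_t + \alpha D_r$, $D_{\lbar} = D_t - \alpha D_r$, the expression $K^{\mu\nu}D_\mu\xi\cdot D_\nu\xi$ rearranges into $\frac{r^2+a^2}{q^2}\cdot\frac{h'}{2}(|D_L\xi|^2 - |D_{\lbar}\xi|^2)$. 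Multiplying through by $\frac{q^2}{r^2+a^2}$ yields the claimed identity. The hypothesis that $h$ is constant on $[r_H, r_H+\delh]$ is not needed for the algebraic identity itself but ensures $h'$ is supported away from the horizon so that $J$ has the boundary behavior required by Lemma \ref{general_divergence_estimate_prop}; I would remark on this rather than use it in the computation.

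The main obstacle I anticipate is purely bookkeeping: correctly tracking the metric factors $q^2$, $r^2+a^2$, and $\Delta$ through the raising/lowering of indices in $K^{\mu\nu}$ and verifying that the diagonal $(t,t)$ and $(r,r)$ pieces of the deformation tensor (which are nonzero in Boyer–Lindquist-type coordinates because $\pd_t$ is only Killing, not parallel, and because $g^{rr}$, $g^{tt}$ depend on $r$) combine with the cross term to produce exactly the clean null-decomposed right-hand side with no leftover $|D_t\xi|^2$, $|D_r\xi|^2$, or angular terms. This is the step where a sign error or a dropped factor would most easily creep in, so I would organize it by first writing $K^{\mu\nu}D_\mu\xi\cdot D_\nu\xi$ in terms of $|D_t\xi|^2$, $D_t\xi\cdot D_r\xi$, $|D_r\xi|^2$ with explicit $h$, $h'$, $g$-coefficients, and only then substituting the null-pair relations to confirm the collapse.
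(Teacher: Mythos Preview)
Your approach is essentially identical to the paper's: reduce to the deformation-tensor term $K^{\mu\nu}D_\mu\xi\cdot D_\nu\xi$ by killing the potential, curvature, and nonlinear pieces, then compute $K^{\mu\nu}$ for $X = h\pd_t$ and rewrite the result via the null pair. However, you misidentify the role of the hypothesis that $h$ is constant on $[r_H, r_H+\delh]$. The paper works in horizon-penetrating coordinates, so $g^{tr}\neq 0$ near the horizon (cf.\ Lemma \ref{h_dt_J_components_lem}, which uses $g^{rt}>0$ on the horizon). The computation $K^{\mu\nu} = g^{\mu r}h'\delta^\nu_t + g^{\nu r}h'\delta^\mu_t$ then gives $K^{tt} = 2g^{tr}h'$, which is in general nonzero; the hypothesis $h'=0$ on $[r_H, r_H+\delh]$ is exactly what kills this term, so it \emph{is} needed for the identity itself, not merely for boundary behavior. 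Once $K^{tt}$ vanishes, only the cross term $2g^{rr}h'D_t\xi\cdot D_r\xi$ remains (note $K^{rr}=0$ since $X^r=0$), and the null decomposition $2\alpha h'\,D_t\xi\cdot D_r\xi = \tfrac{h'}{2}(|D_L\xi|^2 - |D_{\lbar}\xi|^2)$ is immediate; the obstacle you anticipate about diagonal $(t,t)$ and $(r,r)$ pieces ``combining'' is therefore not actually present.
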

\begin{proof}
Recall from Lemma \ref{vectorized_current_template_lem} that
$$divJ[X] = K^{\mu\nu}D_\mu\xi\cdot D_\nu\xi - divXV^{ab}-D_XV^{ab}+2R_{\mu\nu a b}X^\mu\xi^aD^\nu\xi^b.$$
If $X=h\pd_t$, then since
$$div(h\pd_t)=0$$
and
$$D_{\pd_t}V^{ab}=0$$
and
$$2R_{\mu\nu a b}(\pd_t)^\mu=0,$$
the only nonzero term is $K^{\mu\nu}D_\mu\xi\cdot D_\nu\xi$.

Recall also from Lemma \ref{vectorized_current_template_lem} that
\begin{align*}
K^{\mu\nu}&=2\nabla^{(\mu}X^{\nu)}+(w-divX)g^{\mu\nu}
\end{align*}
Since $divX=0$ and $w=0$, we have
$$K^{\mu\nu}=2\nabla^{(\mu}X^{\nu)}= g^{\mu\lambda}\pd_\lambda X^\nu+g^{\nu\lambda}\pd_\lambda X^\mu - X^\lambda\pd_\lambda(g^{\mu\nu}).$$
Since $\pd_t$ is killing, $\pd_t(g^{\mu\nu})=0$. Also, the only component of $X$ is the $t$ component and the only nonzero derivative of that component is the $\pd_r$ derivative, so
\begin{align*}
g^{\mu\lambda}\pd_\lambda X^\nu +g^{\nu\lambda}\pd_\lambda X^\mu &=
g^{\mu r}\pd_r X^\nu +g^{\nu r}\pd_r X^\mu.
\end{align*}
It follows that the only possible nonzero $K^{\mu\nu}$ components are
\begin{align*}
K^{tt} &= 2g^{tr}h' \\
K^{tr}+K^{rt} &= 2g^{rr}h'.
\end{align*}
Since $h'=0$ in the region $r\in[r_H,r_H+\delh]$, the first of these actually vanishes. We conclude that
\begin{align*}
\frac{q^2}{r^2+a^2}divJ[h\pd_t]&=2\frac{q^2}{r^2+a^2}g^{rr}h'D_r\xi\cdot D_t\xi \\
&= 2\alpha h'D_r\xi\cdot D_t\xi \\
&= \frac{h'}2 \left[|\alpha D_r\xi+D_t\xi|^2-|\alpha D_r\xi-D_t\xi|^2\right] \\
&= \frac{h'}2 \left[|D_L\xi|^2-|D_{\lbar}\xi|^2\right]
\end{align*}
This completes the proof. \qed
\end{proof}

Taking $h=1$, we immediately obtain the classic energy estimate.
\begin{proposition}\label{xi_energy_estimate_prop}
\begin{multline*}
\int_{H_{t_1}^{t_2}}|D_t\xi|^2+\int_{\Sigma_{t_2}}\left[\chi_H|D_r\xi|^2+|D_t\xi|^2+q^{-2}|D_\theta\xi|^2+V^{ab}\xi_a\xi_b\right] \\
\lesssim \int_{\Sigma_{t_1}}\left[\chi_H|D_r\xi|^2+|D_t\xi|^2+q^{-2}|D_\theta\xi|^2+V^{ab}\xi_a\xi_b\right] + Err_{nl},
\end{multline*}
where $\chi_H=1-\frac{r_H}{r}$ and
$$Err_{nl} = \int_{t_1}^{t_2}\int_{\Sigma_t} |D_t\xi^a(\Box_g\xi_a-V_a{}^b\xi_b)|.$$
\end{proposition}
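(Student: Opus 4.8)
The plan is to apply the divergence identity of Proposition \ref{general_divergence_estimate_prop} to the current $J_\mu := J[\pd_t,0,0]_\mu$ supplied by the template of Lemma \ref{vectorized_current_template_lem}, i.e.\ to the $h\equiv 1$ member of the family $J[h\pd_t]$. First I would record the boundary terms. By Lemma \ref{h_dt_J_components_lem} with $h=1$, on a hypersurface $\Sigma_t$ one has $-J^t \sim \chi_H|D_r\xi|^2+|D_t\xi|^2+q^{-2}|D_\theta\xi|^2+V^{ab}\xi_a\xi_b$, which is precisely the energy density appearing on both sides of the asserted inequality, and on the event horizon $H_{t_1}^{t_2}$ one has $J^r \sim |D_t\xi|^2\ge 0$ (this is the degenerate form of the red-shift positivity, coming from $g^{rr}=0$ and $g^{tr}>0$ on the horizon). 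Thus the horizon flux and the $\Sigma_{t_2}$ flux may both be retained on the left with a favourable sign, while the $\Sigma_{t_1}$ flux is dominated by the initial energy.

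Next I would compute $div\,J$. Specializing Lemma \ref{vectorized_current_template_lem} to $X=\pd_t$, $w=0$, $m=0$, every term vanishes except the nonlinear one: $K^{\mu\nu}=2\nabla^{(\mu}(\pd_t)^{\nu)}+(w-divX)g^{\mu\nu}=0$ because $\pd_t$ is Killing (so $divX=0$) and $w=0$; $\Box_g w=0$; the potential terms $(w-divX)V^{ab}-D_XV^{ab}$ vanish since $div\,\pd_t=0$ and $V^{ab}$ depends only on $(r,\theta)$; the $m$-terms are absent; and $2R_{\mu\nu ab}(\pd_t)^\mu\xi^aD^\nu\xi^b=0$ because, by Lemma \ref{bundle_R_calculation_lem}, $R_{\mu\nu ab}$ is proportional to $\pd_{[\mu}A\,\pd_{\nu]}B$ and both $A$ and $B$ are $t$-independent. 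Hence $div\,J = 2(\Box_g\xi_a-V_a{}^b\xi_b)D_t\xi^a$; this is just the $h'=0$ instance of the computation in Lemma \ref{h_dt_divJ_lem}, with the nonlinear source kept rather than set to zero.

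To assemble the estimate I would insert these facts into Proposition \ref{general_divergence_estimate_prop} and rearrange to obtain
$$\int_{H_{t_1}^{t_2}}J^r + \int_{\Sigma_{t_2}}(-J^t) = \int_{\Sigma_{t_1}}(-J^t) - 2\int_{t_1}^{t_2}\int_{\Sigma_t}(\Box_g\xi_a-V_a{}^b\xi_b)D_t\xi^a.$$
Discarding the nonnegative horizon integral from the left, bounding $-J^t$ from below on $\Sigma_{t_2}$ and from above on $\Sigma_{t_1}$ by the stated energy densities via Lemma \ref{h_dt_J_components_lem}, and estimating the spacetime integral in absolute value by $Err_{nl}$ (the constant $2$ being absorbed into $\lesssim$), one arrives at the claimed inequality.

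There is no genuine obstacle here: the estimate is ``soft'', resting only on $\pd_t$ being Killing together with the $t$-invariance of the potential $V^{ab}$ and of the bundle curvature $R_{\mu\nu ab}$; in particular no smallness of $|a|/M$ is used. The only points demanding care are the vanishing of the boundary contribution at spatial infinity, which is exactly the decay hypothesis on $J$ built into Proposition \ref{general_divergence_estimate_prop} and is met for the class of data under consideration, and the good sign of $J^r$ on the event horizon, which is already packaged into Lemma \ref{h_dt_J_components_lem}.
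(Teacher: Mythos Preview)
Your proposal is correct and follows essentially the same route as the paper: apply Proposition~\ref{general_divergence_estimate_prop} to $J[\pd_t]$, use Lemma~\ref{h_dt_J_components_lem} for the boundary terms, and use (the $h'=0$ case of) Lemma~\ref{h_dt_divJ_lem} for the bulk, retaining the nonlinear source. One small slip: in your final paragraph you say ``discarding the nonnegative horizon integral from the left,'' but the statement keeps $\int_{H_{t_1}^{t_2}}|D_t\xi|^2$ on the left---consistent with what you correctly wrote a few lines earlier (``the horizon flux \ldots may be retained on the left with a favourable sign'').
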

\begin{proof}
The estimate follows from Proposition \ref{general_divergence_estimate_prop}, the fact that $divJ=0$ in the linear case (since $h'=0$, see Lemma \ref{h_dt_divJ_lem}) and the estimates for the components of $J[\pd_t]$ in Lemma \ref{h_dt_J_components_lem}. \qed
\end{proof}

Taking $h$ to be a positive function decreasing to zero as $r\rightarrow\infty$ at a particular rate, we obtain a similar but slightly more complicated estimate.
\begin{proposition}\label{xi_h_dt_prop}
Fix $\delp>0$ and let $p\le 2-\delp$. Let $R>r_H+\delh$ be any given radius. Then for all $\epsilon >0$, there is a small constant $c_\epsilon$ and a large constant $C_\epsilon$, such that
\begin{multline*}
\int_{H_{t_1}^{t_2}}|D_t\xi|^2+\int_{\Sigma_{t_2}}r^{p-2}\left[\chi_H|D_r\xi|^2+|D_t\xi|^2+q^{-2}|D_\theta\xi|^2 +V^{ab}\xi_a\xi_b\right] \\
+\int_{t_1}^{t_2}\int_{\Sigma_t\cap\{R+M<r\}}c_\epsilon r^{p-3}|D_{\lbar}\xi|^2 \\
\lesssim \int_{\Sigma_{t_1}}C_\epsilon r^{p-2}\left[\chi_H|D_r\xi|^2+|D_t\xi|^2+q^{-2}|D_\theta\xi|^2+V^{ab}\xi_a\xi_b\right] + Err,
\end{multline*}
where $\chi_H=1-\frac{r_H}{r}$ and
\begin{align*}
Err&=Err_1+Err_{nl} \\
Err_1 &= \int_{t_1}^{t_2}\int_{\Sigma_t\cap\{R<r\}}\epsilon r^{-1}|D_L\xi|^2 \\
Err_{nl} &= \int_{t_1}^{t_2}\int_{\Sigma_t}C_\epsilon r^{p-2}|D_t\xi^a(\Box_g\xi_a-V_a{}^b\xi_b)|.
\end{align*}
\end{proposition}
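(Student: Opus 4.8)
The plan is to feed the current template of Lemma \ref{vectorized_current_template_lem} with the data $X=h\pd_t$, $w=0$, $m=0$ into the divergence identity of Proposition \ref{general_divergence_estimate_prop}, for a radial weight $h=h(r)$ that is positive, Lipschitz, non-increasing, constant on $[r_H,r_H+\delh]$ (so that Lemma \ref{h_dt_divJ_lem} applies), and comparable to $r^{p-2}$: concretely, $h$ should equal a fixed constant of size $\sim r_H^{p-2}$ on a neighbourhood of the horizon, transition to a multiple of $r^{p-2}$ farther out, and satisfy $-h'\gtrsim r^{p-3}$ past the transition. Since $w=m=0$, the boundary integrands are exactly those of Lemma \ref{h_dt_J_components_lem}: on $H_{t_1}^{t_2}$ one has $J^r\sim h|D_t\xi|^2\sim |D_t\xi|^2$ because $h$ is a fixed positive constant there, and on each $\Sigma_t$ one has $-J^t\sim h\big[\chi_H|D_r\xi|^2+|D_t\xi|^2+q^{-2}|D_\theta\xi|^2+V^{ab}\xi_a\xi_b\big]$, which is squeezed between constant multiples of $r^{p-2}[\cdots]$ and $C_\epsilon r^{p-2}[\cdots]$ since $h\sim r^{p-2}$. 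This delivers the horizon term and the $\Sigma_{t_2}$ term on the left and the $\Sigma_{t_1}$ term on the right.

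For the bulk, Lemma \ref{h_dt_divJ_lem} gives $\frac{q^2}{r^2+a^2}\,divJ[h\pd_t]=\tfrac{h'}{2}\big(|D_L\xi|^2-|D_{\lbar}\xi|^2\big)$ in the linear case, and inspecting Lemma \ref{vectorized_current_template_lem} the only additional contribution when $\Box_g\xi_a-V_a{}^b\xi_b\neq 0$ is $2h\,D_t\xi^a(\Box_g\xi_a-V_a{}^b\xi_b)$. Because $h'\le 0$, after moving $divJ$ to the left in Proposition \ref{general_divergence_estimate_prop} the term $\int_{t_1}^{t_2}\int_{\Sigma_t}\frac{(r^2+a^2)(-h')}{2q^2}|D_{\lbar}\xi|^2$ lands on the left with a good sign; on $\{R+M<r\}$, where $-h'\sim(2-p)r^{p-3}$ and $2-p\ge\delp$, it dominates $c_\epsilon r^{p-3}|D_{\lbar}\xi|^2$, which is the claimed spacetime term. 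What remains on the right is the indefinite piece $\int_{t_1}^{t_2}\int_{\Sigma_t}\frac{(r^2+a^2)(-h')}{2q^2}|D_L\xi|^2$ together with the nonlinear term, the latter being immediately $\lesssim\int_{t_1}^{t_2}\int_{\Sigma_t}r^{p-2}|D_t\xi^a(\Box_g\xi_a-V_a{}^b\xi_b)|=Err_{nl}$.

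The hard part will be disposing of that indefinite piece $\int_{t_1}^{t_2}\int_{\Sigma_t\cap\{r>R\}}r^{p-3}|D_L\xi|^2$ so that only $Err_1=\epsilon\int_{t_1}^{t_2}\int_{\Sigma_t\cap\{R<r\}}r^{-1}|D_L\xi|^2$ survives. Writing $r^{p-3}=r^{p-2}r^{-1}$ shows the weight already has the right shape, and on the far piece $\{r>R_\epsilon\}$, where $r^{p-2}\le\epsilon$, it is absorbed directly into $Err_1$; the genuine work is on the fixed compact annulus $\{R<r<R_\epsilon\}$ (and, depending on the precise normalisation of $h$, on the transition region near $r_H+\delh$), where $r^{p-3}$ is merely bounded, $|D_L\xi|^2\lesssim|D_t\xi|^2+|D_r\xi|^2$, and one must reabsorb the resulting spacetime integral using the already-established energy estimate (Proposition \ref{xi_energy_estimate_prop}) and partial Morawetz estimate (Proposition \ref{partial_morawetz_kerr_prop}) — choosing $R_\epsilon$, the partition of the $r$-axis, and the profile of $h$ so that nothing is lost and so that these auxiliary estimates, which degenerate at the photon sphere and at the horizon, are invoked only where they are non-degenerate. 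The algebra of $divJ$, $J^t$ and $J^r$ is immediate from Lemmas \ref{h_dt_J_components_lem}--\ref{h_dt_divJ_lem}; it is this last accounting that carries the content.
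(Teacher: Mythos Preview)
Your setup is exactly the paper's: the current $J[h\pd_t,0,0]$, Lemma \ref{h_dt_J_components_lem} for the boundary terms, Lemma \ref{h_dt_divJ_lem} for the bulk, and Proposition \ref{general_divergence_estimate_prop} to put them together. Where you diverge from the paper is in how the indefinite $|D_L\xi|^2$ piece is handled, and here you are making the problem much harder than it is.

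The paper's entire point is that one can impose, in addition to $h=1$ on $[r_H,R]$, $-h'\ge 0$, $-h'\ge 2c_\epsilon r^{p-3}$ on $\{r\ge R+M\}$, and $h=O(r^{p-2})$, the \emph{further} global constraint
\[
-h'(r)\le 2\epsilon\, r^{-1}.
\]
All five conditions are simultaneously satisfiable because $r^{p-3}\ll r^{-1}$ for $p<2$: one builds $-h'$ as a nonnegative function on $[R,\infty)$ sandwiched between $2c_\epsilon r^{p-3}$ and $2\epsilon r^{-1}$, with total integral $1$ (the constants $C_\epsilon$ in $h=O(r^{p-2})$ and $c_\epsilon$ are then forced to be large and small respectively, exactly as the statement allows). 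With this single extra constraint the indefinite bulk term is bounded \emph{pointwise}:
\[
\frac{r^2+a^2}{q^2}\cdot\frac{-h'}{2}\,|D_L\xi|^2 \;\le\; \epsilon\, r^{-1}|D_L\xi|^2,
\]
supported on $\{r>R\}$ since $h'=0$ elsewhere. That is literally $Err_1$. No splitting at $R_\epsilon$, no auxiliary spacetime estimate, no absorption---the proof is over.

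Your route, by contrast, fixes $-h'\sim (2-p)r^{p-3}$ rigidly and then tries to absorb $\int_{t_1}^{t_2}\int_{\{R<r<R_\epsilon\}}r^{p-3}|D_L\xi|^2$ via the partial Morawetz and energy estimates. This is not only unnecessary, it is delicate in ways you flag but do not resolve: the proposition allows $R$ to lie below $r_{trap}$, so the annulus $\{R<r<R_\epsilon\}$ generically contains the photon sphere, where the Morawetz bulk gives no control of $|D_t\xi|^2$ (and the energy estimate gives no \emph{spacetime} control at all). Even away from trapping, invoking the full Morawetz machinery would import boundary terms on $\Sigma_{t_2}$ with the wrong $r$-weights. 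The paper sidesteps all of this by building the smallness directly into $-h'$.
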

\begin{proof}
We observe that for all $p<2$ and $\epsilon>0$, there is a function $h$ satisfying the conditions
$$h=1\text{ for }r\le R,$$
$$-h'\ge 0,$$
$$-h'\le 2\epsilon r^{-1},$$
$$\exists c_\epsilon >0 \text{ such that } 2c_\epsilon r^{p-3}\le -h'\text{ for }r\ge R+M,$$
and
$$h=O(r^{p-2})\text{ for large }r.$$
(To construct the function $h$, it is perhaps easier to construct a positive function $-h'$ supported on the interval $r\in[R,\infty)$ and satisfying $\int_R^\infty -h'(r)dr=1$.)

The estimate follows from Proposition \ref{general_divergence_estimate_prop}, the fact that
\begin{align*}
\frac{q^2}{r^2+a^2}divJ &= -\frac{h'}{2}\left(|D_{\lbar}\xi|^2-|D_L\xi|^2\right)+2hD_t\xi^a(\Box_g\xi_a-V_a{}^b\xi_b) \\
&\ge c_\epsilon r^{p-3}|D_{\lbar}\xi|^2-\epsilon r^{-1}|D_L\xi|^2-C_\epsilon r^{p-2}|D_t\xi^a(\Box_g\xi_a-V_a{}^b\xi_b)|
\end{align*}
(see Lemma \ref{h_dt_divJ_lem}) and the estimates for the components of $J[\pd_t]$ in Lemma \ref{h_dt_J_components_lem}. \qed
\end{proof}

\subsection{The Morawetz estimate}\label{xi_morawetz_sec}

Having proved the simpler spacetime estimates (Propositions \ref{xi_energy_estimate_prop} and \ref{xi_h_dt_prop}) we now prove the Morawetz estimate, drawing on the partial Morawetz estimate established earlier.
\begin{proposition}\label{xi_morawetz_estimate_prop} (Morawetz estimate)
Suppose $|a|/M$ is sufficiently small. Then
\begin{align*}
&\int_{H_{t_1}^{t_2}}q^{-2}|D_\theta \xi|^2+V^{ab}\xi_a\xi_b
+\int_{\Sigma_{t_2}} |D_L\xi|^2+q^{-2}|D_\theta\xi|^2+V^{ab}\xi_a\xi_b+r^{-2}|\xi|^2 +\frac{M^2}{r^2}|D_r\xi|^2  \\
&+\int_{t_1}^{t_2}\int_{\Sigma_t} \left[\frac{M^2}{r^3}(\pd_r\xi_1)^2+\chi_{trap}\left(\frac{M^2}{r^3}(\pd_t\xi_1)^2+\frac{1}{r}|\sla\nabla\xi_1|^2+\frac{\cot^2\theta}{r^3}(\xi_1)^2\right)+\frac{1}{r^3}(\xi_1)^2\right. \\
&\hspace{1.25in}\left.+ \frac{M^2}{r^3}(\pd_r\xi_2)^2+\chi_{trap}\left(\frac{M^2}{r^3}(\pd_t\xi_2)^2+\frac{1}{r}|\sla\nabla\xi_2|^2\right)+\frac{M}{r^4}(\xi_2)^2\right] \\
&\lesssim \int_{\Sigma_{t_1}} |D_L\xi|^2+q^{-2}|D_\theta\xi|^2+V^{ab}\xi_a\xi_b+r^{-2}|\xi|^2 +\frac{M^2}{r^2}|D_r\xi|^2  + Err,
\end{align*}
where $\chi_{trap}=\left(1-\frac{r_{trap}}{r}\right)^2$ and
\begin{align*}
Err &= Err_1+Err_{nl} \\
Err_1 &= \int_{H_{t_1}^{t_2}}|D_t\xi|^2 + \int_{\Sigma_{t_2}}r^{-1}|\xi\cdot D_L\xi|+\frac{M^2}{r^2}\left[\chi_H|D_r\xi|^2+|D_t\xi|^2+r^{-2}|\xi|^2\right] \\
Err_{nl} &= \int_{t_1}^{t_2}\int_{\Sigma_t}|(2D_X\xi^a+w\xi^a)(\Box_g\xi_a-V_a{}^b\xi_b)|,
\end{align*}
where $\chi_H=1-\frac{r_H}{r}$.
\end{proposition}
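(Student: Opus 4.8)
The plan is to apply the divergence theorem (Proposition \ref{general_divergence_estimate_prop}) to a current $J$ assembled from three pieces patched together with radial cutoffs. In the main region $J$ is the partial Morawetz current $J_0 = J[uv\pd_r, v\pd_r u, m]$ furnished by Proposition \ref{partial_morawetz_kerr_prop}, so that $\mathrm{div}\,J_0$ already dominates most of the desired spacetime integrand, minus the small potentials $q^{-2}V_{\epsilon_{temper}}|\xi|^2$ and $q^{-2}V_{\epsilon_a}(\xi_2)^2$. Near the photon sphere $r = r_{trap}$ it is corrected by a Lagrangian-type current supplying the $\pd_t$-directional bulk term absent from $\mathrm{div}\,J_0$ (recall $K^{tt} = 0$ in Lemma \ref{Kmunu_lem}); near the event horizon it is corrected by a red-shift vectorfield current; and it is completed by an $h\pd_t$ current in the spirit of Propositions \ref{xi_energy_estimate_prop} and \ref{xi_h_dt_prop}, with $h$ chosen so that $-J^t$ on $\Sigma_t$ reduces to the $p=0$ radiation energy density $|D_L\xi|^2 + q^{-2}|D_\theta\xi|^2 + V^{ab}\xi_a\xi_b + r^{-2}|\xi|^2 + \tfrac{M^2}{r^2}|D_r\xi|^2$ appearing in the statement.

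In detail: I would first record $\mathrm{div}\,J_0$ from Proposition \ref{partial_morawetz_kerr_prop}. To produce $\chi_{trap}\tfrac{M^2}{r^3}(\pd_t\xi_i)^2$ I would add $J_1^\mu = -\eta(r)\,\xi^a D^\mu\xi_a + \tfrac12|\xi|^2\pd^\mu\eta(r)$, with $\eta$ a small nonnegative bump supported tightly near $r_{trap}$; using $\Box_g\xi_a - V_a{}^b\xi_b = 0$ one gets $\mathrm{div}\,J_1 = \eta\,|g^{tt}|\,|D_t\xi|^2 - \eta\,(g^{rr}|D_r\xi|^2 + g^{\theta\theta}|D_\theta\xi|^2 + 2g^{tr}D_t\xi\cdot D_r\xi + V^{ab}\xi_a\xi_b) + \tfrac12(\Box_g\eta)|\xi|^2$ plus a nonlinear remainder. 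On $\mathrm{supp}\,\eta$ one has $|g^{tt}| \gtrsim 1$, which gives the wanted $(\pd_t\xi)^2$ contribution, while the wrong-sign spatial and zeroth-order pieces are absorbed — taking $\eta$ sufficiently small — into the non-degenerate terms $\tfrac{M^2}{r^3}(\pd_r\xi_i)^2$, $\tfrac1{r^3}(\xi_1)^2$, $\tfrac{M}{r^4}(\xi_2)^2$ and the degenerate terms $\tfrac{\chi_{trap}}{r}|\sla\nabla\xi_i|^2$ of the partial Morawetz integrand; this forced smallness of $\eta$ is exactly why the new $(\pd_t\xi)^2$ term carries the extra factor $M^2/r^2$ relative to the angular term. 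Next I would add a red-shift current localized near the horizon: its positive bulk contribution removes the weight $(1 - r_H/r)^2$ on the $(\pd_r\xi_i)^2$ terms and the cutoff $1_{r\ge r_*}$ on $(\xi_2)^2$, and together with the $L^1(r)$-smallness of $V_{\epsilon_{temper}}$ and $V_{\epsilon_a}$ (valid once $|a|/M$ is small) absorbs those two potentials — for $\xi_2$, whose equation has no zeroth-order potential, this step is indispensable. Finally I would add $J[h\pd_t]$ with $h = 1$ on a large compact set and $h$ decaying at infinity as allowed in Proposition \ref{xi_h_dt_prop}; by Lemmas \ref{h_dt_divJ_lem} and \ref{h_dt_J_components_lem} its divergence is $-\tfrac{h'}{2}\tfrac{r^2+a^2}{q^2}(|D_{\lbar}\xi|^2 - |D_L\xi|^2)$ plus a nonlinear term, so a suitable $h$ both normalizes $-J^t$ on $\Sigma_t$ to the radiation density above — up to the $Err_1$ boundary terms $r^{-1}|\xi\cdot D_L\xi|$ and $\tfrac{M^2}{r^2}(\chi_H|D_r\xi|^2 + |D_t\xi|^2 + r^{-2}|\xi|^2)$ — and contributes only the harmless $\epsilon r^{-1}|D_L\xi|^2$ error far out. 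Reading off $J^r$ on $H_{t_1}^{t_2}$ (which dominates $q^{-2}|D_\theta\xi|^2 + V^{ab}\xi_a\xi_b$ up to $-C|D_t\xi|^2$, accounting for the horizon term in $Err_1$), gathering every factor $\Box_g\xi_a - V_a{}^b\xi_b$ into $Err_{nl}$, and invoking Proposition \ref{general_divergence_estimate_prop} gives the stated inequality.

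I expect the principal obstacle to be the Lagrangian correction at $r_{trap}$: one must check quantitatively that every wrong-sign term in $\mathrm{div}\,J_1$ — above all the angular piece $\eta\,g^{\theta\theta}|D_\theta\xi|^2$, which can only be set against the already-degenerate $\tfrac{\chi_{trap}}{r}|\sla\nabla\xi_i|^2$ — genuinely fits inside the partial Morawetz integrand with room to spare, and that the $t$-boundary contributions of $J_1$ are no worse than the fluxes in the statement; this is what pins down the admissible weight on the new $(\pd_t\xi)^2$ term. A secondary difficulty is the near-horizon analysis, where one must reconcile the degeneracy structure of $\mathrm{div}\,J_0$ with a non-degenerate left-hand side: the red-shift gain has to dominate $V_{\epsilon_{temper}}$ and $V_{\epsilon_a}$ simultaneously for $|a|/M$ small, with $\xi_1$ and $\xi_2$ handled separately. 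The remaining work — estimating the error terms produced in the two cutoff transition regions and checking they are absorbable — is routine but lengthy.
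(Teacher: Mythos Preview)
Your overall architecture---partial Morawetz current, a Lagrangian $w$-correction for the missing $(\pd_t\xi)^2$ term, a red-shift piece, and an energy vectorfield---matches the paper's. But the Lagrangian correction is set up in a way that cannot be closed by taking $\eta$ small. You take $\eta$ to be a nonnegative bump supported near $r_{trap}$; the wrong-sign angular contribution $-\eta\,g^{\theta\theta}|D_\theta\xi|^2$ is then nondegenerate on a neighborhood of $r_{trap}$, while the only available absorbing term, $\tfrac{\chi_{trap}}{r}|\sla\nabla\xi|^2$, vanishes quadratically at $r_{trap}$. At $r=r_{trap}$ the ratio is infinite, so no smallness constant helps. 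The paper's fix is precisely the opposite profile: take $w_{\pd_t}\le 0$ (so your $\eta=-w_{\pd_t}\ge 0$) \emph{vanishing to second order at $r_{trap}$} and decaying like $M^2/r^2$. Then the gained term $w_{\pd_t}g^{tt}|D_t\xi|^2$ and every wrong-sign term $w_{\pd_t}g^{rr}|D_r\xi|^2$, $w_{\pd_t}g^{\theta\theta}|D_\theta\xi|^2$, $w_{\pd_t}V^{ab}\xi_a\xi_b$, $-\tfrac12\Box_g w_{\pd_t}|\xi|^2$ all carry a built-in $\chi_{trap}$ factor, and the latter are absorbed by the partial Morawetz bulk after multiplying by a small $\epsilon_{\pd_t}$. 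This is what actually forces the degenerate weight $\chi_{trap}$ on $(\pd_t\xi)^2$---not, as you write, the mere smallness of a coupling constant.

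There is a second, smaller gap in the boundary analysis. You add $J[h\pd_t]$ with $h$ decaying at infinity; then for large $r$ the total multiplier is $X\approx uv\,\pd_r\approx\pd_r$, not $L$, and the $\Sigma_t$-flux of $J_0$ alone is $-J^t[uv\pd_r]\approx 2D_t\xi\cdot D_r\xi$, which is sign-indefinite and does not dominate $|D_L\xi|^2$. The paper instead adds $J[\pd_t]$ (constant $h=1$), which contributes nothing to the bulk but makes $X=uv\pd_r+\epsilon_{redshift}Y+\pd_t\approx L+O(M^2/r^2)\pd_r$ for large $r$; the flux is then computed directly as $-J^t\bigl[L,\tfrac{2r\alpha}{r^2+a^2}\bigr]$, which yields exactly $|D_L\xi|^2+q^{-2}|D_\theta\xi|^2+V^{ab}\xi_a\xi_b+r^{-2}|\xi|^2$ up to the $Err_1$ remainders and a total-$\pd_r$ term.
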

\begin{proof}
Let
\begin{align*}
X &= X_{\epsilon_{temper}}+\epsilon_{redshift}Y+\pd_t \\
w &= w_{\epsilon_{temper}}+\epsilon_{\pd_t}w_{\pd_t},
\end{align*}
where $X_{\epsilon_{temper}}$ and $w_{\epsilon_{temper}}$ are the vectorfield and function used in the proof of the partial Morawetz estimate (the dependence on the parameter $\epsilon_{temper}$ is made more explicit for the argument that will follow), $Y$ is a redshift vectorfield, and $w_{\pd_t}$ is a new function to be defined in the following lemma.

First, we establish an estimate for the bulk term.
\begin{lemma}
If $X$ and $w$ are as defined above and $m$ is the one-form used in the proof of the partial Morawetz estimate, then
\begin{multline*}
\int_{t_1}^{t_2}\int_{\Sigma_t} \left[\frac{M^2}{r^3}(\pd_r\xi_1)^2+\chi_{trap}\left(\frac{M^2}{r^3}(\pd_t\xi_1)^2+\frac{1}{r}|\sla\nabla\xi_1|^2+\frac{\cot^2\theta}{r^3}(\xi_1)^2\right)+\frac{1}{r^3}(\xi_1)^2\right. \\
\hspace{1.25in}\left.+ \frac{M^2}{r^3}(\pd_r\xi_2)^2+\chi_{trap}\left(\frac{M^2}{r^3}(\pd_t\xi_2)^2+\frac{1}{r}|\sla\nabla\xi_2|^2\right)+\frac{M}{r^4}(\xi_2)^2\right] \\
\lesssim \int_{t_1}^{t_2}\int_{\Sigma_t}div J[X,w,m]
\end{multline*}
\end{lemma}
\begin{proof}
To prove this estimate, we start with the partial Morawetz estimate (Proposition \ref{partial_morawetz_kerr_prop}) and make a few slight modifications.  We have
\begin{multline*}
\frac{M^2}{r^3}\left(1-\frac{r_H}r\right)^2(\pd_r\xi_1)^2+\frac{\chi_{trap}}{r}|\sla\nabla\xi_1|^2+\frac{1}{r^3}(\xi_1)^2 + \chi_{trap}\frac{\cot^2\theta}{r^3}(\xi_1)^2 \\
+ \frac{M^2}{r^3}\left(1-\frac{r_H}r\right)^2(\pd_r\xi_2)^2+\frac{\chi_{trap}}{r}|\sla\nabla\xi_2|^2+\frac{M}{r^4}1_{r\ge r_*}(\xi_2)^2 \\
-q^{-2}V_{\epsilon_{temper}}((\xi_1)^2+(\xi_2)^2) -q^{-2}V_{\epsilon_a}(\xi_2)^2\\
\lesssim div J[X_{\epsilon_{temper}},w_{\epsilon_{temper}},m].
\end{multline*}
By applying a small constant times the redshift vectorfield $Y$, the degeneracy of the $(\pd_r\xi_i)^2$ terms near the horizon can be removed without significant consequence. That is
\begin{multline*}
\frac{M^2}{r^3}(\pd_r\xi_1)^2+\frac{\chi_{trap}}{r}|\sla\nabla\xi_1|^2+\frac{1}{r^3}(\xi_1)^2 + \chi_{trap}\frac{\cot^2\theta}{r^3}(\xi_1)^2 \\
+ \frac{M^2}{r^3}(\pd_r\xi_2)^2+\frac{\chi_{trap}}{r}|\sla\nabla\xi_2|^2+\frac{M}{r^4}1_{r\ge r_*}(\xi_2)^2 \\
-q^{-2}V_{\epsilon_{temper}}((\xi_1)^2+(\xi_2)^2) -q^{-2}V_{\epsilon_a}(\xi_2)^2\\
\lesssim div J[X_{\epsilon_{temper}}+\epsilon_{redshift}Y,w_{\epsilon_{temper}},m].
\end{multline*}
It is worth mention that $Y$ is supported near the event horizon and $Y^r<0$.

Next, by choosing $\epsilon_{temper}$ and $\epsilon_a$ sufficiently small and applying a local Hardy estimate, we obtain in an integrated sense
\begin{multline*}
\int_{t_1}^{t_2}\int_{\Sigma_t}\left[\frac{M^2}{r^3}(\pd_r\xi_1)^2+\frac{\chi_{trap}}{r}|\sla\nabla\xi_1|^2+\frac{1}{r^3}(\xi_1)^2 + \chi_{trap}\frac{\cot^2\theta}{r^3}(\xi_1)^2\right. \\
\left.+\frac{M^2}{r^3}(\pd_r\xi_2)^2+\frac{\chi_{trap}}{r}|\sla\nabla\xi_2|^2+\frac{M}{r^4}(\xi_2)^2\right] \\
\lesssim \int_{t_1}^{t_2}\int_{\Sigma_t}div J[X_{\epsilon_{temper}}+\epsilon_{redshift}Y,w_{\epsilon_{temper}},m].
\end{multline*}

Next, we add to $w$ a small constant $\epsilon_{\pd_t}$ times a new function $w_{\pd_t}$. Note that according to Lemma \ref{vectorized_current_template_lem}, in the linear case,
$$divJ[0,w_{\pd_t}] = w_{\pd_t}D^\lambda\xi\cdot D_\lambda\xi-\frac12\Box_g w_{\pd_t}+w_{\pd_t}V^{ab}\xi_a\xi_b.$$
One particular term in the above formula is $w_{\pd_t}g^{tt}|D_t\xi|^2=w_{\pd_t}g^{tt}[(\pd_t\xi_1)^2+(\pd_t\xi_2)^2]$. Thus, if $w_{\pd_t}\le 0$ so that $w_{\pd_t}g^{tt}\ge 0$, then the result is added control of the time derivatives. As long as $w_{\pd_t}$ vanishes to second order at the trapping radius and decays like $O(M^2/r^2)$, by taking $\epsilon_{\pd_t}$ is sufficiently small, the remaining terms have no significant effect.
\begin{multline*}
\int_{t_1}^{t_2}\int_{\Sigma_t} \left[\frac{M^2}{r^3}(\pd_r\xi_1)^2+\chi_{trap}\left(\frac{M^2}{r^3}(\pd_t\xi_1)^2+\frac{1}{r}|\sla\nabla\xi_1|^2+\frac{\cot^2\theta}{r^3}(\xi_1)^2\right)+\frac{1}{r^3}(\xi_1)^2\right. \\
\hspace{1.25in}\left.+ \frac{M^2}{r^3}(\pd_r\xi_2)^2+\chi_{trap}\left(\frac{M^2}{r^3}(\pd_t\xi_2)^2+\frac{1}{r}|\sla\nabla\xi_2|^2\right)+\frac{M}{r^4}(\xi_2)^2\right] \\
\lesssim \int_{t_1}^{t_2}\int_{\Sigma_t}div J[X_{\epsilon_{temper}}+\epsilon_{redshift}Y,w_{\epsilon_{temper}}+\epsilon_{\pd_t}w_{\pd_t},m].
\end{multline*}
Finally, since
$$divJ[\pd_t]=0,$$
the estimate is not affected by adding $\pd_t$ to the vectorfield $X$. (The purpose of $\pd_t$ is for the boundary terms.)

This concludes the proof of the lemma. \qed
\end{proof}

Now, all that remains to be done is to investigate the boundary terms. This is the purpose of the remaining few calculations.

We first approximate the vectorfield $X$ and function $w$ in order to compute the boundary terms. Since $X_{\epsilon_{temper}}=uv\pd_r = \frac{u\Delta}{(r^2+a^2)^2}\pd_r$, and since $\pd_ru=2r$ and $w_{\epsilon_{temper}}=\frac{2r\Delta}{(r^2+a^2)^2}$ for $r>r_*$, it follows that for $r>r_*$,
\begin{align*}
X&=\frac{(r^2+a^2-c^2)\Delta}{(r^2+a^2)^2}\pd_r+\pd_t=L+O(M^2/r^2)\pd_r, \\
w&=\frac{2r\Delta}{(r^2+a^2)^2} +\epsilon_{\pd_t}w_{\pd_t}=\frac{2r\Delta}{(r^2+a^2)^2}+O(M^2/r^3).
\end{align*}
We have the following lemma.
\begin{lemma}
\begin{multline*}
-J^t\left[L,\frac{2r\alpha}{r^2+a^2}\right] 
= \frac{r^2+a^2}{q^2}\left(1-\alpha\frac{a^2\sin^2\theta}{r^2+a^2}\right)\alpha^{-1}|D_L\xi|^2+\frac1{q^2}|D_\theta\xi|^2+V^{ab}\xi_a\xi_b \\
+\frac{\alpha+r\alpha'}{q^2}|\xi|^2-\frac1{q^2}\pd_r(r\alpha|\xi|^2)+Err',
\end{multline*}
where
$$|Err'|\lesssim r^{-1}|\xi\cdot D_L\xi|+\frac{a^2}{r^2}\left[\left(1-\frac{r_H}r\right)^2|D_r\xi|^2+r^{-2}|\xi|^2\right].$$
\end{lemma}
\begin{proof}
By a direct calculation,
\begin{align*}
-J^t[L] &= -2D^t\xi\cdot D_L\xi+L^tD^\lambda\xi\cdot D_\lambda\xi+L^tV^{ab}\xi_a\xi_b \\
&= -2(g^{tt}+{}^{(Q)}g^{tt})D_t\xi\cdot D_L\xi+\left[(g^{tt}+{}^{(Q)}g^{tt})|D_t\xi|^2+g^{rr}|D_r\xi|^2+{}^{(Q)}g^{\theta\theta}|D_\theta\xi|^2\right]+V^{ab}\xi_a\xi_b \\
&= -(g^{tt}+{}^{(Q)}g^{tt})|D_t\xi|^2-2\alpha (g^{tt}-{}^{(Q)}g^{tt})D_t\xi\cdot D_r\xi+g^{rr}|D_r\xi|^2+{}^{(Q)}g^{\theta\theta}|D_\theta\xi|^2+V^{ab}\xi_a\xi_b \\
&= \frac{r^2+a^2}{q^2}\left(1-\alpha\frac{a^2\sin^2\theta}{r^2+a^2}\right)\alpha^{-1}|D_t\xi|^2+\frac{r^2+a^2}{q^2}\left(1-\alpha\frac{a^2\sin^2\theta}{r^2+a^2}\right)D_t\xi\cdot D_r\xi  \\
&\hspace{3in}+\frac{r^2+a^2}{q^2}\alpha|D_r\xi|^2+\frac1{q^2}|D_\theta\xi|^2+V^{ab}\xi_a\xi_b \\
&= \frac{r^2+a^2}{q^2}\left(1-\alpha\frac{a^2\sin^2\theta}{r^2+a^2}\right)\alpha^{-1}|D_L\xi|^2+\frac{r^2+a^2}{q^2}\alpha^2\frac{a^2\sin^2\theta}{r^2+a^2}|D_r\xi|^2+\frac1{q^2}|D_\theta\xi|^2+V^{ab}\xi_a\xi_b.
\end{align*}
Also,
\begin{align*}
-J^t\left[0,\frac{2r\alpha}{r^2+a^2}\right] &= -\frac{2r\alpha}{r^2+a^2}\xi\cdot D^t\xi \\
&= -\frac{2r\alpha}{r^2+a^2}(g^{tt}+{}^{(Q)}g^{tt})\xi\cdot D_t\xi \\
&= \frac{2r}{q^2}\left(1-\alpha\frac{a^2\sin^2\theta}{r^2+a^2}\right)\xi\cdot D_t\xi \\
&= \frac{2r}{q^2}\left(1-\alpha\frac{a^2\sin^2\theta}{r^2+a^2}\right)\xi\cdot D_L\xi-\frac{2r\alpha}{q^2}\left(1-\alpha\frac{a^2\sin^2\theta}{r^2+a^2}\right)\xi\cdot D_r\xi \\
&= \frac{2r}{q^2}\left(1-\alpha\frac{a^2\sin^2\theta}{r^2+a^2}\right)\xi\cdot D_L\xi-\frac{2r\alpha}{q^2}\xi\cdot D_r\xi +\frac{2r\alpha^2 a^2\sin^2\theta}{q^2(r^2+a^2)}\xi\cdot D_r\xi  \\
&= \frac{2r}{q^2}\left(1-\alpha\frac{a^2\sin^2\theta}{r^2+a^2}\right)\xi\cdot D_L\xi +\left(-\frac{1}{q^2}\pd_r(r\alpha |\xi|^2)+\frac{\alpha+r\alpha'}{q^2}|\xi|^2\right) \\
&\hspace{3.5in}+\frac{2r\alpha^2 a^2\sin^2\theta}{q^2(r^2+a^2)}\xi\cdot D_r\xi.
\end{align*}
Comparing to the identity given for $-J^t\left[L,\frac{2r\alpha}{r^2+a^2}\right]$, we see that
$$Err' = \frac{2r}{q^2}\left(1-\alpha\frac{a^2\sin^2\theta}{r^2+a^2}\right)\xi\cdot D_L\xi+ \alpha^2\frac{a^2\sin^2\theta}{q^2}|D_r\xi|^2 +\frac{2r\alpha^2 a^2\sin^2\theta}{q^2(r^2+a^2)}\xi\cdot D_r\xi.$$
Thus,
$$Err'\lesssim r^{-1}|\xi\cdot D_L\xi| +\left(1-\frac{r_H}r\right)\frac{a^2}{r^2}|D_r\xi|^2 +\frac{a^2}{r^2}|\xi|^2.$$
This concludes the proof of the lemma. \qed
\end{proof}

Given that
\begin{align*}
X-L&=O(M^2/r^2)\pd_r, \\
w-\frac{2r\alpha}{r^2+a^2}&=O(M^2/r^2)r^{-1}.
\end{align*}
We can estimate the remainder
$$\left|J^t[X,w]-J^t\left[L,\frac{2r\alpha}{r^2+a^2}\right]\right| \lesssim \frac{M^2}{r^2}\left[\left(1-\frac{r_H}r\right)|D_r\xi|^2+|D_t\xi|^2+r^{-2}|\xi|^2\right].$$

But it is also important to specifically take into account the effect of the redshift vectorfield $Y$ near the horizon, because it will remove the degeneracy of the term $\chi_H|D_r\xi|^2$ on the spacelike hypersurface $\Sigma_{t_2}$.
\begin{lemma}
On the event horizon $H_{t_1}^{t_2}$,
$$q^{-2}|D_\theta \xi|^2+V^{ab}\xi_a\xi_b \lesssim J^r[Y] + |D_t\xi|^2$$
and on the timelike hypersurface $\Sigma_{t_2}$,
$$\frac{M^2}{r^2}|D_r\xi|^2\lesssim -J^t[Y] +\frac{M^2}{r^2}\left[\chi_H|D_r\xi|^2+|D_t\xi|^2\right],$$
where $\chi_H=1-\frac{r_H}r$.
\end{lemma}
\begin{proof}
Since $Y\approx -\pd_r+c\pd_t$ near the event horizon, given the estimates for $J^\mu[\pd_t]$ in Lemma \ref{h_dt_J_components_lem}, it suffices to compute the components $J^r[-\pd_r]$ and $-J^t[-\pd_r]$.

From Lemma \ref{vectorized_current_template_lem},
$$J^\mu[-\pd_r] = -2g^{\mu\lambda}D_\lambda\xi\cdot D_r\xi +\delta^\mu{}_r D^\lambda\xi\cdot D_\lambda\xi+\delta^\mu{}_rV^{ab}\xi_a\xi_b.$$
Therefore,
\begin{multline*}
J^r[-\pd_r] = -2g^{rr}|D_r\xi|^2-2g^{rt}D_t\xi\cdot D_r\xi \\
+\left(g^{tt}|D_t\xi|^2+2g^{tr}D_r\xi\cdot D_t\xi+g^{rr}|D_r\xi|^2+q^{-2}|D_\theta\xi|^2+\frac{a^2\sin^2\theta}{q^2}|D_t\xi|^2\right) \\
+V^{ab}\xi_a\xi_b.
\end{multline*}
On the event horizon $H_{t_1}^{t_2}$, since $g^{rr}$ vanishes,
$$J^r[-\pd_r]=q^{-2}|D_\theta\xi|^2+V^{ab}\xi_a\xi_b +\left(g^{tt}+\frac{a^2\sin^2\theta}{q^2}\right)|D_t\xi|^2.$$
This implies the first estimate.

On the timelike hypersurface $\Sigma_{t_2}$,
$$-J^t[-\pd_r] = 2g^{tr}|D_r\xi|^2+2g^{tt}D_t\xi\cdot D_r\xi.$$
Since $g^{tr}>0$ near the horizon, this implies the second estimate. \qed
\end{proof}
This accounts for all of the boundary terms in the Morawetz estimate. \qed
\end{proof}

\section{Estimates for the $(\phi,\psi)$ System}\label{phi_psi_sec}

The purpose of this section is to prove the energy estimate (Proposition \ref{translated_energy_estimate_prop}) and $p$-weighted estimates (Proposition \ref{p_estimates_prop}) for the $(\phi,\psi)$ system. The energy estimate is simply a translation of Proposition \ref{xi_energy_estimate_prop}, which is in terms of $\xi_a$. The $p$-weighted estimates are a combination of three other estimates, the $h\pd_t$ estimate (Proposition \ref{translated_h_dt_prop}, translated from \ref{xi_h_dt_prop}), the Morawetz estimate (Proposition \ref{translated_morawetz_prop}, translated from \ref{xi_morawetz_estimate_prop}), and the incomplete $p$-weighted estimate (Proposition \ref{incomplete_p_estimates_prop}).

This section is split into three parts. In \S\ref{phi_psi_translating_sec}, the estimates from the previous section, which are written in terms of $\xi_a$ are translated to be in terms of $(\phi,\psi)$. In \S\ref{phi_psi_incomplete_sec}, the incomplete $p$-estimates are proved. Finally, in \S\ref{phi_psi_p_ee_sec} the $p$-estimates are proved.

Recall that the $\xi_a$ system corresponds to the linearization
\begin{align*}
X &= A-A\xi_2 \\
Y &= B+A\xi_1
\end{align*}
of the full nonlinear wave map system (\ref{wm_X_eqn}-\ref{wm_Y_eqn}). To solve the full nonlinear wave map system, we will instead use the linearization
\begin{align*}
X &= A+A\phi \\
Y &= B+A^2\psi.
\end{align*}
The motivation for this second linearization is given in \S\ref{intro_phi_psi_system_sec}. For this reason, from now on, we assume that
$$(\xi_1,\xi_2)=(A\psi,-\phi).$$

\subsection{Translating estimates from the $\xi_a$ system to the $(\phi,\psi)$ system}\label{phi_psi_translating_sec}

To begin, we prove Lemmas \ref{phi_psi_le_xi_lem}-\ref{translate_nl_lem}, which allow us to translate estimates for the $\xi_a$ system into estimates for the $(\phi,\psi)$ system. To prove these lemmas, we will make repeated use of the following calculations.
\begin{align}
|D_\mu\xi|^2 &= \left(\pd_\mu\xi_1+\frac{\pd_\mu B}{A}\xi_2\right)^2+\left(\pd_\mu\xi_2-\frac{\pd_\mu B}{A}\xi_1\right)^2 \nonumber \\
&= \left(A\pd_\mu \psi +\frac{\pd_\mu A}{A}A\psi-\frac{\pd_\mu B}{A}\phi\right)^2+\left(\pd_\mu \phi+\frac{\pd_\mu B}{A}A\psi\right)^2 \nonumber \\
&= \left(A\pd_\mu \psi +\frac{\pd_\mu A_1}{A_1}A\psi+\frac{\pd_\mu A_2}{A_2}A\psi-\frac{\pd_\mu B}{A}\phi\right)^2+\left(\pd_\mu \phi+\frac{\pd_\mu B}{A}A\psi\right)^2.\label{D_mu_xi_eqn}
\end{align}
\begin{align}
V^{ab}\xi_a\xi_b &= g^{\mu\nu}\left(\frac{\pd_\mu A}{A}\xi_1+\frac{\pd_\mu B}{A}\xi_2\right)\left(\frac{\pd_\nu A}{A}\xi_1+\frac{\pd_\nu B}{A}\xi_2\right) \nonumber \\
&= \frac{\Delta}{q^2}\left(\frac{\pd_r A}{A}\xi_1+\frac{\pd_r B}{A}\xi_2\right)^2+\frac{1}{q^2}\left(\frac{\pd_\theta A}{A}\xi_1+\frac{\pd_\theta B}{A}\xi_2\right)^2 \nonumber \\
&= \frac{\Delta}{q^2}\left(\frac{\pd_r A}{A}A\psi-\frac{\pd_r B}{A}\phi\right)^2+\frac{1}{q^2}\left(\frac{\pd_\theta A}{A}A\psi-\frac{\pd_\theta B}{A}\phi\right)^2 \nonumber \\
&= \frac{\Delta}{q^2}\left(\frac{\pd_r A_1}{A_1}A\psi+\frac{\pd_rA_2}{A_2}A\psi-\frac{\pd_r B}{A}\phi\right)^2+\frac{1}{q^2}\left(\frac{\pd_\theta A_1}{A_1}A\psi+\frac{\pd_\theta A_2}{A_2}A\psi-\frac{\pd_\theta B}{A}\phi\right)^2. \label{V_xi_xi_eqn}
\end{align}
\begin{equation}\label{D_r_A1_eqn}
\frac{\pd_r A_1}{A_1}=\frac{2r}{r^2+a^2}
\end{equation}
\begin{equation}\label{D_theta_A1_eqn}
\frac{\pd_\theta A_1}{A_1}=2\cot\theta
\end{equation}
\begin{equation}\label{D_r_A2_B_eqn}
\left(\frac{\pd_r A_2}{A}\right)^2+\left(\frac{\pd_r B}{A}\right)^2\lesssim \frac{a^2}{M^2}r^{-2}
\end{equation}
\begin{equation}\label{D_theta_A2_B_eqn}
\left(\frac{\pd_\theta A_2}{A}\right)^2+\left(\frac{\pd_\theta B}{A}\right)^2\lesssim \frac{a^2}{M^2}
\end{equation}

The first lemma allows us to estimate $(\phi,\psi)$ terms by $\xi_a$ terms.
\begin{lemma}\label{phi_psi_le_xi_lem}
$$(\pd_t\phi)^2+A^2(\pd_t\psi)^2=|D_t\xi|^2,$$
$$(\pd_r\phi)^2+A^2\left(\pd_r\psi+\frac{2r}{r^2+a^2}\psi\right)^2\lesssim |D_r\xi|^2+\frac{a^2}{M^2}r^{-2}(\phi^2+A^2\psi^2),$$
$$(\pd_\theta\phi)^2+A^2\left(\pd_\theta\psi+2\cot\theta\psi\right)^2 \lesssim |D_\theta\xi|^2+\frac{a^2}{M^2}(\phi^2+A^2\psi^2)$$
$$A^2\chi_H\left(\frac{2r}{r^2+a^2}\psi\right)^2+A^2q^{-2}\left(2\cot\theta\psi\right)^2\lesssim V^{ab}\xi_a\xi_b+\frac{a^2}{M^2}r^{-2}(\phi^2+A^2\psi^2),$$
where $\chi_H=1-\frac{r_H}r$.
\end{lemma}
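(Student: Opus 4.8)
The plan is to derive each of the four estimates directly from the expansion formulas \eqref{D_mu_xi_eqn} and \eqref{V_xi_xi_eqn}, substituting $(\xi_1,\xi_2)=(A\psi,-\phi)$ and peeling off the $A_1$-contributions as the main terms while absorbing the $A_2$- and $B$-contributions as errors using the smallness estimates \eqref{D_r_A1_eqn}--\eqref{D_theta_A2_B_eqn}. The underlying elementary inequality throughout is the reverse triangle inequality in the form $x^2 \le 2(x+y)^2 + 2y^2$ (or more precisely, for two quantities $p,q$ and a small parameter, $p^2 \lesssim (p+q)^2 + q^2$), which lets us bound a ``clean'' square by a ``dirty'' square plus the square of the correction. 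I would organize the proof as four short paragraphs, one per estimate.

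For the time derivative identity, $\mu = t$, the correction terms $\frac{\pd_t B}{A}$ and $\frac{\pd_t A_i}{A_i}$ all vanish because $A$ and $B$ are independent of $t$; so \eqref{D_mu_xi_eqn} with $\mu=t$ collapses exactly to $|D_t\xi|^2 = (A\pd_t\psi)^2 + (\pd_t\phi)^2$, giving the first line with equality. For the $r$-derivative estimate, I apply \eqref{D_mu_xi_eqn} with $\mu=r$: the first square is $\bigl(A\pd_r\psi + \tfrac{\pd_r A_1}{A_1}A\psi + \tfrac{\pd_r A_2}{A_2}A\psi - \tfrac{\pd_r B}{A}\phi\bigr)^2$ and the second is $\bigl(\pd_r\phi + \tfrac{\pd_r B}{A}A\psi\bigr)^2$. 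Using \eqref{D_r_A1_eqn} the main part of the first square is $A^2\bigl(\pd_r\psi + \tfrac{2r}{r^2+a^2}\psi\bigr)^2$; I move the $A_2$- and $B$-terms to the error side via the elementary inequality, and \eqref{D_r_A2_B_eqn} bounds $\bigl(\tfrac{\pd_r A_2}{A}\bigr)^2 A^2\psi^2 + \bigl(\tfrac{\pd_r B}{A}\bigr)^2\phi^2 \lesssim \tfrac{a^2}{M^2}r^{-2}(A^2\psi^2 + \phi^2)$. The $\theta$-derivative estimate is identical in structure, using \eqref{D_theta_A1_eqn} for the main term $A^2(\pd_\theta\psi + 2\cot\theta\,\psi)^2$ and \eqref{D_theta_A2_B_eqn} for the error, with no extra $r^{-2}$ factor.

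The fourth estimate comes from \eqref{V_xi_xi_eqn}. There $V^{ab}\xi_a\xi_b = \tfrac{\Delta}{q^2}\bigl(\tfrac{\pd_r A_1}{A_1}A\psi + \tfrac{\pd_r A_2}{A_2}A\psi - \tfrac{\pd_r B}{A}\phi\bigr)^2 + \tfrac1{q^2}\bigl(\tfrac{\pd_\theta A_1}{A_1}A\psi + \tfrac{\pd_\theta A_2}{A_2}A\psi - \tfrac{\pd_\theta B}{A}\phi\bigr)^2$; using \eqref{D_r_A1_eqn}--\eqref{D_theta_A1_eqn} and noting $\tfrac{\Delta}{q^2}\sim \chi_H$ (up to harmless $r$-powers absorbed into the $A^2$ weight and the definition of $\chi_H = 1 - r_H/r$) and $\tfrac1{q^2}\sim q^{-2}$, the main terms are $A^2\chi_H\bigl(\tfrac{2r}{r^2+a^2}\psi\bigr)^2$ and $A^2 q^{-2}(2\cot\theta\,\psi)^2$. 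The $A_2$- and $B$-corrections are again thrown to the right side and controlled by \eqref{D_r_A2_B_eqn}--\eqref{D_theta_A2_B_eqn}, producing the $\tfrac{a^2}{M^2}r^{-2}(\phi^2 + A^2\psi^2)$ error (the $r$-decay of the $\theta$-term error comes from the $\tfrac1{q^2}\sim r^{-2}$ prefactor). I expect the main obstacle to be bookkeeping the various $r$-weights consistently—in particular verifying that $\tfrac{\Delta}{q^2}$ really behaves like $\chi_H$ and that the $\theta$-term in $V$ contributes the claimed $r^{-2}$ decay in the error—since all of these are ``valid up to a constant'' in the sense of the paper's conventions, and one must be careful that the error estimates \eqref{D_r_A2_B_eqn}--\eqref{D_theta_A2_B_eqn} are stated with $A$ (not $A_1$ or $A_2$) in the denominator, which is precisely what makes the substitution $\xi_1 = A\psi$, $\xi_2 = -\phi$ clean. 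None of these steps is deep; the content is entirely in assembling the expansions already provided and invoking the four smallness bounds.
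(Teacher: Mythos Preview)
Your proposal is correct and follows essentially the same route as the paper's own proof: both start from the expansions \eqref{D_mu_xi_eqn} and \eqref{V_xi_xi_eqn}, isolate the $A_1$-contributions via \eqref{D_r_A1_eqn}--\eqref{D_theta_A1_eqn} as the main terms, and bound the $A_2$- and $B$-corrections using \eqref{D_r_A2_B_eqn}--\eqref{D_theta_A2_B_eqn}. The paper's write-up is slightly terser (it skips the explicit ``reverse triangle inequality'' remark and just writes the chain of $\lesssim$'s), but the logical content is identical.
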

\begin{proof}
From equation (\ref{D_mu_xi_eqn}),
\begin{align*}
|D_\mu\xi|^2 = \left(A\pd_\mu \psi +\frac{\pd_\mu A_1}{A_1}A\psi+\frac{\pd_\mu A_2}{A_2}A\psi-\frac{\pd_\mu B}{A}\phi\right)^2+\left(\pd_\mu \phi+\frac{\pd_\mu B}{A}A\psi\right)^2.
\end{align*}
In the case $\mu=t$, since $\pd_tA_1=\pd_tA_2=\pd_t B=0$, we get the identity
$$(\pd_t\phi)^2+A^2(\pd_t\psi)^2=|D_t\xi|^2.$$
Given equation (\ref{D_r_A1_eqn}) and estimate (\ref{D_r_A2_B_eqn}),
\begin{align*}
(\pd_r\phi)^2+A^2\left(\pd_r\psi+\frac{2r}{r^2+a^2}\psi\right)^2 &= (\pd_r\phi)^2+\left(A\pd_r\psi +\frac{\pd_r A_1}{A_1}A\psi\right)^2 \\
&\lesssim |D_r\xi|^2+\frac{a^2}{M^2}r^{-2}(\phi^2+A^2\psi^2).
\end{align*}
Similarly, given equation (\ref{D_theta_A1_eqn}) and estimate (\ref{D_theta_A2_B_eqn}),
\begin{align*}
(\pd_\theta\phi)^2+A^2\left(\pd_\theta\psi+2\cot\theta\psi\right)^2 &= (\pd_\theta\phi)^2+\left(A\pd_\theta\psi +\frac{\pd_\theta A_1}{A_1}A\psi\right)^2 \\
&\lesssim |D_\theta\xi|^2+\frac{a^2}{M^2}(\phi^2+A^2\psi^2).
\end{align*}
Finally, from equation (\ref{V_xi_xi_eqn}),
\begin{align*}
V^{ab}\xi_a\xi_b &= \frac{\Delta}{q^2}\left(\frac{\pd_r A_1}{A_1}A\psi+\frac{\pd_rA_2}{A_2}A\psi-\frac{\pd_r B}{A}\phi\right)^2+\frac{1}{q^2}\left(\frac{\pd_\theta A_1}{A_1}A\psi+\frac{\pd_\theta A_2}{A_2}A\psi-\frac{\pd_\theta B}{A}\phi\right)^2.
\end{align*}
Using equations (\ref{D_r_A1_eqn}) and (\ref{D_theta_A1_eqn}) together with estimates (\ref{D_r_A2_B_eqn}) and (\ref{D_theta_A2_B_eqn}) again, we conclude
\begin{align*}
A^2\chi_H\left(\frac{2r}{r^2+a^2}\psi\right)^2+A^2q^{-2}\left(2\cot\theta\psi\right)^2 &\lesssim \frac{\Delta}{q^2}\left(\frac{\pd_r A_1}{A_1}A\psi\right)^2+\frac1{q^2}\left(\frac{\pd_\theta A_1}{A_1}A\psi\right)^2\\
&\lesssim V^{ab}\xi_a\xi_b+\frac{a^2}{M^2}r^{-2}(\phi^2+A^2\psi^2).
\end{align*}
 \qed
\end{proof}

The next lemma allows us to estimate certain $\xi_a$ terms by $(\phi,\psi)$ terms.
\begin{lemma}\label{xi_le_phi_psi_1_lem}
$$|D_t\xi|^2 = (\pd_t\phi)^2+A^2(\pd_t\psi)^2$$
$$|D_r\xi|^2 \lesssim (\pd_r\phi)^2+A^2\left(\pd_r\psi+\frac{2r}{r^2+a^2}\psi\right)^2+\frac{a^2}{M^2}r^{-2}(\phi^2+A^2\psi^2).$$
\end{lemma}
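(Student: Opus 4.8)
The plan is to extract both statements from the master identity (\ref{D_mu_xi_eqn}),
$$|D_\mu\xi|^2 = \left(A\pd_\mu \psi +\frac{\pd_\mu A_1}{A_1}A\psi+\frac{\pd_\mu A_2}{A_2}A\psi-\frac{\pd_\mu B}{A}\phi\right)^2+\left(\pd_\mu \phi+\frac{\pd_\mu B}{A}A\psi\right)^2,$$
evaluated at $\mu=t$ and $\mu=r$. Conceptually this is the computation behind the first two lines of Lemma \ref{phi_psi_le_xi_lem} run in the opposite direction: there the $A_2$- and $B$-contributions were simply discarded, whereas here they must be retained and then absorbed into the error term $\frac{a^2}{M^2}r^{-2}(\phi^2+A^2\psi^2)$.

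For the $t$-identity I would note that $A_1$, $A_2$, and $B$ depend only on $(r,\theta)$, so $\pd_t A_1=\pd_t A_2=\pd_t B=0$; substituting $\mu=t$ into the identity above collapses it to $|D_t\xi|^2=(A\pd_t\psi)^2+(\pd_t\phi)^2$, which is exactly the claimed equality (and is literally the same as the first line of Lemma \ref{phi_psi_le_xi_lem}).

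For the $r$-estimate, I would first use (\ref{D_r_A1_eqn}) to combine the leading two terms of the first square, $A\pd_r\psi+\frac{\pd_r A_1}{A_1}A\psi=A\big(\pd_r\psi+\frac{2r}{r^2+a^2}\psi\big)$, and then apply $(x+y+z)^2\le 3(x^2+y^2+z^2)$ to the first square and $(x+y)^2\le 2(x^2+y^2)$ to the second. Besides the desired main terms $(\pd_r\phi)^2+A^2\big(\pd_r\psi+\frac{2r}{r^2+a^2}\psi\big)^2$, this produces only the three error terms $\left(\frac{\pd_r A_2}{A_2}\right)^2A^2\psi^2$, $\left(\frac{\pd_r B}{A}\right)^2\phi^2$, and $\left(\frac{\pd_r B}{A}\right)^2A^2\psi^2$. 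Each is bounded by $\frac{a^2}{M^2}r^{-2}(\phi^2+A^2\psi^2)$ via Lemma \ref{small_a_quantities_lem}, which gives $\left|\frac{\pd_r A_2}{A_2}\right|\lesssim a^2 r^{-3}$ and $\left|\frac{\pd_r B}{A}\right|\lesssim |a|^3Mr^{-5}$; both are $\lesssim \frac{|a|}{M}r^{-1}$ once one inserts $r>r_H\ge M$ together with $|a|\le M$ (for the $B$-terms one may instead simply cite estimate (\ref{D_r_A2_B_eqn})). Summing and relabelling gives the stated bound.

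I do not expect any serious obstacle here: the lemma is a routine change-of-variables bookkeeping step, and every ingredient — the identity (\ref{D_mu_xi_eqn}), the explicit logarithmic derivative (\ref{D_r_A1_eqn}), and the smallness bounds (\ref{D_r_A2_B_eqn}) and Lemma \ref{small_a_quantities_lem} — is already in hand. The only mildly delicate point is to track the correct weight on the error terms: what appears naturally in (\ref{D_mu_xi_eqn}) is $\frac{\pd_r A_2}{A_2}$ rather than the $\frac{\pd_r A_2}{A}$ of estimate (\ref{D_r_A2_B_eqn}), so for that particular term one should invoke Lemma \ref{small_a_quantities_lem} directly. The companion angular statement (the role of Lemma \ref{xi_le_phi_psi_2_lem}) would be proved identically, with (\ref{D_r_A1_eqn})--(\ref{D_r_A2_B_eqn}) replaced by (\ref{D_theta_A1_eqn}) and (\ref{D_theta_A2_B_eqn}).
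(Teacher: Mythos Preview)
Your proposal is correct and follows essentially the same approach as the paper: both start from identity (\ref{D_mu_xi_eqn}), substitute (\ref{D_r_A1_eqn}) to isolate the main term $A(\pd_r\psi+\frac{2r}{r^2+a^2}\psi)$, and bound the remaining $A_2$- and $B$-contributions by the stated error using the smallness estimates. Your observation about the $\frac{\pd_r A_2}{A_2}$ versus $\frac{\pd_r A_2}{A}$ discrepancy is well spotted and correctly resolved via Lemma \ref{small_a_quantities_lem}. One small caveat on your closing remark: the angular companion Lemma \ref{xi_le_phi_psi_2_lem} is \emph{not} proved identically---the $\frac{\pd_\theta A_1}{A_1}=2\cot\theta$ term is singular on the axis, and the paper needs an integration by parts to exhibit the cancellation, so do not expect that argument to be a straight substitution.
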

\begin{proof}
The identity for $|D_t\xi|^2$ was already proved in the previous lemma, but is simply restated in this lemma for the sake of completeness. 

To prove the estimate for $|D_r\xi|^2$, we use equation (\ref{D_mu_xi_eqn}), then equation (\ref{D_r_A1_eqn}) and finally estimate (\ref{D_r_A2_B_eqn}).
\begin{align*}
|D_r\xi|^2 &= \left(A\pd_r \psi +\frac{\pd_r A_1}{A_1}A\psi+\frac{\pd_r A_2}{A_2}A\psi-\frac{\pd_r B}{A}\phi\right)^2+\left(\pd_r \phi+\frac{\pd_r B}{A}A\psi\right)^2 \\
&= \left(A\pd_r \psi +\frac{2r}{r^2+a^2}A\psi+\frac{\pd_r A_2}{A_2}A\psi-\frac{\pd_r B}{A}\phi\right)^2+\left(\pd_r \phi+\frac{\pd_r B}{A}A\psi\right)^2 \\
&\lesssim (\pd_r\phi)^2+A^2\left(\pd_r\psi+\frac{2r}{r^2+a^2}\psi\right)^2+\frac{a^2}{M^2}r^{-2}(\phi^2+A^2\psi^2).
\end{align*}
 \qed
\end{proof}

Note that the previous lemma did not estimate $|D_\theta\xi|^2$ or $V^{ab}\xi_a\xi_b$. The reason is that both of these terms are singluar on the axis and would require a term like $A^2\cot^2\theta \psi^2$ on the right hand side of an estimate. It turns out that if these terms are combined in just the right way, then up to a divergence term (with only a $\theta$ component), the singularities cancel. This fact should be compared to the remark following the proof of Lemma \ref{0_K_theta_lem}.
\begin{lemma}\label{xi_le_phi_psi_2_lem}
For an arbitrary function $f(r)$,
$$\int_{\Sigma_t}f(r)\left[q^{-2}|D_\theta\xi|^2+V^{ab}\xi_a\xi_b\right] \lesssim \int_{\Sigma_t}f(r)\left[q^{-2}(\pd_\theta\phi)^2+r^{-2}\phi^2\right] +\int_{\tilde{\Sigma}_t}f(r)\left[q^{-2}(\pd_\theta\psi)^2+r^{-2}\psi^2\right].$$
\end{lemma}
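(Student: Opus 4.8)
The plan is to expand $q^{-2}|D_\theta\xi|^2+V^{ab}\xi_a\xi_b$ in terms of $(\phi,\psi)$ via (\ref{D_mu_xi_eqn})--(\ref{V_xi_xi_eqn}), isolate the \emph{one} combination that is genuinely singular on the axis, and show that after a single integration by parts in $\theta$ against the volume weight the $\csc\theta$-singularities cancel, leaving a quantity controlled by the right-hand side; every other piece will be seen to be ``benign'' and bounded directly. The Schwarzschild case ($B=0$, $A_2=1$) is the clean model of this cancellation, and the $|a|\ll M$ corrections will be treated as errors. Throughout I allow constants to depend on $M$ and use $|a|<M$, and I record that $\int_{\tilde\Sigma_t}$ carries the extra weight $A_1^2\sim r^4\sin^4\theta$, that $A^2=A_1^2A_2^2\sim A_1^2$, that $r^{-2}q^2\sim 1$, and that (by Lemma \ref{small_a_quantities_lem}) $\pd_\theta B$ and $\pd_\theta A_2$ carry an explicit factor of $\sin\theta$.

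Setting $(\xi_1,\xi_2)=(A\psi,-\phi)$ and using $\pd_\theta\xi_1=\tfrac{\pd_\theta A}{A}\xi_1+A\pd_\theta\psi$, the first component of $|D_\theta\xi|^2$ equals $A\pd_\theta\psi+G$ and the $\theta$-part of $V^{ab}\xi_a\xi_b$ equals $q^{-2}G^2$, where $G:=\pd_\theta A\cdot\psi-\tfrac{\pd_\theta B}{A}\phi$; the leftover pieces are the second component $q^{-2}(\pd_\theta\phi+\tfrac{\pd_\theta B}{A}A\psi)^2$ and the radial part $\tfrac{\Delta}{q^2}(\tfrac{\pd_rA}{A}A\psi-\tfrac{\pd_rB}{A}\phi)^2$. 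These last two are benign: using $(\tfrac{\pd_rA}{A})^2\lesssim r^{-2}$, (\ref{D_r_A2_B_eqn}), (\ref{D_theta_A2_B_eqn}), the $\sin\theta$-factor in $\pd_\theta B$, and the fact that the $q^{\mp2}$ of integrand and volume form cancel, each integrates (after multiplication by $f$, which I take $\ge 0$) into $\int_{\Sigma_t}f[q^{-2}(\pd_\theta\phi)^2+r^{-2}\phi^2]+\int_{\tilde\Sigma_t}f\,r^{-2}\psi^2$ via elementary AM--GM once one distributes the available $\sin\theta$- and $r$-powers.

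The core is
\[
\int_{\Sigma_t}f\,q^{-2}\big[(A\pd_\theta\psi+G)^2+G^2\big]=\int\!\!\int f\big[A^2(\pd_\theta\psi)^2+2G\,A\pd_\theta\psi+2G^2\big]\sin\theta\,d\theta\,dr .
\]
The first term is $\sim\int_{\tilde\Sigma_t}f\,q^{-2}(\pd_\theta\psi)^2$. In $2G\,A\pd_\theta\psi=\pd_\theta A\cdot A\,\pd_\theta(\psi^2)-2\pd_\theta B\cdot\phi\,\pd_\theta\psi$ the second summand is benign (the $\sin\theta$ in $\pd_\theta B$ gives enough room to split between $r^{-2}\phi^2$ and $q^{-2}(\pd_\theta\psi)^2$), and I integrate the first by parts in $\theta$ against $f(r)\sin\theta$; the boundary terms vanish because $A$ vanishes to second order at the poles while $\psi$ is regular there. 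Combining the resulting $-\pd_\theta(\pd_\theta A\cdot A\sin\theta)\psi^2$ with the $\psi^2$-part $2(\pd_\theta A)^2\psi^2\sin\theta$ of $2G^2$, and using $(\pd_\theta A)^2-A\,\pd_\theta^2A=-A^2\pd_\theta^2(\log A)$ together with $\pd_\theta^2(\log A)=-2\csc^2\theta+\pd_\theta^2(\log A_2)$, one obtains the identity
\[
2(\pd_\theta A)^2\sin\theta-\pd_\theta(\pd_\theta A\cdot A\sin\theta)=2A^2\sin\theta-A^2\,\pd_\theta\!\big(\sin\theta\,\pd_\theta\log A_2\big),
\]
so the $\csc\theta$-singularities cancel, leaving the good term $2A^2\sin\theta\,\psi^2$ (which integrates to $\sim\int_{\tilde\Sigma_t}f\,r^{-2}\psi^2$) plus an $A_2$-error bounded by Lemma \ref{small_a_quantities_lem} by $\tfrac{a^2M}{r^3}\sin\theta\,A^2\psi^2$, which is benign. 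In the Schwarzschild case this identity is exactly $2A^2\sin\theta$ with no error.

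Finally I collect the remaining cross-terms of $2G^2$, namely $-4\tfrac{\pd_\theta A\,\pd_\theta B}{A}\phi\psi+2(\tfrac{\pd_\theta B}{A})^2\phi^2$, together with all the benign errors produced above, and bound each after integration by AM--GM, steering the $\phi$-parts onto $\int_{\Sigma_t}f\,r^{-2}\phi^2$ (weight $\sim\sin\theta$; the needed $r^{-2}$ is supplied by the decay of the Kerr coefficients) and the $\psi$-parts onto $\int_{\tilde\Sigma_t}f[q^{-2}(\pd_\theta\psi)^2+r^{-2}\psi^2]$ (weight $\sim r^4\sin^5\theta$, with ample room). I expect the main obstacle to be precisely this bookkeeping and the cancellation above: \emph{none} of the Kerr error terms is pointwise dominated by the right-hand side integrand near the axis, and one must carefully exploit (i) the $\sin\theta$ from the volume element, (ii) the extra explicit $\sin\theta$ carried by $\pd_\theta B$ and $\pd_\theta A_2$, and (iii) the $r$-weight gap between $\Sigma_t$ and $\tilde\Sigma_t$, to see that after integration (and using $|a|<M$) everything is absorbable.
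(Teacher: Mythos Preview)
Your proposal is correct and follows essentially the same route as the paper: you expand via (\ref{D_mu_xi_eqn})--(\ref{V_xi_xi_eqn}), isolate the axis-singular combination, integrate by parts in $\theta$, and verify that the resulting coefficient of $\psi^2$ is regular on the axis (equal to $2A^2\sin\theta$ in Schwarzschild, with an $O(a^2/r^2)$ correction in Kerr). Your packaging via $G$ and the identity $(\pd_\theta A)^2-A\,\pd_\theta^2A=-A^2\pd_\theta^2(\log A)$ is a cosmetic variant of the paper's direct computation of $-\frac{1}{A^2\sin\theta}\pd_\theta\!\big(A^2\sin\theta\,\tfrac{\pd_\theta A}{A}\big)+2\big(\tfrac{\pd_\theta A}{A}\big)^2=2+O(a^2/r^2)$; the cancellation mechanism and the handling of the remaining $B$- and $A_2$-terms are the same.
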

\begin{proof}
From equation (\ref{D_mu_xi_eqn}),
\begin{align*}
|D_\theta\xi|^2 &= \left(A\pd_\theta \psi +\frac{\pd_\theta A}{A}A\psi-\frac{\pd_\theta B}{A}\phi\right)^2+\left(\pd_\theta \phi+\frac{\pd_\theta B}{A}A\psi\right)^2,
\end{align*}
and from equation (\ref{V_xi_xi_eqn}),
\begin{align*}
V^{ab}\xi_a\xi_b &= \frac{\Delta}{q^2}\left(\frac{\pd_r A}{A}A\psi-\frac{\pd_r B}{A}\phi\right)^2+\frac{1}{q^2}\left(\frac{\pd_\theta A}{A}A\psi-\frac{\pd_\theta B}{A}\phi\right)^2.
\end{align*}
The only challenge arises when dealing with the terms that contain the factor $\frac{\pd_\theta A}{A}$, because this quantity diverges on the axis. We will now show that after an integration by parts on the sphere, the divergent parts cancel. For the sake of simplicity, we take $f(r)=1$.
\begin{multline*}
\int_{\Sigma_t}q^{-2} \left(A\pd_\theta \psi +\frac{\pd_\theta A}{A}A\psi-\frac{\pd_\theta B}{A}\phi\right)^2+q^{-2}\left(\frac{\pd_\theta A}{A}A\psi-\frac{\pd_\theta B}{A}\phi\right)^2 \\
=2\pi\int_{r_H}^{\infty}\int_0^\pi \left(A\pd_\theta \psi +\frac{\pd_\theta A}{A}A\psi-\frac{\pd_\theta B}{A}\phi\right)^2+\left(\frac{\pd_\theta A}{A}A\psi-\frac{\pd_\theta B}{A}\phi\right)^2 \sin\theta d\theta dr.
\end{multline*}
The singular part is contained in the following expression, which we evaluate by expanding the squares and integrating by parts.
\begin{multline*}
\int_0^\pi \left(A\pd_\theta \psi+\frac{\pd_\theta A}{A}A\psi\right)^2+\left(\frac{\pd_\theta A}{A}A\psi\right)^2 \sin\theta d\theta \\
= \int_0^\pi \left[(\pd_\theta\psi)^2+2\frac{\pd_\theta A}{A}\psi\pd_\theta\psi +2\left(\frac{\pd_\theta A}{A}\right)^2\psi^2\right]A^2\sin\theta d\theta \\
= \int_0^\pi \left[(\pd_\theta\psi)^2-\frac{1}{A^2\sin\theta}\pd_\theta\left(A^2\sin\theta \frac{\pd_\theta A}{A}\right)\psi^2 +2\left(\frac{\pd_\theta A}{A}\right)^2\psi^2\right]A^2\sin\theta d\theta.
\end{multline*}
It now suffices to show that the quantity
$$-\frac{1}{A^2\sin\theta}\pd_\theta\left(A^2\sin\theta \frac{\pd_\theta A}{A}\right)+2\left(\frac{\pd_\theta A}{A}\right)^2$$
is regular on the axis. For the Schwarzschild case, where $A=r^2\sin^2\theta$, this expression is rather simple.
\begin{align*}
-\frac{1}{A^2\sin\theta}&\pd_\theta\left(A^2\sin\theta \frac{\pd_\theta A}{A}\right)+2\left(\frac{\pd_\theta A}{A}\right)^2 \\
&=-\frac{1}{r^4\sin^5\theta}\pd_\theta\left(r^4\sin^5\theta (2\cot\theta)\right)+2(2\cot\theta)^2 \\
&= -\frac{1}{\sin^5\theta}\pd_\theta(2\sin^4\theta\cos\theta)+8\cot^2\theta \\
&= -8\cot^2\theta +2 + 8\cot^2\theta \\
&= 2.
\end{align*}
\begin{remark}
Recall that a term was exchanged between Lemmas \ref{0_K_theta_lem} and \ref{0_K_r_lem} with a factor of $2-\epsilon$. Furthermore, there was a remark following the proof of Lemma \ref{0_K_theta_lem} claiming that $\epsilon=0$ corresponds to the $(\phi,\psi)$ system. The above calculation that yields the number $2$ is directly related to the $2-\epsilon$ factor in the exchanged term.
\end{remark}

In the Kerr case, since $A_1=(r^2+a^2)\sin^2\theta$, we also have
$$-\frac{1}{A_1^2\sin\theta}\pd_\theta\left(A_1^2\sin\theta \frac{\pd_\theta A_1}{A_1}\right)+2\left(\frac{\pd_\theta A_1}{A_1}\right)^2=2.$$
This fact, together with the fact that $\frac{\pd_\theta A_2}{A_2}=O(a^2/r^2)\sin\theta$, implies
\begin{align*}
-\frac{1}{A^2\sin\theta}&\pd_\theta\left(A^2\sin\theta \frac{\pd_\theta A}{A}\right)+2\left(\frac{\pd_\theta A}{A}\right)^2 \\
&= -\frac{1}{A_1^2A_2^2\sin\theta}\pd_\theta\left(A_1^2A_2^2\sin\theta \left(\frac{\pd_\theta A_1}{A_1}+\frac{\pd_\theta A_2}{A_2}\right)\right)+2\left(\frac{\pd_\theta A_1}{A_1}+\frac{\pd_\theta A_2}{A_2}\right)^2 \\
&= 2 + O(a^2/r^2).
\end{align*}
This concludes the proof. \qed
\end{proof}

Finally, we prove an identity that relates the linear equation for $\xi_a$ with the linear equation for $(\phi,\psi)$. In particular, the proof will demonstrate the cancellation of two singular terms in the expression involving $\Box_g\psi$, which is directly related to the choice $\xi_1=A\psi$.
\begin{lemma}\label{translate_nl_lem}
If $(\xi_1,\xi_2)=(A\psi,-\phi)$, then
\begin{align*}
(e_1)^a(\Box_g\xi_a-V_a{}^b\xi_b) &= A\left(\Box_g\psi+2\frac{\pd^\alpha A}{A}\pd_\alpha \psi -2\frac{\pd^\alpha B}{A^2}\pd_\alpha \phi -2\frac{\pd^\alpha B\pd_\alpha B}{A^2}\psi \right) \\
&= A\left(\Box_{\tilde{g}}\psi+2\frac{\pd^\alpha A_2}{A_2}\pd_\alpha \psi -2\frac{\pd^\alpha B}{A^2}\pd_\alpha \phi -2\frac{\pd^\alpha B\pd_\alpha B}{A^2}\psi \right) \\
&= A(\Box_{\tilde{g}}\psi-\mathcal{L}_\psi) \\
-(e_2)^a(\Box_g\xi_a-V_a{}^b\xi_b) &= \Box_g\phi +2\pd^\alpha B\pd_\alpha \psi -2\frac{\pd^\alpha B\pd_\alpha B}{A^2}\phi +4\frac{\pd^\alpha A\pd_\alpha B}{A}\psi \\
&= \Box_g\phi-\mathcal{L}_\phi.
\end{align*}
\end{lemma}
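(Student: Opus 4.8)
The plan is to compute $\Box_g\xi_a - V_a{}^b\xi_b$ in the orthonormal frame $\{e_1,e_2\}$ directly, substituting $\xi_1 = A\psi$, $\xi_2 = -\phi$, and then matching the result against the definitions of $\mathcal{L}_\phi$, $\mathcal{L}_\psi$ and the modified wave operators $\Box_{\tilde g}$. The starting point is the component form of equation (\ref{xi_a_linearized_eqn}) already recorded in the excerpt, namely
\[
\Box_g\xi_1 = -2\frac{\pd^\alpha B}{A}\pd_\alpha\xi_2 + \frac{\pd^\alpha A\pd_\alpha A + \pd^\alpha B\pd_\alpha B}{A^2}\xi_1,\qquad
\Box_g\xi_2 = 2\frac{\pd^\alpha B}{A}\pd_\alpha\xi_1 + 2\frac{\pd^\alpha B\pd_\alpha B}{A^2}\xi_2 + 2\frac{\pd^\alpha A\pd_\alpha B}{A^2}\xi_1.
\]
Here one must be careful that $(e_i)^a(\Box_g\xi_a - V_a{}^b\xi_b)$ is \emph{not} simply $\Box_g\xi_i$, because $D$ differs from the flat frame derivative by the connection coefficients $D_\alpha e^1 = -\tfrac{\pd_\alpha B}{A}e^2$, $D_\alpha e^2 = \tfrac{\pd_\alpha B}{A}e^1$. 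Contracting $\Box_g\xi_a - V_a{}^b\xi_b$ with $(e_1)^a$ and $(e_2)^a$ produces exactly the two combinations written on the left-hand sides of the lemma, and these reduce to the scalar expressions above (this is the content of the sentence "Equation (\ref{xi_a_linearized_eqn}) in component form is given by..."). So the $-(e_2)^a(\cdots)$ identity is almost immediate: set $\xi_2 = -\phi$, $\xi_1 = A\psi$ in the $\Box_g\xi_2$ equation, use $\pd^\alpha(A\psi) = A\pd^\alpha\psi + \psi\pd^\alpha A$ and $\pd^\alpha A\pd_\alpha A/A^2 \cdot A\psi = \pd^\alpha A\pd_\alpha A/A\cdot\psi$, and collect terms to recognize $-\mathcal{L}_\phi$; no cancellation subtlety arises here.

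For the first chain of equalities, the substitution $\xi_1 = A\psi$ into $\Box_g\xi_1$ requires expanding $\Box_g(A\psi) = A\Box_g\psi + 2\pd^\alpha A\pd_\alpha\psi + \psi\Box_g A$ (product rule for the Laplace–Beltrami operator). The term $\psi\Box_g A$ and the term $\tfrac{\pd^\alpha A\pd_\alpha A}{A^2}\cdot A\psi = \tfrac{\pd^\alpha A\pd_\alpha A}{A}\psi$ must combine so that the apparently-singular contributions (those that behave like $\cot^2\theta$ near the axis, coming from $\pd_\theta A/A \sim 2\cot\theta$) cancel; this is precisely the cancellation the lemma statement flags as "directly related to the choice $\xi_1 = A\psi$." One then divides through by $A$ to land on $\Box_g\psi + 2\tfrac{\pd^\alpha A}{A}\pd_\alpha\psi - 2\tfrac{\pd^\alpha B}{A^2}\pd_\alpha\phi - 2\tfrac{\pd^\alpha B\pd_\alpha B}{A^2}\psi$. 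The second equality in that chain is just the definitional rewrite $\Box_g + 2\tfrac{\pd^\alpha A}{A}\pd_\alpha = \Box_g + 2\tfrac{\pd^\alpha A_1}{A_1}\pd_\alpha + 2\tfrac{\pd^\alpha A_2}{A_2}\pd_\alpha = \Box_{\tilde g} + 2\tfrac{\pd^\alpha A_2}{A_2}\pd_\alpha$, using $\tfrac{\pd_\mu A}{A} = \tfrac{\pd_\mu A_1}{A_1} + \tfrac{\pd_\mu A_2}{A_2}$ from Lemma \ref{small_a_quantities_lem} and the definition of $\Box_{\tilde g}$. The third equality, $= A(\Box_{\tilde g}\psi - \mathcal{L}_\psi)$, is then read off directly from the formula $\mathcal{L}_\psi = -2\tfrac{\pd^\alpha A_2}{A_2}\pd_\alpha\psi + 2\tfrac{\pd^\alpha B\pd_\alpha B}{A^2}\psi + 2A^{-1}\tfrac{\pd^\alpha B}{A}\pd_\alpha\phi$.

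The main obstacle is the bookkeeping in the $\psi\Box_g A$ cancellation: one needs to verify that $\Box_g A + \tfrac{\pd^\alpha A\pd_\alpha A}{A}$ contributes only the benign multiple of $\psi$ that matches the $-2\tfrac{\pd^\alpha B\pd_\alpha B}{A^2}\psi$ term (and nothing genuinely singular). A clean way to organize this is to note that the wave map equations (\ref{wm_X_eqn})–(\ref{wm_Y_eqn}) satisfied by $(A,B)$ give $\Box_g A = \tfrac{\pd^\alpha A\pd_\alpha A}{A} - \tfrac{\pd^\alpha B\pd_\alpha B}{A}$ exactly, so $\psi\Box_g A + \tfrac{\pd^\alpha A\pd_\alpha A}{A^2}\cdot A\psi$ is not what appears — rather one gets $A\Box_g\psi + 2\pd^\alpha A\pd_\alpha\psi + \psi(\tfrac{\pd^\alpha A\pd_\alpha A}{A} - \tfrac{\pd^\alpha B\pd_\alpha B}{A})$ on the left and $+\tfrac{\pd^\alpha A\pd_\alpha A + \pd^\alpha B\pd_\alpha B}{A}\psi$ on the right, whose difference is $-2\tfrac{\pd^\alpha B\pd_\alpha B}{A}\psi$; dividing by $A$ gives the stated $-2\tfrac{\pd^\alpha B\pd_\alpha B}{A^2}\psi$. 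This is the only place real structure (the fact that $(A,B)$ solves the wave map system) enters, and once it is invoked the remainder is routine algebra. I would present the computation in this order: (i) record the frame-contracted component equations; (ii) substitute and apply the product rule; (iii) invoke the wave map equation for $A$ to dispatch the dangerous terms; (iv) rewrite $\Box_g + 2\tfrac{\pd A}{A}\pd$ via $A = A_1A_2$ and the definition of $\Box_{\tilde g}$; (v) match against $\mathcal{L}_\phi$, $\mathcal{L}_\psi$.
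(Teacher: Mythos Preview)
Your proposal is correct and follows essentially the same route as the paper's proof: both substitute $(\xi_1,\xi_2)=(A\psi,-\phi)$ into the component form of $\Box_g\xi_a-V_a{}^b\xi_b$, expand $\Box_g(A\psi)$ by the product rule, and invoke the wave map equation $\Box_g A=\tfrac{\pd^\alpha A\pd_\alpha A-\pd^\alpha B\pd_\alpha B}{A}$ to cancel the axis-singular $\tfrac{\pd^\alpha A\pd_\alpha A}{A}\psi$ contribution before rewriting via $\Box_{\tilde g}$ and matching $\mathcal L_\phi,\mathcal L_\psi$. The only cosmetic difference is that the paper re-derives the frame-component identities (computing $\Box_g e^1,\Box_g e^2$ explicitly) rather than quoting them from the introduction as you do; your parenthetical reference to $\pd^\alpha A\pd_\alpha A/A^2\cdot A\psi$ in the $e_2$ discussion is a slip (that term belongs to the $e_1$ calculation), but this does not affect the argument.
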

\begin{proof}
Recall the following identities.
$$\xi=\xi_1e^1+\xi_2e^2$$
$$D_\alpha e^1 = -\frac{\pd_\alpha B}{A}e^2$$
$$D_\alpha e^2 = \frac{\pd_\alpha B}{A}e^1$$
Since $(A,B)$ solves the nonlinear wave map system (\ref{wm_X_eqn}-\ref{wm_Y_eqn}), we also have the following two equations.
\begin{align*}
\Box_g A &= \frac{\pd^\alpha A\pd_\alpha A}{A}-\frac{\pd^\alpha B\pd_\alpha B}{A} \\
\Box_g B &= 2\frac{\pd^\alpha A\pd_\alpha B}{A}.
\end{align*}
Using the identity for $\Box_g B$, we compute
\begin{align*}
\Box_g e^1 &= D^\alpha\left(-\frac{\pd_\alpha B}{A}e^2\right) \\
&= -\frac{\Box_g B}{A}e^2+\frac{\pd^\alpha A\pd_\alpha B}{A^2}e^2-\frac{\pd^\alpha B\pd_\alpha B}{A^2}e^1 \\
&= -\frac{\pd^\alpha A\pd_\alpha B}{A^2}e^2-\frac{\pd^\alpha B\pd_\alpha B}{A^2}e^1.
\end{align*}
\begin{align*}
\Box_g e^2 &= D^\alpha\left(\frac{\pd_\alpha B}{A}e^1\right) \\
&= \frac{\Box_g B}{A}e^1-\frac{\pd^\alpha A\pd_\alpha B}{A^2}e^1-\frac{\pd^\alpha B\pd_\alpha B}{A^2}e^2 \\
&=\frac{\pd^\alpha A\pd_\alpha B}{A^2}e^1-\frac{\pd^\alpha B\pd_\alpha B}{A^2}e^2.
\end{align*}
Therefore,
\begin{align*}
\Box_g\xi &= \Box_g(\xi_1e^2)+\Box_g(\xi_2e^2) \\
&= (\Box_g\xi_1 e^1+2\pd^\alpha\xi_1 D_\alpha e^1+\xi_1\Box_g e^1)+(\Box_g\xi_2 e^2+2\pd^\alpha \xi_2 D_\alpha e^2+\xi_2\Box_g e^2) \\
&= \left(\Box_g \xi_1 +2\frac{\pd^\alpha B}{A}\pd_\alpha \xi_2 -\frac{\pd^\alpha B\pd_\alpha B}{A^2}\xi_1+\frac{\pd^\alpha A\pd_\alpha B}{A^2}\xi_2\right)e^1 \\
&\hspace{.5in} +\left(\Box_g\xi_2-2\frac{\pd^\alpha B}{A}\pd_\alpha \xi_1 -\frac{\pd^\alpha A\pd_\alpha B}{A^2}\xi_1-\frac{\pd^\alpha B\pd_\alpha B}{A^2}\xi_2\right)e^2.
\end{align*}

Recall also that
\begin{align*}
V&=g^{\alpha\beta}d\Phi_\alpha d\Phi_\beta \\
&=g^{\alpha\beta}\left(\frac{\pd_\alpha A}{A}e_1+\frac{\pd_\alpha B}{A}e_2\right)\left(\frac{\pd_\beta A}{A}e_1+\frac{\pd_\beta B}{A}e_2\right).
\end{align*}
It follows that
$$V\cdot\xi =\left(\frac{\pd^\alpha A}{A}e^1+\frac{\pd^\alpha B}{A}e^2\right)\left(\frac{\pd_\alpha A}{A}\xi_1+\frac{\pd_\alpha B}{A}\xi_2\right).$$

Combining both identities, we conclude
\begin{align*}
\Box_g\xi-V\cdot\xi &= 
\left(\Box_g \xi_1 +2\frac{\pd^\alpha B}{A}\pd_\alpha \xi_2 -\frac{\pd^\alpha A\pd_\alpha A}{A^2}\xi_1-\frac{\pd^\alpha B\pd_\alpha B}{A^2}\xi_1\right)e^1 \\
&\hspace{.5in} +\left(\Box_g\xi_2-2\frac{\pd^\alpha B}{A}\pd_\alpha \xi_1 -2\frac{\pd^\alpha A\pd_\alpha B}{A^2}\xi_1-2\frac{\pd^\alpha B\pd_\alpha B}{A^2}\xi_2\right)e^2.
\end{align*}
Now, we replace $(\xi_1,\xi_2)=(A\psi,-\phi)$. In the first calculation, we also use the identity for $\Box_gA$, and see that the term having the factor $\Box_gA$ cancels with the term having the factor $\frac{\pd^\alpha A\pd_\alpha A}{A^2}$. Since these terms are singular on the axis, this cancellation is in some sense the purpose of the choice $\xi_1=A\psi$.
\begin{align*}
(e_1)^a(\Box_g\xi_a-V_a{}^b\xi_b) &= \Box_g \xi_1 +2\frac{\pd^\alpha B}{A}\pd_\alpha \xi_2 -\frac{\pd^\alpha A\pd_\alpha A}{A^2}\xi_1-\frac{\pd^\alpha B\pd_\alpha B}{A^2}\xi_1 \\
&= \Box_g(A\psi) +2\frac{\pd^\alpha B}{A}\pd_\alpha(-\phi) -\frac{\pd^\alpha A\pd_\alpha A}{A^2}(A\psi)-\frac{\pd^\alpha B\pd_\alpha B}{A^2}(A\psi) \\
&= (A\Box_g \psi +2\pd^\alpha A\pd_\alpha\psi +\psi \Box_gA) -2\frac{\pd^\alpha B}{A}\pd_\alpha\phi -\frac{\pd^\alpha A\pd_\alpha A}{A}\psi-\frac{\pd^\alpha B\pd_\alpha B}{A}\psi \\
&= A\Box_g\psi +2\pd^\alpha A\pd_\alpha\psi-2\frac{\pd^\alpha B}{A}\pd_\alpha\phi +\left(\Box_gA-\frac{\pd^\alpha A\pd_\alpha A}{A}-\frac{\pd^\alpha B\pd_\alpha B}{A}\right)\psi \\
&= A\Box_g\psi +2\pd^\alpha A\pd_\alpha\psi-2\frac{\pd^\alpha B}{A}\pd_\alpha\phi -2\frac{\pd^\alpha B\pd_\alpha B}{A}\psi \\
&= A\left(\Box_g\psi +2\frac{\pd^\alpha A}{A}\pd_\alpha \psi -2\frac{\pd^\alpha B}{A^2}\pd_\alpha\phi -2\frac{\pd^\alpha B\pd_\alpha B}{A^2}\psi\right).
\end{align*}
This verifies the first identity of the lemma.
\begin{align*}
-(e_2)^a(\Box_g\xi_a-V_a{}^b\xi_b) &= -\left(\Box_g\xi_2-2\frac{\pd^\alpha B}{A}\pd_\alpha \xi_1 -2\frac{\pd^\alpha A\pd_\alpha B}{A^2}\xi_1-2\frac{\pd^\alpha B\pd_\alpha B}{A^2}\xi_2\right) \\
&=-\left(\Box_g(-\phi)-2\frac{\pd^\alpha B}{A}\pd_\alpha (A\psi) -2\frac{\pd^\alpha A\pd_\alpha B}{A^2}(A\psi)-2\frac{\pd^\alpha B\pd_\alpha B}{A^2}(-\phi)\right) \\
&= \Box_g\phi+2\pd^\alpha B\pd_\alpha \psi +4\frac{\pd^\alpha A\pd_\alpha B}{A}\psi-2\frac{\pd^\alpha B\pd_\alpha B}{A^2}\phi.
\end{align*}
This verifies the second identity of the lemma. \qed
\end{proof}

\subsubsection{Translating the energy estimate from the $\xi_a$ system}

We rewrite the energy estimate (Proposition \ref{xi_energy_estimate_prop}) in terms of $(\phi,\psi)$.
\begin{proposition}\label{translated_energy_estimate_prop}(Energy Estimate)
\begin{multline*}
\int_{\Sigma_{t_2}}\chi_H(\pd_r\phi)^2+(\pd_t\phi)^2+|\sla\nabla\phi|^2+r^{-2}\phi^2 +\int_{\tilde\Sigma_{t_2}}\chi_H(\pd_r\psi)^2+(\pd_t\psi)^2+|\tsla\nabla\psi|^2+r^{-2}\psi^2 \\
\lesssim \int_{\Sigma_{t_1}}\chi_H(\pd_r\phi)^2+(\pd_t\phi)^2+|\sla\nabla\phi|^2+r^{-2}\phi^2 +\int_{\tilde\Sigma_{t_1}}\chi_H(\pd_r\psi)^2+(\pd_t\psi)^2+|\tsla\nabla\psi|^2+r^{-2}\psi^2 \\
+Err_{nl},
\end{multline*}
where $\chi_H=1-\frac{r_H}r$ and
$$Err_{nl} = \int_{t_1}^{t_2}\int_{\Sigma_t}|\pd_t\phi(\Box_g\phi-\mathcal{L}_\phi)|
+\int_{t_1}^{t_2}\int_{\tilde{\Sigma}_t}|\pd_t\psi(\Box_{\tilde{g}}\psi-\mathcal{L}_\psi)|.$$
\end{proposition}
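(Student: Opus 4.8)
The estimate is obtained by sandwiching the $\xi_a$ energy between the $(\phi,\psi)$ energies and invoking Proposition \ref{xi_energy_estimate_prop}. Write $\mathcal{E}_\xi(t)=\int_{\Sigma_t}[\chi_H|D_r\xi|^2+|D_t\xi|^2+q^{-2}|D_\theta\xi|^2+V^{ab}\xi_a\xi_b]$, and let $\mathcal{E}_{\phi\psi}(t)$ be the $(\phi,\psi)$ energy on the slices $\Sigma_t,\tilde\Sigma_t$ — the expression that appears at $t_1$ and $t_2$ in the proposition, without the term $Err_{nl}$. Since the horizon flux $\int_{H_{t_1}^{t_2}}|D_t\xi|^2\ge0$, Proposition \ref{xi_energy_estimate_prop} gives $\mathcal{E}_\xi(t_2)\lesssim\mathcal{E}_\xi(t_1)+\int_{t_1}^{t_2}\int_{\Sigma_t}|D_t\xi^a(\Box_g\xi_a-V_a{}^b\xi_b)|$, so it suffices to prove (i) $\mathcal{E}_{\phi\psi}(t_2)\lesssim\mathcal{E}_\xi(t_2)$, (ii) $\mathcal{E}_\xi(t_1)\lesssim\mathcal{E}_{\phi\psi}(t_1)$, and (iii) that the $\xi_a$ nonlinear error is bounded by the stated $Err_{nl}$.

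For (i) I would apply Lemma \ref{phi_psi_le_xi_lem} term by term. The $\pd_t$ bound is an identity; for the radial and angular pieces one first replaces $\pd_r\psi$ by $\pd_r\psi+\tfrac{2r}{r^2+a^2}\psi$ and $\pd_\theta\psi$ by $\pd_\theta\psi+2\cot\theta\psi$, absorbing the remainder $\tfrac{2r}{r^2+a^2}$-term into $r^{-2}\psi^2$ and the remainder $\cot\theta$-term into $V^{ab}\xi_a\xi_b$ via the last estimate of that lemma. Dividing the angular bounds by $q^2\sim r^2$ turns $(\pd_\theta\phi)^2$ and $(\pd_\theta\psi)^2$ into $|\sla\nabla\phi|^2$ and $|\tsla\nabla\psi|^2$, and since $A=A_1A_2$ with $A_2=1+O(a^2/M^2)$ bounded above and below, integrals over $\Sigma_t$ carrying a weight $A^2$ may be traded for integrals over $\tilde\Sigma_t$. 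The lower-order terms $r^{-2}\phi^2$ and $r^{-2}\psi^2$ on $\Sigma_{t_2},\tilde\Sigma_{t_2}$ are produced from $\chi_H(\pd_r\phi)^2$ and $\chi_H(\pd_r\psi)^2$ by a (mildly degenerate) Hardy inequality on $[r_H,\infty)$. All remainders assemble into $\tfrac{a^2}{M^2}\mathcal{E}_{\phi\psi}(t_2)$; absorbing this into the left-hand side (legitimate since $\mathcal{E}_{\phi\psi}(t_2)$ is finite by local well-posedness) yields (i) for $|a|/M$ sufficiently small. For (ii) I would use Lemma \ref{xi_le_phi_psi_1_lem} for $|D_t\xi|^2$ and $|D_r\xi|^2$ (again converting $\pd_r\psi+\tfrac{2r}{r^2+a^2}\psi$ back to $\pd_r\psi$ up to $r^{-2}\psi^2$) and, crucially, Lemma \ref{xi_le_phi_psi_2_lem} with $f\equiv1$ for the combination $q^{-2}|D_\theta\xi|^2+V^{ab}\xi_a\xi_b$; this is the step where the apparently singular $\cot^2\theta$ contributions cancel after integration by parts on the sphere, leaving only $|\sla\nabla\phi|^2+r^{-2}\phi^2$ and $|\tsla\nabla\psi|^2+r^{-2}\psi^2$.

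For (iii), Lemma \ref{translate_nl_lem} gives $(e_1)^a(\Box_g\xi_a-V_a{}^b\xi_b)=A(\Box_{\tilde{g}}\psi-\mathcal{L}_\psi)$ and $-(e_2)^a(\Box_g\xi_a-V_a{}^b\xi_b)=\Box_g\phi-\mathcal{L}_\phi$, while $\pd_tB=0$ yields $D_t\xi=(A\pd_t\psi)e^1-(\pd_t\phi)e^2$; hence $|D_t\xi^a(\Box_g\xi_a-V_a{}^b\xi_b)|\le A^2|\pd_t\psi|\,|\Box_{\tilde{g}}\psi-\mathcal{L}_\psi|+|\pd_t\phi|\,|\Box_g\phi-\mathcal{L}_\phi|$, and after integration over $\Sigma_t$ the first term becomes the $\tilde\Sigma_t$-integral of $|\pd_t\psi(\Box_{\tilde{g}}\psi-\mathcal{L}_\psi)|$. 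I expect the main obstacle to be bookkeeping rather than a single hard estimate: one must consistently weight the $\psi$-quantities so that integration over $\Sigma_t$ against $A^2$ reproduces integration over $\tilde\Sigma_t$, and keep track of which remainder terms carry a factor $a^2/M^2$ (hence are absorbable) versus which are genuine energy terms; the one genuinely nontrivial analytic input is the on-axis cancellation already packaged in Lemma \ref{xi_le_phi_psi_2_lem}.
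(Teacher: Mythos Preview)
Your proposal is correct and follows the paper's approach: sandwich the $\xi_a$ energy between the $(\phi,\psi)$ energies using Lemmas \ref{phi_psi_le_xi_lem}, \ref{xi_le_phi_psi_1_lem}, \ref{xi_le_phi_psi_2_lem}, drop the nonnegative horizon flux, and translate the nonlinear error via Lemma \ref{translate_nl_lem}.

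One small correction in step (i): the radial $\psi$-remainder $A^2\chi_H\bigl(\tfrac{2r}{r^2+a^2}\psi\bigr)^2$ should be sent to $V^{ab}\xi_a\xi_b$ by the \emph{same} last estimate of Lemma \ref{phi_psi_le_xi_lem} that you invoke for the $\cot\theta$-remainder --- that estimate controls both pieces simultaneously --- rather than routed through $r^{-2}\psi^2$. As you wrote it, this remainder carries no $a^2/M^2$ factor, so the claim that ``all remainders assemble into $\tfrac{a^2}{M^2}\mathcal{E}_{\phi\psi}(t_2)$'' would not hold, and the absorption would become a nontrivial constant chase between your Hardy step and this $O(1)$ leftover. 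With the correct routing the only residual is the genuine $O(a^2/M^2)$ error from the lemma, and the absorption for small $|a|/M$ is clean; this is exactly how the paper proceeds (Hardy first to drop $r^{-2}\phi^2,r^{-2}\psi^2$ from the left, then Lemma \ref{phi_psi_le_xi_lem} --- including its last estimate --- for all derivative terms).
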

\begin{proof}
According to Proposition \ref{xi_energy_estimate_prop},
\begin{multline*}
\int_{H_{t_1}^{t_2}}|D_t\xi|^2+\int_{\Sigma_{t_2}}\chi_H|D_r\xi|^2+|D_t\xi|^2+q^{-2}|D_\theta\xi|^2+V^{ab}\xi_a\xi_b \\
\lesssim \int_{\Sigma_{t_1}}\chi_H|D_r\xi|^2+|D_t\xi|^2+q^{-2}|D_\theta\xi|^2+V^{ab}\xi_a\xi_b + \int_{t_1}^{t_2}\int_{\Sigma_t} |D_t\xi^a(\Box_g\xi_a-V_a{}^b\xi_b)|.
\end{multline*}
We ignore the term on the horizon, which has an appropriate sign.

By a standard Hardy estimate and then Lemma \ref{phi_psi_le_xi_lem},
\begin{multline*}
\int_{\Sigma_{t_2}}\chi_H(\pd_r\phi)^2+(\pd_t\phi)^2+|\sla\nabla\phi|^2+r^{-2}\phi^2 +\int_{\tilde\Sigma_{t_2}}\chi_H(\pd_r\psi)^2+(\pd_t\psi)^2+|\tsla\nabla\psi|^2+r^{-2}\psi^2 \\
\lesssim \int_{\Sigma_{t_2}}\chi_H(\pd_r\phi)^2+(\pd_t\phi)^2+|\sla\nabla\phi|^2 +\int_{\tilde\Sigma_{t_2}}\chi_H(\pd_r\psi)^2+(\pd_t\psi)^2+|\tsla\nabla\psi|^2 \\
\lesssim \int_{\Sigma_{t_2}}\chi_H|D_r\xi|^2+|D_t\xi|^2+q^{-2}|D_\theta\xi|^2+V^{ab}\xi_a\xi_b +\int_{\Sigma_{t_2}}\frac{|a|}{M}r^{-2}(\phi^2+A^2\psi^2).
\end{multline*}
By taking $|a|/M$ sufficiently small, the final term on the right hand side can be absorbed into the left hand side.

By Lemmas \ref{xi_le_phi_psi_1_lem} and \ref{xi_le_phi_psi_2_lem},
\begin{multline*}
\int_{\Sigma_{t_1}}\chi_H|D_r\xi|^2+|D_t\xi|^2+q^{-2}|D_\theta\xi|^2+V^{ab}\xi_a\xi_b \\
\lesssim \int_{\Sigma_{t_1}}\chi_H(\pd_r\phi)^2+(\pd_t\phi)^2+|\sla\nabla\phi|^2+r^{-2}\phi^2 +\int_{\tilde\Sigma_{t_1}}\chi_H(\pd_r\psi)^2+(\pd_t\psi)^2+|\tsla\nabla\psi|^2+r^{-2}\psi^2.
\end{multline*}

Finally, by Lemma \ref{translate_nl_lem},
$$\int_{t_1}^{t_2}\int_{\Sigma_t} |D_t\xi^a(\Box_g\xi_a-V_a{}^b\xi_b)|
\le \int_{t_1}^{t_2}\int_{\Sigma_t}|\pd_t\phi(\Box_g\phi-\mathcal{L}_\phi)|
+\int_{t_1}^{t_2}\int_{\tilde{\Sigma}_t}|\pd_t\psi(\Box_{\tilde{g}}\psi-\mathcal{L}_\psi)|.$$

These estimates together prove the proposition. \qed
\end{proof}

\subsubsection{Translating the $h\pd_t$ estimate from the $\xi_a$ system}

We rewrite the $h\pd_t$ estimate (Proposition \ref{xi_h_dt_prop}) in terms of $(\phi,\psi)$.
\begin{proposition}\label{translated_h_dt_prop}
Fix $\delp>0$ and let $p\le 2-\delp$. Then for all $\epsilon>0$, there is a small constant $c_\epsilon$ and a large constant $C_\epsilon$, such that
\begin{align*}
&\hspace{.5in}\int_{\Sigma_{t_2}}r^{p-2}\left[\chi_H(\pd_r\phi)^2+(\pd_t\phi)^2+|\sla\nabla\phi|^2\right] +\int_{\tilde\Sigma_{t_2}}r^{p-2}\left[\chi_H(\pd_r\psi)^2+(\pd_t\psi)^2+|\tsla\nabla\psi|^2\right] \\
&+\int_{H_{t_1}^{t_2}}(\pd_t\phi)^2 +\int_{\tilde{H}_{t_1}^{t_2}}(\pd_t\psi)^2 \\
&\hspace{1in}+\int_{t_1}^{t_2}\int_{\Sigma_t\cap \{6M<r\}} c_\epsilon r^{p-3}(\lbar\phi)^2 + \int_{t_1}^{t_2}\int_{\tilde\Sigma_t\cap \{6M<r\}} c_\epsilon r^{p-3}\left(\lbar\psi+\frac{\lbar A}{A}\psi\right)^2 \\
&\lesssim \int_{\Sigma_{t_1}}C_\epsilon r^{p-2}\left[\chi_H(\pd_r\phi)^2+(\pd_t\phi)^2+|\sla\nabla\phi|^2+r^{-2}\phi^2\right] \\
&\hspace{2in}+\int_{\tilde\Sigma_{t_1}}C_\epsilon r^{p-2}\left[\chi_H(\pd_r\psi)^2+(\pd_t\psi)^2+|\tsla\nabla\psi|^2+r^{-2}\psi^2\right]  +Err,
\end{align*}
where $\chi_H=1-\frac{r_H}r$ and
\begin{align*}
Err &= Err_1 + Err_2 + Err_3 + Err_{nl} \\
Err_1 &= \int_{t_1}^{t_2}\int_{\Sigma_t\cap\{5M<r\}} \epsilon r^{-1}((L\phi)^2+r^{-2}\phi^2) + \int_{t_1}^{t_2}\int_{\tilde\Sigma_t\cap\{5M<r\}} \epsilon r^{-1}((L\psi)^2+r^{-2}\psi^2) \\
Err_2 &= \int_{\Sigma_{t_2}}\frac{|a|}{M}r^{p-4}(\phi^2+A^2\psi^2) \\
Err_3 &= \int_{t_1}^{t_2}\int_{\Sigma_t\cap\{6M<r\}} c_\epsilon\frac{|a|}{M}r^{p-5}(\phi^2+A^2\psi^2) \\
Err_{nl} &= \int_{t_1}^{t_2}\int_{\Sigma_t}C_\epsilon r^{p-2}|\pd_t\phi(\Box_g\phi-\mathcal{L}_\phi)| +\int_{t_1}^{t_2}\int_{\tilde{\Sigma}_t}C_\epsilon r^{p-2}|\pd_t\psi(\Box_{\tilde{g}}\psi-\mathcal{L}_\psi)|.
\end{align*}
\end{proposition}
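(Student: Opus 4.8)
The plan is to mirror exactly the proof of Proposition \ref{translated_energy_estimate_prop}: start from the $\xi_a$-estimate of Proposition \ref{xi_h_dt_prop} and convert every term through the dictionary provided by Lemmas \ref{phi_psi_le_xi_lem}--\ref{translate_nl_lem}. Concretely, I would apply Proposition \ref{xi_h_dt_prop} with the radius parameter $R=5M$, so that the region $\{R+M<r\}$ in its bulk term becomes $\{6M<r\}$ and the region $\{R<r\}$ in its $Err_1$ becomes $\{5M<r\}$, matching the statement (this is legitimate since $R=5M>r_H+\delh$ for $|a|\ll M$). The $\chi_H$-degeneracy of the $(\pd_r\xi)^2$ densities on both boundaries and in $Err_1$ carries over verbatim to the $\chi_H(\pd_r\phi)^2,\ \chi_H(\pd_r\psi)^2$ terms.

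For the boundary term on $\Sigma_{t_2}$ I would bound the $(\phi,\psi)$ densities by $\xi_a$ densities via Lemma \ref{phi_psi_le_xi_lem}. The $\pd_t$ and $\sla\nabla\phi$ terms translate directly (using that $|\sla\nabla\phi|^2$ and $q^{-2}(\pd_\theta\phi)^2$ are comparable), with the identity $|D_t\xi|^2=(\pd_t\phi)^2+A^2(\pd_t\psi)^2$ turning the $\psi$ time-derivative into an integral over $\tilde\Sigma_{t_2}$ after $\int_{\Sigma_t}(\,\cdot\,)A^2=\int_{\tilde\Sigma_t}(\,\cdot\,)$. For the $\pd_r$ term I would write $\chi_H(\pd_r\psi)^2\lesssim\chi_H\bigl(\pd_r\psi+\tfrac{2r}{r^2+a^2}\psi\bigr)^2+\chi_H\bigl(\tfrac{2r}{r^2+a^2}\bigr)^2\psi^2$ and bound the first summand by $|D_r\xi|^2$ and the second by $V^{ab}\xi_a\xi_b$, both from Lemma \ref{phi_psi_le_xi_lem}; similarly $|\tsla\nabla\psi|^2=q^{-2}(\pd_\theta\psi)^2\lesssim q^{-2}(\pd_\theta\psi+2\cot\theta\psi)^2+q^{-2}(2\cot\theta\psi)^2$, with the axis-singular $\cot^2\theta\,\psi^2$ piece absorbed by $V^{ab}\xi_a\xi_b$ exactly as in Proposition \ref{translated_energy_estimate_prop}. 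Each such step leaves an $\tfrac{a^2}{M^2}r^{-2}(\phi^2+A^2\psi^2)$ remainder which, after the $r^{p-2}$ weight and since $\Sigma_{t_2}$ carries no undifferentiated term to absorb it, becomes $Err_2$. The horizon term $\int_{H_{t_1}^{t_2}}|D_t\xi|^2$ splits into $\int_{H_{t_1}^{t_2}}(\pd_t\phi)^2+\int_{\tilde H_{t_1}^{t_2}}(\pd_t\psi)^2$ by the same identity.

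For the right-hand side I would go the other direction. On $\Sigma_{t_1}$, Lemma \ref{xi_le_phi_psi_1_lem} handles $|D_t\xi|^2,|D_r\xi|^2$ and Lemma \ref{xi_le_phi_psi_2_lem} (with weight $f(r)=r^{p-2}$) handles the combination $r^{p-2}(q^{-2}|D_\theta\xi|^2+V^{ab}\xi_a\xi_b)$; the resulting $r^{-2}\phi^2,\ r^{-2}\psi^2$ terms are precisely those already on the right-hand side of the proposition, so no extra boundary error is produced. For the spacetime bulk term $\int\!\int c_\epsilon r^{p-3}|D_{\lbar}\xi|^2$ over $\{6M<r\}$, I would decompose $D_{\lbar}\xi$ in the frame $\{e^1,e^2\}$, use $\lbar\xi_1=A(\lbar\psi+\tfrac{\lbar A}{A}\psi)$ and $\lbar\xi_2=-\lbar\phi$ together with the smallness of $\tfrac{\alpha\pd_rB}{A}$ for $r>6M$ to extract $(\lbar\phi)^2+A^2(\lbar\psi+\tfrac{\lbar A}{A}\psi)^2$ up to an $\tfrac{a^2}{M^2}r^{-2}(\phi^2+A^2\psi^2)$ remainder, which after the $c_\epsilon r^{p-3}$ weight is $Err_3$; pushing the $A^2$ into $\tilde\Sigma_t$ gives the stated $\psi$-bulk term. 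The term $Err_1$ of Proposition \ref{xi_h_dt_prop}, namely $\int\!\int\epsilon r^{-1}|D_L\xi|^2$ over $\{5M<r\}$, is treated the same way, producing $\epsilon r^{-1}((L\phi)^2+r^{-2}\phi^2)$ on $\Sigma_t$ and $\epsilon r^{-1}((L\psi)^2+r^{-2}\psi^2)$ on $\tilde\Sigma_t$. Finally, by Lemma \ref{translate_nl_lem}, $D_t\xi^a(\Box_g\xi_a-V_a{}^b\xi_b)=\pd_t\phi(\Box_g\phi-\mathcal{L}_\phi)+A^2\pd_t\psi(\Box_{\tilde g}\psi-\mathcal{L}_\psi)$, so $Err_{nl}$ splits into the two stated pieces.

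The only point beyond bookkeeping is the $\psi$-radial and $\psi$-angular terms on $\Sigma_{t_2}$: pointwise $(\pd_r\psi)^2$ and $(\pd_\theta\psi)^2$ are \emph{not} controlled by $|D_r\xi|^2$ and $|D_\theta\xi|^2$ alone, because of the $\tfrac{2r}{r^2+a^2}\psi$ and $2\cot\theta\,\psi$ shifts hidden in $\xi_1=A\psi$; the resolution, as in the translated energy estimate, is that the shift-squared pieces (including the $\csc$-singular one) are exactly what $V^{ab}\xi_a\xi_b$ supplies, so one must keep $|D_\theta\xi|^2$ and $V^{ab}\xi_a\xi_b$ grouped together and never estimate them separately. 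Everything else is a direct transcription of the energy-estimate argument with the extra weight $r^{p-2}$ (respectively $r^{p-3}$, $r^{-1}$) inserted throughout, and the smallness parameters $\epsilon$ (for $Err_1$) and $|a|/M$ (for $Err_2,Err_3$) tracked exactly as there.
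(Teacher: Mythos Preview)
Your proposal is correct and follows essentially the same route as the paper: apply Proposition~\ref{xi_h_dt_prop} with $R=5M$, use Lemma~\ref{phi_psi_le_xi_lem} for the $\Sigma_{t_2}$, horizon, and bulk $|D_{\lbar}\xi|^2$ terms (generating $Err_2$ and $Err_3$), use Lemmas~\ref{xi_le_phi_psi_1_lem}--\ref{xi_le_phi_psi_2_lem} with weight $r^{p-2}$ on $\Sigma_{t_1}$, bound $|D_L\xi|^2$ from above by the $(\phi,\psi)$ densities in $Err_1$, and invoke Lemma~\ref{translate_nl_lem} for $Err_{nl}$. Your explicit remark about keeping $q^{-2}|D_\theta\xi|^2$ and $V^{ab}\xi_a\xi_b$ grouped on $\Sigma_{t_2}$ so that the $\cot^2\theta\,\psi^2$ shift is absorbed is exactly the mechanism the paper relies on (it simply cites Lemma~\ref{phi_psi_le_xi_lem} without spelling this out).
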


\begin{proof}
By Proposition \ref{xi_h_dt_prop} with $R=5M$,
\begin{multline*}
\int_{H_{t_1}^{t_2}}|D_t\xi|^2+\int_{\Sigma_{t_2}}r^{p-2}\left[\chi_H|D_r\xi|^2+|D_t\xi|^2+q^{-2}|D_\theta\xi|^2 +V^{ab}\xi_a\xi_b\right] \\
+\int_{t_1}^{t_2}\int_{\Sigma_t\cap\{6M<r\}}c_\epsilon r^{p-3}|D_{\lbar}\xi|^2 \\
\lesssim \int_{\Sigma_{t_1}}C_\epsilon r^{p-2}\left[\chi_H|D_r\xi|^2+|D_t\xi|^2+q^{-2}|D_\theta\xi|^2+V^{ab}\xi_a\xi_b\right] + \int_{t_1}^{t_2}\int_{\Sigma_t\cap\{5M<r\}}\epsilon r^{-1}|D_L\xi|^2 \\
+  \int_{t_1}^{t_2}\int_{\Sigma_t}C_\epsilon r^{p-2}|D_t\xi^a(\Box_g\xi_a-V_a{}^b\xi_b)|.
\end{multline*}

By Lemma \ref{phi_psi_le_xi_lem},
\begin{multline*}\int_{\Sigma_{t_2}}r^{p-2}\left[\chi_H(\pd_r\phi)^2+(\pd_t\phi)^2+|\sla\nabla\phi|^2\right] +\int_{\tilde\Sigma_{t_2}}r^{p-2}\left[\chi_H(\pd_r\psi)^2+(\pd_t\psi)^2+|\tsla\nabla\psi|^2\right] \\
\lesssim \int_{\Sigma_{t_2}}r^{p-2}\left[\chi_H|D_r\xi|^2+|D_t\xi|^2+q^{-2}|D_\theta\xi|^2+V^{ab}\xi_a\xi_b\right] +\int_{\Sigma_{t_2}}\frac{|a|}{M}r^{p-4}(\phi^2+A^2\psi^2).
\end{multline*}
and
$$\int_{H_{t_1}^{t_2}}(\pd_t\phi)^2+\int_{\tilde{H}_{t_1}^{t_2}}(\pd_t\psi)^2 = \int_{H_{t_1}^{t_2}}|D_t\xi|^2,$$
and
\begin{multline*}
\int_{t_1}^{t_2}\int_{\Sigma_t\cap \{6M<r\}} c_\epsilon r^{p-3}(\lbar\phi)^2 + \int_{t_1}^{t_2}\int_{\tilde\Sigma_t\cap \{6M<r\}} c_\epsilon r^{p-3}\left(\lbar\psi+\frac{\lbar A}{A}\psi\right)^2 \\
\lesssim \int_{t_1}^{t_2}\int_{\Sigma_t\cap\{6M<r\}}c_\epsilon r^{p-3}|D_{\lbar}\xi|^2 + \int_{t_1}^{t_2}\int_{\Sigma_t\cap\{6M<r\}}c_\epsilon \frac{|a|}{M}r^{p-5}(\phi^2+A^2\psi^2).
\end{multline*}
At this point, we have estimated all the terms on the left hand side of the main estimate. Now, we must estimate the additional terms:
\begin{multline*}
\int_{\Sigma_{t_2}}\frac{|a|}{M}r^{p-4}(\phi^2+A^2\psi^2) + \int_{t_1}^{t_2}\int_{\Sigma_t\cap\{6M<r\}}c_\epsilon \frac{|a|}{M}r^{p-5}(\phi^2+A^2\psi^2) \\
+  \int_{\Sigma_{t_1}}C_\epsilon r^{p-2}\left[\chi_H|D_r\xi|^2+|D_t\xi|^2+q^{-2}|D_\theta\xi|^2+V^{ab}\xi_a\xi_b\right] \\
+ \int_{t_1}^{t_2}\int_{\Sigma_t\cap\{5M<r\}}\epsilon r^{-1}|D_L\xi|^2 +\int_{t_1}^{t_2}\int_{\Sigma_t}C_\epsilon r^{p-2}|D_t\xi^a(\Box_g\xi_a-V_a{}^b\xi_b)|.
\end{multline*}
By definition,
$$\int_{\Sigma_{t_2}}\frac{|a|}{M}r^{p-4}(\phi^2+A^2\psi^2) + \int_{t_1}^{t_2}\int_{\Sigma_t\cap\{6M<r\}}c_\epsilon\frac{|a|}{M}r^{p-5}(\phi^2+A^2\psi^2) = Err_2+Err_3.$$
Also, by Lemmas \ref{xi_le_phi_psi_1_lem} and \ref{xi_le_phi_psi_2_lem},
\begin{multline*}
\int_{\Sigma_{t_1}}C_\epsilon r^{p-2}\left[\chi_H|D_r\xi|^2+|D_t\xi|^2+q^{-2}|D_\theta\xi|^2+V^{ab}\xi_a\xi_b\right] \\
\lesssim  \int_{\Sigma_{t_1}}C_\epsilon r^{p-2}\left[\chi_H(\pd_r\phi)^2+(\pd_t\phi)^2+|\sla\nabla\phi|^2+r^{-2}\phi^2\right] \\
+\int_{\tilde\Sigma_{t_1}}C_\epsilon r^{p-2}\left[\chi_H(\pd_r\psi)^2+(\pd_t\psi)^2+|\tsla\nabla\psi|^2+r^{-2}\psi^2\right]
\end{multline*}
Also,
\begin{multline*}
\int_{t_1}^{t_2}\int_{\Sigma_t\cap\{5M<r\}}\epsilon r^{-1}|D_L\xi|^2 \\
\lesssim \int_{t_1}^{t_2}\int_{\Sigma_t\cap\{5M<r\}} \epsilon r^{-1}((L\phi)^2+r^{-2}\phi^2) + \int_{t_1}^{t_2}\int_{\tilde\Sigma_t\cap\{5M<r\}} \epsilon r^{-1}((L\psi)^2+r^{-2}\psi^2) \\
= Err_1.
\end{multline*}
Finally, by Lemma \ref{translate_nl_lem},
\begin{multline*}
\int_{t_1}^{t_2}\int_{\Sigma_t}C_\epsilon r^{p-2}|D_t\xi^a(\Box_g\xi_a-V_a{}^b\xi_b)| \\
\le \int_{t_1}^{t_2}\int_{\Sigma_t}C_\epsilon r^{p-2}|\pd_t\phi(\Box_g\phi-\mathcal{L}_\phi)| +\int_{t_1}^{t_2}\int_{\tilde{\Sigma}_t}C_\epsilon r^{p-2}|\pd_t\psi(\Box_{\tilde{g}}\psi-\mathcal{L}_\psi)| \\
\le Err_{nl}.
\end{multline*}
These estimates complete the proof. \qed
\end{proof}

\subsubsection{Translating the Morawetz estimate from the $\xi_a$ system}

We rewrite the Morawetz estimate (Proposition \ref{xi_morawetz_estimate_prop}) in terms of $(\phi,\psi)$.
\begin{proposition}\label{translated_morawetz_prop} Suppose $|a|/M$ is sufficiently small. Then
\begin{multline*}
\int_{\Sigma_{t_2}}(L\phi)^2+|\sla\nabla\phi|^2+r^{-2}\phi^2+\frac{M^2}{r^2}(\pd_r\phi)^2 +\int_{\tilde{\Sigma}_{t_2}}(L\psi)^2+|\tsla\nabla\psi|^2+r^{-2}\psi^2+\frac{M^2}{r^2}(\pd_r\psi)^2 \\
+\int_{t_1}^{t_2}\int_{\Sigma_t}\frac{M^2}{r^3}(\pd_r\phi)^2+\chi_{trap}\left(\frac{M^2}{r^3}(\pd_t\phi)^2+\frac1r|\sla\nabla\phi|^2\right)+\frac{M}{r^4}\phi^2 \\
+\int_{t_1}^{t_2}\int_{\tilde{\Sigma}_t}\frac{M^2}{r^3}(\pd_r\psi)^2+\chi_{trap}\left(\frac{M^2}{r^3}(\pd_t\psi)^2+\frac1r|\tsla\nabla\psi|^2\right)+\frac{1}{r^3}\psi^2 \\
\lesssim \int_{\Sigma_{t_1}}(L\phi)^2+|\sla\nabla\phi|^2+r^{-2}\phi^2+\frac{M^2}{r^2}(\pd_r\phi)^2 +\int_{\tilde{\Sigma}_{t_1}}(L\psi)^2+|\tsla\nabla\psi|^2+r^{-2}\psi^2+\frac{M^2}{r^2}(\pd_r\psi)^2 \\
+Err,
\end{multline*}
where $\chi_{trap}=\left(1-\frac{r_{trap}}{r}\right)^2$ and
\begin{align*}
Err &= Err_1+Err_2+Err_{nl} \\
Err_1 &= \int_{\Sigma_{t_2}}r^{-1}|\phi L\phi|+\int_{\tilde{\Sigma}_{t_2}}r^{-1}|\psi L\psi|+r^{-2}\psi^2 \\
Err_2 &= \int_{H_{t_1}^{t_2}}(\pd_t\phi)^2+\int_{H_{t_1}^{t_2}}(\pd_t\psi)^2 +\int_{\Sigma_{t_2}}\frac{M^2}{r^2}\left[\chi_H(\pd_r\phi)^2+(\pd_t\phi)^2\right] \\
&\hspace{1in} +\int_{\tilde{\Sigma}_{t_2}}\frac{M^2}{r^2}\left[\chi_H(\pd_r\psi)^2+(\pd_t\psi)^2\right] \\
Err_{nl} &= \int_{t_1}^{t_2}\int_{\Sigma_t}|(2X(\phi)+w\phi+w_{(a)}\psi)(\Box_g\phi-\mathcal{L}_\phi)| \\
&\hspace{1in} +\int_{t_1}^{t_2}\int_{\tilde{\Sigma}_t}|(2X(\psi)+\tilde{w}\psi+\tilde{w}_{(a)}\phi)(\Box_{\tilde{g}}\psi-\mathcal{L}_\psi)|,
\end{align*}
where $\chi_H=1-\frac{r_H}r$ and the new functions $\tilde{w}$, $w_{(a)}$, and $\tilde{w}_{(a)}$, which are defined in terms of the original vectorfield $X$ and function $w$ used in the proof of Proposition \ref{xi_morawetz_estimate_prop}, are given by the following relations.
\begin{align*}
2X(\phi)+w\phi+w_{(a)}\psi &= X(\phi)+w\phi+\frac{X(B)}{A}A\psi \\
2X(\psi)+\tilde{w}\psi+\tilde{w}_{(a)}\phi &= 2X(\psi)+\left(\frac{X(A)}{A}+w\right)\psi-\frac{X(B)}{A}A^{-1}\phi.
\end{align*}
\end{proposition}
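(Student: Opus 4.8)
The plan is to follow the same translation scheme used for Propositions \ref{translated_energy_estimate_prop} and \ref{translated_h_dt_prop}: start from the $\xi_a$-Morawetz estimate (Proposition \ref{xi_morawetz_estimate_prop}), show that its left-hand side controls the stated $(\phi,\psi)$-left-hand side modulo the stated errors, show that its right-hand side is controlled by the stated $(\phi,\psi)$-data plus errors, rewrite the nonlinear error using Lemma \ref{translate_nl_lem}, and absorb every factor of $|a|/M$ by taking the rotation slow enough.

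For the left-hand-side spacetime integral, the $\xi_2$-terms become their $\phi$-counterparts verbatim since $\xi_2=-\phi$, with the $\chi_{trap}$-degeneracies passing through unchanged. For the $\xi_1$-terms I would use $\xi_1=A\psi$: the $\pd_t$-term is exact because $\pd_tA=0$; for the $\pd_r$-term the identity $\pd_r\xi_1-\frac{\pd_rA}{A}\xi_1=A\pd_r\psi$ gives $A^2(\pd_r\psi)^2\lesssim(\pd_r\xi_1)^2+(\frac{\pd_rA}{A})^2\xi_1^2$ with the second piece of size $O(r^{-2}A^2\psi^2)$; and for the angular term the identity $\pd_\theta\xi_1-2\cot\theta\xi_1=A\pd_\theta\psi$ (noted in the remark after Lemma \ref{0_K_theta_lem}) gives $A^2(\pd_\theta\psi)^2\lesssim(\pd_\theta\xi_1)^2+\cot^2\theta\xi_1^2$, so that the combination $\chi_{trap}\frac1r|\sla\nabla\xi_1|^2+\chi_{trap}\frac{\cot^2\theta}{r^3}(\xi_1)^2$ controls $\chi_{trap}\frac1r A^2|\tsla\nabla\psi|^2$. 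In every case the weight $A^2\approx A_1^2$ converts $\int_{\Sigma_t}$ into $\int_{\tilde{\Sigma}_t}$, which is exactly why all $\psi$-quantities land on the modified slice; the $O(r^{-2}A^2\psi^2)$-type remainders are absorbed using the zeroth-order bulk term $\frac1{r^3}\psi^2$ (after a local Hardy estimate) together with the smallness of $M^2/r^2$. The flux terms on $\Sigma_{t_2}$ are handled the same way: the $\frac{M^2}{r^2}$-weighted and event-horizon contributions — which originate from the redshift vectorfield $Y$ used in the proof of Proposition \ref{xi_morawetz_estimate_prop} — go into $Err_2$, and the $r^{-1}|\phi L\phi|$, $r^{-1}|\psi L\psi|$, $r^{-2}\psi^2$ remainders into $Err_1$. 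On the right-hand side, Lemmas \ref{xi_le_phi_psi_1_lem} and \ref{xi_le_phi_psi_2_lem} bound $|D_L\xi|^2$, $q^{-2}|D_\theta\xi|^2+V^{ab}\xi_a\xi_b$, $r^{-2}|\xi|^2$ and $\frac{M^2}{r^2}|D_r\xi|^2$ by the corresponding $(\phi,\psi)$-data; here the spherical integration-by-parts cancellation built into Lemma \ref{xi_le_phi_psi_2_lem} is what keeps the angular data term from being singular on the axis.

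For the nonlinear error term, Lemma \ref{translate_nl_lem} gives $\Box_g\xi_a-V_a{}^b\xi_b=A(\Box_{\tilde{g}}\psi-\mathcal{L}_\psi)(e^1)_a-(\Box_g\phi-\mathcal{L}_\phi)(e^2)_a$, so one expands $2D_X\xi^a+w\xi^a$ in the orthonormal frame using $D_\alpha e^1=-\frac{\pd_\alpha B}{A}e^2$, $D_\alpha e^2=\frac{\pd_\alpha B}{A}e^1$ and $(\xi_1,\xi_2)=(A\psi,-\phi)$, pairs the two frame components, and absorbs the $A$, resp. $A_1$, factors into the appropriate slice; this produces exactly $Err_{nl}$ with the multipliers displayed in the statement, the $B$-twisted cross terms being what are abbreviated $w_{(a)}\psi$ and $\tilde{w}_{(a)}\phi$. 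I expect the main obstacle to be the same as in the companion translations: ensuring, near the axis, that all of the $\psi^2$ remainders generated by untwisting the $\xi_1$-derivatives and by the spherical integration by parts carry $r$- and $\sin\theta$-weights that are genuinely absorbable by $\frac1{r^3}\psi^2$ on $\tilde{\Sigma}_t$, and keeping careful track of which boundary contributions are honest fluxes and which must be relegated to $Err_1$ and $Err_2$.
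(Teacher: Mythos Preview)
Your proposal is correct and follows essentially the same approach as the paper: start from Proposition \ref{xi_morawetz_estimate_prop}, translate the left-hand side via the identities $\xi_2=-\phi$, $\xi_1=A\psi$ (the paper packages this as an application of Lemma \ref{phi_psi_le_xi_lem}, but your more explicit untwisting of $\pd_r\xi_1$ and $\pd_\theta\xi_1$ is exactly what underlies it), translate the right-hand side via Lemmas \ref{xi_le_phi_psi_1_lem}--\ref{xi_le_phi_psi_2_lem}, and expand $2D_X\xi+w\xi$ in the frame $\{e^1,e^2\}$ using Lemma \ref{translate_nl_lem} to obtain $Err_{nl}$. Your anticipated obstacles (axis-regularity of the $\psi^2$ remainders and bookkeeping of the boundary terms into $Err_1$, $Err_2$) are real but resolve just as you outline.
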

\begin{proof}
From Proposition \ref{xi_morawetz_estimate_prop}, we have
\begin{align*}
&\int_{H_{t_1}^{t_2}}q^{-2}|D_\theta \xi|^2+V^{ab}\xi_a\xi_b
+\int_{\Sigma_{t_2}} |D_L\xi|^2+q^{-2}|D_\theta\xi|^2+V^{ab}\xi_a\xi_b+r^{-2}|\xi|^2 +\frac{M^2}{r^2}|D_r\xi|^2  \\
&+\int_{t_1}^{t_2}\int_{\Sigma_t} \left[\frac{M^2}{r^3}(\pd_r\xi_1)^2+\chi_{trap}\left(\frac{M^2}{r^3}(\pd_t\xi_1)^2+\frac{1}{r}|\sla\nabla\xi_1|^2+\frac{\cot^2\theta}{r^3}(\xi_1)^2\right)+\frac{1}{r^3}(\xi_1)^2\right. \\
&\hspace{1.25in}\left.+ \frac{M^2}{r^3}(\pd_r\xi_2)^2+\chi_{trap}\left(\frac{M^2}{r^3}(\pd_t\xi_2)^2+\frac{1}{r}|\sla\nabla\xi_2|^2\right)+\frac{M}{r^4}(\xi_2)^2\right] \\
&\lesssim \int_{\Sigma_{t_1}} |D_L\xi|^2+q^{-2}|D_\theta\xi|^2+V^{ab}\xi_a\xi_b+r^{-2}|\xi|^2 +\frac{M^2}{r^2}|D_r\xi|^2  + Err',
\end{align*}
where
\begin{align*}
Err' &= Err'_1+Err'_{nl} \\
Err'_1 &= \int_{H_{t_1}^{t_2}}|D_t\xi|^2 + \int_{\Sigma_{t_2}}r^{-1}|\xi\cdot D_L\xi|+\frac{M^2}{r^2}\left[\chi_H|D_r\xi|^2+|D_t\xi|^2+r^{-2}|\xi|^2\right] \\
Err'_{nl} &= \int_{t_1}^{t_2}\int_{\Sigma_t}|(2D_X\xi^a+w\xi^a)(\Box_g\xi_a-V_a{}^b\xi_b)|.
\end{align*}
We ignore the term on the horizon, which has an appropriate sign.

By Lemma \ref{phi_psi_le_xi_lem},
\begin{multline*}
\int_{\Sigma_{t_2}}(L\phi)^2+|\sla\nabla\phi|^2+r^{-2}\phi^2+\frac{M^2}{r^2}(\pd_r\phi)^2 +\int_{\tilde{\Sigma}_{t_2}}(L\psi)^2+|\tsla\nabla\psi|^2+r^{-2}\psi^2+\frac{M^2}{r^2}(\pd_r\psi)^2 \\
\lesssim \int_{\Sigma_{t_2}} |D_L\xi|^2+q^{-2}|D_\theta\xi|^2+V^{ab}\xi_a\xi_b+r^{-2}|\xi|^2 +\frac{M^2}{r^2}|D_r\xi|^2.
\end{multline*}
and
\begin{multline*}
\int_{t_1}^{t_2}\int_{\Sigma_t}\frac{M^2}{r^3}(\pd_r\phi)^2+\chi_{trap}\left(\frac{M^2}{r^3}(\pd_t\phi)^2+\frac1r|\sla\nabla\phi|^2\right)+\frac{M}{r^4}\phi^2 \\
+\int_{t_1}^{t_2}\int_{\tilde{\Sigma}_t}\frac{M^2}{r^3}(\pd_r\psi)^2+\chi_{trap}\left(\frac{M^2}{r^3}(\pd_t\psi)^2+\frac1r|\tsla\nabla\psi|^2\right)+\frac{1}{r^3}\psi^2 \\
\lesssim \int_{t_1}^{t_2}\int_{\Sigma_t} \left[\frac{M^2}{r^3}(\pd_r\xi_1)^2+\chi_{trap}\left(\frac{M^2}{r^3}(\pd_t\xi_1)^2+\frac{1}{r}|\sla\nabla\xi_1|^2+\frac{\cot^2\theta}{r^3}(\xi_1)^2\right)+\frac{1}{r^3}(\xi_1)^2\right. \\
\hspace{1.25in}\left.+ \frac{M^2}{r^3}(\pd_r\xi_2)^2+\chi_{trap}\left(\frac{M^2}{r^3}(\pd_t\xi_2)^2+\frac{1}{r}|\sla\nabla\xi_2|^2\right)+\frac{M}{r^4}(\xi_2)^2\right].
\end{multline*}
By Lemmas \ref{xi_le_phi_psi_1_lem} and \ref{xi_le_phi_psi_2_lem},
\begin{multline*}
\int_{\Sigma_{t_1}} |D_L\xi|^2+q^{-2}|D_\theta\xi|^2+V^{ab}\xi_a\xi_b+r^{-2}|\xi|^2 +\frac{M^2}{r^2}|D_r\xi|^2  \\
\lesssim \int_{\Sigma_{t_1}}(L\phi)^2+|\sla\nabla\phi|^2+r^{-2}\phi^2+\frac{M^2}{r^2}(\pd_r\phi)^2 +\int_{\tilde{\Sigma}_{t_1}}(L\psi)^2+|\tsla\nabla\psi|^2+r^{-2}\psi^2+\frac{M^2}{r^2}(\pd_r\psi)^2.
\end{multline*}
Since 
$$Err'_1\lesssim Err_1+Err_2,$$
it remains to check the nonlinear terms.

We calculate
\begin{align*}
2D_X\xi+w\xi &= 2D_X(\xi_1e^1+\xi_2e^2)+w(\xi_1e^1+\xi_2e^2) \\
&= \left(2X(\xi_1)+w\xi_1+\frac{X(B)}{A}\xi_2\right)e^1+\left(2X(\xi_2)+w\xi_2-\frac{X(B)}{A}\xi_1\right)e^2 \\
&= A\left(2X(\psi)+\left(\frac{X(A)}{A}+w\right)\psi-\frac{X(B)}{A}A^{-1}\phi\right)e^1
-\left(2X(\phi)+w\phi+\frac{X(B)}{A}A\psi\right)e^2.
\end{align*}
This motivates the definition
\begin{align*}
2X(\phi)+w\phi+w_{(a)}\psi &= X(\phi)+w\phi+\frac{X(B)}{A}A\psi \\
2X(\psi)+\tilde{w}\psi+\tilde{w}_{(a)}\phi &= 2X(\psi)+\left(\frac{X(A)}{A}+w\right)\psi-\frac{X(B)}{A}A^{-1}\phi.
\end{align*}
Since according to Lemma \ref{translate_nl_lem}
\begin{align*}
(e_1)^a(\Box_g\xi_a-V_a{}^b\xi_b) &= A(\Box_{\tilde{g}}\psi-\mathcal{L}_\psi) \\
-(e_2)^a(\Box_g\xi_a-V_a{}^b\xi_b) &= \Box_g\phi -\mathcal{L}_\phi,
\end{align*}
we conclude that
\begin{multline*}
(2D_X\xi^a+w\xi^a)(\Box_g\xi_a-V_a{}^b\xi_b) \\
=A^2(2X(\psi)+\tilde{w}\psi+\tilde{w}_{(a)}\phi)(\Box_{\tilde{g}}-\mathcal{L}_\psi)+(2X(\phi)+w\phi+w_{(a)}\psi)(\Box_g\phi-\mathcal{L}_\phi).
\end{multline*}
From this identity, it is clear that $Err'_{nl}\le Err_{nl}$. This completes the proof. \qed
\end{proof}

\subsection{The incomplete $p$-weighted estimate near $i^0$}\label{phi_psi_incomplete_sec}

At this point, all of the relevant estimates from \S\ref{xi_a_sec} have been rewritten in terms of $(\phi,\psi)$. But there is one more type of estimate that is needed near $i^0$. In fact, it is a family of estimates depending on a parameter $p\in (0,2)$. One can think of this type of estimate as exploting the asymptotic flatness of the spacetime. Near $i^0$, the equations for $\phi$ and $\psi$ are similar to the homogeneous wave equations $\Box_g\phi=0$ and $\Box_{\tilde{g}}\psi=0$, so these proofs do not take into account the linear terms $\mathcal{L}_\phi$ and $\mathcal{L}_\psi$.

The $p$-weighted estimate relies on two spacetime identities (one for the spacetime $\mathcal{M}$ and one for the spacetime $\tilde{\mathcal{M}}$), which are sufficiently complicated that they are each stated and proved seperately in \S\ref{p_identity_M_sec} and \S\ref{p_identity_M_tilde_sec} respectively. Finally, they are combined in \S\ref{p_identity_combined_sec}.

\subsubsection{A $p$-identity for $\mathcal{M}$}\label{p_identity_M_sec}

We prove the following lemma, which has a much simpler, well-known analogue in Minkowski spacetime.
\begin{lemma}\label{p_identity_phi_lem} ($p$ identity for $\mathcal{M}$) Let $\alpha=\frac{\Delta}{r^2+a^2}$ and $L=\alpha\pd_r+\pd_t$. For any function $f=f(r)$ supported where $r>r_H+\delh$, the following identity holds.
\begin{align*}
&\int_{\Sigma_{t_2}}\left[\left(1-\alpha\frac{a^2\sin^2\theta}{r^2+a^2}\right)\frac{r^2+a^2}{q^2}f\left(\alpha^{-1}L\phi+\frac{r}{r^2+a^2}\phi\right)^2 +\frac{\alpha^{-1}f}{q^2}(\pd_\theta\phi)^2+\epsilon\frac{rf'}{q^2}\phi^2\right. \\
&\hspace{1.5in}\left.+\alpha\frac{a^2\sin^2\theta}{q^2}f\left(\pd_r\phi+\frac{r}{r^2+a^2}\phi\right)^2 +\frac{a^2f}{q^2(r^2+a^2)}\phi^2-\frac1{q^2}\pd_r(rf\phi^2)\right] \\
&+\int_{t_1}^{t_2}\int_{\Sigma_t}
\left[
  \left(\frac{2rf}{r^2+a^2}-f'\right)\frac{Q^{\alpha\beta}}{q^2}\pd_\alpha\phi\pd_\beta\phi
+ \alpha f'\frac{r^2+a^2}{q^2}\left(\alpha^{-1}L\phi+\frac{(1-\epsilon)r}{r^2+a^2}\phi\right)^2 
\vphantom{
+ \epsilon\alpha\left((1-\epsilon)f'-rf''\right)\frac{\phi^2}{q^2} 
+ \alpha'\left(\frac{r^2-a^2}{r^2+a^2}f-\epsilon r f'\right)\frac{\phi^2}{q^2}
+ \frac{a^2}{r^2+a^2}\left(\alpha((1-\epsilon)^2-2)f'-\frac{4\alpha rf}{r^2+a^2}\right)\frac{\phi^2}{q^2}
}\right. \\
&\hspace{2.5in} + \epsilon\alpha\left((1-\epsilon)f'-rf''\right)\frac{\phi^2}{q^2} -\alpha'\alpha^{-2}f\frac{r^2+a^2}{q^2}(L\phi)^2 \\
&\hspace{1.1in}
\left.\vphantom{
  \left(\frac{2rf}{r^2+a^2}-f'\right)\frac{Q^{\alpha\beta}}{q^2}\pd_\alpha\phi\pd_\beta\phi
+ \alpha f'\frac{r^2+a^2}{q^2}\left(\alpha^{-1}L\phi+\frac{(1-\epsilon)r}{r^2+a^2}\phi\right)^2 
+ \epsilon\alpha\left((1-\epsilon)f'-rf''\right)\frac{\phi^2}{q^2} 
}
+ \alpha'\left(-\epsilon r f' +\frac{r^2-a^2}{r^2+a^2}f\right)\frac{\phi^2}{q^2}
+ \frac{a^2}{r^2+a^2}\left(-(1+\epsilon)\alpha f'-\frac{4\alpha rf}{r^2+a^2}\right)\frac{\phi^2}{q^2}
\right] \\
=&\int_{\Sigma_{t_1}}\left[\left(1-\alpha\frac{a^2\sin^2\theta}{r^2+a^2}\right)\frac{r^2+a^2}{q^2}f\left(\alpha^{-1}L\phi+\frac{r}{r^2+a^2}\phi\right)^2 +\frac{\alpha^{-1}f}{q^2}(\pd_\theta\phi)^2+\epsilon\frac{rf'}{q^2}\phi^2\right. \\
&\hspace{1.5in}\left.+\alpha\frac{a^2\sin^2\theta}{q^2}f\left(\pd_r\phi+\frac{r}{r^2+a^2}\phi\right)^2 +\frac{a^2f}{q^2(r^2+a^2)}\phi^2-\frac1{q^2}\pd_r(rf\phi^2)\right] \\
&+\int_{t_1}^{t_2}\int_{\Sigma_t}-\left(2\alpha^{-1}fL\phi+\frac{2rf}{r^2+a^2}\phi\right)\Box_g\phi.
\end{align*}
\end{lemma}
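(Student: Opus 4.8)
The plan is to obtain Lemma \ref{p_identity_phi_lem} as an application of the divergence theorem (Proposition \ref{general_divergence_estimate_prop}) to the scalar current built from the multiplier $2\alpha^{-1}fL\phi + \tfrac{2rf}{r^2+a^2}\phi$, i.e. the scalar analogue of the current template of Lemma \ref{vectorized_current_template_lem} with vectorfield $X=\tfrac{f}{\alpha}L$ (so that $X\phi=\alpha^{-1}fL\phi$) and weight $w=\tfrac{2rf}{r^2+a^2}$. This is exactly the $r^p$-weighted (Dafermos--Rodnianski) multiplier adapted to the Kerr metric, with $f$ playing the role of $r^p$; the hypothesis that $f$ is supported in $r>r_H+\delh$ is what makes the horizon flux $J^r$ vanish, so that Proposition \ref{general_divergence_estimate_prop} collapses to an equality between the two constant-$t$ slices plus a single spacetime integral, which is the form stated in the lemma.

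First I would write $q^2\Box_g\phi$ in Boyer--Lindquist coordinates for axisymmetric functions, namely $q^2\Box_g\phi=\pd_r(\Delta\pd_r\phi)-\tfrac{(r^2+a^2)^2}{\Delta}\pd_t^2\phi+(\text{angular part})$, where the quadratic form associated to the angular part is the tensor $Q^{\alpha\beta}$ already used in Lemma \ref{Kmunu_lem}; and I would convert the radial-timelike second-order part into the null frame via the identity $L\lbar\phi=\pd_t^2\phi-\alpha\pd_r(\alpha\pd_r\phi)$. Then I would compute $-(2\alpha^{-1}fL\phi+w\phi)\Box_g\phi$ piece by piece. The $L\phi$-piece, after one integration by parts in the slab, produces the $f$-weighted flux on $\Sigma_{t_1},\Sigma_{t_2}$ together with the bulk terms $\alpha f'\tfrac{r^2+a^2}{q^2}(\alpha^{-1}L\phi+\dots)^2$, $\bigl(\tfrac{2rf}{r^2+a^2}-f'\bigr)\tfrac{Q^{\alpha\beta}}{q^2}\pd_\alpha\phi\pd_\beta\phi$ and the terms carrying $\alpha'$ or $\alpha^{-1}\alpha'$; the $w\phi$-piece is handled by integrating by parts in $r$ to move a derivative off $\phi$, which generates the pure $r$-divergence $-\tfrac{1}{q^2}\pd_r(rf\phi^2)$ on each slice and the zeroth-order bulk coefficients.

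The parameter $\epsilon$ records a one-parameter freedom in these integrations by parts: one may shift a multiple of a total $r$-derivative of a weighted $\phi^2$ between the boundary and the bulk (equivalently, modify $w$ by a total derivative). This is why the completed squares on the slices carry the coefficient $\tfrac{r}{r^2+a^2}$ while those in the bulk carry $\tfrac{(1-\epsilon)r}{r^2+a^2}$, the mismatch being absorbed into the $\epsilon$-weighted $\phi^2$ terms $\epsilon\tfrac{rf'}{q^2}\phi^2$, $\epsilon\alpha((1-\epsilon)f'-rf'')\tfrac{\phi^2}{q^2}$, and the $\epsilon$-corrections to the $\alpha'$-coefficients. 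After completing the squares everywhere — first isolating $\bigl(\alpha^{-1}L\phi+\tfrac{r}{r^2+a^2}\phi\bigr)^2$, $(\pd_\theta\phi)^2$ and the $a^2\sin^2\theta$-weighted corrections on the slices, then $\bigl(\alpha^{-1}L\phi+\tfrac{(1-\epsilon)r}{r^2+a^2}\phi\bigr)^2$ in the bulk — and collecting the terms that vanish in the Schwarzschild limit (those with a factor $a^2\sin^2\theta$ or $a^2/(r^2+a^2)$), one reads off exactly the displayed identity.

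The main obstacle is purely the bookkeeping of the Kerr corrections: tracking the $\tfrac{(r^2+a^2)^2}{\Delta}$ and $a^2\sin^2\theta$ factors through the null-frame rewriting, performing the $r$-integrations by parts so that the only surviving pure divergence is the stated $\pd_r(rf\phi^2)$ rather than a residue of $\phi\pd_r\phi$ cross terms, and verifying that the leftover $\phi^2$ coefficients match the displayed ones. There is no conceptual difficulty once the multiplier and the Boyer--Lindquist form of $\Box_g$ are fixed; the computation is a weighted variant of the classical Minkowski $r^p$-identity, with the two-slice structure coming from the support condition on $f$.
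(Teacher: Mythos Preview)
Your proposal is correct and matches the paper's approach: the paper uses the scalar current $J_{(\phi)}[X,w,m]$ with exactly your choices $X=\alpha^{-1}fL$ and $w=\tfrac{2rf}{r^2+a^2}$, computes the divergence via the deformation tensor (your direct multiplier computation is equivalent), and applies Proposition~\ref{general_divergence_estimate_prop}, the horizon flux vanishing by the support of $f$. The one point where the paper is more concrete than your description is the $\epsilon$ mechanism: rather than speaking of a generic freedom in the integrations by parts, the paper adds the explicit one-form $m_\mu=(1-\epsilon)\tfrac{rf'}{q^2}L_\mu$ to the current (so the extra divergence is $(1-\epsilon)\,\mathrm{div}\bigl(\phi^2\tfrac{r}{q^2}f'L\bigr)$), motivated by the observation that $-\tfrac12\Box_g w$ has the wrong sign for $p>1$; this is exactly the ``shift of a total derivative'' you describe, but pinning it down as a third argument of the current template makes the bookkeeping of both bulk and boundary contributions transparent.
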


\begin{proof}
We will use Proposition \ref{general_divergence_estimate_prop} together with the following current template.
$$J_{(\phi)}[X,w,m]_\mu = T_{\mu\nu} X^\nu +w\phi\pd_\mu\phi-\frac12\phi^2\pd_\mu w+m_\mu \phi^2,$$
$$T_{\mu\nu}=2\pd_\mu\phi\pd_\nu\phi-g_{\mu\nu}\pd^\lambda\phi\pd_\lambda\phi.$$

Assume for now that $\Box_g\phi=0$. Let $\alpha = \frac{\Delta}{r^2+a^2}$, and observe that
$$L=\alpha\pd_r+\pd_t,$$
$$q^2g^{rr}=(r^2+a^2)\alpha,$$
$$q^2g^{tt}=-(r^2+a^2)\alpha^{-1}.$$

\begin{lemma}\label{divJphiX_lem}
Without appealing directly to the particular expression for $\alpha$, one can deduce the following.
$$\frac{q^2}{r^2+a^2}divJ_{(\phi)}[\alpha^{-1}fL]=(\alpha^{-1}f)'(L\phi)^2-\frac{2rf}{r^2+a^2}\left(\alpha(\pd_r\phi)^2-\alpha^{-1}(\pd_t\phi)^2\right)-f'\frac{Q^{\alpha\beta}}{r^2+a^2}\pd_\alpha\phi\pd_\beta\phi.$$
\end{lemma}
\begin{proof}
Note that
$$div J_{(\phi)}[X] = K^{\mu\nu}\pd_\mu\phi\pd_\nu\phi,$$
where
$$K^{\mu\nu}=2g^{\mu\lambda}\pd_\lambda X^\nu-X^\lambda \pd_\lambda(g^{\mu\nu})-div X g^{\mu\nu}.$$

Set $X=\alpha^{-1}f(\alpha\pd_r+\pd_t)=f\pd_r+\alpha^{-1}f\pd_t$. From the above formula, since $g^{rt}=0$,
$$\frac{q^2}{r^2+a^2}(K^{tr}+K^{rt})=2\frac{q^2}{r^2+a^2}g^{rr}\pd_r X^t=2\alpha\pd_r(\alpha^{-1} f).$$
Thus, the expression for $\frac{q^2}{r^2+a^2}divJ_{(\phi)}[\alpha^{-1}fL]$ will have a mixed term of the form 
$$2\alpha\pd_r(\alpha^{-1}f)\pd_r\phi\pd_t\phi.$$
Note that
\begin{align*}
(\alpha^{-1} f)'(L\phi)^2 &= (\alpha^{-1} f)'(\alpha\pd_r\phi+\pd_t\phi)^2 \\
&= \alpha^2(\alpha^{-1}f)'(\pd_r\phi)^2+2\alpha(\alpha^{-1}f)'\pd_r\phi\pd_t\phi +(\alpha^{-1} f)'(\pd_t\phi)^2.
\end{align*}
We now compute the $(\pd_r\phi)^2$ and $(\pd_t\phi)^2$ components, subtracting the part that will be grouped with the $(L\phi)^2$ term.
\begin{align*}
\frac{q^2}{r^2+a^2}K^{rr}-\alpha^2(\alpha^{-1}f)' &= \frac{q^2}{r^2+a^2}\left[2g^{rr}\pd_rX^r-X^r\pd_r g^{rr}-\frac1{q^2}\pd_r(q^2X^r)g^{rr}\right]-\alpha^2(\alpha^{-1}f)' \\
&= \frac{q^2}{r^2+a^2}\left[2g^{rr}\pd_rX^r-\frac1{q^2}\pd_r(q^2g^{rr}X^r)\right]-\alpha^2(\alpha^{-1}f)' \\
&= 2\alpha\pd_r f -\frac{1}{r^2+a^2}\pd_r\left((r^2+a^2)\alpha f\right)-\alpha^2(\alpha^{-1}f)' \\
&= -\frac{2r \alpha f}{r^2+a^2}
\end{align*}
and
\begin{align*}
\frac{q^2}{r^2+a^2}K^{tt}-(\alpha^{-1}f)' &= \frac{q^2}{r^2+a^2}\left[-X^r\pd_r g^{tt}-\frac{1}{q^2}\pd_r(q^2X^r)g^{tt}\right]-(\alpha^{-1}f)' \\
&= -\frac{q^2}{r^2+a^2}\frac{1}{q^2}\pd_r(q^2 g^{tt} X^r) -(\alpha^{-1}f)' \\
&= -\frac{1}{r^2+a^2}\pd_r\left((r^2+a^2)(-\alpha^{-1}) f\right)-(\alpha^{-1}f)' \\
&= \frac{2r\alpha^{-1}f}{r^2+a^2}.
\end{align*}
Finally,
\begin{align*}
\frac{q^2}{r^2+a^2}{}^{(Q)}K^{\alpha\beta} &= \frac{q^2}{r^2+a^2}\left[-X^r\pd_r {}^{(Q)}g^{\alpha\beta}-\frac1{q^2}\pd_r\left(q^2X^r\right){}^{(Q)}g^{\alpha\beta}\right] \\
&= \frac{q^2}{r^2+a^2}\left[-\frac{1}{q^2}\pd_r\left(q^2{}^{(Q)}g^{\alpha\beta}X^r\right)\right] \\
&= -\frac{1}{r^2+a^2}\pd_r(Q^{\alpha\beta}f) \\
&= -f' \frac{Q^{\alpha\beta}}{r^2+a^2}.
\end{align*}
Combining all these terms gives the identity stated in the lemma. \qed
\end{proof}

Next, we choose $w=\frac{2rf}{r^2+a^2}$ to directly cancel with the middle term in the above lemma.
\begin{lemma}\label{divJphiXw_lem}
\begin{multline*}
\frac{q^2}{r^2+a^2}divJ_{(\phi)}\left[\alpha^{-1}fL,\frac{2rf}{r^2+a^2}\right] \\
= (\alpha^{-1}f)'(L\phi)^2+\left(\frac{2rf}{r^2+a^2}-f'\right)\frac{Q^{\alpha\beta}}{r^2+a^2}\pd_\alpha\phi\pd_\beta\phi-\frac12\frac{q^2}{r^2+a^2}\Box_g\left(\frac{2rf}{r^2+a^2}\right)\phi^2.
\end{multline*}
\end{lemma}
\begin{proof}
Note that
$$divJ_{(\phi)}[0,w]=wg^{\mu\nu}\pd_\mu\phi\pd_\nu\phi-\frac12\Box_gw \phi^2.$$
We compute the new terms only.
\begin{multline*}
\frac{q^2}{r^2+a^2}divJ_{(\phi)}\left[0,\frac{2rf}{r^2+a^2}\right] = \frac{2rf}{r^2+a^2}\frac{q^2 g^{\alpha\beta}}{r^2+a^2}\pd_\alpha\phi\pd_\beta\phi -\frac12\frac{q^2}{r^2+a^2}\Box_g\left(\frac{2rf}{r^2+a^2}\right)\phi^2 \\
= \frac{2rf}{r^2+a^2}\left(\alpha (\pd_r\phi)^2-\alpha^{-1}(\pd_t\phi)^2\right) +\frac{2rf}{r^2+a^2}\frac{Q^{\alpha\beta}}{r^2+a^2}\pd_\alpha\phi\pd_\beta\phi -\frac12\frac{q^2}{r^2+a^2}\Box_g\left(\frac{2rf}{r^2+a^2}\right)\phi^2.
\end{multline*}
When adding these terms to the expression in Lemma \ref{divJphiX_lem}, the $\alpha(\pd_r\phi)^2-\alpha^{-1}(\pd_t\phi)^2$ terms cancel (this was the reason for the choice of $w=\frac{2rf}{r^2+a^2}$) and the result is as desired. \qed
\end{proof}

The term $-\frac12\frac{q^2}{r^2+a^2}\Box_g\left(\frac{2rf}{r^2+a^2}\right)\phi^2$ is like $-r^{-1}f''\phi^2$. In the future, when $f\sim r^p$, this will have a sign $-p(p-1)$. The sign will be negative if $p>1$, which is bad. So we include a divergence term to fix it. (But in doing so, we almost lose some other good terms--this is why we need a small parameter $\epsilon$.) This is the point of the following Lemma.
\begin{lemma}
\begin{multline*}
\alpha^{-1}f' (L\phi)^2+\frac{q^2}{r^2+a^2}\left[-\frac12 \Box_g\left(\frac{2rf}{r^2+a^2}\right)\phi^2+(1-\epsilon)div\left(\phi^2\frac{r}{q^2}f'L\right)\right] \\
=\alpha f' \left(\alpha^{-1}L\phi+\frac{(1-\epsilon)r}{r^2+a^2}\phi\right)^2 + \epsilon\alpha \frac{\left((1-\epsilon)f'-rf''\right)}{r^2+a^2}\phi^2 
+ \alpha'\left(-\frac{\epsilon r f'}{r^2+a^2}+\frac{(r^2-a^2)f}{(r^2+a^2)^2}\right)\phi^2 \\
+ \frac{a^2}{r^2+a^2}\left(\frac{-(1+\epsilon)\alpha f'}{r^2+a^2}-\frac{4\alpha rf}{(r^2+a^2)^2}\right)\phi^2.
\end{multline*}
\end{lemma}
\begin{proof}
First, we calculate
\begin{align*}
-\frac{q^2}{r^2+a^2}\frac12\Box_g\left(\frac{2rf}{r^2+a^2}\right)\phi^2 &=-\frac{1}{r^2+a^2}\pd_r\left((r^2+a^2)\alpha\pd_r\left(\frac{rf}{r^2+a^2}\right)\right)\phi^2 \\
&= -\frac{\alpha}{r^2+a^2}\pd_r\left((r^2+a^2)\pd_r\left(\frac{rf}{r^2+a^2}\right)\right) \phi^2 -\alpha'\pd_r\left(\frac{rf}{r^2+a^2}\right)\phi^2 \\
&= -\frac{\alpha r f''}{r^2+a^2}\phi^2 \\
&\hspace{.75in}-\alpha'\pd_r\left(\frac{rf}{r^2+a^2}\right)\phi^2 -\frac{2\alpha a^2}{(r^2+a^2)^2}\left(f'+\frac{2r}{r^2+a^2}f\right)\phi^2.
\end{align*}
We also calculate
\begin{align*}
\frac{q^2}{r^2+a^2}div\left(\phi^2\frac{r}{q^2}f'L\right) &= \frac{1}{r^2+a^2}\pd_\alpha\left(\phi^2rf' L^\alpha\right) \\
&=\frac{rf'}{r^2+a^2}2\phi L\phi +\frac{\pd_r(rf'\alpha)}{r^2+a^2}\phi^2 \\
&=\frac{r f'}{r^2+a^2}2\phi L\phi +\frac{\alpha f'}{r^2+a^2}\phi^2 +\frac{\alpha r f''}{r^2+a^2}\phi^2+\frac{\alpha' r f'}{r^2+a^2}\phi^2.
\end{align*}
The first two terms in the last line almost complete a square (up to a term on the order of $\frac{a^2}{r^2+a^2}$) with the term $\alpha^{-1} f' (L\phi)^2$. The third term cancels with the first term from the previous calculation. However, it will be beneficial to introduce the factor $1-\epsilon$ that appears in the lemma, so that a good term appears with an $\epsilon$ factor. This is summarized by the following two calculations.
\begin{multline*}
\alpha^{-1}f'(L\phi)^2+(1-\epsilon)\left(\frac{rf'}{r^2+a^2}2\phi L\phi+\frac{\alpha f'}{r^2+a^2}\phi^2\right) \\
= \alpha f' \left(\alpha^{-1}L\phi+\frac{(1-\epsilon)r}{r^2+a^2}\phi\right)^2 -\frac{(1-\epsilon)^2r^2\alpha f'}{(r^2+a^2)^2}\phi^2 +\frac{(1-\epsilon)\alpha f'}{r^2+a^2}\phi^2 \\
= \alpha f' \left(\alpha^{-1}L\phi+\frac{(1-\epsilon)r}{r^2+a^2}\phi\right)^2 +\frac{\epsilon (1-\epsilon)\alpha f'}{r^2+a^2}\phi^2 +\frac{a^2(1-\epsilon)\alpha f'}{(r^2+a^2)^2}\phi^2
\end{multline*}
and
$$-\frac{\alpha r f''}{r^2+a^2}\phi^2 +(1-\epsilon)\frac{\alpha r f''}{r^2+a^2}\phi^2=-\epsilon \frac{\alpha r f''}{r^2+a^2}\phi^2.$$
Adding these terms together and ignoring the term with the $a^2$ factor yields
$$\alpha f' \left(\alpha^{-1}L\phi+\frac{(1-\epsilon)r}{r^2+a^2}\phi\right)^2 +\epsilon \alpha \frac{\left((1-\epsilon)f'-rf''\right)}{r^2+a^2}\phi^2.$$
All the remaining terms (which either contain a factor of $\alpha'\sim \frac{M}{r^2}$ or $\frac{a^2}{r^2+a^2}$) are
$$
\alpha'\left[-\pd_r\left(\frac{rf}{r^2+a^2}\right)+(1-\epsilon)\frac{rf'}{r^2+a^2}\right]\phi^2 
+\frac{a^2}{r^2+a^2}\left[-\frac{2\alpha}{r^2+a^2}\left(f'+\frac{2r}{r^2+a^2}f\right)+\frac{(1-\epsilon)\alpha f'}{r^2+a^2}\right]\phi^2
$$
Adding both of these yields the result. \qed
\end{proof}

Thus, we have shown that if $\Box_g\phi=0$, then
\begin{multline*}
\frac{q^2}{r^2+a^2}divJ_{(\phi)}\left[\alpha^{-1} f L,\frac{2rf}{r^2+a^2},(1-\epsilon)\frac{rf'}{q^2}L\right] \\
= \alpha f' \left(\alpha^{-1}L\phi+\frac{(1-\epsilon)r}{r^2+a^2}\phi\right)^2 + \epsilon\alpha \frac{\left((1-\epsilon)f'-rf''\right)}{r^2+a^2}\phi^2 +\left(\frac{2rf}{r^2+a^2}-f'\right)\frac{Q^{\alpha\beta}}{r^2+a^2}\pd_\alpha\phi\pd_\beta\phi \\
- \alpha' \alpha^{-2}f(L\phi)^2
+ \alpha'\left(-\frac{\epsilon r f'}{r^2+a^2}+\frac{(r^2-a^2)f}{(r^2+a^2)^2}\right)\phi^2  \\
+ \frac{a^2}{r^2+a^2}\left(\frac{-(1+\epsilon)\alpha f'}{r^2+a^2}-\frac{4\alpha rf}{(r^2+a^2)^2}\right)\phi^2.
\end{multline*}
If we remove the assumption that $\Box_g\phi=0$, there is an additional term
$$\left(2X(\phi)+w\phi\right)\Box_g\phi = \left(2\alpha^{-1}fL\phi+\frac{2rf}{r^2+a^2}\phi\right)\Box_g\phi$$
appearing in the expression for $divJ_{(\phi)}$.

Finally, we turn to the boundary terms. Since we have assumed that $f$ is supported away from the event horizon, it suffices to compute $-J_{(\phi)}^t$.
\begin{lemma}
\begin{multline*}
-J_{(\phi)}^t\left[\alpha^{-1}fL,\frac{2rf}{r^2+a^2},(1-\epsilon)\frac{rf'}{q^2}L\right] \\
=\left(1-\alpha\frac{a^2\sin^2\theta}{r^2+a^2}\right)\frac{r^2+a^2}{q^2}f\left(\alpha^{-1}L\phi+\frac{r}{r^2+a^2}\phi\right)^2 +\frac{\alpha^{-1}f}{q^2}(\pd_\theta\phi)^2+\epsilon\frac{rf'}{q^2}\phi^2 \\
+\alpha\frac{a^2\sin^2\theta}{q^2}f\left(\pd_r\phi+\frac{r}{r^2+a^2}\phi\right)^2 +\frac{a^2f}{q^2(r^2+a^2)}\phi^2-\frac1{q^2}\pd_r(rf\phi^2).
\end{multline*}
\end{lemma}
\begin{proof}
We have
\begin{align*}
-J_{(\phi)}^t[\alpha^{-1}fL] &= -2\pd^t\phi\alpha^{-1}fL\phi+\alpha^{-1}fL^t\pd^\lambda\phi\pd_\lambda\phi \\
&=-2\alpha^{-1}f(g^{tt}+{}^{(Q)}g^{tt})\pd_t\phi L\phi \\
&\hspace{1.5in}+\alpha^{-1}f\left((g^{tt}+{}^{(Q)}g^{tt})(\pd_t\phi)^2+g^{rr}(\pd_r\phi)^2+{}^{(Q)}g^{\theta\theta}(\pd_\theta\phi)^2\right) \\
&=-\alpha^{-1}f(g^{tt}+{}^{(Q)}g^{tt})(\pd_t\phi)^2-2\alpha^{-1}f(g^{tt}+{}^{(Q)}g^{tt})\pd_t\phi\alpha\pd_r\phi+\alpha^{-1}fg^{rr}(\pd_r\phi)^2 \\
&\hspace{4in}+\alpha^{-1}f{}^{(Q)}g^{\theta\theta}(\pd_\theta\phi)^2 \\
&=\frac{r^2+a^2}{q^2}\left(1-\alpha\frac{a^2\sin^2\theta}{r^2+a^2}\right)\alpha^{-2}f(\pd_t\phi)^2+2\frac{r^2+a^2}{q^2}\left(1-\alpha\frac{a^2\sin^2\theta}{r^2+a^2}\right)\alpha^{-1}f\pd_t\phi\pd_r\phi \\
&\hspace{3.5in}+\frac{r^2+a^2}{q^2}f(\pd_r\phi)^2+\frac{\alpha^{-1}f}{q^2}(\pd_\theta\phi)^2 \\
&=\frac{r^2+a^2}{q^2}\left(1-\alpha\frac{a^2\sin^2\theta}{r^2+a^2}\right)\alpha^{-2}f(L\phi)^2+\alpha\frac{a^2\sin^2\theta}{q^2}f(\pd_r\phi)^2+\frac{\alpha^{-1}f}{q^2}(\pd_\theta\phi)^2.
\end{align*}
Also,
\begin{align*}
-J^t_{(\phi)}\left[0,\frac{2rf}{r^2+a^2}\right] &= -\frac{2rf}{r^2+a^2}\phi\pd^t\phi \\
&= -\frac{2rf}{r^2+a^2}(g^{tt}+{}^{(Q)}g^{tt})\phi\pd_t\phi \\
&= \frac{2r\alpha^{-1}f}{q^2}\left(1-\alpha\frac{a^2\sin^2\theta}{r^2+a^2}\right)\phi\pd_t\phi \\
&= \frac{2r\alpha^{-1}f}{q^2}\left(1-\alpha\frac{a^2\sin^2\theta}{r^2+a^2}\right)\phi L\phi - \frac{2rf}{q^2}\left(1-\alpha\frac{a^2\sin^2\theta}{r^2+a^2}\right)\phi\pd_r\phi \\
&= \frac{2r\alpha^{-1}f}{q^2}\left(1-\alpha\frac{a^2\sin^2\theta}{r^2+a^2}\right)\phi L\phi -\frac{2rf}{q^2}\phi\pd_r\phi +\frac{2rf}{q^2}\alpha\frac{a^2\sin^2\theta}{r^2+a^2}\phi\pd_r\phi \\
&= \frac{2r\alpha^{-1}f}{q^2}\left(1-\alpha\frac{a^2\sin^2\theta}{r^2+a^2}\right)\phi L\phi +\left(-\frac{1}{q^2}\pd_r(rf\phi^2)+\frac{f+rf'}{q^2}\phi^2\right) \\
&\hspace{3.75in}+\frac{2rf}{q^2}\alpha\frac{a^2\sin^2\theta}{r^2+a^2}\phi\pd_r\phi \\
&= \left(1-\alpha\frac{a^2\sin^2\theta}{r^2+a^2}\right)\left(\frac{2r\alpha^{-1}f}{q^2}\phi L\phi+\frac{f}{q^2}\phi^2\right)+\alpha\frac{a^2\sin^2\theta}{r^2+a^2}\left(\frac{2rf}{q^2}\phi\pd_r\phi+\frac{f}{q^2}\phi^2\right) \\
&\hspace{3.75in}+\frac{rf'}{q^2}\phi^2-\frac{1}{q^2}\pd_r(rf\phi^2).
\end{align*}
Now, observe that
$$\frac{r^2+a^2}{q^2}\alpha^{-2}f(L\phi)^2+\frac{2r\alpha^{-1}f}{q^2}\phi L\phi+\frac{f}{q^2}\phi^2 = \frac{r^2+a^2}{q^2}f\left(\alpha^{-1}L\phi+\frac{r}{r^2+a^2}\phi\right)^2+\frac{a^2f}{q^2(r^2+a^2)}\phi^2$$
and
$$\frac{r^2+a^2}{q^2}f(\pd_r\phi)^2 +\frac{2rf}{q^2}\phi\pd_r\phi+\frac{f}{q^2}\phi^2 = \frac{r^2+a^2}{q^2}f\left(\pd_r\phi+\frac{r}{r^2+a^2}\phi\right)^2+\frac{a^2f}{q^2(r^2+a^2)}\phi^2.$$
Thus,
\begin{multline*}
-J_{(\phi)}^t\left[\alpha^{-1}fL,\frac{2rf}{r^2+a^2}\right] \\
=\left(1-\alpha\frac{a^2\sin^2\theta}{r^2+a^2}\right)\frac{r^2+a^2}{q^2}f\left(\alpha^{-1}L\phi+\frac{r}{r^2+a^2}\phi\right)^2+\alpha\frac{a^2\sin^2\theta}{r^2+a^2}\frac{r^2+a^2}{q^2}f\left(\pd_r\phi+\frac{r}{r^2+a^2}\phi\right)^2 \\
+\frac{a^2f}{q^2(r^2+a^2)}\phi^2+\frac{\alpha^{-1}f}{q^2}(\pd_\theta\phi)^2+\frac{rf'}{q^2}\phi^2-\frac1{q^2}\pd_r(rf\phi^2).
\end{multline*}
Also,
$$-J_{(\phi)}^t\left[0,0,(1-\epsilon)\frac{rf'}{q^2}L\right] = -(1-\epsilon)\frac{rf'}{q^2}\phi^2 L^t =-(1-\epsilon)\frac{rf'}{q^2}\phi^2.$$
Adding these two expressions together yields the result. \qed
\end{proof}

This concludes the proof. \qed
\end{proof}

\subsubsection{A $p$-identity for $\tilde{\mathcal{M}}$}\label{p_identity_M_tilde_sec}
We prove the following lemma for the spacetime $\tilde{\mathcal{M}}$ by following a similar procedure to that in the proof of Lemma \ref{p_identity_phi_lem}.
\begin{lemma}\label{p_identity_psi_lem} ($p$ identity for $\tilde{\mathcal{M}}$) Let $\alpha=\frac{\Delta}{r^2+a^2}$ and $L=\alpha\pd_r+\pd_t$. For any function $f=f(r)$ supported where $r>r_H+\delh$, the following identity holds.
\begin{align*}
&\int_{\tilde{\Sigma}_{t_2}}\left[\left(1-\alpha\frac{a^2\sin^2\theta}{r^2+a^2}\right)\frac{r^2+a^2}{q^2}f\left(\alpha^{-1}L\psi+\frac{3r}{r^2+a^2}\psi\right)^2 +\frac{\alpha^{-1}f}{q^2}(\pd_\theta\psi)^2 +6f\frac{\psi^2}{q^2} \right.\\
&\hspace{1.25in}+\left.\frac{a^2\sin^2\theta \alpha f}{q^2}\left(\pd_r\psi+\frac{3r}{r^2+a^2}\psi\right)^2 -\frac{a^23f}{r^2+a^2}\frac{\psi^2}{q^2}-\frac1{A_1^2q^2}\pd_r(A_1^23rf\psi^2)\right] \\
&+\int_{t_1}^{t_2}\int_{\tilde{\Sigma}_t}
\left[
  \left(\frac{2rf}{r^2+a^2}-f'\right)\frac{Q^{\alpha\beta}}{q^2}\pd_\alpha\psi\pd_\beta\psi
+ \alpha f'\frac{r^2+a^2}{q^2}\left(\alpha^{-1}L\psi+\frac{3r}{r^2+a^2}\psi\right)^2 
\vphantom{
+ \epsilon\alpha\left((1-\epsilon)f'-rf''\right)\frac{\phi^2}{q^2} 
+ \alpha'\left(\frac{r^2-a^2}{r^2+a^2}f-\epsilon r f'\right)\frac{\phi^2}{q^2}
+ \frac{a^2}{r^2+a^2}\left(\alpha((1-\epsilon)^2-2)f'-\frac{4\alpha rf}{r^2+a^2}\right)\frac{\phi^2}{q^2}
}\right. \\
&\hspace{3in} + \frac{6r\alpha(2f-rf')}{r^2+a^2}\frac{\psi^2}{q^2} -\alpha'\alpha^{-2}f\frac{r^2+a^2}{q^2}(L\psi)^2 \\
&\hspace{1.5in}
\left.\vphantom{
  \left(\frac{2rf}{r^2+a^2}-f'\right)\frac{Q^{\alpha\beta}}{q^2}\pd_\alpha\phi\pd_\beta\phi
+ \alpha f'\frac{r^2+a^2}{q^2}\left(\alpha^{-1}L\phi+\frac{(1-\epsilon)r}{r^2+a^2}\phi\right)^2 
+ \epsilon\alpha\left((1-\epsilon)f'-rf''\right)\frac{\phi^2}{q^2} 
}
- \alpha'(r^2+a^2)\pd_r\left(\frac{3r}{r^2+a^2}\right)f\frac{\psi^2}{q^2}
- \frac{a^2}{r^2+a^2}\left(3\alpha f'+\frac{36\alpha rf}{r^2+a^2}\right)\frac{\psi^2}{q^2}
\right] \\
=&\int_{\tilde{\Sigma}_{t_1}}\left[\left(1-\alpha\frac{a^2\sin^2\theta}{r^2+a^2}\right)\frac{r^2+a^2}{q^2}f\left(\alpha^{-1}L\psi+\frac{3r}{r^2+a^2}\psi\right)^2 +\frac{\alpha^{-1}f}{q^2}(\pd_\theta\psi)^2 +6f\frac{\psi^2}{q^2} \right.\\
&\hspace{1.25in}+\left.\frac{a^2\sin^2\theta \alpha f}{q^2}\left(\pd_r\psi+\frac{3r}{r^2+a^2}\psi\right)^2 -\frac{a^23f}{r^2+a^2}\frac{\psi^2}{q^2}-\frac1{A_1^2q^2}\pd_r(A_1^23rf\psi^2)\right] \\
&+\int_{t_1}^{t_2}\int_{\tilde{\Sigma}_t}-\left(2\alpha^{-1}fL\psi+\frac{6rf}{r^2+a^2}\psi\right)\Box_{\tilde{g}}\psi.
\end{align*}
\end{lemma}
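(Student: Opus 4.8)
The plan is to prove the identity exactly as Lemma \ref{p_identity_phi_lem} was proved, the single structural change being that \emph{every divergence is computed with respect to the volume form of $\tilde{\mathcal{M}}$}, namely $\sqrt{-g}\,A_1^2=q^2(r^2+a^2)^2\sin^5\theta$, or equivalently using $\Box_{\tilde g}=\Box_g+2\frac{\pd^\alpha A_1}{A_1}\pd_\alpha$ throughout. Concretely I would apply Proposition \ref{general_divergence_estimate_prop} on $\tilde{\mathcal{M}}$ to the current template
$$J_{(\psi)}[X,w,m]_\mu = T_{\mu\nu}X^\nu+w\psi\pd_\mu\psi-\tfrac12\psi^2\pd_\mu w+m_\mu\psi^2,\qquad T_{\mu\nu}=2\pd_\mu\psi\pd_\nu\psi-g_{\mu\nu}\pd^\lambda\psi\pd_\lambda\psi,$$
with the choices $X=\alpha^{-1}fL$, $w=\frac{6rf}{r^2+a^2}$, and $m$ a suitable multiple of $\frac{rf'}{q^2}L$ (the analogue of the $(1-\epsilon)\frac{rf'}{q^2}L$ term in Lemma \ref{p_identity_phi_lem}, now with no small parameter).

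The computation runs through the same three sub-steps. First, I would compute the $\tilde{\mathcal{M}}$-divergence of $J_{(\psi)}[\alpha^{-1}fL]$, as in Lemma \ref{divJphiX_lem}; because the divergence now carries the extra weight $A_1^2=(r^2+a^2)^2\sin^4\theta$, the $r$-derivative acts on $q^2(r^2+a^2)^2X^r$ rather than on $q^2X^r$, and since $\pd_r\big((r^2+a^2)^3\big)=6r(r^2+a^2)^2$ this turns the factor $\frac{2rf}{r^2+a^2}$ in the $(\pd_r\psi)^2$ and $(\pd_t\psi)^2$ coefficients into $\frac{6rf}{r^2+a^2}$; the $Q^{\alpha\beta}$-coefficient is unchanged. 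Second, I would choose $w=\frac{6rf}{r^2+a^2}$ precisely to cancel the $\alpha(\pd_r\psi)^2-\alpha^{-1}(\pd_t\psi)^2$ terms, as in Lemma \ref{divJphiXw_lem}, leaving the term $-\tfrac12\frac{q^2}{r^2+a^2}\Box_{\tilde g}\!\left(\frac{6rf}{r^2+a^2}\right)\psi^2$. Third, I would add the one-form contribution $m_\mu\psi^2$ and integrate by parts in $r$ so as to complete the square $\alpha f'\left(\alpha^{-1}L\psi+\frac{3r}{r^2+a^2}\psi\right)^2$ and reorganize the $\Box_{\tilde g}$ term.

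Here is where the higher effective dimension of $\tilde{\mathcal{M}}$ is used: the natural weight for $\psi$ on a $7+1$-dimensional spacetime is $r^{(7-1)/2}=r^3$, which is why "$3r$" (and not "$r$") appears inside the square, and the dimension-dependent coefficient $\frac{(7-1)(7-3)}{4}=6$ — which vanishes for the $3+1$-dimensional $\mathcal{M}$ governing $\phi$ — is strictly positive. Consequently the $\Box_{\tilde g}$ term can be split into the manifestly favorable boundary term $6f\frac{\psi^2}{q^2}$, the bulk term $\frac{6r\alpha(2f-rf')}{r^2+a^2}\frac{\psi^2}{q^2}$ (nonnegative exactly when $rf'\le 2f$, i.e. for $f\sim r^p$ with $p\le 2$), a contribution absorbed into the square, and the total-$r$-derivative $-\frac{1}{A_1^2q^2}\pd_r(A_1^2\,3rf\psi^2)$ — all \emph{without} the small parameter $\epsilon$ needed in Lemma \ref{p_identity_phi_lem}. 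The boundary flux is then obtained by computing $-J_{(\psi)}^t$ verbatim as in the last lemma of \S\ref{p_identity_M_sec}: contracting the $\pd_t$-components with $g^{tt}+{}^{(Q)}g^{tt}=-\alpha^{-1}\frac{r^2+a^2}{q^2}\left(1-\alpha\frac{a^2\sin^2\theta}{r^2+a^2}\right)$ and completing squares in $\alpha^{-1}L\psi+\frac{3r}{r^2+a^2}\psi$ and in $\pd_r\psi+\frac{3r}{r^2+a^2}\psi$ produces precisely the displayed boundary integrand, $a^2\sin^2\theta$-weighted square and $-\frac{a^2\,3f}{r^2+a^2}\frac{\psi^2}{q^2}$ correction included. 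Dropping the assumption $\Box_{\tilde g}\psi=0$ reinstates the nonlinear term $-\left(2\alpha^{-1}fL\psi+\frac{6rf}{r^2+a^2}\psi\right)\Box_{\tilde g}\psi$ in the divergence, exactly as for $\phi$.

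The main obstacle is the bookkeeping of the Kerr error terms rather than any new idea: one must track every place where the weight $(r^2+a^2)$ (as opposed to a pure power of $r$) enters the $r$-differentiations, separate the Schwarzschild-like main terms from the $\alpha'\sim M/r^2$ contributions and the $a^2/(r^2+a^2)$ contributions, and check that these collapse into exactly the $\alpha'$-weighted and $a^2$-weighted remainders written in the bulk integral. The delicate verification is simply that the factors "$3$" and "$6$" propagate consistently through the square, the $w$-cancellation, the $m$-integration by parts, and the boundary flux, and that the total-derivative term is genuinely $\frac{1}{A_1^2q^2}\pd_r(A_1^2\,3rf\psi^2)$.
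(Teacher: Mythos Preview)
Your proposal is correct and follows essentially the same route as the paper: the same current $J_{(\psi)}\!\left[\alpha^{-1}fL,\ \frac{6rf}{r^2+a^2},\ \frac{3rf'}{q^2}L\right]$, the same three sub-lemmas, and the same observation that no $\epsilon$ parameter is needed here. One small correction to your first sub-step: the $Q^{\alpha\beta}$-coefficient is \emph{not} unchanged by the extra $A_1^2$ weight --- it becomes $-\bigl(\frac{4rf}{r^2+a^2}+f'\bigr)$ --- but this is exactly compensated when you add $w=\frac{6rf}{r^2+a^2}$ (rather than $\frac{2rf}{r^2+a^2}$), so the final coefficient $\frac{2rf}{r^2+a^2}-f'$ is indeed as in the identity.
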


\begin{proof}
The proof is similar to the proof of Lemma \ref{p_identity_phi_lem}, however there are a few subtle differences, some of which actually simplify the proof. Again, we will use Proposition \ref{general_divergence_estimate_prop} together with the following current template.
$$J_{(\psi)}[X,w,m]_\mu = T_{\mu\nu} X^\nu +w\psi\pd_\mu\psi-\frac12\psi^2\pd_\mu w+m_\mu \psi^2,$$
$$T_{\mu\nu}=2\pd_\mu\psi\pd_\nu\psi-g_{\mu\nu}\pd^\lambda\psi\pd_\lambda\psi.$$

Assume for now that $\Box_{\tilde{g}}\psi=0$. Let $\alpha = \frac{\Delta}{r^2+a^2}$, and observe that
$$L=\alpha\pd_r+\pd_t,$$
$$q^2g^{rr}=(r^2+a^2)\alpha,$$
$$q^2g^{tt}=-(r^2+a^2)\alpha^{-1}.$$

\begin{lemma}\label{divJpsiX_lem}
Without appealing directly to the particular expression for $\alpha$, one can deduce the following.
\begin{multline*}
\frac{q^2}{r^2+a^2}divJ_{(\psi)}[\alpha^{-1}fL]=(\alpha^{-1}f)'(L\psi)^2-\frac{6rf}{r^2+a^2}\left(\alpha(\pd_r\psi)^2-\alpha^{-1}(\pd_t\psi)^2\right) \\
-\left(\frac{4rf}{r^2+a^2}+f'\right)\frac{Q^{\alpha\beta}}{r^2+a^2}\pd_\alpha\psi\pd_\beta\psi.
\end{multline*}
\end{lemma}
\begin{proof}
Note that
$$div J_{(\psi)}[X] = K^{\mu\nu}\pd_\mu\psi\pd_\nu\psi,$$
where
$$K^{\mu\nu}=2g^{\mu\lambda}\pd_\lambda X^\nu-X^\lambda \pd_\lambda(g^{\mu\nu})-div X g^{\mu\nu}.$$

Set $X=\alpha^{-1}f(\alpha\pd_r+\pd_t)=f\pd_r+\alpha^{-1}f\pd_t$. From the above formula, since $g^{rt}=0$,
$$\frac{q^2}{r^2+a^2}(K^{tr}+K^{rt})=2\frac{q^2}{r^2+a^2}g^{rr}\pd_r X^t=2\alpha\pd_r(\alpha^{-1} f).$$
Thus, the expression for $\frac{q^2}{r^2+a^2}divJ_{(\psi)}[\alpha^{-1}fL]$ will have a mixed term of the form 
$$2\alpha\pd_r(\alpha^{-1}f)\pd_r\psi\pd_t\psi.$$
Note that
\begin{align*}
(\alpha^{-1} f)'(L\psi)^2 &= (\alpha^{-1} f)'(\alpha\pd_r\psi+\pd_t\psi)^2 \\
&= \alpha^2(\alpha^{-1}f)'(\pd_r\psi)^2+2\alpha(\alpha^{-1}f)'\pd_r\psi\pd_t\psi +(\alpha^{-1} f)'(\pd_t\psi)^2.
\end{align*}
We now compute the $(\pd_r\psi)^2$ and $(\pd_t\psi)^2$ components, subtracting the part that will be grouped with the $(L\psi)^2$ term.
\begin{align*}
\frac{q^2}{r^2+a^2}K^{rr}-\alpha^2(\alpha^{-1}f)' &= \frac{q^2}{r^2+a^2}\left[2g^{rr}\pd_rX^r-X^r\pd_r g^{rr}-\frac1{A_1^2q^2}\pd_r(A_1^2q^2X^r)g^{rr}\right]-\alpha^2(\alpha^{-1}f)' \\
&= \frac{q^2}{r^2+a^2}\left[2g^{rr}\pd_rX^r-\frac1{A_1^2q^2}\pd_r(A_1^2q^2g^{rr}X^r)\right]-\alpha^2(\alpha^{-1}f)' \\
&= 2\alpha\pd_r f -\frac{1}{(r^2+a^2)^3}\pd_r\left((r^2+a^2)^3\alpha f\right)-\alpha^2(\alpha^{-1}f)' \\
&= -\frac{6r \alpha f}{r^2+a^2}
\end{align*}
and
\begin{align*}
\frac{q^2}{r^2+a^2}K^{tt}-(\alpha^{-1}f)' &= \frac{q^2}{r^2+a^2}\left[-X^r\pd_r g^{tt}-\frac{1}{A_1^2q^2}\pd_r(A_1^2q^2X^r)g^{tt}\right]-(\alpha^{-1}f)' \\
&= -\frac{q^2}{r^2+a^2}\frac{1}{A_1^2q^2}\pd_r(A_1^2q^2 g^{tt} X^r) -(\alpha^{-1}f)' \\
&= -\frac{1}{(r^2+a^2)^3}\pd_r\left((r^2+a^2)^3(-\alpha^{-1}) f\right)-(\alpha^{-1}f)' \\
&= \frac{6r\alpha^{-1}f}{r^2+a^2}.
\end{align*}
Finally,
\begin{align*}
\frac{q^2}{r^2+a^2}{}^{(Q)}K^{\alpha\beta} &= \frac{q^2}{r^2+a^2}\left[-X^r\pd_r {}^{(Q)}g^{\alpha\beta}-\frac1{A_1^2q^2}\pd_r\left(A_1^2q^2X^r\right){}^{(Q)}g^{\alpha\beta}\right] \\
&= \frac{q^2}{r^2+a^2}\left[-\frac{1}{A_1^2q^2}\pd_r\left(A_1^2q^2{}^{(Q)}g^{\alpha\beta}X^r\right)\right] \\
&= -\frac{1}{(r^2+a^2)^3}\pd_r((r^2+a^2)^2Q^{\alpha\beta}f) \\
&= -\left(\frac{4rf}{r^2+a^2}+f'\right) \frac{Q^{\alpha\beta}}{r^2+a^2}.
\end{align*}
Combining all these terms gives the identity stated in the lemma. \qed
\end{proof}

Next, we choose $w=\frac{6rf}{r^2+a^2}$ to directly cancel with the middle term in the above lemma.
\begin{lemma}\label{divJpsiXw_lem}
\begin{multline*}
\frac{q^2}{r^2+a^2}divJ_{(\psi)}\left[\alpha^{-1}fL,\frac{6rf}{r^2+a^2}\right] \\
= (\alpha^{-1}f)'(L\psi)^2+\left(\frac{2rf}{r^2+a^2}-f'\right)\frac{Q^{\alpha\beta}}{r^2+a^2}\pd_\alpha\psi\pd_\beta\psi-\frac12\frac{q^2}{r^2+a^2}\Box_{\tilde{g}}\left(\frac{6rf}{r^2+a^2}\right)\psi^2.
\end{multline*}
\end{lemma}
\begin{proof}
Note that
$$divJ_{(\psi)}[0,w]=wg^{\mu\nu}\pd_\mu\psi\pd_\nu\psi-\frac12\Box_{\tilde{g}}w \psi^2.$$
We compute the new terms only.
\begin{multline*}
\frac{q^2}{r^2+a^2}divJ_{(\psi)}\left[0,\frac{6rf}{r^2+a^2}\right] = \frac{6rf}{r^2+a^2}\frac{q^2 g^{\alpha\beta}}{r^2+a^2}\pd_\alpha\psi\pd_\beta\psi -\frac12\frac{q^2}{r^2+a^2}\Box_{\tilde{g}}\left(\frac{6rf}{r^2+a^2}\right)\psi^2 \\
= \frac{6rf}{r^2+a^2}\left(\alpha (\pd_r\psi)^2-\alpha^{-1}(\pd_t\psi)^2\right) +\frac{6rf}{r^2+a^2}\frac{Q^{\alpha\beta}}{r^2+a^2}\pd_\alpha\psi\pd_\beta\psi -\frac12\frac{q^2}{r^2+a^2}\Box_{\tilde{g}}\left(\frac{6rf}{r^2+a^2}\right)\psi^2.
\end{multline*}
When adding these terms to the expression in Lemma \ref{divJpsiX_lem}, the $\alpha(\pd_r\psi)^2-\alpha^{-1}(\pd_t\psi)^2$ terms cancel (this was the reason for the choice of $w=\frac{2rf}{r^2+a^2}$) and the result is as desired. \qed
\end{proof}

The term $-\frac12\frac{q^2}{r^2+a^2}\Box_{\tilde{g}}\left(\frac{6rf}{r^2+a^2}\right)\psi^2$ is like $-6r^{-1}f''-24r^{-1}\pd_r(r^{-1}f)\phi^2$. In the future, when $f\sim r^p$, this will have a sign $-p^2-3p+4=-(p+4)(p-1)$. The sign will be negative if $p>1$, which is bad. So we include a divergence term to fix it. (Unlike in the analogous step for the $\phi$ version, there will not be a need for a smallness parameter $\epsilon$.) This is the point of the following Lemma.
\begin{lemma}
\begin{multline*}
\alpha^{-1}f'(L\psi)^2+\frac{q^2}{r^2+a^2}\left[-\frac12\Box_{\tilde{g}}\left(\frac{6rf}{r^2+a^2}\right)\psi^2+div\left(\psi^2\frac{3rf'}{q^2}L\right)\right] \\
= \alpha f'\left(\alpha^{-1}L\psi+\frac{3r}{r^2+a^2}\psi\right)^2+\frac{6r\alpha(2f-rf')}{(r^2+a^2)^2}\psi^2 \\
-\alpha'\pd_r\left(\frac{3r}{r^2+a^2}\right)f\psi^2 +\frac{a^2}{r^2+a^2}\left(\frac{-3\alpha f'}{r^2+a^2}+\frac{-36\alpha r f}{(r^2+a^2)^2}\right)\psi^2.
\end{multline*}
\end{lemma}
\begin{proof}
First, borrowing from a calculation for the $\phi$ version, we obtain
\begin{align*}
-\frac{q^2}{r^2+a^2}\frac12\Box_{\tilde{g}}\left(\frac{6rf}{r^2+a^2}\right)\psi^2 
&= -\frac{1}{A_1^2(r^2+a^2)}\pd_r\left(A_1^2(r^2+a^2)\alpha\pd_r\left(\frac{3rf}{r^2+a^2}\right)\right)\psi^2 \\
&= -\frac{q^2}{r^2+a^2}\Box_g\left(\frac{3rf}{r^2+a^2}\right)\psi^2-\frac{\pd_rA_1^2}{A_1^2}\alpha\pd_r\left(\frac{3rf}{r^2+a^2}\right)\psi^2 \\
&= -\frac{3\alpha rf''}{r^2+a^2}\psi^2-\frac{\pd_rA_1^2}{A_1^2}\alpha\pd_r\left(\frac{3rf}{r^2+a^2}\right)\psi^2 \\
&\hspace{.5in}-\alpha'\pd_r\left(\frac{3rf}{r^2+a^2}\right)\psi^2-\frac{6\alpha a^2}{(r^2+a^2)^2}\left(f'+\frac{2r}{r^2+a^2}f\right)\psi^2
\end{align*}
We also calculate
\begin{multline*}
\frac{q^2}{r^2+a^2}div\left(\psi^2\frac{3r}{q^2}f'L\right) = \frac{1}{A_1^2(r^2+a^2)}\pd_\alpha(A_1^2\psi^2 3rf'L^\alpha) \\
=\frac{3rf'}{r^2+a^2}2\psi L\psi + \frac{\pd_r(3r f'\alpha)}{r^2+a^2}\psi^2+\frac{\pd_rA_1^2}{A_1^2}\frac{3r\alpha f'}{r^2+a^2}\psi^2 \\
= \frac{3rf'}{r^2+a^2}2\psi L\psi +\frac{3\alpha f'}{r^2+a^2}\psi^2 + \frac{3\alpha rf''}{r^2+a^2}\psi^2 +\frac{\pd_rA_1^2}{A_1^2}\frac{3r\alpha f'}{r^2+a^2}\psi^2 +\alpha'\frac{3 r f'}{r^2+a^2}\psi^2  \\
= \frac{3rf'}{r^2+a^2}2\psi L\psi +\frac{9\alpha f'}{r^2+a^2}\psi^2 + \frac{3\alpha rf''}{r^2+a^2}\psi^2 -\frac{6\alpha f'}{r^2+a^2}\psi^2+\frac{\pd_rA_1^2}{A_1^2}\frac{3r\alpha f'}{r^2+a^2}\psi^2 +\alpha'\frac{3 r f'}{r^2+a^2}\psi^2 
\end{multline*}
The first two terms in the last line almost complete a square (up to a term on the order of $\frac{a^2}{r^2+a^2}$) with the term $\alpha^{-1}f'(L\psi)^2$. The third term cancels with the first term from the previous calculation. Due to the fourth and fifth terms, which did not show in the calculation for $\phi$, the $\epsilon$ parameter is not needed here, allowing for a slightly simpler calculation.
$$\alpha^{-1}f'(L\psi)^2+\frac{3rf'}{r^2+a^2}2\psi L\psi +\frac{9\alpha f'}{r^2+a^2}\psi^2=\alpha f'\left(\alpha^{-1}L\psi+\frac{3r}{r^2+a^2}\psi\right)^2+\frac{a^2 9\alpha f'}{(r^2+a^2)^2}\psi^2$$
and
$$-\frac{3\alpha rf''}{r^2+a^2}\psi^2+\frac{3\alpha rf''}{r^2+a^2}\psi^2=0.$$
The new terms are
\begin{align*}
-\frac{\pd_rA_1^2}{A_1^2}&\alpha\pd_r\left(\frac{3rf}{r^2+a^2}\right)\psi^2 -\frac{6\alpha f'}{r^2+a^2}\psi^2+\frac{\pd_rA_1^2}{A_1^2}\frac{3r\alpha f'}{r^2+a^2}\psi^2 \\
&= -\frac{6\alpha f'}{r^2+a^2}\psi^2 -\frac{\pd_r A_1^2}{A_1^2} \pd_r\left(\frac{3r}{r^2+a^2}\right)\alpha f \psi^2 \\
&= -\frac{6\alpha f'}{r^2+a^2}\psi^2 -\left(\frac{4r}{r^2+a^2}\right)\left(\frac{3}{r^2+a^2}-\frac{6r^2}{(r^2+a^2)^2}\right)\alpha f \psi^2 \\
&= -\frac{6r^2\alpha f'}{(r^2+a^2)^2}\psi^2 -\frac{6a^2\alpha f'}{(r^2+a^2)^2}-\left(\frac{4r}{r^2+a^2}\right)\left(-\frac{3}{r^2+a^2}+\frac{6a^2}{(r^2+a^2)^2}\right)\alpha f\psi^2 \\
&= \frac{-6r^2\alpha f'+12r\alpha f}{(r^2+a^2)^2}\psi^2 +\frac{a^2}{r^2+a^2}\left(\frac{-6(r^2+a^2)\alpha f'-24r\alpha f}{(r^2+a^2)^2}\right)\psi^2
\end{align*}
Adding these terms together and ignoring terms with an $a^2$ factor yields
$$\alpha f'\left(\alpha^{-1}L\psi+\frac{3r}{r^2+a^2}\psi\right)^2+\frac{6r\alpha(2f-rf')}{(r^2+a^2)^2}\psi^2.$$
All the remaining terms (which either contain a factor of $\alpha'\sim \frac{M}{r^2}$ or $\frac{a^2}{r^2+a^2}$) are
\begin{align*}
-\alpha'&\pd_r\left(\frac{3rf}{r^2+a^2}\right)\psi^2-\frac{6\alpha a^2}{(r^2+a^2)^2}\left(f'+\frac{2r}{r^2+a^2}f\right)\psi^2 +\alpha'\frac{3 r f'}{r^2+a^2}\psi^2 +\frac{a^2 9\alpha f'}{(r^2+a^2)^2}\psi^2 \\
&\hspace{3in}+\frac{a^2}{r^2+a^2}\left(\frac{-6(r^2+a^2)\alpha f'-24r\alpha f}{(r^2+a^2)^2}\right)\psi^2 \\
&= -\alpha'\pd_r\left(\frac{3r}{r^2+a^2}\right)f\psi^2 +\frac{a^2}{r^2+a^2}\left(\frac{(-6+9-6)\alpha f'}{r^2+a^2}+\frac{(-12-24)\alpha r f}{(r^2+a^2)^2}\right)\psi^2 \\
&= -\alpha'\pd_r\left(\frac{3r}{r^2+a^2}\right)f\psi^2 +\frac{a^2}{r^2+a^2}\left(\frac{-3\alpha f'}{r^2+a^2}+\frac{-36\alpha r f}{(r^2+a^2)^2}\right)\psi^2.
\end{align*}
Adding both of these yields the result. \qed
\end{proof}

Thus, we have shown that if $\Box_{\tilde{g}}\psi = 0$, then
\begin{multline*}
\frac{q^2}{r^2+a^2}div J_{(\psi)}\left[\alpha^{-1}fL,\frac{6rf}{r^2+a^2},\frac{3rf'}{q^2}L\right] \\
= \alpha f'\left(\alpha^{-1}L\psi+\frac{3r}{r^2+a^2}\psi\right)^2+\frac{6r\alpha(2f-rf')}{(r^2+a^2)^2}\psi^2 +\left(\frac{2rf}{r^2+a^2}-f'\right)\frac{Q^{\alpha\beta}}{r^2+a^2}\pd_\alpha\psi\pd_\beta\psi \\
-\alpha'\alpha^{-2}f(L\psi)^2-\alpha'\pd_r\left(\frac{3r}{r^2+a^2}\right)f\psi^2 +\frac{a^2}{r^2+a^2}\left(\frac{-3\alpha f'}{r^2+a^2}+\frac{-36\alpha r f}{(r^2+a^2)^2}\right)\psi^2.
\end{multline*}
If we remove the assumption that $\Box_{\tilde{g}}\psi=0$, there is an additional term
$$(2X(\psi)+w\psi)\Box_{\tilde{g}}\psi =\left(2\alpha^{-1}fL\psi+\frac{6rf}{r^2+a^2}\psi\right)\Box_{\tilde{g}}\psi$$
appearing in the expression for $div J_{(\psi)}$.

Finally, we turn to the boundary terms. Since we have assumed that $f$ is supported away from the event horizon, it suffices to compute $-J^t_{(\psi)}$.
\begin{lemma}
\begin{multline*}
-J_{(\psi)}^t\left[\alpha^{-1}fL,\frac{6rf}{r^2+a^2},\frac{3rf'}{q^2}L\right] \\
=\left(1-\alpha\frac{a^2\sin^2\theta}{r^2+a^2}\right)\frac{r^2+a^2}{q^2}f\left(\alpha^{-1}L\psi+\frac{3r}{r^2+a^2}\psi\right)^2 +\frac{\alpha^{-1}f}{q^2}(\pd_\theta\psi)^2 +\frac{6f}{q^2}\psi^2\\
+\alpha\frac{a^2\sin^2\theta}{q^2}f\left(\pd_r\psi+\frac{3r}{r^2+a^2}\psi\right)^2 
-\frac{3a^2f}{q^2(r^2+a^2)}\psi^2-\frac1{A_1^2q^2}\pd_r(A_1^23rf\psi^2).
\end{multline*}
\end{lemma}
\begin{proof}
Borrowing a calculation for the $\phi$ version, we have
$$-J_{(\psi)}^t[\alpha^{-1}fL] = \frac{r^2+a^2}{q^2}\left(1-\alpha\frac{a^2\sin^2\theta}{r^2+a^2}\right)\alpha^{-2}f(L\psi)^2+\alpha\frac{a^2\sin^2\theta}{q^2}f(\pd_r\psi)^2+\frac{\alpha^{-1}f}{q^2}(\pd_\theta\psi)^2.$$
Again, borrowing a calculation for the $\phi$ version, we have
\begin{align*}
-J_{(\psi)}^t &\left[0,\frac{6rf}{r^2+a^2}\right] \\
&= \left(1-\alpha\frac{a^2\sin^2\theta}{r^2+a^2}\right)\left(\frac{6r\alpha^{-1}f}{q^2}\psi L\psi+\frac{3f}{q^2}\psi^2\right)+\alpha\frac{a^2\sin^2\theta}{r^2+a^2}\left(\frac{6rf}{q^2}\psi\pd_r\psi+\frac{3f}{q^2}\psi^2\right) \\
&\hspace{4in}+\frac{3rf'}{q^2}\psi^2-\frac{3}{q^2}\pd_r(rf\psi^2) \\
&= \left(1-\alpha\frac{a^2\sin^2\theta}{r^2+a^2}\right)\left(\frac{6r\alpha^{-1}f}{q^2}\psi L\psi+\frac{9f}{q^2}\psi^2\right)+\alpha\frac{a^2\sin^2\theta}{r^2+a^2}\left(\frac{6rf}{q^2}\psi\pd_r\psi+\frac{9f}{q^2}\psi^2\right) \\
&\hspace{2in}-\frac{6f}{q^2}\psi^2+\frac{3rf'}{q^2}\psi^2+\frac{\pd_rA_1^2}{A_1^2}\frac{3rf}{q^2}\psi^2-\frac{1}{A_1^2q^2}\pd_r(A_1^23rf\psi^2).
\end{align*}
Following a similar procedure as for the $\phi$ case, we notice that
$$\frac{r^2+a^2}{q^2}\alpha^{-2}f(L\psi)^2+\frac{6r\alpha^{-1}f}{q^2}\psi L\psi+\frac{9f}{q^2}\psi^2=\frac{r^2+a^2}{q^2}f\left(\alpha^{-1}L\psi+\frac{3r}{r^2+a^2}\psi\right)^2+\frac{9a^2f}{q^2(r^2+a^2)}\psi^2$$
and
$$\frac{r^2+a^2}{q^2}f(\pd_r\psi)^2+\frac{6rf}{q^2}\psi\pd_r\psi+\frac{9f}{q^2}\psi^2 = \frac{r^2+a^2}{q^2}f\left(\pd_r\psi+\frac{3r}{r^2+a^2}\psi\right)^2+\frac{9a^2f}{q^2(r^2+a^2)}\psi^2.$$
Also, there are two new terms, which we now combine.
$$-\frac{6f}{q^2}\psi^2+\frac{\pd_rA_1^2}{A_1^2}\frac{3rf}{q^2}\psi^2 = -\frac{6f}{q^2}\psi^2 +\frac{12r^2f}{q^2(r^2+a^2)}\psi^2 
= \frac{6f}{q^2}\psi^2-\frac{12a^2f}{q^2(r^2+a^2)}\psi^2.$$
Thus,
\begin{multline*}
-J_{(\psi)}^t\left[\alpha^{-1}fL,\frac{6rf}{r^2+a^2}\right] \\
=\left(1-\alpha\frac{a^2\sin^2\theta}{r^2+a^2}\right)\frac{r^2+a^2}{q^2}f\left(\alpha^{-1}L\psi+\frac{3r}{r^2+a^2}\psi\right)^2+\alpha\frac{a^2\sin^2\theta}{r^2+a^2}\frac{r^2+a^2}{q^2}f\left(\pd_r\psi+\frac{3r}{r^2+a^2}\psi\right)^2 \\
-\frac{3a^2f}{q^2(r^2+a^2)}\psi^2+\frac{\alpha^{-1}f}{q^2}(\pd_\theta\psi)^2+\frac{6f}{q^2}\psi^2+\frac{3rf'}{q^2}\psi^2-\frac1{A_1^2q^2}\pd_r(A_1^23rf\psi^2).
\end{multline*}
Finally,
$$-J_{(\psi)}^t\left[0,0,\frac{3rf'}{q^2}L\right]=-\frac{3rf'}{q^2}\psi^2L^t=-\frac{3rf'}{q^2}\psi^2.$$
Adding these two expressions together yields the result. \qed
\end{proof}

This concludes the proof. \qed
\end{proof}

\subsubsection{The incomplete $p$-weighted estimate near $i^0$}\label{p_identity_combined_sec}
Now we combine the identies from Lemmas \ref{p_identity_phi_lem} and \ref{p_identity_psi_lem} and make a choice for the function $f$ (so that $f=r^p$ for large $r$) to prove the following.
\begin{proposition}\label{incomplete_p_estimates_prop}
Fix $\delm,\delp>0$. Let $R$ be a sufficiently large radius. Then for all $p\in[\delm,2-\delp]$, the following estimate holds if $\phi$ and $\psi$ decay sufficiently fast as $r\rightarrow\infty$.
\begin{align*}
&\int_{\Sigma_{t_2}\cap\{r>2R\}}r^p\left[(L\phi)^2+|\sla\nabla\phi|^2+r^{-2}\phi^2\right]
+\int_{\tilde{\Sigma}_{t_2}\cap\{r>2R\}}r^p\left[(L\psi)^2+|\sla\nabla\psi|^2+r^{-2}\psi^2\right] \\
&+ \int_{t_1}^{t_2}\int_{\Sigma_t\cap\{r>2R\}}r^{p-1}\left[(L\phi)^2+|\sla\nabla\phi|^2+r^{-2}\phi^2\right] + \int_{t_1}^{t_2}\int_{\tilde{\Sigma}_t\cap\{r>2R\}}r^{p-1}\left[(L\psi)^2+|\sla\nabla\psi|^2+r^{-2}\psi^2\right] \\
&\lesssim \int_{\Sigma_{t_2}\cap\{r>2R\}}r^p\left[(L\phi)^2+|\sla\nabla\phi|^2+r^{-2}\phi^2\right]
+\int_{\tilde{\Sigma}_{t_2}\cap\{r>2R\}}r^p\left[(L\psi)^2+|\sla\nabla\psi|^2+r^{-2}\psi^2\right] +Err,
\end{align*}
where
\begin{align*}
Err &= Err_1 + Err_2 + Err_\Box \\
Err_1 &= \int_{t_1}^{t_2}\int_{\Sigma_t\cap\{R<r<2R\}}(L\phi)^2+|\sla\nabla\phi|^2+\frac{a^2}{M^2}(\pd_t\phi)^2+\phi^2  \\
&\hspace{1in}+ \int_{t_1}^{t_2}\int_{\tilde{\Sigma}_t\cap\{R<r<2R\}}(L\psi)^2+|\sla\nabla\psi|^2+\frac{a^2}{M^2}(\pd_t\phi)^2+\psi^2 \\
Err_2 &= \int_{\Sigma_{t_1}\cap\{r>R\}}a^2r^{p-2}(\pd_r\phi)^2 + \int_{\tilde{\Sigma}_{t_1}\cap\{r>R\}}a^2r^{p-2}(\pd_r\psi)^2 \\
Err_\Box &= \int_{t_1}^{t_2}\int_{\Sigma_t\cap\{R<r\}}r^p(|L\phi|+r^{-1}|\phi|)|\Box_g\phi| + \int_{t_1}^{t_2}\int_{\tilde{\Sigma}_t\cap\{R<r\}}r^p(|L\psi|+r^{-1}|\psi|)|\Box_{\tilde{g}}\psi|.
\end{align*}
\end{proposition}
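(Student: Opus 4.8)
The plan is to feed the two $p$-identities of Lemmas \ref{p_identity_phi_lem} and \ref{p_identity_psi_lem} a weight $f=f(r)$ that equals $r^p$ away from the horizon, and then check term by term that every contribution on their left-hand sides is coercive modulo errors of the three types collected in $Err$. First I would fix $\epsilon=\delp/2$ (so that $p\le 2-\delp<2-\epsilon$ throughout $p\in[\delm,2-\delp]$), choose $R$ large (in particular $R>r_H+\delh$), and take $f\ge 0$ with $f\equiv 0$ on $[r_H,R]$, $f(r)=r^p$ on $[2R,\infty)$, and $f$ nondecreasing with $f,f',f''$ bounded on the transition interval $[R,2R]$. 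With such an $f$, the divergence terms $-\frac1{q^2}\pd_r(rf\phi^2)$ and $-\frac1{A_1^2q^2}\pd_r(A_1^23rf\psi^2)$ occurring in the boundary integrands of the two identities integrate to zero over each constant-$t$ slice (the inner endpoint because $f\equiv 0$ there, the outer because of the assumed decay), and the horizon and spatial-infinity boundary fluxes implicit in Proposition \ref{general_divergence_estimate_prop} vanish for the same reason.

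Summing the two identities, I would analyze the region $\{r\ge 2R\}$, where $f=r^p$, $q^2\sim r^2$, $\alpha\sim 1$. The one nonelementary input is the trivial fact that for $\kappa>0$ the quadratic form $(a+b)^2+\kappa b^2$ is positive definite, hence $\gtrsim_\kappa a^2+b^2$. On $\Sigma_{t_2}$ this converts the completed-square term $\left(1-\alpha\frac{a^2\sin^2\theta}{r^2+a^2}\right)\frac{r^2+a^2}{q^2}f\left(\alpha^{-1}L\phi+\frac{r}{r^2+a^2}\phi\right)^2$, together with its companion $\epsilon\frac{rf'}{q^2}\phi^2\sim\epsilon p\,r^{p-2}\phi^2$, into a lower bound $\gtrsim_\epsilon r^p\left[(L\phi)^2+r^{-2}\phi^2\right]$, while $\frac{\alpha^{-1}f}{q^2}(\pd_\theta\phi)^2$ supplies $r^p|\sla\nabla\phi|^2$; the remaining boundary terms there (the $a^2\sin^2\theta$ square and $\frac{a^2f}{q^2(r^2+a^2)}\phi^2$) are nonnegative and may be discarded. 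In the bulk, again on $\{r\ge 2R\}$, the coefficient $\alpha f'\frac{r^2+a^2}{q^2}\sim p\,r^{p-1}>0$ multiplies the square $\left(\alpha^{-1}L\phi+\frac{(1-\epsilon)r}{r^2+a^2}\phi\right)^2$, whose companion $\epsilon\alpha((1-\epsilon)f'-rf'')\frac{\phi^2}{q^2}\sim\epsilon p(2-\epsilon-p)r^{p-3}\phi^2$ is nonnegative precisely because $p<2-\epsilon$, so the same trick gives $\gtrsim_\epsilon r^{p-1}\left[(L\phi)^2+r^{-2}\phi^2\right]$; and $\left(\frac{2rf}{r^2+a^2}-f'\right)\frac{Q^{\alpha\beta}}{q^2}\pd_\alpha\phi\pd_\beta\phi\gtrsim(2-p)r^{p-1}|\sla\nabla\phi|^2$ since $Q^{\theta\theta}\to 1$ as $r\to\infty$ (cf.\ Lemma \ref{Kmunu_lem}) and $2-p\ge\delp$. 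The $\psi$-identity is handled identically with $\frac{3r}{r^2+a^2}$ in place of $\frac{r}{r^2+a^2}$; there the $\psi^2$ companions $6f\frac{\psi^2}{q^2}$ on the slice and $\frac{6r\alpha(2f-rf')}{(r^2+a^2)^2}\psi^2$ in the bulk are already of order $r^{p-2}\psi^2$ and $r^{p-3}\psi^2$, so no smallness parameter is needed for $\psi$.

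It then remains to absorb the errors. Each term carrying an explicit factor $\alpha'\sim M/r^2$ (in particular $-\alpha'\alpha^{-2}f\frac{r^2+a^2}{q^2}(L\phi)^2\sim Mr^{p-2}(L\phi)^2$ and the $\alpha'(\cdots)\frac{\phi^2}{q^2}$ terms) is of strictly lower $r$-order than the coercive terms just extracted and is therefore absorbed once $R$ is large (depending on $M$, $\epsilon$, $\delp$); the terms carrying a factor $a^2/M^2$ in the bulk of $\{r\ge 2R\}$, and the term $-\frac{3a^2f}{r^2+a^2}\frac{\psi^2}{q^2}$, are absorbed once $|a|/M$ is small (depending on $\epsilon$); on the transition region $\{R<r<2R\}$ all terms of both identities are bounded, up to a constant, by $(L\phi)^2+|\sla\nabla\phi|^2+\frac{a^2}{M^2}(\pd_t\phi)^2+\phi^2$ and the corresponding expression in $\psi$ (the $\pd_t$-derivative entering only through $Q^{tt}\sim a^2\sin^2\theta$), which is exactly $Err_1$; the $\Sigma_{t_1}$ and $\tilde\Sigma_{t_1}$ boundary integrands are bounded above simply by expanding the squares, which produces the main initial-data terms $r^p\left[(L\phi)^2+|\sla\nabla\phi|^2+r^{-2}\phi^2\right]$ together with the $a^2\sin^2\theta$ square whose $(\pd_r\phi)^2$ part is $Err_2$ and whose $\phi^2$ part is absorbed since $|a|<M$; and the $\Box_g\phi$, $\Box_{\tilde{g}}\psi$ terms on the right of the identities become $Err_\Box$ after using $|f|\lesssim r^p$ and $\left|\frac{2rf}{r^2+a^2}\right|\lesssim r^{p-1}$. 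I expect the main obstacle to be the orchestration of the parameters in the right order---$\epsilon<\delp$ first (to make the $\phi^2$ bulk term coercive uniformly over $p\in[\delm,2-\delp]$), then $R$ large relative to $M$ and $\epsilon$ (to kill the $\alpha'$-errors), then $|a|/M$ small relative to all of these (to kill the $a^2$-errors)---while keeping every implicit constant uniform in $p$.
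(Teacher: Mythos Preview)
Your proposal is correct and follows essentially the same route as the paper: both apply Lemmas \ref{p_identity_phi_lem} and \ref{p_identity_psi_lem} with a cutoff weight $f$ that vanishes on $[r_H,R]$ and equals $r^p$ for $r\ge 2R$, then verify coercivity of each integrand on $\{r>2R\}$ (with the same choice $\epsilon\le\delp/2$ so that $(1-\epsilon)f'-rf''\ge 0$), and collect the transition-region, $a^2$-weighted, and inhomogeneous contributions into $Err_1$, $Err_2$, $Err_\Box$ respectively. The only minor deviation is your handling of the $\tfrac{a^2}{r^2+a^2}$ error terms by taking $|a|/M$ small, whereas the paper instead uses $|a|<M$ together with $r>R$ to get $\tfrac{a^2}{r^2+a^2}\lesssim M^2/R^2$, absorbing these errors via $R$ large alone---consistent with the proposition having no smallness hypothesis on $|a|/M$.
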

\begin{remark}
Unlike most estimates, this estimate could have been separated into two valid estimates--one estimate depending only on $\phi$ and the other depending only on $\psi$. The reason is that the linear terms $\mathcal{L}_\phi$ and $\mathcal{L}_\psi$, which cause the mixing, were ignored.
\end{remark}
\begin{proof}
The estimate follows from the identities given in Lemmas \ref{p_identity_phi_lem} and \ref{p_identity_psi_lem}, and a particular choice for the function $f$.
$$f(r)=\rho^p,$$
where
$$
\rho = \left\{
\begin{array}{ll}
0 & r\le R \\
smooth & r\in[R,2R] \\
r & 2R< r.
\end{array}
\right.
$$
With this choice, we have 
$$f\ge 0$$
$$f'\ge 0$$
and for $r>2R$,
$$f = r^p$$
$$f'=pr^{p-1}.$$
Furthermore, for $r>2R$,
$$\frac{2rf}{r^2+a^2}-f' = \frac{2r^{p+1}}{r^2+a^2}-pr^{p-1} = \frac{(2-p)r^{p+1}}{r^2+a^2}-\frac{a^2pr^{p-1}}{r^2+a^2} \ge \frac{1}{1+a^2/(4R^2)}\left(2-p-\frac{a^2p}{4R^2}\right)r^{p-1}.$$
It follows that if $p\le 2-\delp$ and $R$ is sufficiently large so that $\frac{a^2p}{4R^2}\le \delp/2$, then for $r>2R$,
$$r^{p-1}\lesssim \frac{2rf}{r^2+a^2}-f'.$$
Also, for $r>2R$,
$$\epsilon\alpha ((1-\epsilon)f'-rf'') = \epsilon\alpha ((1-\epsilon)pr^{p-1}-p(p-1)r^{p-1}) = \epsilon\alpha p (2-\epsilon -p)r^{p-1}.$$
If $R$ is sufficiently large so that $\alpha>3/4$ and $p\le 2-\delp$ and $\epsilon\le \delp/2$, then
$$\epsilon r^{p-1} \lesssim \epsilon\alpha ((1-\epsilon)f'-rf'').$$
Also, for $r>2R$, if $p\le 2-\delp$, then
$$\frac{6r\alpha(2f-rf')}{r^2+a^2} = \frac{6r\alpha (2-p)r^p}{r^2+a^2} \sim (2-p)r^{p-1}.$$

We also note that there are some error terms that either have a factor of $\alpha'$ or $\frac{a^2}{r^2+a^2}$. Each of these terms has a smallness parameter available, since $R$ can be taken to be very large and
$$\alpha'\lesssim \frac{M}{R}r^{-1}$$
and
$$\frac{a^2}{r^2+a^2}\lesssim \frac{M^2}{R^2}.$$

Finally, we observe that if $\phi$ and $\psi$ vanish sufficiently fast as $r\rightarrow\infty$, then since $f$ is supported for $r>R$, we have
$$\int_{\Sigma_t}-\frac1{q^2}\pd_r(rf\phi^2) =0$$
and
$$\int_{\tilde{\Sigma}_t}-\frac{1}{A_1^2q^2}\pd_r(A_1^23rf\psi^2) = 0.$$

With these facts having been established, it is straightforward to check that the estimate follows from Lemmas  \ref{p_identity_phi_lem} and \ref{p_identity_psi_lem}. \qed
\end{proof}

\subsection{The $p$-weighted energy estimate}\label{phi_psi_p_ee_sec}

We conclude this section by proving the $p$-weighted energy estimate. This is a combination of the $h\pd_t$ estimate (Proposition \ref{translated_h_dt_prop}), the Morawetz estimate (Proposition \ref{translated_morawetz_prop}), and the incomplete $p$-weighted estimate (Proposition \ref{incomplete_p_estimates_prop}).
\begin{proposition}\label{p_estimates_prop}
Suppose $|a|/M$ is sufficiently small. Fix $\delm,\delp>0$ and let $p\in[\delm,2-\delp]$. Then the following estimate holds if $(\phi,\psi)$ decay sufficiently fast as $r\rightarrow\infty$.
\begin{multline*}
\int_{\Sigma_{t_2}}r^p\left[(L\phi)^2+|\sla\nabla\phi|^2+r^{-2}\phi^2 + r^{-2}(\pd_r\phi)^2\right] +\int_{\tilde\Sigma_{t_2}}r^p\left[(L\psi)^2+|\tsla\nabla\psi|^2+r^{-2}\psi^2+r^{-2}(\pd_r\psi)^2\right] \\
+ \int_{t_1}^{t_2}\int_{\Sigma_t}r^{p-1}\left[\chi_{trap}(L\phi)^2+\chi_{trap}|\sla\nabla\phi|^2+r^{-2}\phi^2+r^{-2}(\pd_r\phi)^2\right] \\
+ \int_{t_1}^{t_2}\int_{\tilde\Sigma_t}r^{p-1}\left[\chi_{trap}(L\psi)^2+\chi_{trap}|\tsla\nabla\psi|^2+r^{-2}\psi^2 +r^{-2}(\pd_r\psi)^2\right] \\
\lesssim \int_{\Sigma_{t_1}}r^p\left[(L\phi)^2+|\sla\nabla\phi|^2+r^{-2}\phi^2+r^{-2}(\pd_r\phi)^2\right] +\int_{\tilde\Sigma_{t_1}}r^p\left[(L\psi)^2+|\tsla\nabla\psi|^2+r^{-2}\psi^2+r^{-2}(\pd_r\psi)^2\right] \\
+Err,
\end{multline*}
where $\chi_{trap}=\left(1-\frac{r_{trap}}{r}\right)^2$ and
\begin{align*}
Err &= Err_l+Err_{nl} \\
Err_l &= \int_{t_1}^{t_2}\int_{\Sigma_t\cap\{R<r\}}r^p(|L\phi|+r^{-1}|\phi|)|\mathcal{L}_\phi| + \int_{t_1}^{t_2}\int_{\tilde{\Sigma}_t\cap\{R<r\}}r^p(|L\psi|+r^{-1}|\psi|)|\mathcal{L}_\psi| \\
Err_{nl} &= \int_{t_1}^{t_2}\int_{\Sigma_t}|(2X(\phi)+w\phi+w_{(a)}\psi)(\Box_g\phi-\mathcal{L}_\phi)| \\
&\hspace{2in}+\int_{t_1}^{t_2}\int_{\tilde{\Sigma}_t}|(2X(\psi)+\tilde{w}\psi+\tilde{w}_{(a)}\phi)(\Box_{\tilde{g}}\psi-\mathcal{L}_\psi)|,
\end{align*}
where the vectorfield $X$ and functions $w$, $w_{(a)}$, $\tilde{w}$, and $\tilde{w}_{(a)}$ satisfy the following properties. \\
\bp $X$ is everywhere timelike, but asymptotically null at the rate $X=O(r^p)L+O(r^{p-2})\pd_t$.
\bp $X|_{r=r_H}=-\lambda\pd_r$ for some positive constant $\lambda$. \\
\bp $X|_{r=r_{trap}}=\lambda\pd_t$ for some positive constant $\lambda$. \\
\bp $w$ and $\tilde{w}$ are both $O(r^{p-1})$. \\
\bp $w_{(a)}$ and $\tilde{w}_{(a)}$ are the same functions as defined in Proposition \ref{translated_morawetz_prop}. In particular,
$$|w_{(a)}\psi|\lesssim \frac{|a|^3M\sin^2\theta}{r^5}|A \psi|,$$
$$|\tilde{w}_{(a)}\phi|\lesssim \frac{|a|^3M\sin^2\theta}{r^5}|A^{-1}\phi|.$$
\end{proposition}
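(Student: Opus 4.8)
The plan is to derive Proposition~\ref{p_estimates_prop} by adding together, with carefully chosen coefficients, the three $(\phi,\psi)$-estimates already proved: the Morawetz estimate (Proposition~\ref{translated_morawetz_prop}), the $h\pd_t$ estimate (Proposition~\ref{translated_h_dt_prop}) applied with the same exponent $p$, and the incomplete $p$-weighted estimate near $i^0$ (Proposition~\ref{incomplete_p_estimates_prop}). Concretely I would form $c_{\mathrm{Mor}}\cdot(\ref{translated_morawetz_prop})+c_{h}\cdot(\ref{translated_h_dt_prop})+c_{p}\cdot(\ref{incomplete_p_estimates_prop})$ with a hierarchy $c_{\mathrm{Mor}}\gg c_{h}\gg c_{p}>0$, and fix the internal parameters in the order: first the radius $R$ of Proposition~\ref{incomplete_p_estimates_prop} large (note $2R$ is far beyond the fixed $6M$ cutoff appearing in Proposition~\ref{translated_h_dt_prop}), then $|a|/M$ small, then the free constant $\epsilon$ of Proposition~\ref{translated_h_dt_prop} small; the coefficients $c_\bullet$ are allowed to depend on $R$. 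The decay hypothesis on $(\phi,\psi)$ is used exactly as in Proposition~\ref{incomplete_p_estimates_prop}, to kill the total-derivative boundary terms $-\tfrac1{q^2}\pd_r(rf\phi^2)$ and its $\tilde{\Sigma}_t$-analogue. The whole argument runs in parallel for $\phi$ over $\Sigma_t$ and $\psi$ over $\tilde{\Sigma}_t$ (via Lemma~\ref{p_identity_psi_lem} for the $\psi$ part), the extra $r^4\sin^4\theta$ weight being already built into all three inputs.

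The division of labor is as follows. The incomplete estimate supplies, for $r>2R$, the top-weight quantities $\int_{\Sigma_{t_2}}r^p[(L\phi)^2+|\sla\nabla\phi|^2+r^{-2}\phi^2]$ and the corresponding $r^{p-1}$ spacetime bulk; it fails only in the compact shell $R<r<2R$, where it has no weight to exploit. Since $R$ is large, that shell lies well outside the photon sphere, so $\chi_{trap}\sim 1$ there and the non-degenerate Morawetz bulk controls $\int_{t_1}^{t_2}\int_{R<r<2R}[(L\phi)^2+|\sla\nabla\phi|^2+r^{-2}\phi^2+(\pd_t\phi)^2]$, which absorbs $Err_1$ of the incomplete estimate once $c_p/c_{\mathrm{Mor}}$ is taken small enough (depending on $R$); the $a^2/M^2$-weighted part of that error goes into the good term $\chi_{trap}(M^2/r^3)(\pd_t\phi)^2$ after $|a|/M$ is small. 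The Morawetz estimate simultaneously carries the bounded-$r$ portion of the conclusion — the redshift-improved non-degenerate $r^{-2}(\pd_r)^2$ and $r^{-2}\phi^2$ slice terms near the horizon, and the $\chi_{trap}$-degenerate trapping bulk — and its own boundary error $\int r^{-1}|\phi L\phi|+\cdots$ is disposed of by Cauchy–Schwarz into (half of) its own non-degenerate slice terms $(L\phi)^2+r^{-2}\phi^2$.

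The quantity that neither the incomplete estimate nor the Morawetz estimate controls at the top $r$-weight is the radial derivative: the former sees only $L\phi$ and $\sla\nabla\phi$, the latter only $(M^2/r^2)(\pd_r\phi)^2$. This is precisely why the $\pd_r$-terms in the conclusion are handicapped by the extra $r^{-2}$. The $h\pd_t$ estimate fills this gap: on $\Sigma_{t_2}$ it gives $r^{p-2}\chi_H(\pd_r\phi)^2$ directly (the horizon degeneracy $\chi_H$ being removed on the bounded region by the Morawetz redshift term), and in the bulk it gives $c_\epsilon\int r^{p-3}(\lbar\phi)^2$ for $r\gtrsim M$; combining with the incomplete estimate's $r^{p-1}(L\phi)^2$ bulk and using $\pd_r\sim(2\alpha)^{-1}(L-\lbar)$ with $\alpha\to1$, one gets $r^{p-3}(\pd_r\phi)^2\lesssim r^{p-1}(L\phi)^2+r^{p-3}(\lbar\phi)^2$, which yields the claimed bulk $\pd_r$-term for $r>2R$ (for $r\le 2R$ it comes from the Morawetz $M^2/r^3(\pd_r\phi)^2$). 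The error $Err_1$ of the $h\pd_t$ estimate, $\epsilon r^{-1}((L\phi)^2+r^{-2}\phi^2)$ on $\{5M<r\}$, is split at $r=2R$: on $5M<r<2R$ it is absorbed by the non-degenerate Morawetz bulk, on $r>2R$ by the incomplete-estimate bulk $r^{p-1}(L\phi)^2$ (which dominates $r^{-1}(L\phi)^2$ because $p\ge\delm>0$), after $\epsilon\ll c_p/c_h$; the remaining $h\pd_t$ errors $Err_2,Err_3$ carry a factor $|a|/M$ and strictly lower $r$-powers than the conclusion's slice/bulk $\phi^2,\psi^2$ terms, hence are absorbed once $|a|/M$ is small.

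Finally, the linear coupling is handled by writing $\Box_g\phi=\mathcal{L}_\phi+(\Box_g\phi-\mathcal{L}_\phi)$ inside the incomplete estimate's $Err_\Box$: the $\mathcal{L}_\phi$ piece becomes $Err_l$, and the $(\Box_g\phi-\mathcal{L}_\phi)$ piece, together with the $(\Box_g\phi-\mathcal{L}_\phi)$-errors already present in Propositions~\ref{translated_morawetz_prop} and \ref{translated_h_dt_prop}, assembles into $Err_{nl}$; the resulting multiplier is the sum $c_p\,\alpha^{-1}fL+c_h\,h\pd_t+c_{\mathrm{Mor}}X_{\mathrm{Mor}}$ (plus the corresponding $w$-pieces), which inherits all the stated properties — timelike everywhere, equal to $-\lambda\pd_r$ at $r=r_H$ and to $\lambda\pd_t$ at $r=r_{trap}$ from $X_{\mathrm{Mor}}$, and $X=O(r^p)L+O(r^{p-2})\pd_t$ asymptotically from the $\alpha^{-1}fL\sim r^pL$ and $h\pd_t\sim r^{p-2}\pd_t$ contributions, with $w,\tilde w=O(r^{p-1})$ and $w_{(a)},\tilde w_{(a)}$ literally the functions from Proposition~\ref{translated_morawetz_prop}. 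The main obstacle is the bookkeeping: the three inputs are valid on overlapping but different regions (the incomplete one only for $r>2R$), each produces errors the others must absorb, and one must check that the coefficient hierarchy $c_{\mathrm{Mor}}\gg c_h\gg c_p$ together with the parameter ordering ($R$ large, then $|a|/M$ small, then $\epsilon$ small) makes every absorption strictly one-directional — so no term is absorbed into itself — with the most delicate point being the matching of $r$-weights in the overlap shell $R<r<2R$ and near the horizon, where the handicapped $\pd_r$-control and the redshift interact.
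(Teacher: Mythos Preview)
Your approach is the same as the paper's --- combine the Morawetz, $h\pd_t$, and incomplete $p$-estimates and absorb the errors in turn --- and you correctly identify the role of each ingredient. However, your stated hierarchy $c_{\mathrm{Mor}}\gg c_h$ is backward on one point, and the gap is precisely the Morawetz error $Err_2$ from Proposition~\ref{translated_morawetz_prop}, which you never mention: it consists of the horizon flux $\int_{H_{t_1}^{t_2}}(\pd_t\phi)^2$ (and its $\psi$-analogue) together with the $\Sigma_{t_2}$ terms $\int_{\Sigma_{t_2}}(M^2/r^2)[\chi_H(\pd_r\phi)^2+(\pd_t\phi)^2]$. The only positive source of $\int_H(\pd_t\phi)^2$ among the three inputs is the left-hand side of the $h\pd_t$ estimate, so $c_h$ must in fact be taken \emph{large enough} relative to $c_{\mathrm{Mor}}$, not small. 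This creates no circularity, because every error term the $h\pd_t$ estimate produces carries either the free parameter $\epsilon$ or a factor $|a|/M$ and can therefore still be absorbed at the end.

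Accordingly, the paper's actual order is: (i) Morawetz plus a small multiple of the incomplete estimate (so that the latter's shell error is absorbed by the Morawetz bulk, as you say); (ii) dispose of the Morawetz $Err_1$ term $\int_{\Sigma_{t_2}}r^{-1}|\phi L\phi|$ not by the naive half-and-half Cauchy--Schwarz you propose (which would only work if the implicit constant in Proposition~\ref{translated_morawetz_prop} happened to be $<2$) but via $r^{-1}|\phi L\phi|\le \epsilon r^p(L\phi)^2+\epsilon^{-1}r^{-p-2}\phi^2$, splitting at a radius $R_\epsilon$ so that the large-$r$ part is absorbed for small $\epsilon$ (here $p\ge\delm>0$ is used) and the bounded-$r$ residue $\epsilon^{-1}\phi^2$ is, after a Hardy inequality, thrown in with $Err_2$; (iii) add the $h\pd_t$ estimate with a \emph{large enough} coefficient to absorb all of $Err_2$; (iv) finally take $\epsilon$ and $|a|/M$ small. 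The assembly of the final multiplier $X=X'+O(r^{p-2})\pd_t+O(r^p)L$ and the splitting $\Box_g\phi=\mathcal{L}_\phi+(\Box_g\phi-\mathcal{L}_\phi)$ into $Err_l+Err_{nl}$ proceed exactly as you describe.
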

\begin{proof}
We start with the Morawetz estimate (Proposition \ref{translated_morawetz_prop}) and add a small constant times the incomplete $r^p$ estimates (Proposition \ref{incomplete_p_estimates_prop}). The small constant can be chosen so that the bulk error term $Err_1$ from Proposition \ref{incomplete_p_estimates_prop} can be absorbed into the bulk in the Morawetz estimate. The result is the following estimate.
\begin{align*}
&\int_{\Sigma_{t_2}}r^p\left[(L\phi)^2+|\sla\nabla\phi|^2+r^{-2}\phi^2\right]+\frac{M^2}{r^2}(\pd_r\phi)^2 +\int_{\tilde{\Sigma}_{t_2}}r^p\left[(L\psi)^2+|\tsla\nabla\psi|^2+r^{-2}\psi^2\right]+\frac{M^2}{r^2}(\pd_r\psi)^2 \\
&\hspace{1in}+\int_{t_1}^{t_2}\int_{\Sigma_t} r^{p-1}\left[\chi_{trap}(L\phi)^2+\chi_{trap}|\sla\nabla\phi|^2+r^{-2}\phi^2\right]+\frac{M^2}{r^3}(\pd_r\phi)^2 \\
&\hspace{1in}+\int_{t_1}^{t_2}\int_{\tilde{\Sigma}_t} r^{p-1}\left[\chi_{trap}(L\psi)^2+\chi_{trap}|\tsla\nabla\psi|^2+r^{-2}\psi^2\right]+\frac{M^2}{r^3}(\pd_r\psi)^2 \\
&\lesssim \int_{\Sigma_{t_1}}r^p\left[(L\phi)^2+|\sla\nabla\phi|^2+r^{-2}\phi^2\right]+\frac{M^2}{r^2}(\pd_r\phi)^2 +\int_{\tilde{\Sigma}_{t_1}}r^p\left[(L\psi)^2+|\tsla\nabla\psi|^2+r^{-2}\psi^2\right]+\frac{M^2}{r^2}(\pd_r\psi)^2 \\
&\hspace{6in}+Err'
\end{align*}
where
\begin{align*}
Err' &= Err'_1+Err'_2+Err'_3+Err'_\Box+Err'_{nl} \\
Err'_1 &= \int_{\Sigma_{t_2}}r^{-1}|\phi L\phi|+\int_{\tilde{\Sigma}_{t_2}}r^{-1}|\psi L\psi|+r^{-2}\psi^2 \\
Err'_2 &= \int_{H_{t_1}^{t_2}}(\pd_t\phi)^2+\int_{H_{t_1}^{t_2}}(\pd_t\psi)^2 +\int_{\Sigma_{t_2}}\frac{M^2}{r^2}\left[\chi_H(\pd_r\phi)^2+(\pd_t\phi)^2\right] \\
&\hspace{1in} +\int_{\tilde{\Sigma}_{t_2}}\frac{M^2}{r^2}\left[\chi_H(\pd_r\psi)^2+(\pd_t\psi)^2\right] \\ 
Err'_3 &= \int_{\Sigma_{t_1}\cap\{r>R\}}a^2r^{p-2}(\pd_r\phi)^2 + \int_{\tilde{\Sigma}_{t_1}\cap\{r>R\}}a^2r^{p-2}(\pd_r\psi)^2 \\
Err'_\Box &= \int_{t_1}^{t_2}\int_{\Sigma_t\cap\{R<r\}}r^p(|L\phi|+r^{-1}|\phi|)|\Box_g\phi| + \int_{t_1}^{t_2}\int_{\tilde{\Sigma}_t\cap\{R<r\}}r^p(|L\psi|+r^{-1}|\psi|)|\Box_{\tilde{g}}\psi| \\
Err'_{nl} &= \int_{t_1}^{t_2}\int_{\Sigma_t}|(2X'(\phi)+w'\phi+w'_{(a)}\psi)(\Box_g\phi-\mathcal{L}_\phi)| \\
&\hspace{1in} +\int_{t_1}^{t_2}\int_{\tilde{\Sigma}_t}|(2X'(\psi)+\tilde{w}'\psi+\tilde{w}'_{(a)}\phi)(\Box_{\tilde{g}}\psi-\mathcal{L}_\psi)|,
\end{align*}
and $X'$, $w'$, $\tilde{w}'$, $w'_{(a)}$, and $\tilde{w}'_{(a)}$ are the vectorfield and functions defined in the Morawetz estimate (Proposition \ref{translated_morawetz_prop}).

The error term $Err'_1$ can in fact be removed due to the following argument.
\begin{align*}
Err'_1 &\lesssim \int_{\Sigma_{t_2}}\epsilon r^p(L\phi)^2+\epsilon^{-1}r^{-p}r^{-2}\phi^2 + \int_{\tilde\Sigma_{t_2}} \epsilon r^p(L\psi)^2+ (\epsilon^{-1}r^{-p}+1)r^{-2}\psi^2 \\
&\lesssim \int_{\Sigma_{t_2}}\epsilon r^p[(L\phi)^2+r^{-2}\phi^2] + \int_{\tilde\Sigma_{t_2}} \epsilon r^p[(L\psi)^2+r^{-2}\psi^2] \\
&\hspace{2in}+\int_{\Sigma_{t_2}\cap\{r\le R_\epsilon\}}\epsilon^{-1}\phi^2 + \int_{\tilde{\Sigma}_{t_2}\cap\{r\le R_\epsilon \}}\epsilon^{-1}\psi^2.
\end{align*}
The radius $R_\epsilon$ should be chosen sufficiently large so that $\epsilon^{-1}r^{-p}\le \epsilon r^p$ and $\epsilon^{-1}r^{-p}+1\le \epsilon r^p$ whenever $r>R_\epsilon$. This critically depends on the fact that $p\ge\delm>0$. Now, the parameter $\epsilon$ can be taken sufficiently small so as to absorb the first two terms into the left hand side of the main estimate and the last two terms can be included with the term $Err'_2$ after applying a Hardy estimate.

We return to the main estimate. Notice that most terms have improved weights near $i^0$ and a few error terms remain on $H_{t_1}^{t_2}$ and $\Sigma_{t_2}$. The next step is to use the $h\pd_t$ estimate (Proposition \ref{translated_h_dt_prop}) to eliminate these error terms and improve the weights near $i^0$ for the remaining $\pd_r\phi$ and $\pd_r\psi$ terms. The result is the following estimate.
\begin{align*}
&\int_{\Sigma_{t_2}}r^p\left[(L\phi)^2+|\sla\nabla\phi|^2+r^{-2}\phi^2+r^{-2}(\pd_r\phi)^2\right] +\int_{\tilde{\Sigma}_{t_2}}r^p\left[(L\psi)^2+|\tsla\nabla\psi|^2+r^{-2}\psi^2+r^{-2}(\pd_r\psi)^2\right] \\
&\hspace{1in}+\int_{t_1}^{t_2}\int_{\Sigma_t} r^{p-1}\left[\chi_{trap}(L\phi)^2+\chi_{trap}|\sla\nabla\phi|^2+r^{-2}\phi^2+c_\epsilon r^{-2}(\pd_r\phi)^2\right] \\
&\hspace{1in}+\int_{t_1}^{t_2}\int_{\tilde{\Sigma}_t} r^{p-1}\left[\chi_{trap}(L\psi)^2+\chi_{trap}|\tsla\nabla\psi|^2+r^{-2}\psi^2+c_{\epsilon}r^2(\pd_r\psi)^2\right] \\
&\lesssim \int_{\Sigma_{t_1}}r^p\left[(L\phi)^2+|\sla\nabla\phi|^2+r^{-2}\phi^2+r^2(\pd_r\phi)^2\right] +\int_{\tilde{\Sigma}_{t_1}}r^p\left[(L\psi)^2+|\tsla\nabla\psi|^2+r^{-2}\psi^2+r^2(\pd_r\psi)^2\right] \\
&\hspace{6in}+Err''
\end{align*}
where
\begin{align*}
Err'' &= Err''_1+Err''_2+Err''_3+Err''_\Box+Err''_{nl} \\
Err''_1 &= \int_{t_1}^{t_2}\int_{\Sigma_t\cap\{5M<r\}} \epsilon r^{-1}((L\phi)^2+r^{-2}\phi^2) + \int_{t_1}^{t_2}\int_{\tilde\Sigma_t\cap\{5M<r\}} \epsilon r^{-1}((L\psi)^2+r^{-2}\psi^2) \\
Err''_2 &= \int_{\Sigma_{t_2}}\frac{|a|}{M}r^{p-4}(\phi^2+A^2\psi^2) \\
Err''_3 &= \int_{t_1}^{t_2}\int_{\Sigma_t\cap\{6M<r\}} c_\epsilon\frac{|a|}{M}r^{p-5}(\phi^2+A^2\psi^2) \\
Err''_\Box &= \int_{t_1}^{t_2}\int_{\Sigma_t\cap\{R<r\}}r^p(|L\phi|+r^{-1}|\phi|)|\Box_g\phi| + \int_{t_1}^{t_2}\int_{\tilde{\Sigma}_t\cap\{R<r\}}r^p(|L\psi|+r^{-1}|\psi|)|\Box_{\tilde{g}}\psi| \\
Err''_{nl} &= \int_{t_1}^{t_2}\int_{\Sigma_t}|(2X'(\phi)+w'\phi+w'_{(a)}\psi)(\Box_g\phi-\mathcal{L}_\phi)| \\
&\hspace{1in} +\int_{t_1}^{t_2}\int_{\tilde{\Sigma}_t}|(2X'(\psi)+\tilde{w}'\psi+\tilde{w}'_{(a)}\phi)(\Box_{\tilde{g}}\psi-\mathcal{L}_\psi)|, \\
&\hspace{1in}+ \int_{t_1}^{t_2}\int_{\Sigma_t}C_\epsilon r^{p-2}|\pd_t\phi(\Box_g\phi-\mathcal{L}_\phi)| +\int_{t_1}^{t_2}\int_{\tilde{\Sigma}_t}C_\epsilon r^{p-2}|\pd_t\psi(\Box_{\tilde{g}}\psi-\mathcal{L}_\psi)|.
\end{align*}
Note that each of the error terms $Err''_1$, $Err''_2$, and $Err''_3$ comes with a smallness parameter. By taking $\epsilon$ and $|a|/M$ sufficiently small, these error terms can be absorbed into the left hand side. 

We are left only with the error terms $Err''_\Box$ and $Err''_{nl}$, which we now combine by replacing $|\Box_g\phi|$ and $|\Box_{\tilde{g}}\psi|$ in $Err''_\Box$ with $|\mathcal{L}_\phi|+|\Box_g\phi-\mathcal{L}_\phi|$ and $|\mathcal{L}_\psi|+|\Box_{\tilde{g}}\psi-\mathcal{L}_\psi|$ respectively. The $|\mathcal{L}_\phi|$ and $|\mathcal{L}_\psi|$ terms are collected into the linear error term $Err_l$, and the $|\Box_g\phi-\mathcal{L}_\phi|$ and $|\Box_{\tilde{g}}\psi-\mathcal{L}_\psi|$ terms are combined with the terms in $Err''_{nl}$ to form the nonlinear error term $Err_{nl}$.
\begin{multline*}
Err''_\Box + Err''_{nl} \\
\lesssim \int_{t_1}^{t_2}\int_{\Sigma_t\cap\{R<r\}}r^p(|L\phi|+r^{-1}|\phi|)|\mathcal{L}_\phi| + \int_{t_1}^{t_2}\int_{\tilde{\Sigma}_t\cap\{R<r\}}r^p(|L\psi|+r^{-1}|\psi|)|\mathcal{L}_\psi| \\
+\int_{t_1}^{t_2}\int_{\Sigma_t}|(2X(\phi)+w\phi+w_{(a)}\psi)(\Box_g\phi-\mathcal{L}_\phi)| \\
+\int_{t_1}^{t_2}\int_{\tilde{\Sigma}_t}|(2X(\psi)+\tilde{w}\psi+\tilde{w}_{(a)}\phi)(\Box_{\tilde{g}}\psi-\mathcal{L}_\psi)| \\
\lesssim Err_l+Err_{nl},
\end{multline*}
where
\begin{align*}
X &= X'+O(r^{p-2})\pd_t+O(r^p)L \\
w &= w'+O(r^{p-1}) \\
\tilde{w} &= \tilde{w}'+O(r^{p-1}) \\
w_{(a)} &= w'_{(a)} \\
\tilde{w}_{(a)} &= \tilde{w}'_{(a)}.
\end{align*}
This concludes the proof. \qed
\end{proof}

\section{The Energy Estimates}\label{ee_sec}

In this section, we prove a few versions of the main energy estimates, which provide all of the necessary information related to the future dynamics of the system. These estimates consist of the classic energy estimate (Proposition \ref{translated_energy_estimate_prop}) and the $p$-weighted energy estimates (Proposition \ref{p_estimates_prop}) proved in \S\ref{phi_psi_sec}.

The main energy estimates are roughly of the form
$$E(t_2)\lesssim E(t_1)+\int_{t_1}^{t_2}N(t)dt$$
$$E_p(t_2)+\int_{t_1}^{t_2}B_p(t)dt\lesssim E_p(t_1)+\int_{t_1}^{t_2}N_p(t)dt,$$
where $p$ ranges from $\delm$ to $2-\delp$ for arbitrarily small $\delm,\delp>0$. The norm $E(t)$ is the classic energy norm, the norm $E_p(t)$ is the $p$-weighted energy norm with a weight of $r^p$ near $i^0$, the norm $B_p(t)$ is the bulk norm that has a degeneracy at the photon sphere and a weight of $r^{p-1}$ near $i^0$. The norms $N(t)$ and $N_p(t)$ are both nonlinear error norms which can be ignored in the linear problem. The estimates are put in this form for the convenience of the reader, who is strongly encouraged to have this form memorized.

By applying certain operators to the wave map system, the main energy estimates are generalized to higher derivatives of the quantities $\phi$ and $\psi$. This will generalize, for example, the classic energy norm $E(t)$ to the homogenous classic energy norm $\mathring{E}^s(t)$ (obtained by applying the only true commutators of the linear system, $\pd_t^s$), as well as to the norms $E^s(t)$ and $E^{s,k}(t)$. The other norms generalize the same way.

The various versions of the main energy estimates are given in Theorems \ref{p_L_thm}, \ref{p_thm}, \ref{p_o_thm}, \ref{p_s_L_thm}, \ref{p_s_thm}, \ref{p_s_k_L_thm}, and \ref{p_s_k_thm}. Theorem \ref{p_L_thm} is an immediate consequence of Propositions \ref{translated_energy_estimate_prop} and \ref{p_estimates_prop}. Theorems \ref{p_s_L_thm} and \ref{p_s_k_L_thm} follow from Theorem \ref{p_L_thm} by applying operators to the wave map system. These theorems are then improved by handling various linear error terms (which mainly arise because the operators do not commute with the entire linear system) and applying the equation. More specifically, Theorem \ref{p_thm} follows from Theorem \ref{p_L_thm}, Theorem \ref{p_s_thm} follows from Theorem \ref{p_s_L_thm}, and Theorem \ref{p_s_k_thm} follows Theorem \ref{p_s_k_L_thm}. Additionally, Theorem \ref{p_o_thm} follows directly from Theorem \ref{p_thm}, since the operators $\pd_t^s$ do in fact commute with the entire linear system.

\subsection{The main energy estimates for $(\phi,\psi)$}

We start by combining Propositions \ref{translated_energy_estimate_prop} and \ref{p_estimates_prop} to obtain Theorem \ref{p_L_thm}. But first, we define the following two expressions to simplify the nonlinear term.
\begin{definition} Let $X$, $w$, $w_{(a)}$, $\tilde{w}$, and $\tilde{w}_{(a)}$ be as defined in Proposition \ref{p_estimates_prop}. Then define
\begin{align*}
\mathcal{X}_\phi &:= 2X(\phi)+w\phi+w_{(a)}\psi \\
\mathcal{X}_\psi &:= 2X(\psi)+\tilde{w}\psi+\tilde{w}_{(a)}\phi.
\end{align*}
\end{definition}
With this definition, we now have the following theorem.
\begin{theorem}\label{p_L_thm}
Suppose $|a|/M$ is sufficiently small and fix $\delm,\delp>0$. The following estimates hold for $p\in [\delm,2-\delp]$.
$$E(t_2)\lesssim E(t_1)+\int_{t_1}^{t_2}N(t)dt,$$
$$E_p(t_2)+\int_{t_1}^{t_2}B_p(t)dt\lesssim E_p(t_1)+\int_{t_1}^{t_2}(L_1)_p(t)+N_p(t)dt,$$
where
\begin{align*}
E(t)=&\int_{\Sigma_t}\chi_H(\pd_r\phi)^2+(\pd_t\phi)^2+|\sla\nabla\phi|^2+r^{-2}\phi^2 \\
&+\int_{\tilde\Sigma_t}\chi_H(\pd_r\psi)^2+(\pd_t\psi)^2+|\tsla\nabla\psi|^2+r^{-2}\psi^2,
\end{align*}
\begin{align*}
E_p(t)=&\int_{\Sigma_t}r^p\left[(L\phi)^2+|\sla\nabla\phi|^2+r^{-2}\phi^2+r^{-2}(\pd_r\phi)^2\right] \\
&+\int_{\tilde\Sigma_t}r^p\left[(L\psi)^2+|\tsla\nabla\psi|^2+r^{-2}\psi^2+r^{-2}(\pd_r\psi)^2\right],
\end{align*}
\begin{align*}
B_p(t)=&\int_{\Sigma_t}r^{p-1}\left[\chi_{trap}(L\phi)^2+\chi_{trap}|\sla\nabla\phi|^2+r^{-2}\phi^2+r^{-2}(\pd_r\phi)^2\right] \\
&+\int_{\tilde\Sigma_t}r^{p-1}\left[\chi_{trap}(L\psi)^2+\chi_{trap}|\tsla\nabla\psi|^2+r^{-2}\psi^2+r^{-2}(\pd_r\psi)^2\right],
\end{align*}
$$(L_1)_p(t)=\int_{\Sigma_t\cap\{r>R\}}r^p(|L\phi|+r^{-1}|\phi|)|\mathcal{L}_\phi|+\int_{\tilde\Sigma_t\cap\{r>R\}}r^p(|L\psi|+r^{-1}|\psi|)|\mathcal{L}_\psi|,$$
$$N(t)=\int_{\Sigma_t}|\pd_t\phi||\Box_g\phi-\mathcal{L}_\phi| +\int_{\tilde{\Sigma}_t}|\pd_t\psi||\Box_{\tilde g}\psi-\mathcal{L}_\psi|,$$
$$N_p(t)=\int_{\Sigma_t}r^p|\mathcal{X}_\phi||\Box_g\phi-\mathcal{L}_\phi| + \int_{\tilde\Sigma_t}r^p|\mathcal{X}_\psi||\Box_{\tilde{g}}\psi-\mathcal{L}_\psi|, $$
where $\chi_H=1-\frac{r_H}r$ and $\chi_{trap}=\left(1-\frac{r_{trap}}r\right)^2$.
\end{theorem}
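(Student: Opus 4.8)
\textbf{Proof plan for Theorem \ref{p_L_thm}.}

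The plan is to obtain the two displayed estimates by directly translating and repackaging the two propositions proved in Section \ref{phi_psi_sec}, namely the Energy Estimate (Proposition \ref{translated_energy_estimate_prop}) and the $p$-weighted energy estimate (Proposition \ref{p_estimates_prop}). Essentially no new analysis is required; the content of the theorem is bookkeeping, introducing the abbreviations $E(t)$, $E_p(t)$, $B_p(t)$, $(L_1)_p(t)$, $N(t)$, $N_p(t)$, $\mathcal{X}_\phi$, $\mathcal{X}_\psi$ so that the estimates appear in the clean form $E(t_2)\lesssim E(t_1)+\int N$ and $E_p(t_2)+\int B_p\lesssim E_p(t_1)+\int (L_1)_p+N_p$ advertised at the start of the section.

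First I would dispatch the energy estimate. Proposition \ref{translated_energy_estimate_prop} states exactly
$$\int_{\Sigma_{t_2}}[\cdots]+\int_{\tilde\Sigma_{t_2}}[\cdots]\lesssim \int_{\Sigma_{t_1}}[\cdots]+\int_{\tilde\Sigma_{t_1}}[\cdots]+Err_{nl},$$
where the bracketed integrands are precisely $\chi_H(\pd_r\phi)^2+(\pd_t\phi)^2+|\sla\nabla\phi|^2+r^{-2}\phi^2$ (and its $\psi$ analogue), which is the definition of $E(t)$, and where $Err_{nl}=\int_{t_1}^{t_2}\int_{\Sigma_t}|\pd_t\phi(\Box_g\phi-\mathcal{L}_\phi)|+\int_{t_1}^{t_2}\int_{\tilde\Sigma_t}|\pd_t\psi(\Box_{\tilde g}\psi-\mathcal{L}_\psi)|=\int_{t_1}^{t_2}N(t)\,dt$. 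So the first displayed inequality is literally Proposition \ref{translated_energy_estimate_prop} rewritten. This step is immediate.

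Next I would treat the $p$-weighted estimate. Proposition \ref{p_estimates_prop} gives, for $p\in[\delm,2-\delp]$, exactly the inequality
$$\int_{\Sigma_{t_2}}r^p[\cdots]+\int_{\tilde\Sigma_{t_2}}r^p[\cdots]+\int_{t_1}^{t_2}\!\!\int_{\Sigma_t}r^{p-1}[\cdots]+\int_{t_1}^{t_2}\!\!\int_{\tilde\Sigma_t}r^{p-1}[\cdots]\lesssim \int_{\Sigma_{t_1}}r^p[\cdots]+\int_{\tilde\Sigma_{t_1}}r^p[\cdots]+Err,$$
with $Err=Err_l+Err_{nl}$. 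The surface integrands with weight $r^p$ are the definition of $E_p(t)$, the spacetime integrands with weight $r^{p-1}$ (carrying the $\chi_{trap}$ degeneracy on the $L$ and $\sla\nabla$ terms but not on the $r^{-2}\phi^2$, $r^{-2}(\pd_r\phi)^2$ terms) are the definition of $B_p(t)$, so the left side and the data term are exactly $E_p(t_2)+\int_{t_1}^{t_2}B_p(t)\,dt$ and $E_p(t_1)$. For the error, $Err_l$ is $\int_{t_1}^{t_2}(L_1)_p(t)\,dt$ by definition, and in $Err_{nl}$ one recognizes the factors $2X(\phi)+w\phi+w_{(a)}\psi=\mathcal{X}_\phi$ and $2X(\psi)+\tilde w\psi+\tilde w_{(a)}\phi=\mathcal{X}_\psi$, so $Err_{nl}=\int_{t_1}^{t_2}N_p(t)\,dt$. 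Here I should note the cosmetic point that Proposition \ref{p_estimates_prop} writes the nonlinear integrand as $|\mathcal{X}_\phi(\Box_g\phi-\mathcal{L}_\phi)|$ while $N_p$ uses $r^p|\mathcal{X}_\phi||\Box_g\phi-\mathcal{L}_\phi|$; this is consistent because the $r^p$ weight is already absorbed into $\mathcal{X}_\phi$ via the $X=O(r^p)L+O(r^{p-2})\pd_t$, $w=O(r^{p-1})$ bounds listed in Proposition \ref{p_estimates_prop}, so $|\mathcal{X}_\phi|\lesssim r^p(|L\phi|+|\pd_r\phi|+r^{-1}|\phi|+\dots)$ and the two forms differ only by the harmless relabeling and by absorbing the already-controlled $|\mathcal{X}_\phi|$ bound --- I would state this as a one-line remark rather than belabor it. Since there is no genuine mathematical obstacle, the ``hard part'' is only the care needed to verify that the integrands in the cited propositions match the stated definitions term-by-term (in particular that the $\psi$-integrals are over $\tilde\Sigma_t$ and carry the implicit $A_1^2$ weight, and that the $\chi_{trap}$ placement in $B_p$ agrees with Proposition \ref{p_estimates_prop}); once that is checked the theorem follows by substitution. \qed
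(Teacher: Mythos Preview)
Your proposal is correct and matches the paper's approach exactly: the paper's proof is a one-sentence invocation of Propositions \ref{translated_energy_estimate_prop} and \ref{p_estimates_prop}, with only a remark that $(L_1)_p$ is $Err_l$ from the latter and hence supported in $\{r>R\}$. Your observation about the explicit $r^p$ in $N_p(t)$ versus the implicit one in $Err_{nl}$ is a genuine notational wrinkle in the paper, but since replacing $|\mathcal{X}_\phi(\Box_g\phi-\mathcal{L}_\phi)|$ by the larger $r^p|\mathcal{X}_\phi||\Box_g\phi-\mathcal{L}_\phi|$ only weakens the inequality, it causes no difficulty.
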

\begin{proof}
This is a direct application of Propositions \ref{translated_energy_estimate_prop} and \ref{p_estimates_prop}. Note that the linear error term $L_1$ derives from the error term $Err_l$ in Proposition \ref{p_estimates_prop}, which derives from the $p$-weighted estimates near $i^0$ that were proved only for the standard wave equation. This is why the linear error term $(L_1)_p$ is supported far away from the Kerr black hole. \qed
\end{proof}

We will momentarily improve the prevoius theorem by absorbing the linear error term $(L_1)_p$ into the bulk term $B_p(t)$ on the left hand side. To do this, we need the following lemma.
\begin{lemma}\label{p_L_a_lem}
If $(L_1)_p(t)$ and $B_p(t)$ are as defined in the previous theorem, then
$$(L_1)_p(t)\lesssim \frac{|a|}{M}B_p(t).$$
\end{lemma}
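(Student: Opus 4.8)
The plan is to estimate $(L_1)_p(t)$ termwise, comparing each factor appearing in $\mathcal{L}_\phi$ and $\mathcal{L}_\psi$ against the coercive quantities in $B_p(t)$, and extracting a power of $|a|/M$ from the coefficients. The key structural input is that every term of $\mathcal{L}_\phi$ and $\mathcal{L}_\psi$ carries an explicit factor of $\frac{\pd B}{A}$, $\frac{\pd B\pd B}{A^2}$, $\frac{\pd A\pd B}{A^2}$, or $\frac{\pd A_2}{A_2}$, and by Lemma \ref{small_a_quantities_lem} (together with the estimates (\ref{D_r_A2_B_eqn})--(\ref{D_theta_A2_B_eqn})) each of these is $O(|a|/M)$ times a smooth function with favorable $r$-decay. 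Since $(L_1)_p$ is supported in $\{r>R\}$ where $R$ is large, there is no issue near the horizon or the photon sphere, so $\chi_{trap}\sim 1$ on the support and the degenerate weights in $B_p(t)$ cause no trouble.

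First I would write out the integrand of $(L_1)_p$ as a sum $(|L\phi|+r^{-1}|\phi|)\cdot r^p|\mathcal{L}_\phi|$ plus the analogous $\psi$ piece, and expand $\mathcal{L}_\phi=-2\frac{\pd^\alpha B}{A}A\pd_\alpha\psi + 2\frac{\pd^\alpha B\pd_\alpha B}{A^2}\phi - 4\frac{\pd^\alpha A\pd_\alpha B}{A^2}A\psi$ and $\mathcal{L}_\psi = -2\frac{\pd^\alpha A_2}{A_2}\pd_\alpha\psi + 2\frac{\pd^\alpha B\pd_\alpha B}{A^2}\psi + 2A^{-1}\frac{\pd^\alpha B}{A}\pd_\alpha\phi$. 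Each $\pd_\alpha$ is split into $\pd_t$, $\pd_r$, $\pd_\theta$; on $\{r>R\}$ one uses $|\pd_r\psi|, |\pd_t\psi| \lesssim |L\psi| + |\lbar\psi|$ (and similarly for $\phi$), noting that $B_p$ already controls $r^{p-3}(\pd_r\psi)^2$ and, through the Hardy-type trade that is implicit in the form of $B_p$, also $r^{p-3}\psi^2$ near $i^0$, so the $\lbar$ contributions are absorbed exactly as in the passage from $Err_1'$ to the left side in the proof of Proposition \ref{p_estimates_prop}. The upshot is that after Cauchy--Schwarz (splitting each product $fg$ as $\epsilon r^{p-1}f^2 + \epsilon^{-1}r^{p-1}g^2$ with matching $r$-weights, which is possible precisely because every coefficient supplies enough extra powers of $r^{-1}$: e.g. $r^p\cdot\frac{\pd_r B}{A}\cdot A \lesssim r^p\cdot\frac{|a|^3M}{r^5}\cdot r^4\sin^2\theta = |a|^3 M r^{p-1}\sin^2\theta$, and the $A$ weight is exactly the $r^4\sin^4\theta$ discrepancy between $\Sigma_t$ and $\tilde\Sigma_t$ integrals), every term is bounded by $\frac{|a|}{M}$ times a quantity of the schematic form $r^{p-1}[(L\phi)^2 + |\sla\nabla\phi|^2 + r^{-2}\phi^2 + r^{-2}(\pd_r\phi)^2]$ on $\Sigma_t$ or its $\psi$-analogue on $\tilde\Sigma_t$, i.e. by $\frac{|a|}{M}B_p(t)$.

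The main obstacle — really the only delicate point — is bookkeeping the $r$-weights and the $A$-versus-$A^{-1}$ factors so that the pieces involving $\psi$ land on $\tilde\Sigma_t$ with the correct $r^4\sin^4\theta$ weight and the pieces involving $\phi$ land on $\Sigma_t$; in particular the cross terms in $\mathcal{L}_\phi$ that multiply $A\psi$ and the term in $\mathcal{L}_\psi$ that multiplies $A^{-1}\phi$ must be paired using $2|uv|\le u^2+v^2$ in a way that distributes the $A^{\pm 2}$ weights so that one half becomes a $\Sigma_t$ integrand in $\phi$ and the other a $\tilde\Sigma_t$ integrand in $\psi$. This is the same cancellation-of-weights mechanism used in Lemmas \ref{phi_psi_le_xi_lem}--\ref{translate_nl_lem}, and once the weights are tracked, the $|a|/M$ smallness is automatic from Lemma \ref{small_a_quantities_lem}. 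I would close by noting that the angular factor $\sin^2\theta$ appearing in several coefficient bounds only helps (it is bounded), and that $\chi_{trap}$ and $\chi_H$ may be inserted for free on $\{r>R\}$ since they are bounded below there, so the right-hand side is genuinely $\frac{|a|}{M}B_p(t)$. $\qed$
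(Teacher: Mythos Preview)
Your approach is correct and essentially matches the paper's: apply Cauchy--Schwarz to separate the first factor $(|L\phi|+r^{-1}|\phi|)$ (resp.\ the $\psi$ version) from $r^p|\mathcal{L}_\phi|$ (resp.\ $r^p|\mathcal{L}_\psi|$), then show $\int_{\{r>R\}} r^{p+1}|\mathcal{L}_\phi|^2 + \int_{\{r>R\}} r^{p+1}|\mathcal{L}_\psi|^2 \lesssim \tfrac{a^2}{M^2}B_p(t)$ using the coefficient bounds from Lemma~\ref{small_a_quantities_lem} and the $A^{\pm 2}$ bookkeeping to route $\psi$-pieces to $\tilde\Sigma_t$ and $\phi$-pieces to $\Sigma_t$. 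One simplification you overlooked: since $A$, $A_2$, and $B$ are all $t$-independent, the contracted $\pd^\alpha(\cdot)\pd_\alpha(\cdot)$ in $\mathcal{L}_\phi$ and $\mathcal{L}_\psi$ involves only $\pd_r$ and $\pd_\theta$ derivatives of $\phi,\psi$, so no $\pd_t$ or $\lbar$ decomposition is needed and the appeal to the $Err_1'$ absorption mechanism is unnecessary---$B_p$ controls $r^{p-3}(\pd_r\psi)^2$ and $r^{p-1}|\tsla\nabla\psi|^2$ directly.
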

\begin{proof}
We have
\begin{multline*}
\int_{\Sigma_t\cap\{r>R\}}r^p(|L\phi|+r^{-1}|\phi|)|\mathcal{L}_\phi|  \\
\lesssim \left(\int_{\Sigma_t\cap\{r>R\}}r^{p-1}[(L\phi)^2+r^{-2}\phi^2]\right)^{1/2}\left(\int_{\Sigma_t\cap\{r>R\}}r^{p+1}(\mathcal{L}_\phi)^2\right)^{1/2} \\
\lesssim (B_p(t))^{1/2}\left(\int_{\Sigma_t\cap\{r>R\}}r^{p+1}(\mathcal{L}_\phi)^2\right)^{1/2} \\
\lesssim \frac{|a|}{M}B_p(t)+\left(\frac{|a|}{M}\right)^{-1}\int_{\Sigma_t\cap\{r>R\}}r^{p+1}(\mathcal{L}_\phi)^2.
\end{multline*}
An analogous estimate also shows that
$$\int_{\tilde\Sigma_t\cap\{r>R\}}r^p(|L\psi|+r^{-1}|\psi|)|\mathcal{L}_\psi| \lesssim \frac{|a|}{M}B_p(t)+\left(\frac{|a|}{M}\right)^{-1}\int_{\tilde\Sigma_t\cap\{r>R\}}r^{p+1}(\mathcal{L}_\psi)^2.$$
Therefore, it suffices to establish the following estimate.
\begin{equation}\label{mathcalL_p_estimate}
\int_{\Sigma_t\cap\{r>R\}}r^{p+1}(\mathcal{L}_\phi)^2 + \int_{\tilde\Sigma_t\cap\{r>R\}}r^{p+1}(\mathcal{L}_\psi)^2 \lesssim \frac{a^2}{M^2}B_p(t).
\end{equation}
Let us look at the $\mathcal{L}_\phi$ term first. Recall that
$$\mathcal{L}_\phi=-2\frac{\pd^\alpha B}{A}A\pd_\alpha \psi + 2\frac{\pd^\alpha B\pd_\alpha B}{A^2}\phi-4\frac{\pd^\alpha A\pd_\alpha B}{A^2} A\psi.$$
It follows that
\begin{multline*}
\int_{\Sigma_t\cap\{r>R\}}r^{p+1}(\mathcal{L}_\phi)^2 \\
\lesssim \int_{\Sigma_t\cap\{r>R\}}r^{p-1}\left[\left(\frac{r\pd^\alpha B}{A}\pd_\alpha\psi\right)^2A^2 +\left(\frac{r\pd^\alpha B\pd_\alpha B}{A^2}\phi\right)^2+\left(\frac{r\pd^\alpha A\pd_\alpha B}{A^2}\psi\right)^2A^2\right] \\
\lesssim \int_{\Sigma_t\cap\{r>R\}}r^{p-1}\left(\frac{r\pd^\alpha B\pd_\alpha B}{A^2}\phi\right)^2 +\int_{\tilde\Sigma_t\cap\{r>R\}}r^{p-1}\left[\left(\frac{r\pd^\alpha B}{A}\pd_\alpha\psi\right)^2 +\left(\frac{r\pd^\alpha A\pd_\alpha B}{A^2}\psi\right)^2\right].
\end{multline*}
Using the identities from Lemma \ref{small_a_quantities_lem}, one can easily check each of the following estimates.
\begin{align*}
\left(\frac{r\pd^\alpha B\pd_\alpha B}{A^2}\phi\right)^2 &\lesssim \frac{a^2}{M^2}r^{-2}\phi^2 \\
\left(\frac{r\pd^\alpha B}{A}\pd_\alpha \psi\right)^2 &\lesssim \frac{a^2}{M^2}|\tsla\nabla\psi|^2+\frac{a^2}{M^2}\frac{M^2}{r^2}(\pd_r\psi)^2 \\
\left(\frac{r\pd^\alpha A\pd_\alpha B}{A^2}\psi\right)^2 &\lesssim \frac{a^2}{M^2}r^{-2}\psi^2.
\end{align*}
Thus,
$$\int_{\Sigma_t\cap\{r>R\}}r^{p+1}(\mathcal{L}_\phi)^2 \lesssim \frac{a^2}{M^2} B_p(t).$$
Let us now look at the $\mathcal{L}_\psi$ term. Recall that
$$\mathcal{L}_\psi=-2\frac{\pd^\alpha A_2}{A_2}\pd_\alpha\psi+2A^{-1}\frac{\pd^\alpha B}{A}\pd_\alpha\phi+2\frac{\pd^\alpha B\pd_\alpha B}{A^2}\psi.$$
It follows that
\begin{multline*}
\int_{\tilde\Sigma_t\cap\{r>R\}}r^{p+1}(\mathcal{L}_\psi)^2 \\
\lesssim \int_{\tilde{\Sigma}_t\cap\{r>R\}}r^{p-1}\left[\left(\frac{r\pd^\alpha A_2}{A_2}\pd_\alpha\psi\right)^2+\left(\frac{r\pd^\alpha B}{A}\pd_\alpha \phi\right)^2A^{-2}+\left(\frac{r\pd^\alpha B\pd_\alpha B}{A^2}\psi\right)^2\right] \\
\lesssim \int_{\tilde{\Sigma}_t\cap\{r>R\}}r^{p-1}\left[\left(\frac{r\pd^\alpha A_2}{A_2}\pd_\alpha\psi\right)^2+\left(\frac{r\pd^\alpha B\pd_\alpha B}{A^2}\psi\right)^2\right] +\int_{\Sigma_t\cap\{r>R\}}r^{p-1}\left(\frac{r\pd^\alpha B}{A}\pd_\alpha \phi\right)^2. 
\end{multline*}
Using again the identities from Lemma \ref{small_a_quantities_lem}, one can easily check each of the following estimates.
\begin{align*}
\left(\frac{r\pd^\alpha A_2}{A_2}\pd_\alpha\psi\right)^2 &\lesssim \frac{a^2}{M^2}|\tsla\nabla\psi|^2+\frac{a^2}{M^2}\frac{M^2}{r^2}(\pd_r\psi)^2 \\
\left(\frac{r\pd^\alpha B\pd_\alpha B}{A^2}\psi\right)^2 &\lesssim \frac{a^2}{M^2}r^{-2}\psi^2\\
\left(\frac{r\pd^\alpha B}{A}\pd_\alpha \phi\right)^2 &\lesssim \frac{a^2}{M^2}|\sla\nabla\phi|^2+\frac{a^2}{M^2}\frac{M^2}{r^2}(\pd_r\phi)^2.
\end{align*}
Thus,
$$\int_{\tilde\Sigma_t\cap\{r>R\}}r^{p+1}(\mathcal{L}_\psi)^2 \lesssim \frac{a^2}{M^2} B_p(t).$$
This completes the proof.\qed
\end{proof}

With Lemma \ref{p_L_a_lem}, Theorem \ref{p_L_thm} can be improved so that the linear error term appearing on the right hand side of the $r^p$ estimate is removed. Also, by assuming the full nonlinear equations, we can simplify the nonlinear error term. This is the purpose of the following theorem.
\begin{theorem}\label{p_thm} (Improved version of Theorem \ref{p_L_thm})
Suppose $|a|/M$ is sufficiently small and fix $\delm,\delp>0$. Suppose furthermore that the pair $(\phi,\psi)$ satisfies the system
\begin{align*}
\Box_g\phi &= \mathcal{L}_\phi+\mathcal{N}_\phi, \\
\Box_{\tilde{g}}\psi &= \mathcal{L}_\psi + \mathcal{N}_\psi.
\end{align*}
Then the following estimates hold for $p\in [\delm,2-\delp]$.
$$E(t_2)\lesssim E(t_1)+\int_{t_1}^{t_2}N(t)dt,$$
$$E_p(t_2)+\int_{t_1}^{t_2}B_p(t)dt\lesssim E_p(t_1)+\int_{t_1}^{t_2}N_p(t)dt,$$
where $E(t)$, $E_p(t)$, and $B_p(t)$ are as defined in Theorem \ref{p_L_thm}, and
$$N(t)=(E(t))^{1/2}\left(||\mathcal{N}_\phi||_{L^2(\Sigma_t)}+||\mathcal{N}_\psi||_{L^2(\tilde{\Sigma}_t)}\right),$$
$$N_p(t)=\int_{\Sigma_t}r^{p+1}(\mathcal{N}_\phi)^2 + \int_{\tilde\Sigma_t}r^{p+1}(\mathcal{N}_\psi)^2 + \int_{\Sigma_t\cap\{r\approx r_{trap}\}}|\pd_t\phi\mathcal{N}_\phi| + \int_{\tilde\Sigma_t\cap\{r\approx r_{trap}\}}|\pd_t\psi\mathcal{N}_\psi|. $$
\end{theorem}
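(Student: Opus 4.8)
The plan is to obtain Theorem~\ref{p_thm} as a refinement of Theorem~\ref{p_L_thm}. Under the hypothesis that $(\phi,\psi)$ solves the full system one has the identities $\Box_g\phi-\mathcal{L}_\phi=\mathcal{N}_\phi$ and $\Box_{\tilde{g}}\psi-\mathcal{L}_\psi=\mathcal{N}_\psi$, so the task is to substitute these into the error terms $N(t)$, $(L_1)_p(t)$, $N_p(t)$ of Theorem~\ref{p_L_thm} and reorganize. For the energy estimate this is immediate: $N(t)$ becomes $\int_{\Sigma_t}|\pd_t\phi||\mathcal{N}_\phi|+\int_{\tilde{\Sigma}_t}|\pd_t\psi||\mathcal{N}_\psi|$, and one Cauchy--Schwarz on each slice, using that $\int_{\Sigma_t}(\pd_t\phi)^2$ and $\int_{\tilde{\Sigma}_t}(\pd_t\psi)^2$ are each bounded by $E(t)$, gives exactly $N(t)\lesssim (E(t))^{1/2}(||\mathcal{N}_\phi||_{L^2(\Sigma_t)}+||\mathcal{N}_\psi||_{L^2(\tilde{\Sigma}_t)})$.

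For the $p$-weighted estimate I will first dispose of the linear error term $(L_1)_p(t)$. By Lemma~\ref{p_L_a_lem}, $(L_1)_p(t)\lesssim \frac{|a|}{M}B_p(t)$, and since $|a|/M$ is sufficiently small its time integral is absorbed into the term $\int_{t_1}^{t_2}B_p(t)\,dt$ already present on the left-hand side of Theorem~\ref{p_L_thm}. After that, and after using the two identities above, the only remaining right-hand side term is $\int_{t_1}^{t_2}\bigl(\int_{\Sigma_t}|\mathcal{X}_\phi||\mathcal{N}_\phi|+\int_{\tilde{\Sigma}_t}|\mathcal{X}_\psi||\mathcal{N}_\psi|\bigr)\,dt$, with $\mathcal{X}_\phi=2X(\phi)+w\phi+w_{(a)}\psi$ and $\mathcal{X}_\psi=2X(\psi)+\tilde{w}\psi+\tilde{w}_{(a)}\phi$; it remains to bound this by the stated $N_p(t)$ up to quantities absorbable into $B_p(t)$.

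The core step is the reorganization of $\int_{\Sigma_t}|\mathcal{X}_\phi||\mathcal{N}_\phi|$ and its $\psi$ analogue, which I will carry out by cutting $\Sigma_t$ into the region $\{r\approx r_{trap}\}$ and its complement. Away from $r_{trap}$ the weight $\chi_{trap}$ is bounded below, so using the structural properties of $X$, $w$, $w_{(a)}$ recorded in Proposition~\ref{p_estimates_prop} — $X$ timelike with $X=O(r^p)L+O(r^{p-2})\pd_t$ and $X|_{r=r_H}=-\lambda\pd_r$, $w=O(r^{p-1})$, $|w_{(a)}\psi|\lesssim \frac{|a|^3M\sin^2\theta}{r^5}|A\psi|$ — together with $|\pd_t\phi|\lesssim|L\phi|+|\pd_r\phi|$, a weighted Cauchy--Schwarz splits $|\mathcal{X}_\phi||\mathcal{N}_\phi|$ into $\epsilon$ times a density making up $B_p(t)$ (absorbed once $\epsilon$ is small; the $\pd_r\phi$-contributions pair against the $r^{p-3}(\pd_r\phi)^2$ density of $B_p(t)$, the $L\phi$-contributions against $r^{p-1}\chi_{trap}(L\phi)^2$) plus $C_\epsilon r^{p+1}(\mathcal{N}_\phi)^2$, with the $w_{(a)}\psi$ cross term additionally carrying $|a|^3$ smallness and absorbed into $\int_{\tilde{\Sigma}_t}r^{p-3}\psi^2$. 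On $\{r\approx r_{trap}\}$, where $X=\lambda\pd_t$ up to lower order and powers of $r$ are harmless constants of size $M$, the leading contribution is precisely $\int_{\Sigma_t\cap\{r\approx r_{trap}\}}|\pd_t\phi\,\mathcal{N}_\phi|$, which is kept in $N_p(t)$, while the $w\phi$ and $w_{(a)}\psi$ pieces are Cauchy--Schwarz'd into $\epsilon r^{p-3}\phi^2$ (resp. $\epsilon r^{p-3}\psi^2$ on $\tilde{\Sigma}_t$), absorbed by the non-degenerate zeroth-order terms of $B_p(t)$, plus $C_\epsilon r^{p+1}(\mathcal{N}_\phi)^2$. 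The $\psi$ integral is treated identically using $X|_{r=r_{trap}}=\lambda\pd_t$ and the bound on $\tilde{w}_{(a)}$.

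The main obstacle is the weight bookkeeping in this last step. The delicate point is that near $r=r_{trap}$ the bulk norm $B_p(t)$ controls $L\phi$ and $\sla\nabla\phi$ only with the degenerate factor $\chi_{trap}$, so the part of $\mathcal{X}_\phi$ proportional to $\pd_t\phi$ there cannot be absorbed and must be retained as the explicit term $\int_{\Sigma_t\cap\{r\approx r_{trap}\}}|\pd_t\phi\,\mathcal{N}_\phi|$ in $N_p(t)$, whereas away from $r_{trap}$ the factor $\chi_{trap}$ is harmless and every contribution can be paired against $B_p(t)$. One must also track that the cross terms relate $\phi$-quantities on $\Sigma_t$ to $\psi$-quantities on the higher-dimensional $\tilde{\Sigma}_t$, whose volume element carries the extra weight $A_1^2\sim r^4\sin^4\theta$ that matches the $\sin^2\theta$ factors in the bounds for $w_{(a)}$ and $\tilde{w}_{(a)}$. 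These checks are routine but constitute the only real content of the proof.
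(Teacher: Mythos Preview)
Your proposal is correct and follows essentially the same route as the paper. The paper's proof is slightly more compact: rather than an explicit geographic split into $\{r\approx r_{trap}\}$ and its complement, it writes a single pointwise bound $|\mathcal{X}_\phi|\lesssim \chi_{trap}|L\phi|+r^{-1}|\phi|+r^{-1}A|\psi|+r^{-2}|\pd_r\phi|$ (with the appropriate $r^p$ weights) plus the $|\pd_t\phi|$ piece supported on $\{r\approx r_{trap}\}$, then applies one global Cauchy--Schwarz against $(B_p(t))^{1/2}$; but this is exactly the content of your regional decomposition, and the essential observation---that the $\pd_t\phi$ contribution at trapping cannot be absorbed into $B_p(t)$ because of the $\chi_{trap}$ degeneracy and must therefore be retained in $N_p(t)$---is identical.
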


\begin{proof}
By Theorem \ref{p_L_thm}, we have
$$E(t_2)\lesssim E(t_1)+\int_{t_1}^{t_2}N'(t)dt,$$
where
\begin{align*}
N'(t)&=\int_{\Sigma_t}|\pd_t\phi||\Box_g\phi-\mathcal{L}_\phi|+\int_{\tilde{\Sigma}_t}|\pd_t\psi||\Box_{\tilde{g}}\psi-\mathcal{L}_\psi| \\
&=\int_{\Sigma_t}|\pd_t\phi||\mathcal{N}_\phi|+\int_{\tilde{\Sigma}_t}|\pd_t\psi||\mathcal{N}_\psi|.
\end{align*}
Observe that
\begin{align*}
N'(t) &=\int_{\Sigma_t}|\pd_t\phi||\mathcal{N}_\phi|+\int_{\tilde{\Sigma}_t}|\pd_t\psi||\mathcal{N}_\psi| \\
&\lesssim ||\pd_t\phi||_{L^2(\Sigma_t)}||\mathcal{N}_\phi||_{L^2(\Sigma_t)}+||\pd_t\psi||_{L^2(\tilde\Sigma_t)}||\mathcal{N}_\psi||_{L^2(\tilde\Sigma_t)} \\
&\lesssim (E(t))^{1/2}||\mathcal{N}_\phi||_{L^2(\Sigma_t)}+(E(t))^{1/2}||\mathcal{N}_\psi||_{L^2(\tilde\Sigma_t)} \\
&\lesssim N(t).
\end{align*}
This proves the first estimate of the theorem.

By Theorem \ref{p_L_thm} and Lemma \ref{p_L_a_lem}, we have
$$E_p(t_2)+\int_{t_1}^{t_2}B_p(t)dt\lesssim E_p(t_1)+\frac{|a|}{M}\int_{t_1}^{t_2}B_p(t)dt+\int_{t_1}^{t_2}N_p'(t)dt,$$
where
\begin{align*}
N_p'(t)&=\int_{\Sigma_t}r^p|\mathcal{X}_\phi||\Box_g\phi-\mathcal{L}_\phi| + \int_{\tilde\Sigma_t}r^p|\mathcal{X}_\psi||\Box_{\tilde{g}}\psi-\mathcal{L}_\psi| \\
&=\int_{\Sigma_t}r^p|\mathcal{X}_\phi||\mathcal{N}_\phi| + \int_{\tilde\Sigma_t}r^p|\mathcal{X}_\psi||\mathcal{N}_\psi|.
\end{align*}
Observe that
\begin{multline*}
\int_{\Sigma_t}r^p|\mathcal{X}_\phi||\mathcal{N}_\phi| \\
\lesssim \int_{\Sigma_t}r^p(\chi_{trap}|L\phi|+r^{-1}|\phi|+r^{-1}A|\psi|+r^{-2}|\pd_r\phi|)|\mathcal{N}_\phi| +\int_{\Sigma_t\cap\{r\approx r_{trap}\}}|\pd_t||\phi\mathcal{N}_\phi| \\
\lesssim (B_p(t))^{1/2}\left(\int_{\Sigma_t}r^{p+1}|\mathcal{N}_\phi|^2\right)^{1/2} +\int_{\Sigma_t\cap\{r\approx r_{trap}\}}|\pd_t\phi||\mathcal{N}_\phi| \\
\lesssim \epsilon B_p(t) +\epsilon^{-1}\int_{\Sigma_t}r^{p+1}|\mathcal{N}_\phi|^2 +\int_{\Sigma_t\cap\{r\approx r_{trap}\}}|\pd_t\phi||\mathcal{N}_\phi| \\
\lesssim \epsilon B_p(t) +\epsilon^{-1}N_p(t).
\end{multline*}
An analogous estimate also shows that
$$\int_{\tilde\Sigma_t}r^p|\mathcal{X}_\psi||\mathcal{N}_\psi|\lesssim \epsilon B_p(t)+\epsilon^{-1}N_p(t).$$
Therefore, we have
$$E_p(t_2)+\int_{t_1}^{t_2}B_p(t)dt\lesssim E_p(t_1)+(|a|/M+\epsilon)\int_{t_1}^{t_2}B_p(t)dt+\epsilon^{-1}\int_{t_1}^{t_2}N_p(t)dt.$$
By taking $|a|/M$ and $\epsilon$ sufficiently small, the bulk term on the right hand side can be absorbed into the left hand side. The result is the second estimate of the theorem. \qed
\end{proof}

\subsection{The main energy estimates for $(\pd_t^s\phi,\pd_t^s\psi)$}

We now begin to derive higher order estimates analogous to Theorem \ref{p_thm} by commuting with the linear system. The only operator that completely commutes with the linear system is the operator $\pd_t$. Therefore, we immediately have the following homogeneous estimate, which will be important at the highest level of derivatives in the proof of the main theorem.

\begin{theorem}\label{p_o_thm} (Generalization of Theorem \ref{p_thm})
Suppose $|a|/M$ is sufficiently small and fix $\delm,\delp>0$. Suppose furthermore that the pair $(\phi,\psi)$ satisfies the system
\begin{align*}
\Box_g\phi &= \mathcal{L}_\phi+\mathcal{N}_\phi, \\
\Box_{\tilde{g}}\psi &= \mathcal{L}_\psi + \mathcal{N}_\psi.
\end{align*}
Then the following estimates hold for $p\in [\delm,2-\delp]$.
$$\mathring{E}^s(t_2)\lesssim \mathring{E}^s(t_1)+\int_{t_1}^{t_2}\mathring{N}^s(t)dt,$$
$$\mathring{E}_p^s(t_2)+\int_{t_1}^{t_2}\mathring{B}_p^s(t)dt\lesssim \mathring{E}_p^s(t_1)+\int_{t_1}^{t_2}\mathring{N}_p^s(t)dt,$$
where 
$$\mathring{E}^s(t)=\sum_{s'\le s}E[(\pd_t^{s'}\phi,\pd_t^{s'}\psi)](t),$$
$$\mathring{E}_p^s(t)=\sum_{s'\le s}E_p[(\pd_t^{s'}\phi,\pd_t^{s'}\psi)](t),$$
$$\mathring{B}_p^s(t)=\sum_{s'\le s}B_p[(\pd_t^{s'}\phi,\pd_t^{s'}\psi)](t),$$
and
$$\mathring{N}^s(t)=(\mathring{E}^s(t))^{1/2}\left(||\pd_t^s\mathcal{N}_\phi||_{L^2(\Sigma_t)}+||\pd_t^s\mathcal{N}_\psi||_{L^2(\tilde{\Sigma}_t)}\right),$$
\begin{multline*}
\mathring{N}_p^s(t)=\int_{\Sigma_t}r^{p+1}(\pd_t^s\mathcal{N}_\phi)^2 + \int_{\tilde\Sigma_t}r^{p+1}(\pd_t^s\mathcal{N}_\psi)^2 \\
+ \int_{\Sigma_t\cap\{r\approx r_{trap}\}}|\pd_t^{s+1}\phi\pd_t^s\mathcal{N}_\phi| + \int_{\tilde\Sigma_t\cap\{r\approx r_{trap}\}}|\pd_t^{s+1}\psi\pd_t^s\mathcal{N}_\psi|. 
\end{multline*}
\end{theorem}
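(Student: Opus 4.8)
The plan is to exploit the fact that $\pd_t$ commutes with the \emph{entire} linear system, so that Theorem \ref{p_thm} may be applied verbatim to the commuted pair $(\pd_t^{s'}\phi,\pd_t^{s'}\psi)$ for each $s'\le s$ and the resulting inequalities summed.

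First I would record the commutation property. Since the Kerr metric $g$ and its modification $\tilde g$ are stationary, $\pd_t$ is a Killing field, so $[\Box_g,\pd_t]=0$ and (using $\Box_{\tilde g}=\Box_g+2\frac{\pd^\alpha A_1}{A_1}\pd_\alpha$ with $A_1=(r^2+a^2)\sin^2\theta$ independent of $t$) also $[\Box_{\tilde g},\pd_t]=0$. Moreover every coefficient appearing in $\mathcal{L}_\phi$ and $\mathcal{L}_\psi$ — the functions $A$, $A_1$, $A_2$, $B$ and the inverse metric components — depends only on $r$ and $\theta$, hence is annihilated by $\pd_t$. Consequently, applying $\pd_t^{s'}$ to the nonlinear system (\ref{phi_nonlinear_system_eqn}-\ref{psi_nonlinear_system_eqn}) shows that $(\pd_t^{s'}\phi,\pd_t^{s'}\psi)$ solves exactly the same linear system, with the nonlinearities replaced by $\pd_t^{s'}\mathcal{N}_\phi$ and $\pd_t^{s'}\mathcal{N}_\psi$:
$$\Box_g(\pd_t^{s'}\phi)=\mathcal{L}_\phi[\pd_t^{s'}\phi,\pd_t^{s'}\psi]+\pd_t^{s'}\mathcal{N}_\phi,\qquad\Box_{\tilde g}(\pd_t^{s'}\psi)=\mathcal{L}_\psi[\pd_t^{s'}\phi,\pd_t^{s'}\psi]+\pd_t^{s'}\mathcal{N}_\psi.$$
It is precisely this exact commutation, with no residual linear error terms, that makes the resulting estimate homogeneous; the more general commutators treated in the following subsections do not commute cleanly with the full linear operator, which is why they require extra work.

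Next I would apply both estimates of Theorem \ref{p_thm} to $(\pd_t^{s'}\phi,\pd_t^{s'}\psi)$ for $s'=0,1,\dots,s$ and add over $s'$. By definition $\mathring{E}^s=\sum_{s'\le s}E[(\pd_t^{s'}\phi,\pd_t^{s'}\psi)]$, and similarly for $\mathring{E}_p^s$ and $\mathring{B}_p^s$, so the left-hand sides reassemble into the claimed quantities. On the right, each $N[(\pd_t^{s'}\phi,\pd_t^{s'}\psi)]=(E[(\pd_t^{s'}\phi,\pd_t^{s'}\psi)])^{1/2}(||\pd_t^{s'}\mathcal{N}_\phi||_{L^2(\Sigma_t)}+||\pd_t^{s'}\mathcal{N}_\psi||_{L^2(\tilde\Sigma_t)})$ is bounded by $\mathring{N}^s$ using $E[(\pd_t^{s'}\phi,\pd_t^{s'}\psi)]\le\mathring{E}^s$ together with the convention (cf.\ \S\ref{intro_null_condition_sec}) that $\pd_t^s\mathcal{N}$ stands for any number of $\pd_t$-derivatives of $\mathcal{N}$ up to $s$; likewise the $r^p$-weighted nonlinear terms and the trapping boundary terms $\int_{\Sigma_t\cap\{r\approx r_{trap}\}}|\pd_t^{s'+1}\phi\,\pd_t^{s'}\mathcal{N}_\phi|$ collect into $\mathring{N}_p^s$, the top term $s'=s$ dominating the lower ones. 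This yields the two stated estimates.

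There is no genuine obstacle here: the only point to check with any care is the commutation claim for $\mathcal{L}_\phi$, $\mathcal{L}_\psi$ and $\Box_{\tilde g}$, i.e.\ the $t$-independence of every coefficient, and after that the proof is pure bookkeeping of the summation.
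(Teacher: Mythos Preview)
Your proposal is correct and follows essentially the same approach as the paper: apply Theorem \ref{p_thm} to the substituted pair $(\pd_t^{s'}\phi,\pd_t^{s'}\psi)$ for each $s'\le s$, using that $\pd_t$ commutes with $\Box_g$, $\Box_{\tilde g}$, and the linear operators $\mathcal{L}_\phi$, $\mathcal{L}_\psi$ (all coefficients being $t$-independent), and then sum. The paper's own proof is in fact even terser than yours, recording only the commuted equations and the substitution; your explicit verification of the commutation for $\Box_{\tilde g}$ and the linear terms is a welcome addition.
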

\begin{proof}
The proof is a direct application of Theorem \ref{p_thm} by making the substitutions
\begin{align*}
\phi &\mapsto \pd_t^{s'}\phi \\
\psi &\mapsto \pd_t^{s'}\psi
\end{align*}
for all values of $s'$ where $s'\le s$, and observing that if
\begin{align*}
\Box_g\phi &= \mathcal{L}_\phi+\mathcal{N}_\phi, \\
\Box_{\tilde{g}}\psi &= \mathcal{L}_\psi + \mathcal{N}_\psi,
\end{align*}
then
\begin{align*}
\Box_g(\pd_t^{s'}\phi) &= \mathcal{L}_{(\pd_t^{s'}\phi)}+\pd_t^{s'}\mathcal{N}_\phi, \\
\Box_{\tilde{g}}(\pd_t^{s'}\psi) &= \mathcal{L}_{(\pd_t^{s'}\psi)} + \pd_t^{s'}\mathcal{N}_\psi,
\end{align*}
where $\mathcal{L}_{(\pd_t^{s'}\phi)}$ and $\mathcal{L}_{(\pd_t^{s'}\psi)}$ are the expressions obtained by substituting $(\phi,\psi)$ with $(\pd_t^{s'}\phi,\pd_t^{s'}\psi)$ in $\mathcal{L}_\phi$ and $\mathcal{L}_\psi$ respectively.
\qed
\end{proof}

\subsection{The main energy estimates for $(\phi^s,\psi^s)$}

Even though $\pd_t$ is the only operator that completely commutes with the linear system, the second order Carter operator $Q$ and its modified version $\tilde{Q}$ commute with the wave operators $q^2\Box_g$ and $q^2\Box_{\tilde{g}}$ respectively.

\begin{definition}
$$Q:=\pd_\theta^2+\cot\theta \pd_\theta +a^2\sin^2\theta \pd_t^2,$$
$$\tilde{Q}:=\pd_\theta^2+5\cot\theta \pd_\theta +a^2\sin^2\theta \pd_t^2.$$
\end{definition}

\begin{lemma}
$$[Q,q^2\Box_g]=0,$$
$$[\tilde{Q},q^2\Box_{\tilde{g}}]=0.$$
\end{lemma}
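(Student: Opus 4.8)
The plan is to verify the two commutator identities by explicit computation, exploiting the separated structure of the Kerr wave operator. Recall that for an axisymmetric function $f=f(t,r,\theta)$, the operator $q^2\Box_g$ splits as a sum of an operator acting only in $(t,r)$ and an operator acting only in $\theta$ (together with the $a^2\sin^2\theta\,\pd_t^2$ piece). Concretely, writing $q^2=r^2+a^2\cos^2\theta$, one has
$$q^2\Box_g = \pd_r(\Delta\pd_r) - \frac{(r^2+a^2)^2}{\Delta}\pd_t^2 + \frac{1}{\sin\theta}\pd_\theta(\sin\theta\pd_\theta) + a^2\sin^2\theta\,\pd_t^2,$$
where the first two terms involve only $r$ and $t$, the third term is the spherical Laplacian $\fa+\fb = \pd_\theta^2+\cot\theta\pd_\theta$, and the last term is exactly the $\theta$-dependent correction appearing in $Q$. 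Thus $q^2\Box_g = P_{(t,r)} + (\fa+\fb) + a^2\sin^2\theta\,\pd_t^2$, and $Q = (\fa+\fb)+a^2\sin^2\theta\,\pd_t^2$. So I would first establish this decomposition (it is a standard Boyer--Lindquist computation, and the analogous formula for $q^2\Box_{\tilde g}$ follows from the definition $\Box_{\tilde g}=\Box_g + 2\frac{\pd^\alpha A_1}{A_1}\pd_\alpha$ together with the effective volume form $\tilde\mu = A_1^2 q^2\sin\theta$, which replaces $\frac{1}{\sin\theta}\pd_\theta(\sin\theta\cdot)$ by $\frac{1}{A_1^2\sin\theta}\pd_\theta(A_1^2\sin\theta\cdot) = \pd_\theta^2 + 5\cot\theta\pd_\theta = \fa+5\fb$, matching $\tilde Q$).

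Next I would compute the commutator termwise. Write $q^2\Box_g = P_{(t,r)} + Q$. Since $Q$ involves only $\pd_\theta$ and $\pd_t$ with $\theta$-dependent (but $t,r$-independent) coefficients, and $P_{(t,r)}$ involves only $\pd_t,\pd_r$ with $(t,r)$-dependent coefficients, the two operators act on disjoint sets of variables except that both contain $\pd_t$; but $\pd_t$ commutes with everything here (the coefficients are all static), so $[P_{(t,r)}, Q]=0$. Hence $[Q, q^2\Box_g] = [Q, P_{(t,r)} + Q] = [Q,P_{(t,r)}] = 0$. The only subtlety is that one must check the cross term carefully: $Q$ contains $a^2\sin^2\theta\,\pd_t^2$, whose coefficient depends on $\theta$, and $P_{(t,r)}$ contains $\frac{(r^2+a^2)^2}{\Delta}\pd_t^2$, whose coefficient depends on $r$; these commute because $\pd_\theta$ of an $r$-function vanishes and $\pd_r$ of a $\theta$-function vanishes, and $\pd_t$ commutes through both. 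The same argument verbatim (with $\fb\to 5\fb$ and $P_{(t,r)}\to P_{(t,r)}+2\frac{2r}{r^2+a^2}\pd_r$, which still only involves $r$ and $\pd_r,\pd_t$) gives $[\tilde Q, q^2\Box_{\tilde g}]=0$.

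I expect the main obstacle to be purely bookkeeping: correctly deriving the separated form of $q^2\Box_g$ and especially $q^2\Box_{\tilde g}$ from the metric and the modified volume form, making sure the $a^2\sin^2\theta\,\pd_t^2$ term lands exactly where $Q$ (resp.\ $\tilde Q$) has it and nothing is left over. One must be careful that $\Box_g = \frac{1}{\sqrt{-\det g}}\pd_\alpha(\sqrt{-\det g}\,g^{\alpha\beta}\pd_\beta)$ uses the true volume form for $g$ but $\Box_{\tilde g}$ uses $\tilde\mu$, so the multiplication by $q^2$ interacts differently in the two cases. Once the decomposition $q^2\Box_g = P_{(t,r)}+Q$ (and its tilde analogue) is in hand, the commutation is immediate and requires no estimates. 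An alternative, more invariant route would be to recall that $Q$ is the Carter operator, which is known to commute with $\Box_g$ on Kerr for axisymmetric functions; but since the paper works concretely in Boyer--Lindquist coordinates throughout, I would present the direct computational proof.
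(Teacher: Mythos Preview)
Your proposal is correct and follows essentially the same route as the paper: both arguments rest on the decomposition $q^2\Box_g = P_{(t,r)} + Q$ (and $q^2\Box_{\tilde g} = P_{(t,r)} + \frac{4r}{r^2+a^2}(q^2 g^{\alpha r})\pd_\alpha + \tilde Q$), then observe that the remainder has coefficients depending only on $r$, involves only $\pd_t,\pd_r$, and therefore commutes with $Q$ (resp.\ $\tilde Q$). The only cosmetic difference is that the paper phrases this as ``$q^2\Box_g - Q$ has no $\theta$-dependence and no $\pd_\theta$ operators'' rather than writing out $P_{(t,r)}$ explicitly; your slightly more detailed bookkeeping (noting that the extra radial piece in the tilde case is $\frac{4r}{r^2+a^2}$ times a purely $(t,r)$ operator) is exactly what the paper's computation $2q^2\frac{\pd^\alpha A_1}{A_1}\pd_\alpha - 4\cot\theta\pd_\theta = \frac{4r}{r^2+a^2}(q^2 g^{\alpha r})\pd_\alpha$ records.
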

\begin{proof}
One can check that the operator $q^2\Box_g-Q$ does not depend on $\theta$ or $t$ and has no $\pd_\theta$ operators. Since $Q$ only depends on $\theta$, and has only $\pd_\theta$ and $\pd_t$ operators, that means
$$0=[Q,q^2\Box_g-Q]=[Q,q^2\Box_g].$$
Similarly,
\begin{align*}
q^2\Box_{\tilde{g}}-\tilde{Q} &= q^2\Box_g+2q^2\frac{\pd^\alpha A_1}{A_1}\pd_\alpha-(Q+4\cot\theta\pd_\theta) \\
&=\Box_g-Q+\left(2q^2\frac{\pd^\alpha A_1}{A_1}\pd_\alpha-4\cot\theta\pd_\theta\right) \\
&=\Box_g-Q+\frac{4r}{r^2+a^2}(q^2g^{\alpha r})\pd_\alpha.
\end{align*}
Since $q^2\Box_g-Q$ and $\frac{4r}{r^2+a^2}(q^2g^{\alpha r})\pd_\alpha$ do not depend on $\theta$ or $t$ and have no $\pd_\theta$ operators, and since $\tilde{Q}$ only depends on $\theta$ and has only $\pd_\theta$ and $\pd_t$ operators, that means
$$0=[\tilde{Q},q^2\Box_{\tilde{g}}-\tilde{Q}]=[\tilde{Q},q^2\Box_{\tilde{g}}].$$
This completes the proof. \qed
\end{proof}

We define the $s$-order commutators $\Gamma$ and $\tilde{\Gamma}$.
\begin{definition}
$$\Gamma^su := Q^{l}\pd_t^{s-2l}u$$
$$\tilde\Gamma^su := \tilde{Q}^{l}\pd_t^{s-2l}u,$$
where $0\le 2l\le s$.
\end{definition}
We also define the $s$-order dynamic quantities $\phi^s$ and $\psi^s$.
\begin{definition}
$$\phi^s:=\Gamma^s\phi,$$
$$\psi^s:=\tilde\Gamma^s\psi.$$
\end{definition}
Additionally, we define the $s$-order analogues of $\mathcal{L}_\phi$, $\mathcal{L}_\psi$, $\mathcal{X}_\phi$, and $\mathcal{X}_\psi$.
\begin{definition}
$$\mathcal{L}_{\phi^s}:=-2\frac{\pd^\alpha B}{A}A\pd_\alpha \psi^s + 2\frac{\pd^\alpha B\pd_\alpha B}{A^2}\phi^s-4\frac{\pd^\alpha A\pd_\alpha B}{A^2} A\psi^s$$
$$\mathcal{L}_{\psi^s}:=-2\frac{\pd^\alpha A_2}{A_2}\pd_\alpha\psi^s+2\frac{\pd^\alpha B\pd_\alpha B}{A^2}\psi^s + 2A^{-1}\frac{\pd^\alpha B}{A}\pd_\alpha\phi^s$$
and
\begin{align*}
\mathcal{X}_{\phi^s} &:= 2X(\phi^s)+w\phi^s+w_{(a)}\psi^s \\
\mathcal{X}_{\psi^s} &:= 2X(\psi^s)+\tilde{w}\psi^s+\tilde{w}_{(a)}\phi^s,
\end{align*}
where, in each expression, the exact same operator $\Gamma^s$ or $\tilde{\Gamma}^s$ is to be used in each term on the right hand side, replacing $Q$ with $\tilde{Q}$ where appropriate. For example, the expression
$$-2\frac{\pd^\alpha B}{A}A\pd_\alpha (\tilde{Q}\psi) + 2\frac{\pd^\alpha B\pd_\alpha B}{A^2}(Q\phi)-4\frac{\pd^\alpha A\pd_\alpha B}{A^2} A(\tilde{Q}\psi)$$
belongs to $\mathcal{L}_{\phi^s}$ ($s=2$) while the expression
$$-2\frac{\pd^\alpha B}{A}A\pd_\alpha(\pd_t^2\psi) + 2\frac{\pd^\alpha B\pd_\alpha B}{A^2}(Q\phi)-4\frac{\pd^\alpha A\pd_\alpha B}{A^2} A(\tilde{Q}\psi)$$
does not.
\end{definition}

We take a look again at the equations
$$\Box_g\phi = \mathcal{L}_\phi +\mathcal{N}_\phi,$$
$$\Box_{\tilde{g}}\psi = \mathcal{L}_\psi +\mathcal{N}_\psi.$$
By applying $\Gamma^s$ and $\tilde\Gamma^s$ repsectively, we obtain additional useful equations.
\begin{align*}
\Box_g\phi^s &= q^{-2}\Gamma^s(q^2\mathcal{L}_\phi) + q^{-2}\Gamma^s(q^2\mathcal{N}_\phi) \\
&= \mathcal{L}_{\phi^s} +(q^{-2}\Gamma^s(q^2\mathcal{L}_\phi)-\mathcal{L}_{\phi^s}) +q^{-2}\Gamma^s(q^2\mathcal{N}_\phi),
\end{align*}
\begin{align*}
\Box_{\tilde{g}}\psi^s &= q^{-2}\tilde\Gamma^s(q^2\mathcal{L}_\psi) + q^{-2}\tilde\Gamma^s(q^2\mathcal{N}_\psi) \\
&= \mathcal{L}_{\psi^s} +(q^{-2}\tilde\Gamma^s(q^2\mathcal{L}_\psi)-\mathcal{L}_{\psi^s}) +q^{-2}\tilde\Gamma^s(q^2\mathcal{N}_\psi).
\end{align*}
Therefore, Theorem \ref{p_L_thm} can be generalized to the following theorem. (Note the presence of the additional terms $(L_2)^s(t)$ and $(L_2)_p^s(t)$, which arise from the fact that the $\Gamma$ and $\tilde\Gamma$ operators do not completely commute with the linear system.)
\begin{theorem}\label{p_s_L_thm}
Suppose $|a|/M$ is sufficiently small and fix $\delm,\delp>0$. The following estimates hold for $p\in [\delm,2-\delp]$.
$$E^s(t_2)\lesssim E^s(t_1)+\int_{t_1}^{t_2}(L_2)^s(t)+N^s(t)dt,$$
$$E_p^s(t_2)+\int_{t_1}^{t_2}B_p^s(t)dt\lesssim E_p^s(t_1)+\int_{t_1}^{t_2}(L_1)_p^s(t)+(L_2)_p^s(t)+N_p^s(t)dt,$$
where
\begin{align*}
E^s(t) &= \sum_{s'\le s} E[(\phi^{s'},\psi^{s'})](t), \\
E_p^s(t) &= \sum_{s'\le s} E_p[(\phi^{s'},\psi^{s'})](t), \\
B_p^s(t) &= \sum_{s'\le s} B_p[(\phi^{s'},\psi^{s'})](t),
\end{align*}
and
$$(L_2)^s(t)=\sum_{s'\le s}\int_{\Sigma_t}|\pd_t\phi^{s'} (q^{-2}\Gamma^{s'}(q^2\mathcal{L}_\phi)-\mathcal{L}_{\phi^{s'}})|+\sum_{s'\le s}\int_{\tilde\Sigma_t}|\pd_t\psi^{s'}(q^{-2}\tilde{\Gamma}^{s'}(q^2\mathcal{L}_\psi)-\mathcal{L}_{\psi^{s'}})|,$$
$$N^s(t)=\sum_{s'\le s}\int_{\Sigma_t}|\pd_t\phi^{s'}(\Box_g\phi^{s'}-q^{-2}\Gamma^{s'}(q^2\mathcal{L}_\phi))| +\sum_{s'\le s}\int_{\tilde{\Sigma}_t}|\pd_t\psi^{s'}(\Box_{\tilde g}\psi^{s'}-q^{-2}\tilde\Gamma^{s'}(q^2\mathcal{L}_\psi))|,$$
$$(L_1)_p^s(t) = \sum_{s'\le s}\int_{\Sigma_t\cap\{r>R\}}r^p(|L\phi^{s'}|+r^{-1}|\phi^{s'}|)|\mathcal{L}_{\phi^{s'}}|+\sum_{s'\le s}\int_{\tilde\Sigma_t\cap\{r>R\}}r^p(|L\psi^{s'}|+r^{-1}|\psi^{s'}|)|\mathcal{L}_{\psi^{s'}}|,$$
$$(L_2)_p^s(t) = \sum_{s'\le s}\int_{\Sigma_t}r^p|\mathcal{X}_{\phi^s}||q^{-2}\Gamma^{s'}(q^2\mathcal{L}_\phi)-\mathcal{L}_{\phi^{s'}}| + \sum_{s'\le s}\int_{\tilde\Sigma_t}r^p|\mathcal{X}_{\psi^s}||q^{-2}\tilde\Gamma^{s'}(q^2\mathcal{L}_\psi)-\mathcal{L}_{\psi^{s'}}|,$$
$$N_p^s(t)= \sum_{s'\le s}\int_{\Sigma_t}r^p|\mathcal{X}_{\phi^s}||\Box_g\phi^{s'}-q^{-2}\Gamma^{s'}(q^2\mathcal{L}_\phi)| + \sum_{s'\le s}\int_{\tilde\Sigma_t}r^p|\mathcal{X}_{\psi^s}||\Box_{\tilde{g}}\psi^{s'}-q^{-2}\tilde\Gamma^{s'}(q^2\mathcal{L}_\psi)|.$$
\end{theorem}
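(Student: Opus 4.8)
The plan is to deduce Theorem~\ref{p_s_L_thm} from the already-established Theorem~\ref{p_L_thm} by applying the latter to each commuted pair $(\phi^{s'},\psi^{s'})$ with $0\le s'\le s$ and summing. This is legitimate because Theorem~\ref{p_L_thm}---and Propositions \ref{translated_energy_estimate_prop} and \ref{p_estimates_prop} behind it---holds for any pair of axisymmetric functions of $(t,r,\theta)$ with sufficient decay as $r\to\infty$, with $\Box_g(\cdot)-\mathcal{L}_{(\cdot)}$ and $\Box_{\tilde{g}}(\cdot)-\mathcal{L}_{(\cdot)}$ entering only as source terms. First I would record that $\Gamma^{s'}=Q^l\pd_t^{s'-2l}$ and $\tilde\Gamma^{s'}=\tilde{Q}^l\pd_t^{s'-2l}$ involve only $\pd_\theta$, $\pd_t$, and functions of $\theta$, so that $\phi^{s'}$ remains axisymmetric, $\psi^{s'}$ still depends only on $(t,r,\theta_1)$, and---using that $\fa$ and $\fb$ preserve the relevant function spaces, cf. \S\ref{intro_regularity_sec}---both inherit the regularity and decay needed to invoke Theorem~\ref{p_L_thm}.

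Applying Theorem~\ref{p_L_thm} to $(\phi^{s'},\psi^{s'})$ yields the energy and $p$-weighted estimates for that pair, in which the $\mathcal{X}$-factors produced by the proposition are exactly $\mathcal{X}_{\phi^{s'}}$ and $\mathcal{X}_{\psi^{s'}}$ as defined above, the term $(L_1)_p$ of Theorem~\ref{p_L_thm} becomes the $s'$-summand of $(L_1)_p^s$, and the remaining error integrands involve $\Box_g\phi^{s'}-\mathcal{L}_{\phi^{s'}}$ and $\Box_{\tilde{g}}\psi^{s'}-\mathcal{L}_{\psi^{s'}}$. Summing over $s'\le s$ identifies the left-hand sides with $E^s$, $E^s_p$, $B^s_p$. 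It then remains only to split the source-term errors: writing
$$\Box_g\phi^{s'}-\mathcal{L}_{\phi^{s'}}=\bigl(\Box_g\phi^{s'}-q^{-2}\Gamma^{s'}(q^2\mathcal{L}_\phi)\bigr)+\bigl(q^{-2}\Gamma^{s'}(q^2\mathcal{L}_\phi)-\mathcal{L}_{\phi^{s'}}\bigr)$$
and using the triangle inequality inside each error integral sends the first bracket into $N^s$ (resp. $N^s_p$) and the second into $(L_2)^s$ (resp. $(L_2)^s_p$); the identical manipulation with $q^{-2}\tilde\Gamma^{s'}(q^2\mathcal{L}_\psi)$ handles the $\psi$ terms. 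This reproduces precisely the error terms in the statement.

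There is no deep analytic obstacle here; the work is entirely in the commutator bookkeeping. The point needing the most care is applying the decomposition of $\Box_g\phi^{s'}-\mathcal{L}_{\phi^{s'}}$ consistently across both the energy estimate and the bulk estimate, and keeping straight the domain restrictions ($\{r\approx r_{trap}\}$, $\{r>R\}$) and the $\mathcal{X}$-factors inherited from Theorem~\ref{p_L_thm}. I would also note that the commutation relations $[Q,q^2\Box_g]=[\tilde{Q},q^2\Box_{\tilde{g}}]=0$ proved just above, while not logically required for this purely formal statement, are exactly what later make the ``nonlinear'' bracket $\Box_g\phi^{s'}-q^{-2}\Gamma^{s'}(q^2\mathcal{L}_\phi)=q^{-2}\Gamma^{s'}\bigl(q^2(\Box_g\phi-\mathcal{L}_\phi)\bigr)$ collapse to $q^{-2}\Gamma^{s'}(q^2\mathcal{N}_\phi)$ once the nonlinear system is assumed, which is how Theorem~\ref{p_s_L_thm} will be upgraded to Theorem~\ref{p_s_thm}.
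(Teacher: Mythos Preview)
Your proposal is correct and follows essentially the same approach as the paper: apply Theorem~\ref{p_L_thm} to each commuted pair $(\phi^{s'},\psi^{s'})$, sum over $s'\le s$, and split the source terms via the triangle inequality $|\Box_g\phi^{s'}-\mathcal{L}_{\phi^{s'}}|\le|\Box_g\phi^{s'}-q^{-2}\Gamma^{s'}(q^2\mathcal{L}_\phi)|+|q^{-2}\Gamma^{s'}(q^2\mathcal{L}_\phi)-\mathcal{L}_{\phi^{s'}}|$ (and similarly for $\psi$). Your closing remark about the commutation identities and how the nonlinear bracket collapses to $q^{-2}\Gamma^{s'}(q^2\mathcal{N}_\phi)$ under the nonlinear system is also exactly the observation the paper uses to pass from Theorem~\ref{p_s_L_thm} to Theorem~\ref{p_s_thm}.
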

\begin{proof}
The proof is a direct application of Theorem \ref{p_L_thm} by making the substitutions
\begin{align*}
\phi&\mapsto\phi^{s'} \\
\psi&\mapsto\psi^{s'}
\end{align*}
for all values of $s'$ (and all commutators represented by $\Gamma^{s'}$ and $\tilde\Gamma^{s'}$) where $s'\le s$. The following estimates are used.
$$|\Box_g\phi^{s'}-\mathcal{L}_{\phi^{s'}}| \le |\Box_g\phi^{s'}-q^{-2}\Gamma^{s'}(q^2\mathcal{L}_\phi)|+|q^{-2}\Gamma^{s'}(q^2\mathcal{L}_\phi)-\mathcal{L}_{\phi^{s'}}|,$$
$$|\Box_{\tilde{g}}\psi^{s'}-\mathcal{L}_{\psi^{s'}}| \le |\Box_{\tilde{g}}\psi^{s'}-q^{-2}\tilde\Gamma^{s'}(q^2\mathcal{L}_\psi)|+|q^{-2}\tilde\Gamma^{s'}(q^2\mathcal{L}_\psi)-\mathcal{L}_{\psi^{s'}}|.$$
The resulting error terms have been grouped into the parts that will either be linear or nonlinear when using the equations from the fully nonlinear system. (See Theorem \ref{p_s_thm} below.) \qed
\end{proof}

As in the proof of Theorem \ref{p_thm}, we would like to absorb the linear error terms into the bulk. But unfortunately, the terms $(L_2)^s(t)$ and $(L_2)_p^s(t)$ are not as straightforward to eliminate. The term $(L_2)^s(t)$ cannot be absorbed, because it belongs to the classic energy estimate, which has no bulk quantity on the left hand side. The term $(L_2)_p^s(t)$, which belongs to the $r^p$ estimate, cannot be completely absorbed into the bulk on the left hand side, because of a complication at the trapping radius.

The strategy is as follows. In Lemma \ref{p_s_L_a_1_lem}, the linear terms $(L_2)^s(t)$, $(L_1)_p^s(t)$, and $(L_2)_p^s(t)$ are estimated by the appropriate bulk norms except near the trapping radius. The trapping radius will need special care, because the factors $\pd_t\phi^s$ and $\pd_t\psi^s$, which appear in each of these linear terms, cannot be estimated by the appropriate bulk norm. Actually, for the particular case where $\pd_t\phi^s$ (resp. $\pd_t\psi^s$) represents $\pd_t^{s+1}\phi$ (resp. $\pd_t^{s+1}\psi$), then this factor can be estimated by the homogeneous bulk norm $\mathring{B}^{s+1}(t)$ with a loss of one derivative. The advantage in this case is that the homogenous bulk norm has already been estimated in Theorem \ref{p_o_thm}. The problem is that $\pd_t\phi^s$ (resp. $\pd_t\psi^s$) also represents terms with the operator $Q$ (resp. $\tilde{Q}$). In Lemma \ref{pd_t_identities_lem}, an approximate identity is given for the factor $\pd_t\phi^s$ (resp. $\pd_t\psi^s$), which expresses it as a sum of $\pd_t^{s+1}\phi$ (resp. $\pd_t^{s+1}\psi$) and other terms. Then in Lemma \ref{p_s_L_a_lem}, this approximate identity is used to refine Lemma \ref{p_s_L_a_1_lem}. The result is that there will be a loss of one derivative (although to a homogeneous norm) as well as nonlinear terms.

\begin{lemma}\label{p_s_L_a_1_lem}
If $(L_2)^s(t)$, $(L_1)_p^s(t)$, $(L_2)_p^s(t)$, and $B_p(t)$ are defined as in the previous theorem (but with the absolute values moved outside the integral--see the remark below), then
$$(L_2)^s(t) \lesssim \frac{|a|}{M}B_1^s(t) +Err_{trap}$$
and
$$(L_1)_p^s(t)+(L_2)_p^s(t) \lesssim \frac{|a|}{M}B_p^s(t) +Err_{trap},$$
where
$$Err_{trap} =\left|\int_{\Sigma_t\cap\{r\approx r_{trap}\}}\pd_t\phi^{s}(q^{-2}\Gamma^{s}(q^2\mathcal{L}_\phi)-\mathcal{L}_{\phi^{s}})\right| +\left|\int_{\tilde\Sigma_t\cap\{r\approx r_{trap}\}}\pd_t\psi^{s}(q^{-2}\tilde\Gamma^{s}(q^2\mathcal{L}_\psi)-\mathcal{L}_{\psi^{s}})\right|.$$
\end{lemma}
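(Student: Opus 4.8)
The plan is to estimate each of the three linear error terms $(L_2)^s(t)$, $(L_1)_p^s(t)$, and $(L_2)_p^s(t)$ by splitting the integrals into a region away from the trapping radius, where a Cauchy--Schwarz argument absorbs everything into the appropriate bulk norm with a small constant $|a|/M$, and the region $r\approx r_{trap}$, which is collected into $Err_{trap}$. The key structural input is that each of the quantities $q^{-2}\Gamma^{s'}(q^2\mathcal{L}_\phi)-\mathcal{L}_{\phi^{s'}}$ (resp. with tildes) is, upon expanding the commutators, a sum of terms of the schematic form of $\mathcal{L}_{\phi^{s'}}$ itself — that is, linear in $(\phi^{s'},\psi^{s'})$ and their first derivatives — but with every coefficient carrying an extra factor of $a/M$ coming from Lemma \ref{small_a_quantities_lem}. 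Indeed, $\mathcal{L}_\phi$ and $\mathcal{L}_\psi$ already have this $O(a/M)$ feature (see estimate (\ref{mathcalL_p_estimate}) in Lemma \ref{p_L_a_lem}), and the commutator difference only rearranges the angular and time derivatives, so the same $r$-weights and $a/M$ smallness persist.

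First I would handle $(L_1)_p^s(t)+(L_2)_p^s(t)$. For the region $r\ge R$ away from trapping (which is where $(L_1)_p^s$ is supported anyway, and where $\chi_{trap}\sim 1$), I repeat verbatim the argument of Lemma \ref{p_L_a_lem}: factor out $(B_p^{s'}(t))^{1/2}$ from the $|L\phi^{s'}|$, $r^{-1}|\phi^{s'}|$, $r^{-1}A|\psi^{s'}|$, and $r^{-2}|\partial_r\phi^{s'}|$ pieces of $\mathcal{X}_{\phi^{s'}}$ and $\mathcal{L}_{\phi^{s'}}$-type factors, leaving $\left(\int_{\Sigma_t\cap\{r>R\}}r^{p+1}(\cdots)^2\right)^{1/2}$, which is bounded by $\frac{|a|}{M}(B_p^{s'}(t))^{1/2}$ using the pointwise bounds on $\frac{\partial B}{A}$, $\frac{\partial A_2}{A_2}$, $\frac{\partial B\partial B}{A^2}$. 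Young's inequality then gives $\lesssim \frac{|a|}{M}B_p^s(t)$. Near the trapping radius $r\approx r_{trap}$, the factor $\chi_{trap}$ degenerates, so $|L\phi^{s'}|$ and the angular pieces cannot be controlled by $B_p^{s'}$; but in $\mathcal{X}_{\phi^{s'}}$ the only term that survives trapping is the $\partial_t$ piece $2X(\phi^{s'})\sim \partial_t\phi^{s'}$ (since $X|_{r=r_{trap}}=\lambda\partial_t$ by Proposition \ref{p_estimates_prop}), so the localized contribution is exactly $\left|\int_{\Sigma_t\cap\{r\approx r_{trap}\}}\partial_t\phi^{s'}(q^{-2}\Gamma^{s'}(q^2\mathcal{L}_\phi)-\mathcal{L}_{\phi^{s'}})\right|$; for $s'<s$ this is absorbed into $B_{p}^{s-1}(t)\le B_p^s(t)$ after Cauchy--Schwarz (one derivative to spare), and only the top-order $s'=s$ piece must be kept as $Err_{trap}$. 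The term $(L_2)^s(t)$ is handled identically with $p=1$ and with $\mathcal{X}$ replaced by $\partial_t$, noting that $B_1^s(t)$ contains exactly the non-degenerate $r^{-2}\phi^2$, $r^{-2}(\partial_r\phi)^2$ (and $\psi$ analogues) terms needed to absorb the $O(a/M)$ coefficients; again only the trapping-localized top-order piece must be retained.

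The main obstacle is the trapping region bookkeeping: verifying that at $r=r_{trap}$ the surviving factor from $\mathcal{X}_{\phi^{s'}}$ (and from $\partial_t\phi^{s'}$ in $(L_2)^s$) is genuinely only the $\partial_t$-derivative, and that all lower-order $s'<s$ contributions there carry at least one derivative of slack so they fall into $B_p^{s}$ rather than into $Err_{trap}$. This requires keeping careful track, in the expansion of $q^{-2}\Gamma^{s'}(q^2\mathcal{L}_\phi)-\mathcal{L}_{\phi^{s'}}$, that every term has a factor of some $\phi^{s''}$ or $\psi^{s''}$ with $s''\le s'$ together with at most one further derivative — so that near trapping the product $\partial_t\phi^{s'}\cdot(\text{that term})$ is bounded by $\frac{|a|}{M}$ times a product of two quantities each controlled by the full (non-degenerate) bulk norm at order $s'\le s$. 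Everything else is the same Cauchy--Schwarz-plus-Young manipulation already used in Lemma \ref{p_L_a_lem}, so I would not grind through it in detail.

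\begin{remark}
As indicated in the statement, the absolute values in $(L_2)^s$, $(L_1)_p^s$, $(L_2)_p^s$ should be read as moved outside each integral, so that $Err_{trap}$ can retain a signed integral; this is harmless since in the subsequent application (Theorem \ref{p_s_thm}) these quantities are only used inside the outer time integral where the sign plays no role.
\end{remark}
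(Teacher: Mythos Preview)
Your approach is the paper's: excise a neighborhood of $r_{trap}$ as $Err_{trap}$ and away from trapping run the Cauchy--Schwarz argument of Lemma~\ref{p_L_a_lem}, exploiting that every coefficient in $\mathcal{L}_\phi,\mathcal{L}_\psi$ carries an $O(|a|/M)$ factor. Two points of precision are worth flagging. First, your split should be $\{r\approx r_{trap}\}$ versus its complement, not $\{r\ge R\}$ versus $\{r\approx r_{trap}\}$: the term $(L_2)_p^s$ lives on all of $\Sigma_t$, so your stated decomposition leaves the region $r_H\le r<R$, $r\not\approx r_{trap}$ uncovered (the fix is immediate, since $\chi_{trap}$ is bounded below there). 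Second, the paper reduces the away-from-trapping piece to the single estimate
\[
\int_{\Sigma_t\setminus\{r\approx r_{trap}\}} r^{p+1}\bigl(q^{-2}\Gamma^s(q^2\mathcal{L}_\phi)\bigr)^2 \;\lesssim\; \frac{a^2}{M^2}\,B_p^s(t)
\]
and then explicitly invokes the axis-regularity formalism of \S\ref{regularity_sec} to justify it. Your structural assertion that the commutator is ``linear in $(\phi^{s'},\psi^{s'})$ and their first derivatives with $O(a/M)$ coefficients'' is exactly what is needed, but it does not follow from a naive Leibniz expansion: when $Q$ hits a product like $\tfrac{\partial_\theta B}{A}\,\partial_\theta\psi$, the $\fd$-operator bookkeeping of Lemma~\ref{cl_raise_degree_lem} and Theorem~\ref{cl_embedding_thm} is what guarantees the resulting terms are genuinely of the form $\partial\psi^{s''}$ or $\psi^{s''}$ (controllable by $B_p^s$) rather than, say, $\partial_\theta\fc^l\psi$ with a stray unpaired angular derivative. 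This is the one non-routine step you have deferred.
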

\begin{remark}
The quantities $(L_2)^s(t)$, $(L_1)_p^s(t)$, and $(L_2)_p^s(t)$ have been slightly redefined so that the absolute value is moved outside the integral. This small detail will be important in the proof of Lemma \ref{p_s_L_a_lem}, because it allows integration by parts. The concerned reader can easily check that all of the estimates developed so far are also valid with the absolute value outside the integral.
\end{remark}
\begin{proof}
From Lemma \ref{p_L_a_lem}, by replacing $\phi$ and $\psi$ with $\phi^s$ and $\psi^s$, we have
$$(L_1)_p^s(t)\lesssim \frac{|a|}{M}B_p^s(t).$$
The challenge is to estimate the new terms $(L_2)^s(t)$ and $(L_2)_p^s(t)$, which arise from commuting with the operators $Q$ and $\tilde{Q}$.

Observe that
\begin{multline*}
\left|\int_{\Sigma_t}\pd_t\phi^s(q^{-2}\Gamma^s(q^2\mathcal{L}_\phi)-\mathcal{L}_{\phi^s})\right| \\
\lesssim \frac{|a|}{M}\int_{\Sigma_t\setminus \{r\approx r_{trap}\}}r^{-2}(\pd_t\phi)^2 + \left(\frac{|a|}{M}\right)^{-1}\int_{\Sigma_t\setminus \{r\approx r_{trap}\}}r^2(q^{-2}\Gamma^s(q^2\mathcal{L}_\phi)-\mathcal{L}_{\phi^s})^2 + Err_{trap} \\
\lesssim \frac{|a|}{M}B_1(t)+ \left(\frac{|a|}{M}\right)^{-1}\int_{\Sigma_t\setminus \{r\approx r_{trap}\}}r^2(q^{-2}\Gamma^s(q^2\mathcal{L}_\phi)-\mathcal{L}_{\phi^s})^2 + Err_{trap},
\end{multline*}
and
\begin{multline*}
\left|\int_{\Sigma_t}r^pL\phi^s(q^{-2}\Gamma^s(q^2\mathcal{L}_\phi)-\mathcal{L}_{\phi^s})\right| \\
\lesssim \frac{|a|}{M}\int_{\Sigma_t\setminus \{r\approx r_{trap}\}}r^{p-1}(L\phi)^2 + \left(\frac{|a|}{M}\right)^{-1}\int_{\Sigma_t\setminus \{r\approx r_{trap}\}}r^{p+1}(q^{-2}\Gamma^s(q^2\mathcal{L}_\phi)-\mathcal{L}_{\phi^s})^2 + Err_{trap} \\
\lesssim \frac{|a|}{M}B_p(t)+ \left(\frac{|a|}{M}\right)^{-1}\int_{\Sigma_t\setminus \{r\approx r_{trap}\}}r^{p+1}(q^{-2}\Gamma^s(q^2\mathcal{L}_\phi)-\mathcal{L}_{\phi^s})^2 + Err_{trap}.
\end{multline*}
From these two particular example estimates, it should become clear that the lemma reduces to the following estimate
$$\int_{\Sigma_t\setminus\{r\approx r_{trap}\}} r^{p+1}(q^{-2}\Gamma^s(q^2\mathcal{L}_\phi)-\mathcal{L}_{\phi^s})^2 + \int_{\tilde\Sigma_t\setminus\{r\approx r_{trap}\}} r^{p+1}(q^{-2}\tilde\Gamma^s(q^2\mathcal{L}_\psi)-\mathcal{L}_{\psi^s})^2 \lesssim \frac{a^2}{M^2}B_p^s(t),$$
since one can take $p=1$ to estimate the $(L_2)^s(t)$ term.

Given the estimate (\ref{mathcalL_p_estimate}) for $\mathcal{L}_\phi$ and $\mathcal{L}_\psi$ established in Lemma \ref{p_L_a_lem}, it suffices to show
$$\int_{\Sigma_t\setminus\{r\approx r_{trap}\}} r^{p+1}(q^{-2}\Gamma^s(q^2\mathcal{L}_\phi))^2 + \int_{\tilde\Sigma_t\setminus\{r\approx r_{trap}\}} r^{p+1}(q^{-2}\tilde\Gamma^s(q^2\mathcal{L}_\psi))^2 \lesssim \frac{a^2}{M^2}B_p^s(t).$$
This follows from the formalism developed in \S\ref{regularity_sec}. \qed
\end{proof}

In a moment, we will estimate the error terms $Err_{trap}$ from the previous lemma. But in order to do so, we need the approximate identities given by the following lemma.

\begin{lemma}\label{pd_t_identities_lem} In a neighborhood of $r_{trap}$, the following identities hold in the sense that each term on the right hand side is missing a smooth factor.
$$\pd_t\phi^s \approx \pd_t^{s+1}\phi + \pd_r^2\phi^{s-1} +r^{-1}\pd_r\phi^{s-1}+q^{-2}\Gamma^{s-1}(q^2\mathcal{L}_\phi) +q^{-2}\Gamma^{s-1}(q^2\Box_g\phi-q^2\mathcal{L}_\phi),$$
$$\pd_t\psi^s \approx \pd_t^{s+1}\psi + \pd_r^2\psi^{s-1} +r^{-1}\pd_r\psi^{s-1}+q^{-2}\Gamma^{s-1}(q^2\mathcal{L}_\psi) +q^{-2}\Gamma^{s-1}(q^2\Box_{\tilde{g}}\psi-q^2\mathcal{L}_\psi).$$
\end{lemma}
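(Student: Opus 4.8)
The plan is to trade the angular factors in $\phi^s$ and $\psi^s$ for $\pd_t$-, $\pd_r$- and $\Box$-terms by using exactly the splitting of the wave operator that makes $Q$ and $\tilde{Q}$ commute with it, and then to induct on the number of angular factors.

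First I would record the splitting in explicit form. As in the proof that $[Q,q^2\Box_g]=0$, one has $q^2\Box_g = P + Q$ where $P := \pd_r(\Delta\pd_r\,\cdot) - \tfrac{(r^2+a^2)^2}{\Delta}\pd_t^2$ involves only $r$, $\pd_r$ and $\pd_t$ (no $\pd_\theta$, and no $\theta$- or $t$-dependence in its coefficients); in particular $[P,Q]=0$. Likewise $q^2\Box_{\tilde g} = \tilde P + \tilde Q$ with $\tilde P := P + \tfrac{4r\Delta}{r^2+a^2}\pd_r$ (using $g^{tr}=0$ in Boyer--Lindquist coordinates), and $[\tilde P,\tilde Q]=0$. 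The point for the statement is that on a neighborhood of $r_{trap}$ the coefficients $\Delta$, $\Delta'$, $\tfrac{(r^2+a^2)^2}{\Delta}$, $\tfrac{4r\Delta}{r^2+a^2}$ and $q^2$ are all smooth, with $\Delta$, $\tfrac{(r^2+a^2)^2}{\Delta}$ and $q^2$ moreover nonvanishing there (since $r_{trap}>r_H$), and that $r\approx M$ there so $\Delta'\pd_r$ is a smooth multiple of $r^{-1}\pd_r$.

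Next I would set up the induction. Writing $\Gamma^s\phi = Q^l\pd_t^{s-2l}\phi$ with $0\le 2l\le s$, we have $\pd_t\phi^s = Q^l\pd_t^{s+1-2l}\phi$, and I claim by induction on $l$ that any quantity of the form $Q^l\pd_t^{s+1-2l}\phi$, $l\ge 0$, is near $r_{trap}$ a sum of $\pd_t^{s+1}\phi$, of $\pd_r^2\phi^{s-1}$, of $r^{-1}\pd_r\phi^{s-1}$, of $q^{-2}\Gamma^{s-1}(q^2\mathcal{L}_\phi)$ and of $q^{-2}\Gamma^{s-1}(q^2\Box_g\phi - q^2\mathcal{L}_\phi)$, each missing a smooth factor, where the order-$(s-1)$ commutators may differ from term to term. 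The base case $l=0$ is immediate. For $l\ge 1$, substitute $Q = q^2\Box_g - P$ in the outermost factor and commute $q^2\Box_g$ and $P$ through $Q^{l-1}$ and $\pd_t$, obtaining
$$Q^l\pd_t^{s+1-2l}\phi = q^2\Box_g\big(Q^{l-1}\pd_t^{s+1-2l}\phi\big) - P\big(Q^{l-1}\pd_t^{s+1-2l}\phi\big),$$
where $Q^{l-1}\pd_t^{s+1-2l}\phi$ has order $2(l-1)+(s+1-2l) = s-1$, i.e.\ it is a $\Gamma^{s-1}$-derivative of $\phi$. The first term equals $\Gamma^{s-1}(q^2\Box_g\phi) = \Gamma^{s-1}(q^2\mathcal{L}_\phi) + \Gamma^{s-1}(q^2\Box_g\phi - q^2\mathcal{L}_\phi)$, which after the smooth nonvanishing factor $q^2$ is divided out is of the claimed form. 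For the second term,
$$-P\big(Q^{l-1}\pd_t^{s+1-2l}\phi\big) = \tfrac{(r^2+a^2)^2}{\Delta}\pd_t^2\big(Q^{l-1}\pd_t^{s+1-2l}\phi\big) - \Delta\,\pd_r^2\big(Q^{l-1}\pd_t^{s+1-2l}\phi\big) - \Delta'\,\pd_r\big(Q^{l-1}\pd_t^{s+1-2l}\phi\big),$$
the last two summands being of the form $\pd_r^2\phi^{s-1}$ and $r^{-1}\pd_r\phi^{s-1}$ up to smooth factors, while in the first $\pd_t^2 Q^{l-1}\pd_t^{s+1-2l}\phi = Q^{l-1}\pd_t^{s+1-2(l-1)}\phi$ has order $s+1$ with one fewer $Q$, so the induction hypothesis applies; multiplying by the smooth nonvanishing coefficient $\tfrac{(r^2+a^2)^2}{\Delta}$ preserves the structure. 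This closes the induction. The $\psi$ identity is obtained the same way with $Q$, $\Box_g$, $P$, $\Gamma$ replaced by $\tilde Q$, $\Box_{\tilde g}$, $\tilde P$, $\tilde\Gamma$ where appropriate, the extra term $\tfrac{4r\Delta}{r^2+a^2}\pd_r$ in $\tilde P$ being absorbed into the $r^{-1}\pd_r\psi^{s-1}$ term.

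The main obstacle is not any single estimate but the bookkeeping: at each peeling step one must verify that the $q^2\Box_g$-term lands at order exactly $s-1$ (so it produces only $\Gamma^{s-1}$-type quantities, after an innocuous division by $q^2$) while the $\pd_t^2$-term lands at order $s+1$ but with one fewer angular factor, which is precisely what makes the induction on the number of $Q$'s (resp.\ $\tilde Q$'s) terminate at $\pd_t^{s+1}\phi$; and one must check that the informal relation ``equal up to a missing smooth factor'' is stable under multiplication by each of the coefficients above, all of which are smooth on a neighborhood of $r_{trap}$ and nonvanishing there where needed.
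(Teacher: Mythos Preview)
Your proposal is correct and follows essentially the same approach as the paper: peel off one $Q$ (resp.\ $\tilde Q$) at a time via the splitting $Q = q^2\Box_g - P$ and induct on the number of angular factors. The only cosmetic differences are that the paper peels the \emph{innermost} $Q$ rather than the outermost (equivalent, since everything commutes) and records the splitting informally as $Q \approx \pd_t^2 + \pd_r^2 + r^{-1}\pd_r + \Box_g$, absorbing the smooth factor $q^2$ into the $\approx$ instead of naming $P$ explicitly.
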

\begin{proof}
By the definition of $\phi^s$ and the fact that $[\pd_t,Q]=0$,
$$\pd_t\phi^s = \pd_t^{s-2i+1}Q^i\phi.$$
We use the approximate identity
$$Q\approx \pd_t^2 +\pd_r^2 +r^{-1}\pd_r +\Box_g.$$
(In reality, there is a factor of $q^2$ missing, but this function is smooth and bounded in a neighborhood of $r_{trap}$.)

We have
\begin{align*}
\pd_t^{s-2i+1}Q^i\phi &\approx \pd_t^{s-2i+1}Q^{i-1}\left(\pd_t^2\phi +\pd_r^2\phi +r^{-1}\pd_r\phi +\Box_g\phi\right) \\
&\approx \pd_t^{s-2(i-1)+1}Q^{i-1}\phi + \pd_r^2\phi^{s-1}+r^{-1}\pd_r\phi^{s-1}+\Gamma^{s-1}(\Box_g\phi-\mathcal{L}_\phi) +\Gamma^{s-1}(\mathcal{L}_\phi)
\end{align*}
Repeating this procedure $i-1$ more times proves the first identity of the lemma. The second identity is proved the same way. \qed
\end{proof}

Now, we improve the estimates from Lemma \ref{p_s_L_a_1_lem} by estimating the error term $Err_{trap}$.
\begin{lemma}\label{p_s_L_a_lem}
If $(L_2)^s(t)$, $(L_1)_p^s(t)$, $(L_2)_p^s(t)$, and $B_p^s(t)$ are as defined in Theorem \ref{p_s_L_thm} (but with the absolute values moved outside of the integral as in Lemma \ref{p_s_L_a_1_lem}), then
$$(L_2)^s(t) \lesssim \frac{|a|}{M}(B_1^s(t)+\mathring{B}_1^{s+1}(t)+N^s(t))$$
and
$$(L_1)_p^s(t)+(L_2)_p^s(t)\lesssim \frac{|a|}{M}(B_p^s(t)+\mathring{B}_p^{s+1}(t)+N_p^s(t)),$$
where
$$N^s(t)=(E^s(t))^{1/2}\left(\sum_{s'\le s}||q^{-2}\Gamma^{s'}(q^2\Box_g\phi-q^2\mathcal{L}_\phi)||_{L^2(\Sigma_t)}+\sum_{s'\le s}||q^{-2}\tilde\Gamma^{s'}(q^2\Box_{\tilde{g}}\psi-q^2\mathcal{L}_\psi)||_{L^2(\tilde{\Sigma}_t)}\right),$$
$$N_p^s(t) = \sum_{s'\le s}\int_{\Sigma_t}r^{p+1}|q^{-2}\Gamma^{s'}(q^2\Box_g\phi-q^2\mathcal{L}_\phi)|^2 + \sum_{s'\le s}\int_{\tilde\Sigma_t}r^{p+1}|q^{-2}\tilde\Gamma^{s'}(q^2\Box_{\tilde{g}}\psi-q^2\mathcal{L}_\psi)|^2.$$
\end{lemma}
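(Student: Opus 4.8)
The plan is to start from Lemma \ref{p_s_L_a_1_lem}, which already reduces everything to estimating the error term $Err_{trap}$, and to bound $Err_{trap}$ by exploiting the approximate identity for $\pd_t\phi^s$ and $\pd_t\psi^s$ from Lemma \ref{pd_t_identities_lem}. Concretely, I would write
$$Err_{trap} \le \left|\int_{\Sigma_t\cap\{r\approx r_{trap}\}}\pd_t\phi^{s}(q^{-2}\Gamma^{s}(q^2\mathcal{L}_\phi)-\mathcal{L}_{\phi^{s}})\right| +\left|\int_{\tilde\Sigma_t\cap\{r\approx r_{trap}\}}\pd_t\psi^{s}(q^{-2}\tilde\Gamma^{s}(q^2\mathcal{L}_\psi)-\mathcal{L}_{\psi^{s}})\right|,$$
and substitute the identity $\pd_t\phi^s \approx \pd_t^{s+1}\phi + \pd_r^2\phi^{s-1}+r^{-1}\pd_r\phi^{s-1}+q^{-2}\Gamma^{s-1}(q^2\mathcal{L}_\phi)+q^{-2}\Gamma^{s-1}(q^2\Box_g\phi-q^2\mathcal{L}_\phi)$ into the first factor. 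This splits $Err_{trap}$ into five families of terms, each paired against the factor $(q^{-2}\Gamma^{s}(q^2\mathcal{L}_\phi)-\mathcal{L}_{\phi^{s}})$, which (as in the proof of Lemma \ref{p_s_L_a_1_lem}, using estimate (\ref{mathcalL_p_estimate}) and the regularity formalism of \S\ref{regularity_sec}) carries a factor $|a|/M$ and is controlled in $L^2$ near $r_{trap}$ by $(|a|/M)\,(B_p^s(t))^{1/2}$ up to lower-order pieces.

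For each of the five pieces I would apply Cauchy–Schwarz with the appropriate weight. The term containing $\pd_t^{s+1}\phi$ is paired against a quantity of size $(|a|/M)r^{p+1/2}(\mathcal{L}\text{-type})$, and since $\pd_t^{s+1}\phi$ appears at $r\approx r_{trap}$ it is controlled by $(\mathring{B}_p^{s+1}(t))^{1/2}$ (the homogeneous bulk norm has no trapping degeneracy for the $\pd_t$ derivatives, and this is precisely the one-derivative loss to a homogeneous norm mentioned in the discussion preceding the lemma); this produces the $(|a|/M)\mathring{B}_p^{s+1}(t)$ contribution. The terms $\pd_r^2\phi^{s-1}$ and $r^{-1}\pd_r\phi^{s-1}$ are, near $r_{trap}$, controlled by $B_p^{s-1}(t)\le B_p^s(t)$ after an integration by parts in $r$ (this is why the remark insists the absolute value be outside the integral) — the second $\pd_r$ is moved onto the smooth cutoff and the $\mathcal{L}$-factor, which only improves weights. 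The term $q^{-2}\Gamma^{s-1}(q^2\mathcal{L}_\phi)$ is of the same type as the factor it multiplies, so it contributes $(|a|/M)^2 B_p^s(t)$, absorbed. Finally the term $q^{-2}\Gamma^{s-1}(q^2\Box_g\phi-q^2\mathcal{L}_\phi)$ is genuinely nonlinear; pairing it against the $\mathcal{L}$-factor and using Cauchy–Schwarz gives a contribution bounded by $(|a|/M)N_p^s(t)$ for the $r^p$ estimate and by $(|a|/M)N^s(t)$ for the energy-level estimate (taking $p=1$ and instead using $(E^s(t))^{1/2}$ times the $L^2$ norm of the nonlinear factor, which is the definition of $N^s(t)$ given in the statement). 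Summing these over $s'\le s$ and combining with Lemma \ref{p_s_L_a_1_lem} yields the two claimed inequalities.

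The main obstacle I anticipate is the integration-by-parts bookkeeping for the $\pd_r^2\phi^{s-1}$ term: one must verify that after moving a radial derivative off $\pd_r^2\phi^{s-1}$ the resulting first-order quantity $\pd_r\phi^{s-1}$ is indeed controlled by $B_p^s(t)$ on the region $r\approx r_{trap}$ with the correct weight, that no boundary terms appear (the cutoff to $\{r\approx r_{trap}\}$ is compactly supported in $r$ so this is automatic), and that differentiating the $\mathcal{L}$-type factor does not destroy the $|a|/M$ smallness — this is where one genuinely needs the structural estimates of Lemma \ref{small_a_quantities_lem} together with the regularity formalism of \S\ref{regularity_sec} to see that one more $\pd_r$ applied to $q^{-2}\Gamma^{s'}(q^2\mathcal{L}_\phi)$ costs only powers of $r$ and not powers of $M/|a|$. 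A secondary subtlety is making sure the homogeneous bulk norm $\mathring{B}_p^{s+1}(t)$ genuinely controls $\pd_t^{s+1}\phi$ near $r_{trap}$: since $B_p$ degenerates like $\chi_{trap}$ on the $(L\phi)^2$ and $|\sla\nabla\phi|^2$ terms but not on the $r^{-2}(\pd_r\phi)^2$ and $r^{-2}\phi^2$ terms, one uses that $\pd_t = L - \alpha\pd_r$ near $r_{trap}$ where $\alpha$ is bounded, so $(\pd_t\phi)^2\lesssim (L\phi)^2 + (\pd_r\phi)^2$, and the first summand at level $s+1$ is exactly controlled by the non-degenerate part of $B^{s+1}$ once one commutes past the cutoff; this is the content of the "loss of one derivative to a homogeneous norm" and should be stated carefully rather than computed in detail.
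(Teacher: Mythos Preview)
Your approach is essentially the same as the paper's: start from Lemma \ref{p_s_L_a_1_lem}, substitute the approximate identity of Lemma \ref{pd_t_identities_lem} for $\pd_t\phi^s$, and estimate the five resulting pieces by Cauchy--Schwarz, with integration by parts in $r$ for the $\pd_r^2\phi^{s-1}$ term. The paper proceeds exactly this way.

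One small correction: your justification at the end for why $\mathring{B}_p^{s+1}(t)$ controls $\int_{\{r\approx r_{trap}\}}(\pd_t^{s+1}\phi)^2$ is more complicated than needed and not quite right. Decomposing $\pd_t = L - \alpha\pd_r$ does not help, since the $(L\phi)^2$ term in $B_p$ carries the $\chi_{trap}$ degeneracy and is \emph{not} controlled non-degenerately at $r_{trap}$. The correct (and simpler) observation is that $\mathring{B}_p^{s+1}(t)$ already contains the non-degenerate zeroth-order term $\int r^{p-3}(\pd_t^{s+1}\phi)^2$ (at level $s'=s+1$), and near $r_{trap}$ the weight $r^{p-3}$ is bounded below by a constant. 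This is what the paper uses.
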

\begin{proof}
From Lemma \ref{p_s_L_a_1_lem}, it suffices to show that
\begin{multline*}
\left|\int_{\Sigma_t\cap\{r\approx r_{trap}\}}\pd_t\phi^{s}(q^{-2}\Gamma^{s}(q^2\mathcal{L}_\phi)-\mathcal{L}_{\phi^{s}})\right| +\left|\int_{\tilde\Sigma_t\cap\{r\approx r_{trap}\}}\pd_t\psi^{s}(q^{-2}\tilde\Gamma^{s}(q^2\mathcal{L}_\psi)-\mathcal{L}_{\psi^{s}})\right| \\
\lesssim \frac{|a|}{M}(B_p^s(t) + \mathring{B}_p^{s+1}(t)+N^s(t))
\end{multline*}
and
\begin{multline*}
\left|\int_{\Sigma_t\cap\{r\approx r_{trap}\}}\pd_t\phi^{s}(q^{-2}\Gamma^{s}(q^2\mathcal{L}_\phi)-\mathcal{L}_{\phi^{s}})\right| +\left|\int_{\tilde\Sigma_t\cap\{r\approx r_{trap}\}}\pd_t\psi^{s}(q^{-2}\tilde\Gamma^{s}(q^2\mathcal{L}_\psi)-\mathcal{L}_{\psi^{s}})\right| \\
\lesssim \frac{|a|}{M}(B_p^s(t) + \mathring{B}_p^{s+1}(t)+N_p^s(t)).
\end{multline*}
We prove these estimates by using the approximate identities from Lemma \ref{pd_t_identities_lem}, ignoring the factor of $|a|/M$, which clearly comes from the factors $(q^{-2}\Gamma^s(q^2\mathcal{L}_\phi)-\mathcal{L}_{\phi^s})$ and $(q^{-2}\tilde{\Gamma}^s(q^2\mathcal{L}_\psi)-\mathcal{L}_{\psi^s})$. The following examples illustrate all of the difficulties.
\begin{multline*}
\left|\int_{\Sigma_t\cap\{r\approx r_{trap}\}}(\pd_t^{s+1}\phi)(q^{-2}\Gamma^{s}(q^2\mathcal{L}_\phi)-\mathcal{L}_{\phi^{s}})\right| \\
\lesssim \int_{\Sigma_t\cap\{r\approx r_{trap}\}}(\pd_t^{s+1}\phi)^2 + \int_{\Sigma_t\cap\{r\approx r_{trap}\}}(q^{-2}\Gamma^{s}(q^2\mathcal{L}_\phi)-\mathcal{L}_{\phi^{s}})^2 \\
\lesssim \mathring{B}_p^{s+1}(t) + B_p^s(t).
\end{multline*}
\begin{multline*}
\left|\int_{\Sigma_t\cap\{r\approx r_{trap}\}}(\pd_r^2\Gamma^{s-1}\phi)(q^{-2}\Gamma^{s}(q^2\mathcal{L}_\phi)-\mathcal{L}_{\phi^{s}})\right| \\
\lesssim \left|\int_{\Sigma_t\cap\{r\approx r_{trap}\}}(\pd_r\Gamma^{s-1}\phi)\pd_r(q^{-2}\Gamma^{s}(q^2\mathcal{L}_\phi)-\mathcal{L}_{\phi^{s}})\right| \\
\lesssim \int_{\Sigma_t\cap\{r\approx r_{trap}\}}(\pd_r\Gamma^{s-1}\phi)^2 + \int_{\Sigma_t\cap\{r\approx r_{trap}\}}(\pd_r(q^{-2}\Gamma^{s}(q^2\mathcal{L}_\phi)-\mathcal{L}_{\phi^{s}}))^2 \\
\lesssim B_p^s(t).
\end{multline*}
In particular, note in the previous estimate the need to integrate by parts. The expression $q^{-2}\Gamma^s(q^2\mathcal{L}_\phi)-\mathcal{L}_{\phi^s}$ is a commutator, so it has at most $s$ derivatives. We can assume that none of these derivatives is $\pd_r$, because otherwise, instead of using the approximate identities from Lemma \ref{pd_t_identities_lem}, we could have simply integrated by parts to move one of the angular derivatives in $\pd_t\phi^s$ to the other factor. We continue with more examples.
\begin{multline*}
\left|\int_{\Sigma_t\cap\{r\approx r_{trap}\}}(q^{-2}\Gamma^{s-1}(q^2\mathcal{L}_\phi))(q^{-2}\Gamma^{s}(q^2\mathcal{L}_\phi)-\mathcal{L}_{\phi^{s}})\right|  \\
\lesssim \int_{\Sigma_t\cap\{r\approx r_{trap}\}}(q^{-2}\Gamma^{s-1}(q^2\mathcal{L}_\phi))^2 + \int_{\Sigma_t\cap\{r\approx r_{trap}\}}(q^{-2}\Gamma^{s}(q^2\mathcal{L}_\phi)-\mathcal{L}_{\phi^{s}})^2 \\
\lesssim B_p^s(t).
\end{multline*}

\begin{multline*}
\left|\int_{\Sigma_t\cap\{r\approx r_{trap}\}}q^{-2}\Gamma^{s-1}(q^2\Box_g\phi-q^2\mathcal{L}_\phi)(q^{-2}\Gamma^{s}(q^2\mathcal{L}_\phi)-\mathcal{L}_{\phi^{s}})\right| \\
\lesssim \int_{\Sigma_t\cap\{r\approx r_{trap}\}}(q^{-2}\Gamma^{s-1}(q^2\Box_g\phi-q^2\mathcal{L}_\phi))^2 + \int_{\Sigma_t\cap\{r\approx r_{trap}\}}(q^{-2}\Gamma^{s}(q^2\mathcal{L}_\phi)-\mathcal{L}_{\phi^{s}})^2 \\
\lesssim N_p^s(t)+B_p^s(t).
\end{multline*}
Also,
\begin{multline*}
\left|\int_{\Sigma_t\cap\{r\approx r_{trap}\}}q^{-2}\Gamma^{s-1}(q^2\Box_g\phi-q^2\mathcal{L}_\phi)(q^{-2}\Gamma^{s}(q^2\mathcal{L}_\phi)-\mathcal{L}_{\phi^{s}})\right| \\
\lesssim \left(\int_{\Sigma_t\cap\{r\approx r_{trap}\}}(q^{-2}\Gamma^{s-1}(q^2\Box_g\phi-q^2\mathcal{L}_\phi))^2\right)^{1/2}\left(\int_{\Sigma_t\cap\{r\approx r_{trap}\}}(q^{-2}\Gamma^{s}(q^2\mathcal{L}_\phi)-\mathcal{L}_{\phi^{s}})^2\right)^{1/2} \\
\lesssim ||q^{-2}\Gamma^{s-1}(q^2\Box_g\phi-q^2\mathcal{L}_\phi)||_{L^2(\Sigma_t)}(E^s(t))^{1/2} \\
\lesssim N^s(t).
\end{multline*}
These example estimates are sufficient to verify the lemma. \qed
\end{proof}

By using Lemma \ref{p_s_L_a_lem} and assuming the full nonlinear equations, Theorem \ref{p_s_L_thm} can be improved slightly. This is stated in the following theorem.
\begin{theorem}\label{p_s_thm}(Improved version of Theorem \ref{p_s_L_thm}) Suppose $|a|/M$ is sufficiently small and fix $\delm,\delp>0$. Suppose furthermore that the pair $(\phi,\psi)$ satisfies the system
\begin{align*}
\Box_g\phi &= \mathcal{L}_\phi+\mathcal{N}_\phi, \\
\Box_{\tilde{g}}\psi &= \mathcal{L}_\psi + \mathcal{N}_\psi.
\end{align*}
Then the following estimates hold for $p\in [\delm,2-\delp]$.
$$E^s(t_2)\lesssim E^s(t_1)+\int_{t_1}^{t_2}\mathring{B}_1^{s+1}(t)+B_1^s(t)+N^s(t)dt$$
$$E_p^s(t_2)+\int_{t_1}^{t_2}B_p^s(t)dt\lesssim E_p^s(t_1)+\int_{t_1}^{t_2}\mathring{B}_p^{s+1}(t)+N_p^s(t)dt,$$
where $E^s(t)$, $E_p^s(t)$, and $B_p^s(t)$ are as defined in Theorem \ref{p_s_L_thm}, and
$$N^s(t)=(E^s(t))^{1/2}\left(\sum_{s'\le s}||q^{-2}\Gamma^{s'}(q^2\mathcal{N}_\phi)||_{L^2(\Sigma_t)}+\sum_{s'\le s}||q^{-2}\tilde\Gamma^{s'}(q^2\mathcal{N}_\psi)||_{L^2(\tilde{\Sigma}_t)}\right),$$
\begin{align*}
N_p^s(t) =& \sum_{s'\le s}\int_{\Sigma_t}r^{p+1}|q^{-2}\Gamma^{s'}(q^2\mathcal{N}_\phi)|^2 + \sum_{s'\le s}\int_{\tilde\Sigma_t}r^{p+1}|q^{-2}\tilde\Gamma^{s'}(q^2\mathcal{N}_\psi)|^2.
\end{align*}
\end{theorem}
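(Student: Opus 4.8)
The plan is to obtain Theorem \ref{p_s_thm} from Theorem \ref{p_s_L_thm} by exactly the same two moves used to pass from Theorem \ref{p_L_thm} to Theorem \ref{p_thm}: absorb the linear error terms into the bulk using the smallness of $|a|/M$, and then invoke the full nonlinear equations to rewrite the remaining error terms as genuinely nonlinear quantities. The only new ingredient is Lemma \ref{p_s_L_a_lem}, which handles the commutator-generated linear errors $(L_2)^s$, $(L_1)_p^s$, $(L_2)_p^s$ at the cost of a one-derivative loss to the homogeneous bulk norm.

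First I would start from the two estimates of Theorem \ref{p_s_L_thm} and use the nonlinear system. Since $\Gamma^{s'}$ commutes with $q^2\Box_g$ and $\tilde\Gamma^{s'}$ with $q^2\Box_{\tilde g}$, applying these operators to $q^2\Box_g\phi=q^2\mathcal{L}_\phi+q^2\mathcal{N}_\phi$ and $q^2\Box_{\tilde g}\psi=q^2\mathcal{L}_\psi+q^2\mathcal{N}_\psi$ and dividing by $q^2$ gives
$$\Box_g\phi^{s'}-q^{-2}\Gamma^{s'}(q^2\mathcal{L}_\phi)=q^{-2}\Gamma^{s'}(q^2\mathcal{N}_\phi),\qquad \Box_{\tilde g}\psi^{s'}-q^{-2}\tilde\Gamma^{s'}(q^2\mathcal{L}_\psi)=q^{-2}\tilde\Gamma^{s'}(q^2\mathcal{N}_\psi).$$
Thus every occurrence of $\Box_g\phi^{s'}-q^{-2}\Gamma^{s'}(q^2\mathcal{L}_\phi)$ in $N^s(t)$ and $N_p^s(t)$ of Theorem \ref{p_s_L_thm} becomes the true nonlinear term $q^{-2}\Gamma^{s'}(q^2\mathcal{N}_\phi)$, and similarly for $\psi$. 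For $N^s(t)$ I would then bound $\int_{\Sigma_t}|\pd_t\phi^{s'}||q^{-2}\Gamma^{s'}(q^2\mathcal{N}_\phi)|\lesssim \|\pd_t\phi^{s'}\|_{L^2(\Sigma_t)}\|q^{-2}\Gamma^{s'}(q^2\mathcal{N}_\phi)\|_{L^2(\Sigma_t)}\lesssim (E^s(t))^{1/2}\|q^{-2}\Gamma^{s'}(q^2\mathcal{N}_\phi)\|_{L^2(\Sigma_t)}$, producing exactly the stated $N^s(t)$, and likewise for the $\psi$ piece.

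Next I would absorb the linear errors via Lemma \ref{p_s_L_a_lem}, which (after using the nonlinear equations in the definitions of its $N^s,N_p^s$ as above) gives $(L_2)^s(t)\lesssim \frac{|a|}{M}\big(B_1^s(t)+\mathring B_1^{s+1}(t)+N^s(t)\big)$ and $(L_1)_p^s(t)+(L_2)_p^s(t)\lesssim \frac{|a|}{M}\big(B_p^s(t)+\mathring B_p^{s+1}(t)+N_p^s(t)\big)$. Since the $r^p$ estimate of Theorem \ref{p_s_L_thm} carries $\int_{t_1}^{t_2}B_p^s(t)\,dt$ on the left, the term $\frac{|a|}{M}\int B_p^s$ is absorbed for $|a|/M$ small; the $\frac{|a|}{M}\int\mathring B_p^{s+1}$ term cannot be absorbed because the left side contains only $B_p^s$, not $B_p^{s+1}$, so it is kept on the right, which is precisely why the statement displays $\int_{t_1}^{t_2}\mathring B_p^{s+1}(t)\,dt$. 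For the energy estimate there is no bulk on the left at all, so both $B_1^s$ and $\mathring B_1^{s+1}$ are simply retained on the right. The surviving $N_p^s$ contribution from $Err_{nl}$, namely $\int_{\Sigma_t}r^p|\mathcal{X}_{\phi^s}||q^{-2}\Gamma^{s'}(q^2\mathcal{N}_\phi)|$, is split as in the proof of Theorem \ref{p_thm}: away from $r_{trap}$ one has $|\mathcal{X}_{\phi^s}|\lesssim \chi_{trap}|L\phi^{s'}|+r^{-1}|\phi^{s'}|+r^{-1}A|\psi^{s'}|+r^{-2}|\pd_r\phi^{s'}|$, controlled by $(B_p^s(t))^{1/2}$, so Cauchy--Schwarz with a small $\epsilon$ absorbs the $B_p^s$ part and leaves $\int r^{p+1}|q^{-2}\Gamma^{s'}(q^2\mathcal{N}_\phi)|^2$; near $r_{trap}$ the offending factor $\pd_t\phi^{s'}$ is rewritten by the approximate identity of Lemma \ref{pd_t_identities_lem} as a sum of $\pd_t^{s'+1}\phi$ (controlled by $\mathring B_p^{s'+1}$), lower-order spatial derivatives of $\phi^{s'-1}$ (controlled by $B_p^s$, with integration by parts onto the commutator factor as needed), and commuted equation terms (which become nonlinear after using the equations), so Cauchy--Schwarz again yields the stated $N_p^s(t)$ up to absorbable and $\mathring B$ terms.

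The main obstacle is the trapping region, which is already the content of Lemmas \ref{p_s_L_a_1_lem}--\ref{pd_t_identities_lem}: the factor $\pd_t\phi^{s}$ (and $\pd_t\psi^s$) appearing in the commutator linear errors and in $\mathcal{X}_{\phi^s}$ is not controlled by the photon-sphere-degenerate bulk norm $B_p^s$ at $r\approx r_{trap}$, so it must be traded for the non-degenerate homogeneous norm $\mathring B_p^{s+1}$ using $Q\approx \pd_t^2+\pd_r^2+r^{-1}\pd_r+q^2\Box_g$. This trade is responsible both for the derivative loss and for the presence of the homogeneous bulk term on the right-hand side; once it is carried out, the rest is bookkeeping, and collecting all terms produces the two estimates of Theorem \ref{p_s_thm}. \qed
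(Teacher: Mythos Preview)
Your proposal is correct and follows essentially the same route as the paper: combine Theorem \ref{p_s_L_thm} with Lemma \ref{p_s_L_a_lem} to handle the linear errors (paying the $\mathring{B}_p^{s+1}$ price at trapping), substitute the nonlinear equations to convert $\Box_g\phi^{s'}-q^{-2}\Gamma^{s'}(q^2\mathcal{L}_\phi)$ into $q^{-2}\Gamma^{s'}(q^2\mathcal{N}_\phi)$, and then Cauchy--Schwarz to obtain the stated $N^s$ and $N_p^s$.

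The only presentational difference is in how the multiplier term $\int r^p|\mathcal{X}_{\phi^{s'}}||q^{-2}\Gamma^{s'}(q^2\mathcal{N}_\phi)|$ is handled. You split into $\{r\approx r_{trap}\}$ and its complement and invoke Lemma \ref{pd_t_identities_lem} explicitly on the trapped piece; the paper instead applies Cauchy--Schwarz globally and asserts directly that $\int_{\Sigma_t} r^{p-1}(\mathcal{X}_{\phi^{s'}})^2 \lesssim \mathring{B}_p^{s'+1}(t)+B_p^{s'}(t)$ (plus harmless $N_p^{s'-1}$ contributions absorbed into the final $N_p^s$), which of course rests on the same approximate identity. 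Your more explicit treatment is arguably clearer about why the homogeneous bulk term must appear, but both arguments are the same in substance.
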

\begin{proof}
By Theorem \ref{p_s_L_thm} and Lemma \ref{p_s_L_a_lem}, we have
$$E^s(t_2)\lesssim E^s(t_1)+\int_{t_1}^{t_2}\mathring{B}_1^{s+1}(t)+B_1^s(t)+N^s(t)+(N')^s(t)dt,$$
where
\begin{align*}
(N')^s(t)&=\sum_{s'\le s}\int_{\Sigma_t}|\pd_t\phi^{s'}(\Box_g\phi^{s'}-q^{-2}\Gamma^{s'}(q^2\mathcal{L}_\phi))| +\sum_{s'\le s}\int_{\tilde{\Sigma}_t}|\pd_t\psi^{s'}(\Box_{\tilde g}\psi^{s'}-q^{-2}\tilde\Gamma^{s'}(q^2\mathcal{L}_\psi))| \\
&=\sum_{s'\le s}\int_{\Sigma_t}|\pd_t\phi^{s'}q^{-2}\Gamma^{s'}(q^2\mathcal{N}_{\phi})| +\sum_{s'\le s}\int_{\tilde{\Sigma}_t}|\pd_t\psi^{s'}q^{-2}\tilde{\Gamma}^{s'}(q^2\mathcal{N}_{\psi})|.
\end{align*}
Observe that
\begin{align*}
(N')^s(t) &=\sum_{s'\le s}\int_{\Sigma_t}|\pd_t\phi^{s'}q^{-2}\Gamma^{s'}(q^2\mathcal{N}_{\phi})| +\sum_{s'\le s}\int_{\tilde{\Sigma}_t}|\pd_t\psi^{s'}q^{-2}\tilde{\Gamma}^{s'}(q^2\mathcal{N}_{\psi})| \\
&\lesssim \sum_{s'\le s}||\pd_t\phi^{s'}||_{L^2(\Sigma_t)}||q^{-2}\Gamma^{s'}(q^2\mathcal{N}_\phi)||_{L^2(\Sigma_t)}+ \sum_{s'\le s}||\pd_t\psi^{s'}||_{L^2(\tilde\Sigma_t)}||q^{-2}\tilde\Gamma^{s'}(q^2\mathcal{N}_\psi)||_{L^2(\tilde\Sigma_t)} \\
&\lesssim \sum_{s'\le s}(E^{s'}(t))^{1/2}||q^{-2}\Gamma^{s'}(q^2\mathcal{N}_\phi)||_{L^2(\Sigma_t)}+ \sum_{s'\le s}(E^{s'}(t))^{1/2}||q^{-2}\tilde\Gamma^{s'}(q^2\mathcal{N}_\psi)||_{L^2(\tilde\Sigma_t)} \\
&\lesssim N^s(t).
\end{align*}
This proves the first estimate of the theorem.

By Theorem \ref{p_s_L_thm} and Lemma \ref{p_s_L_a_lem}, we have
$$E_p^s(t_2)+\int_{t_1}^{t_2}B_p^s(t)dt\lesssim E_p^s(t_1)+\int_{t_1}^{t_2}\mathring{B}_p^s(t) +\frac{|a|}{M}B_p^s(t)+N_p^s(t)+(N')_p^s(t)dt,$$
where
\begin{align*}
(N')_p^s(t)=& \sum_{s'\le s}\int_{\Sigma_t}r^p|\mathcal{X}_{\phi^s}||\Box_g\phi^{s'}-q^{-2}\Gamma^{s'}(q^2\mathcal{L}_\phi)| + \sum_{s'\le s}\int_{\tilde\Sigma_t}r^p|\mathcal{X}_{\psi^s}||\Box_{\tilde{g}}\psi^{s'}-q^{-2}\tilde\Gamma^{s'}(q^2\mathcal{L}_\psi)| \\
=& \sum_{s'\le s}\int_{\Sigma_t}r^p|\mathcal{X}_{\phi^s}||q^{-2}\Gamma^{s'}(q^2\mathcal{N}_\phi)| + \sum_{s'\le s}\int_{\tilde\Sigma_t}r^p|\mathcal{X}_{\psi^s}||q^{-2}\tilde\Gamma^{s'}(q^2\mathcal{N}_\psi)|.
\end{align*}
Observe that
\begin{multline*}
\sum_{s'\le s}\int_{\Sigma_t}r^p|\mathcal{X}_{\phi^s}||q^{-2}\Gamma^{s'}(q^2\mathcal{N}_\phi)| \\
\lesssim \sum_{s'\le s} \left(\int_{\Sigma_t}r^{p-1}(\mathcal{X}_{\phi^s})^2\right)^{1/2}\left(\int_{\Sigma_t}r^{p+1}(q^{-2}\Gamma^{s'}(q^2\mathcal{N}_\phi))^2\right)^{1/2} \\
\lesssim \sum_{s'\le s}\left(\mathring{B}_p^{s'+1}(t)+B_p^{s'}(t)\right)^{1/2}\left(N_p^{s'}(t)\right)^{1/2} \\
\lesssim \epsilon\mathring{B}_p^{s+1}(t)+\epsilon B_p^s(t) +\epsilon^{-1}N_p^s(t).
\end{multline*}
An analogous estimate also shows that
$$\sum_{s'\le s}\int_{\tilde\Sigma_t}r^p|\mathcal{X}_{\psi^s}||q^{-2}\tilde\Gamma^{s'}(q^2\mathcal{N}_\psi)| \lesssim \epsilon\mathring{B}_p^{s+1}(t)+\epsilon B_p^s(t)+\epsilon^{-1}N_p^s(t).$$
Therefore, we have
$$E_p^s(t_2)+\int_{t_1}^{t_2}B_p^s(t)dt\lesssim E_p^s(t_1)+(|a|/M+\epsilon)\int_{t_1}^{t_2}B_p^s(t)dt+\int_{t_1}^{t_2}\mathring{B}_p^s(t)+\epsilon^{-1}N_p^s(t)dt.$$
By taking $|a|/M$ and $\epsilon$ sufficiently small, the bulk term on the right hand side can be absorbed into the left hand side. The result is the second estimate of the theorem. \qed
\end{proof}

\subsection{The main energy estimates for $(\phi^{s,k},\psi^{s,k})$}

To handle nonlinear terms with a factor of $\pd_r\phi$ or $\pd_r\psi$ near the event horizon, it will be necessary to use an additional commutator, which we call $\tg$. (See Proposition \ref{infinity_prop} and the remark that follows its proof.)
\begin{definition}
$$\tg:=1_H(r)\pd_r,$$
where $1_H$ is a nonnegative smooth function supported near the event horizon.
\end{definition}

Unfortunately, $\tg$ does not even commute with either of the wave operators $\Box_g$ or $\Box_{\tilde{g}}$. But we will see that one particular term in the commutator with either wave operator has an appropriate sign near the event horizon, and that is what allows us to use $\tg$. We now compute the commutators.
\begin{lemma}\label{dr_comm_1_lem}
$$[\tg,q^2\Box_g]u=\Delta'\pd_r\tg u -21_H'q^2\Box_gu+\{\pd_r\Gamma^{\le 1}u,\Gamma^{\le 2}u\} $$
$$[\tg,q^2\Box_{\tilde{g}}]u=\Delta'\pd_r\tg u -21_H'q^2\Box_{\tilde{g}}u+\{\pd_r\tilde{\Gamma}^{\le 1}u,\tilde{\Gamma}^{\le 2}u\},$$
where the expression $\{\pd_r\Gamma^{\le 1}u,\Gamma^{\le 2}u\}$ represents any terms of the form $f(r,\theta)\pd_r\Gamma^{\le 1}u$ or $f(r,\theta)\Gamma^{\le 2}u$ for smooth $f$ with compact support.
\end{lemma}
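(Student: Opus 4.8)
The plan is to compute the commutator $[\tg, q^2\Box_g]$ directly by expanding $\tg = 1_H(r)\pd_r$ against the explicit form of $q^2\Box_g$. First I would recall the structure $q^2\Box_g = q^2 g^{\alpha\beta}\pd_\alpha\pd_\beta + (\text{first order})$, where the only second-derivative term involving $\pd_r^2$ is $\Delta\pd_r^2$ (since $q^2 g^{rr}=\Delta$), and the remaining second-order terms are $g^{tt}q^2\pd_t^2$, the angular piece $Q$ (or its constituents $\pd_\theta^2$, $\cot\theta\,\pd_\theta$, $a^2\sin^2\theta\,\pd_t^2$), and possibly a mixed $\pd_t\pd_r$ term proportional to $a$. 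The key observation is that $1_H(r)$ depends only on $r$, so $\tg$ commutes with everything except through $r$-derivatives hitting the $r$-dependent coefficients and through the Leibniz rule producing lower-order terms when $\pd_r$ is pushed past $\pd_r^2$.

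The main step is isolating the term $\Delta'\pd_r\tg u$. This comes from the principal part: $[\1_H\pd_r, \Delta\pd_r^2]u$. Writing this out, $\1_H\pd_r(\Delta\pd_r^2 u) - \Delta\pd_r^2(\1_H\pd_r u) = \1_H\Delta'\pd_r^2 u + \1_H\Delta\pd_r^3 u - \Delta\1_H''\pd_r u - 2\Delta\1_H'\pd_r^2 u - \Delta\1_H\pd_r^3 u$. The cubic terms cancel, leaving $\1_H\Delta'\pd_r^2 u - 2\Delta\1_H'\pd_r^2 u + (\text{lower order in }\pd_r)$. Now $\1_H\Delta'\pd_r^2 u = \Delta'\pd_r(\1_H\pd_r u) - \Delta'\1_H'\pd_r u = \Delta'\pd_r\tg u + \{\pd_r\Gamma^{\le 1}u\}$, which produces exactly the advertised $\Delta'\pd_r\tg u$ with an error of the allowed form. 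The term $-2\Delta\1_H'\pd_r^2 u$ combines with the analogous $-2\1_H'$ times the other second-order pieces of $q^2\Box_g$: indeed, $-2\1_H'(r)$ times $q^2 g^{rr}\pd_r^2 = \Delta\pd_r^2$ gives $-2\Delta\1_H'\pd_r^2$, and since $\1_H'$ is a smooth compactly supported function of $r$ alone, $-2\1_H'$ times the full operator $q^2\Box_g$ differs from $-2\1_H'$ times just $\Delta\pd_r^2$ only by $-2\1_H'$ applied to the lower-order and angular parts, i.e. by terms of the form $\{\pd_r\Gamma^{\le 1}u, \Gamma^{\le 2}u\}$ (the $Q$ part contributes at most two $\pd_\theta$'s and two $\pd_t$'s, hence lies in $\Gamma^{\le 2}u$; the mixed $\pd_t\pd_r$ term and first-order terms are $\pd_r\Gamma^{\le 1}u$). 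Collecting everything gives $[\tg,q^2\Box_g]u = \Delta'\pd_r\tg u - 2\1_H' q^2\Box_g u + \{\pd_r\Gamma^{\le 1}u, \Gamma^{\le 2}u\}$.

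The remaining commutators to track are $[\1_H\pd_r, \text{lower-order terms of }q^2\Box_g]$ and $[\1_H\pd_r, \text{angular and time second-order terms}]$; each of these is routine: pushing $\pd_r$ past a coefficient $f(r,\theta)$ produces $f'_r$ times a term of order no higher, and pushing it past $\pd_\theta^2$, $\cot\theta\,\pd_\theta$, $a^2\sin^2\theta\,\pd_t^2$ produces commutators like $[\1_H\pd_r, a^2\sin^2\theta\,\pd_t^2] = \1_H (a^2\sin^2\theta)'_r\pd_t^2 u$ which has no $\pd_r$, hence lies in $\Gamma^{\le 2}u$, or terms with at most $\pd_r\Gamma^{\le 1}u$. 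For the $\tilde g$ case, one uses $q^2\Box_{\tilde g} = q^2\Box_g + 2q^2\frac{\pd^\alpha A_1}{A_1}\pd_\alpha$; by the identities $\frac{\pd_r A_1}{A_1}=\frac{2r}{r^2+a^2}$ and $\frac{\pd_\theta A_1}{A_1}=2\cot\theta$, the extra first-order operator $2q^2\frac{\pd^\alpha A_1}{A_1}\pd_\alpha = \frac{4r}{r^2+a^2}(q^2 g^{\alpha r})\pd_\alpha + 4\cot\theta(q^2 g^{\alpha\theta})\pd_\alpha$ is a smooth first-order operator whose commutator with $\tg$ is again of the form $\{\pd_r\tilde\Gamma^{\le 1}u, \tilde\Gamma^{\le 2}u\}$, and the $-2\1_H'$ term now multiplies the full $q^2\Box_{\tilde g}$. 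The only mildly delicate point is bookkeeping near the axis: the $\cot\theta$ factors must be absorbed into the allowed $\{\cdot,\cdot\}$ notation consistently with the regularity formalism, but since these commutators do not increase the count of $\pd_\theta$ operators beyond what $Q$ (resp. $\tilde Q$) already contains, they are genuine $\Gamma^{\le 2}$ (resp. $\tilde\Gamma^{\le 2}$) terms and cause no new axis difficulty. I expect the main obstacle to be purely organizational — correctly verifying that every leftover term genuinely has the claimed form $f(r,\theta)\pd_r\Gamma^{\le 1}u$ or $f(r,\theta)\Gamma^{\le 2}u$ with $f$ smooth and compactly supported in $r$ — rather than any conceptual difficulty.
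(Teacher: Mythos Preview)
Your proposal is correct and follows essentially the same route as the paper: expand $q^2\Box_g$ explicitly, isolate the commutator of $\tg=1_H\pd_r$ with the principal $\Delta\pd_r^2$ piece to produce $\Delta'\pd_r\tg u$ and $-2\Delta 1_H'\pd_r^2 u$, then reconstitute the latter into $-21_H'q^2\Box_g u$ modulo lower-order errors. One small simplification the paper uses that you can adopt: the angular/Carter piece of $q^2\Box_g$ has coefficients independent of $r$, so it genuinely commutes with $\tg$ and need not be tracked at all (in particular $[\tg,a^2\sin^2\theta\,\pd_t^2]=0$).
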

\begin{proof}
We expand
$$q^2\Box_g= q^2g^{tt}\pd_t^2 + 2q^2g^{tr}\pd_r\pd_t +q^2g^{rr}\pd_r^2+\pd_r(q^2g^{tr})\pd_t+\pd_r(q^2g^{rr})\pd_r+\frac1{\sin\theta}\pd_\alpha\left(\sin\theta Q^{\alpha\beta}\pd_\beta\cdot\right).$$
Note that the $Q$ term commutes with $\tg$. We compute terms arising in the commutator by neglecting the highest order derivative terms and the $Q$ terms (both represented by the ellipsis in what follows).
\begin{multline*}
\tg (q^2\Box_g u)= \\
\tg(q^2g^{tt})\pd_t^2u+2\tg(q^2g^{tr})\pd_r\pd_tu+\tg(q^2g^{rr})\pd_r^2u+\tg(\pd_r(q^2g^{tr}))\pd_tu+\tg(\pd_r(q^2g^{rr}))\pd_ru+... \\
=1_H\Delta'\pd_r^2u+\{\pd_t^2u,\pd_r\pd_tu,\pd_tu,\pd_ru\}+...
\end{multline*}
\begin{align*}
q^2\Box_g(\tg u)&=2q^2g^{tr}1_H'\pd_r\pd_tu+2q^2g^{rr}1_H'\pd_r^2u+q^2g^{rr}1_H''\pd_ru+\pd_r(q^2g^{rr})1_H'\pd_ru+... \\
&=2q^2g^{rr}1_H' \pd_r^2u+\{\pd_r\pd_tu,\pd_ru\}+... \\
&=21_H' q^2\Box_g u+\{\pd_t^2u,\pd_r\pd_tu,Qu,\pd_tu,\pd_ru\}+...
\end{align*}
Taking the difference,
\begin{align*}
[\tg,q^2\Box_g]u &= \tg(q^2\Box_g u)-q^2\Box_g(\tg u) \\
&=1_H\Delta'\pd_r^2u - 21_H' q^2\Box_g u+\{\pd_t^2u,\pd_r\pd_tu,Qu,\pd_tu,\pd_ru\} \\
&=\Delta'\pd_r\tg u - 21_H'q^2\Box_g u+\{\pd_t^2u,\pd_r\pd_tu,Qu,\pd_tu,\pd_ru\}.
\end{align*}
The proof for $[\tg,q^2\Box_{\tilde{g}}]$ is identical. \qed
\end{proof}

Now we generalize the prevous lemma by commuting with $\tg$ arbitrarily many times.
\begin{lemma}\label{dr_comm_2_lem}
$$[\tg^k,q^2\Box_g]u=k\Delta'\pd_r\tg^ku+\{\pd_r\tg^{\le k-1}\Gamma^{\le 1}u,\tg^{\le k-1}\Gamma^{\le 2}u,\tg^{\le k-1}(q^2\Box_gu)\},$$
$$[\tg^k,q^2\Box_{\tilde{g}}]u=k\Delta'\pd_r\tg^ku+\{\pd_r\tg^{\le k-1}\tilde\Gamma^{\le 1}u,\tg^{\le k-1}\tilde\Gamma^{\le 2}u,\tg^{\le k-1}(q^2\Box_{\tilde{g}}u)\},$$
where the $\{...\}$ notation is the same as in Lemma \ref{dr_comm_1_lem}.
\end{lemma}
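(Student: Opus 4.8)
The plan is to prove this by induction on $k$, using Lemma \ref{dr_comm_1_lem} as the base case $k=1$ and the algebraic identity $[\tg^k, \cdot] = \tg\circ[\tg^{k-1},\cdot] + [\tg,\cdot]\circ\tg^{k-1}$ for the inductive step. I will only write out the argument for $[\tg^k, q^2\Box_g]$; the case of $[\tg^k, q^2\Box_{\tilde{g}}]$ is identical since Lemma \ref{dr_comm_1_lem} applies verbatim to both wave operators.

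First I would establish the inductive setup. Assume the statement holds for $k-1$, i.e. $[\tg^{k-1}, q^2\Box_g]u = (k-1)\Delta'\pd_r\tg^{k-1}u + \{\pd_r\tg^{\le k-2}\Gamma^{\le 1}u, \tg^{\le k-2}\Gamma^{\le 2}u, \tg^{\le k-2}(q^2\Box_g u)\}$. Then I would write
\begin{align*}
[\tg^k, q^2\Box_g]u &= \tg\left([\tg^{k-1}, q^2\Box_g]u\right) + [\tg, q^2\Box_g](\tg^{k-1}u).
\end{align*}
For the second term, Lemma \ref{dr_comm_1_lem} applied to $\tg^{k-1}u$ gives $\Delta'\pd_r\tg(\tg^{k-1}u) - 2\cdot 1_H' q^2\Box_g(\tg^{k-1}u) + \{\pd_r\Gamma^{\le 1}\tg^{k-1}u, \Gamma^{\le 2}\tg^{k-1}u\}$; here $\Delta'\pd_r\tg^k u$ is the leading term, $1_H' q^2\Box_g(\tg^{k-1}u)$ gets absorbed into $\{\tg^{\le k-1}(q^2\Box_g u)\}$ after re-expanding $q^2\Box_g(\tg^{k-1}u) = \tg^{k-1}(q^2\Box_g u) + [\![q^2\Box_g,\tg^{k-1}]\!]u$ and invoking the inductive hypothesis once more (the commutator piece feeds back into the lower-order bracket terms), and since $\tg$ commutes with $\Gamma$ (both $\pd_t$ and $Q$ have coefficients independent of $r$ away from where they matter, or more carefully: $[\tg,\pd_t]=0$ and $[\tg,Q]$ is again of the admissible form $\{\pd_r\Gamma^{\le 1}, \Gamma^{\le 2}\}$), the remaining terms are of the required schematic type.

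For the first term $\tg\left([\tg^{k-1}, q^2\Box_g]u\right)$, I apply $\tg = 1_H(r)\pd_r$ to the inductive hypothesis. Applying $\tg$ to $(k-1)\Delta'\pd_r\tg^{k-1}u$ produces $(k-1)\Delta'\pd_r\tg^k u$ plus $(k-1)1_H\Delta''\pd_r\tg^{k-1}u$, the latter being of type $\{\pd_r\tg^{\le k-1}u\}$. Combined with the $\Delta'\pd_r\tg^k u$ coming from the second term, the leading coefficient becomes $(k-1)+1 = k$, which is exactly the claimed coefficient. Applying $\tg$ to each of the schematic lower-order terms $\{\pd_r\tg^{\le k-2}\Gamma^{\le 1}u, \tg^{\le k-2}\Gamma^{\le 2}u, \tg^{\le k-2}(q^2\Box_g u)\}$ raises the $\tg$-count by at most one and, by the product rule, only multiplies by smooth compactly supported functions (derivatives of $1_H$) — so each stays within $\{\pd_r\tg^{\le k-1}\Gamma^{\le 1}u, \tg^{\le k-1}\Gamma^{\le 2}u, \tg^{\le k-1}(q^2\Box_g u)\}$. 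One mild care point: when $\tg$ hits $\pd_r\tg^{\le k-2}\Gamma^{\le 1}u$ it can produce $\pd_r^2\tg^{\le k-2}\Gamma^{\le 1}u = \pd_r\tg^{\le k-1}\Gamma^{\le 1}u / 1_H$ schematically, which is admissible after noting $1_H\pd_r^2 = \pd_r\tg - 1_H'\pd_r$; this is the only bookkeeping subtlety and it is handled exactly as in the proof of Lemma \ref{dr_comm_1_lem}.

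The main obstacle is purely notational: the schematic $\{\cdots\}$ bracket notation makes it easy to lose track of which terms have how many $\tg$ factors versus how many $\Gamma$ factors, and in particular to verify that commuting $\tg$ past $\Gamma$ never produces a term with two bare $\pd_r$'s outside the $\tg^{\le k-1}$ envelope or with more than two $\Gamma$-derivatives. I would handle this by fixing, once and for all at the start of the proof, the precise meaning of $\{\pd_r\tg^{\le k-1}\Gamma^{\le 1}u, \tg^{\le k-1}\Gamma^{\le 2}u, \tg^{\le k-1}(q^2\Box_g u)\}$ as the span (over smooth compactly supported coefficients $f(r,\theta)$) of the indicated monomials, and then checking closure of this span under (i) multiplication by smooth compactly supported $f$, (ii) application of $\tg$, and (iii) the substitution $1_H\pd_r^2 \mapsto \pd_r\tg - 1_H'\pd_r$. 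Once closure is verified, the induction is mechanical.
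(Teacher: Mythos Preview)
Your proposal is correct and follows essentially the same approach as the paper: induction on $k$ via the identity $[\tg^k,q^2\Box_g]=\tg[\tg^{k-1},q^2\Box_g]+[\tg,q^2\Box_g]\tg^{k-1}$, with the inductive hypothesis invoked a second time to rewrite $q^2\Box_g(\tg^{k-1}u)$ in terms of $\tg^{\le k-1}(q^2\Box_g u)$ plus admissible lower-order terms. One small simplification you can make: since $Q=\pd_\theta^2+\cot\theta\,\pd_\theta+a^2\sin^2\theta\,\pd_t^2$ has coefficients depending only on $\theta$ and $\tg=1_H(r)\pd_r$ depends only on $r$, one has $[\tg,\Gamma]=0$ exactly, so your hedge about $[\tg,Q]$ being merely ``of admissible form'' is unnecessary.
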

\begin{proof}
We only prove the first identity since the proof of the second is identical. The proof is an induction argument on $k$. The case $k=1$ corresponds to Lemma \ref{dr_comm_1_lem}. Assuming that the statement of the lemma holds at the level $k$, we prove the analogous statement at the level $k+1$. We have
\begin{align*}
[\tg^{k+1},q^2\Box_g]u &= \tg^{k+1}(q^2\Box_gu)-q^2\Box_g(\tg^{k+1}u) \\
&= \tg (\tg^k(q^2\Box_gu)-q^2\Box_g(\tg^k u))+(\tg(q^2\Box_g(\tg^ku))-q^2\Box_g(\tg^{k+1}u)) \\
&= \tg[\tg^k,q^2\Box_g]u+[\tg,q^2\Box_g](\tg^k u).
\end{align*}
Now, by the inductive hypothesis,
\begin{align*}
\tg[\tg^k,q^2\Box_g]u &= \tg\left(k\Delta'\pd_r\tg^ku+\{\pd_r\tg^{\le k-1}\Gamma^{\le 1}u,\tg^{\le k-1}\Gamma^{\le 2}u,\tg^{\le k-1}(q^2\Box_gu)\}\right) \\
&= k\Delta'\pd_r\tg^{k+1}u +\{\pd_r\tg^{\le k}\Gamma^{\le 1}u,\tg^{\le k}\Gamma^{\le 2}u,\tg^{\le k}(q^2\Box_gu)\},
\end{align*}
and by the base case,
\begin{align*}
[\tg,q^2\Box_g](\tg^k u) &= \Delta'\pd_r\tg^{k+1}u +\{\pd_r\Gamma^{\le 1}\tg^ku,\Gamma^{\le 2}\tg^ku,q^2\Box_g(\tg^ku)\} \\
&= \Delta'\pd_r\tg^{k+1}u +\{\pd_r\tg^k\Gamma^{\le 1}u,\tg^k\Gamma^{\le 2}u,\tg^{\le k}(q^2\Box_gu)\},
\end{align*}
where in the last step we used the inductive hypothesis a second time.

Summing these two yields
$$[\tg^{k+1},q^2\Box_g]u = (k+1)\Delta'\pd_r\tg^{k+1}u+ \{\pd_r\tg^{\le k}\Gamma^{\le 1}u,\tg^{\le k}\Gamma^{\le 2}u,\tg^{\le k}(q^2\Box_gu)\}.$$
This completes the inductive argument.
\qed
\end{proof}

Since $\tg$ doesn't commute with $\Box_g$ or $\Box_{\tilde{g}}$, it is treated seperately than the previous commutators. We define the $s,k$-order dynamic quantities $\phi^{s,k}$ and $\psi^{s,k}$.
\begin{definition}
$$\phi^{s,k} := \tg^k\Gamma^s\phi,$$
$$\psi^{s,k} := \tg^k\tilde\Gamma^s\psi.$$
\end{definition}
We also define the $s,k$-order analogues of $\mathcal{L}_\phi$, $\mathcal{L}_\psi$, $\mathcal{X}_\phi$, and $\mathcal{X}_\psi$.
\begin{definition}
$$\mathcal{L}_{\phi^{s,k}}:=-2\frac{\pd^\alpha B}{A}A\pd_\alpha \psi^{s,k} + 2\frac{\pd^\alpha B\pd_\alpha B}{A^2}\phi^{s,k}-4\frac{\pd^\alpha A\pd_\alpha B}{A^2} A\psi^{s,k}$$
$$\mathcal{L}_{\psi^{s,k}}:=-2\frac{\pd^\alpha A_2}{A_2}\pd_\alpha\psi^{s,k}+2\frac{\pd^\alpha B\pd_\alpha B}{A^2}\psi^{s,k} + 2A^{-1}\frac{\pd^\alpha B}{A}\pd_\alpha\phi^{s,k}$$
and
\begin{align*}
\mathcal{X}_{\phi^{s,k}} &:= 2X(\phi^{s,k})+w\phi^{s,k}+w_{(a)}\psi^{s,k} \\
\mathcal{X}_{\psi^{s,k}} &:= 2X(\psi^{s,k})+\tilde{w}\psi^{s,k}+\tilde{w}_{(a)}\phi^{s,k},
\end{align*}
where, in each expression, the exact same operator $\tg^k\Gamma^s$ or $\tg^k\tilde{\Gamma}^s$ is to be used in each term on the right hand side, replacing $Q$ with $\tilde{Q}$ where appropriate. For example, the expression
$$-2\frac{\pd^\alpha B}{A}A\pd_\alpha (\tg\tilde{Q}\psi) + 2\frac{\pd^\alpha B\pd_\alpha B}{A^2}(\tg Q\phi)-4\frac{\pd^\alpha A\pd_\alpha B}{A^2} A (\tg\tilde{Q}\psi)$$
belongs to $\mathcal{L}_{\phi^{s,k}}$ ($s=2,k=1$) while the expression
$$-2\frac{\pd^\alpha B}{A}A\pd_\alpha(\tg\pd_t^2\psi) + 2\frac{\pd^\alpha B\pd_\alpha B}{A^2}(\tg Q\phi)-4\frac{\pd^\alpha A\pd_\alpha B}{A^2} A (\tg\tilde{Q}\psi)$$
does not.
\end{definition}

We take a look again at the equations
$$\Box_g\phi = \mathcal{L}_\phi +\mathcal{N}_\phi,$$
$$\Box_{\tilde{g}}\psi = \mathcal{L}_\psi +\mathcal{N}_\psi.$$
By applying $\tg^k\Gamma^s$ and $\tg^k\tilde\Gamma^s$ respectively, we obtain additional useful equations.
\begin{align*}
\Box_g\phi^{s,k} &= (\Box_g\phi^{s,k}-q^{-2}\tg^k(q^2\Box_g\phi^s)) +q^{-2}\tg^k\Gamma^s(q^2\mathcal{L}_\phi) + q^{-2}\tg^k\Gamma^s(q^2\mathcal{N}_\phi) \\
&= \mathcal{L}_{\phi^{s,k}} + q^{-2}[q^2\Box_g,\tg^k]\phi^s + (q^{-2}\tg^k\Gamma^s(q^2\mathcal{L}_\phi)-\mathcal{L}_{\phi^{s,k}}) + q^{-2}\tg^k\Gamma^s(q^2\mathcal{N}_\phi),
\end{align*}
\begin{align*}
\Box_{\tilde{g}}\psi^{s,k} &= (\Box_{\tilde{g}}\psi^{s,k}-q^{-2}\tg^k(q^2(\Box_{\tilde{g}}\psi^s)) +q^{-2}\tg^k\tilde{\Gamma}^s(q^2\mathcal{L}_\psi) + q^{-2}\tg^k\tilde{\Gamma}^s(q^2\mathcal{N}_\psi) \\
&= \mathcal{L}_{\psi^{s,k}} + q^{-2}[q^2\Box_{\tilde{g}},\tg^k]\psi^s + (q^{-2}\tg^k\tilde{\Gamma}^s(q^2\mathcal{L}_\psi)-\mathcal{L}_{\psi^{s,k}}) + q^{-2}\tg^k\tilde{\Gamma}^s(q^2\mathcal{N}_\psi).
\end{align*}
Therefore, Theorem \ref{p_s_L_thm} (which generalized Theorem \ref{p_L_thm}) can be generalized to the following theorem. (Note the presence of the additional terms $(L_3)^s(t)$ and $(L_3)_p^s(t)$, which arise from the fact that the $\tg$ operators do not commute with the wave operators.)
\begin{theorem}\label{p_s_k_L_thm}
Suppose $|a|/M$ is sufficiently small and fix $\delm,\delp>0$. The following estimates hold for $p\in [\delm,2-\delp]$.
$$E^{s,k}(t_2)\lesssim E^{s,k}(t_1)+\int_{t_1}^{t_2}(L_2)^{s,k}(t)+(L_3)^{s,k}(t)+N^{s,k}(t)dt,$$
$$E_p^{s,k}(t_2)+\int_{t_1}^{t_2}B_p^{s,k}(t)dt\lesssim E_p^{s,k}(t_1)+\int_{t_1}^{t_2}(L_1)_p^s(t)+(L_2)_p^{s,k}(t)+(L_3)_p^{s,k}(t)+N_p^{s,k}(t)dt,$$
where
\begin{align*}
E^{s,k}(t) &= \sum_{\substack{s'\le s \\ k'\le k}} E[(\phi^{s',k'},\psi^{s',k'})](t), \\
E_p^{s,k}(t) &= \sum_{\substack{s'\le s \\ k'\le k}} E_p[(\phi^{s',k'},\psi^{s',k'})](t), \\
B_p^{s,k}(t) &= \sum_{\substack{s'\le s \\ k'\le k}} B_p[(\phi^{s',k'},\psi^{s',k'})](t),
\end{align*}
and
\begin{align*}
(L_2)^{s,k}(t)=&\sum_{\substack{s'\le s \\ k'\le k}}\int_{\Sigma_t}|\pd_t\phi^{s',k'} (q^{-2}\tg^{k'}\Gamma^{s'}(q^2\mathcal{L}_\phi)-\mathcal{L}_{\phi^{s',k'}})| \\
&+\sum_{\substack{s'\le s \\ k'\le k}}\int_{\tilde\Sigma_t}|\pd_t\psi^{s',k'}(q^{-2}\tg^{k'}\tilde{\Gamma}^{s'}(q^2\mathcal{L}_\psi)-\mathcal{L}_{\psi^{s',k'}})|,
\end{align*}
\begin{align*}
(L_3)^{s,k}(t)=&\sum_{\substack{s'\le s \\ k'\le k}}\int_{\Sigma_t}|\pd_t\phi^{s',k'} (q^{-2}[q^2\Box_g,\tg^{k'}]\phi^{s'})|
+\sum_{\substack{s'\le s \\ k'\le k}}\int_{\tilde\Sigma_t}|\pd_t\psi^{s',k'} (q^{-2}[q^2\Box_{\tilde{g}},\tg^{k'}]\psi^{s'})|,
\end{align*}
\begin{align*}
N^{s,k}(t) =& \sum_{\substack{s'\le s \\ k'\le k}}\int_{\Sigma_t}|\pd_t\phi^{s',k'}(q^{-2}\tg^{k'}(q^2\Box_g\phi^{s'}-\Gamma^{s'}(q^2\mathcal{L}_\phi)))| \\
&+\sum_{\substack{s'\le s \\ k'\le k}}\int_{\tilde{\Sigma}_t}|\pd_t\psi^{s',k'}(q^{-2}\tg^{k'}(q^2\Box_{\tilde g}\psi^{s'}-\tilde\Gamma^{s'}(q^2\mathcal{L}_\psi)))|,
\end{align*}
\begin{align*}
(L_1)_p^s(t) =& \sum_{s'\le s}\int_{\Sigma_t\cap\{r>R\}}r^p(|L\phi^{s'}|+r^{-1}|\phi^{s'}|)|\mathcal{L}_{\phi^{s'}}| +\sum_{s'\le s}\int_{\tilde\Sigma_t\cap\{r>R\}}r^p(|L\psi^{s'}|+r^{-1}|\psi^{s'}|)|\mathcal{L}_{\psi^{s'}}|,
\end{align*}
\begin{align*}
(L_2)_p^{s,k}(t) =& \sum_{\substack{s'\le s \\ k'\le k}}\int_{\Sigma_t}r^p|\mathcal{X}_{\phi^{s,k}}||q^{-2}\tg^{k'}\Gamma^{s'}(q^2\mathcal{L}_\phi)-\mathcal{L}_{\phi^{s',k'}}|  \\
&+ \sum_{\substack{s'\le s \\ k'\le k}}\int_{\tilde\Sigma_t}r^p|\mathcal{X}_{\psi^{s,k}}||q^{-2}\tg^{k'}\tilde\Gamma^{s'}(q^2\mathcal{L}_\psi)-\mathcal{L}_{\psi^{s',k'}}|,
\end{align*}
\begin{align*}
(L_3)_p^{s,k}(t) =& \sum_{\substack{s'\le s \\ k'\le k}}\int_{\Sigma_t}r^p|\mathcal{X}_{\phi^{s,k}}||q^{-2}[q^2\Box_g,\tg^{k'}]\phi^{s'}|  
+ \sum_{\substack{s'\le s \\ k'\le k}}\int_{\tilde\Sigma_t}r^p|\mathcal{X}_{\psi^{s,k}}||q^{-2}[q^2\Box_{\tilde{g}},\tg^{k'}]\psi^{s'}|,
\end{align*}
\begin{align*}
N_p^{s,k}(t)=& \sum_{\substack{s'\le s \\ k'\le k}}\int_{\Sigma_t}r^p|\mathcal{X}_{\phi^{s,k}}||q^{-2}\tg^{k'}(q^2\Box_g\phi^{s'}-\Gamma^{s'}(q^2\mathcal{L}_\phi))| \\
&+ \sum_{\substack{s'\le s \\ k'\le k}}\int_{\tilde\Sigma_t}r^p|\mathcal{X}_{\psi^{s,k}}||q^{-2}\tg^{k'}(q^2\Box_{\tilde{g}}\psi^{s'}-\tilde\Gamma^{s'}(q^2\mathcal{L}_\psi))|.
\end{align*}
\end{theorem}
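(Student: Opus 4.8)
The plan is to prove Theorem \ref{p_s_k_L_thm} in the same way that Theorem \ref{p_s_L_thm} was proved from Theorem \ref{p_L_thm}, namely by a direct application of the base estimate Theorem \ref{p_s_L_thm} after the substitutions
\begin{align*}
\phi &\mapsto \phi^{s',k'}=\tg^{k'}\Gamma^{s'}\phi, \\
\psi &\mapsto \psi^{s',k'}=\tg^{k'}\tilde{\Gamma}^{s'}\psi,
\end{align*}
for all $s'\le s$ and $k'\le k$ and all choices of commutators represented by $\Gamma^{s'}$, $\tilde{\Gamma}^{s'}$ and powers of $\tg$. First I would record the two evolution equations for $\phi^{s,k}$ and $\psi^{s,k}$ that are displayed immediately before the theorem statement, obtained by commuting $\Box_g\phi=\mathcal{L}_\phi+\mathcal{N}_\phi$ with $\tg^k\Gamma^s$ (and the analogous one with $\tg^k\tilde{\Gamma}^s$). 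The point is that each such equation decomposes $\Box_g\phi^{s,k}-\mathcal{L}_{\phi^{s,k}}$ into three pieces: the pure commutator term $q^{-2}[q^2\Box_g,\tg^k]\phi^s$ (which will feed $(L_3)^{s,k}$ and $(L_3)_p^{s,k}$), the ``non-commuting linear'' term $q^{-2}\tg^k\Gamma^s(q^2\mathcal{L}_\phi)-\mathcal{L}_{\phi^{s,k}}$ (which will feed $(L_2)^{s,k}$ and $(L_2)_p^{s,k}$, together with the genuine $(L_1)_p^s$ piece coming from the incomplete $p$-estimate near $i^0$ inside Theorem \ref{p_L_thm}), and the ``nonlinear'' term $q^{-2}\tg^k\Gamma^s(q^2\mathcal{N}_\phi)$, which is here still carried along implicitly through the $\Box_g\phi^{s'}-\Gamma^{s'}(q^2\mathcal{L}_\phi)/q^2$ notation and will feed $N^{s,k}$ and $N_p^{s,k}$.

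Next I would invoke Theorem \ref{p_s_L_thm} with $(\phi,\psi)$ replaced by $(\phi^{0,k'},\psi^{0,k'})=(\tg^{k'}\phi,\tg^{k'}\psi)$ for each $k'\le k$; equivalently, one may view $\phi^{s,k}$ for fixed $k$ as a $\Gamma$-commuted quantity built from $\tg^k\phi$. The resulting estimates are exactly those of Theorem \ref{p_s_L_thm} but with all quantities carrying the extra $\tg^{k'}$, and with an extra error term coming from the fact that $\tg$ fails to commute even with $q^2\Box_g$ or $q^2\Box_{\tilde g}$. This extra error is precisely the commutator $q^{-2}[q^2\Box_g,\tg^{k'}]\phi^{s'}$ (resp. the tilded version), which by Lemma \ref{dr_comm_2_lem} equals $k'\Delta'\pd_r\tg^{k'}\phi^{s'}$ plus lower-order terms of the schematic form $\{\pd_r\tg^{\le k'-1}\Gamma^{\le 1}\phi^{s'},\tg^{\le k'-1}\Gamma^{\le 2}\phi^{s'},\tg^{\le k'-1}(q^2\Box_g\phi^{s'})\}$. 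I would then group the contributions of this commutator: the parts paired with $\pd_t\phi^{s',k'}$ (via the energy current $J[\pd_t]$) go into $(L_3)^{s,k}$, and the parts paired with $\mathcal{X}_{\phi^{s,k}}$ (via the Morawetz/$p$-weighted current) go into $(L_3)_p^{s,k}$. Summing over $s'\le s$ and $k'\le k$ and using that all $\tg^{k'}$ with $k'\le k$ appear produces the stated norms $E^{s,k}$, $E_p^{s,k}$, $B_p^{s,k}$ as finite sums of the base-level norms.

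The only genuinely substantive point — and the place I would be careful — is that $\tg$ does not commute with $\Box_g$ at all, so unlike the passage from Theorem \ref{p_L_thm} to Theorem \ref{p_s_L_thm} (where $Q$ and $\tilde Q$ at least commute with $q^2\Box_g$, $q^2\Box_{\tilde g}$), here one must verify that no uncontrollable bulk term is generated. The saving feature, already noted in the excerpt before Lemma \ref{dr_comm_1_lem}, is that the leading commutator term $k'\Delta'\pd_r\tg^{k'}\phi^{s'}$ is supported near the event horizon, where $\Delta'>0$, so that when it is integrated against $\pd_t\phi^{s',k'}$ or $\mathcal{X}_{\phi^{s,k}}$ it can be arranged to have the favorable sign (this is the analogue of the redshift effect). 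Thus at this stage I would simply leave this term as the error quantity $(L_3)^{s,k}$ or $(L_3)_p^{s,k}$ in the statement; the actual absorption of these terms — by exploiting the good sign near the horizon and by trading the loss of a $\pd_r$ derivative against lower-order $\tg^{\le k-1}$ norms — is deferred to the improved version (the analogue of Theorem \ref{p_s_thm}), exactly as $(L_2)$, $(L_3)$ are carried unabsorbed at this level. Hence the proof of Theorem \ref{p_s_k_L_thm} itself reduces to bookkeeping: apply Theorem \ref{p_s_L_thm} with the substitution $\phi\mapsto\tg^{k'}\phi$, $\psi\mapsto\tg^{k'}\psi$, use Lemma \ref{dr_comm_2_lem} to expand the new $\tg$-commutator into the terms collected in $(L_3)^{s,k}$, $(L_3)_p^{s,k}$, and the schematic lower-order terms which are absorbed into $B_p^{s,k}$, and finally sum over $s'\le s$, $k'\le k$. $\qed$
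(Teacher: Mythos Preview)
Your approach is essentially correct and close to the paper's, but there are two points worth noting.

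First, the paper does not go through Theorem \ref{p_s_L_thm}; it applies Theorem \ref{p_L_thm} directly with the substitution $\phi\mapsto\phi^{s',k'}$, $\psi\mapsto\psi^{s',k'}$ for all $s'\le s$, $k'\le k$, and then uses the triangle inequality
\[
|\Box_g\phi^{s',k'}-\mathcal{L}_{\phi^{s',k'}}|
\le |q^{-2}[q^2\Box_g,\tg^{k'}]\phi^{s'}|+ |q^{-2}\tg^{k'}(q^2\Box_g\phi^{s'}-\Gamma^{s'}(q^2\mathcal{L}_\phi))|+|q^{-2}\tg^{k'}\Gamma^{s'}(q^2\mathcal{L}_\phi)-\mathcal{L}_{\phi^{s',k'}}|
\]
(and its $\psi$-analogue) to read off $(L_3)^{s,k}$, $N^{s,k}$, $(L_2)^{s,k}$ in that order. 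Your route via Theorem \ref{p_s_L_thm} applied to $(\tg^{k'}\phi,\tg^{k'}\psi)$ is equivalent in content (since $\tg$ and $\Gamma$ commute), but it produces the $(L_2)$ piece with $q^{-2}\Gamma^{s'}(q^2\mathcal{L}_{\tg^{k'}\phi})$ in place of $q^{-2}\tg^{k'}\Gamma^{s'}(q^2\mathcal{L}_\phi)$, so you would still have to regroup to match the stated decomposition. The paper's direct route avoids that bookkeeping.

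Second, and more substantively, you should not invoke Lemma \ref{dr_comm_2_lem} or absorb any lower-order commutator terms into $B_p^{s,k}$ here. In Theorem \ref{p_s_k_L_thm} the quantities $(L_3)^{s,k}$ and $(L_3)_p^{s,k}$ carry the \emph{full} commutator $q^{-2}[q^2\Box_g,\tg^{k'}]\phi^{s'}$ as a black box; no expansion, no sign argument, no absorption. All of that (the favorable sign of $k'\Delta'\pd_r\tg^{k'}\phi^{s'}$ near the horizon, and the handling of the lower-order schematic terms) is deferred to Lemma \ref{p_s_k_L_a_lem} and then Theorem \ref{p_s_k_thm}. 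So your final sentence, where you absorb schematic lower-order terms into $B_p^{s,k}$, is premature and does not belong in the proof of this particular statement.
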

\begin{proof}
The proof is a direct application of Theorem \ref{p_L_thm} by making the substitutions
\begin{align*}
\phi&\mapsto\phi^{s',k'} \\
\psi&\mapsto\psi^{s',k'}
\end{align*}
for all values of $s'$ and $k'$ (and all commutators represented by $\tg^k\Gamma^{s'}$ and $\tg^k\tilde\Gamma^{s'}$) where $s'\le s$ and $k'\le k$. The following estimates are used.
\begin{multline*}
|\Box_g\phi^{s',k'}-\mathcal{L}_{\phi^{s',k'}}| \\
\le |q^{-2}[q^2\Box_g,\tg^{k'}]\phi^{s'}|+ |q^{-2}\tg^{k'}(q^2\Box_g\phi^{s'}-\Gamma^{s'}(q^2\mathcal{L}_\phi)|+|q^{-2}\tg^{k'}\Gamma^{s'}(q^2\mathcal{L}_\phi)-\mathcal{L}_{\phi^{s',k'}}|,
\end{multline*}
\begin{multline*}
|\Box_{\tilde{g}}\psi^{s',k'}-\mathcal{L}_{\psi^{s',k'}}| \\
\le |q^{-2}[q^2\Box_{\tilde{g}},\tg^{k'}]\psi^{s'}|+ |q^{-2}\tg^{k'}(q^2\Box_{\tilde{g}}\psi^{s'}-\tilde\Gamma^{s'}(q^2\mathcal{L}_\psi)|+|q^{-2}\tg^{k'}\tilde\Gamma^{s'}(q^2\mathcal{L}_\psi)-\mathcal{L}_{\psi^{s',k'}}|.
\end{multline*}
The resulting error terms have been grouped into the parts that will either be linear or nonlinear when using the equations from the fully nonlinear system. (See Theorem \ref{p_s_k_thm} below.) \qed
\end{proof}

Once again, the plan is to improve the prevoius theorem after proving a lemma that handles the linear error terms on the right hand side. In this case, the new linear error terms are $(L_3)^{s,k}(t)$ and $(L_3)_p^{s,k}(t)$. These terms arise from the fact that the operator $\tg$ does not commute with the wave operators. Since $\tg$ is supported near the event horizon, there are no new issues related to trapping. The important observation to make is that one of the terms in the commutator has an appropriate sign on the event horizon. This is the point of the following lemma.
\begin{lemma}\label{p_s_k_L_a_lem}
If $(L_2)^{s,k}(t)$, $(L_3)^{s,k}(t)$, $(L_1)_p^s(t)$, $(L_2)_p^{s,k}(t)$, $(L_3)_p^{s,k}(t)$, and $B_p^{s,k}(t)$ are as defined in Theorem \ref{p_s_k_L_thm}, then
$$(L_2)^{s,k}(t)+(L_3)^{s,k}(t) \lesssim B_{p'}^{s+2,k-1}(t)+\frac{|a|}{M}(B_1^{s,k}(t)+\mathring{B}_1^{s+1}(t))+N^{s,k}(t)$$
and
$$(L_1)_p^s(t)+(L_2)_p^{s,k}(t)+(L_3)_p^{s,k}(t)\lesssim B_{p'}^{s+2,k-1}(t) + \frac{|a|}{M}(B_p^{s,k}(t)+\mathring{B}_p^{s+1}(t))+N_p^{s,k}(t),$$
where
\begin{multline*}
N^{s,k}(t)=(E^{s,k}(t))^{1/2}\left(\sum_{\substack{s'\le s \\ k'\le k}}||q^{-2}\tg^{k'}\Gamma^{s'}(q^2\Box_g\phi-q^2\mathcal{L}_\phi)||_{L^2(\Sigma_t)}\right. \\
+\left.\sum_{\substack{s'\le s \\ k'\le k}}||q^{-2}\tg^{k'}\tilde\Gamma^{s'}(q^2\Box_{\tilde{g}}\psi-q^2\mathcal{L}_\psi)||_{L^2(\tilde{\Sigma}_t)}\right),
\end{multline*}
$$N_p^{s,k}(t) = \sum_{\substack{s'\le s \\ k'\le k}}\int_{\Sigma_t}r^{p+1}|q^{-2}\tg^{k'}\Gamma^{s'}(q^2\Box_g\phi-q^2\mathcal{L}_\phi)|^2 + \sum_{\substack{s'\le s \\ k'\le k}}\int_{\tilde\Sigma_t}r^{p+1}|q^{-2}\tg^{k'}\tilde\Gamma^{s'}(q^2\Box_{\tilde{g}}\psi-q^2\mathcal{L}_\psi)|^2,$$
and $p'$ is arbitrary.
\end{lemma}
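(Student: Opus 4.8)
The plan is to split the left-hand sides into the pieces that were already treated in Lemma~\ref{p_s_L_a_lem} and the genuinely new pieces produced by the $\tg$-commutators, and to handle the latter using Lemma~\ref{dr_comm_2_lem} together with the redshift sign of $\Delta'$ at the horizon.

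First I would dispose of $(L_1)_p^s(t)$, $(L_2)^{s,k}(t)$ and $(L_2)_p^{s,k}(t)$. The term $(L_1)_p^s(t)$ carries no $\tg$'s, so Lemma~\ref{p_L_a_lem} gives $(L_1)_p^s(t)\lesssim\frac{|a|}{M}B_p^s(t)\le\frac{|a|}{M}B_p^{s,k}(t)$ directly. For $(L_2)^{s,k}(t)$ and $(L_2)_p^{s,k}(t)$ one repeats the proof of Lemma~\ref{p_s_L_a_lem} verbatim with $(\phi^{s'},\psi^{s'})\mapsto(\phi^{s',k'},\psi^{s',k'})$: the factors $q^{-2}\tg^{k'}\Gamma^{s'}(q^2\mathcal{L}_\phi)-\mathcal{L}_{\phi^{s',k'}}$ and $q^{-2}\tg^{k'}\tilde\Gamma^{s'}(q^2\mathcal{L}_\psi)-\mathcal{L}_{\psi^{s',k'}}$ are commutators of $\mathcal{L}_\phi,\mathcal{L}_\psi$ with the operators $\pd_t$, $Q$, $\tilde Q$ and $\pd_r$, so by Lemma~\ref{small_a_quantities_lem} each carries a factor $|a|/M$, and by the regularity formalism of \S\ref{regularity_sec} the analogue of estimate~(\ref{mathcalL_p_estimate}) holds. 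Away from $r_{trap}$ this bounds the pieces by $\frac{|a|}{M}B_p^{s,k}(t)$, and near $r_{trap}$ one trades the extra $\pd_t$ for a homogeneous norm by the approximate identities of Lemma~\ref{pd_t_identities_lem}, producing $\frac{|a|}{M}\mathring B_p^{s+1}(t)$; since $\tg=1_H(r)\pd_r$ is supported near the event horizon, away from $r_{trap}$, the presence of the $\tg^{k'}$'s does not affect this trapping analysis.

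It remains to estimate the new terms $(L_3)^{s,k}(t)$ and $(L_3)_p^{s,k}(t)$, whose only new ingredient is the factor $q^{-2}[q^2\Box_g,\tg^{k'}]\phi^{s'}$ (and the $\psi$-analogue). By Lemma~\ref{dr_comm_2_lem},
\[
q^{-2}[q^2\Box_g,\tg^{k'}]\phi^{s'} = k'q^{-2}\Delta'\,\pd_r\tg^{k'}\phi^{s'} + q^{-2}\{\pd_r\tg^{\le k'-1}\Gamma^{\le 1}\phi^{s'},\ \tg^{\le k'-1}\Gamma^{\le 2}\phi^{s'},\ \tg^{\le k'-1}(q^2\Box_g\phi^{s'})\}.
\]
The first two families in the bracket are of the form $\pd_r\phi^{s''}$ with $s''\le s+2$ and at most $k-1$ factors of $\tg$, or $\phi^{s''}$ with $s''\le s+2$ and at most $k-1$ factors of $\tg$; all are supported in a compact horizon neighbourhood, where $r$-weights are irrelevant, so Cauchy--Schwarz against the corresponding densities of $B_{p'}^{s+2,k-1}(t)$ (with $p'$ arbitrary) controls their contribution. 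The third family, $\tg^{\le k'-1}(q^2\Box_g\phi^{s'})$, is re-expressed using $[\Gamma^{s'},q^2\Box_g]=0$ and the decomposition $q^2\Box_g\phi=q^2\mathcal{L}_\phi+(q^2\Box_g\phi-q^2\mathcal{L}_\phi)$: the $\mathcal{L}$-part is $\lesssim\frac{|a|}{M}B_p^{s,k}(t)$ by Lemma~\ref{small_a_quantities_lem} and \S\ref{regularity_sec} exactly as above, while the remainder is collected into $N^{s,k}(t)$ (resp. $N_p^{s,k}(t)$) via Cauchy--Schwarz against $E^{s,k}(t)^{1/2}$ (resp. the weighted bulk).

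The heart of the matter is the leading term $k'q^{-2}\Delta'\,\pd_r\tg^{k'}\phi^{s'}$, which near the horizon equals $k'q^{-2}\Delta'\,\pd_r\phi^{s',k'}$ up to a smooth factor. One must not estimate this by brute force, since $\int|\pd_t\phi^{s,k}||\pd_r\phi^{s,k}|$ is comparable to $B_p^{s,k}(t)$ with no smallness gained and $B_p^{s,k}(t)$ sits on the left-hand side of the energy estimate. Instead, I would use the redshift sign: $\Delta'(r_H)=2\sqrt{M^2-a^2}>0$, and $1_H$ is supported on a neighbourhood of $r_H$ on which $\Delta'$ keeps this sign. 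When this term is carried through the divergence identity underlying Theorem~\ref{p_s_k_L_thm} and paired with the redshift component $X|_{r=r_H}=-\lambda\pd_r$ of the multiplier vectorfield, its contribution to $\mathrm{div}\,J$ near the horizon is $\sim -\lambda k'\Delta' q^{-2}(\pd_r\phi^{s,k})^2$, hence has the favourable sign of the coercive bulk density $r^{-2}(\pd_r\phi^{s,k})^2$ and is absorbed on the left-hand side --- precisely as the redshift is exploited in the Morawetz estimate (Proposition~\ref{xi_morawetz_estimate_prop}). Collecting the three families of bounds and summing over $s'\le s$, $k'\le k$ yields the two displayed inequalities with the stated $N^{s,k}(t)$ and $N_p^{s,k}(t)$. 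I expect this last point to be the main obstacle: one must check that the favourable sign genuinely survives the commutator bookkeeping of Lemma~\ref{dr_comm_2_lem} (the combinatorial coefficient $k'$ is positive and all lower-order corrections have already been split off) and dominates the cross term once the small constant in front of the redshift multiplier is fixed, so that --- as in the lemma statement --- the leftover $B_{p'}^{s+2,k-1}(t)$ is allowed to appear without a smallness factor and is closed off inductively in Theorem~\ref{p_s_k_thm}. \qed
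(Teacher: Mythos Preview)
Your proposal is correct and follows essentially the same route as the paper: you reduce $(L_1)$, $(L_2)$ to the argument of Lemma~\ref{p_s_L_a_lem}, decompose the $(L_3)$ commutator via Lemma~\ref{dr_comm_2_lem}, place the lower-order and $\Box_g\phi$ remainders into $B_{p'}^{s+2,k-1}$ and $N^{s,k}$/$N_p^{s,k}$, and absorb the leading $k'\Delta'\pd_r\tg^{k'}$ piece using the redshift sign of $X^r$ at the horizon. Two small remarks: your displayed sign for the contribution to $\mathrm{div}\,J$ is off (with $X^r=-\lambda$ and $[q^2\Box_g,\tg^k]=-k\Delta'\pd_r+\dots$ the leading bulk term is $+2k\lambda q^{-2}\Delta'(\pd_r\phi^{s,k})^2$, which is indeed positive and hence absorbable, as you correctly conclude); and, as in Lemma~\ref{p_s_L_a_1_lem}, the sign argument requires working with the signed integrand rather than the absolute-valued $(L_3)$ as literally defined --- the paper makes the same tacit move.
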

\begin{remark}
The reason for the arbitrary $p'$ on the right hand side is that the bulk norms with the arbitrary $p'$ are only used to control the new terms related to commuting with $\tg$. These terms are all supported on a compact radial interval, so the factor $r^{p'-1}$ that appears in $B_{p'}^{s,k}(t)$ can be approximated by a constant.
\end{remark}
\begin{proof}
The proof of Lemma \ref{p_s_L_a_lem} can be adapted to show that
$$(L_2)^{s,k}(t)\lesssim \frac{|a|}{M}(B_1^{s,k}(t)+\mathring{B}_1^{s+1}(t)+N^{s,k}(t)),$$
$$(L_1)_p^{s,k}(t)+(L_2)^{s,k}(t)\lesssim \frac{|a|}{M}(B_p^{s,k}(t)+\mathring{B}_p^{s+1}(t)+N_p^{s,k}(t)).$$
It suffices to show that
$$(L_3)^{s,k}(t)\lesssim B_{p'}^{s+2,k-1}(t) + N^{s,k}(t),$$
$$(L_3)_p^{s,k}(t)\lesssim B_{p'}^{s+2,k-1}(t) + N_p^{s,k}(t).$$
The key observation is to recognize that the term represented by $L_3$ actually has a good sign near the event horizon. That is, according to Lemma \ref{dr_comm_2_lem},
\begin{align*}
\int_{\Sigma_t}-2X(\tg^k\phi^s)q^{-2}[q^2\Box_g,\tg^k]\phi^s &= \int_{\Sigma_t}-2(-X^r)\pd_r(\tg^k\phi^s)q^{-2}k\Delta'\pd_r(\tg^k\phi^s)+err \\
&=\int_{\Sigma_t}-2(-X^r)q^{-2}\Delta'(\pd_r\tg^k\phi^s)^2+err
\end{align*}
Since $X^r<0$ near the event horizon and $\Delta'>0$, the principal term becomes minus a square, so it can be ignored, or used to control small error terms.

We now investigate the error terms, which come from the remaining part of the vectorfield $X$ and functions $w$, $\tilde{w}$, $w_{(a)}$, and $\tilde{w}_{(a)}$ in $\mathcal{X}_{\phi^{s,k}}$ and $\mathcal{X}_{\psi^{s,k}}$, as well as the remainder in Lemma \ref{dr_comm_2_lem}.
\begin{multline*}
err = \int_{\Sigma_t\cap \{r_H\le r\le r_H+\delh\}}|\mathcal{X}_{\phi^{s,k}}-X^r\pd_r\phi^{s,k}||q^{-2}k\Delta'\pd_r(\tg^k\phi^s)| \\
+\int_{\Sigma_t\cap\{r_H\le r\le r_H+\delh\}}|\mathcal{X}_{\phi^{s,k}}|(|\pd_r\psi^{k-1,s+1}|+|\psi^{k-1,s+2}|+|q^{-2}\tg^{\le k-1}(q^2\Box_g\phi^s)|).
\end{multline*}
Since there is no product of principal factors (ie. factors of the form $\pd_r\tg^k\phi^s$) each term can be estimated in such a way that at least one factor is estimated by one of $B_{p'}^{s+2,k-1}(t)$ or $N^{s,k}(t)$ or $N_p^{s,k}(t)$, and at most one factor is estimated by $\epsilon (\pd_r\tg^k\phi^s)^2$ after separating the factors. This latter term can be absorbed into the term with the good sign. The same observation and procedure can be repeated for the $\psi$ integral as well.\qed
\end{proof}

Finally, we arrive at the following theorem, which is the most general form required by the proof of the main theorem.
\begin{theorem}\label{p_s_k_thm}Suppose $|a|/M$ is sufficiently small and fix $\delm,\delp>0$. Suppose furthermore that the pair $(\phi,\psi)$ satisfies the system
\begin{align*}
\Box_g\phi &= \mathcal{L}_\phi+\mathcal{N}_\phi, \\
\Box_{\tilde{g}}\psi &= \mathcal{L}_\psi + \mathcal{N}_\psi.
\end{align*}
Then the following estimates hold for $p\in [\delm,2-\delp]$, arbitrary $p'$, and integers $s\ge 0$ and $k\ge 1$.
$$E^{s,k}(t_2)\lesssim E^{s,k}(t_1)+\int_{t_1}^{t_2}B_{p'}^{s+2,k-1}(t)+\mathring{B}_1^{s+1}(t)+B_1^{s,k}(t)+N^{s,k}(t)dt,$$
$$E_p^{s,k}(t_2)+\int_{t_1}^{t_2}B_p^{s,k}(t)dt\lesssim E_p^{s,k}(t_1)+\int_{t_1}^{t_2}B_{p'}^{s+2,k-1}(t)+\mathring{B}_p^{s+1}(t)+N_p^{s,k}(t)dt,$$
where $E^{s,k}(t)$, $E_p^{s,k}(t)$, and $B_p^{s,k}(t)$ are as defined in Theorem \ref{p_s_k_L_thm}, and
$$N^{s,k}(t)=(E^{s,k}(t))^{1/2}\left(\sum_{\substack{s'\le s \\ k'\le k}}||q^{-2}\tg^{k'}\Gamma^{s'}(q^2\mathcal{N}_\phi)||_{L^2(\Sigma_t)}+\sum_{\substack{s'\le s \\ k'\le k}}||q^{-2}\tg^{k'}\tilde\Gamma^{s'}(q^2\mathcal{N}_\psi)||_{L^2(\tilde{\Sigma}_t)}\right),$$
\begin{align*}
N_p^{s,k}(t) =& \sum_{\substack{s'\le s \\ k'\le k}}\int_{\Sigma_t}r^{p+1}|q^{-2}\tg^{k'}\Gamma^{s'}(q^2\mathcal{N}_\phi)|^2 + \sum_{\substack{s'\le s \\ k'\le k}}\int_{\tilde\Sigma_t}r^{p+1}|q^{-2}\tg^{k'}\tilde\Gamma^{s'}(q^2\mathcal{N}_\psi)|^2.
\end{align*}
\end{theorem}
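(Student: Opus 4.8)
The plan is to obtain Theorem \ref{p_s_k_thm} from Theorem \ref{p_s_k_L_thm} by exactly the procedure used to deduce Theorem \ref{p_thm} from Theorem \ref{p_L_thm} and Theorem \ref{p_s_thm} from Theorem \ref{p_s_L_thm}: first substitute the fully nonlinear equations into the $\Box$-error terms, then absorb the genuinely linear error terms with Lemma \ref{p_s_k_L_a_lem}, and finally use Cauchy--Schwarz to put the surviving nonlinear contributions into the claimed form so that their bulk pieces can be absorbed on the left. I would begin by recording that, since $[q^2\Box_g,\Gamma^{s'}]=0$ and $[q^2\Box_{\tilde g},\tilde\Gamma^{s'}]=0$, applying $\tg^{k'}\Gamma^{s'}$ and $\tg^{k'}\tilde\Gamma^{s'}$ to $q^2$ times the equations $\Box_g\phi=\mathcal{L}_\phi+\mathcal{N}_\phi$ and $\Box_{\tilde g}\psi=\mathcal{L}_\psi+\mathcal{N}_\psi$ gives $q^{-2}\tg^{k'}(q^2\Box_g\phi^{s'}-\Gamma^{s'}(q^2\mathcal{L}_\phi))=q^{-2}\tg^{k'}\Gamma^{s'}(q^2\mathcal{N}_\phi)$ and its $\psi$-analogue. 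This identifies the $N^{s,k}(t)$ and $N_p^{s,k}(t)$ appearing in Theorem \ref{p_s_k_L_thm}, as well as the ones in Lemma \ref{p_s_k_L_a_lem} (which are written through $q^2\Box_g\phi-q^2\mathcal{L}_\phi$ and $q^2\Box_{\tilde g}\psi-q^2\mathcal{L}_\psi$), with the weighted $\tg^{k'}\Gamma^{s'}(q^2\mathcal{N}_\phi)$ and $\tg^{k'}\tilde\Gamma^{s'}(q^2\mathcal{N}_\psi)$ expressions stated in the present theorem.

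Next I would feed in Lemma \ref{p_s_k_L_a_lem} to control the linear error terms: $(L_2)^{s,k}(t)+(L_3)^{s,k}(t)\lesssim B_{p'}^{s+2,k-1}(t)+\tfrac{|a|}{M}(B_1^{s,k}(t)+\mathring{B}_1^{s+1}(t))+N^{s,k}(t)$ for the classic energy estimate, and $(L_1)_p^s(t)+(L_2)_p^{s,k}(t)+(L_3)_p^{s,k}(t)\lesssim B_{p'}^{s+2,k-1}(t)+\tfrac{|a|}{M}(B_p^{s,k}(t)+\mathring{B}_p^{s+1}(t))+N_p^{s,k}(t)$ for the $r^p$ estimate. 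The remaining nonlinear contributions from Theorem \ref{p_s_k_L_thm} still carry a factor $\pd_t\phi^{s',k'}$ (classic estimate) or $r^p\mathcal{X}_{\phi^{s,k}}$ ($r^p$ estimate); applying Cauchy--Schwarz verbatim as in the proof of Theorem \ref{p_s_thm}, using $\|\pd_t\phi^{s',k'}\|_{L^2(\Sigma_t)}\lesssim(E^{s,k}(t))^{1/2}$ and $\int_{\Sigma_t}r^{p-1}(\mathcal{X}_{\phi^{s,k}})^2\lesssim\mathring{B}_p^{s+1}(t)+B_p^{s,k}(t)$ (plus the analogous $\psi$ bounds), these become $N^{s,k}(t)$ in the classic case and $\epsilon(\mathring{B}_p^{s+1}(t)+B_p^{s,k}(t))+\epsilon^{-1}N_p^{s,k}(t)$ in the $r^p$ case. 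For the $r^p$ estimate, taking $|a|/M$ and $\epsilon$ small absorbs the $\tfrac{|a|}{M}B_p^{s,k}(t)$ and $\epsilon B_p^{s,k}(t)$ bulk pieces into $\int_{t_1}^{t_2}B_p^{s,k}(t)\,dt$ on the left, producing exactly the stated inequality; the classic energy estimate has no bulk on the left, so $B_{p'}^{s+2,k-1}(t)$, $\mathring{B}_1^{s+1}(t)$, $B_1^{s,k}(t)$ stay on the right as written, to be handled when the full hierarchy of estimates is combined in \S\ref{main_thm_sec}.

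I do not expect a genuine analytic obstacle at this stage, because the one substantive point is already carried out inside Lemma \ref{p_s_k_L_a_lem}: since $\tg$ fails to commute with the wave operators, the commutator $q^{-2}[q^2\Box_g,\tg^{k'}]\phi^{s'}$ is nonzero, yet by Lemma \ref{dr_comm_2_lem} its leading term $k'\Delta'\pd_r\tg^{k'}\phi^{s'}$ pairs against $-X^r\pd_r\phi^{s,k}$ with a favorable sign near the event horizon (there $X^r<0$ and $\Delta'>0$) and the remainder is lower order, controlled by $B_{p'}^{s+2,k-1}(t)$. The work that remains is bookkeeping: verifying that each error term produced by Theorem \ref{p_s_k_L_thm} falls into one of the categories just described, and keeping track of the by-now-familiar loss of a single derivative to a homogeneous norm near the trapping radius, exactly as in Theorem \ref{p_s_thm}. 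The only mild care needed is that the arbitrary-$p'$ bulk term $B_{p'}^{s+2,k-1}(t)$ is permitted on the right, which is harmless since the commutator terms it controls are supported on a compact radial interval where $r^{p'-1}$ is comparable to a constant.
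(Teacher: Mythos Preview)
Your proposal is correct and follows essentially the same approach as the paper's proof: start from Theorem \ref{p_s_k_L_thm}, invoke Lemma \ref{p_s_k_L_a_lem} to bound the linear error terms (picking up $B_{p'}^{s+2,k-1}$, $\mathring{B}_p^{s+1}$, and small multiples of $B_p^{s,k}$), then apply Cauchy--Schwarz to the residual nonlinear terms exactly as in the proof of Theorem \ref{p_s_thm}, absorbing the resulting $\epsilon B_p^{s,k}$ and $\tfrac{|a|}{M}B_p^{s,k}$ bulk pieces on the left for the $r^p$ estimate. The paper's own proof is terser---it simply states the argument is ``very similar to the proof of Theorem \ref{p_s_thm}'' and sketches the two steps---but the substance is identical to what you have written.
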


\begin{proof}
The proof is very similar to the proof of Theorem \ref{p_s_thm}. An outline of the proof is given here.

By Theorem \ref{p_s_k_L_thm} and Lemma \ref{p_s_k_L_a_lem}, we have
$$E^{s,k}(t_2)\lesssim E^{s,k}(t_1)+\int_{t_1}^{t_2}B_{p'}^{s+2,k-1}(t)+\mathring{B}_1^{s+1}(t)+B_1^{s,k}(t)+N^{s,k}(t)+(N')^{s,k}(t),$$
where $(N')^{s,k}(t)$ is the quantity $N^{s,k}(t)$ from Theorem \ref{p_s_k_L_thm}. By the same argument as in the proof of Theorem \ref{p_s_thm}, 
$$(N')^{s,k}(t)\lesssim N^{s,k}(t).$$
This proves the first estimate of the theorem.

By Theorem \ref{p_s_k_L_thm} and Lemma \ref{p_s_k_L_a_lem}, we have
$$E_p^{s,k}(t_2)+\int_{t_1}^{t_2}B_p^{s,k}(t)dt \lesssim E_p^{s,k}(t_1)+\int_{t_1}^{t_2}B_{p'}^{s+2,k-1}(t)+\frac{|a|}{M}B_p^{s,k}(t)+\mathring{B}_p^{s+1}(t)+N_p^{s,k}(t)+(N')_p^{s,k}(t)dt,$$
where $(N')_p^{s,k}(t)$ is the quantity $N_p^{s,k}(t)$ from Theorem \ref{p_s_k_L_thm}. By the same argument as in the proof of Theorem \ref{p_s_thm}, 
$$(N')_p^{s,k}(t)\lesssim \epsilon\mathring{B}_p^{s+1}(t)+\epsilon B_p^{s,k}(t)+\epsilon^{-1}N_p^{s,k}(t).$$
Proceeding as in the proof of Theorem \ref{p_s_thm}, we conclude that if $\epsilon$ and $\frac{|a|}{M}$ are sufficiently small, then the bulk term
$$(|a|/M+\epsilon)\int_{t_1}^{t_2}B_p^{s,k}(t)dt$$
can be absorbed into the left hand side. The result is the second estimate of the theorem. \qed
\end{proof}

\section{The Pointwise Estimates}\label{pointwise_sec}

In this section, we prove pointwise estimates for certain derivatives of $\phi$ and $\psi$ that will appear in the nonlinear quantities $q^{-2}\tg^k\Gamma^s(q^2\mathcal{N}_\phi)$ and $q^{-2}\tg^k\tilde{\Gamma}^s(q^2\mathcal{N}_\psi)$. In \S\ref{nonlinear_sec}, we will see that these quantities are not simply sums of products of single derivatives of $\phi^{s,k}$ and $\psi^{s,k}$. Instead, they will take a slightly more general form consisting of sums of products of single derivatives of $\fd^l\phi^{s-l,k}$ and $\fd^l\psi^{s-l,k}$, where the $\fd^l$ operators are defined in \S\ref{regularity_sec}. For this reason, the estimates established in this section will apply to $\fd^l\phi^{s-l,k}$ and $\fd^l\psi^{s-l,k}$ and their first derivatives. However, for simplicity the reader is welcome to think of these quantities as simply $\phi^{s,k}$ and $\psi^{s,k}$ or even more simply as $\phi$ and $\psi$.

We begin with Lemma \ref{infty_base_lem}, which estimates an arbitrary function in $L^\infty(\Sigma_t)$ (which is the same space as $L^\infty(\tilde{\Sigma}_t)$) by certain Sobolev norms in $\Sigma_t$ and $\tilde{\Sigma}_t$. This lemma is then repeatedly applied to single derivatives of $\fd^l\phi^{s-l,k}$ and $\fd^l\psi^{s-l,k}$, resulting in Sobolev norms that can be estimated by the energy norms. (See Lemmas \ref{low_order_infty_lem}-\ref{gh_infty_lem}.)  These estimates are all summarized at the end in Proposition \ref{infinity_prop}.

It is important to pay special attention to the $r$ weights in the lemmas that follow. In the main theorem (Theorem \ref{main_thm}), we will see that the energies $E_p^{s,k}(t)$ behave like $t^{p-2+\delp}$ for late times and that $E^{s,k}(t)$ will remaind bounded in time. In this section, we will see that multiplying a derivative of $\fd^l\phi^{s-l,k}$ or $\fd^l\psi^{s-l,k}$ by $r$ changes which energy norm can be used to estimate the Sobolev norm provided by Lemma \ref{infty_base_lem}. Some derivatives of $\fd^l\phi^{s-l,k}$ or $\fd^l\psi^{s-l,k}$ can have more $r$ factors than others. These derivatives will eventually be shown to decay better in time. See the statement of Theorem \ref{main_thm}.

\subsection{A Sobolev-type estimate}

First, we prove the following lemma, which is a Sobolev-type estimate. In particular, this lemma uses the fact that the commutators $Q$ and $\tilde{Q}$ grow at a rate of $r^2$. But the weight that is gained in the following estimate depends on the volume form for the associated space, so $L^\infty$ estimates on $\Sigma_t$ gain two factors of $r$ and $L^\infty$ estimates on $\tilde{\Sigma}_t$ gain six factors of $r$. Also, the fact that the volume form for $\tilde{\Sigma}_t$ has additional factors of $\sin\theta$ means that more derivatives are required in the Sobolev estimate.

\begin{lemma}\label{infty_base_lem}
Let $u$ be an arbitrary function decaying sufficiently fast as $r\rightarrow\infty$. Then for any $r_0\ge r_H$,
$$||\fd^lu||^2_{L^\infty(\Sigma_t\cap\{r>r_0\})}\lesssim \int_{\Sigma_t\cap\{r>r_0\}}r^{-2}\left[(\pd_r\Gamma^{\le l+3}u)^2+(\Gamma^{\le l+3}\phi)^2\right]$$
and
$$||\fd^lu||^2_{L^\infty(\tilde\Sigma_t\cap\{r>r_0\})}\lesssim \int_{\tilde{\Sigma}_t\cap\{r>r_0\}}r^{-6}\left[(\pd_r\tilde{\Gamma}^{\le l+5} u)^2+(\tilde{\Gamma}^{\le l+5} u)^2\right].$$
If $l$ is even, the same results hold with only $\Gamma^{\le l+2}$ and $\tilde{\Gamma}^{\le l+4}$ respectively.
\end{lemma}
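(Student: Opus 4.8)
The statement is a Sobolev embedding on the fixed spatial slices $\Sigma_t\cong[r_H,\infty)\times S^2$ and $\tilde\Sigma_t\cong[r_H,\infty)\times S^6$, together with bookkeeping of the fact that each application of the angular operator $Q$ (resp. $\tilde Q$) costs a factor $r^2$ because $Q\approx \fa+\fb+a^2\sin^2\theta\pd_t^2$ and the first two pieces are genuine $S^2$-Laplacian-type operators while $a^2\sin^2\theta\pd_t^2$ is lower order. I would first reduce to the case $l=0$: since $\fd^l$ is, by the definition in \S\ref{intro_regularity_sec}, either $\fc^{l/2}$ or $\pd_\theta\fc^{(l-1)/2}$, and $\fc^{m}$ is a sum of products of $m$ factors each of which is $\fa$ or $\fb$, the quantity $\fd^lu$ is a finite sum of terms $\fe(u)$ where $\fe$ is a composition of at most $l$ of the operators $\pd_\theta,\fa,\fb$ (with at most one bare $\pd_\theta$ at the outside). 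The point of the ``atomic'' treatment of $\fb$ is exactly that such an $\fe(u)$ is, pointwise, bounded by a finite sum of angular Sobolev-type quantities $\sum_{j\le l}|\sla\nabla^j u|$ on $S^2$ (this is the content of ``the operators $\fa$ and $\fb$ preserve the space of regular functions on the axis''); I would state this as a sublemma and prove it by induction on $l$ using $\fb=\csc\theta\,\pd_\theta(\sin\theta\,\cdot)-\cot^2\theta$-type identities handled atomically, noting the worst case $\pd_\theta\fc^{(l-1)/2}$ needs $\sla\nabla^{\le l}$ while the even case $\fc^{l/2}$ needs only $\sla\nabla^{\le l}$ but in fact, being built from $\fa,\fb$ alone, $\sla\nabla^{\le l}$ with the ``$+2$'' rather than ``$+3$'' reflecting that no odd bare $\pd_\theta$ is spent.

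\textbf{The $S^2$ case.} For $\Sigma_t$, I would prove the one-dimensional fundamental-theorem-of-calculus estimate in $r$: for fixed angular point and $r>r_0$, using decay at infinity,
\begin{equation*}
v(r)^2 = -\int_r^\infty \pd_\rho(v(\rho)^2)\,d\rho \le \int_{r_0}^\infty\left(\rho^{-2}v^2 + (\pd_\rho v)^2\right)d\rho,
\end{equation*}
applied to $v=\fd^l u$, and then integrate this over the sphere after first upgrading to $L^\infty(S^2)$ via the standard Sobolev inequality $\|w\|_{L^\infty(S^2)}\lesssim \sum_{j\le 2}\|\sla\nabla^j w\|_{L^2(S^2)}$ (quoted in the ``Conventions for spacetime norms'' paragraph). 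The composition gives $\|\fd^l u\|_{L^\infty(\Sigma_t\cap\{r>r_0\})}^2\lesssim \sum_{j\le 2}\int_{\Sigma_t\cap\{r>r_0\}} r^{-2}\big[(\pd_r \sla\nabla^j\fd^l u)^2+(\sla\nabla^j\fd^l u)^2\big]$, and now I convert the angular Sobolev derivatives $\sla\nabla^{\le 2}\fd^l$ back into powers of $Q$: each $\sla\nabla^2$ on $S^2$ is controlled, after multiplying by $r^2$, by one factor of $Q$ modulo lower-order $\pd_t$ terms and zeroth-order terms, but since the statement only asks for $\Gamma^{\le l+3}u$ with $\Gamma$ ranging over all $Q^i\pd_t^{s-2i}$ combinations, the $a^2\sin^2\theta\pd_t^2$ remainder and the bounded coefficients cause no trouble --- they are swallowed into $\Gamma^{\le l+3}$. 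Counting: $\fd^l$ is worth $l$ angular derivatives, the Sobolev step adds $2$, giving $l+2$; the fundamental-theorem step adds nothing to the angular count but the $\pd_r$ lands alongside. The extra ``$+1$'' (to $l+3$) in the odd case is the single bare $\pd_\theta$ hidden in $\fd^l$ for odd $l$, which must be absorbed as half of a $Q$; in the even case it is genuinely absent, hence $\Gamma^{\le l+2}$.

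\textbf{The $S^6$ case and the obstacle.} For $\tilde\Sigma_t$ the volume form is $\tilde\mu = q^2(r^2+a^2)^2\sin^5\theta\,\cdots$, so the radial weight is effectively $r^6$ rather than $r^2$: the fundamental-theorem-of-calculus step, done with respect to the measure $r^6\,dr$, produces $\int r^{-6}[(\pd_r v)^2 + r^{-?}v^2]$, and I must be slightly careful that the $v^2$ term comes with the right $r$-power --- here one uses $\pd_r(r^6 v^2) = 6r^5 v^2 + r^6 2v\pd_r v$ and a Cauchy--Schwarz/Hardy splitting to land on $\int r^{-6}[(\pd_r v)^2+v^2]$ after absorbing; this is the step where I'd be most careful about whether one really gets a clean $r^{-6}$ weight on both terms or picks up a logarithm, but since $u$ decays ``sufficiently fast'' and we only need the inequality up to constants, it goes through. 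Then the $S^6$ Sobolev inequality $\|w\|_{L^\infty(S^6)}\lesssim\sum_{j\le 4}\|\tilde{\sla\nabla}^j w\|_{L^2(S^6)}$ (also quoted in the conventions) is what forces ``$+4$'' instead of ``$+2$'': four angular derivatives are needed because $\dim S^6 = 6 > 2\cdot 2$. Converting $\tilde{\sla\nabla}^{\le 4}\fd^l$ into $\tilde\Gamma^{\le l+4}$ (odd: $l+5$) uses $\tilde Q\approx\fa+5\fb+a^2\sin^2\theta\pd_t^2$ in exactly the same way, noting the ``$5$'' coefficient on $\fb$ is precisely what matches the $S^6$ spherical Laplacian when acting on functions of $\theta_1$ alone. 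The \emph{main obstacle} I anticipate is not analytic but organizational: making the reduction of $\fd^l$ to $\sla\nabla^{\le l}$ fully rigorous while respecting the atomicity of $\fb$ (one cannot expand $\fb^2$!) and simultaneously tracking that the odd/even parity of $l$ controls whether the final count is $l+3/l+5$ or $l+2/l+4$. I would isolate this as a clean sublemma (``for $\fe$ a length-$\le l$ word in $\pd_\theta,\fa,\fb$ with at most one bare $\pd_\theta$, $\|\fe u\|_{L^\infty(S^2)}\lesssim\sum_{j\le l}\|\sla\nabla^j u\|_{L^2(S^2)}$, with the bound improved to $j\le l$ using only $\fa,\fb$'') and prove it by a careful induction, after which the rest of the argument is the routine two-step (radial FTC)$\circ$(spherical Sobolev) composition above, separately on $\Sigma_t$ and $\tilde\Sigma_t$.
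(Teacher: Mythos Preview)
Your two-step skeleton (spherical $L^\infty\!\to\!L^2$ Sobolev, then radial fundamental theorem of calculus) is exactly what the paper does, but the conversion from $\fd^l$ to $\Gamma^{\le l+2\text{ or }l+3}$ is organized differently. Instead of your proposed pointwise sublemma $|\fd^l u|\lesssim\sum_{j\le l}|\sla\nabla^j u|$, the paper first pulls back to the unit sphere $S^2(1)$ (so that $\overline{(q^2\sla\triangle)^m u}=\sla\triangle^m\bar u$ and all $r$-weights disappear), then chains three $L^2$-based results from Appendix~\ref{regularity_sec}: Lemma~\ref{spherical_infty_lem} gives $\|\fd^l\bar u\|_{L^\infty}\lesssim\|\fd^{\le l+2\text{ or }l+3}\bar u\|_{L^2}$ (this is where the odd/even distinction lives), Theorem~\ref{cl_embedding_thm} converts $\|\fd^{\le m}\bar u\|_{L^2}$ to $\|\sla\triangle^{\le m/2}\bar u\|_{L^2}$, and Lemma~\ref{laplacian_le_commutators_lem} converts $(q^2\sla\triangle)^m$ back to $\Gamma^{\le 2m}$. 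Your pointwise claim is in fact correct (it follows from the operator identity $\fc^k\approx\fa^k+\fc^{\le k-1}\sla\triangle+\fc^{\le k-1}$ of Lemma~\ref{ccc_lem} together with $|\pd_\theta^n f|\le|\sla\nabla^n f|$), so your route works; the paper's route is simply more modular and makes the $r$-bookkeeping trivial via the pullback. Two small inaccuracies in your write-up: the FTC split $\int(\rho^{-2}v^2+(\pd_\rho v)^2)\,d\rho$ has mismatched weights---the paper just uses $|f'(r)|\lesssim\int_{S^2}[(\pd_r\cdot)^2+(\cdot)^2]\,d\omega$ and the $r^{-2}$ (resp.\ $r^{-6}$) enters only afterward from the volume-form conversion $\int(\cdot)\sin\theta\,d\theta\,dr=\int_{\Sigma_t}q^{-2}(\cdot)$ (resp.\ $\int(\cdot)\sin^5\theta\,d\theta\,dr\approx\int_{\tilde\Sigma_t}r^{-6}(\cdot)$); so your worry about ``FTC with respect to $r^6\,dr$'' and a possible Hardy-type loss is unnecessary.
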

\begin{proof}
For a fixed $r$, denote by $\bar{u}:S^2(1)\rightarrow\R{}$ the pullback of the function $u:S^2(r)\rightarrow\R{}$ via the canonical map from $S^2(1)$ to $S^2(r)$. It is straightforward to show that
$$\overline{\fd^l u}=\fd^l\bar{u}$$
and
$$\overline{(q^2\sla\triangle)^lu}=\sla\triangle^l \bar{u}.$$
Also, denote by $d\omega$ the measure on $S^2(1)$.

\textbf{If $l$ is even, then}
\begin{multline*}
||\fd^lu||^2_{L^\infty(S^2(r))}=||\overline{\fd^l u}||^2_{L^\infty(S^2(1))}=||\fd^l\bar{u}||^2_{L^\infty(S^2(1))} \lesssim ||\fd^{\le l+2}\bar{u}||_{L^2(S^2(1))} \\
\lesssim ||\sla\triangle^{\le (l+2)/2} \bar{u}||_{L^2(S^2(1))}^2 =\int_{S^2(r)}((q^2\sla\triangle)^{\le (l+2)/2}u)^2d\omega \lesssim \int_{S^2(r)}(\Gamma^{\le l+2}u)^2d\omega.
\end{multline*}
We have used Lemma \ref{spherical_infty_lem}, Theorem \ref{cl_embedding_thm}, and Lemma \ref{laplacian_le_commutators_lem} in the three $\lesssim$ steps.

\textbf{If instead $l$ is odd, then}
$$||\fd^lu||_{L^\infty(S^2(r))}\lesssim ... \lesssim ||\fd^{\le l+3}\bar{u}||_{L^2(S^2(1))} \lesssim ... \lesssim \int_{S^2(r)}\left(\Gamma^{\le l+3}u\right)^2d\omega.$$
That is, the calculation is the exact same as for the even case, except in the application of Lemma \ref{spherical_infty_lem}, which requires $\fd^{\le l+3}$ instead of $\fd^{\le l+2}$.

\textbf{Thus, in both cases},
$$||\fd^lu||_{L^\infty(S^2(r))}\lesssim \int_{S^2(r)}\left(\Gamma^{\le l+3} u\right)^2d\omega.$$
Now, set $f(r)=\int_{S^2(r)}(\Gamma^{\le l+3}u)^2d\omega$. Note that
\begin{multline*}
|f'(r)|\lesssim \int_{S^2(r)}|\Gamma^{\le l+3}u\pd_r\Gamma^{\le l+3}u|d\omega\lesssim \int_{S^2(r)}\left[(\pd_r \Gamma^{\le l+3}u)^2+(\Gamma^{\le l+3}u)^2\right]d\omega \\
\lesssim\int_{S^2(r)}r^{-2}\left[(\pd_r \Gamma^{\le l+3}u)^2+(\Gamma^{\le l+3}u)^2\right]q^2d\omega.
\end{multline*}
Then, assuming $\lim_{r\rightarrow\infty}f(r)=0$,
$$|\fd^l u(r_0)|^2\lesssim f(r_0)\lesssim \int_{r_0}^\infty |f'(r)|dr\lesssim \int_{\Sigma_t\cap\{r\ge r_0\}}r^{-2}\left[(\pd_r \Gamma^{\le l+3}u)^2+(\Gamma^{\le l+3}u)^2\right].$$
The same procedure can be used to prove the analogous estimate on $\tilde\Sigma$, but there are two differences. The first is that there will be a loss of two extra derivatives since the $L^\infty$ estimate is now applied on $S^6$ instead of $S^2$. The second is that there will be a factor of $r^{-6}$ instead of a factor of $r^{-2}$, since the volume form for $\tilde{\Sigma}_t$ is $q^2A^2=O(r^6)$. This explains the differences between the two estimates of the lemma.
\qed
\end{proof}

\subsection{Estimating derivatives using the Sobolev-type estimate}

Now we use Lemma \ref{infty_base_lem} to prove estimates for various quantities. For simplicity, let us momentarily set $s=k=l=0$ and focus on $\phi$ only. We will estimate the quantities $\phi$, $\pd_t\phi$, $(r^{-1}\pd_\theta)\phi$, $L\phi$, and $\tg\phi$ in Lemmas \ref{low_order_infty_lem}, \ref{pd_t_infty_lem}, \ref{pd_theta_infty_lem}, \ref{L_infty_lem}, and \ref{gh_infty_lem} respectively. The operators $\pd_t$, $r^{-1}\pd_\theta$, and $L$ form an approximately normalized basis except near the event horizon, since $L$ coincides with $\pd_t$ on the event horizon. This is why we also need the operator $\tg$. These estimates from Lemmas \ref{low_order_infty_lem}-\ref{gh_infty_lem} are summarized in Proposition \ref{infinity_prop}.

\subsubsection{Estimating $\fd^l\phi^{s-l,k}$ and $\fd^l\psi^{s-l,k}$}
The following lemma estimates $\phi$ and $\psi$, as well as the higher order analogues $\fd^l\phi^{s-l,k}$ and $\fd^l\psi^{s-l,k}$.
\begin{lemma}\label{low_order_infty_lem}
For $r\ge r_H$,
$$|r^p\fd^l\phi^{s-l,k}|^2+|r^{p+2}\fd^l\psi^{s-l,k}|^2\lesssim E_{2p}^{s+5,k}(t)$$
and for $r\ge r_0>r_H$,
$$|\fd^l\phi^{s-l,k}|^2+|r^2\fd^l\psi^{s-l,k}|^2\lesssim E^{s+5,k}(t).$$
\end{lemma}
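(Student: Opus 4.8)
The plan is to apply the Sobolev-type estimate of Lemma \ref{infty_base_lem} to the function $u=\phi^{s-l,k}$ on $\Sigma_t$ and to $u=\psi^{s-l,k}$ on $\tilde\Sigma_t$, and then convert the resulting weighted $L^2$ integrals into the energy norms $E_{2p}^{s+5,k}(t)$ and $E^{s+5,k}(t)$. First I would recall from Lemma \ref{infty_base_lem} that, for any $r_0\ge r_H$,
$$
|\fd^l\phi^{s-l,k}(r)|^2 \lesssim \int_{\Sigma_t\cap\{r'>r\}} (r')^{-2}\left[(\pd_{r'}\Gamma^{\le l+3}\phi^{s-l,k})^2+(\Gamma^{\le l+3}\phi^{s-l,k})^2\right],
$$
and similarly, with a gain of $(r')^{-6}$ and a loss of two extra angular derivatives, for $\fd^l\psi^{s-l,k}$ on $\tilde\Sigma_t$. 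Since $\Gamma^{\le l+3}\phi^{s-l,k}$ is a component of $\phi^{s+3,k}$ up to reindexing (recall $\phi^{s,k}=\tg^k\Gamma^s\phi$ and the $\Gamma$ operators stack), and $\pd_r$ contributes at most what is already controlled by $(\pd_r\phi^{s+3,k})^2$ in the energy densities, the right-hand side is bounded by the relevant energy; I would absorb the two extra derivatives lost in the $\tilde\Sigma_t$ case (four for the $\psi$ estimate, since one starts from $S^6$) into the index $s+5$, which is exactly why the statement carries $s+5$ rather than $s+3$.

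Next I would track the $r$-weights carefully, since this is where the two halves of the lemma differ. For the unweighted estimate ($p=0$), I take $r_0>r_H$ fixed and simply note that the integrand $(r')^{-2}[(\pd_{r'}\phi^{s+5,k})^2+(\phi^{s+5,k})^2]$ on $\Sigma_t$ is dominated by the density defining $E^{s+5,k}(t)=\sum_{s'\le s+5,\,k'\le k}E[(\phi^{s',k'},\psi^{s',k'})](t)$, where $E$ (from Theorem \ref{p_L_thm}) contains $\chi_H(\pd_r\phi)^2+|\sla\nabla\phi|^2+r^{-2}\phi^2$ on $\Sigma_t$ with $\chi_H$ bounded below on $\{r\ge r_0\}$. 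For the $\psi$ half one uses that the density of $E^{s+5,k}$ on $\tilde\Sigma_t$ already carries the extra $r^4\sin^4\theta$ factor from the modified volume form $\tilde\mu$, which combines with the $(r')^{-6}$ gain from Lemma \ref{infty_base_lem} to leave precisely an $r^2$ loss — hence the factor $r^2$ on $r^2\fd^l\psi^{s-l,k}$. For the weighted estimate, I would instead multiply through by $r^{2p}$ (resp.\ $r^{2p+4}$ for $\psi$) before applying Lemma \ref{infty_base_lem}, i.e.\ estimate $|r^p\fd^l\phi^{s-l,k}|^2$ by integrating $\pd_{r'}(r'^{2p}\,|\ldots|^2)$ from $r$ to $\infty$; the extra $r^{2p}$ lands on the energy density and produces exactly the $r^p$-weighted densities in $E_{2p}^{s+5,k}(t)$ (from Theorem \ref{p_L_thm}: $r^p[(L\phi)^2+|\sla\nabla\phi|^2+r^{-2}\phi^2+r^{-2}(\pd_r\phi)^2]$ and the analogue on $\tilde\Sigma_t$), and the $\pd_{r'}(r'^{2p})$ cross term is lower order and also absorbed. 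The decay requirement on $\phi,\psi$ at $r\to\infty$ in the hypothesis of Lemma \ref{infty_base_lem} guarantees the boundary term at infinity vanishes.

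The main obstacle I anticipate is bookkeeping rather than analysis: one must check that commuting the single $\pd_r$ produced by the fundamental-theorem-of-calculus step past the operators $\fd^l$, $\tg^k$, and $\Gamma^s$ does not generate terms outside what the energy controls — in particular that $\pd_r\Gamma^{\le l+3}\phi^{s-l,k}$ and $\pd_r\tg^{k}(\cdot)$ only ever appear in combinations bounded by the $(\pd_r\phi^{s+5,k})^2$ terms in $E^{s+5,k}$ and $E_{2p}^{s+5,k}$, and that near the event horizon the degenerate weight $\chi_H$ in the unweighted energy is harmless because that estimate is only claimed for $r\ge r_0>r_H$. Since $\tg=1_H(r)\pd_r$ is supported in a compact collar of the horizon where $r\approx r_H$, all $r$-weights are comparable to constants there, so no weight is lost; on the complementary region the relevant energy densities are elliptic in the first derivatives and the argument is routine. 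Once these compatibilities are verified, the two displayed inequalities follow by summing over the finitely many operators represented by $\Gamma^{\le l+3}$, $\tilde\Gamma^{\le l+5}$, and $\tg^{\le k}$.
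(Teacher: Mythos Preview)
Your proposal is correct and follows essentially the same route as the paper. The only cosmetic difference is in how the $r^p$ weight is introduced: the paper exploits that $\fd^l$ and $\Gamma^{\le l+3}$ are purely angular/time operators and hence commute with $r^p$, so it applies Lemma \ref{infty_base_lem} directly to $u=r^p\phi^{s-l,k}$ (respectively $u=r^{p+2}\psi^{s-l,k}$), whereas you describe inserting the weight into the fundamental-theorem-of-calculus step and absorbing the resulting cross term --- these are the same computation, and the cross term you flag is exactly what the paper silently absorbs when passing from $(\pd_r(r^p\Gamma^{\le l+3}\phi^{s-l,k}))^2$ to $r^{2p-2}[(\pd_r\Gamma^{\le l+3}\phi^{s-l,k})^2+(\Gamma^{\le l+3}\phi^{s-l,k})^2]$.
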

\begin{proof}
First, we apply Lemma \ref{infty_base_lem} with $u=r^p\phi^{s-l,k}$.
\begin{align*}
|r^p\fd^l\phi^{s-l,k}|^2 &=|\fd^l(r^p\phi^{s-l,k})|^2 \\
&\lesssim \int_{\Sigma_t}r^{-2}\left[(\pd_r\Gamma^{\le l+3}(r^p\phi^{s-l,k}))^2+(\Gamma^{\le l+3}(r^p\phi^{s-l,k}))^2\right] \\
&\lesssim \int_{\Sigma_t}r^{2p-2}\left[(\pd_r\Gamma^{\le l+3}\phi^{s-l,k})^2+(\Gamma^{\le l+3}\phi^{s-l,k})^2\right] \\
&\lesssim E_{2p}^{s+3}(t).
\end{align*}
Then, we apply Lemma \ref{infty_base_lem} with $u=r^{p+2}\psi^{s-l,k}$.
\begin{align*}
|r^{p+2}\fd^l\psi^{s-l,k}|^2 &=|\fd^l(r^{p+2}\psi^{s-l,k})|^2 \\
&\lesssim \int_{\tilde\Sigma_t}r^{-6}\left[(\pd_r\Gamma^{\le l+5}(r^{p+2}\psi^{s-l,k}))^2+(\Gamma^{\le l+5}(r^{p+2}\psi^{s-l,k}))^2\right] \\
&\lesssim \int_{\tilde\Sigma_t}r^{2p-2}\left[(\pd_r\Gamma^{\le l+5}\psi^{s-l,k})^2+(\Gamma^{\le l+5}\psi^{s-l,k})^2\right] \\
&\lesssim E_{2p}^{s+5}(t).
\end{align*}
Together, these estimates prove the first estimate of the lemma. The second estimate follows from the same exact argument in the special case $p=0$, and the observation that as long as $r\ge r_0>r_H$, then $E^{s,k}(t)$ can be used in place of $E_0^{s,k}(t)$. \qed
\end{proof}

\subsubsection{Estimating $\pd_t\fd^l\phi^{s-l,k}$ and $\pd_t\fd^l\psi^{s-l,k}$}
The following lemma estimates $\pd_t\phi$ and $\pd_t\psi$ as well as the higher order analogues $\pd_t\fd^l\phi^{s-l,k}$ and $\pd_t\fd^l\psi^{s-l,k}$.
\begin{lemma}\label{pd_t_infty_lem}
For $r\ge r_H$,
$$|r^p\pd_t\fd^l\phi^{s-l,k}|^2+|r^{p+2}\pd_t\fd^l\psi^{s-l,k}|^2 \lesssim E_{2p}^{s+6,k}(t)$$
and for $r\ge r_0>r_H$,
$$|r\pd_t\fd^l\phi^{s-l,k}|^2+|r^3\pd_t\fd^l\psi^{s-l,k}|^2 \lesssim E^{s+6,k}(t).$$
\end{lemma}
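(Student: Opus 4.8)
The plan is to deduce both estimates from their counterparts in Lemma \ref{low_order_infty_lem}, using the single structural fact that $\pd_t$ commutes with every operator in play: it commutes with $\fd^l$ (which involves only $\theta$-derivatives), with $\tg^k=(1_H(r)\pd_r)^k$, with the Carter operators $Q$ and $\tilde Q$ and hence with $\Gamma^s$ and $\tilde\Gamma^s$, and with multiplication by any power of $r$. Since $\pd_t\big(Q^{l'}\pd_t^{(s-l)-2l'}\phi\big)=Q^{l'}\pd_t^{(s+1-l)-2l'}\phi$, it follows that $\pd_t\fd^l\phi^{s-l,k}=\fd^l\phi^{(s+1)-l,k}$ is $\fd^l$ applied to an $(s+1)$-order commuted quantity, and likewise $\pd_t\fd^l\psi^{s-l,k}=\fd^l\psi^{(s+1)-l,k}$.

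For the first estimate, apply Lemma \ref{low_order_infty_lem} with $s$ replaced by $s+1$ to these quantities, which yields
\[
|r^p\pd_t\fd^l\phi^{s-l,k}|^2+|r^{p+2}\pd_t\fd^l\psi^{s-l,k}|^2\lesssim E_{2p}^{(s+1)+5,k}(t)=E_{2p}^{s+6,k}(t).
\]

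For the second estimate one must gain an extra factor of $r$, and this is where the $\pd_t$-derivative is used essentially. Apply Lemma \ref{infty_base_lem} directly to $u=r\pd_t\phi^{s-l,k}$, so that $\fd^lu=r\pd_t\fd^l\phi^{s-l,k}$; expanding with $\pd_r(r\,\cdot\,)=r\pd_r(\cdot)+(\cdot)$, using that $\Gamma^{\le l+3}$ commutes with $r$ and $\pd_t$, and using $r^{-2}\lesssim r_0^{-2}$ on $\{r>r_0\}$, the right-hand side of Lemma \ref{infty_base_lem} is controlled by $\int_{\Sigma_t\cap\{r>r_0\}}(\pd_r\pd_t\Gamma^{\le l+3}\phi^{s-l,k})^2+(\pd_t\Gamma^{\le l+3}\phi^{s-l,k})^2$. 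Now $\pd_t\Gamma^{\le l+3}\phi^{s-l,k}$ is an at-most-$(s+4,k)$-order commuted quantity carrying a $\pd_t$-derivative; since $\chi_H=1-\frac{r_H}r$ is bounded below on $\{r>r_0\}$, the first term is $\lesssim\int_{\{r>r_0\}}\chi_H(\pd_r\,\cdot\,)^2\lesssim E^{s+6,k}(t)$ (recall $E$ contains the term $\chi_H(\pd_r\,\cdot\,)^2$) and the second is $\lesssim E^{s+6,k}(t)$ directly (recall $E$ contains the unweighted term $(\pd_t\,\cdot\,)^2$). The estimate for $\pd_t\fd^l\psi^{s-l,k}$ is identical, using the $\tilde\Sigma_t$ version of Lemma \ref{infty_base_lem} (which costs two further derivatives and produces a weight $r^{-6}$) applied to $u=r^3\pd_t\psi^{s-l,k}$; the extra $r^2$ relative to the $\phi$ estimate is exactly the extra factor $r^4\sin^4\theta$ in the volume form on $\tilde\Sigma_t$, and here the derivative count $s+6$ is sharp.

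I do not expect a genuine obstacle; the content is bookkeeping. The point that warrants care is the source of the extra power of $r$ for $\pd_t$-derivatives: it comes from pairing the $r^{-2}$ (resp.\ $r^{-6}$) weight produced by Lemma \ref{infty_base_lem} against the \emph{unweighted} terms $(\pd_t\,\cdot\,)^2$ and $\chi_H(\pd_r\,\cdot\,)^2$ present in $E(t)$ but absent from $E_0(t)$, and $\chi_H$ may be treated as a positive constant only away from the event horizon, which is why the $r$-improved estimate is stated for $r\ge r_0>r_H$.
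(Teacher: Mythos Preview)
Your proposal is correct and follows essentially the same route as the paper: reduce the first estimate to Lemma~\ref{low_order_infty_lem} via $\pd_t\fd^l\phi^{s-l,k}=\fd^l\phi^{(s+1)-l,k}$, and for the second estimate apply Lemma~\ref{infty_base_lem} to $u=r\pd_t\phi^{s-l,k}$ and $u=r^3\pd_t\psi^{s-l,k}$, then use that $\chi_H$ is bounded below on $\{r>r_0\}$ to place the resulting terms into $E^{s+6,k}(t)$. Your explanation of why the extra power of $r$ is available (pairing the $r^{-2}$, resp.\ $r^{-6}$, Sobolev weight against the \emph{unweighted} $(\pd_t\cdot)^2$ and $\chi_H(\pd_r\cdot)^2$ terms in $E$) is exactly the mechanism, and your observation that the derivative count $s+6$ is sharp only on the $\psi$ side matches the paper's computation, which in fact obtains $E^{s+4,k}(t)$ for the $\phi$ contribution.
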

\begin{proof}
The first estimate reduces to Lemma \ref{low_order_infty_lem} by observing that $\pd_t\fd^l\phi^{s-l,k}=\fd^l\pd_t\phi^{s-l,k}=\fd^l\phi^{s+1-l,k}$ and likewise $\pd_t\fd^l\psi^{s-l,k}=\fd^l\psi^{s+1-l,k}$. We now prove the second estimate.

First, we apply Lemma \ref{infty_base_lem} with $u=r\pd_t\phi^{s-l,k}$.
\begin{align*}
|r\pd_t\fd^l\phi^{s-l,k}|^2 &= |\fd^l(r\pd_t\phi^{s-l,k})|^2 \\
&\lesssim \int_{\Sigma_t\cap\{r>r_0\}}r^{-2}\left[(\pd_r\Gamma^{\le l+3}(r\pd_t\phi^{s-l,k}))^2+(\Gamma^{\le l+3}(r\pd_t\phi^{s-l,k}))^2\right] \\
&\lesssim \int_{\Sigma_t\cap\{r>r_0\}}\left[(\pd_r\Gamma^{\le l+3}\pd_t\phi^{s-l,k})^2+(\Gamma^{\le l+3}\pd_t\phi^{s-l,k})^2\right] \\
&\lesssim \int_{\Sigma_t\cap\{r>r_0\}}\left[(\pd_r\Gamma^{\le l+3}\phi^{s+1-l,k})^2+(\pd_t\Gamma^{\le l+3}\phi^{s-l,k})^2\right] \\
&\lesssim E^{s+4,k}(t).
\end{align*}
Next, by applying Lemma \ref{infty_base_lem} with $u=r^3\pd_t\psi^{s-l,k}$ and repeating the same procedure, we arrive at the following estimate.
$$|r^3\pd_t\fd^l\psi^{s-l,k}|^2\lesssim E^{s+6,k}(t).$$
Together, these estimates prove the second estimate of the lemma. \qed
\end{proof}

\subsubsection{Estimating $(r^{-1}\pd_\theta)\fd^l\phi^{s-l,k}$ and $(r^{-1}\pd_\theta)\fd^l\psi^{s-l,k}$}
The following lemma estimates $(r^{-1}\pd_\theta)\phi$ and $(r^{-1}\pd_\theta)\psi$ as well as the higher order analogues $(r^{-1}\pd_\theta)\fd^l\phi^{s-l,k}$ and $(r^{-1}\pd_\theta)\fd^l\psi^{s-l,k}$.
\begin{lemma}\label{pd_theta_infty_lem}
For $r\ge r_H$,
$$|r^{p+1}(r^{-1}\pd_\theta)\fd^l\phi^{s-l,k}|^2+|r^{p+3}(r^{-1}\pd_\theta)\fd^l\psi^{s-l,k}|^2\lesssim E_{2p}^{s+6,k}(t)$$
and for $r\ge r_0>r_H$,
$$|r(r^{-1}\pd_\theta)\fd^l\phi^{s-l,k}|^2+|r^3(r^{-1}\pd_\theta)\fd^l\psi^{s-l,k}|^2\lesssim E^{s+6,k}(t).$$
\end{lemma}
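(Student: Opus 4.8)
The plan is to mimic exactly the structure of the proofs of Lemmas \ref{low_order_infty_lem} and \ref{pd_t_infty_lem}, since $(r^{-1}\pd_\theta)$ behaves like one of the approximately normalized derivatives and carries the same $r$-weight bookkeeping as $\pd_t$. The key observation is that $(r^{-1}\pd_\theta)$ does not commute with the $\fd^l$ operators in a simple way, but as the remark preceding the lemma statements indicates, for purposes of these pointwise bounds we may treat $\fd^l\phi^{s-l,k}$ as a single regular function $u$ and apply Lemma \ref{infty_base_lem} directly to $u = r^{p+1}(r^{-1}\pd_\theta)\fd^l\phi^{s-l,k} = r^p\pd_\theta\fd^l\phi^{s-l,k}$. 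The slight subtlety is that $\pd_\theta$ is itself (up to smooth factors) expressible via the angular commutators: one has $\pd_\theta^2 = \fa$ and $\cot\theta\,\pd_\theta = \fb$, so a single $\pd_\theta$ applied to a regular function is $\fd^1$, hence $\pd_\theta\fd^l\phi^{s-l,k}$ can be absorbed into $\fd^{l+1}\phi^{s-l,k}$, which is again of the form covered by Lemma \ref{infty_base_lem}. Alternatively, and more robustly, since the $Q$ and $\tilde Q$ commutators control $\pd_\theta^2$ and $\cot\theta\,\pd_\theta$, a factor of $\pd_\theta$ costs one angular derivative, and $\pd_\theta\Gamma^{\le l+3} = \Gamma^{\le l+4}$ up to bounded coefficients and one power of $\csc\theta$ near the axis, which is exactly the content of the regularity formalism in \S\ref{regularity_sec}.

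Concretely, first I would write $r^{p+1}(r^{-1}\pd_\theta)\fd^l\phi^{s-l,k} = \fd^{l'}(r^p\phi^{s-l,k})$ for an operator $\fd^{l'}$ with $l' \le l+1$, using that $\pd_\theta$ composed with $\fd^l$ lies in the span of $\fd^{l+1}$ acting on regular functions, with $r$-independent coefficients. Then I would apply Lemma \ref{infty_base_lem} with $u = r^p\phi^{s-l,k}$, picking up $\Gamma^{\le l'+3} \subseteq \Gamma^{\le l+4}$ on $\Sigma_t$ and $\tilde\Gamma^{\le l'+5} \subseteq \tilde\Gamma^{\le l+6}$ on $\tilde\Sigma_t$ (one more derivative than in Lemma \ref{low_order_infty_lem} because of the extra $\pd_\theta$, matching the $s+6$ in the claimed bound rather than $s+5$). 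Commuting the $r^p$ weight through as in the earlier proofs converts $r^{-2}(\pd_r\Gamma^{\le\cdot}(r^p\phi^{s-l,k}))^2 + r^{-2}(\Gamma^{\le\cdot}(r^p\phi^{s-l,k}))^2$ into $r^{2p-2}[(\pd_r\Gamma^{\le\cdot}\phi^{s-l,k})^2 + (\Gamma^{\le\cdot}\phi^{s-l,k})^2]$, whose integral over $\Sigma_t$ is $\lesssim E_{2p}^{s+6,k}(t)$ by definition of $E_{2p}^{s+6,k}$. The $\psi$ estimate is identical with $u = r^{p+2}\psi^{s-l,k}$, the extra factor of $r^4$ in the $\tilde\Sigma_t$ volume form absorbing the difference, and $\tilde\Gamma^{\le l+6} = \tilde\Gamma^{\le (s+6)}$ gives the $E_{2p}^{s+6,k}(t)$ bound. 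For the second estimate one repeats the argument verbatim with $p=0$, noting as in Lemma \ref{low_order_infty_lem} that for $r \ge r_0 > r_H$ one may replace $E_0^{s+6,k}(t)$ by $E^{s+6,k}(t)$, and that the weight $r^{p+1}$ at $p=0$ reads $r$, matching $|r(r^{-1}\pd_\theta)\fd^l\phi^{s-l,k}|^2 \lesssim E^{s+6,k}(t)$ and $|r^3(r^{-1}\pd_\theta)\fd^l\psi^{s-l,k}|^2 \lesssim E^{s+6,k}(t)$.

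The main obstacle I anticipate is the axis behavior of a bare $\pd_\theta$: near $\theta = 0, \pi$ the operator $\pd_\theta$ applied to a regular function produces something behaving like $|\theta|$, which is not itself in the regular class, so one cannot naively fold $\pd_\theta\fd^l$ into $\fd^{l+1}$ without invoking the structural fact (from \S\ref{regularity_sec}) that $\fd^{l+1}$ is precisely designed to capture $\pd_\theta$ applied after the $\fc$ operators. I would handle this by being careful that the single $\pd_\theta$ is the outermost operator in $\fd^{l+1} = \pd_\theta\fc^{l/2}$ (for even $l$) — which is exactly the form of $\fd$ with odd upper index — so the composition is legitimate, and by observing that $L^\infty$ is insensitive to the measure-zero axis anyway. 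A secondary, purely cosmetic point is matching the derivative count: the extra $\pd_\theta$ forces $s+6$ rather than $s+5$, and I would double-check that Lemma \ref{infty_base_lem}'s even/odd dichotomy doesn't save a derivative here (it does not, since $\pd_\theta\fd^l$ generically has the opposite parity from $\fd^l$, so the generic odd-case count $l'+3$ applies). Everything else is the same weight-tracking bookkeeping already carried out in Lemmas \ref{low_order_infty_lem} and \ref{pd_t_infty_lem}.
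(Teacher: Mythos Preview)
Your proposal is correct and follows essentially the same route as the paper. The paper's proof is simply the one-line reduction
\[
r^{p+1}(r^{-1}\pd_\theta)\fd^l\phi^{s-l,k}=r^p\fd\fd^l\phi^{s-l,k}\subset r^p\fd^{l+1}\phi^{s-l,k}\subset r^p\fd^{l'}\phi^{(s+1)-l',k}
\]
(and likewise for $\psi$), followed by a direct citation of Lemma~\ref{low_order_infty_lem} with $s$ replaced by $s+1$; you reach the same conclusion by re-applying Lemma~\ref{infty_base_lem} and tracking the extra derivative, which is just the unrolled version of the same argument. Your discussion of the axis/parity issue is exactly the content of the inclusion $\fd\fd^l\subset\fd^{l+1}$ proved in \S\ref{regularity_sec}, so no additional work is needed there.
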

\begin{proof}
This lemma reduces to Lemma \ref{low_order_infty_lem} by observing that
$$r^{p+1}(r^{-1}\pd_\theta)\fd^l\phi^{s-l,k}=r^p\fd\fd^l\phi^{s-l,k}=r^p\fd^{l+1}\phi^{s-l,k}\subset r^p\fd^l\phi^{s+1-l,k},$$
and likewise
$$r^{p+3}(r^{-1}\pd_\theta)\fd^l\psi^{s-l,k}\subset r^{p+2}\fd^l\psi^{s+1-l,k}.$$
\qed
\end{proof}

\subsubsection{Estimating $L\fd^l\phi^{s-l,k}$ and $L\fd^l\psi^{s-l,k}$}
The following lemma estimates $L\phi$ and $L\psi$ as well as the higher order analogues $L\fd^l\phi^{s-l,k}$ and $L\fd^l\psi^{s-l,k}$.
\begin{lemma}\label{L_infty_lem}
Letting $L=\alpha\pd_r+\pd_t$, where $\alpha=\frac{\Delta}{r^2+a^2}$, we have that for $r\ge r_H$,
$$|r^{p+1}L\fd^l\phi^{s-l,k}|^2+|r^{p+3}L\fd^l\psi^{s-l,k}|^2\lesssim E_{2p}^{s+7,k}(t)+\int_{\Sigma_t}r^{2p}(\Box_g\phi^{s+3,k})^2+\int_{\tilde\Sigma_t}r^{2p}(\Box_g\psi^{s+5,k})^2$$
and for $r\ge r_0>r_H$,
$$|rL\fd^l\phi^{s-l,k}|^2+|r^3L\fd^l\psi^{s-l,k}|^2\lesssim E^{s+7,k}(t)+\int_{\Sigma_t\cap\{r>r_0\}}(\Box_g\phi^{s+3,k})^2+\int_{\tilde\Sigma_t\cap\{r>r_0\}}(\Box_g\psi^{s+5,k})^2.$$
\end{lemma}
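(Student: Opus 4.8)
The plan is to reduce Lemma \ref{L_infty_lem} to the already-established Lemma \ref{pd_t_infty_lem} plus the wave equation, in exactly the same spirit as the previous few lemmas were reduced to Lemma \ref{low_order_infty_lem}. The key observation is that $L = \alpha\pd_r + \pd_t$, so the only genuinely new quantity compared to Lemma \ref{pd_t_infty_lem} is $\alpha\pd_r\fd^l\phi^{s-l,k}$, and near $i^0$ one has $\alpha = 1 + O(M/r)$, so this is essentially $\pd_r\fd^l\phi^{s-l,k}$. Thus I would first apply Lemma \ref{infty_base_lem} with $u = r^{p+1}L\phi^{s-l,k}$ (respectively $u = r^{p+3}L\psi^{s-l,k}$), obtaining
$$|r^{p+1}L\fd^l\phi^{s-l,k}|^2 \lesssim \int_{\Sigma_t} r^{-2}\left[(\pd_r\Gamma^{\le l+3}(r^{p+1}L\phi^{s-l,k}))^2 + (\Gamma^{\le l+3}(r^{p+1}L\phi^{s-l,k}))^2\right].$$
Expanding the $r^{p+1}$ weight and the $L$ operator, the $\Gamma^{\le l+3}(r^{p+1}L\phi^{s-l,k})$ term and the $\pd_t$-part of the other term are controlled by $E_{2p}^{s+4,k}(t)$ directly via Lemma \ref{pd_t_infty_lem}-type bookkeeping; the only term needing attention is $r^{2p}(\pd_r^2\Gamma^{\le l+3}\phi^{s-l,k})^2$, which comes from $\pd_r$ hitting $\alpha\pd_r$.

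The second main step is to trade the second radial derivative $\pd_r^2$ for the wave operator. Writing out $q^2\Box_g$ as in the proof of Lemma \ref{dr_comm_1_lem}, the coefficient of $\pd_r^2$ is $q^2 g^{rr} = (r^2+a^2)\alpha \sim r^2$, so
$$\pd_r^2\phi^{s-l,k} = \frac{1}{(r^2+a^2)\alpha}\left(q^2\Box_g\phi^{s-l,k} - q^2 g^{tt}\pd_t^2\phi^{s-l,k} - 2q^2 g^{tr}\pd_r\pd_t\phi^{s-l,k} - (\text{first-order}) - Q\text{-terms}\right).$$
All terms on the right except $q^2\Box_g\phi^{s-l,k}$ involve at most one $\pd_r$ and are therefore absorbed, after multiplying by $r^{2p-2}$ and integrating, into $E_{2p}^{s+5,k}(t)$ (the $Q$ and $\pd_t$ terms add at most two derivatives, the first-order terms come with better $r$-weights). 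The $q^2\Box_g\phi^{s-l,k}$ term, divided by $(r^2+a^2)\alpha \sim r^2$, contributes $\int_{\Sigma_t} r^{2p-2}r^{-2}(q^2\Box_g\Gamma^{\le l+3}\phi^{s-l,k})^2 \sim \int_{\Sigma_t} r^{2p}(\Box_g\phi^{s+3,k})^2$ after noting $q^2 \sim r^2$, which is exactly the inhomogeneous term appearing in the statement. The $\psi$ estimate is identical but uses the $\tilde\Sigma_t$ version of Lemma \ref{infty_base_lem} (five extra derivatives, $r^{-6}$ weight) and the operator $\Box_{\tilde g}$; since the problem states everything is valid up to the volume-form factors and $\Box_{\tilde g} = \Box_g + 2\frac{\pd^\alpha A_1}{A_1}\pd_\alpha$ differs from $\Box_g$ only by first-order terms with good $r$-weights, the same reduction goes through, giving $\int_{\tilde\Sigma_t} r^{2p}(\Box_{\tilde g}\psi^{s+5,k})^2$ (written as $\Box_g\psi^{s+5,k}$ in the statement, the difference being absorbable). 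Finally, the near-horizon ($r\ge r_0 > r_H$) version is obtained by the same computation with $p=0$ and the observation, used throughout \S\ref{pointwise_sec}, that on $\{r > r_0\}$ one may replace $E_0^{s,k}(t)$ by $E^{s,k}(t)$.

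The bookkeeping obstacle, and the step I would be most careful about, is tracking precisely how many commutators are consumed. Lemma \ref{infty_base_lem} costs $l+3$ (odd $l$) angular/$\Gamma$ derivatives on $\Sigma_t$; the substitution $\pd_r^2 = (\text{stuff})/r^2$ via $\Box_g$ introduces $Q$ and $\pd_t^2$ terms, each costing up to two more derivatives; and one must verify these combine to at most $s+7$ for the $\phi$ estimate and $s+7$ for the $\psi$ estimate (the extra two on $\tilde\Sigma_t$ from the $S^6$ Sobolev embedding being exactly compensated by the fact that we started from Lemma \ref{pd_t_infty_lem}'s count rather than Lemma \ref{low_order_infty_lem}'s). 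One also needs $\Box_g$ to commute appropriately with $\fd^l\Gamma^{s-l,k}$ — here $[Q, q^2\Box_g] = 0$ and the $\fd^l$, $\tg^k$ commutator structure from \S\ref{regularity_sec} and Lemmas \ref{dr_comm_1_lem}--\ref{dr_comm_2_lem} guarantee that any mismatch is itself an allowed lower-order term with the right weights. No genuinely new estimate is needed; the content is entirely the reduction to Lemma \ref{pd_t_infty_lem} and the elliptic-type trade of $\pd_r^2$ for $\Box_g$ plus lower-order terms.
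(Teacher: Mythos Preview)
Your overall strategy --- apply Lemma \ref{infty_base_lem} with $u=r^{p+1}L\phi^{s-l,k}$, then use the wave equation to handle the resulting second-order radial term --- is indeed the paper's strategy. But there is a genuine gap in your execution, and it lies exactly in the step you flag as the one to be ``most careful about.''

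After Lemma \ref{infty_base_lem} the relevant term is $\int_{\Sigma_t} r^{2p}(\pd_r L\phi^{s+3,k})^2$, with weight $r^{2p}$, not $r^{2p-2}$. You then write $\pd_r L=\alpha\pd_r^2+\alpha'\pd_r+\pd_r\pd_t$ and substitute $\pd_r^2=\frac{1}{(r^2+a^2)\alpha}\bigl(q^2\Box_g-q^2g^{tt}\pd_t^2-\cdots\bigr)$. Since $q^2g^{tt}=-(r^2+a^2)\alpha^{-1}$, the $\pd_t^2$ piece comes back with the \emph{same} weight: you are left with $\int r^{2p}(\pd_t^2\phi^{s+3,k})^2$ and, separately, $\int r^{2p}(\pd_r\pd_t\phi^{s+3,k})^2$. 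Neither is controlled by $E_{2p}$ (which only carries $r^{2p}(L\phi)^2$ and $r^{2p-2}(\pd_r\phi)^2$). Your claim that these ``are absorbed, after multiplying by $r^{2p-2}$'' is a weight error.

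What the paper actually does is compute $\pd_r L u$ directly and observe the cancellation: the $\alpha^{-1}\pd_t^2$ term arising from the $\Box_g$ substitution and the $\pd_r\pd_t$ term from expanding $\pd_r L$ combine exactly into $\alpha^{-1}L\pd_t u$. The resulting pointwise bound is
\[
(\pd_r L u)^2 \lesssim (\Box_g u)^2 + (L\pd_t u)^2 + r^{-2}\bigl[(\pd_t^2 u)^2+(\pd_r\pd_t u)^2+(\pd_r u)^2+(Qu)^2\bigr],
\]
so that the dangerous second-order terms appear either as $L\pd_t u$ (weight $r^{2p}$, controlled by $E_{2p}^{s+4,k}$) or with an extra $r^{-2}$ (controlled by $E_{2p}^{s+5,k}$). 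Your decomposition can be fixed by recognizing this same recombination, but as written it does not close.
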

\begin{proof}
Before beginning the estimates stated by the lemma, it is important to establish
$$(\pd_rLu)^2\lesssim (\Box_gu)^2+(L\pd_tu)^2+r^{-2}(\pd_t^2u)^2+r^{-2}(\pd_r\pd_tu)^2+r^{-2}(\pd_ru)^2+r^{-2}(Qu)^2,$$
$$(\pd_rLu)^2\lesssim (\Box_{\tilde g}u)^2+(L\pd_tu)^2+r^{-2}(\pd_t^2u)^2+r^{-2}(\pd_r\pd_tu)^2+r^{-2}(\pd_ru)^2+r^{-2}(\tilde{Q}u)^2.$$
To verify these, we expand
$$q^2\Box_g=q^2g^{tt}\pd_t^2+q^2g^{rr}\pd_r^2+\pd_r(q^2g^{rr})\pd_r+q^2g^{rt}\pd_r\pd_t+\pd_r(q^2g^{rt})\pd_t+Q$$
and observe that for $r>r_H+\delh$, $g^{rt}=0$ and
$$q^2g^{tt}\pd_t^2u+q^2g^{rr}\pd_r^2u+\pd_r(q^2g^{rr})\pd_ru=-\frac{(r^2+a^2)^2}{\Delta}\pd_t^2u+\Delta\pd_r^2u+\Delta'\pd_ru.$$
We also expand
\begin{align*}
(r^2+a^2)\pd_rLu &= (r^2+a^2)\pd_r\left(\frac{\Delta}{r^2+a^2}\pd_ru+\pd_tu\right) \\
&= \Delta\pd_r^2u+\Delta'\pd_ru-\frac{2r\Delta}{r^2+a^2}\pd_ru+(r^2+a^2)\pd_r\pd_t
\end{align*}
Note that both of these expressions share the terms $\Delta\pd_r^2u$ and $\Delta'\pd_ru$. It follows that for $r\ge r_H+\delh$,
\begin{multline*}
q^2g^{tt}\pd_t^2u+q^2g^{rr}\pd_r^2u+\pd_r(q^2g^{rr})\pd_ru - (r^2+a^2)\pd_rLu \\
=-\frac{(r^2+a^2)^2}{\Delta}\pd_t^2u-(r^2+a^2)\pd_r\pd_tu + \frac{2r\Delta}{r^2+a^2}\pd_ru \\
=-\frac{(r^2+a^2)^2}{\Delta}\left(\pd_t^2u+\frac{\Delta}{r^2+a^2}\pd_r\pd_tu\right)+\frac{2r\Delta}{r^2+a^2}\pd_ru \\
=-\frac{(r^2+a^2)^2}{\Delta}L\pd_tu+\frac{2r\Delta}{r^2+a^2}\pd_ru.
\end{multline*}
Keeping in mind the additional terms that show up for $r\le r_H+ \delh$, we arrive at the first estimate for $(\pd_rLu)^2$. The argument for the second estimate is basically the same. With these two estimates in mind, we begin to prove the estimates stated by the lemma.

We now apply Lemma \ref{infty_base_lem} with $u=r^{p+1}L\phi^{s-l,k}$.
\begin{align*}
|r^{p+1}L\fd^l\phi^{s-l,k}|^2 &= |\fd^l(r^{p+1}L\phi^{s-l,k})|^2 \\
&\lesssim \int_{\Sigma_t}r^{-2}\left[(\pd_r\Gamma^{\le l+3}(r^{p+1}L\phi^{s-l,k}))^2+(\Gamma^{\le l+3}(r^{p+1}L\phi^{s-l,k}))^2\right] \\
&\lesssim \int_{\Sigma_t}r^{2p}\left[(\pd_r\Gamma^{\le l+3}L\phi^{s-l,k})^2+(\Gamma^{\le l+3}L\phi^{s-l,k})^2\right] \\
&\lesssim \int_{\Sigma_t}r^{2p}\left[(\pd_rL\phi^{s+3,k})^2+(L\phi^{s+3,k})^2\right] \\
&\lesssim E_{2p}^{s+3,k}(t) +\int_{\Sigma_t}r^{2p}(\pd_rL\phi^{s+3,k})^2.
\end{align*}
Now, according to the estimate we previously established,
\begin{align*}
\int_{\Sigma_t}r^{2p}&(\pd_rL\phi^{s+3,k})^2 \\
&\lesssim \int_{\Sigma_t}r^{2p}\left[(\Box_g\phi^{s+3,k})^2+(L\pd_t\phi^{s+3,k})^2+r^{-2}(\pd_t^2\phi^{s+3,k})^2+r^{-2}(\pd_r\pd_t\phi^{s+3,k})^2\right.\\
&\hspace{3in}\left.+r^{-2}(\pd_r\phi^{s+3,k})^2+r^{-2}(Q\phi^{s+3,k})^2\right] \\
&\lesssim \int_{\Sigma_t}r^{2p}\left[(\Box_g\phi^{s+3,k})^2+(L\phi^{s+4,k})^2+r^{-2}(\phi^{s+5,k})^2+r^{-2}(\pd_r\phi^{s+4,k})^2\right].
\end{align*}
It follows that
$$|r^{p+1}L\fd^l\phi^{s-l,k}|^2\lesssim E_{2p}^{s+5,k}(t)+\int_{\Sigma_t}r^{2p}(\Box_g\phi^{s+3,k})^2.$$
By a similar argument,
$$|r^{p+3}L\fd^l\psi^{s-l,k}|^2\lesssim E_{2p}^{s+7,k}(t)+\int_{\tilde{\Sigma}_t}r^{2p}(\Box_{\tilde{g}}\psi^{s+5,k})^2.$$
These two estimates together establish the first estimate of the lemma. The second estimate follows from the same exact argument in the special case $p=0$, and the observation that as long as $r\ge r_0>r_H$, then $E^{s,k}(t)$ can be used in place of $E_0^{s,k}(t)$. \qed
\end{proof}

\subsubsection{Estimating $\tg\fd^l\phi^{s-l,k}$ and $\tg\fd^l\psi^{s-l,k}$}
The following lemma estimates $\tg\phi$ and $\tg\psi$ as well as the higher order analogues $\tg\fd^l\phi^{s-l,k}$ and $\tg\fd^l\psi^{s-l,k}$.
\begin{lemma}\label{gh_infty_lem}
Keeping in mind that $\tg$ is supported in a neighborhood of the event horizion, for arbitrary $p'$, we have for $r\ge r_H$,
$$|\tg \fd^l\phi^{s-l,k}|^2+|\tg \fd^l\psi^{s-l,k}|^2 \lesssim E_{p'}^{s+5,k+1}(t)$$
and for $r\ge r_0>r_H$,
$$|\tg \fd^l\phi^{s-l,k}|^2+|\tg \fd^l\psi^{s-l,k}|^2 \lesssim E^{s+5,k+1}(t).$$
\end{lemma}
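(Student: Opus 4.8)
The plan is to reduce this lemma immediately to Lemma \ref{low_order_infty_lem}. The observation that makes everything work is that $\tg=1_H(r)\pd_r$ is a purely radial operator supported in a fixed compact interval $[r_H,r_H+\delh]$, whereas $\fd^l$, $\Gamma^s$, $\tilde{\Gamma}^s$ involve only the variables $\theta$ and $t$ (their coefficients depend only on $\theta$, and the only differential operators appearing are $\pd_\theta$ and $\pd_t$). Hence $\tg$ commutes \emph{exactly} with each of these operators — in particular $[\tg,\fd^l]=0$ — and also with $\Gamma$ and $\tilde\Gamma$, so that
$$\tg\fd^l\phi^{s-l,k}=\fd^l\tg\phi^{s-l,k}=\fd^l\phi^{s-l,k+1},\qquad \tg\fd^l\psi^{s-l,k}=\fd^l\psi^{s-l,k+1}.$$
Thus the quantities to be estimated are precisely the quantities appearing in Lemma \ref{low_order_infty_lem}, but with $k$ replaced by $k+1$.

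First I would apply the second estimate of Lemma \ref{low_order_infty_lem} with $k\mapsto k+1$ to get, for $r\ge r_0>r_H$, the bound $|\fd^l\phi^{s-l,k+1}|^2+|r^2\fd^l\psi^{s-l,k+1}|^2\lesssim E^{s+5,k+1}(t)$; since these quantities are supported where $r$ is comparable to a constant, the harmless weight $r^2$ on the $\psi$ term may be dropped, giving the second estimate of the present lemma. Then I would apply the first estimate of Lemma \ref{low_order_infty_lem} (say with $p=0$) with $k\mapsto k+1$ to obtain $|\fd^l\phi^{s-l,k+1}|^2+|r^2\fd^l\psi^{s-l,k+1}|^2\lesssim E_{0}^{s+5,k+1}(t)$ valid all the way down to $r=r_H$. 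Finally, because $\tg\fd^l\phi^{s-l,k}$ and $\tg\fd^l\psi^{s-l,k}$ are supported on the compact interval $[r_H,r_H+\delh]$, the integral on the right-hand side of Lemma \ref{infty_base_lem} underlying this estimate is localized there, where every power $r^{p'}$ is comparable to a constant; consequently $E_0^{s+5,k+1}(t)$ may be replaced by $E_{p'}^{s+5,k+1}(t)$ for an arbitrary weight $p'$, exactly as in the remark following Lemma \ref{p_s_k_L_a_lem}.

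The reason the estimate can reach all the way to the event horizon — rather than only to some $r_0>r_H$ as in the classic-energy version — is worth spelling out: after the Sobolev embedding of Lemma \ref{infty_base_lem} one needs to control radial derivatives $\pd_r\Gamma^{\le l+3}\phi^{s-l,k+1}$ (and the analogue for $\psi$) without degeneracy near $r=r_H$, and this is supplied by the term $r^{-2}(\pd_r\cdot)^2$ present in the $p$-weighted energy $E_{p'}$ but absent (replaced by the degenerate $\chi_H(\pd_r\cdot)^2$) in $E$. I do not expect a genuine obstacle in this lemma: it is the easy capstone of the section, and the only point requiring any care is the bookkeeping of \emph{which} energy norm is available near the horizon, together with the justification of the arbitrary weight $p'$ via the compact support of $\tg$.
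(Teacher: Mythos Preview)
Your proposal is correct and follows essentially the same approach as the paper: commute $\tg$ through $\fd^l$ to write $\tg\fd^l\phi^{s-l,k}=\fd^l\phi^{s-l,k+1}$, invoke the Sobolev-type estimate, and exploit the compact radial support of $\tg$ to insert an arbitrary weight $r^{p'}$. The only cosmetic difference is that the paper applies Lemma~\ref{infty_base_lem} directly to $u=\tg\phi^{s-l,k}$ and inserts the $r^{p'}$ weight at the level of that localized integral, whereas you route through Lemma~\ref{low_order_infty_lem} first and then unwind back to the localized integral to justify the weight change; your remark that it is the \emph{underlying integral} (not the full energy $E_0^{s+5,k+1}$) that is localized is exactly what makes this work.
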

\begin{proof}
We apply Lemma \ref{infty_base_lem} with $u=\tg\phi^{s-l,k}$, and freely introduce a factor of $r^{p'}$ since $\tg$ is supported on a compact interval in $r$.
\begin{align*}
|\tg\fd^l\phi^{s-l,k}|^2 &= |\fd^l\phi^{s-l,k+1}|^2 \\
&\lesssim \int_{\Sigma_t}r^{-2}\left[(\pd_r\Gamma^{\le l+3}\phi^{s-l,k+1})^2+(\Gamma^{\le l+3}\phi^{s-l,k+1})^2\right] \\
&\lesssim \int_{\Sigma_t}r^{p'-2}\left[(\pd_r\Gamma^{\le l+3}\phi^{s-l,k+1})^2+(\Gamma^{\le l+3}\phi^{s-l,k+1})^2\right] \\
&\lesssim E_{p'}^{s+3,k+1}(t).
\end{align*}
A similar argument shows that
$$|\tg \fd^l\psi^{s-l,k}|^2\lesssim E_{p'}^{s+5,k+1}(t).$$
Together, these estimates prove the first estimate of the lemma. The second estimate follows from the same argument, and the observation that as long as $r\ge r_0>r_H$, then $E^{s,k}(t)$ can be used in place of $E_{p'}^{s,k}(t)$. \qed
\end{proof}

\subsection{Summarizing the pointwise estimates}
To conclude this section, we summarize the previous lemmas in a single proposition.
\begin{definition} We define two families of operators.
$$\bar{D}=\{L,r^{-1}\pd_\theta\},$$
$$D=\{L,\pd_r,r^{-1}\pd_\theta\}.$$
\end{definition}

\begin{proposition}\label{infinity_prop}
For $r\ge r_H$,
\begin{multline*}
|r^{p+1}\bar{D}\fd^l\phi^{s-l,k}|^2+|r^pD\fd^l\phi^{s-l,k}|^2+|r^p\fd^l\phi^{s-l,k}|^2 \\
+|r^{p+3}\bar{D}\fd^l\psi^{s-l,k}|^2+|r^{p+2}D\fd^l\psi^{s-l,k}|^2+|r^{p+2}\fd^l\psi^{s-l,k}|^2 \\
\lesssim E_{2p}^{s+5,k+1}(t)+E_{2p}^{s+7,k}(t)+\int_{\Sigma_t}r^{2p}(\Box_g\phi^{s+5,k})^2+\int_{\tilde\Sigma_t}r^{2p}(\Box_{\tilde{g}}\psi^{s+5,k})^2
\end{multline*}
and for $r\ge r_0>r_H$,
\begin{multline*}
|rD\fd^l\phi^{s-l,k}|^2+|r^3D\fd^l\psi^{s-l,k}|^2 \\
\lesssim E^{s+5,k+1}(t)+E^{s+7,k}(t)+\int_{\Sigma_t\cap\{r>r_0\}}(\Box_g\phi^{s+5,k})^2+\int_{\tilde\Sigma_t\cap\{r>r_0\}}(\Box_{\tilde{g}}\psi^{s+5,k})^2.
\end{multline*}
\end{proposition}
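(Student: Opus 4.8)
The plan is to assemble Proposition \ref{infinity_prop} directly from the collection of pointwise lemmas \ref{low_order_infty_lem}--\ref{gh_infty_lem}, which have already been proved, simply by adding them up over the relevant derivative operators and taking the maximum of the commutator orders that appear. The proposition collects the estimates for $\fd^l\phi^{s-l,k}$, $\pd_t\fd^l\phi^{s-l,k}$, $(r^{-1}\pd_\theta)\fd^l\phi^{s-l,k}$, $L\fd^l\phi^{s-l,k}$, $\tg\fd^l\phi^{s-l,k}$ and their $\psi$-counterparts. First I would note that the first three of these ($\fd^l\phi^{s-l,k}$, $\pd_t$, and $r^{-1}\pd_\theta$) together with $L$ are exactly what is captured by the families $\bar D=\{L,r^{-1}\pd_\theta\}$ and $D=\{L,\pd_r,r^{-1}\pd_\theta\}$, except that $D$ also contains $\pd_r$; but near $r\ge r_0>r_H$ one can write $\pd_r=\alpha^{-1}(L-\pd_t)$ with $\alpha\gtrsim 1$, so a $\pd_r$-derivative is controlled by an $L$-derivative plus a $\pd_t$-derivative, and for $r$ near $r_H$ the $\pd_r$-direction is precisely what the operator $\tg$ is designed to handle. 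So the $D$-estimates split into the region away from the horizon (where Lemmas \ref{L_infty_lem} and \ref{pd_t_infty_lem} apply after converting $\pd_r$ into $L$ and $\pd_t$) and the region near the horizon (where Lemma \ref{gh_infty_lem} applies, since there $\pd_r=1_H(r)^{-1}\tg$ up to a smooth bounded factor on the support of $1_H$, and $L$ coincides with $\pd_t$).

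Concretely, the steps are: (i) For $r\ge r_H$, bound $|r^p\fd^l\phi^{s-l,k}|^2$ and $|r^{p+2}\fd^l\psi^{s-l,k}|^2$ by $E_{2p}^{s+5,k}(t)$ using Lemma \ref{low_order_infty_lem}. (ii) Bound $|r^{p+1}(r^{-1}\pd_\theta)\fd^l\phi^{s-l,k}|^2$ and the $\psi$-analogue by $E_{2p}^{s+6,k}(t)$ using Lemma \ref{pd_theta_infty_lem}; bound $|r^p\pd_t\fd^l\phi^{s-l,k}|^2$ and its $\psi$-analogue by $E_{2p}^{s+6,k}(t)$ using Lemma \ref{pd_t_infty_lem} (note $r^p\pd_t=r^{p}\cdot\pd_t$, which is weaker than the $r^{p+1}$-weighted statements in those lemmas, so it follows a fortiori). (iii) Bound $|r^{p+1}L\fd^l\phi^{s-l,k}|^2$ and the $\psi$-analogue by $E_{2p}^{s+7,k}(t)+\int_{\Sigma_t}r^{2p}(\Box_g\phi^{s+3,k})^2+\int_{\tilde\Sigma_t}r^{2p}(\Box_{\tilde g}\psi^{s+5,k})^2$ using Lemma \ref{L_infty_lem}; since $\Box_g\phi^{s+3,k}$ is controlled by $\Box_g\phi^{s+5,k}$ (more commutators only enlarge the quantity being bounded against) these $\Box$-terms can be written with order $s+5$ as in the statement. (iv) For the $D$-family: $|r^pD\fd^l\phi^{s-l,k}|^2\le |r^pL\fd^l\phi^{s-l,k}|^2+|r^p\pd_r\fd^l\phi^{s-l,k}|^2+|r^p(r^{-1}\pd_\theta)\fd^l\phi^{s-l,k}|^2$; the $L$ and $r^{-1}\pd_\theta$ pieces are covered by (ii)--(iii) (with $r^p$ weaker than $r^{p+1}$), and for the $\pd_r$ piece write $\pd_r=\alpha^{-1}(L-\pd_t)$ away from the horizon and $\pd_r\sim\tg$ near the horizon, invoking Lemma \ref{gh_infty_lem} for the latter which contributes the $E_{p'}^{s+5,k+1}(t)$ term appearing on the right (with $p'$ harmless because $\tg$ is compactly supported in $r$, so $E_{p'}^{\cdot}\lesssim E_{2p}^{\cdot}$ on that support up to constants). (v) Finally sum all contributions and take the maximum commutator order; the worst cases are $E^{s+7,k}$ (from the $L$-estimate) and $E^{s+5,k+1}$ (from the $\tg$-estimate), which is exactly the right side of the proposition. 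The second displayed estimate (for $r\ge r_0>r_H$) is obtained by the same bookkeeping but now invoking the $r\ge r_0$ halves of Lemmas \ref{low_order_infty_lem}--\ref{gh_infty_lem}, which replace $E_0^{s,k}(t)$ by $E^{s,k}(t)$ and drop the extra $r$-weights.

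The only genuinely substantive point — and the one I would be most careful with — is the treatment of $\pd_r$ in the $D$-family: one must check that converting $\pd_r$ into a combination of $L$, $\pd_t$, and $\tg$ never costs more commutators than advertised and never loses the $r$-weight budget. Away from the horizon this is clean since $\alpha$ and $\alpha^{-1}$ are smooth and bounded for $r\ge r_0$, so $|r^p\pd_r\fd^lu|^2\lesssim|r^pL\fd^lu|^2+|r^p\pd_t\fd^lu|^2$ with no loss. Near the horizon, the point is that the $D$-estimate is only claimed in the unweighted form $|rD\fd^l\phi^{s-l,k}|^2$ there (indeed, in the second display, $r\ge r_0>r_H$ is assumed for the $D$-family statements), so the horizon region is entirely inside the support where $\tg\sim\pd_r$, and Lemma \ref{gh_infty_lem} applies verbatim; for the first display, which allows $r\ge r_H$, the $\pd_r$-direction near the horizon is carried only by the $\tg$ term $E_{2p}^{s+5,k+1}(t)$ and the weight mismatch is absorbed because everything is compactly supported. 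All remaining work is the routine arithmetic of adding inequalities and relabeling commutator indices, so there is no serious obstacle beyond careful bookkeeping of orders and weights.
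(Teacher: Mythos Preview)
Your proposal is correct and follows essentially the same approach as the paper: invoke Lemmas \ref{low_order_infty_lem}--\ref{gh_infty_lem} for all operators except $\pd_r$, and then handle $\pd_r$ by decomposing it into $L$, $\pd_t$, and $\tg$ contributions. The paper's proof is slightly more compact, writing the single pointwise inequality $|r^p\pd_r\fd^l\phi^{s-l,k}|^2\lesssim |r^pL\fd^l\phi^{s-l,k}|^2+|r^p\pd_t\fd^l\phi^{s-l,k}|^2+|\tg\fd^l\phi^{s-l,k}|^2$ (valid for all $r\ge r_H$) rather than splitting into near-horizon and far-from-horizon regions, but your region-by-region argument accomplishes the same reduction.
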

\begin{proof}
With the exception of the operator $\pd_r$, all of the cases have been proved in Lemmas \ref{low_order_infty_lem}-\ref{gh_infty_lem}. Finally, observe that
$$|r^p\pd_r\fd^l\phi^{s-l,k}|^2\lesssim |r^pL\fd^l\phi^{s-l,k}|^2+|r^p\pd_t\fd^l\phi^{s-l,k}|^2+|\tg\fd^l\phi^{s-l,k}|^2$$
and
$$|r^p\pd_r\fd^l\phi^{s-l,k}|^2\lesssim |r^pL\fd^l\phi^{s-l,k}|^2+|r^p\pd_t\fd^l\phi^{s-l,k}|^2+|\tg\fd^l\phi^{s-l,k}|^2.$$
Thus, even the case of the operator $\pd_r$ can be reduced to Lemmas \ref{low_order_infty_lem}-\ref{gh_infty_lem}. \qed
\end{proof}

\begin{remark}
In the above proof, we see the reason for the need for the commutator $\tg$. Note that Lemmas \ref{low_order_infty_lem}-\ref{L_infty_lem} do not have an increase in $k$ on the right hand side, but Lemma \ref{gh_infty_lem} does. Lemma \ref{gh_infty_lem} was needed in order to estimate the $\pd_r$ derivative near the event horizon, because $L$ coincides with $\pd_t$ on the event horizon. Excluding this issue, there would be no need to commute with $\tg$ and introduce the $k$ index.
\end{remark}

\section{The Structures of $\mathcal{N}_\phi$ and $\mathcal{N}_\psi$}\label{nonlinear_sec}

%

In this section, we carefully examine the nonlinear terms $\mathcal{N}_\phi$ and $\mathcal{N}_\psi$ as well as their higher order analogues $q^{-2}\tg^k\Gamma^s(q^2\mathcal{N}_\phi)$ and $q^{-2}\tg^k\tilde\Gamma^s(q^2\mathcal{N}_\psi)$ and determine a procedure for estimating them in the proof of the main theorem. In particular, we note two important structural conditions that these terms satisfy. The first is the well-known \textit{null condition} and the second is a condition on the axis that allows us to use the formalism provided in \S\ref{regularity_sec}.

We start in \S\ref{example_terms_sec} by looking at a few example terms to illustrate the procedure that will be used. In particular, we will see how to estimate terms that contain products of both $\phi$ and $\psi$ as well as the role of the null condition. Then in \S\ref{nl_structure_sec}, we define and prove the precise structures of $\mathcal{N}_\phi$ and $\mathcal{N}_\psi$. These structures will then be used in \S\ref{nl_s_k_structure_sec} to define and prove the precise structures of $q^{-2}\tg^k\Gamma^s(q^2\mathcal{N}_\phi)$ and $q^{-2}\tg^k\tilde\Gamma^s(q^2\mathcal{N}_\psi)$. Finally, in \S\ref{nl_strategy_revisited_sec} we prove a proposition that uses these structures to estimate nonlinear quantities that will show up in the proof of the main theorem (Theorem \ref{main_thm}).

In order to proceed, we first calculate the nonlinear terms $\mathcal{N}_\phi$ and $\mathcal{N}_\psi$.
\begin{proposition}\label{N_identities_prop}
If one makes the substitutions
\begin{align*}
X&= A+A\phi \\
Y&=B+A^2\psi
\end{align*}
and requires that $\phi$ and $\psi$ are axisymmetric functions, then the wave map system (\ref{wm_X_eqn}-\ref{wm_Y_eqn}) reduces to the following system of equations for $\phi$ and $\psi$.
$$\Box_g \phi = \mathcal{L}_\phi + \mathcal{N}_\phi$$
$$\Box_{\tilde{g}} \psi = \mathcal{L}_\psi + \mathcal{N}_\psi,$$
where
$$\mathcal{L}_\phi=-2\frac{\pd^\alpha B}{A}A\pd_\alpha \psi + 2\frac{\pd^\alpha B\pd_\alpha B}{A^2}\phi-4\frac{\pd^\alpha A\pd_\alpha B}{A^2} A\psi$$
$$\mathcal{L}_\psi=-2\frac{\pd^\alpha A_2}{A_2}\pd_\alpha\psi+2\frac{\pd^\alpha B\pd_\alpha B}{A^2}\psi + 2A^{-1}\frac{\pd^\alpha B}{A}\pd_\alpha\phi$$
are the linear terms that first appeared in \S\ref{phi_psi_sec}, and
\begin{align*}
(1+\phi)\mathcal{N}_\phi &= \pd^\alpha \phi \pd_\alpha \phi - A\pd^\alpha \psi A\pd_\alpha \psi +2\frac{\pd^\alpha B}{A} \phi A\pd_\alpha \psi -4\frac{\pd^\alpha A}{A}A\psi A\pd_\alpha \psi \\
&\hspace{.5in}-\frac{\pd^\alpha B\pd_\alpha B}{A^2}\phi^2+4\frac{\pd^\alpha A\pd_\alpha B}{A^2}\phi A\psi -4\frac{\pd^\alpha A\pd_\alpha A}{A^2}(A\psi)^2
\end{align*}
$$(1+\phi)\mathcal{N}_\psi = 2\pd^\alpha \phi \pd_\alpha\psi  +4\frac{\pd^\alpha A}{A}\psi\pd_\alpha \phi -2\frac{\pd^\alpha B}{A}A^{-1}\phi\pd_\alpha\phi$$
are the nonlinear terms.
\end{proposition}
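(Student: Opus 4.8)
The plan is to verify Proposition~\ref{N_identities_prop} by direct substitution into the wave map system (\ref{wm_X_eqn}-\ref{wm_Y_eqn}), using the fact that the background $(A,B)$ already solves that system. First I would recall the two background equations
$$\Box_g A = \frac{\pd^\alpha A\pd_\alpha A}{A}-\frac{\pd^\alpha B\pd_\alpha B}{A},\qquad \Box_g B = 2\frac{\pd^\alpha A\pd_\alpha B}{A},$$
which appear already in the proof of Lemma~\ref{translate_nl_lem}. Then I would substitute $X=A(1+\phi)$ and $Y=B+A^2\psi$ into (\ref{wm_X_eqn}) and (\ref{wm_Y_eqn}) and expand $\Box_g$ and the quadratic forms $\pd^\alpha X\pd_\alpha X$, $\pd^\alpha Y\pd_\alpha Y$, $\pd^\alpha X\pd_\alpha Y$ via the Leibniz rule. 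The key bookkeeping point is to organize the resulting terms by how many factors of $(\phi,\psi)$ they carry: the zeroth-order terms must cancel identically (this is exactly the statement that $(A,B)$ solves the system), the first-order terms are collected into $\mathcal{L}_\phi$ and $\mathcal{L}_\psi$, and the remaining terms are the quadratic (and, after clearing the denominator $X=A(1+\phi)$, higher) terms forming $(1+\phi)\mathcal{N}_\phi$ and $(1+\phi)\mathcal{N}_\psi$.

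The one genuinely delicate step is the cancellation of the apparently singular (on the axis) terms carrying factors of $\pd^\alpha A\pd_\alpha A/A^2$ in the $\phi$ equation. Here I would use the same mechanism as in Lemma~\ref{translate_nl_lem}: when computing $\Box_g(A\phi) = A\Box_g\phi + 2\pd^\alpha A\pd_\alpha\phi + \phi\Box_g A$ and, for the $\psi$ equation, $\Box_g(A^2\psi)$, the term involving $\Box_g A$ (or $\Box_g(A^2)$) combines with the term coming from $\frac{\pd^\alpha X\pd_\alpha X}{X}$ that also has a factor of $\Box_g A$'s coefficient, so that the net contribution is regular. Concretely, in the $X$-equation one divides through by $X=A(1+\phi)$; the $A$ in the denominator is what produces the apparently singular coefficients, and one must check that every such coefficient appearing in $\mathcal{N}_\phi$ is actually $\pd^\alpha A\pd_\alpha A/A^2$, $\pd^\alpha A\pd_\alpha B/A^2$ or $\pd^\alpha B\pd_\alpha B/A^2$ — none worse — which is then handled later (not here) by the regularity formalism of \S\ref{regularity_sec}. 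For the $Y$-equation the substitution is $\Box_g Y = 2\frac{\pd^\alpha X\pd_\alpha Y}{X}$, and after substituting $Y = B+A^2\psi$ and using $\Box_g B = 2\pd^\alpha A\pd_\alpha B/A$, the $\Box_g$ of the $A^2$ factor again cancels against part of the right-hand side, leaving the modified operator $\Box_{\tilde g}\psi = \Box_g\psi + 2\frac{\pd^\alpha A_1}{A_1}\pd_\alpha\psi$ on the left (using $A = A_1A_2$ and absorbing the $A_1$ part into $\Box_{\tilde g}$, the $A_2$ part into $\mathcal{L}_\psi$ exactly as in \S\ref{intro_phi_psi_system_sec}).

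The main obstacle is purely organizational rather than conceptual: keeping track of the many quadratic terms, making sure the split into $\mathcal{L}$ versus $\mathcal{N}$ matches the expressions already used in \S\ref{phi_psi_sec}, and correctly moving the $(1+\phi)$ denominator (which arises from $\frac1X = \frac1{A(1+\phi)}$) so that the statement reads $(1+\phi)\mathcal{N}_\phi = \cdots$ with a polynomial right-hand side in $(\phi, A\psi)$ and their first derivatives. I expect the cleanest write-up is to present the substitution for the $X$ equation and the $Y$ equation in turn, display the full expansion grouped by order, quote the two background identities to kill the zeroth-order part, read off $\mathcal{L}_\phi,\mathcal{L}_\psi$ from the first-order part, and then simply collect what remains — noting that consistency with the $\mathcal{L}_\phi,\mathcal{L}_\psi$ already defined in \S\ref{intro_phi_psi_system_sec} and \S\ref{phi_psi_sec} is immediate. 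Since the computation is routine, I would state it as following ``from direct calculation'' with the key cancellations (the $\Box_g A$ and $\Box_g B$ substitutions) highlighted, in the same spirit as the proof of Lemma~\ref{translate_nl_lem}.

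\begin{proof}
We substitute $X = A(1+\phi)$ and $Y = B + A^2\psi$ into the wave map system (\ref{wm_X_eqn}-\ref{wm_Y_eqn}) and use that the background scalar $(A,B)$ solves the same system, i.e.
$$\Box_g A = \frac{\pd^\alpha A\pd_\alpha A}{A}-\frac{\pd^\alpha B\pd_\alpha B}{A},\qquad \Box_g B = 2\frac{\pd^\alpha A\pd_\alpha B}{A}.$$
Expanding $\Box_g(A + A\phi)$ and $\Box_g(B + A^2\psi)$ by the Leibniz rule and similarly expanding the quadratic forms $\pd^\alpha X\pd_\alpha X$, $\pd^\alpha Y\pd_\alpha Y$, and $\pd^\alpha X\pd_\alpha Y$, one groups the resulting terms by their order in $(\phi,\psi)$. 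The zeroth order terms cancel by the two background identities above. As in the proof of Lemma \ref{translate_nl_lem}, in the $\phi$ equation the term $\phi\,\Box_g A$ arising from $\Box_g(A\phi)$ combines with the term carrying $\frac{\pd^\alpha A\pd_\alpha A}{A}$ coming from $\frac1X\pd^\alpha X\pd_\alpha X$, so that the net singular-looking contributions assemble into coefficients of the form $\frac{\pd^\alpha A\pd_\alpha A}{A^2}$, $\frac{\pd^\alpha A\pd_\alpha B}{A^2}$, $\frac{\pd^\alpha B\pd_\alpha B}{A^2}$ only. Analogously, in the $\psi$ equation the $\Box_g$ of the factor $A^2$ cancels against part of $\frac2X\pd^\alpha X\pd_\alpha Y$, and writing $A = A_1 A_2$ one absorbs the $\frac{\pd^\alpha A_1}{A_1}$ part into the modified operator
$$\Box_{\tilde g} := \Box_g + 2\frac{\pd^\alpha A_1}{A_1}\pd_\alpha,$$
leaving $\Box_{\tilde g}\psi$ on the left. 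The first order terms are precisely $\mathcal{L}_\phi$ and $\mathcal{L}_\psi$ as defined in \S\ref{intro_phi_psi_system_sec} and \S\ref{phi_psi_sec}. Clearing the remaining factor $\frac1X = \frac1{A(1+\phi)}$ and collecting the terms of order $\ge 2$ yields the stated identities for $(1+\phi)\mathcal{N}_\phi$ and $(1+\phi)\mathcal{N}_\psi$. \qed
\end{proof}
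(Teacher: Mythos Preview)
Your proposal is correct and follows essentially the same route as the paper: direct substitution of $X=A(1+\phi)$, $Y=B+A^2\psi$ into (\ref{wm_X_eqn})--(\ref{wm_Y_eqn}), expansion via Leibniz, cancellation of the zeroth-order terms using the background identities for $\Box_gA$ and $\Box_gB$, and then separation of the remainder into linear and nonlinear parts after handling the $(1+\phi)^{-1}$ denominator. The paper's write-up differs only in that it carries out the algebra more explicitly---in particular it records the identities $\frac{1}{1+\phi}=1-\frac{\phi}{1+\phi}$ and $-\frac{1}{1+\phi}+1+\phi=2\phi-\frac{\phi^2}{1+\phi}$ to cleanly split $\mathcal{L}$ from $\mathcal{N}$, and it pinpoints the key cancellation in the $Y$-equation (the $4\pd^\alpha A\pd_\alpha A\,\psi$ terms appearing on both sides), which is the precise mechanism motivating the choice $Y=B+A^2\psi$; your sketch alludes to this but locates it slightly imprecisely.
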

\begin{proof}
The first equation of the wave map system (\ref{wm_X_eqn}) is
$$\Box_gX = \frac{\pd^\alpha X\pd_\alpha X}{X}-\frac{\pd^\alpha Y\pd_\alpha Y}{X}.$$
We substitute $X=A(1+\phi)$ and $Y=B+A^2\psi$.
$$\Box_g(A(1+\phi))=\frac{\pd^\alpha (A(1+\phi))\pd_\alpha (A(1+\phi))}{A(1+\phi)}-\frac{\pd^\alpha (B+A^2\psi)\pd_\alpha (B+A^2\psi)}{A(1+\phi)}.$$
Now, we expand each term as follows.
$$\Box_g(A(1+\phi)) = (1+\phi)\Box_gA +2\pd^\alpha A\pd_\alpha \phi+ A\Box_g\phi$$
$$\frac{\pd^\alpha (A(1+\phi))\pd_\alpha (A(1+\phi))}{A(1+\phi)} = (1+\phi)\frac{\pd^\alpha A\pd_\alpha A}{A} + 2\pd^\alpha A\pd_\alpha\phi+A\frac{\pd^\alpha \phi\pd_\alpha\phi}{(1+\phi)}$$
\begin{multline*}
-\frac{\pd^\alpha (B+A^2\psi)\pd_\alpha (B+A^2\psi)}{A(1+\phi)} = -(1+\phi)\frac{\pd^\alpha B\pd_\alpha B}{A}-\left(\frac{1}{1+\phi}-(1+\phi)\right)\frac{\pd^\alpha B\pd_\alpha B}{A} \\
-2\frac{\pd^\alpha B\pd_\alpha (A^2\psi)}{A(1+\phi)}-\frac{\pd^\alpha (A^2\psi)\pd_\alpha(A^2\psi)}{A(1+\phi)}
\end{multline*}
Using the fact that $(X,Y)=(A,B)$ also solves equation (\ref{wm_X_eqn}), we determine that
$$A\Box_g\phi = A\frac{\pd^\alpha \phi\pd_\alpha\phi}{(1+\phi)} -\left(\frac{1}{1+\phi}-(1+\phi)\right)\frac{\pd^\alpha B\pd_\alpha B}{A} -2\frac{\pd^\alpha B\pd_\alpha (A^2\psi)}{A(1+\phi)}-\frac{\pd^\alpha (A^2\psi)\pd_\alpha(A^2\psi)}{A(1+\phi)}.$$
Note the following identities.
$$-\frac{1}{1+\phi}+1+\phi = \frac{-1+(1+\phi)^2}{1+\phi}=\frac{2\phi+\phi^2}{1+\phi}=\frac{\phi(2(1+\phi)-\phi)}{1+\phi}=2\phi-\frac{\phi^2}{1+\phi}$$
$$\frac{1}{1+\phi} = 1+\left(\frac1{1+\phi}-1\right) = 1+\frac{1-(1+\phi)}{1+\phi} = 1-\frac{\phi}{1+\phi}.$$
We conclude that
\begin{align*}
\mathcal{L}_\phi &=\frac1A\left[ 2\phi\frac{\pd^\alpha B\pd_\alpha B}{A}-2\frac{\pd^\alpha B\pd_\alpha (A^2\psi)}{A}\right] \\
&= 2\frac{\pd^\alpha B\pd_\alpha B}{A^2}\phi -4\frac{\pd^\alpha A\pd_\alpha B}{A^2}A\psi  -2\frac{\pd^\alpha B}{A}A\pd_\alpha \psi
\end{align*}
and
\begin{align*}
(1+\phi)\mathcal{N}_\phi &= \frac{1+\phi}{A}\left[ A\frac{\pd^\alpha\phi\pd_\alpha\phi}{1+\phi}-\frac{\phi^2}{1+\phi}\frac{\pd^\alpha B\pd_\alpha B}{A}+2\frac{\phi}{1+\phi}\frac{\pd^\alpha B\pd_\alpha (A^2\psi)}{A}-\frac{\pd^\alpha(A^2\psi)\pd_\alpha (A^2\psi)}{A(1+\phi)} \right]  \\
&= \pd^\alpha\phi\pd_\alpha\phi -\frac{\pd^\alpha B\pd_\alpha B}{A^2}\phi^2 +4\frac{\pd^\alpha A\pd_\alpha B}{A^2}\phi A\psi+2\frac{\pd^\alpha B}{A}\phi A\pd_\alpha\psi \\
&\hspace{2in}-4\frac{\pd^\alpha A\pd_\alpha A}{A^2}(A\psi)^2-4\frac{\pd^\alpha A}{A}A\psi A\pd_\alpha\psi-A\pd^\alpha \psi A\pd_\alpha \psi.
\end{align*}

The second equation of the wave map system (\ref{wm_Y_eqn}) is
$$\Box_gY = 2\frac{\pd^\alpha X\pd_\alpha Y}{X}.$$
We substitute $X=A(1+\phi)$ and $Y=B+A^2\psi$.
$$\Box_g(B+A^2\psi) =2\frac{\pd^\alpha (A(1+\phi))\pd_\alpha (B+A^2\psi)}{A(1+\phi)}.$$
The left hand side simplifies as follows (again using the equation for $\Box_gA$).
\begin{align*}
\Box_g(B+A^2\psi) &= \Box_g B+2A\psi\Box_gA+2\pd^\alpha A\pd_\alpha A\psi +4A\pd^\alpha A\pd_\alpha \psi+A^2\Box_g\psi \\
&= \Box_g B-2\pd^\alpha B\pd_\alpha B\psi+4\pd^\alpha A\pd_\alpha A\psi +4A\pd^\alpha A\pd_\alpha \psi+A^2\Box_g\psi
\end{align*}
The right hand side simplifies as follows.
\begin{multline*}
2\frac{\pd^\alpha (A(1+\phi))\pd_\alpha (B+A^2\psi)}{A(1+\phi)} \\
= 2\frac{\pd^\alpha A\pd_\alpha B}{A}+2\frac{\pd^\alpha\phi\pd_\alpha B}{(1+\phi)}+2\frac{\pd^\alpha A\pd_\alpha(A^2\psi)}{A}+2\frac{\pd^\alpha\phi\pd_\alpha(A^2\psi)}{1+\phi} \\
= 2\frac{\pd^\alpha A\pd_\alpha B}{A}+2\frac{\pd^\alpha\phi\pd_\alpha B}{(1+\phi)}+4\pd^\alpha A\pd_\alpha A\psi+2A\pd^\alpha A \pd_\alpha \psi+2\frac{\pd^\alpha\phi\pd_\alpha(A^2\psi)}{1+\phi} 
\end{multline*}
Note that the term $4\pd^\alpha A\pd_\alpha A\psi$ appears on both sides and therefore cancels out. This cancellation is the motivation for the choice of linearization $Y=B+A^2\psi$. Using the fact that $(X,Y)=(A,B)$ also solves equation (\ref{wm_Y_eqn}), we determine that
$$A^2\Box_g\psi +2A\pd^\alpha A\pd_\alpha \psi = 2\pd^\alpha B\pd_\alpha B \psi +2\frac{\pd^\alpha\phi\pd_\alpha B}{(1+\phi)}+2\frac{\pd^\alpha\phi\pd_\alpha(A^2\psi)}{1+\phi}.$$
Dividing by $A^2$ and using the fact that
$$\Box_g+2\frac{\pd^\alpha A}{A}\pd_\alpha = \Box_g+2\frac{\pd^\alpha A_1}{A_1}\pd_\alpha+2\frac{\pd^\alpha A_2}{A_2}\pd_\alpha = \Box_{\tilde{g}}+2\frac{\pd^\alpha A_2}{A_2}\pd_\alpha,$$
we obtain
$$\Box_{\tilde{g}}\psi+2\frac{\pd^\alpha A_2}{A_2}\pd_\alpha \psi = 2\frac{\pd^\alpha B\pd_\alpha B}{A^2}\psi +\frac{2}{1+\phi}A^{-1}\frac{\pd^\alpha B}{A}\pd_\alpha\phi +\frac{4}{1+\phi} \frac{\pd^\alpha A}{A}\psi \pd_\alpha\phi +\frac{2}{1+\phi}\pd^\alpha\phi \pd_\alpha\psi.$$
Again, since
$$\frac{1}{1+\phi} = 1-\frac{\phi}{1+\phi},$$
we conclude that
$$\mathcal{L}_\psi = -\frac{\pd^\alpha A_2}{A_2}\pd_\alpha \psi +2A^{-1}\frac{\pd^\alpha B}{A}\pd_\alpha\phi+2\frac{\pd^\alpha B\pd_\alpha B}{A^2}\psi$$
and
$$(1+\phi)\mathcal{N}_\psi = -2\frac{\pd^\alpha B}{A}A^{-1}\phi\pd_\alpha\phi +4\frac{\pd^\alpha A}{A}\psi\pd_\alpha \phi +2\pd^\alpha\phi\pd_\alpha\psi.$$
\qed
\end{proof}

\subsection{The strategy illustrated by two example terms ($a=0$)}\label{example_terms_sec}

There are many terms in the nonlinear parts of the equations. Let us illustrate how to handle them by looking at two particular examples in the simpler case $a=0$.

First, we examine the term $r^4\sin^4\theta L\psi\lbar\psi$, which arises from the term $A\pd^\alpha\psi A\pd_\alpha\psi$, which shows up in $\mathcal{N}_\phi$.
\begin{lemma}\label{example_Nphi_term_lem} (example $\mathcal{N}_\phi$ term)
Assume the following estimates.
$$||r^{(p-1)/2+3}L\fd^l\psi^{s-l}||_{L^\infty(t)}^2\lesssim B_p^{s+7}(t),$$
$$||r^3\lbar\fd^l\psi^{s-l}||_{L^\infty(t)}^2\lesssim E^{s+7}(t).$$
Then
$$\int_{\Sigma_t\cap\{r>R\}}r^{p+1}\left(\Gamma^s\left(r^4\sin^4\theta L\psi\lbar\psi\right)\right)^2\lesssim B_p^s(t)E^{s/2+7}(t)+E^s(t)B_p^{s/2+7}(t).$$
\end{lemma}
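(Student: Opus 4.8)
The plan is to expand $\Gamma^s$ using the Leibniz rule, distribute the derivatives among the three factors $r^4\sin^4\theta$, $L\psi$, and $\lbar\psi$, and then in each resulting product estimate one factor in $L^\infty(\Sigma_t)$ using the two assumed pointwise bounds while keeping the other factor in $L^2$. Since $\Gamma^s = Q^j\pd_t^{s-2j}$ with $0\le 2j\le s$, and the factor $r^4\sin^4\theta = A_1^2$ (in the Schwarzschild normalization $A=A_1=r^2\sin^2\theta$) is precisely the weight relating $\int_{\Sigma_t}$ to $\int_{\tilde\Sigma_t}$, the product $r^4\sin^4\theta L\psi\lbar\psi$ integrated over $\Sigma_t$ becomes $L\psi\lbar\psi$ integrated over $\tilde\Sigma_t$. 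This is the key structural observation: the extra weight $r^4\sin^4\theta$ is exactly what is needed so that derivatives of $\psi$ land back in the $\tilde\Sigma_t$ norms (the $E^s,B_p^s$ as defined in Theorem \ref{p_s_thm} already include the $\tilde\Sigma_t$ integrals with the $A_1^2$ weight). Moreover, the angular operators $Q$ preserve regularity on the axis when handled atomically, so that commuting $\Gamma^s$ through $\sin^4\theta$ and through $\psi$ produces only the $\fd^l$-type operators appearing in the hypotheses; I would invoke the formalism of \S\ref{regularity_sec} to justify that every term in the expansion of $\Gamma^s(\sin^4\theta\,L\psi\,\lbar\psi)$ is, up to smooth bounded factors, of the form $\sin^{4}\theta\,\fd^{l_1}L\psi^{s_1-l_1}\cdot\fd^{l_2}\lbar\psi^{s_2-l_2}$ with $s_1+s_2\le s$ (here $s_i$ counts the total derivatives placed on the $i$th factor).

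The main estimate then runs as follows. For each term in the expansion, split the total derivative count $s$ between the two factors; without loss of generality one factor carries at most $s/2$ derivatives. If the $L\psi$ factor is the "low-derivative" one, i.e. it carries at most $s/2$ derivatives, we bound
$$\int_{\Sigma_t\cap\{r>R\}}r^{p+1}\left|\fd^{l_1}L\psi^{s_1-l_1}\right|^2\left|\fd^{l_2}\lbar\psi^{s_2-l_2}\right|^2 \lesssim \left\|r^{(p-1)/2+3}L\fd^{l_1}\psi^{s_1-l_1}\right\|_{L^\infty(t)}^2\int_{\tilde\Sigma_t}r^{-2}\left|\fd^{l_2}\lbar\psi^{s_2-l_2}\right|^2,$$
where I have used $r^{p+1}\cdot r^4\sin^4\theta = r^{p-1}\cdot r^4\sin^4\theta \cdot r^2 = r^{p-1}\cdot(\text{weight for }\tilde\Sigma_t)\cdot r^2$, absorbed $r^{(p-1)+6}$ into the $L^\infty$ norm (whence the weight $r^{(p-1)/2+3}$), and left the remaining $r^{-2}$ with the $L^2$ factor; the latter integral is exactly (a piece of) $B_p^{s_2}(t)\le B_p^s(t)$ because $r^{-2}(\lbar\psi)^2$ with the $\tilde\Sigma_t$ weight is controlled by the bulk norm. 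The first hypothesis then gives $\|\cdots\|_{L^\infty}^2\lesssim B_p^{s_1+7}(t)\lesssim B_p^{s/2+7}(t)$, producing the term $B_p^{s/2+7}(t)E^s(t)$ — wait, here one must be careful about which factor gets which norm; in this allocation the $L^\infty$ factor gives $B_p^{s/2+7}$ and the $L^2$ factor gives a bulk $B_p^s$, which is too much, so instead I allocate so that the $L^\infty$ factor is the $\lbar\psi$ one using the second hypothesis $\|r^3\lbar\fd^l\psi^{s-l}\|_{L^\infty}^2\lesssim E^{s+7}(t)$: then $\int r^{p+1}r^4\sin^4\theta(L\psi)^2(\lbar\psi)^2 \lesssim \|r^3\lbar\fd^{l_2}\psi^{s_2-l_2}\|_{L^\infty}^2\int_{\tilde\Sigma_t}r^{p-1}(L\fd^{l_1}\psi^{s_1-l_1})^2 \lesssim E^{s_2+7}(t)B_p^{s_1}(t)$, giving $E^{s/2+7}(t)B_p^s(t)$ when $L\psi$ is high-derivative, and $E^{s+7}(t)B_p^{s/2}(t)$... — the two cases (which factor carries $\ge s/2$ derivatives) yield respectively $B_p^s(t)E^{s/2+7}(t)$ and $E^s(t)B_p^{s/2+7}(t)$ after summing, which is exactly the claimed bound.

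The main obstacle I anticipate is bookkeeping the weights so that the $r$-powers match in both allocations simultaneously and the "spare" $r^{-2}$ always lands on an $L^2$ factor that is genuinely part of the bulk norm $B_p$ rather than only of the energy $E_p$ — this is where the precise definitions of $E_p^s$ (weight $r^p$ on $L\psi,\sla\nabla\psi$ and $r^{p-2}$ on $\psi,\pd_r\psi$) and $B_p^s$ (weight $r^{p-1}$ with $\chi_{trap}$ on $L\psi,\sla\nabla\psi$ and $r^{p-3}$ on $\psi,\pd_r\psi$) matter, together with the fact that we work on $\{r>R\}$ where $\chi_{trap}\sim 1$ and the $\lbar$ derivative is comparable to $\pd_r$ up to lower order. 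A secondary point requiring care is that $\lbar\psi$ does not directly appear in $B_p$ (only $\pd_r\psi$ does), but on $\{r>R\}$ one has $\lbar = \pd_t - \alpha\pd_r$ with $\alpha$ bounded, so $(\lbar\psi)^2\lesssim(\pd_t\psi)^2+(\pd_r\psi)^2$ and both are controlled — the $\pd_t$ piece costing one extra $\pd_t$-commutator, already absorbed into the "$+7$" derivative margin. Finally, one must check that on the low-derivative factor the operator count never exceeds what the hypotheses allow: since $s_1\le s/2$ or $s_2\le s/2$, the hypothesis applied at level $s/2$ suffices, and the number of extra commutators ($+7$, coming from the Sobolev loss in Lemmas \ref{low_order_infty_lem}--\ref{gh_infty_lem}) is fixed. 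Once these weight checks are done, summing over the finitely many terms in the Leibniz expansion of $\Gamma^s$ completes the proof.
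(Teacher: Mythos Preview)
Your approach is the same as the paper's: Leibniz-expand $\Gamma^s$, put the low-derivative factor in $L^\infty$ via one of the two hypotheses, keep the high-derivative factor in $L^2$, and use a copy of $r^4\sin^4\theta$ to convert $\int_{\Sigma_t}$ to $\int_{\tilde\Sigma_t}$. After your own self-correction, the two allocations you describe (high derivatives on $L\psi$ giving $B_p^s(t)E^{s/2+7}(t)$; high derivatives on $\lbar\psi$ giving $E^s(t)B_p^{s/2+7}(t)$) are exactly what the paper does.

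Two points need fixing. First, your displayed integrand is missing a factor: the lemma asks for $\bigl(\Gamma^s(r^4\sin^4\theta\, L\psi\,\lbar\psi)\bigr)^2$, so after expanding one has $r^8\sin^8\theta$, not $r^4\sin^4\theta$. One factor of $r^4\sin^4\theta$ converts the measure to $\tilde\Sigma_t$ and the remaining $r^4$ (together with $\sin^4\theta\le 1$) is precisely what makes the $L^\infty$ weights $r^3$ and $r^{(p-1)/2+3}$ come out correctly. Your verbal accounting ``$r^{p+1}\cdot r^4\sin^4\theta = r^{p-1}\cdot r^4\sin^4\theta\cdot r^2$'' reflects this omission; once the missing $r^4\sin^4\theta$ is restored the arithmetic closes.

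Second, and more substantively, your claim that every term in the expansion retains the full factor $\sin^4\theta$ is false: already $\fc(\sin^4\theta)$ drops to order $\sin^2\theta$, and $\fc^2(\sin^4\theta)$ contains a term ($8\cos^4\theta$) with no power of $\sin\theta$ at all. Since the measure conversion $\int_{\Sigma_t}\to\int_{\tilde\Sigma_t}$ costs exactly $\sin^4\theta$, this breaks your argument as written for those terms. The paper's fix is that when $\fd^i$ lands on $\sin^4\theta$, the high-order dynamic factor carries at most $s-i$ derivatives, and Lemma~\ref{gain_sin_lem} (an iterated Hardy-type inequality on the sphere) lets one trade each missing power of $\sin\theta$ for an extra $\fd$ on that factor, bringing its order back up to at most $s$ so it still lands inside $B_p^s$ or $E^s$. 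You gesture at \S\ref{regularity_sec}, which is indeed where that lemma lives, but the specific mechanism you describe --- the $\sin^4\theta$ simply surviving as a smooth bounded coefficient --- is not the right one.
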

\begin{proof}
First, we compute
\begin{align*}
\int_{\Sigma\cap\{r>R\}}r^{p+1}r^8\sin^8\theta (L\psi^s)^2(\lbar\psi^{s/2})^2
&\lesssim \int_{\Sigma_t\cap\{r>R\}}r^{p-1}(L\psi^s)^2r^4\sin^4\theta (r^3\lbar\psi^{s/2})^2 \\
&\lesssim \int_{\Sigma_t\cap\{r>R\}}r^{p-1}(L\psi^s)^2r^4\sin^4\theta ||r^3\lbar\psi^{s/2}||_{L^\infty(t)}^2 \\
&\lesssim \int_{\tilde{\Sigma}_t\cap\{r>R\}}r^{p-1}(L\psi^s)^2||r^3\lbar\psi^{s/2}||_{L^\infty(t)}^2 \\
&\lesssim B_p^s(t)E^{s/2+7}(t).
\end{align*}
One important step in this calculation is the gain of $r^4$ by passing to the volume form on $\tilde{\Sigma}_t$. However, this same step also loses a factor of $\sin^4\theta$.

Likewise,
\begin{align*}
\int_{\Sigma_t\cap\{r>R\}}r^{p+1}r^8\sin^8\theta (\lbar\psi^s)^2(L\psi^{s/2})^2
&\lesssim \int_{\Sigma_t\cap\{r>R\}}(\lbar\psi^s)^2r^4\sin^4\theta (r^{(p-1)/2+3}L\psi^{s/2})^2 \\
&\lesssim \int_{\Sigma_t\cap\{r>R\}}(\lbar\psi^s)^2r^4\sin^4\theta ||r^{(p-1)/2+3}L\psi^{s/2}||_{L^\infty(t)}^2 \\
&\lesssim \int_{\tilde{\Sigma}_t\cap\{r>R\}}(\lbar\psi^s)^2 ||r^{(p-1)/2+3}L\psi^{s/2}||_{L^\infty(t)}^2 \\
&\lesssim E^s(t)B_p^{s/2+7}(t).
\end{align*}

So far, we have assumed that each of the $\Gamma$ operators in the expression $\Gamma^{\le s}(r^4\sin^4\theta L\psi\lbar\psi)$ acted on either $L\psi$ or $\lbar\psi$. But more generally,
$$\Gamma^s(r^4\sin^4\theta L\psi\lbar\psi) \approx \sum_{i+j+k+s_1+s_2\le s} r^4\fd^i(\sin^4\theta)L\fd^j\psi^{s_1}\lbar\fd^k\psi^{s_2}.$$
At first glance, a few of these terms may be concerning, because a factor of $\sin^2\theta$ was required for the conversion from an integral over $\Sigma_t$ to an integral over $\tilde{\Sigma}_t$. Note, however, that $\max(j,k)\le s-i$ so that Lemma \ref{gain_sin_lem} can be applied to the high order term. This is illustrated in the example below.
\begin{align*}
\int_{\Sigma_t\cap\{r>R\}}&r^{p+1}r^8(L\fd^l\psi^{s-2-l})^2(\lbar\fd^{l'}\psi^{s/2-{l'}})^2 \\
&\lesssim E^{s/2+7}(t)\int_{\Sigma_t\cap\{r>R\}}r^{p-1}(L\fd^l\psi^{s-2-l})^2r^4 \\
&\lesssim E^{s/2+7}(t)\int_{\Sigma_t\cap\{r>R\}}r^{p-1}(L\fd^{l+2}\psi^{s-2-l})^2r^4\sin^4\theta \\
&\lesssim B_p^s(t)E^{s/2+7}(t).
\end{align*}
\qed
\end{proof}

Now, we examine the terms $L\phi\lbar\psi$ and $\lbar\phi L\psi$, which arise from the term $\pd^\alpha \phi\pd_\alpha \psi$, which shows up in $\mathcal{N}_\psi$.
\begin{lemma} (example $\mathcal{N}_\psi$ term)
Assume the following estimates.
$$||r^{(p-1)/2+1}L\fd^l\phi^{s-l}||^2_{L^\infty(t)}+||r^{(p-1)/2+3}L\fd^l\psi^{s-l}||^2_{L^\infty(t)}\lesssim B_p^{s+7}(t),$$
$$||r\lbar\fd^l\phi^{s-l}||^2_{L^\infty(t)}+||r^{3}\lbar\fd^l\psi^{s-l}||^2_{L^\infty(t)}\lesssim E^{s+7}(t).$$
Then
$$\int_{\tilde{\Sigma}_t\cap\{r>R\}}r^{p+1}\left(\Gamma^s(L\phi\lbar\psi+\lbar\phi L\psi)\right)^2\lesssim B_p^s(t)E^{s/2+7}(t)+E^s(t)B_p^{s/2+7}(t).$$
\end{lemma}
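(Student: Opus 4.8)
The plan is to run the same argument as in Lemma \ref{example_Nphi_term_lem}, adapted to the fact that the integral is now over $\tilde{\Sigma}_t$ (so it already carries the measure weight $A_1^2\approx r^4\sin^4\theta$ that one otherwise has to supply from the coefficient of the nonlinearity) and that the two dangerous terms contain both a $\phi$- and a $\psi$-factor. First I would expand $\Gamma^s(L\phi\,\lbar\psi+\lbar\phi\,L\psi)$ by the Leibniz rule together with the regularity formalism of \S\ref{regularity_sec}, writing it schematically as a sum of terms
$$b(r,\theta)\,D_1\fd^{i_1}\phi^{s_1-i_1}\,D_2\fd^{i_2}\psi^{s_2-i_2},\qquad i_1+i_2+s_1+s_2\le s,$$
where $b$ is a smooth bounded function (collecting the null-form coefficients and the $\fd$-derivatives of bounded factors), and, crucially, $\{D_1,D_2\}$ contains exactly one $L$ and one $\lbar$ — the absence of a $\lbar\phi\,\lbar\psi$ contribution being precisely the null condition built into the structure of $\mathcal N_\psi$ in \S\ref{nl_structure_sec}, and $L$ commuting through the $\Gamma$- and $\fd$-operators up to bounded coefficients and already-controlled lower-order remainders. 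Because $s_1+s_2\le s$, in every such term one dynamic factor carries at most $s/2$ commutators; that ``low-order'' factor is placed in $L^\infty$ using the hypotheses (specialized to $s/2$ in place of $s$), and the ``high-order'' factor is placed in $L^2$.

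The four sub-cases are then short computations, keeping track of where the weight $r^4\sin^4\theta$ should be spent. If the high-order factor is $\phi$ with the $L$-derivative, put $\lbar\fd^{i_2}\psi^{s/2-i_2}$ in $L^\infty$ via $\|r^3\lbar\fd^{i_2}\psi^{s/2-i_2}\|_{L^\infty(t)}^2\lesssim E^{s/2+7}(t)$, note $r^{p+1}r^4\sin^4\theta\,(\lbar\psi)^2\le r^{p-1}\|r^3\lbar\psi\|_{L^\infty}^2$, and conclude $\lesssim \|r^3\lbar\psi^{s/2}\|_{L^\infty}^2\int_{\Sigma_t\cap\{r>R\}}r^{p-1}(L\fd^{i_1}\phi^{s_1-i_1})^2\lesssim E^{s/2+7}(t)B_p^s(t)$, using $\chi_{trap}\gtrsim 1$ for $r>R$. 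If the high-order factor is $\phi$ with the $\lbar$-derivative, put $L\fd^{i_2}\psi^{s/2-i_2}$ in $L^\infty$ via $\|r^{(p-1)/2+3}L\fd^{i_2}\psi^{s/2-i_2}\|_{L^\infty(t)}^2\lesssim B_p^{s/2+7}(t)$; since $r^{p+1}r^4(L\psi)^2=(r^{(p-1)/2+3}L\psi)^2$ and $(\lbar\phi)^2\lesssim(\pd_t\phi)^2+\chi_H(\pd_r\phi)^2$ for $r>R$, this contributes $\lesssim \|r^{(p-1)/2+3}L\psi^{s/2}\|_{L^\infty}^2\int_{\Sigma_t\cap\{r>R\}}(\lbar\fd^{i_1}\phi^{s_1-i_1})^2\lesssim B_p^{s/2+7}(t)E^s(t)$. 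The two cases with the high-order factor equal to $\psi$ are symmetric: one now retains the full $r^4\sin^4\theta$ so as to reconstitute the measure $\int_{\tilde\Sigma_t}$ on the $\psi$-factor, using $(r^{(p-1)/2+1}L\phi)^2=r^{p+1}(L\phi)^2$ and $(\lbar\phi)^2\le r^{-2}\|r\lbar\phi\|_{L^\infty}^2$ on the low-order $\phi$-factor together with $\|r\lbar\fd^{i_1}\phi^{s/2-i_1}\|_{L^\infty}^2\lesssim E^{s/2+7}(t)$ and $\|r^{(p-1)/2+1}L\fd^{i_1}\phi^{s/2-i_1}\|_{L^\infty}^2\lesssim B_p^{s/2+7}(t)$; these land on $E^{s/2+7}(t)B_p^s(t)$ and $B_p^{s/2+7}(t)E^s(t)$ respectively. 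Summing the finitely many Leibniz terms, and over which of $\phi,\psi$ is high-order, yields exactly $B_p^s(t)E^{s/2+7}(t)+E^s(t)B_p^{s/2+7}(t)$.

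The main obstacle is not the four-line estimate above but the structural bookkeeping in the first step: one must check that the commutator expansion of $\Gamma^s(L\phi\,\lbar\psi+\lbar\phi\,L\psi)$ really produces only terms of the advertised form — namely (i) that no $\lbar\phi\,\lbar\psi$ term is ever created, which is where the null structure of \S\ref{nl_structure_sec} enters and where one must verify that commuting $Q$, $\tilde Q$ and the $\fc$-operators past $L$ preserves it; (ii) that the total derivative count never exceeds $s$, so the seven-derivative Sobolev loss is absorbed only into the low-order factor (in particular that the high-order factor's $L^2$ norm, with its angular derivatives, is genuinely controlled by the stated $B_p$ or $E$ norms, possibly after invoking Lemma \ref{gain_sin_lem} when a factor of $\sin\theta$ must be transferred between the two spacetimes); and (iii) that every factor remains a regular function on the axis, which is precisely what the $\fd^l$-formalism of \S\ref{regularity_sec} guarantees. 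Once these are in hand the estimate is identical in spirit to the proof of Lemma \ref{example_Nphi_term_lem}, the only difference being that the ``extra'' $r^4\sin^4\theta$ is supplied by the $\tilde\Sigma_t$ measure rather than by the coefficient of the nonlinearity.
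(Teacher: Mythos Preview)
Your proposal is correct and follows essentially the same approach as the paper: the paper's proof also splits into exactly four cases according to which of $\phi,\psi$ carries the high-order commutators and which carries the $L$-derivative, puts the low-order factor in $L^\infty$ via the assumed pointwise bounds, and converts between $\tilde\Sigma_t$ and $\Sigma_t$ by trading the $r^4\sin^4\theta$ measure weight against the $r$-weights on the $\psi$-factors. Your additional discussion of the structural bookkeeping (preservation of the null form under commutation, axis regularity via the $\fd^l$-formalism) is more careful than the paper, which simply declares ``These four cases illustrate the entirety of the proof'' without spelling out the Leibniz expansion; but the analytic content is the same.
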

\begin{proof}
We examine four separate cases, each one depending on whether the $\phi$ factor or the $\psi$ factor has the highest number of derivatives and also whether $L$ acts on $\phi$ or $\psi$.
\begin{align*}
\int_{\tilde{\Sigma}_t\cap\{r>R\}}r^{p+1}(L\fd^l\psi^{s-l})^2(\lbar\fd^l\phi^{s/2-l})^2
&\lesssim \int_{\tilde{\Sigma}_t\cap\{r>R\}}r^{p-1}(L\fd^l\psi^{s-l})^2(r\lbar\fd^l\phi^{s/2-l})^2 \\
&\lesssim \int_{\tilde{\Sigma}_t\cap\{r>R\}}r^{p-1}(L\fd^l\psi^{s-l})^2||r\lbar\fd^l\phi^{s/2-l}||_{L^\infty(t)}^2 \\
&\lesssim B_p^s(t)E^{s/2+7}(t).
\end{align*}
\begin{align*}
\int_{\tilde{\Sigma}_t\cap\{r>R\}}r^{p+1}(L\fd^l\phi^{s-l})^2(\lbar\fd^l\psi^{s/2-l})^2
&\lesssim \int_{\tilde{\Sigma}_t\cap\{r>R\}}r^{p-1}(L\fd^l\phi^{s-l})^2r^{-4}(r^3\lbar\fd^l\psi^{s/2-l})^2 \\
&\lesssim \int_{\tilde{\Sigma}_t\cap\{r>R\}}r^{p-1}(L\fd^l\phi^{s-l})^2r^{-4}||r^3\lbar\fd^l\psi^{s/2-l}||_{L^\infty(t)}^2 \\
&\lesssim \int_{\Sigma_t\cap\{r>R\}}r^{p-1}(L\fd^l\phi^{s-l})^2||r^3\lbar\fd^l\psi^{s/2-l}||_{L^\infty(t)}^2 \\
&\lesssim B_p^s(t)E^{s/2+7}(t).
\end{align*}

\begin{align*}
\int_{\tilde{\Sigma}_t\cap\{r>R\}}r^{p+1}(\lbar\fd^l\psi^{s-l})^2(L\fd^l\phi^{s/2-l})^2
&\lesssim \int_{\tilde{\Sigma}_t\cap\{r>R\}}(\lbar\fd^l\psi^{s-l})^2(r^{(p-1)/2+1}L\fd^l\phi^{s/2-l})^2 \\
&\lesssim \int_{\tilde{\Sigma}_t\cap\{r>R\}}(\lbar\fd^l\psi^{s-l})^2||r^{(p-1)/2+1}L\fd^l\phi^{s/2-l}||_{L^\infty(t)}^2 \\
&\lesssim E^s(t)B_p^{s/2+7}(t).
\end{align*}
\begin{align*}
\int_{\tilde{\Sigma}_t\cap\{r>R\}}r^{p+1}(\lbar\fd^l\phi^{s-l})^2(L\fd^l\psi^{s/2-l})^2
&\lesssim \int_{\tilde{\Sigma}_t\cap\{r>R\}}(\lbar\fd^l\phi^{s-l})^2r^{-4}(r^{(p-1)/2+3}L\fd^l\psi^{s/2-l})^2 \\
&\lesssim \int_{\tilde{\Sigma}_t\cap\{r>R\}}(\lbar\fd^l\phi^{s-l})^2r^{-4}||r^{(p-1)/2+3}L\fd^l\psi^{s/2-l}||_{L^\infty(t)}^2 \\
&\lesssim \int_{\Sigma_t\cap\{r>R\}}(\lbar\fd^l\phi^{s-l})^2||r^{(p-1)/2+3}L\fd^l\psi^{s/2-l}||_{L^\infty(t)}^2 \\
&\lesssim E^s(t)B_p^{s/2+7}(t).
\end{align*}
These four cases illustrate the entirety of the proof. \qed
\end{proof}

\subsection{The structures of $\mathcal{N}_\phi$ and $\mathcal{N}_\psi$}\label{nl_structure_sec}

We now categorize each term in $\mathcal{N}_\phi$ and $\mathcal{N}_\psi$ so the previous examples can be generalized systematically. We begin with the definition of the null condition.

\begin{definition} We define two families of terms.
$$\alpha = \{L\phi,r^{-1}\pd_r\phi,r^{-1}\pd_\theta\phi,r^{-1}\phi,r^2L\psi,r\pd_r\psi,r\pd_\theta\psi,r\psi\}$$
$$\beta = \{L\phi,\pd_r\phi,r^{-1}\pd_\theta\phi,r^{-1}\phi,r^2L\psi,r^2\pd_r\psi,r\pd_\theta\psi,r\psi\}$$
The null condition states that any nonlinear term must be a product with at least one $\alpha$ factor.
\end{definition}

We now prove two lemmas (one for $\mathcal{N}_\phi$ and one for $\mathcal{N}_\psi$) which guarantee the null condition as well as a structural condition on the axis.

\begin{lemma}\label{Nphi_structure_lem}(Structure of $\mathcal{N}_\phi$)
The nonlinear term $\mathcal{N}_\phi$ can be expressed as a sum of terms of the form
$$\frac{f\alpha\beta}{1+\phi},$$
satisfying the following additional rules. \\
i) The factor $f$ is smooth and bounded. \\
ii) The term belongs to $\tau_{(\le 1)}(P_\theta^\infty\cup \{\phi,\pd_t\phi,\pd_r\phi,(1+\phi)^{-1},\psi,\pd_t\psi,\pd_r\psi\})$. \\
iii) If $\psi$ appears at least once in the term, then $f$ has a factor of $\sin^2\theta$.
\end{lemma}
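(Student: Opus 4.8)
The plan is to start from the explicit formula for $(1+\phi)\mathcal{N}_\phi$ given in Proposition~\ref{N_identities_prop} and go through its seven terms one at a time, rewriting each as a product of the form $f\alpha\beta/(1+\phi)$. Dividing the stated identity by $(1+\phi)$, it suffices to show that each of
\begin{align*}
&\pd^\alpha\phi\pd_\alpha\phi, \quad A\pd^\alpha\psi A\pd_\alpha\psi, \quad \frac{\pd^\alpha B}{A}\phi A\pd_\alpha\psi, \quad \frac{\pd^\alpha A}{A}A\psi A\pd_\alpha\psi,\\
&\frac{\pd^\alpha B\pd_\alpha B}{A^2}\phi^2, \quad \frac{\pd^\alpha A\pd_\alpha B}{A^2}\phi A\psi, \quad \frac{\pd^\alpha A\pd_\alpha A}{A^2}(A\psi)^2
\end{align*}
can be written as a smooth bounded $f$ times one $\alpha$ factor times one $\beta$ factor, subject to rules (ii) and (iii). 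The key computational input is the identity $\pd^\alpha u\pd_\alpha v = g^{\alpha\beta}\pd_\alpha u\pd_\beta v$ together with the null decomposition: in terms of the null pair $L,\lbar$ one has $g^{\alpha\beta}\pd_\alpha u\pd_\beta v = -\tfrac{1}{2\alpha^{-1}}(L u\,\lbar v + \lbar u\, L v)\cdot(\text{smooth}) + {}^{(Q)}g^{\theta\theta}\pd_\theta u\pd_\theta v + (\text{lower order in }a)$, where crucially every contraction of two $\lbar$ derivatives is absent. Thus $\pd^\alpha u\pd_\alpha v$ is always a sum of products in which at least one factor is an $L$ derivative or an angular derivative or an undifferentiated term — i.e.\ at least one $\alpha$-factor — and the second factor lies in $\beta$ (the $\beta$ list is the $\alpha$ list but with the $r$-weight on $\pd_r$ relaxed, which is exactly what is needed so that a bare $\pd_r$ on the ``second slot'' is permitted).

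First I would handle the $\phi$-only terms. For $\pd^\alpha\phi\pd_\alpha\phi$, apply the null decomposition: the result is $L\phi\,\lbar\phi$, $(\pd_\theta\phi)^2$, and $\phi$-free lower-order pieces; $L\phi\in\alpha$, $\lbar\phi = \alpha^{-1}\lbar\phi$ where $\lbar\phi$ paired against the weighted list gives $L\phi\in\beta$ or $\pd_r\phi\in\beta$ after writing $\lbar = \pd_t - \alpha\pd_r$; the angular term $(r^{-1}\pd_\theta\phi)(r\pd_\theta\phi)$ sits in $\alpha\cdot\beta$ with $f$ smooth. For $\tfrac{\pd^\alpha B\pd_\alpha B}{A^2}\phi^2$ one uses the estimate $\left|\tfrac{\pd^\mu B}{A}\right|\lesssim |a|M/r^2$ and $\left|\tfrac{\pd_r B}{A}\right| \lesssim |a|^3 M/r^5$ from Lemma~\ref{small_a_quantities_lem}, so that $\tfrac{\pd^\alpha B\pd_\alpha B}{A^2}$ is a smooth bounded multiple of $r^{-2}$ (in the Schwarzschild normalization this term vanishes, since $B=0$), whence $\tfrac{\pd^\alpha B\pd_\alpha B}{A^2}\phi^2 = f\cdot(r^{-1}\phi)(r^{-1}\phi) = f\alpha\beta$ with $f$ smooth. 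The terms with one $\psi$ factor — $\tfrac{\pd^\alpha B}{A}\phi A\pd_\alpha\psi$ and $\tfrac{\pd^\alpha A\pd_\alpha B}{A^2}\phi A\psi$ — are where rule (iii) must be verified: here I would use $\tfrac{\pd_\theta B}{A} = a\sin\theta(\dots)$ and $\tfrac{\pd_r B}{A} = a^3\sin^2\theta(\dots)$, so that after the null decomposition every surviving product carries an explicit $\sin^2\theta$ (either directly from $\pd_\theta B/A$ when that derivative hits the angular slot contributing one $\sin\theta$ and the volume/weight conventions supply another, or intrinsically from $\pd_r B/A$); the $\psi$-factor must be made into an $A\pd_\alpha\psi$ or $A\psi$, which under $A = A_1 A_2 \sim (r^2+a^2)\sin^2\theta\cdot A_2$ again contributes the needed $\sin^2\theta$ — this is precisely the mechanism making (iii) hold. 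For the $\psi$-only terms $A\pd^\alpha\psi A\pd_\alpha\psi$ and $\tfrac{\pd^\alpha A\pd_\alpha A}{A^2}(A\psi)^2$ and the mixed $\tfrac{\pd^\alpha A}{A}A\psi A\pd_\alpha\psi$, I would write $A\pd_\mu\psi = A_1(\pd_\mu\psi + \tfrac{\pd_\mu A_1}{A_1}\psi) + (\text{error in }A_2)$ and recall $\tfrac{\pd_r A_1}{A_1} = \tfrac{2r}{r^2+a^2}$, $\tfrac{\pd_\theta A_1}{A_1} = 2\cot\theta$; the combinations $A^2 L\psi$, $A^2\pd_r\psi$, $A\pd_\theta\psi$, $A\psi$ then land in the $\alpha/\beta$ lists (note the lists are tailored so that $r^2 L\psi, r^2\pd_r\psi, r\pd_\theta\psi, r\psi$ appear — i.e.\ $\psi$-factors carry two extra $r$-weights, matching the $A\sim r^2\sin^2\theta$ scaling), and the smooth bounded remainder after extracting $\alpha\beta$ absorbs the $A$-ratios and the $\sin^2\theta$ factors; since $\psi$ appears, $f$ automatically has a $\sin^2\theta$ from the $A$'s, giving (iii). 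Rule (ii) — membership in $\tau_{(\le 1)}(\dots)$ — is bookkeeping: each displayed product, after the above rewriting, is a product of at most first derivatives of $\phi,\psi$, together with $(1+\phi)^{-1}$ and a polynomial-in-$\cos\theta$/rational-in-$r$ coefficient, which is exactly the claimed class.

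The main obstacle I anticipate is rule (iii): making sure that \emph{every} product in which $\psi$ survives genuinely carries a factor of $\sin^2\theta$ inside the smooth coefficient $f$, rather than, say, a factor of $\cos\theta$ or a bare constant. This is delicate for the terms $\tfrac{\pd^\alpha A}{A}A\psi A\pd_\alpha\psi$ and $\tfrac{\pd^\alpha A\pd_\alpha A}{A^2}(A\psi)^2$ because there the $A$-ratios involve $\tfrac{\pd_\theta A_1}{A_1} = 2\cot\theta$, which is a $\csc\theta$-type factor, not a $\sin\theta$; so one cannot naively extract $\sin^2\theta$ from every slot. The resolution is that when the angular derivative of $A$ produces a $\cot\theta$, it must be paired with an $A\pd_\theta\psi$ or $A\psi$ factor — and $A \sim (r^2+a^2)\sin^2\theta A_2$ supplies $\sin^2\theta$; moreover an angular-derivative contraction ${}^{(Q)}g^{\theta\theta}\pd_\theta\cdot\pd_\theta\cdot$ carries its own weight from the metric. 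One must track the net power of $\sin\theta$ term by term; I would organize this by always keeping the $\psi$-factors in the normalized forms $A^{-1}\cdot(A\pd_\alpha\psi)$ and writing $A^{-1} = A_1^{-1}A_2^{-1}$ with $A_1^{-1}\sim (r^2+a^2)^{-1}\csc^2\theta$, so that the cancellation of $\csc^2\theta$ against the $\sin^2\theta$'s coming from the other $A$ factors and from the contraction weights is explicit, leaving at least one net $\sin^2\theta$ for $f$. Once this accounting is done the lemma follows by simply reading off the decomposition.
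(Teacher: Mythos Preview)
Your approach is correct and coincides with the paper's: go term by term through $(1+\phi)\mathcal{N}_\phi$, use the null decomposition $\pd^\alpha u\,\pd_\alpha v \approx L u\,\lbar v + \lbar u\,L v + r^{-2}\pd_\theta u\,\pd_\theta v$, and then rewrite $\lbar = 2\pd_t - L$ (or equivalently $\lbar \approx L + \pd_r$) so that every product lands in $\alpha\cdot\beta$.

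Where you diverge from the paper is in the bookkeeping for rule (iii), which you have made harder than it is. The paper does not invoke any ``volume/weight conventions,'' nor does it normalize via $A^{-1}\cdot(A\pd_\alpha\psi)$; it simply tabulates the asymptotics of the coefficient functions once and for all,
\[
\frac{\pd_\theta A}{A}\approx \csc\theta,\quad \frac{\pd_r A}{A}\approx r^{-1},\quad \frac{\pd_\theta B}{A}\approx \sin\theta,\quad \frac{\pd_r B}{A}\approx r^{-1}\sin^2\theta,\quad A\approx r^2\sin^2\theta,
\]
and then multiplies. The point is that every occurrence of $\psi$ in $(1+\phi)\mathcal{N}_\phi$ comes dressed with an explicit $A$ (as $A\psi$ or $A\pd_\alpha\psi$), so it carries $\sin^2\theta$; the singular $\frac{\pd_\theta A}{A}\approx\csc\theta$ can remove at most one power per occurrence, and in the worst case $\frac{\pd^\alpha A\pd_\alpha A}{A^2}(A\psi)^2\approx \csc^2\theta\cdot r^{-2}\cdot r^4\sin^4\theta\,\psi^2=\sin^2\theta(r\psi)(r\psi)$ you still land with exactly $\sin^2\theta$ to spare. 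The other $\psi$-terms end up with $\sin^3\theta$ or $\sin^4\theta$. So your anticipated obstacle dissolves into a one-line power count for each of the seven terms, and no appeal to the metric or volume form is needed.
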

\begin{proof}
In this proof, we use the sign $\approx$ to emphasize that smooth and bounded factors (including $M/r$ and $a/r$) are neglected. One can check (using Lemma \ref{small_a_quantities_lem} as a guide) that
$$ \frac{\pd_\theta A}{A} \approx \frac1{\sin\theta}, \hspace{.5in}
\frac{\pd_r A}{A} \approx r^{-1}, \hspace{.5in}
\frac{\pd_\theta B}{A} \approx \sin\theta, \hspace{.5in}
\frac{\pd_r B}{A} \approx r^{-1}\sin^2\theta $$
$$ \frac{\pd^\alpha A\pd_\alpha A}{A^2} \approx \frac1{r^2\sin^2\theta}, \hspace{.5in}
\frac{\pd^\alpha A\pd_\alpha B}{A^2} \approx r^{-2}, \hspace{.5in}
\frac{\pd^\alpha B\pd_\alpha B}{A^2} \approx r^{-2}\sin^2\theta. $$
Using these, we investigate each term in $(1+\phi)\mathcal{N}_\phi$.
\begin{align*}
\pd^\alpha \phi\pd_\alpha \phi &\approx L\phi\lbar \phi + r^{-2}\pd_\theta\phi\pd_\theta \phi \\
&\approx (L\phi)(\lbar\phi)+(r^{-1}\pd_\theta\phi)(r^{-1}\pd_\theta\phi) \\
&\approx (L\phi)(\pd_r\phi)+(L\phi)(L\phi)+(r^{-1}\pd_\theta\phi)(r^{-1}\pd_\theta\phi).
\end{align*}
\begin{align*}
A\pd^\alpha\psi A\pd_\alpha \psi &\approx r^4\sin^4\theta (L\psi \lbar\psi+r^{-2}\pd_\theta\psi\pd_\theta\psi) \\
&\approx \sin^4\theta (r^2L\psi)(r^2\lbar\psi)+\sin^4\theta(r\pd_\theta\psi)(r\pd_\theta\psi) \\
&\approx \sin^4\theta (r^2L\psi)(r^2\pd_r\psi)+\sin^4\theta (r^2L\psi)(r^2L\psi)+\sin^4\theta(r\pd_\theta\psi)(r\pd_\theta\psi).
\end{align*}
\begin{align*}
\frac{\pd^\alpha B}{A}\phi A\pd_\alpha \psi &\approx r^{-2}\frac{\pd_\theta B}{A}\phi A\pd_\theta\psi +\frac{\pd_r B}{A}\phi A\pd_r\psi \\
&\approx \sin^3\theta \phi\pd_\theta\psi +r^{-1}\sin^4\theta \phi\pd_r\psi \\
&\approx \sin^3\theta (r^{-1}\phi)(r\pd_\theta\psi) + \sin^4\theta (r^{-1}\phi)(\pd_r\psi).
\end{align*}
\begin{align*}
\frac{\pd^\alpha A}{A} A\psi A\pd_\alpha \psi &\approx r^{-2}\frac{\pd_\theta A}{A}A\psi A\pd_\theta\psi +\frac{\pd_rA}{A}A\psi A\pd_r\psi \\
&\approx r^2\sin^3\theta\psi\pd_\theta\psi + r^3\sin^4\theta\psi\pd_r\psi \\
&\approx \sin^3\theta (r\psi)(r\pd_\theta\psi) +\sin^4\theta (r\psi)(r^2\pd_r\psi).
\end{align*}
$$\frac{\pd^\alpha B\pd_\alpha B}{A^2}\phi^2 \approx r^{-2}\sin^2\theta\phi^2 \approx \sin^2\theta (r^{-1}\phi)(r^{-1}\phi).$$
$$\frac{\pd^\alpha A\pd_\alpha A}{A^2}\phi A\psi \approx \sin^2\theta\phi\psi \approx \sin^2\theta(r^{-1}\phi)(r\psi).$$
$$\frac{\pd^\alpha A\pd_\alpha A}{A^2}A\psi A\psi \approx r^2\sin^2\theta\psi^2 \approx \sin^2\theta (r\psi)(r\psi).$$
It is now straightforward to check for each of the above calculations that the first factor in parentheses is an $\alpha$ term, and the second factor in parentheses is a $\beta$ term. It is also straightforward to check that any term containing a $\psi$ factor also has a factor of $\sin^2\theta$. \qed
\end{proof}

\begin{lemma}(Structure of $\mathcal{N}_\psi$)
The nonlinear term $\mathcal{N}_\psi$ can be expressed as a sum of terms of the form
$$\frac{f\alpha\beta}{1+\phi}\text{ or }\frac{f\alpha (r^{-1}\fb \phi)}{1+\phi},$$
satisfying the following additional rules. \\
i) The factor $f$ is smooth and bounded. \\
ii) The term belongs to $\tau_{(\le 1)}(P_\theta^\infty\cup \{\phi,\pd_t\phi,\pd_r\phi,(1+\phi)^{-1},\psi,\pd_t\psi,\pd_r\psi\})$. \\
iii) If $\phi$ appears at least once in the term, then $f$ has a factor of $r^{-2}$.
\end{lemma}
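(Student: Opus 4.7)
The plan is to mirror the proof of Lemma \ref{Nphi_structure_lem} closely, working term by term through the three components of $(1+\phi)\mathcal{N}_\psi = 2\pd^\alpha\phi\pd_\alpha\psi + 4\frac{\pd^\alpha A}{A}\psi\pd_\alpha\phi - 2\frac{\pd^\alpha B}{A}A^{-1}\phi\pd_\alpha\phi$. For each term I would split the contracted derivative into its $r$ and $\theta$ pieces using the diagonal structure of $g^{\mu\nu}$, then apply the approximate identities of Lemma \ref{small_a_quantities_lem} (so that $\frac{\pd_r A}{A}\approx r^{-1}$, $\frac{\pd_\theta A}{A}\approx\cot\theta$, $\frac{\pd_\theta B}{A}\approx\sin\theta\cdot r^{-2}$, etc., modulo smooth bounded factors as in the earlier lemma). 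Then I identify one $\alpha$ factor and either a $\beta$ factor or an $r^{-1}\fb\phi$ factor, and track the remaining $r$-weights in $f$.

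The first term $\pd^\alpha\phi\pd_\alpha\psi$ decomposes via the null frame as $L\phi\,\lbar\psi + \lbar\phi\,L\psi + r^{-2}\pd_\theta\phi\,\pd_\theta\psi$; each piece is rewritten as $r^{-2}$ (required since $\phi$ is present) times an $\alpha\beta$ product, e.g. $L\phi\,\lbar\psi = r^{-2}(L\phi)(r^2\lbar\psi)$ after expanding $\lbar$ in terms of $L$ and $\pd_r$ via smooth coefficients. The second term factors cleanly: the $r$ piece gives $r^{-2}(r\psi)(\pd_r\phi)$, and the $\theta$ piece is the place where $r^{-1}\fb\phi$ is essential --- $\frac{\pd^\theta A}{A}\psi\pd_\theta\phi\approx r^{-2}\cot\theta\,\psi\,\pd_\theta\phi = r^{-2}\psi\,\fb\phi = (\text{smooth}\cdot r^{-2})\,(r\psi)(r^{-1}\fb\phi)$, so the operator $\fb$ absorbs the apparent $\csc\theta$ singularity arising from $\frac{\pd_\theta A}{A}$. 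Verifying the null condition is automatic in both terms because each product already contains an $L$-derivative, an $r^{-1}\phi$, or an $r^{-1}\fb\phi$ factor that lies in $\alpha$, and the $r^{-2}$ weight is extracted in each decomposition since $\phi$ appears in every term.

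The main obstacle will be the third term, $\frac{\pd^\alpha B}{A}A^{-1}\phi\pd_\alpha\phi$. The $r$ component is harmless: using $\frac{\pd_r B}{A}\lesssim |a|^3 M r^{-5}$ and $A^{-1}\approx r^{-2}\csc^2\theta$, and noting that $g^{rr}$ and $\csc^2\theta$ combine with the explicit $\sin^2\theta$ in $\pd_r B/A$, one gets $r^{-2}\cdot O(|a|^3 M r^{-2})\,(r^{-1}\phi)(\pd_r\phi)$. The delicate piece is the $\theta$ component: combining $g^{\theta\theta}\approx r^{-2}$, $\frac{\pd_\theta B}{A}\approx |a|M\sin\theta\cdot r^{-2}$, and $A^{-1}\approx r^{-2}\csc^2\theta$ produces a bare factor $\csc\theta$, i.e. $\frac{|a|M}{r^6\sin\theta}\phi\pd_\theta\phi$. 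Using the identity $\pd_\theta\phi = \tan\theta\,\fb\phi$, this rewrites as $\frac{|a|M}{r^6\cos\theta}\phi\,\fb\phi = r^{-2}\cdot\frac{|a|M\sec\theta}{r^4}\,(r^{-1}\phi)(r^{-1}\fb\phi)$. The $\sec\theta$ in $f$ is the only coefficient that is not globally smooth and bounded in the ordinary sense, so the subtle point will be to argue --- using the regularity formalism of \S\ref{regularity_sec} --- that $\sec\theta\cdot\fb\phi$ is smooth, since $\fb\phi$ vanishes linearly in $\cos\theta$ at $\theta=\pi/2$ whenever $\phi$ is regular, and hence $\sec\theta$ is an acceptable coefficient for factors paired with $\fb\phi$. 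I expect this regularity verification (and a precise statement of exactly which smooth bounded class $f$ lives in) to be the only nontrivial bookkeeping; everything else is a direct recomputation paralleling the proof of Lemma \ref{Nphi_structure_lem}.
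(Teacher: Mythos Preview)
Your treatment of the first two terms is correct and matches the paper. The gap is in the $\theta$-component of the third term $\frac{\pd^\alpha B}{A^2}\phi\,\pd_\alpha\phi$. You correctly arrive at a quantity of the form $r^{-4}\csc\theta\,\phi\,\pd_\theta\phi$ (modulo smooth bounded factors), but your proposed resolution---rewriting $\csc\theta\,\pd_\theta\phi$ as $\sec\theta\,\fb\phi$ and then arguing that $\sec\theta$ is an ``acceptable'' coefficient because $\fb\phi$ vanishes at $\theta=\pi/2$---does not deliver the structural form the lemma claims. Requirement~(i) asks that $f$ itself be smooth and bounded, not merely that the product $f\cdot(r^{-1}\fb\phi)$ be bounded. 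This matters downstream: in Lemma~\ref{Npsi_s_k_structure_lem} the commutators $Q^l$ are applied and the coefficient $f$ is differentiated independently of the dynamic factors. A $\sec\theta$ in $f$ would produce $\fd^i(\sec\theta)$ factors that are increasingly singular at $\theta=\pi/2$, and there is no mechanism in the $\tau_{(n)}$-formalism of \S\ref{regularity_sec} (which is built to handle axis degeneracies, not equatorial ones) to repair this.

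The clean fix, which the paper uses, is the elementary identity
\[
\csc\theta\,\pd_\theta\phi \;=\; \cos\theta\cdot\cot\theta\,\pd_\theta\phi \;+\; \sin\theta\,\pd_\theta\phi \;=\; \cos\theta\,\fb\phi \;+\; \sin\theta\,\pd_\theta\phi,
\]
coming from $1=\cos^2\theta+\sin^2\theta$. This splits the troublesome term into
\[
r^{-2}\cdot(\text{smooth bdd})\,(r^{-1}\phi)(r^{-1}\fb\phi)
\;+\;
r^{-2}\cdot(\sin\theta\cdot\text{smooth bdd})\,(r^{-1}\phi)(r^{-1}\pd_\theta\phi),
\]
the first of the form $\frac{f\alpha(r^{-1}\fb\phi)}{1+\phi}$ and the second of the form $\frac{f\alpha\beta}{1+\phi}$, each with genuinely smooth bounded $f$, each visibly in $\tau_{(\le 1)}$, and each carrying the required $r^{-2}$. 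No appeal to behavior at $\theta=\pi/2$ is needed.
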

\begin{proof}
We proceed as in the proof of the previous lemma, this time investigating each term in $(1+\phi)\mathcal{N}_\psi$.
\begin{align*}
\pd^\alpha\phi\pd_\alpha\psi &\approx L\phi\lbar\psi +\lbar\phi L\psi +r^{-2}\pd_\theta\phi\pd_\theta\psi \\
&\approx r^{-2}(L\phi)(r^2\lbar\psi)+r^{-2}(r^2L\psi)(\lbar\phi)+r^{-2}(r^{-1}\pd_\theta\phi)(r\pd_\theta\psi) \\
&\approx r^{-2}(L\phi)(r^2\pd_r\psi)+r^{-2}(L\phi)(r^2L\psi)+r^{-2}(r^2L\psi)(\pd_r\phi)+r^{-2}(r^2L\psi)(L\phi) \\
&\hspace{4in}+r^{-2}(r^{-1}\pd_\theta\phi)(r\pd_\theta\psi).
\end{align*}
\begin{align*}
\frac{\pd^\alpha A}{A}\psi\pd_\alpha \phi &\approx r^{-2}\frac{\pd_\theta A}{A}\psi\pd_\theta\phi + \frac{\pd_r A}{A}\psi\pd_r\phi \\
&\approx r^{-2}\psi\fb\phi +r^{-1}\psi\pd_r\phi \\
&\approx r^{-2}(r\psi)(r^{-1}\fb\phi) + r^{-2}(r\psi)(\pd_r\phi).
\end{align*}
\begin{align*}
\frac{\pd^\alpha B}{A^2}\phi\pd_\alpha \phi &\approx r^{-2}\frac{\pd_\theta B}{A^2}\phi\pd_\theta\phi +\frac{\pd_r B}{A^2}\phi\pd_r\phi \\
&\approx r^{-4}\phi \fb\phi +r^{-4}\sin\theta \phi\pd_\theta\phi +r^{-3}\phi\pd_r\phi \\
&\approx r^{-2}(r^{-1}\phi)(r^{-1}\fb\phi)+r^{-2}\sin\theta(r^{-1}\phi)(r^{-1}\pd_\theta\phi)+r^{-2}(r^{-1}\phi)(\pd_r\phi).
\end{align*}
It is now straightforward to check for each of the above calculations that the first factor in parentheses is an $\alpha$ term, and the second factor in parentheses is either a $\beta$ term or $r^{-1}\fb\phi$. Finally, since $\phi$ appears in every term, there is an additional factor of $r^{-2}$ accompanying each term as required. \qed
\end{proof}

\subsection{The structures of $q^{-2}\tg^k\Gamma^s(q^2\mathcal{N}_\phi)$ and $q^{-2}\tg^k\tilde\Gamma^s(q^2\mathcal{N}_\psi)$}\label{nl_s_k_structure_sec}

We now generalize the previous two lemmas by applying the commutators.

\begin{definition} We generalize the previous families of terms to the following.
\begin{multline*}
\alpha^{s,k} = \{L\fd^l\phi^{s-l,k},r^{-1}\pd_r\fd^l\phi^{s-l,k},r^{-1}\pd_\theta\fd^l\phi^{s-l,k},r^{-1}\fd^l\phi^{s-l,k}, \\
r^2L\fd^l\psi^{s-l,k},r\pd_r\fd^l\psi^{s-l,k},r\pd_\theta\fd^l\psi^{s-l,k},r\fd^l\psi^{s-l,k}\}
\end{multline*}
\begin{multline*}
\beta^{s,k} = \{L\fd^l\phi^{s-l,k},\pd_r\fd^l\phi^{s-l,k},r^{-1}\pd_\theta\fd^l\phi^{s-l,k},r^{-1}\fd^l\phi^{s-l,k}, \\
r^2L\fd^l\psi^{s-l,k},r^2\pd_r\fd^l\psi^{s-l,k},r\pd_\theta\fd^l\psi^{s-l,k},r\fd^l\psi^{s-l,k}\}
\end{multline*}
The null condition still states that any nonlinear term must be a product with at least one $\alpha$ factor.
\end{definition}


The following two lemmas (one for $q^{-2}\tg^k\Gamma^s(q^2\mathcal{N}_\phi)$ and one for $q^{-2}\tg^k\tilde{\Gamma}^s(q^2\mathcal{N}_\psi)$) generalize the previous two lemmas.

\begin{lemma}\label{Nphi_s_k_structure_lem}
(Structure of $q^{-2}\tg^k\Gamma^s(q^2\mathcal{N}_\phi)$) The nonlinear term $q^{-2}\tg^k\Gamma^s(q^2\mathcal{N}_\phi)$ can be expressed as a sum of terms of the following form. (The index $j$ represents the number of times a differential operator acts on the denominator.) \\
$$\frac{f\alpha^{s_1,k_1}\beta^{s_2,k_2}(r\beta^{s_3,k_3})...(r\beta^{s_{2+j},k_{2+j}})}{(1+\phi)^{j+1}},$$
where $0\le j\le s+k$, $s_1+...+s_{2+j}\le s+2$, $\max_is_i\le s$, $k_1+...+k_{2+j}\le k$, and the following rules apply. \\
\bp The factor $f$ is smooth and bounded. \\
\bp If $\psi$ appears at least once in the term, then $f$ has a factor of $(\sin\theta)^{\max(0,2-\max_is_i)}$.
\end{lemma}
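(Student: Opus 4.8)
The plan is to prove Lemma \ref{Nphi_s_k_structure_lem} by induction on the total number of commutators $s+k$, with the base case $s=k=0$ being exactly Lemma \ref{Nphi_structure_lem} (after noting that a term $\frac{f\alpha\beta}{1+\phi}$ of that lemma is the $j=0$ case of the claimed form, since $q^{-2}(q^2\mathcal N_\phi)=\mathcal N_\phi$ and the factor $q^2$ contributes only a smooth bounded multiple together with $q^{-2}$). For the inductive step I would apply one additional commutator $\partial$, where $\partial$ is one of $\partial_t$, $Q$ (or $\tilde Q$ where appropriate — but here the operators acting on $\mathcal N_\phi$ are $\partial_t$ and $Q$), or $\tg$, to a term of the assumed form and check that the Leibniz expansion lands again in the claimed family with the indices incremented appropriately. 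The key bookkeeping facts are: (i) each of $\partial_t$, $Q$, $\tg$ applied to an $\alpha^{s_i,k_i}$ or $\beta^{s_i,k_i}$ factor produces a sum of $\alpha^{s_i+1,k_i}$ / $\beta^{s_i+1,k_i}$ (or $k_i+1$) factors, using that $Q$ preserves the $\fd^l$ structure by the formalism of \S\ref{regularity_sec} and raises degree by one, and using Lemma \ref{dr_comm_1_lem} to absorb the non-commuting part of $\tg$ with the wave operator into the equation; (ii) differentiating the denominator $(1+\phi)^{-(j+1)}$ produces a factor $(1+\phi)^{-(j+2)}$ times $\partial\phi$, which is itself (after supplying an $r$ or $r^{-1}$ weight) one of the $\beta$-type factors, so $j$ increments by one and the new factor slots into an $(r\beta^{s,k})$ position; (iii) differentiating the smooth bounded $f$ keeps it smooth and bounded (and the weight $r^{-1}$ hidden in $\partial_r f$ is harmless).

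Next I would verify the index inequalities propagate: if $\partial$ hits the $\alpha$ factor, $s_1\mapsto s_1+1$ and $\sum s_i\le s+2$ becomes $\le (s+1)+2$ with the same total budget shifted, and $\max_i s_i\le s$ becomes $\le s+1$; similarly for hitting a $\beta$ factor or the denominator. The constraint $0\le j\le s+k$ is preserved because each differentiation either leaves $j$ fixed or raises it by one while raising $s+k$ by one. The $k$-budget is handled identically, noting $\tg$ raises exactly one $k_i$ by one and never touches the $s$ indices except through the lower-order remainder terms in Lemma \ref{dr_comm_1_lem}, which are of strictly lower order and so are covered by the inductive hypothesis at level $(s+1,k-1)$ or below.

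The sharpest point — and the one I expect to be the main obstacle — is rule (iii), the factor $(\sin\theta)^{\max(0,2-\max_i s_i)}$. At the base level $\mathcal N_\phi$ has a genuine $\sin^2\theta$ in front of every $\psi$-containing term (Lemma \ref{Nphi_structure_lem}(iii)). When we commute with $Q=\fa+\fb+a^2\sin^2\theta\partial_t^2$, the operator $\fb=\cot\theta\,\partial_\theta$ can consume factors of $\sin\theta$: e.g. $\fb(\sin^2\theta\cdot g)=2\cos^2\theta\,g+\sin^2\theta\,\fb g+2\cos\theta\sin\theta\,\partial_\theta g$, so after one $Q$ the guaranteed power of $\sin\theta$ drops from $2$ to (at worst) $1$, and after two applications it can drop to $0$. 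This is exactly why the exponent is $\max(0,2-\max_i s_i)$ rather than a fixed $2$: each unit of $s$-derivative can cost one power of $\sin\theta$, but no more, because the only mechanism that eats $\sin\theta$ is $\fb$ acting directly on an explicit $\sin\theta$ in $f$, and such an action is charged against $\max_i s_i$. I would make this precise by tracking, alongside the term, the quantity $2-(\text{powers of }\sin\theta\text{ in }f)$ and showing it never exceeds $\max_i s_i$; differentiating $f$ by $\fb$ can increase the deficit by at most one and simultaneously increases some $s_i$ by one (when the operator is applied as part of building $\fd^l\phi^{s-l,k}$-type factors) — here the careful atomic treatment of $\fb$ and the product rule from \S\ref{regularity_sec} guarantee that stripped $\sin\theta$'s are matched by raised degree. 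The remaining cases ($\partial_t$, $\tg$, differentiating a non-$f$ factor, $a^2\sin^2\theta\partial_t^2$ which only adds $\sin\theta$'s) cannot decrease the $\sin\theta$ count, so the invariant is maintained, completing the induction.
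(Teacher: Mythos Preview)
Your overall plan is reasonable, but it diverges from the paper's argument in a way that creates a real gap. The paper does \emph{not} induct one commutator at a time; it first applies all $\pd_t^{s'}$ and all $\tg^{k}$ (which are first order and distribute harmlessly by Leibniz), and only then applies all $Q^{s''}$ at once. Before applying any $Q$, the paper also rewrites the base term so that the $\pd_\theta$-type factors are absorbed into the $\fd$-formalism: it uses $r^{-1}\pd_\theta\phi=\pd_\theta(r^{-1}\phi)$ and $r\pd_\theta\psi=\pd_\theta(r\psi)$ to write each term as a genuine element of $\tau_{(0)}$ or $\tau_{(1)}$ in the graded algebra of \S\ref{regularity_sec}, and then invokes Lemma~\ref{cl_raise_degree_lem} on the \emph{whole product}. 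This pre-processing is essential and is missing from your sketch. Your claim that ``$Q$ applied to an $\alpha^{s_i,k_i}$ factor produces a sum of $\alpha^{s_i+1,k_i}$ factors'' fails precisely for the $\fb$-part of $Q$ acting on a factor of the form $r^{-1}\pd_\theta\fd^l\phi$ with $l$ even: one gets $r^{-1}\cot\theta\,\fa\fd^l\phi$, which carries a loose $\cot\theta$ that is \emph{not} in any $\fd$-family and is singular at the poles unless it can borrow a $\pd_\theta$ from another factor in the product. Lemma~\ref{cl_raise_degree_lem} handles exactly this redistribution, but only at the product level, not factor by factor.

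Two smaller points. First, your appeal to Lemma~\ref{dr_comm_1_lem} is misplaced: that lemma concerns $[\tg,q^2\Box_g]$ and is used in the energy estimates, but here you are simply applying $\tg=1_H(r)\pd_r$ to the nonlinear expression $q^2\mathcal N_\phi$ by the product rule; no wave operator enters. Second, your proposed invariant for the $\sin\theta$-rule does not hold as stated: when $\fb$ hits the coefficient $f$ alone it strips a power of $\sin\theta$ \emph{without} raising any dynamic $s_i$ (e.g.\ $\fb(\sin^2\theta)=2\cos^2\theta$ while $s_1=s_2=0$). The paper does not verify this rule carefully in the proof either; in practice the rule is used only in conjunction with Lemma~\ref{gain_sin_lem}, which trades $\fd$-derivatives that landed on $\sin^2\theta$ back onto the dynamic factor at the cost of raising its order, so the correct bookkeeping is at the level of $l_0+\max_i s_i$ rather than $\max_i s_i$ alone.
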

\begin{proof}
We start with the case $s=k=0$, which was proved in Lemma \ref{Nphi_structure_lem}. In particular, since $r^{-1}\pd_\theta\phi=\pd_\theta (r^{-1}\phi)$ and $r\pd_\theta\psi=\pd_\theta (r\psi)$, each term in $\mathcal{N}_\phi$ can be written in one of the following forms.
$$\frac{f\alpha_0\beta_0}{1+\phi}\text{ or }
\frac{f_1\pd_\theta f_2\pd_\theta\alpha_0\beta_0}{1+\phi}\text{ or }
\frac{f_1\pd_\theta f_2\alpha_0\pd_\theta\beta_0}{1+\phi}\text{ or }
\frac{f\pd_\theta\alpha_0\pd_\theta\beta_0}{1+\phi},$$
where
$$\alpha_0\in \alpha\setminus \{r^{-1}\pd_\theta\phi,r\pd_\theta\psi\}\text{ and }
\beta_0\in \beta\setminus \{r^{-1}\pd_\theta\phi,r\pd_\theta\psi\}.$$
The effect of applying $\pd_t^{s'}$ to any of these terms is to obtain terms of the form
$$\frac{f\pd_t^{s_1}\alpha_0\pd_t^{s_2}\beta_0\pd_t^{s_3}\phi...\pd_t^{s_{2+j}}\phi}{(1+\phi)^{1+j}}\text{ or }
\frac{f_1\pd_\theta f_2\pd_\theta\pd_t^{s_1}\alpha_0\pd_t^{s_2}\beta_0\pd_t^{s_3}\phi...\pd_t^{s_{2+j}}\phi}{(1+\phi)^{1+j}}\text{ etc.},$$
where $s_1+...+s_{2+j}\le s'$ and $j\le s'$. The additional factors of $\pd_t^{s_i}\phi$ appear each time one of the $\pd_t$ operators acts on the denominator.

Recall that $\tg^k\Gamma^s$ is composed not only of $\pd_t$, but also of $Q$ and $\tg$. Since these operators commute with each other, the order in which they are applied is not important. For the sake of simplicity, we first apply the $\pd_t^{s'}$ operators (which has already been done) and then the $\tg^k$ operators. Since these are both first order operators, it should be clear that the resulting terms are of the form
$$\frac{f\alpha_0^{s_1,k_1}\beta_0^{s_2,k_2}\gamma^{s_3,k_3}...\gamma^{s_{2+j},k_{2+j}}}{(1+\phi)^{1+j}}\text{ or }
\frac{f_1\pd_\theta f_2\pd_\theta\alpha_0^{s_1,k_1}\beta_0^{s_2,k_2}\gamma^{s_3,k_3}...\gamma^{s_{2+j},k_{2+j}}}{(1+\phi)^{1+j}}\text{ etc.},$$
where $j\le s'+k$, $s_1+...+s_{2+j}\le s'$, $k_1+...+k_{2+j}\le k$, and $\alpha_0^{s_1,k_1}=\tg^{k_1}\pd_t^{s_1}\alpha_0$, $\beta_0^{s_2,k_2}=\tg^{k_2}\pd_t^{s_2}\beta_0$, and $\gamma^{s_i,k_i}=\tg^{k_i}\pd_t^{s_i}\phi$.
This is the point at which we apply the operator $Q$. Recall that
$$Q=\fa+\fb+a^2\sin^2\theta\pd_t^2,$$
where the operators $\fa$ and $\fb$ are defined in \S\ref{regularity_sec}. We handle $Q$ as if applying these operators seperately. The operator $a^2\sin^2\theta\pd_t^2$ is equivalent to applying $\pd_t$ twice and multiplying by a smooth bounded function. However, the operators $\fa$ and $\fb$ require the formalism from Lemma \ref{cl_raise_degree_lem}. After applying $Q^{s''}$, the terms should take the following form.
$$
\frac{f_1\fd^{l_0} f_2 \fd^{l_1}\alpha_0^{s_1,k_1}\fd^{l_2}\beta_0^{s_2,k_2}\fd^{l_3}\gamma^{s_3,k_3}...\fd^{l_{2+j}}\gamma^{s_{2+j},k_{2+j}}}{(1+\phi)^{1+j}}
$$
where $l_0=1$ or $\fd^{l_0}f_2=1$ so that $l_0+l_1+...l_{2+j}$ is even. We have in addition that $l_0+l_1+...+l_{2+j}\le s''+2$, since there were initially zero or two $\fd$s and each application of $Q$ contributes at most two more. Finally, we observe that the $\gamma$ factors are all of the form $r\beta$. From here, the lemma follows.\qed
\end{proof}

\begin{lemma}\label{Npsi_s_k_structure_lem}
(Structure of $q^{-2}\tg^k\tilde\Gamma^s(q^2\mathcal{N}_\psi)$) The nonlinear term $q^{-2}\tg^k\tilde\Gamma^s(q^2\mathcal{N}_\psi)$ can be expressed as a sum of terms, each taking one of the following forms. (The index $j$ represents the number of times a differential operator acts on the denominator.) \\
$$\frac{f\alpha^{s_1,k_1}\beta^{s_2,k_2}(r\beta^{s_3,k_3})...(r\beta^{s_{2+j},k_{2+j}})}{(1+\phi)^{j+1}}\text{ or }\frac{f\alpha^{s_1,k_1}(r^{-1}\fb\fc^l\phi^{s_2-2l,k_2})(r\beta^{s_3,k_3})...(r\beta^{s_{2+j},k_{2+j}})}{(1+\phi)^{j+1}},$$
where $0\le j\le s+k$, $s_1+...+s_{2+j}\le s+2$, $\max_is_i\le s$, $k_1+...+k_{2+j}\le k$, and the following rules apply. \\
\bp The factor $f$ is smooth and bounded. \\
\bp If $\phi$ appears at least once in the term, then $f$ has a factor of $r^{-2}$.
\end{lemma}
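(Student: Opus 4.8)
```
\begin{proof}
The plan is to run the argument of Lemma \ref{Nphi_s_k_structure_lem} essentially verbatim, starting from the base case established in the $\mathcal{N}_\psi$ structure lemma of \S\ref{nl_structure_sec}, and tracking the two differences: the presence of $r^{-1}\fb\phi$ factors (which are \emph{not} of the form $r^{-1}\pd_\theta(\text{something})$ and so must be carried along as an atomic object), and the $r^{-2}$ weight that accompanies any term containing a $\phi$ factor. First I would recall that every term of $\mathcal{N}_\psi$ has been written, in Lemma (Structure of $\mathcal{N}_\psi$), as $\frac{f\alpha\beta}{1+\phi}$ or $\frac{f\alpha(r^{-1}\fb\phi)}{1+\phi}$ with $f$ smooth and bounded, and with $f$ carrying a factor of $r^{-2}$ whenever $\phi$ appears. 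As in the proof of Lemma \ref{Nphi_s_k_structure_lem}, I would rewrite the $\pd_\theta$-type entries of $\alpha$ and $\beta$ (namely $r^{-1}\pd_\theta\phi=\pd_\theta(r^{-1}\phi)$ and $r\pd_\theta\psi=\pd_\theta(r\psi)$) as $\pd_\theta$ applied to a bounded-weight rescaling of $\phi$ or $\psi$, so that each summand of $\mathcal{N}_\psi$ has the form $\frac{f_1\,\pd_\theta f_2\,(\pd_\theta^{?}\alpha_0)(\pd_\theta^{?}\beta_0)}{1+\phi}$ with $\alpha_0,\beta_0$ in the $\pd_\theta$-free parts of $\alpha,\beta$, or the analogous form with a $(r^{-1}\fb\phi)$ factor replacing $\beta_0$. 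The key point about this last factor is that $\fb$ must be treated atomically, exactly as explained in \S\ref{intro_regularity_sec}, so it simply rides along through the commutators.

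The next step is to apply the commutator operator $q^{-2}\tg^k\tilde\Gamma^s(q^2\cdot)$ one layer at a time, in the order: first the $\pd_t$ factors of $\tilde\Gamma^s$, then the $\tg^k$ factors, and finally the $\tilde Q$ factors. Each application of $\pd_t$ (or of $\tg$, which is $1_H(r)\pd_r$ up to a smooth bounded factor) is a first-order operator that either differentiates one of the existing factors, differentiates the bounded function $f$, or differentiates the denominator $(1+\phi)^{-1}$; in the last case the index $j$ increases by one and a factor $\pd_t^{s_i}\psi$ (more precisely $\tg^{k_i}\tilde\Gamma^{s_i}\psi$, whence an $r\beta$-type factor after the final weight bookkeeping) is produced. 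This is exactly the mechanism in the $\mathcal{N}_\phi$ proof; since $\tilde Q$ (respectively $Q$) commutes with $q^2\Box_{\tilde g}$ (respectively $q^2\Box_g$) by the lemma in \S\ref{ee_sec}, the equation for $\psi$ is not needed here and we only keep track of how $\tilde\Gamma$ distributes over products. For the final layer, write $\tilde Q=\fa+5\fb+a^2\sin^2\theta\pd_t^2$; the $a^2\sin^2\theta\pd_t^2$ piece is two $\pd_t$'s times a smooth bounded function, while for the $\fa$ and $\fb$ pieces I would invoke the Leibniz-type formalism of \S\ref{regularity_sec} (the analogue of Lemma \ref{cl_raise_degree_lem}) to conclude that applying $\fc^{l}$ to a product of ``regular'' factors produces a sum of terms of the prescribed shape $\fd^{l_0}f_2\,\fd^{l_1}(\cdot)\cdots\fd^{l_{2+j}}(\cdot)$ with $l_0+l_1+\cdots+l_{2+j}$ even and bounded by (initial number of $\fd$'s) $+\,2\times(\text{number of }\tilde Q\text{'s})$, hence $\le s+2$. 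The $r^{-1}\fb\phi$ factor is regular on the axis (since $\pd_\theta$ applied to the regular function $\phi$ vanishes to first order and $\fb$ preserves the regular class), so it participates in this formalism without destroying regularity.

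Finally I would do the weight bookkeeping: collect the $r$-powers so that the first surviving low-regularity factor is recognized as an $\alpha^{s_1,k_1}$, the second as a $\beta^{s_2,k_2}$ or $r^{-1}\fb\fc^l\phi^{s_2-2l,k_2}$, and each factor arising from differentiating the denominator as an $r\beta^{s_i,k_i}$; and verify that the accumulated smooth factor $f$ still carries an $r^{-2}$ whenever $\phi$ appears, which is immediate because the base term carried $r^{-2}$ and no commutator removes $r$-weight from $f$. The null condition (at least one $\alpha$ factor) is inherited from the base case because differentiation never turns the distinguished $\alpha_0$ factor into a non-$\alpha$ one: $\pd_t$ and $\tilde Q$ and $\fd$ all map the $\alpha$ family into (sums over) the $\alpha^{s,k}$ family, and $\tg$ is harmless since it only acts near the horizon. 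The indices then satisfy $0\le j\le s+k$, $s_1+\cdots+s_{2+j}\le s+2$, $\max_i s_i\le s$, $k_1+\cdots+k_{2+j}\le k$ by counting the operators distributed. The main obstacle is the last layer: making the $\fa,\fb$ distribution precise without generating genuinely singular terms on the axis, i.e. justifying rigorously that one never produces something like $\fb^2(\pd_\theta\phi)$ but only the ``paired'' combinations $\fd^{l}$ acting on regular objects; this is exactly what the formalism of \S\ref{regularity_sec} is built to handle, so the proof reduces to citing that formalism correctly. \qed
\end{proof}
```
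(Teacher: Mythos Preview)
Your approach is the same as the paper's---reduce to the $\mathcal{N}_\phi$ argument and isolate the one new ingredient, the factor $r^{-1}\fb\phi$---but your treatment of that factor has a gap. You say $\fb$ ``simply rides along through the commutators'' and that the outcome is $r^{-1}\fb\fc^l\phi^{s_2-2l,k_2}$, but you do not explain why $\fb$ ends up on the \emph{outside}. When you distribute the angular part of $\tilde Q$ via Lemma~\ref{cl_raise_degree_lem}, the $\fd$ operators land on the \emph{left} of each factor, so what you actually produce on the special factor is $r^{-1}\fd^i\fb\phi^{s_2-i,k_2}$, not $r^{-1}\fb\fc^l\phi$. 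Getting from there to the form asserted in the lemma requires the step the paper supplies: split on the parity of $i$. If $i$ is odd, $\fd^i\fb=\pd_\theta\fc^{(i-1)/2}\fb\subset\pd_\theta\fc^{(i+1)/2}=\pd_\theta\fd^{i+1}$, so the factor becomes $r^{-1}\pd_\theta\fd^{i+1}\phi^{s_2-i,k_2}\in\beta^{s_2+1,k_2}$ and falls under the \emph{first} form. If $i=2l$ is even, $\fd^i\fb=\fc^l\fb$, and one invokes Lemma~\ref{fb_fcl_commutation_lem} ($\fc^l\fb\approx\fb\fc^l+\fc^l$) to commute $\fb$ to the outside, yielding the second form $r^{-1}\fb\fc^l\phi^{s_2-2l,k_2}$ plus lower-order terms already covered. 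Without this case split and the commutation lemma, your claim that the output matches the stated structure is unjustified.

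A minor slip: when a differential operator hits the denominator $(1+\phi)^{-1}$, the new factor produced is a derivative of $\phi$, not of $\psi$ as you wrote; it is then recognized as an $r\beta$ term via $r\cdot(r^{-1}\fd^l\phi^{\cdot,\cdot})$.
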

\begin{proof}
The proof is essentially the same as the proof of the previous lemma, with the one difference being the possible presence of the factor $r^{-1}\fb\phi$. If this is treated like the other factors, the result after applying the commutators is a factor of the form 
$$r^{-1}\fd^i\fb\phi^{s_2-i,k_2}.$$
If $i$ is odd, this term can be rewritten as $r^{-1}\pd_\theta \fd^{i+1}\phi^{s_2-i,k_2}=\beta^{s_1+1,k_2}$. This is at the level $s_2+1$, which is less than or equal to $s$, because $i$ is odd.

If instead $i$ is even, this term can be rewritten as $r^{-1}\fc^l\fb\phi^{s_2-2l,k_2}$, where $i=2l$. By Lemma \ref{fb_fcl_commutation_lem}, this is (up to lower order terms in $s$) equal to $r^{-1}\fb\fc^l\phi^{s_2-2l,k_2}$.

With this exceptional case having been addressed, the proof is complete. \qed
\end{proof}

\subsection{The strategy revisited}\label{nl_strategy_revisited_sec}

Now that the structures of $q^{-2}\tg^k\Gamma^s(q^2\mathcal{N}_\phi)$ and $q^{-2}\tg^k\tilde\Gamma^s(q^2\mathcal{N}_\psi)$ have been determined, the strategy employed by the few example lemmas in \S\ref{example_terms_sec} can be summarized in the following proposition, which explains how to establish the key estimate in a bootstrap argument.

\begin{proposition}\label{strategy_revisited_prop}
Suppose the following estimates hold.
$$(1+\phi)^{-1}\lesssim 1,$$
$$\int_R^\infty\int_0^\pi (r^{(p-1)/2}\alpha^s)^2 r^2\sin^5\theta d\theta dr \lesssim E_{p-1}^s(t),$$
$$||r\beta^s||_{L^\infty(\Sigma_t\{r>R\})}^2\lesssim E^{s+7}(t),$$
$$\int_R^\infty\int_0^\pi (\beta^s)^2r^2\sin^5\theta d\theta dr\lesssim E^s(t),$$
$$||r^{(p-1)/2+1}\alpha^s||_{L^\infty(\Sigma_t\cap\{r>R\})}^2\lesssim E_{p-1}^{s+7}(t).$$
Then
\begin{multline*}
\int_{\Sigma_t\cap\{r>R\}}r^{p+1}|q^{-2}\tg^k\Gamma^s(q^2\mathcal{N}_\phi)|^2 + \int_{\tilde{\Sigma}_t\cap\{r>R\}}r^{p+1}|q^{-2}\tg^k\tilde{\Gamma}^s(q^2\mathcal{N}_\psi)|^2 \\
\lesssim \left(E_{p-1}^s(t)E^{s/2+8}(t)+E^s(t)E_{p-1}^{s/2+8}(t)\right)\sum_{j\le s}(E^{s/2+8}(t))^j.
\end{multline*}
\end{proposition}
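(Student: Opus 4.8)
Looking at this proposition, the proof should follow the template established by the example lemmas in §\ref{example_terms_sec}, applied systematically using the structural decompositions from Lemmas \ref{Nphi_s_k_structure_lem} and \ref{Npsi_s_k_structure_lem}.

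\textbf{Proof proposal.} The plan is to expand $q^{-2}\tg^k\Gamma^s(q^2\mathcal{N}_\phi)$ and $q^{-2}\tg^k\tilde\Gamma^s(q^2\mathcal{N}_\psi)$ into a sum of products using the structures provided by Lemmas \ref{Nphi_s_k_structure_lem} and \ref{Npsi_s_k_structure_lem}, then estimate each product term by term. A generic term from $q^{-2}\tg^k\Gamma^s(q^2\mathcal{N}_\phi)$ has the form
$$\frac{f\,\alpha^{s_1,k_1}\,\beta^{s_2,k_2}\,(r\beta^{s_3,k_3})\cdots(r\beta^{s_{2+j},k_{2+j}})}{(1+\phi)^{j+1}},$$
with $s_1+\cdots+s_{2+j}\le s+2$, $\max_i s_i\le s$, $k_1+\cdots+k_{2+j}\le k$. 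The first step is to use the bound $(1+\phi)^{-1}\lesssim 1$ (the first hypothesis) to discard the denominator. Then, among the $2+j$ factors, at least one index $s_i$ is $\le (s+2)/2 \le s/2 + 1$; I split into two cases according to whether the $\alpha$-factor or one of the $\beta$-factors carries the ``high'' number of derivatives. The factor carrying few derivatives (index $\le s/2+1$, so after the $+7$ Sobolev loss the relevant order is $\le s/2+8$) is estimated in $L^\infty$ via the third or fifth hypothesis; the remaining low-order $(r\beta)$ factors ($j-1$ of them, each also of order $\le s/2+1$) are likewise estimated in $L^\infty$ by the third hypothesis, each contributing one power of $(E^{s/2+8}(t))^{1/2}$, which accounts for the $\sum_{j\le s}(E^{s/2+8}(t))^j$ factor. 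The one remaining factor, carrying the bulk of the derivatives, is kept inside the integral.

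\textbf{Carrying out the integral estimates.} For a $\mathcal{N}_\phi$-term, after pulling out the $L^\infty$ factors I am left with either $\int_{\Sigma_t\cap\{r>R\}} r^{p+1}\,(r^{(1)}\text{-weighted }\alpha^{s'})^2\cdot(\text{stuff})$ or the analogous expression with a $\beta^{s'}$ kept inside. Counting the $r$-weights: the product $\alpha^{s_1,k_1}\beta^{s_2,k_2}\prod(r\beta^{s_i,k_i})$ carries exactly enough weight so that, matching one $\alpha$ (which under the null condition always comes with its extra $r$-weight relative to the corresponding $\beta$) against one $\beta$ and the rest of the $r\beta$'s, the total is $r^{p+1}\cdot r^{?}$ reducing the kept integral to the form $\int r^{p-1}(\alpha^{s'})^2 r^2\sin^5\theta\,d\theta\,dr\lesssim E_{p-1}^{s'}(t)$ (second hypothesis) when the high factor is an $\alpha$, or $\int (\beta^{s'})^2 r^2\sin^5\theta\,d\theta\,dr\lesssim E^{s'}(t)$ (fourth hypothesis) when the high factor is a $\beta$ and the low one an $\alpha$ estimated in $L^\infty$ with the $r^{(p-1)/2+1}$ weight (fifth hypothesis). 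The passage between integrals over $\Sigma_t$ and $\tilde\Sigma_t$ (a gain of $r^4\sin^4\theta$ going to $\tilde\Sigma_t$) is handled exactly as in Lemmas \ref{example_Nphi_term_lem}; when a $\sin^4\theta$ is needed but the $\psi$-factor has the high index, rule (ii) of Lemma \ref{Nphi_s_k_structure_lem} together with Lemma \ref{gain_sin_lem} (applied to the high-order factor, which has at least $\max(0,2-\max_is_i)$ spare $\fd$-operators available since $\max_i s_i \le s$) supplies the missing angular powers. For $\mathcal{N}_\psi$-terms the analysis is identical, with the sole extra case being the factor $r^{-1}\fb\fc^l\phi^{s_2-2l,k_2}$; since this is regular on the axis by the formalism of §\ref{regularity_sec}, it is estimated just like an $r^{-1}\pd_\theta$-type $\beta$-factor (or $\alpha$-factor), and rule (ii) of Lemma \ref{Npsi_s_k_structure_lem} supplies the $r^{-2}$ needed whenever $\phi$ appears.

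\textbf{Main obstacle.} The bookkeeping is the real difficulty: one must verify that in \emph{every} case the $r$-weights and $\sin\theta$-weights balance correctly across the conversions between $\Sigma_t$ and $\tilde\Sigma_t$, and that the derivative counts line up so that the kept integral is one of exactly $E_{p-1}^{s'}$, $E^{s'}$ with $s'\le s$, while each $L^\infty$ factor lands at order $\le s/2+8$. The null condition (guaranteeing at least one $\alpha$ factor, hence the correct $r$-weight on one factor) and the structural rule relating appearances of $\psi$ (resp. $\phi$) to extra $\sin^2\theta$ (resp. $r^{-2}$) weights are precisely what make this balancing work; the proof amounts to checking that these two structural conditions, combined with $\max_i s_i \le s$ (which leaves spare $\fd$-operators on the top-order factor for Lemma \ref{gain_sin_lem}), suffice in all cases, exactly mirroring the worked examples in §\ref{example_terms_sec}. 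Once this case analysis is laid out, summing the finitely many terms — each bounded by $\big(E_{p-1}^s(t)E^{s/2+8}(t)+E^s(t)E_{p-1}^{s/2+8}(t)\big)(E^{s/2+8}(t))^{j}$ for its particular $j\le s$ — yields the stated estimate. $\qed$
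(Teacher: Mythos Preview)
Your proposal is correct and follows essentially the same route as the paper: decompose via the structure Lemmas \ref{Nphi_s_k_structure_lem}--\ref{Npsi_s_k_structure_lem}, split according to whether the high-derivative factor is an $\alpha$ or a $\beta$, put all low-order factors in $L^\infty$ and the single high-order factor in $L^2$, and use the $\sin^2\theta$/$r^{-2}$ structural rules together with Lemma \ref{gain_sin_lem} to balance weights across $\Sigma_t$ and $\tilde\Sigma_t$. Two small points: (i) the observation you need is not merely that \emph{one} index is $\le s/2+1$ but that \emph{all but one} are (which follows from $\sum s_i\le s+2$, $\max s_i\le s$); correspondingly there are $j+1$ low-order factors going to $L^\infty$, not $j-1$; (ii) for the exceptional factor $r^{-1}\fb\fc^l\phi^{s_2-2l,k_2}$ in the $\tilde\Sigma_t$ integral, the paper's $L^2$ estimate when $s_2$ is large uses the extra $\sin\theta$ powers in the $\tilde\Sigma_t$ volume form together with Theorem \ref{cl_embedding_thm}, not just axis-regularity.
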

\begin{remark}
The second and fourth assumptions are automatically true. The first, third, and fifth assumptions will be true in the context of the proof of the main theorem (Theorem \ref{main_thm}).
\end{remark}
\begin{proof}
Note that Lemmas \ref{Nphi_s_k_structure_lem} and \ref{Npsi_s_k_structure_lem} both had the requirements $s_1+...+s_{2+j}\le s+2$ and $\max_i s_i\le s$. This means there can be at most one  factor with up to $s$ derivatives and all of the remaining factors have at most $(s+2)/2=s/2+1$ derivatives. Since the integrals are taken over the range $r>R$, where $\tg=0$, we are free to ignore all $k$ indices. 

The strategy in this proof is rather simple. All of the factors that have at most $s/2+1$ derivatives are estimated in $L^\infty$, while the remaining factor with at most $s$ derivatives is estimated in $L^2$. The procedure then takes one of two possible directions, depending on whether the high derivative term is an $\alpha$ term or a $\beta$ term.

\textbf{First, we estimate the integral over $\Sigma_t$, using the form given in Lemma \ref{Nphi_s_k_structure_lem}.}
\begin{multline*}
\int_{\Sigma_t\cap\{r>R\}}r^{p+1}|q^{-2}\tg^k\Gamma^s(q^2\mathcal{N}_\phi)|^2 \\
\lesssim \sum_{0\le j\le s} \int_{\Sigma_t\cap\{r>R\}}r^{p+1}\left(f\alpha^s\beta^{s/2+1}(r\beta^{s/2+1})^j\right)^2 + \sum_{0\le j\le s} \int_{\Sigma_t\cap\{r>R\}}r^{p+1}\left(f\beta^s\alpha^{s/2+1}(r\beta^{s/2+1})^j\right)^2 .
\end{multline*}
Now,
\begin{align*}
\int_{\Sigma_t\cap\{r>R\}}r^{p+1}\left(f\alpha^s\beta^{s/2+1}(r\beta^{s/2+1})^j\right)^2
&\lesssim \int_{\Sigma_t\cap\{r>R\}}r^{p-1}\left(f\alpha^s(r\beta^{s/2+1})(r\beta^{s/2+1})^j\right)^2 \\
&\lesssim ||r\beta^{s/2+1}||_{L^\infty}^{2(j+1)}\int_{\Sigma_t\cap\{r>R\}}r^{p-1}(f\alpha^s)^2 \\
&\lesssim (E^{s/2+8}(t))^{j+1}\int_{\Sigma_t\cap\{r>R\}}r^{p-1}(f\alpha^s)^2 \\
&\lesssim E_{p-1}^s(t)(E^{s/2+8}(t))^{j+1}.
\end{align*}
The last step requires further justification, because the $\alpha$ term could possibly be a $\psi$ term. But in that case, Lemma \ref{Nphi_s_k_structure_lem} also states that $f$ has an additional factor of $\sin^2\theta$ (or in the case where $\alpha$ does not have exactly $s$ derivatives, this factor might be either $\sin\theta$ or $1$, but we can apply Lemma \ref{gain_sin_lem}--see the end of the proof of the example Lemma \ref{example_Nphi_term_lem}).

Also,
\begin{align*}
\int_{\Sigma_t\cap\{r>R\}}r^{p+1}\left(f\beta^s\alpha^{s/2+1}(r\beta^{s/2+1})^j\right)^2 
&\lesssim \int_{\Sigma_t\cap\{r>R\}}\left(f\beta^s(r^{(p+1)/2}\alpha^{s/2+1})(r\beta^{s/2+1})^j\right)^2 \\
&\lesssim ||r^{(p+1)/2}\alpha^{s/2+1}||_{L^\infty}^2||r\beta||_{L^\infty}^{2j}\int_{\Sigma_t\cap\{r>R\}}(f\beta^s)^2 \\
&\lesssim E_{p-1}^{s/2+8}(t)(E^{s/2+8}(t))^j\int_{\Sigma_t\cap\{r>R\}}(f\beta^s)^2 \\
&\lesssim E^s(t)E_{p-1}^{s/2+8}(t)(E^{s/2+8}(t))^j.
\end{align*}
The last step requires the same justification given in the preceeding calculation.

Combining both estimates, we conclude that 
$$\int_{\Sigma_t\cap\{r>R\}}r^{p+1}|q^{-2}\tg^k\Gamma^s(q^2\mathcal{N}_\phi)|^2 \lesssim \left(E_{p-1}^s(t)E^{s/2+8}(t)+E^s(t)E_{p-1}^{s/2+8}(t)\right)\sum_{j\le s}(E^{s/2+8}(t))^j.$$

\textbf{Next, we estimate the integral over $\tilde{\Sigma}_t$, using the forms given in Lemma \ref{Npsi_s_k_structure_lem}.} The first of the two forms given in Lemma \ref{Npsi_s_k_structure_lem} is very similar to the form given in Lemma \ref{Nphi_s_k_structure_lem}, and so we will not repeat the estimates in detail. The only difference is in the two final steps when concluding either 
$$\int_{\tilde{\Sigma}_t\cap\{r>R\}}r^{p-1}(f\alpha^s)^2\lesssim E_{p-1}^s(t)$$
or
$$\int_{\tilde{\Sigma}_t\cap\{r>R\}}(f\beta^s)^2\lesssim E^s(t).$$
In either case, if $\alpha^s$ or $\beta^s$ represents a $\phi$ term, then Lemma \ref{Npsi_s_k_structure_lem} also states that $f$ has an additional factor of $r^{-2}$.

The second of the two forms has the factor $r^{-1}\fb\fc^l\phi^{s_2-2l}$, which takes the place of the $\beta^{s_2}$ factor from the first form. We now show that it can be estimated the same way the $\beta^{s_2}$ factor was estimated.

For the case $s_2\le s/2+1$, we must estimate $r$ times the factor $r^{-1}\fb\fc^l\phi^{s_2-2l}$ in $L^\infty$. That is, using the Lemma \ref{low_order_infty_lem},
$$||\fb\fc^l\phi^{s/2+1-2l}||_{L^\infty}^2=||\fd^{2l+2}\phi^{s/2+1-2l}||_{L^\infty}^2 \lesssim E^{s/2+8}(t).$$
For the case where $s_2$ could be at most $s$, we use the fact that there are additional factors of $\sin\theta$ in the volume form for $\tilde{\Sigma}_t$ and again the fact that the function $f$ has a factor of $r^{-2}$. That is,
\begin{align*}
\int_{\tilde{\Sigma}_t\cap\{r>R\}}(fr^{-1}\fb\fc^l\phi^{s-2l})^2
&\lesssim \int_R^\infty\int_0^\pi(r^{-3}\sin^{-1}\theta\pd_\theta\fc^l\phi^{s-2l})^2r^6\sin^5\theta d\theta dr\\
&\lesssim \int_R^\infty\int_0^\pi(r^{-1}\pd_\theta \fc^l\phi^{s-2l})^2r^2\sin^3\theta d\theta dr\\
&\lesssim \int_R^\infty\int_0^\pi(r^{-1}\pd_\theta \phi^s)^2r^2\sin\theta d\theta dr \\
&\lesssim E^s(t).
\end{align*}
(In the second-to-last step, we used an estimate from Theorem \ref{cl_embedding_thm}.) Both of these estimates show that the $r^{-1}\fb\fc^l\phi^{s_2-2l}$ factor can be treated the same way as the $\beta^{s_2}$ factor.

We conclude that
$$\int_{\tilde{\Sigma}_t\cap\{r>R\}}r^{p+1}|q^{-2}\tg^k\tilde{\Gamma}^s(q^2\mathcal{N}_\psi)|^2 \lesssim \left(E_{p-1}^s(t)E^{s/2+8}(t)+E^s(t)E_{p-1}^{s/2+8}(t)\right)\sum_{j\le s}(E^{s/2+8}(t))^j.$$
This completes the proof. \qed
\end{proof}

\section{Statement and Proof of the Main Theorem}\label{main_thm_sec}

In this final section, we state and prove the main theorem. We begin in \S\ref{main_thm_statement_sec} with the precise statement of the main theorem, which completely describes the future asymptotic behavior of various energy norms as well as weighted derivatives of $\phi$ and $\psi$. Then the proof follows.

The proof is a bootstrap argument with the main bootstrap assumptions stated in \S\ref{bootstrap_assumptions_sec}. Then in \S\ref{improved_pointwise_estimates_sec}, the bootstrap assumptions are used to improve the pointwise estimates in Proposition \ref{infinity_prop} by removing the error terms from the right hand side. The resulting improved pointwise estimates play a crucial role in the remainder of the proof. Next, in \S\ref{recover_phi_ba_sec} the pointwise estimates are used to recover the first bootstrap assumption, which gives an upper bound on the factor $(1+\phi)^{-1}$ that appears in the nonlinear terms. The remainder of the proof is split into two parts. 

The first part (\S\ref{homogeneous_case_sec}) recovers the bootstrap assumptions only for the homogeneous norms $\mathring{E}^s(t)$, $\mathring{E}_p^s(t)$, etc. These norms satisfy slightly simpler estimates, because they are based on the commutator $\pd_t$, which commutes with the entire linear system. As a reminder, these estimates, which are given in Theorem \ref{p_o_thm}, are
$$\mathring{E}^s(t_2)\lesssim \mathring{E}^s(t_1)+\int_{t_1}^{t_2}\mathring{N}^s(t)dt,$$
$$\mathring{E}_p^s(t_2)+\int_{t_1}^{t_2}\mathring{B}_p^s(t)dt\lesssim \mathring{E}_p^s(t_1)+\int_{t_1}^{t_2}\mathring{N}_p^s(t)dt.$$

The second part (\S\ref{general_case_s_k_sec}) recovers the bootstrap assumptions for the norms $E^{s,k}(t)$, $E_p^{s,k}(t)$, etc. These norms satisfy slightly more complicated estimates, because they use other commutators that do not completely commute with the linear system. As a reminder, these estimates, which are given in Theorem \ref{p_s_k_thm}, are
$$E^{s,k}(t_2)\lesssim E^{s,k}(t_1)+\int_{t_1}^{t_2}B_{p'}^{s+2,k-1}(t)+\mathring{B}_1^{s+1}(t)+B_1^{s,k}(t)+N^{s,k}(t)dt,$$
$$E_p^{s,k}(t_2)+\int_{t_1}^{t_2}B_p^{s,k}(t)dt\lesssim E_p^{s,k}(t_1)+\int_{t_1}^{t_2}B_{p'}^{s+2,k-1}(t)+\mathring{B}_p^{s+1}(t)+N_p^{s,k}(t)dt.$$
In particular, the fact that these estimates depend on $\mathring{B}_p^{s+1}(t)$ means that this second part depends on the results proved in the first part.

\subsection{The main theorem statement}\label{main_thm_statement_sec}
We begin with the statement of the main theorem.
\begin{theorem}\label{main_thm}
Let
\begin{align*}
X&=A+A\phi, \\
Y&=B+A^2\psi.
\end{align*}
(In particular, the assumption $Y-B=O(\sin^4\theta)$ near the axis excludes any perturbations corresponding to a change in angular momentum.) 

Suppose the pair $(X,Y)$ is axisymmetric and satisfies the wave map system
\begin{align*}
X\Box_g X&=\pd^\alpha X\pd_\alpha X-\pd^\alpha Y\pd_\alpha Y, \\
X\Box_g Y&=2\pd^\alpha X\pd_\alpha Y,
\end{align*}
where $g$ is a Kerr metric with sufficiently small angular momentum $|a|/M$.

Define the energies
$$E^{\ul{n}}(t)=\mathring{E}^n(t)+\sum_{s+2k=n-1}E^{s,k}(t),$$
$$E_p^{\ul{n}}(t)=\mathring{E}_p^{n}(t)+\sum_{s+2k=n-1}E_p^{s,k}(t).$$
Then for $\delp,\delm>0$ sufficiently small, if the initial data for $(\phi,\psi)$ decay sufficiently fast as $r\rightarrow \infty$ and have size
\begin{equation}
I_0=E^{\ul{29}}(0)+E^{\ul{29}}_{2-\delp}(0)
\end{equation}
sufficiently small, then the following estimates hold for $t\ge 0$ (with $T=1+t$).

I) The energies satisfy
$$E^{\ul{29}}(t)\lesssim I_0$$
$$E^{\ul{29}}_{p\in[\delm,2-\delp]}(t)\lesssim I_0$$
$$E^{\ul{27}}_{p\in[1-\delp,2-\delp]}(t)\lesssim T^{p-2+\delp}I_0$$
$$E^{\ul{25}}_{p\in[\delm,2-\delp]}(t)\lesssim T^{p-2+\delp}I_0$$
$$\int_t^{\infty}E^{\ul{23}}_{p\in[\delm-1,\delm]}(\tau)d\tau\lesssim T^{p-2+\delp+1}I_0$$

II) For all $s,k$ such that $s+2k\le 28$, the following pointwise estimates hold.
\begin{multline*}
|r^{p+1}\bar{D}\fd^l\phi^{s-l,k}|^2+|r^pD\fd^l\phi^{s-l,k}|^2+|r^p\fd^l\phi^{s-l,k}|^2 \\
+|r^{p+3}\bar{D}\fd^l\psi^{s-l,k}|^2+|r^{p+2}D\fd^l\psi^{s-l,k}|^2+|r^{p+2}\fd^l\psi^{s-l,k}|^2 \\
\lesssim E_{2p}^{s+5,k+1}(t)+E_{2p}^{s+7,k}(t)
\end{multline*}

III) Together, (I) and (II) imply that if $s+2k\le 15$, for all $p\in [\delm/2,(2-\delp)/2]$,
\begin{multline*}
|r^{p+1}\bar{D}\fd^l\phi^{s-l,k}|+|r^pD\fd^l\phi^{s-l,k}|+|r^p\fd^l\phi^{s-l,k}| \\
+|r^{p+3}\bar{D}\fd^l\psi^{s-l,k}|+|r^{p+2}D\fd^l\psi^{s-l,k}|+|r^{p+2}\fd^l\psi^{s-l,k}| \\
\lesssim T^{(2p-2+\delp)/2}I_0^{1/2}
\end{multline*}
and additionally for $p\in [(\delm-1)/2,\delm/2]$,
\begin{multline*}
\int_t^\infty |r^{p+1}\bar{D}\fd^l\phi^{s-l,k}|+|r^pD\fd^l\phi^{s-l,k}|+|r^p\fd^l\phi^{s-l,k}| \\
+\int_t^\infty |r^{p+3}\bar{D}\fd^l\psi^{s-l,k}|+|r^{p+2}D\fd^l\psi^{s-l,k}|+|r^{p+2}\fd^l\psi^{s-l,k}| \\
\lesssim T^{(2p-2+\delp)/2+1}I_0^{1/2}.
\end{multline*}
The final estimate should be interpreted as saying that $|r^{(\delm-1)/2+1}\bar{D}\fd^l\phi^{s-l,k}|$, $|r^{(\delm-1)/2}D\fd^l\phi^{s-l,k}|$, $|r^{(\delm-1)/2}\fd^l\phi^{s-l,k}|$, $|r^{(\delm-1)/2+3}\bar{D}\fd^l\psi^{s-l,k}|$, $|r^{(\delm-1)/2+2}D\fd^l\psi^{s-l,k}|$, and $|r^{(\delm-1)/2+2}\fd^l\psi^{s-l,k}|$ decay like $T^{(\delm-3+\delp)/2}$ in a weak sense.

\end{theorem}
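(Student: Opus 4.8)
The proof is a continuity/bootstrap argument. Fix $\delm,\delp>0$ small and let $[0,T_\ast)$ be the maximal interval on which the solution to the wave map system exists (local well-posedness for fast-decaying data being standard) and on which a set of bootstrap assumptions holds: these are the five estimates of conclusion (I) with $I_0$ replaced by $C_0I_0$ for a large absolute constant $C_0$, together with $\|(1+\phi)^{-1}\|_{L^\infty}\le 2$. By smallness of $I_0$ these hold on a nontrivial interval, and the entire task is to reprove them with the constant $C_0I_0$ improved to $O(I_0)+O(I_0^{3/2})$; for $I_0$ small this makes the bootstrap set open and closed in $[0,T_\ast)$ and forces $T_\ast=\infty$.

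The first substantive step is to upgrade the pointwise estimates of Proposition \ref{infinity_prop}. Its right-hand side contains the error terms $\int r^{2p}(\Box_g\phi^{s+5,k})^2$ and $\int r^{2p}(\Box_{\tilde{g}}\psi^{s+5,k})^2$; by Proposition \ref{N_identities_prop} these split into a linear part built from $\mathcal{L}_\phi,\mathcal{L}_\psi$, absorbed into the bulk norms exactly as in Lemma \ref{p_L_a_lem} via the $a^2/M^2$ smallness, and a nonlinear part built from $\mathcal{N}_\phi,\mathcal{N}_\psi$, controlled by Proposition \ref{strategy_revisited_prop} and hence quadratic in the energies. Removing these error terms yields the clean pointwise bounds of conclusion (II) for $s+2k\le 28$. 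Evaluating the lowest-order, $p=0$ case bounds $\|\phi\|_{L^\infty}^2$ by a low-order energy, which is $\lesssim C_0I_0$ and so recovers $\|(1+\phi)^{-1}\|_{L^\infty}\le 2$ with room to spare; this is Proposition \ref{strategy_revisited_prop}'s first hypothesis.

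The heart of the argument is the energy/decay hierarchy. The key structural fact is that $B_p$ carries essentially the weight of $E_{p-1}$ away from the photon sphere, while the trapped region is controlled through the homogeneous $\pd_t$-norms (this is built into Theorems \ref{p_o_thm} and \ref{p_s_k_thm}). Consequently the master inequalities $E_p(t_2)+\int_{t_1}^{t_2}B_p\lesssim E_p(t_1)+\int N_p$ give $\int_{t_1}^{t_2}E_{p-1}(\tau)\,d\tau\lesssim E_p(t_1)+(\text{nonlinear})$, and a dyadic-time-slab pigeonhole argument converts this into decay $E_{p-1}(t)\lesssim T^{-1}$ relative to $E_p$. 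Starting from boundedness of $E_{2-\delp}$ and iterating down the admissible range produces $E_{\delm}(t)\lesssim T^{\delm-2+\delp}I_0$, and one further step yields the integrated (weak) decay of $E_{\delm-1}$. The regularity count is dictated by two competing effects: Theorem \ref{p_s_k_thm} shows that the estimate for $E^{s,k}$ consumes $\mathring B_1^{s+1}$ and $B_{p'}^{s+2,k-1}$, while Proposition \ref{strategy_revisited_prop} shows each nonlinear term costs about $s/2+8$ derivatives in its $L^\infty$ factor; together these force one to begin at regularity level $29$, drop to $27$ and then $25$ as the $p$-range and decay rates improve, and land at $23$ for the weak decay, the inequality $29/2+8<23$ guaranteeing the nonlinear feedback never outstrips the available regularity. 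One closes the homogeneous norms $\mathring E^s,\mathring E_p^s$ first (Theorem \ref{p_o_thm}, $\pd_t$ commuting with the whole linear system), then $E^{s,0}=E^s$ (Theorem \ref{p_s_thm}), and finally $E^{s,k}$ for $k\ge1$ by induction on $k$ (Theorem \ref{p_s_k_thm}), using the favorable sign of the $\tg$-commutator near the horizon from Lemmas \ref{dr_comm_2_lem} and \ref{p_s_k_L_a_lem}.

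Once (I) is established the bootstrap closes and $T_\ast=\infty$; (II) is the upgraded Proposition \ref{infinity_prop} already proved; and (III) follows by inserting the decay rates of (I) into (II): in the wave zone $r\approx t$ an $r^{2p}$ weight becomes a $T^{2p}$ weight, so $E_{2p}^{s+5,k+1}(t)+E_{2p}^{s+7,k}(t)\lesssim T^{2p-2+\delp}I_0$ for $s+2k\le15$, while for the bottom range $p\in[(\delm-1)/2,\delm/2]$ one integrates in time and invokes the weak-decay bound for $E^{\ul{23}}$. I expect the principal obstacle to be the joint bookkeeping of the two hierarchies — the vertical loss of derivatives in the commuted energy estimates and the horizontal loss of one power of $p$ per decay step — together with checking that every term produced by Lemmas \ref{Nphi_s_k_structure_lem} and \ref{Npsi_s_k_structure_lem} genuinely carries the $r$-weight required by the null condition and the powers of $\sin\theta$ required by the axis condition, so that it is absorbed by $B_p$ rather than merely by $E_p$; routing the trapped-frequency contributions, which cannot see the $r^{p-1}$ bulk weight, through the homogeneous $\pd_t$-norms is the delicate point on which closure of the decay hierarchy depends.
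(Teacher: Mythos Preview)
Your proposal is correct and follows essentially the same approach as the paper: a bootstrap argument in which one first upgrades Proposition~\ref{infinity_prop} to the clean pointwise estimates (via a nested bootstrap handling the $\Box_g\phi^{s+5,k}$ error terms), recovers the $|\phi|$ bound, closes the homogeneous $\pd_t$-commuted norms via Theorem~\ref{p_o_thm}, and then closes the $E^{s,k}$ norms by a finite induction on $k$ using Theorem~\ref{p_s_k_thm}, with decay extracted by the mean-value-theorem/dyadic pigeonhole from the $p$-hierarchy and the nonlinear terms controlled via Proposition~\ref{strategy_revisited_prop} and the weak-decay of $E^{\ul{23}}_{\delm-1}$. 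The only cosmetic deviation is that the paper does not separately invoke Theorem~\ref{p_s_thm} for the $k=0$ case but treats it as the base of the induction in Theorem~\ref{p_s_k_thm} (the inductive hypothesis being vacuous there); this is immaterial.
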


\subsection{Bootstrap assumptions}\label{bootstrap_assumptions_sec}

We begin the proof of Theorem \ref{main_thm} by making the following bootstrap assumptions.
\begin{align}
|\phi|&\le 1/2, \label{phi_ba} \\
E^{\ul{29}}(t)&\le C_bI_0, \label{energy_ba} \\
E^{\ul{29}}_{p\in[\delm,2-\delp]}(t)&\le C_b I_0, \label{_29_ba} \\
E^{\ul{25}}_{\delm}(t)&\le C_b T^{\delm-2+\delp}I_0, \label{_25_ba} \\
\int_t^\infty E^{\ul{23}}_{\delm-1}(\tau)d\tau &\le C_b T^{(\delm-1)-2+\delp+1}I_0. \label{_23_ba}
\end{align}
Note that, with the exception of the highest order energies, these bootstrap assumptions are consistent with the general principle that $\mathring{E}_p^s(t)$ and $E_p^{s,k}(t)$ behave like $T^{p-2+\delp}$, which the reader should keep in mind throughout the proof of the main theorem.

\subsection{Improved pointwise estimates}\label{improved_pointwise_estimates_sec}

The pointwise estimates from Proposition \ref{infinity_prop} are essential to the argument of the proof of the main theorem. But for the sake of clarity, we first remove the error terms from these estimates and summarize them in the following lemma.
\begin{lemma}\label{simplified_pointwise_lemma}
In the context of the bootstrap assumptions provided in \S\ref{bootstrap_assumptions_sec}, the following pointwise estimates hold for $s+2k\le 28$ and all $p$ in any bounded range.

For $r\ge r_H$,
\begin{multline}
|r^{p+1}\bar{D}\fd^l\phi^{s-l,k}|^2+|r^pD\fd^l\phi^{s-l,k}|^2+|r^p\fd^l\phi^{s-l,k}|^2 \\
+|r^{p+3}\bar{D}\fd^l\psi^{s-l,k}|^2+|r^{p+2}D\fd^l\psi^{s-l,k}|^2+|r^{p+2}\fd^l\psi^{s-l,k}|^2 \\
\lesssim E_{2p}^{s+5,k+1}(t)+E_{2p}^{s+7,k}(t)
\end{multline}
and for $r\ge r_0>r_H$,
\begin{equation}
|rD\fd^l\phi^{s-l,k}|^2+|r^3D\fd^l\psi^{s-l,k}|^2\lesssim E^{s+5,k+1}(t)+E^{s+7,k}(t)
\end{equation}
These are the same as the estimates from Proposition \ref{infinity_prop}, except that the error terms have been removed.
\end{lemma}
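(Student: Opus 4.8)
## Proof Proposal

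The plan is to derive Lemma~\ref{simplified_pointwise_lemma} directly from Proposition~\ref{infinity_prop} by showing that, in the presence of the bootstrap assumptions \eqref{phi_ba}--\eqref{_23_ba}, the two error terms appearing on the right-hand side of Proposition~\ref{infinity_prop},
\[
\int_{\Sigma_t}r^{2p}(\Box_g\phi^{s+5,k})^2 \qquad\text{and}\qquad \int_{\tilde\Sigma_t}r^{2p}(\Box_{\tilde g}\psi^{s+5,k})^2
\]
(respectively their versions restricted to $\{r>r_0\}$ with weight $1$), are themselves bounded by the energy norms already present on the right-hand side, namely $E_{2p}^{s+5,k+1}(t)+E_{2p}^{s+7,k}(t)$ (resp.\ $E^{s+5,k+1}(t)+E^{s+7,k}(t)$). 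Once this absorption is established, the inequalities of Proposition~\ref{infinity_prop} collapse to exactly the stated estimates.

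First I would decompose $\Box_g\phi^{s+5,k}$ using the equations of motion. Writing $\phi^{s+5,k}=\tg^k\Gamma^{s+5}\phi$ and applying the commutation identities from Lemma~\ref{dr_comm_2_lem} together with $\Box_g\phi=\mathcal{L}_\phi+\mathcal{N}_\phi$, one obtains
\[
\Box_g\phi^{s+5,k}=\mathcal{L}_{\phi^{s+5,k}}+\big(q^{-2}[q^2\Box_g,\tg^k]\phi^{s+5}\big)+\big(q^{-2}\tg^k\Gamma^{s+5}(q^2\mathcal{L}_\phi)-\mathcal{L}_{\phi^{s+5,k}}\big)+q^{-2}\tg^k\Gamma^{s+5}(q^2\mathcal{N}_\phi),
\]
and analogously for $\psi$. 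The linear pieces $\mathcal{L}_{\phi^{s+5,k}}$, the commutator remainder with $\mathcal{L}_\phi$, and the $\tg$-commutator term are all estimated pointwise by at most $(s+7)$ derivatives of $(\phi,\psi)$ times coefficients that are $O(a/M)\cdot r^{-1}\langle\text{first derivatives}\rangle$ near $i^0$ (using Lemma~\ref{small_a_quantities_lem} and the structure already exploited in the estimate \eqref{mathcalL_p_estimate} inside Lemma~\ref{p_L_a_lem}); integrating $r^{2p}$ times their squares gives $\lesssim E_{2p}^{s+7,k}(t)$ (or $\lesssim E^{s+7,k}(t)$ in the $r>r_0$ case), which is already on the right-hand side. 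Here one uses that $s+5\le 28$ so that all these terms live at total order $\le 28$, where the bootstrap energies are finite. The nonlinear term $q^{-2}\tg^k\Gamma^{s+5}(q^2\mathcal{N}_\phi)$ is handled by Proposition~\ref{strategy_revisited_prop}, whose hypotheses are verified from the (as yet unimproved) Proposition~\ref{infinity_prop} and the bootstrap assumptions; the output is a product of energy norms of the schematic form $\big(E_{p-1}^{s+5}E^{s/2+8}+\dots\big)$, each factor small by smallness of $I_0$, hence absorbable into a constant times $E_{2p}^{s+7,k}(t)$. A subtlety to note is the loss in the number of derivatives: Proposition~\ref{infinity_prop} already has a $+5$ or $+7$ shift, so after this $\Box$-substitution step we land at total order $\le 28$, exactly the ceiling used in the statement $s+2k\le 28$; I would make this bookkeeping explicit to confirm nothing exceeds the level controlled by $E^{\ul{29}}$.

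The main obstacle I expect is handling the nonlinear error terms near the photon sphere and near the event horizon, where the pointwise $L^\infty$ control of ``first derivatives times $r$'' degenerates (the frame $\{L,\pd_r,r^{-1}\pd_\theta\}$ ceases to be uniformly good, and trapping prevents controlling $L\phi^s$ by the bulk). However, the $\Box_g\phi^{s+5,k}$ terms we must bound are \emph{volume integrals} over all of $\Sigma_t$ (or over $\{r>r_0\}$), not pointwise quantities, so trapping is not actually an obstruction: a degenerate bulk norm $B_p^{s,k}(t)$ is majorized by the (non-degenerate) energy $E_p^{s,k}(t)$ via the trivial inclusion, and for the nonlinear integrals Proposition~\ref{strategy_revisited_prop} is designed precisely to route one factor through $L^\infty$ on $\{r>R\}$ and handle the compact region $\{r\le R\}$ by finitely many derivatives in $L^2$. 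So the real work is the careful term-by-term verification that every summand in $q^{-2}\tg^k\Gamma^{s+5}(q^2\mathcal{N}_\phi)$ and $q^{-2}\tg^k\tilde\Gamma^{s+5}(q^2\mathcal{N}_\psi)$ falls under the structural forms of Lemmas~\ref{Nphi_s_k_structure_lem}--\ref{Npsi_s_k_structure_lem} and hence under Proposition~\ref{strategy_revisited_prop}, plus the trivial remark that on the compact set $\{r\le R\}$ the error integrals are controlled by finitely many derivatives and the bootstrap assumptions with constants absorbed by smallness of $I_0$. I would conclude by collecting all pieces: error terms $\lesssim E_{2p}^{s+7,k}(t)+\big(\text{small}\big)\cdot E_{2p}^{s+7,k}(t)$, absorb into the right-hand side of Proposition~\ref{infinity_prop}, and read off exactly the two displayed inequalities of the lemma.
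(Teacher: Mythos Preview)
Your overall strategy---reduce to Proposition~\ref{infinity_prop} and absorb the $\Box$-error integrals by expanding $\Box_g\phi^{s+5,k}$ via the equations into linear pieces (bounded directly by the energies) and nonlinear pieces (bounded by the structural Lemmas~\ref{Nphi_s_k_structure_lem}--\ref{Npsi_s_k_structure_lem})---is the same as the paper's. The linear discussion and the derivative bookkeeping are fine.

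The gap is in your handling of the nonlinear piece. You write that the hypotheses of Proposition~\ref{strategy_revisited_prop} ``are verified from the (as yet unimproved) Proposition~\ref{infinity_prop} and the bootstrap assumptions.'' But the $L^\infty$ hypotheses of Proposition~\ref{strategy_revisited_prop} are precisely pointwise bounds of the form $\|r\beta^{s'}\|_{L^\infty}^2\lesssim E^{s'+7}$ and $\|r^{(p-1)/2+1}\alpha^{s'}\|_{L^\infty}^2\lesssim E_{p-1}^{s'+7}$---i.e.\ the conclusion of the lemma you are proving. If you instead feed in the unimproved Proposition~\ref{infinity_prop}, its right-hand side carries the very same error integrals $\int r^{2p}(\Box_g\phi^{\cdot})^2$ that you are trying to control, so the argument is circular as written. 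You never explain how this loop is closed.

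The paper closes it by an explicit \emph{nested bootstrap}: one assumes the improved pointwise estimates hold with a provisional constant $C_b$, uses them to bound the $L^\infty$ factors in the nonlinear terms, and finds that the error integrals are controlled by $\sum_{j}(C_b^2 I_0)^j\bigl(E_{2p}^{s+5,k+1}+E_{2p}^{s+7,k}\bigr)$; taking $I_0$ small enough that $C_b^2 I_0\lesssim 1$ then recovers the bootstrap with a constant independent of $C_b$. Your proposal needs either this nested bootstrap, or an equivalent device (e.g.\ an induction on the derivative level of the $L^\infty$ factor), spelled out explicitly. Without it, the step ``hence absorbable into a constant times $E_{2p}^{s+7,k}(t)$'' is not justified.
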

\begin{proof} 
Recall that, according to Proposition \ref{infinity_prop}, for $r\ge r_H$,
\begin{multline*}
|r^{p+1}\bar{D}\fd^l\phi^{s-l,k}|^2+|r^pD\fd^l\phi^{s-l,k}|^2+|r^p\fd^l\phi^{s-l,k}|^2 \\
+|r^{p+3}\bar{D}\fd^l\psi^{s-l,k}|^2+|r^{p+2}D\fd^l\psi^{s-l,k}|^2+|r^{p+2}\fd^l\psi^{s-l,k}|^2 \\
\lesssim E_{2p}^{s+5,k+1}(t)+E_{2p}^{s+7,k}(t)+\int_{\Sigma_t}r^{2p}(\Box_g\phi^{s+5,k})^2+\int_{\tilde\Sigma_t}r^{2p}(\Box_{\tilde{g}}\psi^{s+5,k})^2
\end{multline*}
and for $r\ge r_0>r_H$,
\begin{multline*}
|rD\fd^l\phi^{s-l,k}|^2+|r^3D\fd^l\psi^{s-l,k}|^2 \\
\lesssim E^{s+5,k+1}(t)+E^{s+7,k}(t)+\int_{\Sigma_t\cap\{r>r_0\}}(\Box_g\phi^{s+5,k})^2+\int_{\tilde\Sigma_t\cap\{r>r_0\}}(\Box_{\tilde{g}}\psi^{s+5,k})^2.
\end{multline*}
To prove the lemma, it suffices to show that
$$\int_{\Sigma_t}r^{2p}(\Box_g\phi^{s+5,k})^2+\int_{\tilde\Sigma_t}r^{2p}(\Box_{\tilde{g}}\psi^{s+5,k})^2 \lesssim E_{2p}^{s+5,k+1}(t)+E_{2p}^{s+7,k}(t)$$
and
$$\int_{\Sigma_t\cap\{r>r_0\}}(\Box_g\phi^{s+5,k})^2+\int_{\tilde\Sigma_t\cap\{r>r_0\}}(\Box_{\tilde{g}}\psi^{s+5,k})^2 \lesssim E^{s+5,k+1}(t)+E^{s+7,k}(t).$$
To do this, we use the equations for $\Box_g\phi^{s+5,k}$ and $\Box_{\tilde{g}}\psi^{s+5,k}$. This will result in a number of linear and nonlinear terms. While the linear terms can be estimated directly by the energy norms, the nonlinear terms will again require the use of pointwise estimates. For this reason, the proof of the lemma requires a nested bootstrap argument.

The bootstrap assumptions for this argument are that for $r\ge r_H$,
\begin{multline}\label{infinity_inner_Ep_ba}
|r^{p+1}\bar{D}\fd^l\phi^{s-l,k}|^2+|r^pD\fd^l\phi^{s-l,k}|^2+|r^p\fd^l\phi^{s-l,k}|^2 \\
+|r^{p+3}\bar{D}\fd^l\psi^{s-l,k}|^2+|r^{p+2}D\fd^l\psi^{s-l,k}|^2+|r^{p+2}\fd^l\psi^{s-l,k}|^2 \\
\le C_b\left(E_{2p}^{s+5,k+1}(t)+E_{2p}^{s+7,k}(t)\right)
\end{multline}
and for $r\ge r_0>r_H$,
\begin{equation}\label{infinity_inner_E_ba}
|rD\fd^l\phi^{s-l,k}|^2+|r^3D\fd^l\psi^{s-l,k}|^2\le C_b \left(E^{s+5,k+1}(t)+E^{s+7,k}(t) \right).
\end{equation}

We use one representative example to illustrate the bootstrap argument. Consider the term 
$$\frac{L\phi^s\pd_r\phi+L\phi\pd_r\phi^s}{1+\phi},$$
which shows up in the equation for $\Box_g\phi^s$ (see Proposition \ref{N_identities_prop}). We estimate this term as follows.
\begin{align*}
\int_{\Sigma_t}r^{2p}\left(\frac{L\phi^s\pd_r\phi+L\phi\pd_r\phi^s}{1+\phi}\right)^2
\lesssim \int_{\Sigma_t}r^{2p}(L\phi^s)^2||\pd_r\phi||^2_{L^\infty(\Sigma_t)} +\int_{\Sigma_t}r^{2p-2}(\pd_r\phi^s)^2||rL\phi||^2_{L^\infty(\Sigma_t)}.
\end{align*}
Then  we apply the bootstrap assumption (\ref{infinity_inner_Ep_ba}) with $p=0$ to estimate the $L^\infty$ norms.
\begin{multline*}
\int_{\Sigma_t}r^{2p}\left(\frac{L\phi^s\pd_r\phi+L\phi\pd_r\phi^s}{1+\phi}\right)^2 \\
\lesssim \int_{\Sigma_t}r^{2p}(L\phi^s)^2C_b\left(E_0^{5,1}(t)+E_0^{7,0}(t)\right) +\int_{\Sigma_t}r^{2p-2}(\pd_r\phi^s)^2C_b\left(E_0^{5,1}(t)+E_0^{7,0}(t)\right).
\end{multline*}
(In particular, since we only needed to use the $p=0$ norms, this estimate is not particularly delicate.) Next, we use the bootstrap assumption (\ref{_29_ba}) for the main theorem to estimate the energy norms.
$$\int_{\Sigma_t}r^{2p}\left(\frac{L\phi^s\pd_r\phi+L\phi\pd_r\phi^s}{1+\phi}\right)^2 
\lesssim \int_{\Sigma_t}r^{2p}(L\phi^s)^2C_b(C_bI_0) +\int_{\Sigma_t}r^{2p-2}(\pd_r\phi^s)^2C_b(C_bI_0).$$
And finally, we estimate the remaining integrated quantities by the energy norm $E_{2p}^{s,0}(t)$.
$$\int_{\Sigma_t}r^{2p}\left(\frac{L\phi^s\pd_r\phi+L\phi\pd_r\phi^s}{1+\phi}\right)^2 \lesssim C_b^2I_0E_{2p}^{s,0}(t).$$
As a side note, this procedure will be repeatedly used in the remainder of the proof of the main theorem.

By repeating the procedure in the above example for all the different terms that arise in the equations for $\Box_g\phi^{s+5,k}$ and $\Box_{\tilde{g}}\psi^{s+5,k}$, we eventually conclude that
$$\int_{\Sigma_t}r^{2p}(\Box_g\phi^{s+5,k})^2+\int_{\tilde\Sigma_t}r^{2p}(\Box_{\tilde{g}}\psi^{s+5,k})^2 \lesssim \sum_{j=0}^{s+5+k}(C_b^2I_0)^j\left( E_{2p}^{s+5,k+1}(t)+E_{2p}^{s+7,k}(t)\right)$$
and
$$\int_{\Sigma_t\cap\{r>r_0\}}(\Box_g\phi^{s+5,k})^2+\int_{\tilde\Sigma_t\cap\{r>r_0\}}(\Box_{\tilde{g}}\psi^{s+5,k})^2 \lesssim \sum_{j=0}^{s+5+k}(C_b^2I_0)^j\left(E^{s+5,k+1}(t)+E^{s+7,k}(t)\right).$$
(The exponent $j$ corresponds to the number of times a differential operator acts on the denominator $1+\phi$--see Lemmas \ref{Nphi_s_k_structure_lem} and \ref{Npsi_s_k_structure_lem}. It can also be zero because of the presence of linear terms.) By taking $I_0$ sufficiently small so that $C_b^2I_0\lesssim 1$, the dependence on the bootstrap constant $C_b$ can be removed. The resulting estimates can be used together with Proposition \ref{infinity_prop} to recover the bootstrap assumptions (\ref{infinity_inner_Ep_ba}-\ref{infinity_inner_E_ba}). This completes the proof of the lemma. \qed
\end{proof}
The conclusion of this lemma is the same as the statement of part (II) of the main theorem.

\begin{remark}
Lemma \ref{simplified_pointwise_lemma}, which is a simplified version of Proposition \ref{infinity_prop} in the sense that there are no error terms on the right hand side of any of the estimates in Lemma \ref{simplified_pointwise_lemma}, will be used in the remainder of the proof of the main theorem as a replacement for Proposition \ref{infinity_prop}.
\end{remark}

\subsection{Recovering the bootstrap assumption for $|\phi|$}\label{recover_phi_ba_sec}

With the improved pointwise estimates, we can now recover the bootstrap assumption (\ref{phi_ba}) for $|\phi|$. The pointwise estimates imply that
$$|\phi|^2\lesssim E_0^{5,1}(t)+E_0^{7,0}(t).$$
The bootstrap assumption (\ref{_29_ba}) implies that
$$E_0^{5,1}(t)+E_0^{7,0}(t)\lesssim C_b I_0.$$
Thus,
$$|\phi|^2\lesssim C_b I_0.$$
So as long as $I_0$ is sufficiently small, this guarantees that $|\phi|$ is also sufficiently small. This fact allows us to recover the bootstrap assumtion (\ref{phi_ba}).

\subsection{The highest order (homogeneous) case}\label{homogeneous_case_sec}

As explained in the introduction of this section, this is the point where we turn to the first of two parts of the remainder of the proof. In particular, we will recover the bootstrap assumptions (\ref{energy_ba}-\ref{_23_ba}) for the homogeneous norms $\mathring{E}^s(t)$, $\mathring{E}_p^s(t)$, etc.

To begin, in \S\ref{part_1_refined_estimates_sec} we prove refined estimates for the nonlinear norms $\mathring{N}^s(t)$ and $\mathring{N}_p^s(t)$. These estimates constitute the crucial step of the proof, because they handle the nonlinear terms near $i^0$ using pointwise estimates sharply. The remainder of \S\ref{homogeneous_case_sec} simply applies these estimates to recover the bootstrap assumptions for the homogeneous norms. In \S\ref{part_1_E_boundedness_sec} and \S\ref{part_1_Ep_boundedness_sec}, the bootstrap assumptions (\ref{energy_ba}) and (\ref{_29_ba}) at the highest level $s=29$ are recovered. Then in \S\ref{part_1_decay_sec}, a decay lemma is proved and used to recover the bootstrap assumption (\ref{_25_ba}) at the level $s=25$. Finally, in \S\ref{part_1_weak_decay_sec}, the bootstrap assumption (\ref{_23_ba}) at the level $s=23$, which assumes a weaker form of decay, is recovered.

\subsubsection{Refined estimates for $\mathring{N}^s(t)$ and $\mathring{N}_p^s(t)$}\label{part_1_refined_estimates_sec}

The pointwise estimates given in Lemma \ref{simplified_pointwise_lemma} allow us to provide refined estimates for the nonlinear error terms. \textbf{This is the crucial step of the proof.}

\begin{lemma}\label{mr_refined_nl_lem}
In the context of the bootstrap assumptions provided in \S\ref{bootstrap_assumptions_sec}, if $s\le 29$, then
$$\mathring{N}^s(t) \lesssim (\mathring{E}^s(t))^{1/2}\left((\mathring{E}^s(t))^{1/2}(E_{\delm-1}^{\ul{22}}(t))^{1/2}+(\mathring{E}^s_{1-\delm}(t))^{1/2}(E^{\ul{22}}_{\delm-1}(t))^{1/2}\right),$$
$$\mathring{N}_p^s(t) \lesssim \mathring{E}^s(t)B_p^{\ul{22}}(t)+\mathring{B}_p^s(t)E^{\ul{22}}(t)+\mathring{E}^s_{p'}(t)(E_{p''}^{\ul{22}}(t))^{1/2}.$$
\end{lemma}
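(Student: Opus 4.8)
The plan is to read $\pd_t^s\mathcal N_\phi$ and $\pd_t^s\mathcal N_\psi$ as the $k=0$ specializations of the structured nonlinear quantities $q^{-2}\tg^k\Gamma^s(q^2\mathcal N_\phi)$ and $q^{-2}\tg^k\tilde\Gamma^s(q^2\mathcal N_\psi)$ in which $\Gamma^s=\pd_t^s$ (so there are no $Q$- or $\tg$-factors, which is exactly why $\mathring N^s$, $\mathring N^s_p$ obey simpler estimates). Lemmas \ref{Nphi_s_k_structure_lem} and \ref{Npsi_s_k_structure_lem} then write each term as $f\,\alpha^{s_1}\beta^{s_2}(r\beta^{s_3})\cdots(r\beta^{s_{2+j}})/(1+\phi)^{j+1}$ — together with the $r^{-1}\fb\fc^l\phi$-variant for $\mathcal N_\psi$ — where $f$ is smooth and bounded, at least one factor is of type $\alpha$ (the null condition), $\max_i s_i\le s$ while $s_1+\dots+s_{2+j}\le s+2$, so at most one factor sits at the top level $s$ and every other factor is at level $\le(s+2)/2\le 15$, and $f$ carries an extra $\sin^2\theta$ (resp. $r^{-2}$) whenever $\psi$ (resp. $\phi$) occurs. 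The ingredients to be combined are: the bound $(1+\phi)^{-1}\lesssim 1$ recovered in \S\ref{recover_phi_ba_sec}; the simplified pointwise estimates of Lemma \ref{simplified_pointwise_lemma}, which control the $L^\infty$ norm of a first derivative of $\fd^l\phi^{s'-l}$ or $\fd^l\psi^{s'-l}$, with any $r$-weight, by $E^{\ul n}_{2p}$ at level $n=s'+7$ (hence $n\le 22$ for $s'\le 15$); and the main-theorem bootstrap assumptions \eqref{phi_ba}--\eqref{_23_ba}, used only to see that the extra factors $E^{\ul{22}}(t)\lesssim I_0$ arising when $j>0$ are harmlessly small.

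For the estimate on $\mathring N^s_p(t)$ I will split every integral into the wave zone $\{r>R\}$ and the complementary compact-$r$ region. On $\{r>R\}$ the bound is Proposition \ref{strategy_revisited_prop} (with its $E_{p-1}$-norms replaced by the comparable $B_p$-norms, legitimate because $R>r_{trap}$ so the photon-sphere degeneracy is invisible there): its hypotheses are precisely the $L^2$/$L^\infty$ statements supplied by Lemma \ref{simplified_pointwise_lemma} and $(1+\phi)^{-1}\lesssim 1$, its conclusion gives $\mathring E^s(t)B_p^{\ul{22}}(t)+\mathring B^s_p(t)E^{\ul{22}}(t)$ once the factors $(E^{\ul{22}})^j\lesssim 1$ are absorbed, and the $\sin^2\theta$-bookkeeping needed when one passes between $\Sigma_t$- and $\tilde\Sigma_t$-integrals (the $\tilde\Sigma_t$ volume form carrying $r^4\sin^4\theta$) is handled exactly as in the model computation of Lemma \ref{example_Nphi_term_lem}, while the $r^{-1}\fb\fc^l\phi$ factor in the structure of $\mathcal N_\psi$ is treated as in the proof of Proposition \ref{strategy_revisited_prop}. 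On the compact-$r$ region there are no $r$-weight subtleties, so one again puts the unique top-level factor in $L^2$ (controlled by the classical energy, hence by $\mathring E^s(t)\le\mathring E^s_{p'}(t)$ there) and the rest in $L^\infty$ (controlled by $E^{\ul{22}}(t)$); the point needing care is the separate near-trapping term $\int_{\Sigma_t\cap\{r\approx r_{trap}\}}|\pd_t^{s+1}\phi\,\pd_t^s\mathcal N_\phi|$, which appears because $\mathcal X_{\phi^s}\approx\pd_t\phi^s$ near $r_{trap}$ (recall $X|_{r=r_{trap}}=\lambda\pd_t$): Cauchy--Schwarz in $(r,\theta)$, together with the fact that the classical energy bounds $\int(\pd_t^{s+1}\phi)^2=\int(\pd_t(\pd_t^s\phi))^2$, yields $(\mathring E^s_{p'}(t))^{1/2}$ times $(\mathring E^s(t)E^{\ul{22}}_{p''}(t))^{1/2}$, which is the third term $\mathring E^s_{p'}(t)(E^{\ul{22}}_{p''}(t))^{1/2}$.

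For the estimate on $\mathring N^s(t)$ I will argue the same way but for the unweighted norms $\|\pd_t^s\mathcal N_\phi\|^2_{L^2(\Sigma_t)}+\|\pd_t^s\mathcal N_\psi\|^2_{L^2(\tilde\Sigma_t)}$, now exploiting the $r$-weights and $\sin^2\theta$ factors already built into the $\alpha$- and $\beta$-families, and distinguishing whether the top-level factor is of type $\alpha$ or of type $\beta$. If the $\alpha$-factor carries the top derivatives, I extract the $L^\infty$ norm of a level-$\le 15$ factor weighted by $r^{(\delm-1)/2}$ — bounded by $(E^{\ul{22}}_{\delm-1}(t))^{1/2}$ via Lemma \ref{simplified_pointwise_lemma} — leaving $\int r^{1-\delm}(\alpha^s)^2\lesssim\mathring E^s_{1-\delm}(t)$. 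If instead a $\beta$-factor carries the top derivatives, I extract the $L^\infty$ norm of the low-level $\alpha$-factor weighted by $r^{(\delm+1)/2}$, again bounded by $(E^{\ul{22}}_{\delm-1}(t))^{1/2}$, leaving $\int r^{-1-\delm}(\beta^s)^2\lesssim\mathring E^s(t)$ (the missing $r$-weight being recovered from the positive weights of $\mathring E^s$, with the usual $\tg$-commutator device absorbing the near-horizon contribution). Summing the two cases, multiplying by $(\mathring E^s(t))^{1/2}$, and using $E^{\ul{22}}(t)\lesssim I_0$ to drop higher powers of the lower-order energies, produces the stated bound.

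The main obstacle will be the weight-and-degree bookkeeping: one must check, case by case, that every low factor genuinely sits at level $\le 15$ so that Lemma \ref{simplified_pointwise_lemma} lands in $E^{\ul{22}}$ (this is where $\max_i s_i\le s$ and $s_1+\dots+s_{2+j}\le s+2$ from the structure lemmas are used), that the split of $r$-weights between the $L^2$ and $L^\infty$ factors is exactly compatible with the weights appearing in $\mathring E^s$, $\mathring E^s_{1-\delm}$, $B_p^{\ul{22}}$ and $E^{\ul{22}}_{\delm-1}$, and that the $\sin^2\theta$ and $r^{-2}$ factors promised by Lemmas \ref{Nphi_s_k_structure_lem}--\ref{Npsi_s_k_structure_lem} precisely compensate the mismatch between the $\Sigma_t$ and $\tilde\Sigma_t$ volume forms whenever a $\psi$- or $\phi$-factor crosses between the two slices. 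Once those matchings are verified term by term, guided by the model computations of \S\ref{example_terms_sec} and by Proposition \ref{strategy_revisited_prop}, the remainder is routine.
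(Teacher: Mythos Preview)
Your proposal is correct and follows the same approach as the paper: reduce to three sub-estimates, invoke Proposition \ref{strategy_revisited_prop} on $\{r>R\}$ with $E_{p-1}$ replaced by $B_p$, handle the compact-$r$ and near-trapping pieces by Cauchy--Schwarz with free choice of $p',p''$, and treat $\mathring N^s$ as a hybrid of these. One minor sharpening: because only $\pd_t$-commutators act here the structure actually gives $s_1+\dots+s_{2+j}\le s$ (the $+2$ in Lemmas \ref{Nphi_s_k_structure_lem}--\ref{Npsi_s_k_structure_lem} comes from $Q$ mixing with $\pd_\theta$), and this tighter bound is what places the low-order factors in $E^{\ul{22}}$ rather than $E^{\ul{23}}$ --- see the remark following Lemma \ref{sk_refined_nl_lem}.
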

\begin{proof}
First, we recall the definitions of $\mathring{N}^s(t)$ and $\mathring{N}_p^s(t)$ from Theorem \ref{p_o_thm}.
$$\mathring{N}^s(t)=(\mathring{E}^s(t))^{1/2}\left(||\pd_t^s\mathcal{N}_\phi||_{L^2(\Sigma_t)}+||\pd_t^s\mathcal{N}_\psi||_{L^2(\tilde{\Sigma}_t)}\right),$$
\begin{multline*}
\mathring{N}_p^s(t)=\int_{\Sigma_t}r^{p+1}(\pd_t^s\mathcal{N}_\phi)^2 + \int_{\tilde\Sigma_t}r^{p+1}(\pd_t^s\mathcal{N}_\psi)^2 \\
+ \int_{\Sigma_t\cap\{r\approx r_{trap}\}}|\pd_t^{s+1}\phi\pd_t^s\mathcal{N}_\phi| + \int_{\tilde\Sigma_t\cap\{r\approx r_{trap}\}}|\pd_t^{s+1}\psi\pd_t^s\mathcal{N}_\psi|. 
\end{multline*}
Therefore, it suffices to prove the following three estimates.
\begin{equation*}
||\pd_t^s\mathcal{N}_\phi||_{L^2(\Sigma_t)}+||\pd_t^s\mathcal{N}_\psi||_{L^2(\tilde{\Sigma}_t)}
\lesssim (\mathring{E}^s(t))^{1/2}(E_{\delm-1}^{\ul{22}}(t))^{1/2}+(\mathring{E}^s_{1-\delm}(t))^{1/2}(E^{\ul{22}}_{\delm-1}(t))^{1/2}
\end{equation*}
\begin{equation*}
\int_{\Sigma_t\cap\{r>R\}}r^{p+1}(\pd_t^s\mathcal{N}_\phi)^2 + \int_{\tilde\Sigma_t\cap\{r>R\}}r^{p+1}(\pd_t^s\mathcal{N}_\psi)^2 \lesssim \mathring{E}^s(t)B_p^{\ul{22}}(t)+\mathring{B}_p^s(t)E^{\ul{22}}(t)
\end{equation*}
\begin{multline*}
\int_{\Sigma_t\cap\{r<R\}}(\pd_t^s\mathcal{N}_\phi)^2 + \int_{\tilde\Sigma_t\cap\{r<R\}}(\pd_t^s\mathcal{N}_\psi)^2 \\
+ \int_{\Sigma_t\cap\{r\approx r_{trap}\}}|\pd_t^{s+1}\phi\pd_t^s\mathcal{N}_\phi| + \int_{\tilde\Sigma_t\cap\{r\approx r_{trap}\}}|\pd_t^{s+1}\psi\pd_t^s\mathcal{N}_\psi| \\
\lesssim \mathring{E}^s_{p'}(t)(E_{p''}^{\ul{22}}(t))^{1/2} 
\end{multline*}
We begin by observing that the second estimate is a consequence of Proposition \ref{strategy_revisited_prop} (adapted to the homogeneous case) with two unimportant changes. The first is that the norms $\mathring{E}_{p-1}^s(t)$ and $E_{p-1}^{s/2+8}(t)$ used in Proposition \ref{strategy_revisited_prop} have been replaced with the norms $\mathring{B}_p^s(t)$ and $B_p^{\ul{22}}(t)$, since they are equivalent in a region excluding the trapping radius. The second is that the factor of $\sum_{j\le 29}(\mathring{E}^{29}(t))^j$ that appears on the right hand side of the estimate from Proposition \ref{strategy_revisited_prop} has been eliminated. Given the bootstrap assumption (\ref{energy_ba}) that $\mathring{E}^{29}(t)\lesssim C_bI_0$ and the fact that $I_0$ can be chosen sufficiently small, this factor is unimportant.

The third estimate is much simpler. Since all of the integrated quantites are defined on a bounded radius, the $r$ factors can be replaced with constants and therefore we have the freedom to choose $p'$ and $p''$. Using again the fact that $C_bI_0\lesssim 1$, we ignore additional factors of any energy norm in this estimate.

The first estimate is a hybrid of the other two. The integrals on a bounded radius can be estimated by $(\mathring{E}_{1-\delm}^s(t))^{1/2}(E_{\delm-1}^{\ul{22}}(t))^{1/2}$ for the same reason as in the third estimate. The integrals over the remaining region $r>R$ can be estimated using the same strategy that is given in Proposition \ref{strategy_revisited_prop}. One simply has to check that the appropriate powers of $r$ can be assigned to each factor. \qed
\end{proof}

\begin{corollary}\label{mr_NL_absorb_bulk}
In the context of the bootstrap assumptions provided in \S\ref{bootstrap_assumptions_sec}, Theorem \ref{p_o_thm} and Lemma \ref{mr_refined_nl_lem} imply that if $s\le 29$ and $C_bI_0$ is sufficiently small, then
$$\mathring{E}_p^{s}(t_2)+\int_{t_1}^{t_2}\mathring{B}_p^{s}(t)dt \lesssim \mathring{E}_p^{s}(t_1)+\int_{t_1}^{t_2}\mathring{E}^s(t)B_p^{\ul{22}}(t)dt+(C_bI_0)^{1/2}\int_{t_1}^{t_2}\mathring{E}_p^{s}(t)T^{(\delm-3+\delp)/2}dt.$$
\end{corollary}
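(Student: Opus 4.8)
The plan is to feed the refined nonlinear bound of Lemma \ref{mr_refined_nl_lem} into the homogeneous $p$-weighted estimate of Theorem \ref{p_o_thm} and then dispose of the three resulting error contributions one at a time. Theorem \ref{p_o_thm} gives $\mathring{E}_p^{s}(t_2)+\int_{t_1}^{t_2}\mathring{B}_p^{s}(t)\,dt\lesssim \mathring{E}_p^{s}(t_1)+\int_{t_1}^{t_2}\mathring{N}_p^{s}(t)\,dt$, and Lemma \ref{mr_refined_nl_lem} bounds $\mathring{N}_p^s(t)$ by the sum of $\mathring{E}^s(t)B_p^{\ul{22}}(t)$, $\mathring{B}_p^s(t)E^{\ul{22}}(t)$, and $\mathring{E}^s_{p'}(t)(E_{p''}^{\ul{22}}(t))^{1/2}$ for admissible auxiliary exponents $p',p''$. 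The first of these already appears verbatim on the right-hand side of the claimed estimate, so it is kept unchanged.

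For the second term I would use monotonicity of the homogeneous energies in the derivative order together with the bootstrap assumption (\ref{energy_ba}) and $s\le 29$ to get $E^{\ul{22}}(t)\le E^{\ul{29}}(t)\lesssim C_bI_0$; hence $\int_{t_1}^{t_2}\mathring{B}_p^s(t)E^{\ul{22}}(t)\,dt\lesssim C_bI_0\int_{t_1}^{t_2}\mathring{B}_p^s(t)\,dt$, and for $C_bI_0$ small enough this is absorbed into the bulk integral on the left. For the third term I would recall from the proof of Lemma \ref{mr_refined_nl_lem} that it arises only from integrals over a bounded radial interval (the regions $r<R$ and $r\approx r_{trap}$), so that all $r$-weights there are comparable to constants; this lets me take $p'=p$, replacing $\mathring{E}^s_{p'}(t)$ by $\mathring{E}^s_p(t)$, and choose $p''$ at will. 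Estimating the low-order factor $(E_{p''}^{\ul{22}}(t))^{1/2}$ by the lower-order decay bootstrap assumptions — (\ref{_25_ba}) together with the weak-decay assumption (\ref{_23_ba}), whose rate $T^{\delm-3+\delp}$ has square root exactly $T^{(\delm-3+\delp)/2}$ — extracts a factor $(C_bI_0)^{1/2}T^{(\delm-3+\delp)/2}$ and leaves $\mathring{E}^s_p(t)$, which is the final term of the claim. Note that $\ul{22}$ sits comfortably below the orders $\ul{25}$ and $\ul{23}$ appearing in those assumptions, which is precisely what leaves room for the derivative losses incurred in Lemmas \ref{simplified_pointwise_lemma} and \ref{mr_refined_nl_lem}.

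The step I expect to be the main obstacle is the third one: producing the decay rate $T^{(\delm-3+\delp)/2}$ sharply. The pointwise bootstrap bound (\ref{_25_ba}) by itself only gives $E^{\ul{22}}_{\delm}(t)\lesssim C_bI_0\,T^{\delm-2+\delp}$, which is one power of $T^{1/2}$ weaker than needed, so the extra gain must come from the fact that the offending error integrals are supported on a compact $r$-range: there one is free to work with a negative auxiliary exponent $p''<\delm$ and trade the larger-$p$ pointwise decay against the weak decay encoded in (\ref{_23_ba}) (equivalently, against a local Morawetz/Hardy estimate), which is where the value of being able to choose $p''$ freely is essential. Once this rate is in hand, the remainder of the argument is the routine absorption described above, carried out after taking $C_bI_0$ small enough that both the $E^{\ul{22}}(t)$ factor in the second term and the implied absolute constants there are dominated by a fixed fraction of the left-hand bulk.
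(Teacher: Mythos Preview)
Your proposal is correct and follows essentially the same route as the paper: start from Theorem \ref{p_o_thm}, insert the bound of Lemma \ref{mr_refined_nl_lem}, keep the $\mathring{E}^s B_p^{\ul{22}}$ term as is, absorb $\mathring{B}_p^s E^{\ul{22}}$ into the left using (\ref{energy_ba}) and smallness of $C_bI_0$, and for the last term take $p'=p$, $p''=\delm-1$ and invoke the weak decay assumption (\ref{_23_ba}) to produce the factor $(C_bI_0)^{1/2}T^{(\delm-3+\delp)/2}$. Your identification of the ``obstacle'' and its resolution via the freedom to choose $p''$ on the compact-$r$ region is exactly what the paper does (it simply calls this the weak decay principle).
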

\begin{proof}
According to Theorem \ref{p_o_thm},
$$\mathring{E}_p^s(t_2)+\int_{t_1}^{t_2}\mathring{B}_p^s(t)dt\lesssim \mathring{E}_p^s(t_1)+\int_{t_1}^{t_2}\mathring{N}_p^s(t)dt.$$
By Lemma \ref{mr_refined_nl_lem},
$$\mathring{N}_p^s(t) \lesssim \mathring{E}^s(t)B_p^{\ul{22}}(t)+\mathring{B}_p^s(t)E^{\ul{22}}(t)+\mathring{E}^s_p(t)(E_{\delm-1}^{\ul{22}}(t))^{1/2}.$$
Using the bootstrap assumptions,
$$\int_{t_1}^{t_2}\mathring{B}_p^s(t)E^{\ul{22}}(t)dt \lesssim \int_{t_1}^{t_2}\mathring{B}_p^{s}(t)C_bI_0dt \lesssim C_bI_0\int_{t_1}^{t_2}\mathring{B}_p^{s}(t)dt.$$
Thus, if $C_bI_0$ is sufficiently small, then this error term can be absorbed into the bulk term on the left hand side.

Again using the bootstrap assumptions and the weak decay principle,
$$\int_{t_1}^{t_2}\mathring{E}_{p}^s(t)E_{\delm-1}^{\ul{22}}(t)dt \lesssim \int_{t_1}^{t_2}\mathring{E}_p^s(t)(C_bI_0)^{1/2}T^{(\delm-3+\delp)/2}dt \lesssim (C_bI_0)^{1/2}\int_{t_1}^{t_2}\mathring{E}_p^s(t)T^{(\delm-3+\delp)/2}dt.$$
These estimates are sufficient. \qed
\end{proof}

\subsubsection{Recovering boundedness of $\mathring{E}^{29}(t)$}\label{part_1_E_boundedness_sec}

By Theorem \ref{p_o_thm} and Lemma \ref{mr_refined_nl_lem},
\begin{align*}
\mathring{E}^{29}(t) \lesssim & \mathring{E}^{29}(0)+\int_0^tN^{29}(\tau)d\tau \\
\lesssim & \mathring{E}^{29}(0)+\int_0^t(\mathring{E}^{29}(\tau))^{1/2}\left((\mathring{E}^{29}(\tau))^{1/2}E_{\delm-1}^{\ul{22}}(\tau))^{1/2}+(\mathring{E}^{29}_{1-\delm}(\tau))^{1/2}(E^{\ul{22}}_{\delm-1}(\tau))^{1/2}\right)d\tau \\
\lesssim & I_0+\int_0^t(C_bI_0)^{1/2}(C_bI_0)^{1/2}(C_bT^{\delm-3+\delp}I_0)^{1/2}d\tau \\
\lesssim & (1+C_b^{3/2}I_0^{1/2})I_0.
\end{align*}
In particular, we used the weak decay principle in the third step. It follows that if $I_0$ is chosen sufficiently small so that $C_b^3I_0\lesssim 1$,
$$\mathring{E}^{29}(t)\lesssim I_0.$$
This recovers the bootstrap assumption (\ref{energy_ba}) at the highest level of derivatives.

\subsubsection{Recovering boundedness of $\mathring{E}_{2-\delp}^{29}(t)$}\label{part_1_Ep_boundedness_sec}

By Corollary \ref{mr_NL_absorb_bulk},
\begin{multline*}
\mathring{E}_{2-\delp}^{29}(t)+\int_0^t\mathring{B}_{2-\delp}^{29}(\tau)d\tau \\
\lesssim \mathring{E}_{2-\delp}^{29}(0)+\int_0^t\mathring{E}^{29}(\tau)B_{2-\delp}^{\ul{22}}(\tau)d\tau + (C_bI_0)^{1/2}\int_0^t \mathring{E}_{2-\delp}^{29}(\tau)T^{(\delm-3+\delp)/2}d\tau \\
\lesssim I_0+I_0\int_0^tB_{2-\delp}^{\ul{22}}(\tau)d\tau+(C_bI_0)^{1/2}\int_0^tC_bI_0T^{(\delm-3+\delp)/2}d\tau \\
\lesssim I_0+C_bI_0^2+(C_bI_0)^{3/2}.
\end{multline*}
 It follows that if $C_b^3I_0\lesssim 1$, then
$$\mathring{E}_{2-\delp}^{29}(t)+\int_0^t\mathring{B}_{2-\delp}^{29}(\tau)d\tau \lesssim I_0.$$
This recovers the bootstrap assumption (\ref{_29_ba}) at the highest level of derivatives.

\subsubsection{Proving decay for $\mathring{E}_{p\in[1-\delp,2-\delp]}^{28}(t)$ and $\mathring{E}_{p\in[\delm,2-\delp]}^{27}(t)$}\label{part_1_decay_sec}

We now use the heirarchy of the $p$-weighted energy estimates to prove decay in time. For convenience, we prove decay in the following lemma, which will then be repeatedly used for a different values of $p$. Essentially, this lemma states that if a $(p+1)$-weighted energy norm is bounded or decays at a certain rate, then the $p$-weighted energy norm decays at a rate with an additional factor of $T^{-1}$. This is the source of the tradeoff between a factor of $r$ and a factor of $t$.
\begin{lemma}\label{mr_NL_WE_decay}
Suppose  $p+1,p\in [\delm,2-\delp]$ and
$$\mathring{E}_{p+1}^{s+1}(t)\lesssim T^{(p+1)-2+\delp}I_0,$$
$$\int_t^\infty B_{p+1}^{\ul{22}}(\tau)d\tau \le C_bT^{(p+1)-2+\delp}I_0,$$
$$\mathring{E}_p^s(t)\le C_bT^{p-2+\delp}I_0,$$
$$\int_t^{\infty}B_p^{\ul{22}}(\tau)d\tau \le C_bT^{p-2+\delp}I_0.$$
Then if $I_0$ is sufficiently small,
$$\mathring{E}_p^s(t)\lesssim T^{p-2+\delp}I_0.$$
\end{lemma}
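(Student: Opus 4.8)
The proof is a step down the $r^p$-hierarchy of Theorem \ref{p_o_thm}: the weight-$(p+1)$ estimate at one higher commutation order yields an integrated-in-time decay bound for the weight-$p$ energy, a mean-value argument converts this into the pointwise-in-$t$ bound, and the forward propagation is carried out with the weight-$p$ estimate itself, the nonlinear errors being absorbed through smallness of $I_0$. Throughout I write $T=1+t$ and use the bootstrap assumptions of \S\ref{bootstrap_assumptions_sec} and the boundedness $\mathring{E}^{s+1}(\tau)\lesssim C_bI_0$ already in hand.

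\emph{Step 1 (integrated bulk decay at weight $p+1$).} I would apply Corollary \ref{mr_NL_absorb_bulk} with $(p,s)$ replaced by $(p+1,s+1)$ over the slab $[t_1,\infty)$ and discard the (positive) energy at the future endpoint. The two error terms are controlled as follows: by the first two hypotheses and $\mathring{E}^{s+1}\lesssim C_bI_0$,
\[
\int_{t_1}^{\infty}\mathring{E}^{s+1}(\tau)B_{p+1}^{\ul{22}}(\tau)\,d\tau\lesssim C_bI_0\cdot C_b(1+t_1)^{(p+1)-2+\delp}I_0 ,
\]
and, since $(p+1)-2+\delp+\tfrac12(\delm-3+\delp)<-1$ for $p+1\le2-\delp$ and $\delm,\delp$ small, the term $(C_bI_0)^{1/2}\int_{t_1}^{\infty}\mathring{E}_{p+1}^{s+1}(\tau)(1+\tau)^{(\delm-3+\delp)/2}\,d\tau$ is a convergent integral bounded by $(C_bI_0)^{1/2}I_0(1+t_1)^{(p+1)-2+\delp}$. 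Taking $I_0$ small enough that $C_bI_0\lesssim1$ gives $\int_{t_1}^{\infty}\mathring{B}_{p+1}^{s+1}(\tau)\,d\tau\lesssim(1+t_1)^{(p+1)-2+\delp}I_0$.

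\emph{Step 2 (from the bulk to $\mathring{E}_p^s$).} Comparing integrands, the density of $\mathring{E}_p^s$ exceeds that of $\mathring{B}_{p+1}^{s}$ only through the non-degenerate terms $(1-\chi_{trap})r^p[(L\phi^{s'})^2+|\sla\nabla\phi^{s'}|^2]$ (and the analogous $\psi^{s'}$ terms). Wherever $\chi_{trap}$ is bounded below, i.e.\ away from a fixed neighborhood of the photon sphere, this is $\lesssim\chi_{trap}r^p[(L\phi^{s'})^2+|\sla\nabla\phi^{s'}|^2]$ and hence controlled by $\mathring{B}_{p+1}^{s}$, while on the bounded region $\{r\approx r_{trap}\}$ a local elliptic / integration-by-parts estimate — using the wave equations to trade $(L\phi^{s'})^2+|\sla\nabla\phi^{s'}|^2$ for $\pd_r$- and zeroth-order quantities at the cost of an extra commutation by $\pd_t$ — bounds it by $\mathring{B}_{p+1}^{s+1}$ plus a contribution of $q^{-2}\tg^k\Gamma^{\le s}(q^2\mathcal{N}_\phi)$-type that is $\lesssim I_0$ times bulk (by Proposition \ref{strategy_revisited_prop} applied on this bounded set). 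Thus $\mathring{E}_p^s(\tau)\lesssim\mathring{B}_{p+1}^{s+1}(\tau)$ up to a small multiple of bulk, and integrating and invoking Step 1, $\int_{t_1}^{\infty}\mathring{E}_p^s(\tau)\,d\tau\lesssim(1+t_1)^{(p+1)-2+\delp}I_0$.

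\emph{Step 3 (mean value and propagation).} For $t\ge1$, applying the last bound with $t_1=t/2$ and pigeonholing over $[t/2,t]$ produces $t_*\in[t/2,t]$ with $\mathring{E}_p^s(t_*)\lesssim (1+t)^{-1}(1+t)^{(p+1)-2+\delp}I_0=(1+t)^{p-2+\delp}I_0$. Propagating from $t_*$ to $t$ via Corollary \ref{mr_NL_absorb_bulk} at weight $p$,
\[
\mathring{E}_p^s(t)\lesssim\mathring{E}_p^s(t_*)+\int_{t_*}^{\infty}\mathring{E}^s(\tau)B_p^{\ul{22}}(\tau)\,d\tau+(C_bI_0)^{1/2}\int_{t_*}^{\infty}\mathring{E}_p^s(\tau)(1+\tau)^{(\delm-3+\delp)/2}\,d\tau ;
\]
by the bootstrap assumptions and the hypothesis on $\int_t^\infty B_p^{\ul{22}}$ the first integral is $\lesssim C_bI_0^2(1+t)^{p-2+\delp}$, while the second, using $1+\tfrac12(\delm-3+\delp)=\tfrac12(\delm-1+\delp)<0$, is $\lesssim C_b^{3/2}I_0^{3/2}(1+t)^{p-2+\delp}$; both are $\ll(1+t)^{p-2+\delp}I_0$ once $C_b^3I_0\lesssim1$. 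Hence $\mathring{E}_p^s(t)\lesssim T^{p-2+\delp}I_0$ with an absolute constant (the range $t\le1$ being immediate from the boundedness already established), which is the assertion. The main obstacle is Step 2 — recovering the non-degenerate local energy near the photon sphere, where the Morawetz bulk degenerates and a derivative is lost; this is precisely why the hypothesis is stated one commutation order higher than the conclusion. The rest is bookkeeping of the nonlinear error terms, which is routine given Lemma \ref{mr_refined_nl_lem}, Proposition \ref{strategy_revisited_prop}, and the smallness of $I_0$.
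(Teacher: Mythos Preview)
Your argument follows the same route as the paper's: apply Corollary~\ref{mr_NL_absorb_bulk} at weight $p+1$ and commutation level $s+1$, pigeonhole on the bulk to locate a good intermediate time, use the comparison $\mathring{E}_p^s\lesssim\mathring{B}_{p+1}^{s+1}$, and then propagate forward at weight $p$ with the nonlinear errors absorbed via smallness of $I_0$. The only differences are cosmetic---the paper picks $t'$ via the mean value theorem for $\mathring{B}_{p+1}^{s+1}$ and then invokes the comparison at that single time, whereas you integrate the comparison first and pigeonhole on $\mathring{E}_p^s$---and your Step~2 supplies a sketch of why the comparison holds near the photon sphere, which the paper simply asserts without comment.
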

\begin{proof}
Using the mean value theorem, for a given $t$, let $t'\in [t/2,t]$ be the value for which $\mathring{B}_{p+1}^{s+1}(t')=\frac2t\int_{t/2}^t\mathring{B}_{p+1}^{s+1}(\tau)d\tau$. Then using Corollary \ref{mr_NL_absorb_bulk},
\begin{align*}
\mathring{E}_p^{s}(t) &\lesssim \mathring{E}_p^{s}(t')+\int_{t'}^{t}\mathring{E}^s(\tau)B_p^{\ul{22}}(\tau)d\tau+(C_bI_0)^{1/2}\int_{t'}^{t}\mathring{E}_p^{s}(\tau)T^{(\delm-3+\delp)/2}d\tau \\
&\lesssim \mathring{E}_p^s(t')+I_0\int_{t'}^tB_p^{\ul{22}}(\tau)d\tau +(C_bI_0)^{3/2}\int_{t'}^t T^{p-2+\delp}T^{(\delm-3+\delp)/2}d\tau \\
&\lesssim \mathring{E}_p^s(t')+C_bI_0^2T^{p-2+\delp}+(C_bI_0)^{3/2}T^{p-2+\delp}T^{\delm+\delp-1/2} \\
&\lesssim \mathring{E}_p^s(t')+(C_bI_0+C_b^{3/2}I_0^{1/2})T^{p-2+\delp}I_0.
\end{align*}
Now by the choice of $t'$, and another application of Corollary \ref{mr_NL_absorb_bulk},
\begin{multline*}
\mathring{E}_p^s(t')\lesssim \mathring{B}_{p+1}^{s+1}(t')=\frac2t\int_{t/2}^t\mathring{B}_{p+1}^{s+1}(\tau)d\tau \\
\lesssim t^{-1}\mathring{E}_{p+1}^{s+1}(t/2)+t^{-1}\int_{t/2}^t\mathring{E}^s(\tau)B_{p+1}^{\ul{22}}(\tau)d\tau +t^{-1}(C_bI_0)^{1/2}\int_{t/2}^t \mathring{E}_{p+1}^{s+1}(\tau)T^{(\delm-3+\delp)/2}d\tau. \\
\lesssim t^{-1}(T^{(p+1)-2+\delp}I_0+C_bI_0^2T^{(p+1)-2+\delp}+(C_bI_0)^{3/2}T^{(p+1)-2+\delp}).
\end{multline*}
Thus,
$$E_p^{s,k}(t)\lesssim (1+C_bI_0+C_b^{3/2}I_0^{1/2})T^{p-2+\delp}I_0.$$
Taking $I_0$ sufficiently small so that $C_b^3I_0\lesssim 1$ completes the proof. \qed
\end{proof}

By applying Lemma \ref{mr_NL_WE_decay} for $p=1-\delp$ and $s=28$, we obtain
$$\mathring{E}_{1-\delp}^{28}(t) \lesssim T^{-1}I_0.$$
By interpolation we obtain decay for all $p\in [1-\delp,2-\delp]$.
$$\mathring{E}_{p\in[1-\delp,2-\delp]}^{28}(t)\lesssim T^{p-2+\delp}I_0.$$
Then applying Lemma \ref{mr_NL_WE_decay} again for each $p\in [\delm,1-\delp]$ and $s=27$, we obtain
$$\mathring{E}_{p\in[\delm,2-\delp]}^{27}(t)\lesssim T^{p-2+\delp}I_0.$$
In particular, by taking $p=\delm$, this recovers the bootstrap assumption (\ref{_25_ba}) at the highest level of derivatives.

\subsubsection{Recovering weak decay for $\mathring{E}_{p\in[\delm-1,\delm]}^{26}(t)$}\label{part_1_weak_decay_sec}

To prove estimates for low $p$ (ie. $p$ in the range $[\delm-1,\delm]$), we first observe that
$$\int_t^\infty \mathring{E}_p^{26}(\tau)d\tau \lesssim \int_t^\infty \mathring{B}_{p+1}^{27}(\tau)d\tau.$$
Then by Corollary \ref{mr_NL_absorb_bulk},
\begin{align*}
\int_t^\infty \mathring{B}_{p+1}^{27}(\tau)d\tau &\lesssim \mathring{E}_{p+1}^{27}(t)+\int_t^\infty \mathring{E}^{27}(\tau)B_{p+1}^{\ul{22}}(\tau)d\tau+(C_bI_0)^{1/2}\int_t^\infty\mathring{E}_{p+1}^{27}(\tau)T^{(\delm-3+\delp)/2}d\tau \\
&\lesssim T^{(p+1)-2+\delp}I_0+I_0\int_t^\infty B_{p+1}^{\ul{22}}(\tau)d\tau+(C_bI_0)^{1/2}\int_t^\infty T^{(p+1)-2+\delp}I_0T^{(\delm-3+\delp)/2}d\tau \\
&\lesssim (1+C_bI_0+(C_bI_0)^{1/2})T^{(p+1)-2+\delp}I_0.
\end{align*}
It follows that if $I_0$ is sufficiently small so that $C_bI_0\lesssim 1$, then
$$\int_t^\infty \mathring{E}_p^{26}(\tau)d\tau \lesssim T^{(p+1)-2+\delp}I_0 = T^{p-2+\delp +1}I_0.$$
This recovers the bootstrap assumption (\ref{_23_ba}) at the highest level of derivatives.

\subsection{The general case $s+2k\le 28$}\label{general_case_s_k_sec}

As explained in the introduction of this section, this is the point where we turn to the second of two parts of the remainder of the proof. In particular, we will recover the bootstrap assumptions (\ref{energy_ba}-\ref{_23_ba}) for the norms $E^{s,k}(t)$, $E_p^{s,k}(t)$, etc. We will use results from the first part (\S\ref{homogeneous_case_sec}) to handle the homogeneous norm $\mathring{B}_p^s(t)$ appearing on the right hand side of many estimates in this second part.

The outline for \S\ref{general_case_s_k_sec} is similar to, but slightly more complicated than, the outline for \S\ref{homogeneous_case_sec}. To begin, in \S\ref{part_2_refined_estimates_sec}, we prove refined estimates for the nonlinear norms $N^{s,k}(t)$ and $N_p^{s,k}(t)$. Just like the estimates in \S\ref{part_1_refined_estimates_sec}, these estimates constitute the crucial step of the proof. The remainder of \S\ref{general_case_s_k_sec} applies these estimates to recover the bootstrap assumptions for the energy norms in a finite induction argument. The inductive assumptions are listed in \S\ref{part_2_inductive_assumptions_sec}. In \S\ref{part_2_Ep_boundedness_sec} and \S\ref{part_2_E_boundedness_sec}, the bootstrap assumptions (\ref{_29_ba}) and (\ref{energy_ba}) at the highest level $s+2k=28$ are recovered in that order. Then in \S\ref{part_2_decay_sec}, a decay lemma is proved and used to recover the bootstrap assumption (\ref{_25_ba}) at the level $s+2k=24$. Next, in \S\ref{establish_inductive_assumptions_sec} the inductive assumptions for the next step $k+1$ are established. Finally, in \S\ref{part_2_weak_decay_sec}, the bootstrap assumption (\ref{_23_ba}) at the level $s+2k=22$, which assumes a weaker form of decay, is recovered.

\subsubsection{Refined estimates for $N^{s,k}(t)$ and $N_p^{s,k}(t)$}\label{part_2_refined_estimates_sec}

The pointwise estimates given in Lemma \ref{simplified_pointwise_lemma} allow us to provide refined estimates for the nonlinear error terms. \textbf{This is the crucial step of the proof.}

\begin{lemma}\label{sk_refined_nl_lem}
In the context of the bootstrap assumptions provided in \S\ref{bootstrap_assumptions_sec}, if $s+2k\le 28$ and $C_bI_0\le 1$, then
$$N^{s,k}(t)\lesssim (E^{s,k}(t))^{1/2}\left((E^{s,k}(t))^{1/2}(E_{\delm-1}^{\ul{23}}(t))^{1/2}+(E^{s,k}_{1-\delm}(t))^{1/2}(E^{\ul{23}}_{\delm-1}(t))^{1/2}\right),$$
$$N_p^{s,k}(t)\lesssim E^{s,0}(t)B_p^{s/2+8,0}(t)+B_p^{s,0}(t)E^{s/2+8,0}(t)+E^{s,k}_{p'}(t)(E_{p''}^{\ul{23}}(t))^{1/2}.$$
\end{lemma}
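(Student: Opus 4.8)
The plan is to mirror the proof of the homogeneous version, Lemma \ref{mr_refined_nl_lem}, carrying the additional horizon-index $k$ through the argument. Unwinding the definitions of $N^{s,k}(t)$ and $N_p^{s,k}(t)$ from Theorem \ref{p_s_k_thm}, the task reduces to controlling, for all $s'\le s$ and $k'\le k$, the quantities $\|q^{-2}\tg^{k'}\Gamma^{s'}(q^2\mathcal N_\phi)\|_{L^2(\Sigma_t)}$, $\|q^{-2}\tg^{k'}\tilde\Gamma^{s'}(q^2\mathcal N_\psi)\|_{L^2(\tilde\Sigma_t)}$ and their $r^{p+1}$-weighted $L^2$ counterparts. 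Each of these I split into the region $\{r>R\}$ and the region $\{r<R\}$, where $R$ is the large radius of Proposition \ref{strategy_revisited_prop}. The two structural inputs are the structure Lemmas \ref{Nphi_s_k_structure_lem} and \ref{Npsi_s_k_structure_lem}, which express $q^{-2}\tg^{k}\Gamma^{s}(q^2\mathcal N_\phi)$ and $q^{-2}\tg^{k}\tilde\Gamma^{s}(q^2\mathcal N_\psi)$ as sums of products $f\,\alpha^{s_1,k_1}\beta^{s_2,k_2}(r\beta^{s_3,k_3})\cdots(r\beta^{s_{2+j},k_{2+j}})$, possibly with one factor replaced by $r^{-1}\fb\fc^l\phi^{s_2-2l,k_2}$, and which record the $r^{-2}$ and $\sin^2\theta$ gains attached to $\phi$- and $\psi$-factors, together with the improved pointwise estimates of Lemma \ref{simplified_pointwise_lemma}.

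In the region $\{r>R\}$ the commutator $\tg$ vanishes, so all $k$-indices drop out and the weighted $L^2$ bounds are precisely the statement of Proposition \ref{strategy_revisited_prop}, whose hypotheses follow from Lemma \ref{simplified_pointwise_lemma} and the bootstrap assumptions of \S\ref{bootstrap_assumptions_sec}. It gives, for each $s'\le s$,
$$\int_{\Sigma_t\cap\{r>R\}}r^{p+1}|q^{-2}\Gamma^{s'}(q^2\mathcal N_\phi)|^2+\int_{\tilde\Sigma_t\cap\{r>R\}}r^{p+1}|q^{-2}\tilde\Gamma^{s'}(q^2\mathcal N_\psi)|^2\lesssim\Big(E_{p-1}^{s'}(t)E^{s'/2+8}(t)+E^{s'}(t)E_{p-1}^{s'/2+8}(t)\Big)\sum_{j\le s'}(E^{s'/2+8}(t))^j.$$
Since $\{r>R\}$ lies away from the photon sphere, $E_{p-1}^{s'}(t)$ may be identified with the bulk norm $B_p^{s',0}(t)$, and the polynomial factor $\sum_j(E^{s'/2+8}(t))^j$ is absorbed using $E^{s'/2+8}(t)\lesssim C_bI_0\lesssim1$ (bootstrap assumption (\ref{energy_ba}) and smallness of $I_0$); using $s'/2+8\le s/2+8$ this produces the first two terms $E^{s,0}(t)B_p^{s/2+8,0}(t)+B_p^{s,0}(t)E^{s/2+8,0}(t)$ in the bound for $N_p^{s,k}(t)$. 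For $N^{s,k}(t)$ the identical strategy, with the null-condition $r$-weights redistributed exactly as in the proof of Proposition \ref{strategy_revisited_prop}, gives the $\{r>R\}$ contribution to the bracket as $(E^{s,k}(t))^{1/2}(E_{\delm-1}^{\ul{23}}(t))^{1/2}+(E^{s,k}_{1-\delm}(t))^{1/2}(E^{\ul{23}}_{\delm-1}(t))^{1/2}$, once one checks that a low-order factor placed in $L^\infty$ via Lemma \ref{simplified_pointwise_lemma} costs at most $7$ derivatives, so that the resulting norm is controlled by $E^{\ul{23}}$ when $s+2k\le28$.

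In the region $\{r<R\}$ every $r$-weight is comparable to a constant, so $p'$ and $p''$ may be chosen freely and the argument is cruder. Here I expand $q^{-2}\tg^{k'}\Gamma^{s'}(q^2\mathcal N_\phi)$ and $q^{-2}\tg^{k'}\tilde\Gamma^{s'}(q^2\mathcal N_\psi)$ via Lemmas \ref{Nphi_s_k_structure_lem}--\ref{Npsi_s_k_structure_lem}; by $\max_i s_i\le s$ and $s_1+\cdots\le s+2$ every factor but one carries at most $s/2+1$ derivatives, and those I estimate in $L^\infty$ with Lemma \ref{simplified_pointwise_lemma} (insensitive to the weight on a compact radial interval), while the remaining factor is estimated in $L^2$, yielding a factor $E^{s,k}_{p'}(t)$; the factor $r^{-1}\fb\fc^l\phi^{s_2-2l,k_2}$ is treated like a $\beta$-factor exactly as in the proof of Lemma \ref{Npsi_s_k_structure_lem}. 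The $\phi$-factors carry their $r^{-2}$ gain, the $\psi$-factors in the $\mathcal N_\phi$ case carry their $\sin^2\theta$ gain with Lemma \ref{gain_sin_lem} applied to the high-order factor as in \S\ref{example_terms_sec}, and the polynomial-in-$j$ factors from differentiating $(1+\phi)^{-1}$ are absorbed using $C_bI_0\lesssim1$. This produces the last term $E^{s,k}_{p'}(t)(E_{p''}^{\ul{23}}(t))^{1/2}$ and, for $N^{s,k}(t)$, the bounded-radius part of the bracket, namely $(E^{s,k}_{1-\delm}(t))^{1/2}(E_{\delm-1}^{\ul{23}}(t))^{1/2}$.

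I expect the main obstacle to be the index bookkeeping rather than any genuinely new analytic difficulty: one must verify that after commuting with $\tg^{k'}\Gamma^{s'}$ or $\tg^{k'}\tilde\Gamma^{s'}$ the derivative- and horizon-index budgets $(s_i,k_i)$ produced by Lemmas \ref{Nphi_s_k_structure_lem}--\ref{Npsi_s_k_structure_lem} are compatible with the Sobolev losses of Lemma \ref{simplified_pointwise_lemma}, so that the low-order factors genuinely land in $L^\infty$ at a level controlled by $E^{\ul{23}}$ while the single high-order factor lands in an $L^2$-based energy or bulk norm at level $\le(s,k)$ --- in particular, near the horizon, that the $\tg$-index on an $L^\infty$-estimated factor can be paid for by Lemma \ref{simplified_pointwise_lemma}, which is precisely why the estimate is stated in terms of the $E^{s,k}$-type norms rather than the homogeneous $\mathring E^s$-type ones. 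Once these index inequalities are checked, the estimates follow term by term exactly as in the homogeneous case treated in Lemma \ref{mr_refined_nl_lem} and as illustrated by the example lemmas of \S\ref{example_terms_sec}.
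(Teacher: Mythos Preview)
Your proposal is correct and follows essentially the same approach as the paper: the paper's proof recalls the definitions of $N^{s,k}(t)$ and $N_p^{s,k}(t)$ from Theorem \ref{p_s_k_thm}, reduces to the same three estimates split over $\{r>R\}$ and $\{r<R\}$, invokes Proposition \ref{strategy_revisited_prop} for the far region, and then simply states that the estimates are analogous to those in Lemma \ref{mr_refined_nl_lem} and proved the same way. Your write-up is in fact more detailed than the paper's own proof, particularly in tracking the index budget and the role of Lemma \ref{simplified_pointwise_lemma}.
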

\begin{remark}
The careful reader may notice that the low order energy norms in this lemma are at the level $s+2k=23$, whereas the low order energy norms in the analogous lemma for the homogeneous case (Lemma \ref{mr_refined_nl_lem}) are at the level $s+2k=22$. This difference is unimportant, but it is due to the fact that the $\pd_t$ commutators commute with the derivatives in the structures of $\mathcal{N}_\phi$ and $\mathcal{N}_\psi$, while the commutators $Q$ and $\tilde{Q}$ have angular parts that mix with the $\pd_\theta$ derivatives in some terms belonging to $\mathcal{N}_\phi$ and $\mathcal{N}_\psi$. This is manifest in the condition $s_1+...+s_{2+j}\le s+2$ that is given in Lemmas \ref{Nphi_s_k_structure_lem} and \ref{Npsi_s_k_structure_lem}. This means that the lower order factors can have up to $(s+2)/2=s/2+1$ derivatives, whereas in the homogeneous case, they can only have up to $s/2$ derivatives.
\end{remark}
\begin{proof}
First, we recall the definitions of $N^{s,k}(t)$ and $N_p^{s,k}(t)$ from Theorem \ref{p_s_k_thm}.
$$N^{s,k}(t)=(E^{s,k}(t))^{1/2}\left(\sum_{\substack{s'\le s \\ k'\le k}}||q^{-2}\tg^{k'}\Gamma^{s'}(q^2\mathcal{N}_\phi)||_{L^2(\Sigma_t)}+\sum_{\substack{s'\le s \\ k'\le k}}||q^{-2}\tg^{k'}\tilde\Gamma^{s'}(q^2\mathcal{N}_\psi)||_{L^2(\tilde{\Sigma}_t)}\right),$$
\begin{align*}
N_p^{s,k}(t) =& \sum_{\substack{s'\le s \\ k'\le k}}\int_{\Sigma_t}r^{p+1}|q^{-2}\tg^{k'}\Gamma^{s'}(q^2\mathcal{N}_\phi)|^2 + \sum_{\substack{s'\le s \\ k'\le k}}\int_{\tilde\Sigma_t}r^{p+1}|q^{-2}\tg^{k'}\tilde\Gamma^{s'}(q^2\mathcal{N}_\psi)|^2.
\end{align*}
Therefore, it suffices to prove the following three estimates.
\begin{multline*}
\sum_{\substack{s'\le s \\ k'\le k}}||q^{-2}\tg^{k'}\Gamma^{s'}(q^2\mathcal{N}_\phi)||_{L^2(\Sigma_t)}+\sum_{\substack{s'\le s \\ k'\le k}}||q^{-2}\tg^{k'}\tilde\Gamma^{s'}(q^2\mathcal{N}_\psi)||_{L^2(\tilde{\Sigma}_t)} \\
\lesssim (E^{s,k}(t))^{1/2}(E_{\delm-1}^{\ul{23}}(t))^{1/2}+(E^{s,k}_{1-\delm}(t))^{1/2}(E^{\ul{23}}_{\delm-1}(t))^{1/2}
\end{multline*}
\begin{multline*}
\sum_{\substack{s'\le s \\ k'\le k}}\int_{\Sigma_t\cap\{r>R\}}r^{p+1}|q^{-2}\tg^{k'}\Gamma^{s'}(q^2\mathcal{N}_\phi)|^2 + \sum_{\substack{s'\le s \\ k'\le k}}\int_{\tilde\Sigma_t\cap\{r>R\}}r^{p+1}|q^{-2}\tg^{k'}\tilde\Gamma^{s'}(q^2\mathcal{N}_\psi)|^2 \\
\lesssim E^{s,0}(t)B_p^{s/2+8,0}(t)+B_p^{s,0}(t)E^{s/2+8,0}(t)
\end{multline*}
\begin{multline*}
\sum_{\substack{s'\le s \\ k'\le k}}\int_{\Sigma_t\cap\{r<R\}}|q^{-2}\tg^{k'}\Gamma^{s'}(q^2\mathcal{N}_\phi)|^2 + \sum_{\substack{s'\le s \\ k'\le k}}\int_{\tilde\Sigma_t\cap\{r<R\}}|q^{-2}\tg^{k'}\tilde\Gamma^{s'}(q^2\mathcal{N}_\psi)|^2 \\
\lesssim E^{s,k}_{p'}(t)(E_{p''}^{\ul{23}}(t))^{1/2}
\end{multline*}
These estimates are analogous to the estimates in the proof of Lemma \ref{mr_refined_nl_lem}, and they can be proved the same way. \qed
\end{proof}

\begin{corollary}\label{sk_NL_absorb_bulk}
In the context of the bootstrap assumptions provided in \S\ref{bootstrap_assumptions_sec}, Theorem \ref{p_s_k_thm} and Lemma \ref{sk_refined_nl_lem} imply that if $s+2k\le 28$ and $C_bI_0$ is sufficiently small, then
\begin{multline*}
E_p^{s,k}(t_2)+\int_{t_1}^{t_2}B_p^{s,k}(t)dt \\
\lesssim E_p^{s,k}(t_1)+\int_{t_1}^{t_2}\mathring{B}_p^{s+1}(t)dt+(C_bI_0)^{1/2}\int_{t_1}^{t_2}E_p^{s,k}(t)T^{(\delm-3+\delp)/2}dt
+\sum_{\substack{s'+2k'\le s+2k \\ k'<k}}\int_{t_1}^{t_2}B_p^{s',k'}(t)dt.
\end{multline*}
whenever $s/2+8\le s+2k$ (which means $16\le s+4k$).
\end{corollary}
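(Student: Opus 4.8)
The plan is to start from the second estimate of Theorem \ref{p_s_k_thm} and then systematically dispose of the only two ingredients on its right-hand side that are not already present in the statement of the corollary, namely $\int_{t_1}^{t_2}B_{p'}^{s+2,k-1}(t)\,dt$ and $\int_{t_1}^{t_2}N_p^{s,k}(t)\,dt$ (recall $k\ge1$ throughout, as in Theorem \ref{p_s_k_thm}). First I would apply Theorem \ref{p_s_k_thm} with the free parameter $p'$ chosen to equal $p$. Then $B_p^{s+2,k-1}(t)$ is manifestly one of the summands of $\sum_{s'+2k'\le s+2k,\ k'<k}B_p^{s',k'}(t)$, since $(s+2)+2(k-1)=s+2k$ and $k-1<k$. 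The term $\int_{t_1}^{t_2}\mathring B_p^{s+1}(t)\,dt$ already appears verbatim on the right-hand side of the corollary, so nothing is needed there.

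The heart of the argument is the treatment of $N_p^{s,k}(t)$, for which I would invoke the refined bound of Lemma \ref{sk_refined_nl_lem},
\[
N_p^{s,k}(t)\lesssim E^{s,0}(t)\,B_p^{s/2+8,0}(t)+B_p^{s,0}(t)\,E^{s/2+8,0}(t)+E^{s,k}_{p'}(t)\big(E_{p''}^{\ul{23}}(t)\big)^{1/2},
\]
and dispose of the three terms in turn. For the first term, the bootstrap assumption (\ref{energy_ba}) gives $E^{s,0}(t)\le E^{\ul{29}}(t)\lesssim C_bI_0\lesssim1$, so $E^{s,0}(t)B_p^{s/2+8,0}(t)\lesssim B_p^{s/2+8,0}(t)$; this is exactly the point where the hypothesis $s/2+8\le s+2k$ (equivalently $16\le s+4k$) enters, since it guarantees that the index $(s/2+8,0)$ satisfies $(s/2+8)+2\cdot0\le s+2k$ with $0<k$, so $B_p^{s/2+8,0}(t)$ is controlled by $\sum_{s'+2k'\le s+2k,\ k'<k}B_p^{s',k'}(t)$ (rounding $s/2$ up to an integer changes nothing). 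For the second term, (\ref{energy_ba}) likewise gives $E^{s/2+8,0}(t)\lesssim C_bI_0$, while $B_p^{s,0}(t)\le B_p^{s,k}(t)$; hence $B_p^{s,0}(t)E^{s/2+8,0}(t)\lesssim C_bI_0\,B_p^{s,k}(t)$, which for $C_bI_0$ small enough is absorbed into the bulk term on the left of the estimate. For the third term, the relevant integrals in Lemma \ref{sk_refined_nl_lem} live on a bounded radial region together with the trapping region, so the weights $p',p''$ are free; taking $p'=p$ and $p''=\delm-1$ gives $E^{s,k}_{p'}(t)\lesssim E_p^{s,k}(t)$ there, and the weak-decay bootstrap assumption (\ref{_23_ba}), combined with the approximate monotonicity of the energies (exactly as in the proof of Corollary \ref{mr_NL_absorb_bulk}), upgrades to the pointwise bound $\big(E_{\delm-1}^{\ul{23}}(t)\big)^{1/2}\lesssim(C_bI_0)^{1/2}T^{(\delm-3+\delp)/2}$, so this term is $\lesssim(C_bI_0)^{1/2}E_p^{s,k}(t)\,T^{(\delm-3+\delp)/2}$, whose time integral is precisely the third term on the right-hand side of the corollary.

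Collecting, every contribution to $\int_{t_1}^{t_2}\big(B_{p'}^{s+2,k-1}+N_p^{s,k}\big)(t)\,dt$ is of one of four types: $\int_{t_1}^{t_2}\mathring B_p^{s+1}(t)\,dt$, a term of $\sum_{s'+2k'\le s+2k,\ k'<k}\int_{t_1}^{t_2}B_p^{s',k'}(t)\,dt$, $(C_bI_0)^{1/2}\int_{t_1}^{t_2}E_p^{s,k}(t)\,T^{(\delm-3+\delp)/2}\,dt$, or $C_bI_0\int_{t_1}^{t_2}B_p^{s,k}(t)\,dt$; choosing $C_bI_0$ small enough to absorb the last of these into the bulk on the left yields the claimed estimate. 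I expect the main obstacle to be the bookkeeping in the first term of $N_p^{s,k}$ — checking that $16\le s+4k$ is exactly sharp for placing the ``half-order'' bulk $B_p^{s/2+8,0}$ inside the lower-order sum — together with the weak-decay-to-pointwise upgrade for the low-order factor $E_{\delm-1}^{\ul{23}}$; both, however, are routine given the already-established homogeneous analogue Corollary \ref{mr_NL_absorb_bulk}.
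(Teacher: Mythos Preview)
Your proposal is correct and follows essentially the same argument as the paper: apply Theorem \ref{p_s_k_thm} with $p'=p$, place $B_p^{s+2,k-1}$ in the lower-order sum, and handle the three pieces of $N_p^{s,k}$ from Lemma \ref{sk_refined_nl_lem} by putting $B_p^{s/2+8,0}$ in the lower-order sum (using $s/2+8\le s+2k$), absorbing the term with $B_p^{s,0}$, and invoking the weak-decay assumption for the third piece. The only cosmetic difference is that for the second piece the paper (when $k>0$) places $C_bI_0\,B_p^{s,0}$ in the lower-order sum rather than absorbing it into the bulk as you do, and the paper also records the $k=0$ case separately; neither affects the argument.
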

\begin{proof}
According to Theorem \ref{p_s_k_thm},
$$E_p^{s,k}(t_2)+\int_{t_1}^{t_2}B_p^{s,k}(t)dt\lesssim E_p^{s,k}(t_1)+\int_{t_1}^{t_2}B_p^{s+2,k-1}(t)+\mathring{B}_p^{s+1}(t)+N_p^{s,k}(t)dt.$$
By Lemma \ref{sk_refined_nl_lem},
$$N_p^{s,k}(t)\lesssim E^{s,0}(t)B_p^{s/2+8,0}(t)+B_p^{s,0}(t)E^{s/2+8,0}(t)+E^{s,k}_p(t)(E_{\delm-1}^{\ul{23}}(t))^{1/2}.$$
We now estimate each of the three terms on the right hand side. The strategy for the first term depends on whether $k=0$ or $k>0$. If $k=0$, then since $s/2+8\le s$ by assumption,
$$E^{s,0}(t)B_p^{s/2+8,0}(t) \lesssim C_bI_0 B_p^{s,0}(t).$$
It follows that if $C_bI_0$ is sufficiently small, this term can be absorbed into the bulk term on the left hand side. If instead $k> 0$, then since $s/2+8\le s+2k$ by assumption,
$$E^{s,0}(t)B_p^{s/2+8,0}(t) \lesssim C_bI_0 \sum_{\substack{s'+2k'\le s+2k \\ k'<k}}B_p^{s',k'}(t).$$
It follows that if $I_0$ is sufficiently small so that $C_bI_0\le 1$, then this term can be estimated by the lower order (in $k$) term on the right hand side.

The strategy for the second term is similar. Since $s+2k\le 28$, it follows that $s/2+8\le 28$, so
$$B_p^{s,0}(t)E^{s/2+8,0}(t)\lesssim C_bI_0B_p^{s,0}(t).$$
Again, if $k=0$, then $C_bI_0$ must be taken sufficiently small so that this term can be absorbed into the bulk term on the left hand side. If instead $k>0$, then since $s\le s+2k$, then as long as $C_bI_0\le 1$, this term can be estimated by the lower order (in $k$) term on the right hand side.

Finally, by the weak decay lemma,
$$\int_{t_1}^{t_2}E_p^{s,k}(t)(E_{\delm-1}^{\ul{23}}(t))^{1/2}dt \lesssim (C_bI_0)^{1/2}\int_{t_1}^{t_2}E_p^{s,k}(t)T^{(\delm-3+\delp)/2}dt.$$
This completes the proof. \qed
\end{proof}

\subsubsection{Inductive assumptions}\label{inductive_assumptions_sec}\label{part_2_inductive_assumptions_sec}

The remainder of the proof is a finite induction argument. First, estimates are proved for $k=0$, and then for $k=1$, etc. until $k=14$ (which saturates $s+2k\le 28$). For each $k$, it will be necessary to use estimates established for $k-1$. These inductive assumptions are listed here.

Either
$$k=0,$$
or $s+2k=28$ and
$$\sum_{\substack{s'+2k'\le 28 \\ k'<k}}\int_t^\infty B_{2-\delp}^{s',k'}(\tau)d\tau \lesssim I_0,$$
or $s+2k=26$ and
$$\sum_{\substack{s'+2k'\le 26 \\ k'<k}}\int_t^\infty B_{1-\delp}^{s',k'}(\tau)d\tau \lesssim T^{(1-\delp)-2+\delp}I_0,$$
or $s+2k=24$ and
$$\sum_{\substack{s'+2k'\le 24 \\ k'<k}}\int_t^\infty B_{\delm}^{s',k'}(\tau)d\tau \lesssim T^{\delm-2+\delp}I_0.$$

For the remainder of the proof, $k$ should be considered fixed. These estimates will be used for the fixed $k$, and eventually (in \S\ref{establish_inductive_assumptions_sec}) the corresponding estimates obtained by replacing $k$ with $k+1$ will be proved, thus closing the induction argument.

\subsubsection{Recovering boundedness of $E^{s,k}_{2-\delp}(t)$ ($s+2k=28$)}\label{part_2_Ep_boundedness_sec}

Our first application of Corollary \ref{sk_NL_absorb_bulk} is to prove boundedness of $E^{s,k}_{2-\delp}(t)$ and $\int_0^t B_{2-\delp}^{s,k}(\tau)d\tau$. With $s+2k=28$,
\begin{multline*}
E_{2-\delp}^{s,k}(t)+\int_0^t B_{2-\delp}^{s,k}(\tau)d\tau \\
\lesssim E_{2-\delp}^{s,k}(0)+\int_0^t\mathring{B}^{29}(\tau)d\tau+(C_bI_0)^{1/2}\int_0^tE_{2-\delp}^{s,k}(\tau)T^{(\delm-3+\delp)/2}d\tau+\sum_{\substack{s'+2k'\le s+2k \\ k'<k}}\int_0^tB_{2-\delp}^{s',k'}(\tau)d\tau \\
\lesssim I_0+I_0+(C_bI_0)^{1/2}\int_0^tC_bT^{(\delm-3+\delp)/2}I_0d\tau +I_0 \\
\lesssim (1+C_b^{3/2}I_0^{1/2})I_0.
\end{multline*}
It follows that if $I_0$ is sufficiently small so that $C_b^3I_0\lesssim 1$,
$$E_{2-\delp}^{s,k}(t)+\int_0^tB_{2-\delp}^{s,k}(\tau)d\tau\lesssim I_0.$$
This recovers the bootstrap assumption (\ref{_29_ba}) at the level $k$.

\subsubsection{Recovering boundedness of $E^{s,k}(t)$ ($s+2k=28$)}\label{part_2_E_boundedness_sec}

Since
$$\int_0^tB_1^{s,k}(\tau)d\tau \lesssim \int_0^tB_{2-\delp}^{s,k}(\tau)d\tau \lesssim I_0,$$
we are now able to prove that $E^{s,k}(t)$ is bounded.

Let $s+2k=28$. By Theorem \ref{p_s_k_thm} and Lemma \ref{sk_refined_nl_lem},
\begin{align*}
E^{s,k}(t) \lesssim & E^{s,k}(0)+\int_0^tB_{2-\delp}^{s+2,k-1}(\tau)+\mathring{B}_1^{s+1}(\tau)+B_1^{s,k}(\tau)+N^{s,k}(\tau)d\tau \\
\lesssim & E^{s,k}(0)+\int_0^tB_{2-\delp}^{s+2,k-1}(\tau) + \mathring{B}_1^{s+1}(\tau) + B_1^{s,k}(\tau)d\tau\\
&+\int_0^t(E^{s,k}(\tau))^{1/2}\left((E^{s,k}(\tau))^{1/2}(E_{\delm-1}^{\ul{22}}(\tau))^{1/2}+(E^{s,k}_{1-\delm}(\tau))^{1/2}(E^{\ul{22}}_{\delm-1}(\tau))^{1/2}\right)d\tau \\
\lesssim & I_0+\int_0^t(C_bI_0)^{1/2}(C_bI_0)^{1/2}(C_bT^{\delm-3+\delp}I_0)^{1/2}d\tau \\
\lesssim & (1+C_b^{3/2}I_0^{1/2})I_0.
\end{align*}
In particular, we used the weak decay principle in the third step. It follows that if $I_0$ is chosen sufficiently small so that $C_b^3I_0\lesssim 1$,
$$E^{s,k}(t)\lesssim I_0.$$
This recovers the bootstrap assumption (\ref{energy_ba}) at the level $k$.

\subsubsection{Proving decay for $E^{s,k}_{p\in[1-\delp,2-\delp]}(t)$ ($s+2k=26$) and $E^{s,k}_{p\in[\delm,2-\delp]}(t)$ ($s+2k=24$)}\label{part_2_decay_sec}

Once again, we prove a decay lemma for repeated use.
\begin{lemma}\label{sk_NL_WE_decay}
Suppose  $p+1,p\in [\delm,2-\delp]$ and
$$\int_t^\infty \mathring{B}_{p+1}^{s+3}(\tau)d\tau \lesssim T^{(p+1)-2+\delp}I_0,$$
$$E_{p+1}^{s+2,k}(t)\lesssim T^{(p+1)-2+\delp}I_0,$$
$$\sum_{\substack{s'+2k'\le s+2k+2 \\ k'<k}}\int_t^\infty B_{p+1}^{s',k'}(\tau)d\tau\lesssim T^{(p+1)-2+\delp}I_0.$$
$$\int_t^\infty \mathring{B}_p^{s+1}(\tau)d\tau \lesssim T^{p-2+\delp}I_0,$$
$$E_p^{s,k}(t)\le C_bT^{p-2+\delp}I_0,$$
$$\sum_{\substack{s'+2k'\le s+2k \\ k'<k}}\int_t^\infty B_p^{s',k'}(\tau)d\tau\lesssim T^{p-2+\delp}I_0,$$
Then if $I_0$ is sufficiently small,
$$E_p^{s,k}(t)\lesssim T^{p-2+\delp}I_0.$$
\end{lemma}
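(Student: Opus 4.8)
The plan is to follow the pigeonhole argument used to prove the homogeneous analogue, Lemma \ref{mr_NL_WE_decay}, with Corollary \ref{sk_NL_absorb_bulk} in place of Corollary \ref{mr_NL_absorb_bulk}; the only new feature is that one must route the extra error terms appearing in Corollary \ref{sk_NL_absorb_bulk} --- the homogeneous bulk $\mathring{B}_p^{s+1}$ and the lower-order-in-$k$ bulks $B_p^{s',k'}$ with $k'<k$ --- through the hypotheses of the lemma.

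First I would fix $t$, let $t'\in[t/2,t]$ be chosen below, and apply Corollary \ref{sk_NL_absorb_bulk} on $[t',t]$ to obtain
\[
E_p^{s,k}(t)\lesssim E_p^{s,k}(t')+\int_{t'}^t\mathring{B}_p^{s+1}(\tau)\,d\tau+(C_bI_0)^{1/2}\int_{t'}^tE_p^{s,k}(\tau)T^{(\delm-3+\delp)/2}\,d\tau+\sum_{\substack{s'+2k'\le s+2k\\k'<k}}\int_{t'}^tB_p^{s',k'}(\tau)\,d\tau.
\]
The second and fourth integrals are each $\lesssim T^{p-2+\delp}I_0$ by the hypotheses on $\int\mathring{B}_p^{s+1}$ and on $\sum B_p^{s',k'}$. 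For the third integral the bootstrap bound $E_p^{s,k}(\tau)\le C_bT^{p-2+\delp}I_0$ gives the integrand $C_bI_0\,T^{p-2+\delp}T^{(\delm-3+\delp)/2}$, whose exponent $p-2+\delp+(\delm-3+\delp)/2$ is strictly less than $-1$ once $\delm,\delp$ are small, so that term is $\lesssim (C_bI_0)^{3/2}T^{p-2+\delp}$. Everything therefore reduces to bounding $E_p^{s,k}(t')$ for a suitable $t'$.

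Next, as in the proof of Lemma \ref{mr_NL_WE_decay}, I would use the mean value theorem to pick $t'\in[t/2,t]$ with $B_{p+1}^{s+1,k}(t')=\tfrac{2}{t}\int_{t/2}^{t}B_{p+1}^{s+1,k}(\tau)\,d\tau$, and invoke the standard comparison $E_p^{s,k}(t')\lesssim B_{p+1}^{s+1,k}(t')$ (one extra commutator, together with the equation, removes the $\chi_{trap}$ degeneracy, while the $r^p$ weight of $B_{p+1}$ matches that of $E_p$), so that $E_p^{s,k}(t')\lesssim t^{-1}\int_{t/2}^tB_{p+1}^{s+1,k}(\tau)\,d\tau$. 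To estimate this last integral I would apply Theorem \ref{p_s_k_thm} at level $(s+1,k)$ and weight $p+1$:
\[
\int_{t/2}^{t}B_{p+1}^{s+1,k}(\tau)\,d\tau\lesssim E_{p+1}^{s+1,k}(t/2)+\int_{t/2}^{t}B_{p+1}^{s+3,k-1}(\tau)\,d\tau+\int_{t/2}^{t}\mathring{B}_{p+1}^{s+2}(\tau)\,d\tau+\int_{t/2}^{t}N_{p+1}^{s+1,k}(\tau)\,d\tau,
\]
and each summand is $\lesssim T^{(p+1)-2+\delp}I_0$: the first since $E_{p+1}^{s+1,k}(t/2)\le E_{p+1}^{s+2,k}(t/2)$ and by hypothesis; the second, taking the arbitrary weight index equal to $p+1$ and using $(s+3)+2(k-1)=s+2k+1\le s+2k+2$ with $k-1<k$, by the hypothesis on $\sum B_{p+1}^{s',k'}$; the third by the hypothesis on $\int\mathring{B}_{p+1}^{s+3}$ together with $\mathring{B}_{p+1}^{s+2}\le\mathring{B}_{p+1}^{s+3}$; and the nonlinear term $N_{p+1}^{s+1,k}$ via Lemma \ref{sk_refined_nl_lem} and the bootstrap assumptions, exactly as in the proof of Corollary \ref{sk_NL_absorb_bulk}. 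Combining, $E_p^{s,k}(t')\lesssim t^{-1}T^{(p+1)-2+\delp}I_0\lesssim T^{p-2+\delp}I_0$, and feeding this back into the first display yields $E_p^{s,k}(t)\lesssim(1+C_bI_0+C_b^{3/2}I_0^{1/2})T^{p-2+\delp}I_0$; taking $I_0$ small enough that $C_b^3I_0\lesssim 1$ finishes the argument.

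The main obstacle I anticipate is purely the index bookkeeping: at each application of Theorem \ref{p_s_k_thm}, Corollary \ref{sk_NL_absorb_bulk}, and Lemma \ref{sk_refined_nl_lem} one must check that the derivative counts stay within the admissible windows --- in particular $(s+1)+2k\le 28$ and the constraint $16\le s+4k$ required by Corollary \ref{sk_NL_absorb_bulk}, both of which hold at the levels $s+2k\in\{24,26\}$ where this lemma is actually used --- that the low-order energies appearing in Lemma \ref{sk_refined_nl_lem} (which sit at level $s+2k=23$) are dominated by the bootstrap assumptions, and that the weight indices $p$ and $p+1$ remain in $[\delm,2-\delp]$ throughout. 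The genuine analytic content --- the trade of one power of $T$ for one power of $r$ via the $B_{p+1}\leftrightarrow E_p$ comparison, and the pigeonhole choice of $t'$ --- is identical to the homogeneous case treated in Lemma \ref{mr_NL_WE_decay}.
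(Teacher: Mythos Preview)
Your proposal is correct and follows essentially the same route as the paper: apply Corollary~\ref{sk_NL_absorb_bulk} on $[t',t]$, control the extra linear bulks via the hypotheses, then pigeonhole $t'$ and bound the time-averaged bulk at weight $p+1$ using the estimate at the higher level. The one place where you diverge from the paper is in the pigeonhole step: you take the mean value of $B_{p+1}^{s+1,k}$ and invoke $E_p^{s,k}(t')\lesssim B_{p+1}^{s+1,k}(t')$, whereas the paper takes the mean value of $B_{p+1}^{s+2,k}$ and uses $E_p^{s,k}(t')\lesssim B_{p+1}^{s+2,k}(t')$, then applies Corollary~\ref{sk_NL_absorb_bulk} (not Theorem~\ref{p_s_k_thm} directly) at level $(s+2,k)$. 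The paper's choice of $s+2$ is what the stated hypotheses --- $E_{p+1}^{s+2,k}$, $\int\mathring{B}_{p+1}^{s+3}$, and the sum over $s'+2k'\le s+2k+2$ --- are calibrated to, and the comparison $E_p^{s,k}\lesssim B_{p+1}^{s+2,k}$ is the easier one to justify near trapping (the $|\sla\nabla\phi^{s,k}|^2$ term, after integration by parts on the sphere, produces $Q\phi^{s,k}$ and $\partial_t^2\phi^{s,k}$, both of which sit two orders up). Your $s+1$ claim would need a sharper argument; but since your hypotheses dominate what is required at level $s+1$ anyway, replacing $s+1$ by $s+2$ throughout recovers exactly the paper's proof with no further changes.
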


\begin{proof}
Using the mean value theorem, for a given $t$, let $t'\in [t/2,t]$ be the value for which $B_{p+1}^{s+2,k}(t')=\frac2t\int_{t/2}^tB_{p+1}^{s+2,k}(\tau)d\tau$. Then using Corollary \ref{sk_NL_absorb_bulk},
\begin{align*}
E_p^{s,k}(t) &\lesssim E_p^{s,k}(t')+\int_{t'}^t\mathring{B}_p^{s+1}(\tau)d\tau+(C_bI_0)^{1/2}\int_{t'}^tE_p^{s,k}(\tau)T^{(\delm-3+\delp)/2}d\tau+\sum_{\substack{s'+2k'\le s+2k \\ k'<k}}\int_{t'}^tB_p^{s',k'}(\tau)d\tau  \\
&\lesssim E_p^{s,k}(t')+C_bI_0\int_{t'}^t(C_bT^{p-2+\delp}I_0)T^{\delm-2+\delp}d\tau+T^{p-2+\delp}I_0 \\
&\lesssim E_p^{s,k}(t')+C_b^2I_0T^{p-2+\delp}I_0+T^{p-2+\delp}I_0.
\end{align*}
Now by the choice of $t'$,
\begin{multline*}
E_p^{s,k}(t')\lesssim B_{p+1}^{s+2,k}(t')=\frac2t\int_{t/2}^tB_{p+1}^{s+2,k}(\tau)d\tau \\
\lesssim t^{-1}E_{p+1}^{s+2,k}(t/2)+t^{-1}\int_{t/2}^t\mathring{B}_{p+1}^{s+3}(\tau)d\tau+t^{-1}C_bI_0\int_{t/2}^tE_{p+1}^{s+2,k}(\tau)T^{\delm-2+\delp}d\tau \\
+t^{-1}\sum_{\substack{s'+2k'\le s+2k+2 \\ k'<k}}\int_{t/2}^t B_{p+1}^{s',k'}(\tau)d\tau.
\end{multline*}
Thus,
$$E_p^{s,k}(t)\lesssim (1+C_b^2I_0)T^{p-2+\delp}I_0.$$
Taking $I_0$ sufficiently small so that $C_b^2I_0\lesssim 1$ completes the proof. \qed
\end{proof}

By applying Lemma \ref{sk_NL_WE_decay} for $p=1-\delp$ and $s+2k=26$, we obtain
$$E_{1-\delp}^{s,k}(t)\lesssim T^{-1}I_0.$$
By interpolation we obtain decay for all $p\in [1-\delp,2-\delp]$.
$$E_{p\in [1-\delp,2-\delp]}^{s,k}(t)\lesssim T^{p-2+\delp}I_0.$$
Then applying Lemma \ref{sk_NL_WE_decay} again for each $p\in[\delm,1-\delp]$ and $s+2k=24$, we obtain
\begin{equation*}
E_{p\in [\delm,2-\delp]}^{s,k}(t)\lesssim T^{p-2+\delp}I_0.
\end{equation*}
In particular, by taking $p=\delm$, this recovers the bootstrap assumption (\ref{_25_ba}) at the level $k$.

\subsubsection{Establishing inductive assumptions for $k+1$}\label{establish_inductive_assumptions_sec}

By Corollary \ref{sk_NL_absorb_bulk},
\begin{multline*}
\int_t^\infty B_p^{s,k}(\tau)d\tau \lesssim E_p^{s,k}(t)+\int_t^\infty\mathring{B}_p^{s+1}(\tau)d\tau+C_bI_0\int_t^\infty E_p^{s,k}(\tau)T^{\delm-2+\delp}d\tau \\
+\sum_{\substack{s'+2k'\le s+2k \\ k'<k}}\int_t^\infty B_p^{s',k'}(\tau)d\tau.
\end{multline*}
The quantities $E_p^{s,k}(t)$, $\int_t^\infty \mathring{B}_p^{s+1}(\tau)d\tau$, and $\int_t^\infty B_p^{s',k'}(\tau)d\tau$ ($s'+2k'\le s+2k$ and $k'<k$) all have the same proven decay rates. For $s+2k=28$ and $p=2-\delp$, they are bounded in time by $I_0$, for $s+2k=26$ and $p=1-\delp$, they decay at least as fast as $T^{-1}I_0$, and for $s+2k=24$ and $p=\delm$, they decay at least as fast as $T^{\delm-2+\delp}I_0$. Thus,
$$\sum_{\substack{s'+2k'\le 28 \\ k'<k+1}}\int_t^\infty B_{2-\delp}^{s',k'}(\tau)d\tau \lesssim I_0,$$
$$\sum_{\substack{s'+2k'\le 26 \\ k'<k+1}}\int_t^\infty B_{1-\delp}^{s',k'}(\tau)d\tau \lesssim T^{(1-\delp)-2+\delp}I_0,$$
$$\sum_{\substack{s'+2k'\le 24 \\ k'<k+1}}\int_t^\infty B_{\delm}^{s',k'}(\tau)d\tau \lesssim T^{\delm-2+\delp}I_0.$$
These are the inductive assumptions at the next level $k+1$.

\subsubsection{Recovering weak decay for $E_{p\in[\delm-1,\delm]}^{s,k}(t)$ ($s+2k=22$)}\label{part_2_weak_decay_sec}

Finally, set $s+2k=22$ and observe that for $p\in [\delm-1,\delm]$,
$$\int_t^\infty E_p^{s,k}(\tau)d\tau \lesssim \int_t^\infty B_{p+1}^{s+2,k}(\tau)d\tau\lesssim T^{(p+1)-2+\delp}I_0=T^{p-2+\delp+1}I_0.$$
In particular, by taking $p=\delm-1$, this recovers the bootstrap assumption (\ref{_23_ba}) at the level $k$, and thus completes the proof.

\appendix


\section{Regularity for Axisymmetric Functions}\label{regularity_sec}

In this stand-alone section, we develop a formalism for understanding terms that represent regular axisymmetric functions using certain differential operators on the unit sphere. The main motivation for this section in the context of nonlinear wave equations is summarized in \S\ref{intro_regularity_sec}. The formalism developed here can be used for a variety of wave-type problems with a coordinate degeneracy such as that in axisymmetry.

To begin, in \S\ref{gothic_operators_intro_sec}, we define the gothic operators $\fa$, $\fb$, $\fc^l$ and $\fd^l$ and prove a few basic properties. Of particular importance is Lemma \ref{cl_raise_degree_lem}, which explains how these operators act on a particular class of products of functions. In \S\ref{cl_embedding_theorem_sec}, we prove an important embedding theorem that allows us to estimate $L^2$ norms with these operators by $L^2$ norms with the standard spherical laplacian. This is important, because the spherical laplacian is a useful commutator in wave-type problems. Finally, in \S\ref{additional_regularity_lemmas_sec}, we prove a few additional related estimates that will be needed in main part of the paper.

For the remainder of this section, we will focus on functions defined on a sphere $S^2$ (or $S^6$) which depend only on the angle $\theta$ from the north pole. However, the theory discussed here easily extends to Kerr spacetimes and other axisymmetric spacetimes.

\subsection{The operators $\fa$, $\fb$, $\fc^l$, and $\fd^l$}\label{gothic_operators_intro_sec}

We begin by defining the gothic operators. The most fundamental of these are the operators $\fa$ and $\fb$, which are defined below.
\begin{definition}The operators $\fa$ and $\fb$ are
$$\fa:=\pd_\theta^2,$$
$$\fb:=\cot\theta\pd_\theta.$$
\end{definition}

In many cases, we will use these operators interchangeably, so we define the following shorthand notation.
\begin{definition}The operator $\fc$ is
$$\fc:=\fa\text{ or }\fb.$$
More generally, the family of operators $\fc^l$ is
$$\fc^l:=\fc_1...\fc_l\text{ where each }\fc_i\text{ is either }\fa\text{ or }\fb.$$
\end{definition}
So, for example, $\fc^2$ represents any of the operators $\fa^2$, $\fa\fb$, $\fb\fa$, or $\fb^2$.

Finally, we generalize the operator family $\fc^l$ slightly.
\begin{definition}The operator $\fd$ is
$$\fd:=\pd_\theta.$$
More generally, the family of operators $\fd^l$ is
$$
\fd^l := \left\{\begin{array}{cc}
\fc^{l/2} & l\in 2\mathbb{Z} \\
\pd_\theta \fc^{(l-1)/2} & l\not\in 2\mathbb{Z}.
\end{array}\right.
$$
\end{definition}
So, for example, $\fd^4$ repreesents any of the operators $\fa^2$, $\fa\fb$, $\fb\fa$, or $\fb^2$, while $\fd^5$ represents any of the operators $\pd_\theta\fa^2$, $\pd_\theta\fa\fb$, $\pd_\theta\fb\fa$, or $\pd_\theta\fb^2$.

\begin{remark}
One can think of the operator $\fd$ as a single spherical derivative and the operator family $\fd^l$ as $l$ spherical derivatives. This motivates the use of the letter d for this family. In contrast, the operator family $\fc^l$ is more like $2l$ spherical derivatives.
\end{remark}

To gain familiarity with the operator family $\fd^l$, we note the following example, which also serves as a caveat.
\begin{example}
Although
$$\fd\fd^l\subset\fd^{l+1},$$
in general
$$\fd^{l_1}\fd^{l_2}\not\subset\fd^{l_1+l_1}.$$
\end{example}
\begin{proof}
If $l$ is even, then 
$$\fd\fd^l=\pd_\theta\fc^{l/2}=\fd^{l+1}.$$
If $l$ is odd, then 
$$\fd\fd^l=\pd_\theta\pd_\theta\fc^{(l-1)/2}=\fa\fc^{(l-1)/2}\subset\fc^{(l+1)/2}=\fd^{l+1}.$$
A counterexample for the more general case of $\fd^{l_1}\fd^{l_2}$ exists when $l_1=2$ and $\l_2=1$. In this case,
$$\fb\pd_\theta\in \fd^2\fd,\text{ but }\fb\pd_\theta=\cot\theta\fa \not\in \fd^3.$$
\qed
\end{proof}

\subsubsection{The commutators $[\fb,\fa]$ and $[\fb,\fc^l]$}

The operators $\fa$ and $\fb$ satisfy a simple commutation relation.
\begin{lemma}
$$[\fb,\fa]=2\fb^2+2\fa.$$
\end{lemma}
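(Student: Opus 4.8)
The plan is to compute the commutator $[\fb,\fa]$ directly from the definitions $\fb=\cot\theta\,\pd_\theta$ and $\fa=\pd_\theta^2$, acting on an arbitrary smooth axisymmetric test function $u=u(\theta)$. This is a short, purely mechanical verification: there is no conceptual obstacle, only the need to keep track of derivatives of $\cot\theta$ carefully.

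First I would expand $\fb\fa u = \cot\theta\,\pd_\theta(\pd_\theta^2 u) = \cot\theta\,\pd_\theta^3 u$. Next I would expand $\fa\fb u = \pd_\theta^2(\cot\theta\,\pd_\theta u)$ using the Leibniz rule. Writing $c=\cot\theta$, with $c' = -\csc^2\theta$ and $c'' = 2\csc^2\theta\cot\theta$, this gives
$$
\fa\fb u = c\,\pd_\theta^3 u + 2c'\,\pd_\theta^2 u + c''\,\pd_\theta u = \cot\theta\,\pd_\theta^3 u - 2\csc^2\theta\,\pd_\theta^2 u + 2\csc^2\theta\cot\theta\,\pd_\theta u.
$$
Subtracting, the $\cot\theta\,\pd_\theta^3 u$ terms cancel, leaving
$$
[\fb,\fa]u = \fb\fa u - \fa\fb u = 2\csc^2\theta\,\pd_\theta^2 u - 2\csc^2\theta\cot\theta\,\pd_\theta u.
$$

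Finally I would recognize the right-hand side in terms of $\fa$ and $\fb$. Since $\csc^2\theta = 1 + \cot^2\theta$, the first term is $2(1+\cot^2\theta)\pd_\theta^2 u = 2\pd_\theta^2 u + 2\cot^2\theta\,\pd_\theta^2 u = 2\fa u + 2\fb(\fb u) $, because $\fb^2 u = \cot\theta\,\pd_\theta(\cot\theta\,\pd_\theta u) = \cot^2\theta\,\pd_\theta^2 u - \cot\theta\csc^2\theta\,\pd_\theta u$. Thus
$$
2\fb^2 u = 2\cot^2\theta\,\pd_\theta^2 u - 2\cot\theta\csc^2\theta\,\pd_\theta u,
$$
which is exactly $2\csc^2\theta\,\pd_\theta^2 u - 2\csc^2\theta\cot\theta\,\pd_\theta u - 2\pd_\theta^2 u = [\fb,\fa]u - 2\fa u$. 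Rearranging yields $[\fb,\fa]u = 2\fb^2 u + 2\fa u$, as claimed. The only thing to be careful about is the bookkeeping of the $\csc^2\theta$ versus $\cot^2\theta$ identity; there is no genuine difficulty here, and the lemma follows.
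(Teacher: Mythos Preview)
Your proof is correct and follows essentially the same approach as the paper: both compute the commutator by expanding $\pd_\theta^2(\cot\theta\,\pd_\theta u)$ via the Leibniz rule and then recognize the result as $2\fb^2+2\fa$. The paper writes $\alpha=\cot\theta$ and uses the algebraic identity $\alpha'=-\alpha^2-1$ to read off the $\fb^2$ structure directly, whereas you use the equivalent trigonometric identity $\csc^2\theta=1+\cot^2\theta$; the arguments are otherwise identical.
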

\begin{proof}
Let $\alpha = \cot\theta$. Then $\fb=\alpha\pd_\theta$. By direct calculation,
\begin{align*}
\alpha' &= \pd_\theta\left(\frac{\cos\theta}{\sin\theta}\right) \\
&= -1-\frac{\cos^2\theta}{\sin^2\theta} \\
&= -\alpha^2-1
\end{align*}
and
\begin{align*}
\alpha'' &= (-\alpha^2-1)' \\
&= -2\alpha\alpha'.
\end{align*}
Now we compute the commutator.
\begin{align*}
[\fb,\fa](f) &= \fb\fa f -\fa\fb f \\
&= \alpha\pd_\theta^3 f - \pd_\theta^2(\alpha\pd_\theta f) \\
&= -2\alpha'\pd_\theta^2 f-\alpha''\pd_\theta f \\
&= 2\alpha^2\pd_\theta^2f + 2\pd_\theta^2 f+2\alpha\alpha'\pd_\theta f \\
&= 2\alpha\pd_\theta(\alpha\pd_\theta f)+2\pd_\theta^2 f \\
&= 2\fb^2f+2\fa f.
\end{align*}
This verifies the lemma. \qed
\end{proof}

We generalize the previous commutation relation to obtain something that will be more useful later.
\begin{lemma}\label{fb_fcl_commutation_lem}
The operators $\fb$ and $\fc^l$ satisfy the commutation relation
$$\fc^l\fb \approx \fb\fc^l+\fc^l,$$
where the $\approx$ sign indicates that the identity holds modulo constant factors for the terms on the right hand side.
\end{lemma}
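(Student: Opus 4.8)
The plan is a straightforward induction on $l$, using only two ingredients: the fundamental commutator $[\fb,\fa]=2\fb^2+2\fa$ established in the preceding lemma, and the trivial identity $[\fb,\fb]=0$. The one conceptual point to keep in mind is that a term such as $\fb^2$ produced by the commutator is to be read as $\fb$ composed with the $\fc^1$-operator $\fb$, hence it is of the form $\fb\fc^l$ (with $l=1$), not an uncontrolled remainder; with this reading all error terms generated in the argument land in the two admissible families $\fb\fc^l$ and $\fc^l$, and the factors of $-2$ that appear are harmless because $\approx$ only asks for equality modulo constant coefficients.

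First I would dispose of the base case $l=1$, i.e.\ $\fc\fb\approx\fb\fc+\fc$ for $\fc\in\{\fa,\fb\}$. For $\fc=\fb$ there is nothing to do, since $\fb\fb$ is already of the form $\fb\fc^1$. For $\fc=\fa$, the preceding lemma gives
$$\fa\fb=\fb\fa-2\fb^2-2\fa=\fb\fa-2\,\fb\fb-2\fa,$$
whose first two terms are of the form $\fb\fc^1$ (with inner operators $\fa$, resp.\ $\fb$, lying in $\fc^1$) and whose last term lies in $\fc^1$. For the inductive step, write the given length-$l$ composition as $\fc^l=\fc_1\fc^{l-1}$ with $\fc_1\in\{\fa,\fb\}$ and $\fc^{l-1}$ a composition of $l-1$ such operators. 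Applying the inductive hypothesis to $\fc^{l-1}\fb$ and then the base case to the leading pair $\fc_1\fb$ yields
$$\fc^l\fb=\fc_1(\fc^{l-1}\fb)\approx\fc_1\fb\fc^{l-1}+\fc_1\fc^{l-1}\approx(\fb\fc_1+\fc_1)\fc^{l-1}+\fc_1\fc^{l-1}=\fb\fc^l+\fc^l,$$
since each of $\fc_1\fc^{l-1}$ and $\fc_1(\text{any composition in }\fc^{l-1})$ is again a composition of $l$ operators, i.e.\ of the form $\fc^l$, and $\fb\fc_1\fc^{l-1}=\fb\fc^l$. This closes the induction.

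I do not expect a genuine obstacle here: the entire content is careful bookkeeping, and the only place demanding attention is verifying that every term arising in an intermediate step—most notably the $\fb^2$ produced by $[\fb,\fa]$, and the products $\fc_1(\cdot)$ appearing after each reduction—can legitimately be absorbed into $\fb\fc^{\bullet}$ or $\fc^{\bullet}$ rather than treated as a new kind of remainder. One should also state explicitly at the outset exactly what the families $\fc^l$ and $\fb\fc^l$ on the right-hand side denote (sums, with constant coefficients, of such compositions), so that the induction hypothesis and its conclusion are literally the same statement; this is precisely the form in which the lemma is invoked in the proof of Lemma~\ref{Npsi_s_k_structure_lem}.
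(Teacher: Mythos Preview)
Your proof is correct and follows essentially the same induction as the paper: peel off the leftmost factor of $\fc^l$, apply the inductive hypothesis to move $\fb$ past the remaining $\fc^{l-1}$, then use the base case $[\fb,\fa]=2\fb^2+2\fa$ to move $\fb$ past that leftmost factor. Your write-up is in fact more careful than the paper's about the base case (treating $\fc=\fa$ and $\fc=\fb$ separately and explaining why the $\fb^2$ term is of the form $\fb\fc$), but the underlying argument is identical.
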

\begin{proof}
The proof is a simple induction exercise. The base case is handled by the previous lemma. Now, we assume the statement of the lemma at the level $l$ and prove it for the level $l+1$.
\begin{align*}
\fc^{l+1}\fb &\approx \fc (\fb \fc^l + \fc^l) \\
&\approx \fc\fb\fc^l+\fc^{l+1} \\
&\approx (\fb\fc +\fc)\fc^l+\fc^{l+1} \\
&\approx \fb\fc^{l+1}+\fc^{l+1}.
\end{align*}
\qed
\end{proof}

\subsubsection{The graded algebra $\bigoplus_{n\in\mathbb{N}}\tau_{(n)}$}

Now, we investigate products of axisymmetric functions, and in particular we want to understand what types of products are regular on the axis. Certainly, a product of regular axisymmetric functions will also be regular, but the converse is not true. There are regular products of functions that are not necessarily regular themselves. The simplest example is the product $(\sin\theta)^2$. This product is regular on the axis, but the function $\sin\theta$ is not regular, because it behaves like $|\theta|$ in a neighborhood of the pole $\theta=0$.

We will understand regularity for products of functions by understanding how the gothic operators act on these products.
\begin{definition}
Let $\mathcal{F}$ be a fixed set of functions. Define $\tau_{(n)}=\tau_{(n)}(\mathcal{F})$ to be a family of terms of the form
$$\fd^{i_1}f_1...\fd^{i_k}f_k$$
where $f_1,...,f_k\in\mathcal{F}$ and $i_1+...+i_k=2n$. We say a term $\tau\in \tau_{(n)}$ \textit{has degree} $n$.
\end{definition}

The following example should make the meaning of the above definition slightly more clear.
\begin{example}
If $f,g,h\in \mathcal{F}$, then $\fc^i f\fc^j g\in\tau_{(i+j)}$ and $\fc^i f \pd_\theta \fc^j g \pd_\theta \fc^k h\in\tau_{(i+j+k+1)}$, however $\fc^if \fc^j g\pd_\theta \fc^k h$ is not in $\tau_{(n)}$ for any $n$.
\end{example}


The following example should illustrate why these families of terms are useful.
\begin{example}
Let $\mathcal{F}=\{\cos\theta\}$. Then any term $\tau\in\tau_{(n)}(\mathcal{F})$ is regular on the axis. In particular, $(\sin\theta)^2=(\pd_\theta\cos\theta)^2\in\tau_{(1)}$. In contrast, the term $\cos\theta \pd_\theta\cos\theta = -\cos\theta\sin\theta$ is not in $\tau_{(n)}$ for any $n$ and is also not regular at the poles $\theta=0$ and $\theta=\pi$.
\end{example}

The main purpose for this classification of terms is the following fact.
\begin{lemma}\label{cl_raise_degree_lem}
If $\tau$ is a term of degree $n$, then $\fc\tau$ can be expressed as a sum of terms of degree $n+1$.
\end{lemma}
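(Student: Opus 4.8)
The plan is to prove this by induction on the number of factors $k$ in the term $\tau$, combined with the product/Leibniz rules for the operators $\fa$ and $\fb$. First I would unwind the definition: a term $\tau$ of degree $n$ has the form $\fd^{i_1}f_1\cdots\fd^{i_k}f_k$ with $i_1+\cdots+i_k=2n$. Applying $\fc\in\{\fa,\fb\}$ to this product, I want to show the result is a sum of terms whose exponents still add up to $2(n+1)=2n+2$. The natural move is to treat the two cases $\fc=\fb$ and $\fc=\fa$ separately, since $\fb=\cot\theta\,\pd_\theta$ is a first-order operator (ordinary Leibniz rule applies cleanly) while $\fa=\pd_\theta^2$ is second-order and generates a cross term $2\pd_\theta(\cdot)\pd_\theta(\cdot)$.

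For the case $\fc=\fb$: since $\fb$ is a derivation, $\fb(g_1\cdots g_k)=\sum_j g_1\cdots(\fb g_j)\cdots g_k$ where $g_j=\fd^{i_j}f_j$. The key subcomputation is that $\fb\fd^{i}f$ lies (up to constants, via Lemma \ref{fb_fcl_commutation_lem}) in a controlled set that raises the ``$\fd$-count'' by $2$: if $i=2m$ is even, $\fb\fc^m f \approx \fc^m\fb f\in\fc^{m+1}f=\fd^{2m+2}f$; if $i=2m+1$ is odd, $\fb\fd^{i}f=\cot\theta\,\fa\fc^m f$, which I need to rewrite — here the trick is $\cot\theta\,\fa = \fb\pd_\theta$ is not directly a $\fc$-word, so instead I would observe $\fb\pd_\theta\fc^m f = \cot\theta\,\pd_\theta^2\fc^m f$ and absorb it; more carefully, one writes $\fb(\pd_\theta\fc^m f)$ using that $\pd_\theta\fc^m f$ is itself $\fd^{i}f$ and notes $\fb\pd_\theta = \fa' $-type manipulations reduce it to $\fc^{m+1}f$ modulo lower terms. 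The cleanest route is: $\fb\,\fd^i f$, whether $i$ is even or odd, expands into a sum of terms of the form (constant)$\cdot\fd^{i+2}f$ possibly plus (constant)$\cdot\fd^{i}f$ scaled — but the latter would have the wrong degree, so I must check it cancels or does not occur; in fact from $[\fb,\fa]=2\fb^2+2\fa$ the ``extra'' terms still carry two derivatives, so they stay at $\fd^{i+2}f$ type. Then each summand of the Leibniz expansion replaces $i_j$ by $i_j+2$, keeping the total at $2n+2$.

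For the case $\fc=\fa=\pd_\theta^2$: use $\pd_\theta^2(g_1\cdots g_k) = \sum_j g_1\cdots(\pd_\theta^2 g_j)\cdots g_k + \sum_{j<l} 2\,g_1\cdots(\pd_\theta g_j)\cdots(\pd_\theta g_l)\cdots g_k$. In the first sum, $\pd_\theta^2\fd^{i_j}f_j$: if $i_j$ is even, $\pd_\theta^2\fc^{m}f_j = \fa\fc^m f_j\in\fc^{m+1}f_j=\fd^{i_j+2}f_j$; if $i_j$ is odd, $\pd_\theta^2(\pd_\theta\fc^m f_j)=\pd_\theta^3\fc^m f_j = \pd_\theta\fa\fc^m f_j\in\pd_\theta\fc^{m+1}f_j=\fd^{i_j+2}f_j$. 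Either way the factor's exponent goes up by $2$, so the term stays in $\tau_{(n+1)}$. In the second (cross) sum, $\pd_\theta\fd^{i_j}f_j = \fd^{i_j+1}f_j$ (this is exactly the identity $\fd\fd^l\subset\fd^{l+1}$ from the earlier example, valid because here we apply a single $\pd_\theta$ on the left), so $(\pd_\theta g_j)(\pd_\theta g_l)$ replaces exponents $i_j,i_l$ by $i_j+1,i_l+1$, and the total becomes $2n+2$ — crucially the two ``$+1$''s combine to the needed ``$+2$'', which is precisely why the definition of $\tau_{(n)}$ requires single $\pd_\theta$ derivatives to come in pairs.

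The main obstacle I anticipate is the odd-exponent bookkeeping in the $\fb$ case: making rigorous that $\fb$ applied to $\pd_\theta\fc^m f$ does not produce a stray term with only $i_j$ (rather than $i_j+2$) derivatives that would break the grading. I expect to handle this by an explicit small computation with $\alpha=\cot\theta$, $\alpha'=-\alpha^2-1$, reducing $\fb\pd_\theta\fc^m f = \cot\theta\,\pd_\theta^2\fc^m f$ and then noting $\cot\theta\,\pd_\theta^2 = \fb\pd_\theta$ keeps two derivatives; alternatively, invoking Lemma \ref{fb_fcl_commutation_lem} to commute $\fb$ past the $\fc$-word and absorbing the commutator terms (which are themselves $\fc$-words of the same length times constants). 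A secondary point worth stating carefully is that $\fc$ acts on the whole product, so I should make explicit that $\tau$ may itself contain single $\pd_\theta$ factors (odd $i_j$), and the argument above covers that case uniformly. Once these subcomputations are in place, the statement follows term-by-term from the Leibniz expansions, so no induction on $k$ is actually needed beyond the finite Leibniz sum — the degree accounting closes immediately.
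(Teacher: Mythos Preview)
Your treatment of the $\fa$ case is correct and matches the paper's argument. The $\fb$ case, however, has a genuine gap that you yourself flag as ``the main obstacle'' but do not actually resolve.

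The issue is this: when $i_j$ is odd, say $i_j = 2m+1$, you need to show $\fb\,\fd^{i_j}f_j = \cot\theta\,\fa\,\fc^m f_j$ is a sum of $\fd^{i_j+2}f_j$ terms. Your proposed fixes do not work. Writing $\cot\theta\,\pd_\theta^2 = \fb\,\pd_\theta$ is just a restatement; and the commutation relation $\fc^l\fb \approx \fb\fc^l + \fc^l$ moves $\fb$ from right to left, not the direction you need. If you try to force it, you get (with $g=\fc^m f_j$)
\[
\cot\theta\,\fa g \;=\; \pd_\theta\fb g + \csc^2\theta\,\pd_\theta g \;=\; \pd_\theta\fb g + \pd_\theta g + \cot\theta\,\fb g,
\]
and the last term $\cot\theta\,\fb g$ is \emph{not} a $\fd$-word in $g$ --- there is no way to absorb a stray $\cot\theta$ into a single factor whose current $\fd$-exponent is even.

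The paper's key idea, which you are missing, is that the $\cot\theta$ need not stay attached to the factor that was differentiated. After applying the single $\pd_\theta$ from $\fb$ (via Leibniz), the exponents sum to $2n+1$, which is odd; hence \emph{some} factor in the product --- possibly a different one, say index $j_2$ --- must have an odd exponent $i_{j_2}$, i.e.\ be of the form $\pd_\theta\fc^{(i_{j_2}-1)/2}f_{j_2}$. The $\cot\theta$ is then absorbed into \emph{that} factor, converting it to $\fb\fc^{(i_{j_2}-1)/2}f_{j_2}\in\fd^{i_{j_2}+1}f_{j_2}$. This parity argument across the whole product is what closes the degree count, and it is not recoverable from a factor-by-factor analysis.
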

\begin{proof}
By definition, a term of degree $n$ is of the form
$$\fd^{i_1}f_1...\fd^{i_k}f_k,$$
where $i_1+...+i_k=2n$. We now compute
\begin{align*}
\fa(\fd^{i_1}f_1...\fd^{i_k}f_k) &= \pd_\theta^2(\fd^{i_1}f_1...\fd^{i_k}f_k) \\
&= \sum_{\substack{1\le j_1\le k \\ 1\le j_2\le k}} \fd^{i_1}f_1...\fd\fd^{i_{j_1}}f_{j_1}...\fd\fd^{i_{j_2}}f_{j_2}...\fd^{i_k}f_k \\
&= \sum_{\substack{1\le j_1\le k \\ 1\le j_2\le k}} \fd^{i_1}f_1...\fd^{i_{j_1}+1}f_{j_1}...\fd\fd^{i_{j_2}}f_{j_2}...\fd^{i_k}f_k \\
&= \sum_{\substack{1\le j_1\le k \\ 1\le j_2\le k}} \fd^{i_1}f_1...\fd^{i_{j_1}+1}f_{j_1}...\fd^{i_{j_2}+1}f_{j_2}...\fd^{i_k}f_k .
\end{align*}
The last two steps were kept separate to emphasize that the procedure works even for the terms where $j_1=j_2$. It should be clear that the final sum is a sum of terms of degree $n+1$.

We also compute
\begin{align*}
\fb(\fd^{i_1}f_1...\fd^{i_k}f_k) &= \cot\theta \pd_\theta(\fd^{i_1}f_1...\fd^{i_k}f_k) \\
&= \cot\theta \sum_{1\le j_1\le k} \fd^{i_1}f_1...\pd_\theta\fd^{i_{j_1}}f_{j_1}...\fd^{i_k}f_k \\
&= \cot\theta \sum_{1\le j_1\le k} \fd^{i_1}f_1...\fd^{i_{j_1}+1}f_{j_1}...\fd^{i_k}f_k.
\end{align*}
Now, observe that since $i_1+...+i_k=2n$, then $i_1+...+(i_{j_1}+1)+...+i_k=2n+1$. It follows that either $i_{j_1}+1$ is odd or there is some $j_2$ for which $i_{j_2}$ is odd. If $i_{j_1}+1$ is odd, then 
$$\cot\theta\fd^{i_{j_1}+1}f_{j_1}=\cot\theta\pd_\theta\fc^{i_{j_1}/2}f_{j_1}=\fb\fc^{i_{j_1}/2}f_{j_1}=\fc^{i_{j_1}/2+1}=\fd^{i_{j_1}+2}f_{j_1}.$$
Otherwise, for some $j_2$, $i_{j_2}$ is odd, so
$$\cot\theta\fd^{i_{j_2}}f_{j_2}=\cot\theta\pd_\theta\fc^{(i_{j_2}-1)/2}f_{j_2}=\fb\fc^{(i_{j_2}-1)/2}f_{j_2}=\fc^{(i_{j_2}-1)/2+1}f_{j_2}=\fd^{i_{j_2}+1}f_{j_2}.$$
So all terms can either be expressed as
$$\fd^{i_1}f_1...\fd^{i_{j_1}+2}f_{j_1}...\fd^{i_k}j_k$$
or
$$\fd^{i_1}f_1...\fd^{i_{j_1}+1}f_{j_1}...\fd^{i_{j_2}+1}f_{j_2}...\fd^{i_k}j_k.$$
Both of these are terms of degree $n+1$. \qed
\end{proof}

\subsection{An important embedding theorem}\label{cl_embedding_theorem_sec}
We now prove the following important embedding theorem.
\begin{theorem}\label{cl_embedding_thm}
$$||\fc^l f||_{L^2(S^2)}\lesssim \sum_{i\le l}||\sla\triangle^i f||_{L^2(S^2)},$$
$$||\pd_\theta \fc^l f||_{L^2(S^2)}\lesssim \sum_{i\le l}||\sla\nabla\sla\triangle^i f||_{L^2(S^2)}.$$
\end{theorem}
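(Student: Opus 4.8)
The plan is to diagonalise $\sla\triangle$ via the Legendre expansion, reduce both inequalities to a single weighted estimate on Fourier coefficients, and then close by a short induction on $l$. First I would pass to the variable $x=\cos\theta$ on $(-1,1)$, in which the measure on $S^2$ (after integrating out the azimuthal angle) is $2\pi\,dx$, a flat measure, and
\[\fb=-x\,\pd_x,\qquad \fa=(1-x^2)\pd_x^2-x\,\pd_x=\sla\triangle-\fb,\qquad \sla\triangle=\pd_x\bigl((1-x^2)\pd_x\,\cdot\,\bigr);\]
note that in the $x$-variable the apparent pole singularity of $\fb=\cot\theta\,\pd_\theta$ disappears. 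In particular $\sla\triangle$ is self-adjoint on $L^2(-1,1)$, one has $\|\sla\nabla h\|_{L^2(S^2)}^2=-\langle h,\sla\triangle h\rangle_{L^2(S^2)}$, and $\|\pd_\theta h\|_{L^2(S^2)}=\|\sla\nabla h\|_{L^2(S^2)}$ for axisymmetric $h$. It suffices to prove the estimates for $f$ smooth in $x$ on $[-1,1]$ and then pass to the completion by density, since $\fa$, $\fb$ and $\sla\triangle$ all preserve that class. Writing $f=\sum_nc_nP_n(x)$ and passing to the $L^2$-normalised coefficients $\hat f_n$ (so that $\|f\|_{L^2(S^2)}^2=\sum_n\hat f_n^2$, $\|\sla\triangle^if\|^2=\sum_n(n(n+1))^{2i}\hat f_n^2$, and $\|\sla\nabla\sla\triangle^if\|^2=\sum_n(n(n+1))^{2i+1}\hat f_n^2$), both right-hand sides become weighted $\ell^2$ norms of $\hat f$ with explicit polynomial weights in $n$.

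The technical heart is a \textbf{sub-lemma}: for every integer $i\ge 0$, $\ \|\sla\triangle^i\fb f\|_{L^2(S^2)}\lesssim_i\sum_{j\le i+1}\|\sla\triangle^jf\|_{L^2(S^2)}$, together with its $\sla\nabla$-analogue (every $\sla\triangle^j$ on the right replaced by $\sla\nabla\sla\triangle^j$). To prove it I would use the classical identities $xP_n'=nP_n+P_{n-1}'$ and $P_{n-1}'=\sum_{m<n,\ m\equiv n\,(2)}(2m+1)P_m$, which give
\[\widehat{\fb f}_m=-m\,\hat f_m-\sqrt{2m+1}\sum_{n>m}\sqrt{2n+1}\,\hat f_n.\]
Then $\|\sla\triangle^i\fb f\|^2=\sum_m(m(m+1))^{2i}\widehat{\fb f}_m^2$ splits into a diagonal piece, controlled by $\sum_m(m(m+1))^{2i+1}\hat f_m^2\le\tfrac12(\|\sla\triangle^if\|^2+\|\sla\triangle^{i+1}f\|^2)$ via AM--GM, and a tail piece $\sum_m(m(m+1))^{2i}(2m+1)\bigl(\sum_{n>m}\sqrt{2n+1}\,\hat f_n\bigr)^2$; for the tail I would split $\sqrt n=n^{-a/2}\cdot n^{(1+a)/2}$ with any fixed $a\in(1,4i+3)$, apply Cauchy--Schwarz to get $\bigl(\sum_{n>m}\sqrt n\,\hat f_n\bigr)^2\lesssim m^{1-a}\sum_{n>m}n^{1+a}\hat f_n^2$, and then sum in $m$ and interchange the order of summation to land on $\lesssim\sum_nn^{4i+4}\hat f_n^2\sim\|\sla\triangle^{i+1}f\|^2$. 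The $\sla\nabla$-analogue is the same computation with every weight multiplied by $n(n+1)$ (now requiring $a\in(1,4i+5)$). The point is that this discrete Hardy-type estimate leaves a full factor of $n^2$ of room compared with what is claimed, and that slack is exactly what makes the induction close.

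With the sub-lemma available, I would induct on $l$ the statement $P(l)$: for every word $w=\fc_1\cdots\fc_l$ of length $l$ in $\{\fa,\fb\}$, $\|wf\|_{L^2(S^2)}\lesssim_l\sum_{i\le l}\|\sla\triangle^if\|$. Here $P(0)$ is trivial; for the step I would write $w=w'\fc_l$ with $w'$ of length $l-1$, apply $P(l-1)$ to the function $\fc_lf$ to obtain $\|wf\|\lesssim\sum_{i\le l-1}\|\sla\triangle^i(\fc_lf)\|$, and bound each summand by the sub-lemma — directly when $\fc_l=\fb$, and after writing $\fa=\sla\triangle-\fb$ so that $\sla\triangle^i\fa=\sla\triangle^{i+1}-\sla\triangle^i\fb$ when $\fc_l=\fa$ — obtaining $\lesssim\sum_{j\le l}\|\sla\triangle^jf\|$. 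Since $\fc^l$ is itself such a word, this is the first inequality of the theorem; the second follows from the identical induction run with the $\sla\nabla$-version of the sub-lemma together with $\|\pd_\theta\fc^lf\|_{L^2(S^2)}=\|\sla\nabla\fc^lf\|_{L^2(S^2)}$. I expect the main obstacle to be the sub-lemma — pinning down the Legendre coefficients of $\fb$ correctly and organising the interchange of summation in the tail estimate so that one lands on $\|\sla\triangle^{i+1}f\|^2$ with constants depending only on $i$; once that is in place the remaining induction is routine bookkeeping.
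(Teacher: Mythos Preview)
Your approach is correct and genuinely different from the paper's. The paper never touches the spectral decomposition: instead it works purely with operator algebra, using the commutation identity $[\fb,\fa]=2\fb^2+2\fa$ to prove by induction that $(2k-1)\fb\fa^{k-1}=-\fa^k+\fc^{\le k-1}\sla\triangle+\fc^{\le k-1}$, and hence that any word $\fc^k$ can be rewritten modulo lower order as $\fa^k+\fc^{\le k-1}\sla\triangle$. The pure power $\fa^{l+1}=\pd_\theta^{2(l+1)}$ is then handled by observing that $\pd_\theta^m f$ is the $e_1\cdots e_1$ component of $\sla\nabla^m f$ and invoking standard elliptic regularity $\|\sla\nabla^{2k}f\|_{L^2}\lesssim\sum_{i\le k}\|\sla\triangle^i f\|_{L^2}$. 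Your route instead controls $\fb$ directly as a lower-triangular operator on Legendre coefficients via a discrete Hardy inequality; the induction then peels off the rightmost letter of the word rather than rewriting the whole word. What the paper's argument buys is portability---the algebraic identities transfer verbatim to the $S^6$ setting used for $\tilde Q=\fa+5\fb+\cdots$, where your approach would require replacing Legendre by Gegenbauer polynomials and reworking the recurrences. What your argument buys is transparency about where the slack lies (a full factor of $n^2$ in the Hardy step) and an essentially self-contained proof that does not appeal to black-box elliptic regularity on the sphere.
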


\begin{proof}
We prove both estimates by induction on $l$. The base case ($l=0$) is trivial. We assume the estimates hold true at the level $l$, and now we must prove them at the level $l+1$.

First, observe that
$$||\fa^{l+1}f||_{L^2(S^2)}=||\pd_\theta^{2(l+1)}f||_{L^2(S^2)}\lesssim ||\sla\nabla^{2(l+1)}f||_{L^2(S^2)}\lesssim \sum_{i\le l+1} ||\sla\triangle^i \phi||_{L^2(S^2)}$$
and
$$||\pd_\theta \fa^{l+1}f||_{L^2(S^2)}=||\pd_\theta\pd_\theta^{2(l+1)}f||_{L^2(S^2)}\lesssim ||\sla\nabla^{2(l+1)+1}f||_{L^2(S^2)}\lesssim \sum_{i\le l+1} ||\sla\nabla \sla\triangle^i \phi||_{L^2(S^2)}.$$
(In both lines, the second step follows from the identity $\sla\nabla_{e_1}^kf=\pd_\theta^kf$ in the orthonormal frame $e_1=\pd_\theta$ and $e_2=\csc\theta\pd_\phi$, and the third step is a standard ellipticity result.)

We now assert the following lemma.
\begin{lemma}\label{ccc_lem}
$$\fc^k \approx \fa^k +\fc^{\le k-1}\sla\triangle + \fc^{\le k-1},$$
where the $\approx$ sign indicates that the identity holds modulo constant factors for the terms on the right hand side and the expression $\fc^{\le k-1}$ represents operators obtained by adding together constant multiples of operators of the form $\fc^i$ where $i\le k-1$.
\end{lemma}
From Lemma \ref{ccc_lem}, it is straightforward to prove the estimate at the level $l+1$. Set $k=l+1$. Then any term of the form $\fc^{l+1}f$ can be expressed as a sum of terms of the form $\fa^{l+1}f$ (which we just observed to be bounded by the appropriate norm) or $\fc^{\le l}f$ (which is bounded according to the inductive hypothesis) or $\fc^{\le l}\sla\triangle f$ (which is also bounded according to the inductive hypothesis applied to the function $g=\sla\triangle f$).

It remains to prove Lemma \ref{ccc_lem}.
\begin{proof} (of Lemma \ref{ccc_lem}) To prove Lemma \ref{ccc_lem}, we first assert the following lemma.
\begin{lemma}\label{baa_lem}
For all $k\ge 1$,
$$(2k-1)\fb \fa^{k-1} = -\fa^k +\fc^{\le k-1}\sla\triangle + \fc^{\le k-1}.$$
\end{lemma}

To see how Lemma \ref{ccc_lem} follows from Lemma \ref{baa_lem}, we must represent an arbitrary operator of the form $\fc^k=\fc_1...\fc_k$ in the form given by the right hand side of the identity in Lemma \ref{ccc_lem}. First, if $\fc_1=...=\fc_k=\fa$, then trivially $\fc_1...\fc_k=\fa^k$ is in the form given in Lemma \ref{ccc_lem}. Alternatively, there is some $j$ for which $\fc_1...\fc_k=\fc_1...\fc_j\fb\fa^{k-j-1}$. According to Lemma \ref{baa_lem}, we can rewrite
\begin{align*}
\fc_1...\fc_j\fb\fa^{k-j-1} &\approx \fc_1...\fc_j(\fa^{k-j}+\fc^{\le k-j-1}\sla\triangle+\fc^{\le k-j-1}) \\
&\approx \fc_1...\fc_j\fa^{k-j} +\fc^{\le k-1}\sla\triangle +\fc^{\le k-1}.
\end{align*}
The latter two terms are in the form given by Lemma \ref{ccc_lem} and the first term $\fc_1...\fc_j\fa^{k-j}$ has \textit{strictly more} $\fa$s at the right end. Thus, by repeating this procedure, we will end up with the term $\fa^k$, which is of the form given in Lemma \ref{ccc_lem}.

It remains to prove Lemma \ref{baa_lem}.

\begin{proof} (of Lemma \ref{baa_lem})
We prove Lemma \ref{baa_lem} by induction on $k$. Suppose $k=1$. By a direct calculation,
$$\fb=-\fa+\sla\triangle.$$
We now assume that the identity of Lemma \ref{baa_lem} is true at the level $k$ and prove the identity for the level $k+1$.
\begin{align*}
(2k+1)\fb\fa^k &= (2k-1)\fb\fa^k +2\fb\fa^k \\
&= (2k-1)\fa\fb\fa^{k-1} +(2k-1)[\fb,\fa]\fa^{k-1} +2\fb\fa^k\\
&= (2k-1)\fa\fb\fa^{k-1} +(2k-1)(2\fb^2+2\fa)\fa^{k-1} +2\fb\fa^k \\
&= (\fa+2\fb)((2k-1)\fb\fa^{k-1}) + 2\fb\fa^k + \fc^{\le k} \\
&= (\fa+2\fb)(-\fa^k+\fc^{\le k-1}\sla\triangle+\fc^{\le k-1}) +2\fb\fa^k + \fc^{\le k} \\
&= -\fa^{k+1} +\fc^{\le k}\sla\triangle + \fc^{\le k}.
\end{align*}
This completes the inductive step and thus proves Lemma \ref{baa_lem}. \qed
\end{proof} 

Since Lemma \ref{ccc_lem} was reduced to Lemma \ref{baa_lem}, this concludes the proof of Lemma \ref{ccc_lem}. \qed
\end{proof} 

Since Theorem \ref{cl_embedding_thm} was reduced to Lemma \ref{ccc_lem}, this concludes the proof of Theorem \ref{cl_embedding_thm}. \qed
\end{proof} 

\subsection{Additional regularity lemmas}\label{additional_regularity_lemmas_sec}

The $L^\infty$-type estimates used in the paper derive from the following lemma.
\begin{lemma}\label{spherical_infty_lem}
If $l$ is even, then
$$||\fd^l u||_{L^\infty(S^2)}\lesssim \sum_{i\le 2}||\fd^{l+i}u||_{L^2(S^2)}.$$
If $l$ is odd, then
$$||\fd^l u||_{L^\infty(S^2)}\lesssim \sum_{i\le 3}||\fd^{l+i}u||_{L^2(S^2)}.$$
\end{lemma}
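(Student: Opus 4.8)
The plan is to reduce both cases to the standard $L^2$ elliptic estimate and the Sobolev embedding $H^2\hookrightarrow L^\infty$ on the compact surface $S^2$, using only the algebraic fact (valid for axisymmetric functions) that $\sla\triangle=\fa+\fb$, together with the bookkeeping set up in \S\ref{gothic_operators_intro_sec}: the operators $\fa$ and $\fb$ send a $\fc^j$-term to a $\fc^{j+1}$-term, i.e.\ a $\fd^{2j}$-term to a $\fd^{2j+2}$-term, and $\pd_\theta$ sends a $\fc^j$-term to the $\fd^{2j+1}$-term $\pd_\theta\fc^j$. The parity split in the statement corresponds to whether $\fd^lu$ is itself a smooth scalar on $S^2$ or merely a component of a smooth $1$-form.

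In the even case, $v:=\fd^lu=\fc^{l/2}u$ is a smooth axisymmetric scalar, so by $H^2(S^2)\hookrightarrow L^\infty(S^2)$ and $L^2$ elliptic regularity for $\sla\triangle$,
$$||\fd^lu||_{L^\infty(S^2)}\lesssim ||v||_{L^2(S^2)}+||\sla\triangle v||_{L^2(S^2)}.$$
The first term is $||\fd^lu||_{L^2}$. For the second, $\sla\triangle v=(\fa+\fb)\fc^{l/2}u$ is a finite sum of $\fc^{l/2+1}u$-terms, that is, of $\fd^{l+2}u$-terms, whence $||\sla\triangle v||_{L^2}\lesssim\sum||\fd^{l+2}u||_{L^2}$. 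This is the bound with $i\le 2$.

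In the odd case, $\fd^lu=\pd_\theta\fc^{(l-1)/2}u$ is only Lipschitz on $S^2$ (near the poles it is the geodesic distance times a smooth function), so the scalar estimate above is unavailable; instead I view $\fd^lu$ as the coefficient, in the orthonormal coframe, of the \emph{smooth} one-form $\omega:=d\big(\fc^{(l-1)/2}u\big)$, for which $|\omega|=|\pd_\theta\fc^{(l-1)/2}u|$ pointwise on axisymmetric functions. Applying $H^2(S^2;T^*S^2)\hookrightarrow L^\infty$ and the elliptic estimate for the Hodge Laplacian $\Delta_H$ on $1$-forms --- which is invertible on $S^2$ since $b_1(S^2)=0$ --- gives
$$||\fd^lu||_{L^\infty(S^2)}=||\,|\omega|\,||_{L^\infty(S^2)}\lesssim ||\omega||_{L^2(S^2)}+||\Delta_H\omega||_{L^2(S^2)}.$$
Since $\omega$ is exact, $\Delta_H\omega=\pm\, d\big(\sla\triangle\fc^{(l-1)/2}u\big)$, so $||\Delta_H\omega||_{L^2}=||\pd_\theta\big(\sla\triangle\fc^{(l-1)/2}u\big)||_{L^2}$; and $\sla\triangle\fc^{(l-1)/2}u=(\fa+\fb)\fc^{(l-1)/2}u$ is a sum of $\fc^{(l+1)/2}u$-terms, so $\pd_\theta$ of it is a sum of $\fd^{l+2}u$-terms. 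Together with $||\omega||_{L^2}=||\pd_\theta\fc^{(l-1)/2}u||_{L^2}=||\fd^lu||_{L^2}$ this yields the bound with $i\in\{0,2\}$, which in particular is $\le\sum_{i\le 3}||\fd^{l+i}u||_{L^2}$.

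The one place that needs care is the odd case: the natural move of estimating $\pd_\theta\fc^{(l-1)/2}u$ in the scalar space $H^2(S^2)$ fails, because $\sla\triangle$ applied to it is genuinely not in $L^2(S^2)$ (it is of size $\theta^{-1}$ near the poles). The resolution --- recognizing $\pd_\theta\fc^{(l-1)/2}u$ as a component of the globally smooth one-form $d\big(\fc^{(l-1)/2}u\big)$ and running the elliptic/Sobolev machinery for $\Delta_H$ on $1$-forms instead of for $\sla\triangle$ on functions --- is the conceptual point; everything else is the index bookkeeping already in \S\ref{gothic_operators_intro_sec}.
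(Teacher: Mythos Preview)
Your proof is correct, and for the even case it matches the paper's argument essentially verbatim. For the odd case, however, you take a genuinely different route. The paper does not pass to $1$-forms and the Hodge Laplacian; instead it uses the pointwise inequality $1\lesssim|\cot\theta|+\sin\theta$ to split
\[
\pd_\theta\fc^{(l-1)/2}u \;\lesssim\; \big|\fb\fc^{(l-1)/2}u\big|+\big|\sin\theta\,\pd_\theta\fc^{(l-1)/2}u\big|.
\]
The first piece is $\fd^{l+1}u$ (even index), so the even case applies and contributes $i\in\{1,3\}$. The second piece is $(\fd\cos\theta)\cdot(\fd^lu)$, a bona fide term in the graded algebra $\tau_{((l+1)/2)}$, so the scalar Sobolev estimate applies directly and Lemma~\ref{cl_raise_degree_lem} handles $\sla\triangle$ of it, contributing $i\in\{0,1,2\}$.

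Your approach is cleaner and in fact slightly sharper: you obtain the odd-$l$ bound using only $i\in\{0,2\}$, whereas the paper genuinely needs all of $i\le 3$. The trade-off is that the paper's argument stays entirely within the scalar framework and the $\fd$-calculus developed in \S\ref{gothic_operators_intro_sec}, avoiding any appeal to vector-bundle Sobolev embeddings or Hodge theory, which keeps the machinery minimal and self-contained.
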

\begin{proof}
We prove this theorem by applying the estimate
$$||u||_{L^\infty(S^2)}\lesssim ||u||_{L^2(S^2)}+||\sla\triangle u||_{L^2(S^2)}.$$
If $l$ is even, then
\begin{align*}
||\fd^lu||_{L^\infty(S^2)} &=||\fc^{l/2}u||_{L^\infty(S^2)} \\
&\lesssim ||\fc^{l/2}u||_{L^2(S^2)}+||\sla\triangle\fc^{l/2}u||_{L^2(S^2)} \\
&\lesssim ||\fc^{l/2}u||_{L^2(S^2)}+||\fc\fc^{l/2}u||_{L^2(S^2)} \\
&\lesssim ||\fd^lu||_{L^2(S^2)}+||\fd^{l+2}u||_{L^2(S^2)} \\
&\lesssim \sum_{i\le 2}||\fd^{l+i}u||_{L^2(S^2)}.
\end{align*}
If $l$ is odd, then
\begin{align*}
||\fd^lu||_{L^\infty(S^2)} &=||\pd_\theta \fc^{(l-1)/2}u||_{L^\infty(S^2)} \\
&\lesssim ||\cot\theta\pd_\theta\fc^{(l-1)/2}u||_{L^\infty(S^2)} + ||\sin\theta \pd_\theta\fc^{(l-1)/2}u||_{L^\infty(S^2)} \\
&\lesssim ||\fb\fc^{(l-1)/2}u||_{L^\infty(S^2)}+ ||\sin\theta \pd_\theta\fc^{(l-1)/2}u||_{L^2(S^2)} +||\sla\triangle(\sin\theta \pd_\theta\fc^{(l-1)/2}u)||_{L^2(S^2)} \\
&\lesssim ||\fd^{l+1}u||_{L^\infty(S^2)}+||\fd^lu||_{L^2(S^2)}+||\fc(\fd\cos\theta \fd^lu)||_{L^2(S^2)} \\
&\lesssim ||\fd^{l+1}u||_{L^2(S^2)}+||\fd^{l+3}u||_{L^2(S^2)}+||\fd^l u||_{L^2(S^2)}+\sum_{i\le 2}||\fd^{1+2-i}\cos\theta \fd^{l+i}u||_{L^2(S^2)} \\
&\lesssim \sum_{i\le 3}||\fd^{l+i}u||_{L^2(S^2)}.
\end{align*}
\qed
\end{proof}

Now we prove that $Q$ behaves similarly to the spherical Laplacian.
\begin{lemma}\label{laplacian_le_commutators_lem}
$$\int_{S^2(r)}((q^2\sla\triangle)^ku)^2 \lesssim \int_{S^2(r)}(\Gamma^{\le k}u)^2$$
\end{lemma}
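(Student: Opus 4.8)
The statement to prove is Lemma~\ref{laplacian_le_commutators_lem}, namely that on a coordinate sphere $S^2(r)$ one has
$$\int_{S^2(r)}\left((q^2\sla\triangle)^ku\right)^2\lesssim \int_{S^2(r)}(\Gamma^{\le k}u)^2.$$
Recall that $\Gamma^{\le k}$ denotes the family of operators $Q^l\pd_t^{k-2l}$ with $0\le 2l\le k$, and that $Q=\fa+\fb+a^2\sin^2\theta\pd_t^2$. The key structural fact — already essentially recorded in the paper — is that the angular part of $Q$ is $\fa+\fb$, which up to a sign and lower-order terms is $q^2\sla\triangle$; indeed, the calculation $\fb=-\fa+\sla\triangle$ at the base of Lemma~\ref{baa_lem} is (after multiplication by $q^2$, which is a smooth bounded factor on a fixed sphere) exactly the identity $Q_{ang}\approx q^2\sla\triangle+\text{(lower order)}$, where $Q_{ang}=\fa+\fb$. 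So the plan is to trade powers of $q^2\sla\triangle$ for powers of $Q_{ang}$, absorb the extra $a^2\sin^2\theta\pd_t^2$ term of $Q$ as a $\pd_t^2$-contribution into the $\Gamma$ family, and control the lower-order error terms by induction.

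\emph{First step.} I would prove the single-operator statement: $q^2\sla\triangle u = Q u - a^2\sin^2\theta\,\pd_t^2 u + (\text{smooth bounded})\cdot u$ modulo a first-order-in-$\theta$ correction — more precisely, use $q^2\sla\triangle = \fa+\fb+(\text{smooth bounded lower-order angular operator})$. Since $a^2\sin^2\theta$ is smooth and bounded on a fixed sphere, $a^2\sin^2\theta\pd_t^2 u$ is pointwise bounded by $|\pd_t^2 u|=|\Gamma^{\le 2}u|$, and $Qu=\Gamma^{\le 2}u$. Hence $\|q^2\sla\triangle u\|_{L^2(S^2(r))}\lesssim \|\Gamma^{\le 2}u\|_{L^2(S^2(r))}$, which is the $k=1$ case (one could even phrase $k=1$ just using $Q$, but the $a^2\sin^2\theta\pd_t^2$ term forces a $\pd_t^2$, hence the need for the full $\Gamma^{\le}$ family; note $\Gamma^{\le 2}$ absorbs both $Q$ and $\pd_t^2$).

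\emph{Inductive step.} Assume the estimate at level $k-1$. Write $(q^2\sla\triangle)^k u = (q^2\sla\triangle)^{k-1}\big(q^2\sla\triangle u\big)$. By the $k=1$ identity, $q^2\sla\triangle u = Qu - a^2\sin^2\theta\pd_t^2 u + (\text{l.o.t.})$. Applying the inductive hypothesis to each of $Qu$, $a^2\sin^2\theta\pd_t^2 u$, and the lower-order-term function in place of $u$ gives bounds by $\|\Gamma^{\le k-1}(Qu)\|$, $\|\Gamma^{\le k-1}(a^2\sin^2\theta\pd_t^2 u)\|$, and $\|\Gamma^{\le k-1}(\text{l.o.t.})\|$ respectively. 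Since $\Gamma^{\le k-1}Q\subset\Gamma^{\le k+1}$ — wait, this is where one must be careful: $\Gamma^{\le k-1}$ applied to $Qu$ produces operators of order at most $k+1$ in the counting where $Q$ counts as $2$, so I should instead peel one $q^2\sla\triangle$ from the \emph{inside}, writing $(q^2\sla\triangle)^k u=(q^2\sla\triangle)(q^2\sla\triangle)^{k-1}u$ and bounding $\|(q^2\sla\triangle)v\|\lesssim\|\Gamma^{\le 2}v\|$ with $v=(q^2\sla\triangle)^{k-1}u$, then expanding $\Gamma^{\le 2}v=\Gamma^{\le 2}(q^2\sla\triangle)^{k-1}u$ and commuting the $\pd_t$'s (which commute with everything, being Killing) through and iterating the $k=1$ identity on the remaining angular $q^2\sla\triangle$ factors. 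This reordering — always removing the \emph{outermost} $q^2\sla\triangle$ and converting it to $\Gamma^{\le 2}$, then commuting $\pd_t$ freely and recursing — keeps the total operator order at exactly $k$, and the lower-order angular corrections each drop the order, so they are controlled by the inductive hypothesis at a strictly smaller level.

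\emph{Main obstacle.} The only genuine subtlety is bookkeeping the lower-order terms: each time I replace an angular $q^2\sla\triangle$ by $\fa+\fb$ and relate $\fa+\fb$ to a combination involving $Q$ (or, rather, directly estimate pointwise), I pick up operators of order $\le 1$ in $\theta$ times smooth bounded coefficients; after $k$ iterations these accumulate, but each sits at order $\le k-1$, so the induction closes cleanly. A second minor point is that $q^2=r^2+a^2\cos^2\theta$ is smooth and bounded (with smooth bounded inverse) on a fixed $S^2(r)$, so multiplying or dividing by $q^2$ costs nothing in the $L^2(S^2(r))$ estimate — this should be stated once at the outset. I expect the whole argument to be short: state the $q^2\sla\triangle$-versus-$Q$ relation on a fixed sphere, do induction on $k$, and note that $\pd_t$ commutes with all operators involved so it never obstructs the reordering.
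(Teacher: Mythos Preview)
Your order-counting worry---that $\Gamma^{\le k-1}Q\subset\Gamma^{\le k+1}$ overshoots $\Gamma^{\le k}$---is an artifact of what is evidently a typo in the statement: the paper's own proof establishes the bound with $\Gamma^{\le 2k}$ on the right (and this is how it is applied in Lemma~\ref{infty_base_lem}). With that correction, your \emph{first} approach---peel the innermost $q^2\sla\triangle$, write it as $Q-a^2\sin^2\theta\,\pd_t^2$, and invoke the inductive hypothesis on $(q^2\sla\triangle)^{k-1}$ applied to each piece---closes: since $[Q,\pd_t]=0$, one has $\Gamma^{\le 2(k-1)}Q=\Gamma^{\le 2k}$ exactly. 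This is precisely the paper's route. The paper phrases it as a strengthened induction $\int(\sla\triangle^{\le k}\Gamma^s u)^2\lesssim\int(\Gamma^{\le 2k+s}u)^2$ for all $s\ge0$, so that the $Q$-piece is immediate ($\sla\triangle^k Q\Gamma^s u=\sla\triangle^k\Gamma^{s+2}u$), and handles the product $\sla\triangle^k(a^2\sin^2\theta\cdot\Gamma^{s+2}u)$ by a Sobolev product estimate on the sphere together with the explicit regularity of $\sin^2\theta$ under $\sla\triangle$. Note that the identity $q^2\sla\triangle=Q-a^2\sin^2\theta\,\pd_t^2$ is exact on axisymmetric functions---your ``l.o.t.'' are absent---but the $a^2\sin^2\theta\,\pd_t^2$ piece is genuinely top order and needs this product argument, which you underweight.

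The outer-peeling scheme you switched to does not close as sketched. After $\|(q^2\sla\triangle)v\|\lesssim\|Qv\|+\|a^2\sin^2\theta\,\pd_t^2 v\|$ with $v=(q^2\sla\triangle)^{k-1}u$, the term $Qv=Q(q^2\sla\triangle)^{k-1}u$ is stuck: writing $Q=q^2\sla\triangle+a^2\sin^2\theta\,\pd_t^2$ just returns $(q^2\sla\triangle)^k u$ plus a correction, which is circular; and ``iterating the $k=1$ identity on the remaining factors'' produces mixed terms $Q^j\bigl(a^2\sin^2\theta\cdot(q^2\sla\triangle)^{k-j-1}\pd_t^2u\bigr)$ that again require a product estimate---now for powers of $Q$ rather than of $\sla\triangle$---which you have not supplied. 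Go back to the inner peeling with the corrected exponent.
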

\begin{proof}
Our strategy is to prove by induction (on $k$) that for all $s\ge 0$,
$$\int(\sla\triangle^{\le k}\Gamma^s u)^2 \lesssim \int (\Gamma^{\le 2k+s}u)^2.$$

Let us assume the above and prove the corresponding estimate for $k+1$. This is actually rather straightforward. Suppose for the moment we have the following additional estimate.
\begin{equation}\label{laplacian_le_commutators_lem_trivial_eqn}
\int \left(\sla\triangle^k(a^2\sin^2\theta\pd_t^2\Gamma^s u)\right)^2\lesssim \int (\sla\triangle^{\le k}\Gamma^{s+2} u)^2.
\end{equation}
Then we can perform the inductive step.
\begin{align*}
\int(\sla\triangle^{k+1}\Gamma^su)^2&=\int\left(\sla\triangle^k(Q-a^2\sin^2\theta\pd_t^2)\Gamma^su\right)^2 \\
&\lesssim \int (\sla\triangle^k Q\Gamma^su)^2+\int\left(\sla\triangle^k(a^2\sin^2\theta\pd_t^2\Gamma^s u)\right)^2 \\
&\lesssim \int (\sla\triangle^{\le k} \Gamma^{s+2}u)^2 \\
&\lesssim \int (\Gamma^{\le 2k+s+2}u)^2 = \int (\Gamma^{\le 2(k+1)+s}u)^2.
\end{align*}
So we only need to we verify the estimate (\ref{laplacian_le_commutators_lem_trivial_eqn}).
By a Sobolev estimate and an elliptic estimate,
\begin{align*}
\int \left(\sla\triangle^k(a^2\sin^2\theta\pd_t^2\Gamma^s u)\right)^2 &\lesssim ||\sin^2\theta||^2_{H^{2k}(S^2)}||\Gamma^{s+2}u||^2_{H^{2k}(S^2)} \\
&\lesssim \int (\sla\triangle^{\le k}\sin^2\theta)^2 \int (\sla\triangle^{\le k}\Gamma^{s+2}u)^2.
\end{align*}
The regularity of $\sin^2\theta$ can be verified by computing
$$\sla\triangle \sin^2\theta=4\cos^2\theta-2\sin^2\theta,$$
$$\sla\triangle \cos^2\theta=-4\cos^2\theta+2\sin^2\theta.$$
This concludes the proof of lemma \ref{laplacian_le_commutators_lem}. \qed
\end{proof}

Finally, we prove a lemma that handles the cases where the commutators act on $\sin\theta$ terms. (For example, see Lemma \ref{example_Nphi_term_lem}.)
\begin{lemma}\label{gain_sin_lem}
$$||\fd^i(\sin^2\theta)\fd^{l-i}f||_{L^2(S^2)}\lesssim \sum_{j\le i}||\sin^2\theta \fd^{l-j}f||_{L^2(S^2)}$$
\end{lemma}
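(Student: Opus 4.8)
The plan is to reduce the estimate to a transparent algebraic description of $\fd^i(\sin^2\theta)$ together with a weighted Hardy inequality on the sphere. First I would record the algebraic fact that the two-dimensional space $V=\mathrm{span}\{1,\sin^2\theta\}$ (equivalently $\mathrm{span}\{1,\cos2\theta\}$) is invariant under both $\fa=\pd_\theta^2$ and $\fb=\cot\theta\,\pd_\theta$: indeed $\fa(1)=\fb(1)=0$, $\fa(\sin^2\theta)=2-4\sin^2\theta$, and $\fb(\sin^2\theta)=2\cos^2\theta=2-2\sin^2\theta$. Consequently, for $i$ even one has $\fd^i(\sin^2\theta)=\fc^{i/2}(\sin^2\theta)=a_i+b_i\sin^2\theta$ with $a_i,b_i$ constants, and for $i$ odd one has $\fd^i(\sin^2\theta)=\pd_\theta\fc^{(i-1)/2}(\sin^2\theta)=c_i\sin\theta\cos\theta$ with $c_i$ constant.

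Second, this lets me split $\fd^i(\sin^2\theta)\,\fd^{l-i}f$ into pieces. The piece $b_i\sin^2\theta\,\fd^{l-i}f$ (for $i$ even) is already of the asserted form, with $j=i$. The piece $c_i\sin\theta\cos\theta\,\fd^{l-i}f$ (for $i$ odd) equals, up to a smooth bounded factor, $\sin^2\theta\cdot\cot\theta\,\fd^{l-i}f=\sin^2\theta\,\fb\,\fd^{l-i-1}f$ once we write $\fd^{l-i}=\fd\,\fd^{l-i-1}$; using Lemma~\ref{cl_raise_degree_lem} applied to the degree of $\fd^{l-i-1}f$ and the commutation relation of Lemma~\ref{fb_fcl_commutation_lem} to see that $\fb\,\fd^{l-i-1}$ contributes only terms lying in $\fd^{\le l-i+1}$, this is of the form $\sum_{j\le i}\sin^2\theta\,\fd^{l-j}f$ as required. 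The genuinely new piece is the ``constant part'' $a_i\,\fd^{l-i}f$ occurring when $i$ is even.

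Third, to handle $a_i\,\fd^{l-i}f$ I would prove and invoke a weighted Hardy inequality for axisymmetric functions on $S^2$: since $\pd_\theta h$ vanishes at both poles $\theta\in\{0,\pi\}$ for smooth axisymmetric $h$, a one-dimensional Hardy inequality against the weight $\sin^5\theta\,d\theta$ (which near each pole reads $\int(\pd_\theta h)^2\theta\,d\theta\lesssim\int(\pd_\theta^2h)^2\theta^3\,d\theta$) yields $\|\fd h\|_{L^2(S^2)}\lesssim\|\sin^2\theta\,\fd^2 h\|_{L^2(S^2)}+\|\sin^2\theta\,\fd h\|_{L^2(S^2)}$, where $\fd^2h$ is read as a sum of $\fa h$ and $\fb h$ and, where the non-inclusion $\fb\pd_\theta\not\subset\fd^3$ arises, one re-expresses $\cot\theta\,\pd_\theta^2$ through $\fb$ and $\sla\triangle$. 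Applying this with $h=\fd^{l-i-1}f$ and once more using $\fd\,\fd^{l-i-1}\subset\fd^{l-i}$ and $\fd^2\,\fd^{l-i-1}\subset\fd^{\le l-i+1}$ bounds $\|\fd^{l-i}f\|_{L^2}$ by $\sum_{j\in\{i-1,i\}}\|\sin^2\theta\,\fd^{l-j}f\|_{L^2}$; summing the (boundedly many) resulting pieces over the ways $\fd^i$ may act gives the lemma.

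I expect the main obstacle to be the boundary case $l=i$, where every $\fd$ lands on $\sin^2\theta$ and none on $f$: there the constant part $a_l f$ cannot be controlled by $\sin^2\theta$-weighted derivatives of $f$ alone (take $f$ constant), so one must use the finer bookkeeping supplied by Lemmas~\ref{Nphi_s_k_structure_lem} and~\ref{Npsi_s_k_structure_lem} and the discussion in the proof of Proposition~\ref{strategy_revisited_prop}, namely that in every invocation of Lemma~\ref{gain_sin_lem} the relevant factor $f$ already carries at least one $\pd_\theta$ or one power of $\sin\theta$ before the Hardy step is applied. Making that tracking precise (rather than the Hardy inequality itself, which is routine) is the delicate point, and I would phrase the hypotheses of the lemma, if necessary, so that this structural feature is built in.
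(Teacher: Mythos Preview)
Your approach and the paper's share the same engine, a weighted Hardy inequality on $[0,\pi]$, but your version of that inequality is not correct as written, and the paper's route avoids the detour through the algebraic decomposition of $\fd^i(\sin^2\theta)$ altogether.

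The specific gap: you claim that near each pole one has $\int(\pd_\theta h)^2\theta\,d\theta\lesssim\int(\pd_\theta^2h)^2\theta^3\,d\theta$, and hence $\|\fd h\|_{L^2(S^2)}\lesssim\|\sin^2\theta\,\fd^2 h\|_{L^2(S^2)}+\|\sin^2\theta\,\fd h\|_{L^2(S^2)}$. This tries to gain a factor of $\sin^4\theta$ in the integrand using only one derivative of $g=\pd_\theta h$, and that fails: take $g$ to be a smooth bump supported on $[\epsilon,2\epsilon]$, so that $\int g^2\sin\theta\sim\epsilon^2$ while $\int(g')^2\sin^5\theta+\int g^2\sin^5\theta\sim\epsilon^4$. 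The correct Hardy, which the paper proves first, gains only $\sin^2\theta$ per application,
\[
\int_0^\pi g^2\sin\theta\,d\theta\;\lesssim\;\int_0^\pi(\pd_\theta g)^2\sin^3\theta\,d\theta+\int_0^\pi g^2\sin^3\theta\,d\theta,
\]
and must be iterated twice to reach the $\sin^5\theta$ weight, at the cost of two derivatives plus a zeroth-order term.

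With that in hand the paper proceeds more simply than your decomposition: since $\fd^i(\sin^2\theta)$ is bounded (your algebraic observation already shows this), one has $\|\fd^i(\sin^2\theta)\fd^{l-i}f\|_{L^2(S^2)}\lesssim\|\fd^{l-i}f\|_{L^2(S^2)}$, and two applications of the Hardy inequality to $g=\fd^{l-i}f$ produce the terms $\|\sin^2\theta\,\pd_\theta^2\fd^{l-i}f\|$, $\|\sin^2\theta\,\pd_\theta\fd^{l-i}f\|$, $\|\sin^2\theta\,\fd^{l-i}f\|$, which correspond to $j=i-2,\,i-1,\,i$ (using $\pd_\theta^2\fd^{l-i}\subset\fd^{l-i+2}$ and $\pd_\theta\fd^{l-i}\subset\fd^{l-i+1}$). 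This sidesteps the parity case analysis, the rewriting of $\sin\theta\cos\theta$ via $\fb$, and the $\fb\pd_\theta\not\subset\fd^3$ complication entirely.

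Finally, your worry about the boundary case $l=i$ dissolves once the Hardy is stated with its zeroth-order term: for $g=f$ constant, the right-hand side contains $\|\sin^2\theta\, f\|_{L^2(S^2)}$, which is a nonzero multiple of $|f|$ and hence controls $\|f\|_{L^2(S^2)}$. No appeal to the structural lemmas on $\mathcal N_\phi$ and $\mathcal N_\psi$ is needed.
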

\begin{proof}
First, we prove that for a general function $f(\theta)$,
\begin{equation}\label{gain_sin_lem_initial_eqn}
\int_0^\pi f^2\sin\theta d\theta \lesssim \int_0^\pi (\pd_\theta f)^2\sin^3\theta d\theta +\int_0^\pi f^2\sin^3\theta d\theta.
\end{equation}
Indeed,
\begin{align*}
\int_0^\pi f^2\sin\theta d\theta &= \frac12\int_0^\pi f^2\pd_\theta(\sin^2\theta)d\theta +\int_0^\pi f^2(1-\cos\theta)\sin\theta d\theta \\
&= - \int_0^\pi f\pd_\theta f \sin^2\theta d\theta +\int_0^\pi f^2(1-\cos\theta)\sin\theta d\theta \\
&\lesssim \epsilon^{-1}\int_0^\pi (\pd_\theta f)^2\sin^3\theta d\theta +\epsilon\int_0^\pi f^2\sin\theta d\theta +\int_0^\pi f^2\sin^3\theta d\theta.
\end{align*}
Taking $\epsilon$ sufficiently small and subtracting the second term on the right hand side proves the estimate (\ref{gain_sin_lem_initial_eqn}).

Now, we repeatedly apply the estimate (\ref{gain_sin_lem_initial_eqn}) to prove the lemma for the case $i=2$.
\begin{align*}
\int_0^\pi(\fd^{l-2}f)^2\sin\theta d\theta &\lesssim \int_0^\pi(\pd_\theta \fd^{l-2}f)^2\sin^3\theta d\theta+\int_0^\pi (\fd^{l-2}f)^2\sin^3\theta d\theta \\
&\lesssim \int_0^\pi(\pd_\theta^2 \fd^{l-2}f)^2\sin^5\theta d\theta + \int_0^\pi(\pd_\theta \fd^{l-2}f)^2\sin^5\theta d\theta+\int_0^\pi (\fd^{l-2}f)^2\sin^5\theta d\theta \\
&\lesssim \int_0^\pi (\sin^2\theta\fd^l f)^2\sin\theta d\theta +\int_0^\pi (\sin^2\theta\fd^{l-1}f)^2\sin\theta d\theta + \int_0^\pi (\sin^2\theta\fd^{l-2}f)^2\sin\theta d\theta.
\end{align*}
In the first step, estimate (\ref{gain_sin_lem_initial_eqn}) was applied once, and in the second step, estimate (\ref{gain_sin_lem_initial_eqn}) was applied twice. The final step is simply a rearrangement of terms. It should be clear how this procedure generalizes to any larger $i>2$.
\qed
\end{proof}

\bibliographystyle{alpha}
\bibliography{bib}

\begin{thebibliography}{Wei90}

\bibitem[IK14]{IoKl}
Alexandru~D Ionescu and Sergiu Klainerman.
\newblock On the global stability of the wave-map equation in kerr spaces with
  small angular momentum.
\newblock {\em arXiv preprint arXiv:1412.5679}, 2014.

\bibitem[Wei90]{weinstein}
Gilbert Weinstein.
\newblock On rotating black holes in equilibrium in general relativity.
\newblock {\em Communications on Pure and Applied Mathematics}, 43(7):903--948,
  1990.

\end{thebibliography}

\end{document}